\newtheorem{theorem}{Theorem}
\newtheorem{proposition}[theorem]{Proposition}
\newtheorem{lemma}[theorem]{Lemma}
\newtheorem{corollary}[theorem]{Corollary}
\theoremstyle{definition}
\newtheorem{constructions}[theorem]{\noindent{\font\SweD=cmssi10\SweD Constructions}\bf}
\newtheorem{definitions}[theorem]{\font\SweD=cmssi10\SweD Definitions\bf}
\newtheorem{def:al schema}[theorem]{\font\SweD=cmssi10\SweD Definitional schema\bf}
\newtheorem{def:al schemas}[theorem]{\font\SweD=cmssi10\SweD Definitional schemata\bf}
\newtheorem{axiom schema}[theorem]{{\font\SweD=cmssi10\SweD Axiom schema}\bf}
\newtheorem{example}[theorem]{\font\SweD=cmssi10\SweD Example\bf}
\newtheorem{problem}[theorem]{\font\SweD=cmssi10\SweD Problem\bf}
\newtheorem{remark}[theorem]{\font\SweD=cmssi10\SweD Remark\bf}
\newtheorem{remarks}[theorem]{\font\SweD=cmssi10\SweD Remarks\bf}
\newcommand\Cal{\mathcal}
\newcommand\bosy{\boldsymbol}
\newcommand\mfrk{\mathfrak}\newcommand\mfrak{\mathfrak}
\renewcommand\math[1]{\hbox{$\kern0.4mm#1\kern0.4mm$}}
\newcommand\mathss[3]{\hbox{\kern0.17mm\kern.#1mm$#3$\kern.#2mm\kern0.17mm}}
\newcommand\aall[1]{\forall\kern.8mm{#1}\kern.2mm\,;}
\newcommand\eexi[1]{\exists\kern.9mm{#1}\kern.2mm\,;}
\newcommand\impss[2]{\kern.#1mm\kern.17mm\Rightarrow\kern.17mm\kern.#2mm}
\newcommand\equivss[2]{\kern.#1mm\kern.17mm\Leftrightarrow\kern.17mm\kern.#2mm}
\def\afr#1_#2{\mathfrak#1_{\kern.2mm\lower.15mm\hbox{\font\SweD =cmr6\SweD #2}}} 
\def\Alf{\hbox{\font\SweD =cmmi10 scaled\magstep1\SweD \char'013}\kern0.15mm}
\def\Eps{\hbox{\font\SweD =cmmi10 scaled\magstep1\SweD \char'017}\kern0.15mm}
\newcommand\twEps{\hbox{\font\Å=cmmi10 scaled\magstep1\Å\char'042}} 
\def\Iota{\kern.15mm\hbox{\font\SweD =cmmi10 scaled\magstep1\SweD \char'023}\kern0.2mm}
\newcommand\sIota{\kern.1mm\hbox{\font\SweD =cmmi8\SweD \char'023}\kern0.15mm} 
\def\Nu{\hbox{\font\SweD =cmmi10 scaled\magstep1\SweD \char'027}\kern0.25mm}
\newcommand\twer[1]{\lower.45mm\hbox{\font\Å=cmr12\Å#1}} %
\newcommand\tweq{\raise.13mm\hbox{\font\Å=cmmi12\Åq}} %
\newcommand\Yps{\varUpsilon}\newcommand\vPi{\varPi}
\newcommand\Phii{\char'010}\newcommand\Psii{\char'011}
 \newcommand\iotaa{\char'023}%
\newcommand\nuu{{\char'027}}
\def\pii{\char'031}\def\sigmaa{\char'033}
\def\rbrakf{\kern.9mm]\kern.2mm\lower.8mm\hbox{\font\SweD =cmr5\SweD f}\kern.2mm} 
\def\rbrakff{\kern.9mm]\kern.2mm\lower.8mm\hbox{\font\SweD =cmr5\SweD ff}\kern.2mm} 
\newcommand\rbracff[1]{\kern.3mm\kern.#1mm]\kern.2mm\lower.88mm\hbox{\font\å=cmr5\åf\kern-.4mmf}\kern.2mm} 
\newcommand\Pif[1]{\Pi\lower.68mm\hbox{\font\å=cmr5\åf}\kern.6mm\kern.#1mm} 
\def\ftimes{\hbox{${}\times\kern-2.7mm\lower.9mm\hbox{\font\SweD =cmr5\SweD f}\kern1.93mm$}} 
\def\bbN{\mathbb N} 
\def\bbNo{\mathbb N\kern.15mm\lower.65mm\hbox{\font\SweD =cmr6\SweD 0}\kern.1mm} 
\def\sbbNo{{\mathbb N\kern.07mm\lower.45mm\hbox{\font\SweD =cmr5\SweD 0}\kern.1mm}} 
\def\bbR{\mathbb R} 
\newcommand\ovbbR{\kern.33mm\raise1.1mm\hbox{$^{\overline{\kern1.35mm}}$}\kern-1.85mm\mathbb R} 
\newcommand\bbI{\mathbb I} 
\newcommand\ssbb[3]{\hspace{.#1mm}\mathbb#3\hspace{.#2mm}} 
\def\lbb#1_#2{\mathbb#1\kern.2mm\raise.52mm\hbox{$_{_{#2}}$}} 
\def\rbb#1^#2{\mathbb#1\kern.2mm\lower.55mm\hbox{$^{^{#2}}$}} 
\def\Rlplus{\mathbb R\kern.2mm\lower.33mm\hbox{\font\SweD =cmr5\SweD +}} 
\def\fbbR{\raise1.2mm\hbox{\font\SweD =cmr5\SweD f}\kern.3mm\mathbb R} 
\def\tfbbR{\raise1.2mm\hbox{\font\SweD =cmr5\SweD tf}\kern.3mm\mathbb R} 
\def\stfbbR{{\raise.5mm\hbox{\font\SweD =cmr5\SweD t\kern-.1mmf}\kern.3mm\mathbb R}} 
\newcommand\vbbR[1]{\raise1.65mm\hbox{\font\Å=cmss5\Åv}\kern.2mm\mathbb R\kern.3mm\kern.#1mm} 
\newcommand\tvbbR[1]{\raise1.35mm\hbox{\font\SweD=cmr5\SweD t}\raise1.65mm\hbox{\font\SweD=cmss5\SweD v}\kern.2mm\mathbb R\kern.3mm\kern.#1mm} 
\newcommand\smbbR[1]{\raise1.6mm\hbox{\font\SweD=cmr5\SweD sm}\kern.1mm\mathbb R\kern.3mm\kern.#1mm} 
\newcommand\smbbRR{\raise1.67mm\hbox{\font\Å=cmss5\Åsm}\mathbb R} %
\def\fbbC{\raise1.23mm\hbox{\font\SweD =cmr5\SweD f}\kern.1mm\mathbb C} 
\def\tfbbC{\raise1.23mm\hbox{\font\SweD =cmr5\SweD tf}\kern.1mm\mathbb C} 
\def\stfbbC{{\raise.5mm\hbox{\font\SweD =cmr5\SweD t\kern-.1mmf}\kern.3mm\mathbb C}} 
\def\tfbbH{\raise1.2mm\hbox{\font\SweD =cmr5\SweD tf}\kern.3mm\mathbb H} 
\def\taubb_#1{\tau\kern-.15mm\lower.7mm\hbox{\font\SweD =msbm5\SweD #1}\kern.3mm} 
\def\nsTbb_#1{\hbox{\font\SweD =eusm9\SweD T}\lower.7mm\hbox{\font\SweD =msbm5\SweD #1}\kern.3mm} 
\def\barscTbb_#1{\kern.3mm\lower.4mm\hbox{$^{^{\overline{\kern1.4mm}}}$}\kern-2.05mm\hbox{\font\Å=eusb7\ÅT}\kern.25mm\lower.7mm\hbox{\font\Å=msbm5\Å#1}\kern.3mm} 
\newcommand\TopexR{\hbox{\font\å=eusm9\åT}\lower.27mm\hbox{$_{^{\overline{\mathbb R\kern-.4mm}}}$}\kern.388mm} 
\def\bartau_bb#1{\bar\tau\kern-.15mm\lower.7mm\hbox{\font\SweD =msbm5\SweD #1}\kern.3mm} 
\def\cinfty{\raise1.35mm\hbox{\font\SweD =cmr5\SweD c}\kern-.15mm\infty} 
\def\inftyyplus{{\vphantom{p_{p_p}}\infty\RHB{.5}{\fiveroman+}}} 
\newcommand\imag{{\kern-.18mm\raise1.25mm\hbox{\font\Å=cmr5\Å\char'052}\kern-1mm\hbox{\font\Å=cmr10\Å\char'020}}} 
\newcommand\iimag{{\kern-.18mm\raise.75mm\hbox{\font\Å=cmr5\Å\char'052}\kern-.95mm\hbox{\font\Å=cmr7\Å\char'020}\kern.05mm}} 
\newcommand\thepi{{\kern-.25mm\raise.67mm\hbox{\font\Å=cmr10\Å\char'056}\kern-.72mm\pi}} 
\newcommand\tthepi{{\kern-.15mm\raise.45mm\hbox{\font\fff=cmr7\fff\char'056}\kern-.63mm\pi}} 
\newcommand\rmdss[2]{\kern1mm\kern.#1mm\roman d\kern.#2mm\,}
\newcommand\rvdss[2]{\kern1mm\kern.#1mm\hbox{\font\Å=cmr10\Åd}\kern-2.1mm\lower.99mm\hbox{\font\Å=cmmi5\Å\char'051}\kern.#2mm\kern.3mm} 
\newcommand\sigmalg[1]{\hbox{\font\SweD =cmmi12\SweD \char'033}\kern-.2mm\lower.77mm\hbox{\font\SweD =cmr6\SweD a}\lower.8mm\hbox{\font\SweD =cmr5\SweD l}\lower.45mm\hbox{\font\SweD =cmr5\SweD g}\kern.#1mm\kern.4mm} 
\newcommand\sigmAlg[1]{\hbox{\font\SweD =cmmi12\SweD \char'033}\kern-.2mm\lower.8mm\hbox{\font\SweD =cmr5\SweD Al}\lower.45mm\hbox{\font\SweD =cmr5\SweD g}\kern.#1mm\kern.4mm} 
\def\Lebmea^#1{\kern.25mm\mu_{\kern.3mm\hbox{\font\SweD =cmr5\SweD Leb}}^{\vphantom n\kern.5mm{#1}}} 
\def\Lebmef^#1{\kern.25mm\mathfrak m_{\kern.3mm\hbox{\font\SweD =cmr5\SweD Leb}}^{\vphantom n\kern.5mm{#1}}} 
\def\LeBmef^#1{\kern.25mm\mathfrak m_{\kern.3mm\hbox{\font\SweD =cmr5\SweD LeB}}^{\vphantom n\kern.5mm{#1}}} 
\newcommand\openIval[1]{\null\kern.35mm]\kern.7mm#1\kern.8mm[\kern.35mm\null} 
\newcommand\rsigma[1]{\raise.15mm\hbox{$^{\sigma}$\kern-.6mm\kern.#1mm}} 
\newcommand\loint{\int\kern-1.2mm\lower.45mm\hbox{$\underline{\phantom.}$}\kern1mm}
\newcommand\upint{\kern.5mm\raise1.85mm\hbox{$\overline{\phantom.}$}\kern-1.2mm\int}
\newcommand\plusint{\kern.5mm\raise1.65mm\hbox{\font\SweD=cmr5\SweD +}\kern-1mm\int} 
\newcommand\Reint{\kern.5mm\raise1.65mm\hbox{\font\SweD=msbm5\SweD R}\kern-1mm\int} 
\newcommand\sefRC{\{\kern0.15mm\raise1.2mm\hbox{\font\SweD =cmr5\SweD f}\kern.3mm\mathbb R\,,\kern-0.3mm\raise1.23mm\hbox{\font\SweD =cmr5\SweD f}\kern.1mm\mathbb C\,\}} 
\def\setRC{\{\kern0.15mm\raise1.2mm\hbox{\font\SweD =cmr5\SweD tf}\kern.3mm\mathbb R\,,\kern-0.3mm\raise1.23mm\hbox{\font\SweD =cmr5\SweD tf}\kern.1mm\mathbb C\,\}} 
\def\Reit#1{_{\lower.2mm\hbox{\kern.2mm\hskip.#1mm\font\SweD =msbm5\SweD R\kern.15mm\font\SweD =cmr5\SweD t}\kern.15mm}} 
\def\Reif#1{_{\lower.2mm\hbox{\kern.#1mm\font\SweD =msbm5\SweD R\kern.2mm\font\SweD =cmr5\SweD i\kern-.3mm f}\kern.15mm}} 
\def\Ceit#1{_{\lower.2mm\hbox{\hskip.#1mm\font\SweD =msbm5\SweD C\kern.15mm\font\SweD =cmr5\SweD t}\kern.15mm}} 
\def\Ceif#1{_{\lower.2mm\hbox{\kern.#1dd\font\SweD =msbm5\SweD C\kern.2mm\font\SweD =cmr5\SweD i\kern-.3mm f}\kern.15mm}} 
\def\tvsps#1(#2){\hbox{\font\Å=cmss10\Åt.\kern-.3mm v\kern-.3mm .s\kern.8mm}(\kern.#1pt\boldsymbol#2\kern0.37mm)} 
\def\TVSps#1(#2){\roman{TVS}\kern0.7mm(\kern.#1pt\boldsymbol#2\kern0.37mm)} 
\def\LCSps#1(#2){\roman{LCS}\kern0.7mm(\kern.#1pt\boldsymbol#2\kern0.37mm)} 
\def\catLCS#1(#2){\hbox{\font\Å=cmssbx8\ÅLCS}\kern0.7mm(\kern.#1pt\boldsymbol#2\kern0.37mm)} 
\def\BaSps#1(#2){\roman{BaS}\kern0.7mm(\kern.#1pt\boldsymbol#2\kern0.37mm)} 
\def\FrSps#1(#2){\roman{FrS}\kern0.7mm(\kern.#1pt\boldsymbol#2\kern0.37mm)} 
\def\HilbSps#1(#2){\roman H\kern.15mm\lower.7mm\hbox{\font\SweD =cmr5\SweD ilb}\kern0.4mm\roman S\kern.6mm(\kern.#1pt\boldsymbol#2\kern0.37mm)} 
\def\TVS{\roman{TVS}\kern0.4mm}%
\def\LCS{\roman{LCS}\kern0.4mm}%
\def\BaS{\roman{BaS}\kern0.4mm}%
\def\HilbLCS{\roman{H\kern.15mm\lower.7mm\hbox{\font\SweD =cmr5\SweD ilb}\kern.25mmLCS}\kern0.4mm} 
\def\Nbh{\Cal N_{\font\SweD =cmmi6\lower.15mm\hbox{\kern.1mm\SweD bh\kern.15mm}}}
\newcommand\neiBoo{\Cal N\kern-.4mm\lower.7mm\hbox{\font\SweD=cmr7\SweD o}\kern.8mm} 
\newcommand\bouSet{\Cal B\kern-.1mm\lower.7mm\hbox{\font\Å=cmssi6\Ås}\kern.8mm} 
\newcommand\bouSeT{\Cal B\kern.1mm\lower.7mm\hbox{\font\Å=cmss6\Ås}\kern.7mm} 
\newcommand\semiNor{\Cal S\kern.2mm\lower.75mm\hbox{\font\SweD=cmr5\SweD N}\kern.8mm} 
\newcommand\BSnorm{\mathscr B\kern-.25mm\lower.65mm\hbox{\font\Å=cmsl6\Ås\kern-.15mm\font\Å=cmsl6\Ån}\kern.85mm} 
\newcommand\Bqnorm{\mathscr B\kern-.25mm\lower.6mm\hbox{\font\Å=cmssqi5\Åq}\lower.85mm\hbox{\kern-.15mm\font\Å=cmsl6\Ån}\kern.85mm} 
\newcommand\bouNor{\Cal B\kern.2mm\lower.75mm\hbox{\font\SweD=cmr5\SweD N}\kern.8mm} 
\newcommand\vsquotient{\lower.85mm\hbox{\kern-.6mm\font\x=cmr5\x vs\kern.85mm}} 
\newcommand\tvsquotient{\lower.85mm\hbox{\kern-.35mm\font\x=cmr5\x tvs\kern.85mm}} 
\def\SemiNor{\Cal S_{_N}\kern0.15mm}
\def\dimHa{{\rm dim_{_{\kern.2mm Ha}}}}
\def\lfbb_#1{_{\lower.25mm\hbox{\kern.3mm\font\SweD =msbm5\SweD #1\kern.15mm}}} 
\newcommand\Lis{\mathscr L\lower.75mm\hbox{\kern-.75mm\font\Å=cmmi6\Åi\kern-.865mmi\font\Å=cmmi7\Ås\kern.4mm}} 
\newcommand\Linb{\Cal L\lower.7mm\hbox{\kern.3mm\font\SweD =cmmi6\SweD b\kern.25mm}} 
\newcommand\Lingamma{\Cal L\lower.43mm\hbox{\kern.27mm\font\Å=cmtex5\Å\char'011\kern.45mm}} 
\newcommand\dualgamma[2]{_{\kern-.1mm\kern.#1mm\raise.1mm\hbox{\font\Å=cmtex5\Å\char'011}}\kern-.#1mm\kern-0.6mm^{\kern.#2mm\prime\kern.2mm}_{\kern-.#2mm\kern.#1mm}} 
\newcommand\vbdualgamma[1]{_{\kern-.3mm\kern.#1mm\raise.1mm\hbox{\font\Å=cmtex5\Å\char'011}\kern.25mm\lower.25mm\hbox{\font\Å=cmss5\Åv\font\Å=cmmi5\Åb}}\kern-.#1mm\kern-2.6mm^{\prime}_{\kern.#1mm\kern2.45mm}} 
\newcommand\vbdualc[1]{_{\kern-.3mm\kern.#1mm\kern.25mm\lower.25mm\hbox{\font\Å=cmss5\Åcv\font\Å=cmmi5\Åb}}\kern-.#1mm\kern-2.6mm^{\prime}_{\kern.#1mm\kern2.45mm}} 
\def\dualbeta{^{\kern0.4mm\prime}_{\kern-.2mm\raise.95mm\hbox{$_{_\beta}$}}} 
\newcommand\dlbetss[2]{_{\kern.#2mm\hbox{\font\Å=cmssi5\Å\char'031}}^{\kern.#1mm\kern.35mm\prime}} 
\newcommand\dlmuss[2]{_{\kern.#2mm\raise.37mm\hbox{\font\Å=cmmib5\Å\char'026}}^{\kern.#1mm\kern.35mm\prime}}
\newcommand\dlkss[2]{_{\kern.#2mm\kern.05mm\hbox{\font\Å=cmssqi5\Åk}}^{\kern.#1mm\kern.35mm\prime}}
\newcommand\dlckss[2]{_{\kern.#2mm\hbox{\font\Å=cmssi5\Åck}}^{\kern.#1mm\kern.35mm\prime}}
\newcommand\dlsigstar[2]{_{\kern.#2mm\hbox{\font\Å=cmmi6\Å\char'033}^*\kern-.2mm}^{\kern.#1mm\kern.35mm\prime}}
\newcommand\dualsigma[1]{_\sigma^{\kern.4mm\kern.#1mm\prime}} %
\newcommand\dlsigss[2]{^{\kern.4mm\kern.#1mm\prime}_{\kern.#2mm\sigma}} 
\def\fRe{\kern.1mm\raise1.2mm\hbox{\font\SweD =cmr5\SweD f\kern.1mm}\roman{Re}\kern.9mm}
\def\fIm{\kern.1mm\raise1.2mm\hbox{\font\SweD =cmr5\SweD f\kern.1mm}\roman{Im}\kern.9mm}
\newcommand\vtopsum[1]{\kern.7mm\kern.#1mm\text{-}\kern.4mm\raise1.57mm\hbox{\font\Å=cmss5\Åt}\kern-.75mm\sum} 
\def\prodsubtext#1{\prod\kern-0.3mm\lower.9mm\hbox{\font\SweD =cmr5\SweD #1}\kern.7mm}
\def\coprodsubtext#1{\coprod\kern-0.3mm\lower.9mm\hbox{\font\SweD =cmr5\SweD #1}\kern.7mm}
\def\prodc{\prod{_{_{\kern-.3mm\bold c\kern.15mm}}}}
\def\vsprod_#1_#2{\prod\kern-0.3mm{}_{_{\roman{#1}\sp{#2}\,}}} 
\def\vscoprod_#1_#2{\coprod\kern-0.3mm{}_{_{\roman{#1}\sp{#2}\,}}} 
\def\ssexpar#1#2^#3)_#4{\kern.5mm\kern.#1mm^{#3\kern.15mm\kern.#2mm)}\kern.15mm\raise.65mm\hbox{\font\Å=cmr5\Å#4}} 
\def\expnota^#1]_#2{\,^{#1\,]{_{}}_{\roman{#2}}}} 
\def\expnote^#1]#2{\kern.8mm^{#1}\kern.3mm\raise1.2mm\hbox{\font\SweD =cmsy5\SweD \char'143}\raise1.1mm\hbox{\font\SweD =cmr5\SweD #2}} %
\def\expar#1]#2{\kern1mm\raise1.5mm\hbox{\font\SweD =cmsy5\SweD \char'042}\kern.6mm#1\kern.8mm\raise1.4mm\hbox{\font\SweD =cmsy5\SweD \char'145}\raise1.3mm\hbox{\font\SweD =cmr5\SweD #2}} %
\newcommand\sbig[2]{\lower.1mm\hbox{\font\Å=cmr11\Å#1}\kern.#2mm} 
\newcommand\sast{\kern.2mm\raise.9mm\hbox{\font\å=cmsy5\å\char'003}} 
\newcommand\subw[1]{\kern.07mm\lower.85mm\hbox{\lower.0#1in\hbox{\font\Å=cmssqi5\Åw }}} 
\newcommand\subvw[1]{\kern.07mm\lower.85mm\hbox{\lower.0#1in\hbox{\font\Å=cmssi5\Åv\kern-.4mm w }}} 
\newcommand\subsigrs[2]{_{\lower.3mm\hbox{\lower.#1mm\hbox{\kern-.2mm\kern.#2mm\font\Å=cmmi7\Å\char'033}}}} 
\newcommand\subsigma{\kern.07mm\lower.85mm\hbox{\font\Å=cmmi6\Å\char'033}} 
\def\flbb_#1{\kern.3mm\lower.8mm\hbox{\font\SweD=msbm5\SweD #1}\kern.15mm} 
\def\co{\hbox{\font\Å=cmmi12\Åc}\lower.8mm\hbox{\font\Å=cmr6\Åo}\kern.4mm}
\def\LL^#1{L\kern0.15mm\raise.4mm\hbox{$^{#1}$}\kern0.15mm}
\def\Lrs#1#2^#3{L\kern0.25mm\kern0.#2mm\lower.#1mm\hbox{\raise.4mm\hbox{$^{#3}$}}\kern0.25mm} 
\def\mvLrs#1#2^#3{\raise1.53mm\hbox{\font\å=cmtt5\åm}\raise1.53mm\hbox{\font\å=cmssq5\åv}\kern-.3mm\lower.12mm\hbox{\font\å=cmitt12\åL}\kern0.25mm\kern0.#2mm\lower.#1mm\hbox{\raise.4mm\hbox{$^{#3}$}}\kern0.25mm} 
\def\mLrs#1#2^#3{\raise1.63mm\hbox{\font\Å=cmtex5\Åm}\kern-.33mm\lower.12mm\hbox{\font\Å=cmitt12\ÅL}\kern0.25mm\kern0.#2mm\lower.#1mm\hbox{\raise.4mm\hbox{$^{#3}$}}\kern0.25mm} 
\def\mvsLrs#1#2^#3{\raise1.53mm\hbox{\font\å=cmtt5\åm}\raise1.53mm\hbox{\font\å=cmssq5\åv}\kern-.3mm\lower.12mm\hbox{$\hbox{\font\å=cmitt12\åL}_{\lower.2mm\hbox{\font\å=cmitt6\ås}}$}\kern-1.0mm\kern0.#2mm\lower.#1mm\hbox{\raise.4mm\hbox{$^{#3}$}}\kern0.25mm} 
\def\LLrs#1#2^#3{\lower.15mm\hbox{\font\å=cmitt12\åL}\kern0.25mm\kern0.#2mm\lower.#1mm\hbox{\raise.4mm\hbox{$^{#3}$}}\kern0.25mm} 
\def\sLrs#1#2^#3{\hbox{\font\å=cmitt12\åL}_{\lower.2mm\hbox{\font\å=cmitt6\ås}}\kern-1.0mm\kern0.#2mm\lower.#1mm\hbox{\raise.4mm\hbox{$^{#3}$}}\kern0.25mm} 
\def\lll^#1{\ell\kern0.7mm\raise.2mm\hbox{$^{#1}$}\kern0.15mm}
\def\Cinfty{C\kern.6mm\raise.15mm\hbox{$^\infty$}\kern.07mm} 
\def\scrDSw#1'{\raise1.68mm\hbox{\font\Å=cmssi5\Å#1}\kern-.35mm\mathscr D\kern-.35mm\lower.84mm\hbox{\font\Å=cmsl5\ÅS\kern-.3mmw}\kern-1.55mm'\kern1.1mm} 
\def\Besovrss#1#2#3_#4^#5{\lower.1mm\hbox{\font\Å=cmss10\ÅB}\kern.6mm\lower.25mm\hbox{\raise.#1mm\hbox{$^{\kern.2mm\kern.#3mm#5\,}_{\kern.#2mm#4}$}}} %
\def\Besovprss#1#2#3_#4^#5{\kern.82mm\lower.8mm\hbox{\font\Å=cmb8\Å.}\kern-1.65mm\lower.1mm\hbox{\font\Å=cmss10\ÅB}\kern.6mm\lower.25mm\hbox{\raise.#1mm\hbox{$^{\kern.2mm\kern.#3mm#5\,}_{\kern.#2mm#4}$}}} %
\def\vcBesovrss#1#2#3_#4^#5{\raise1.62mm\hbox{\font\Å=cmssi5\Åvc}\kern-.35mm\lower.1mm\hbox{\font\Å=cmssi10\ÅB}\kern.75mm\raise.#1mm\hbox{$^{\kern.45mm\kern.#3mm#5\,}_{\kern.#2mm#4}$}} %
\def\LiTrFrss#1#2#3_#4^#5{\lower.1mm\hbox{\font\Å=cmss10\ÅF}\kern.4mm\lower.25mm\hbox{\raise.#1mm\hbox{$^{\kern.6mm\kern.#3mm#5\,}_{\kern.#2mm#4}$}}} %
\def\vcLiTrFrss#1#2#3_#4^#5{\raise1.62mm\hbox{\font\Å=cmss5\Åvc}\kern-.13mm\lower.1mm\hbox{\font\Å=cmss10\ÅF}\kern.4mm\raise.#1mm\hbox{$^{\kern.6mm\kern.#3mm#5\,}_{\kern.#2mm#4}$}} %
\def\Besrss#1#2#3^#4{B\kern-.25mm\lower.78mm\hbox{\font\SweD =cmr6\SweD es}\kern-1.35mm\raise.#1pt\hbox{$_{\kern1.5mm}^{\kern.#2pt{#4}\kern.2mm\kern.#3pt}$}} 
\def\HBsrss#1#2#3^#4{H\kern-.35mm\lower.84mm\hbox{\font\Å=cmmi5\ÅB}\kern-.3mm\lower.8mm\hbox{\font\Å=cmmi6\Ås}\kern-1.75mm\raise.#1pt\hbox{$_{\kern2.2mm}^{\kern.#2pt{#4}\kern.2mm\kern.#3pt}$}} 
\def\vcBesrss#1#2#3^#4{\raise1.65mm\hbox{\font\SweD =cmr5\SweD v}\kern-.1mm\raise1.65mm\hbox{\font\SweD =cmmi5\SweD c}\kern-.3mmB\kern-.25mm\lower.78mm\hbox{\font\SweD =cmr6\SweD es}\kern-1.35mm\raise.#1pt\hbox{$_{\kern1.5mm}^{\kern.#2pt{#4}\kern.2mm\kern.#3pt}$}} 
\def\vcHBsrss#1#2#3^#4{\raise1.65mm\hbox{\font\Å=cmr5\Åv}\kern-.1mm\raise1.65mm\hbox{\font\Å=cmmi5\Åc}\kern-.3mmH\kern-.35mm\lower.84mm\hbox{\font\Å=cmmi5\ÅB}\kern-.3mm\lower.8mm\hbox{\font\Å=cmmi6\Ås}\kern-1.75mm\raise.#1pt\hbox{$_{\kern2.2mm}^{\kern.#2pt{#4}\kern.2mm\kern.#3pt}$}} 
\def\HBsprss#1#2#3^#4{H\kern-.35mm\lower.84mm\hbox{\font\Å=cmmi5\ÅB}\kern-.3mm\lower.82mm\hbox{\font\Å=cmmi6\Ås}\lower.88mm\hbox{\font\Å=cmb8\Å\char'056}\kern-2.58mm\raise.#1pt\hbox{$_{\kern2.68mm}^{\kern.#2pt{#4}\kern.2mm\kern.#3pt}$}} 
\def\vcHBsprss#1#2#3^#4{\raise1.65mm\hbox{\font\Å=cmr5\Åv}\kern-.1mm\raise1.65mm\hbox{\font\Å=cmmi5\Åc}\kern-.3mmH\kern-.35mm\lower.84mm\hbox{\font\Å=cmmi5\ÅB}\kern-.3mm\lower.82mm\hbox{\font\Å=cmmi6\Ås}\lower.88mm\hbox{\font\Å=cmb8\Å\char'056}\kern-2.58mm\raise.#1pt\hbox{$_{\kern2.68mm}^{\kern.#2pt{#4}\kern.2mm\kern.#3pt}$}} 
\def\Hdotrss#1#2#3^#4{H\kern-.1mm\lower.1mm\hbox{\font\SweD =cmb9\SweD \char'056}\kern-.15mm\raise.#1pt\hbox{$_{\kern.4mm}^{\kern.#2pt{#4}\kern.2mm\kern.#3pt}$}} 
\def\vcHdotrss#1#2#3^#4{\raise1.65mm\hbox{\font\SweD =cmr5\SweD v}\kern-.1mm\raise1.65mm\hbox{\font\SweD =cmmi5\SweD c}\kern-.3mmH\kern-.1mm\lower.1mm\hbox{\font\SweD =cmb9\SweD \char'056}\kern-.15mm\raise.#1pt\hbox{$_{\kern.4mm}^{\kern.#2pt{#4}\kern.2mm\kern.#3pt}$}} 
\def\HCerss#1#2#3^#4{H\kern-.35mm\lower.82mm\hbox{\font\SweD =cmmi5\SweD C}\kern-.75mm\raise.#1pt\hbox{$_{\kern.9mm}^{\kern.#2pt{#4}\kern.2mm\kern.#3pt}$}} 
\def\vcHCerss#1#2#3^#4{\raise1.65mm\hbox{\font\SweD =cmr5\SweD v}\kern-.1mm\raise1.65mm\hbox{\font\SweD =cmmi5\SweD c}\kern-.3mmH\kern-.35mm\lower.82mm\hbox{\font\SweD =cmmi5\SweD C}\kern-.75mm\raise.#1pt\hbox{$_{\kern.9mm}^{\kern.#2pt{#4}\kern.2mm\kern.#3pt}$}} 
\def\HCdotrss#1#2#3^#4{H\kern-.35mm\lower.82mm\hbox{\font\SweD =cmmi5\SweD C}\lower.85mm\hbox{\font\SweD =cmb8\SweD \char'056}\kern-1.55mm\raise.#1pt\hbox{$_{\kern1.6mm}^{\kern.#2pt{#4}\kern.2mm\kern.#3pt}$}} 
\def\vcHCdotrss#1#2#3^#4{\raise1.65mm\hbox{\font\SweD =cmr5\SweD v}\kern-.1mm\raise1.65mm\hbox{\font\SweD =cmmi5\SweD c}\kern-.3mmH\kern-.35mm\lower.82mm\hbox{\font\SweD =cmmi5\SweD C}\lower.85mm\hbox{\font\SweD =cmb8\SweD \char'056}\kern-1.55mm\raise.#1pt\hbox{$_{\kern1.6mm}^{\kern.#2pt{#4}\kern.2mm\kern.#3pt}$}} 
\def\Holrss#1#2#3^#4{H\kern-.23mm\lower.8mm\hbox{\font\SweD =cmr6\SweD o}\lower.83mm\hbox{\font\SweD =cmr5\SweD l}\kern-1.2mm\raise.#1pt\hbox{$_{\kern1.3mm}^{\kern.#2pt{#4}\kern.0#3in\kern.2mm}$}} 
\def\vcHolrss#1#2#3^#4{\raise1.65mm\hbox{\font\SweD =cmr5\SweD v}\kern-.1mm\raise1.65mm\hbox{\font\SweD =cmmi5\SweD c}\kern-.3mmH\kern-.23mm\lower.8mm\hbox{\font\SweD =cmr6\SweD o}\lower.83mm\hbox{\font\SweD =cmr5\SweD l}\kern-1.2mm\raise.#1pt\hbox{$_{\kern1.3mm}^{\kern.#2pt{#4}\kern.0#3in\kern.2mm}$}} 
\def\vbHolrss#1#2#3^#4{\raise1.65mm\hbox{\font\SweD =cmr5\SweD v}\kern.06mm\raise1.2mm\hbox{\font\SweD =cmmi5\SweD b}\kern-.36mmH\kern-.23mm\lower.8mm\hbox{\font\SweD =cmr6\SweD o}\lower.83mm\hbox{\font\SweD =cmr5\SweD l}\kern-1.2mm\raise.#1pt\hbox{$_{\kern1.3mm}^{\kern.#2pt{#4}\kern.0#3in\kern.2mm}$}} 
\newcommand\Holombd{H\kern-.25mm\lower.82mm\hbox{\font\SweD =cmmi5\SweD b}\lower.82mm\hbox{\font\SweD =cmr5\SweD d}\kern.35mm} 
\def\bold#1{{\bf#1}}
\def\roman#1{{\rm#1}}
\def\limu_#1{\lim\kern-5.5mm\lower1.5mm\hbox{$_{#1}\ $}}
\def\oseoy{\raise1.9mm\hbox{\kern.5mm\font\SweD =cmr5\SweD o}\kern-1.7mm y}
\newcommand\uniqset{\raise1.6mm\hbox{\font\SweD=cmr5\SweD uniq}\kern.3mm\roman{set}\kern.9mm} 
\def\Univ{\hbox{\font\SweD =cmssbx10\SweD U}{}} 
\def\Pows{\Cal P\kern-.7mm\lower.15mm\hbox{$_s$}\kern.3mm} 
\def\lei{      {}_{ {}^{\,\downarrow\text{\hskip-2.1mm}       }  }  \cap       }
\def\lei{\hbox{\kern.45mm$_{^\downarrow}\kern-1.280mm\cap\kern.85mm$}}
\def\leip{\hbox{\kern.6mm$_{^\downarrow}\kern-1.280mm\cap\kern1mm$}}
\def\leiss#1#2{\hbox{\kern.5mm\kern.#1mm$_{^\downarrow}\kern-1.280mm\cap\kern.9mm\kern.#2mm$}} 
\def\supp{\roman{supp\kern1mm}} 
\def\vfsupp{\text{\,-\,\raise1.4mm\hbox{\font\SweD =cmr5\SweD f}supp\kern1mm}} 
\def\Card{{}^{}{\rm Card}{}^{}\,}
\newcommand\card[1]{\roman{card}\kern.#1mm\kern.4mm} %
\newcommand\inve{\kern.27mm\raise1.4mm\hbox{\font\SweD =cmr5\SweD -\kern-.28mm-}\raise1.35mm\hbox{\font\SweD =cmss5\SweD i\kern.18mm}} 
\newcommand\sinve{\raise.85mm\hbox{\font\Å=cmr5\Å-\kern-.28mm-}\raise.8mm\hbox{\font\Å=cmss5\Åi\kern.18mm}} 
\newcommand\invss[2]{\kern.27mm\kern.#1mm\raise1.4mm\hbox{\font\SweD =cmr5\SweD -\kern-.28mm-}\raise1.35mm\hbox{\font\SweD =cmss5\SweD i\kern.#2mm\kern.18mm}} 
\newcommand\invxs[2]{\kern.27mm\kern.#1mm\raise1.4mm\hbox{\font\SweD =cmr5\SweD -\kern-.28mm-in\kern.#2mm\kern.18mm}} 
\newcommand\fvalss[2]{\hbox{\kern.3mm\kern.#1mm\font\SweD =cmr10\SweD \char'022\kern-.1mm}\kern.#2mm} 
\def\fvalue{\hbox{\kern.2mm\font\SweD =cmr10\SweD \char'022\kern-.2mm}} 
\def\ffvalue{\hbox{\kern.2mm\font\SweD =cmr7\SweD \char'022\kern-.2mm}} 
\def\image{\hbox{\font\SweD =cmr10\SweD \char'022\kern-1mm\char'022}} 
\def\iimage{\hbox{\font\SweD =cmr7\SweD \kern.3mm\char'022\kern-.7mm\char'022\kern-.3mm}} 
\def\images{\hbox{\font\SweD =cmr10\SweD \char'022\kern-1mm\char'022\kern-1mm\char'022}} 
\def\weco{\kern.15mm\hbox{\font\SweD =cmtt10\SweD \char'054}\kern.4mm} 
\def\cdotn{\kern-.2mm\cdot\kern-.2mm} 
\def\setminusn{\kern-.2mm\setminus\kern-.2mm} 
\def\capss#1#2{\kern.1mm\kern.#1mm\cap\kern.1mm\kern.#2mm} 
\def\cupss#1#2{\kern.1mm\kern.#1mm\cup\kern.1mm\kern.#2mm} 
\def\cuppp{\kern0.15mm\cup\kern0.15mm} 
\def\timesn{\kern-.2mm\times\kern-.2mm} 
\def\Times{\kern.7mm\hbox{\font\SweD =cmbsy10\SweD \char'002}\kern.7mm}
\def\ttimes{\hbox{\kern-.2mm${}\times\kern-2.5mm\lower.8mm\hbox{\font\SweD =cmr5\SweD t}\kern1.8mm$}} 
\def\ttimesn{\hbox{\kern-.2mm${}\times\kern-2.5mm\lower.8mm\hbox{\font\SweD =cmr5\SweD t}\kern1.4mm$}} 
\def\ktimes{\hbox{\kern-.2mm${}\times\kern-2.5mm\lower1mm\hbox{\font\SweD =cmr5\SweD k}\kern1.5mm$}} 
\def\ltimes{\hbox{${}\times\kern-2.45mm\lower.8mm\hbox{\font\SweD =cmr5\SweD l}\kern1.6mm$}} 
\def\vstimes{\kern.95mm\raise.45mm\hbox{\font\SweD =cmbsy6\SweD \char'002}\kern-2.3mm\lower.9mm\hbox{\font\SweD =cmr5\SweD vs}\kern1.05mm} 
\def\atimes{\kern.8mm\hbox{\font\SweD =cmsy10\SweD \char'002}\kern-1.85mm\lower.65mm\hbox{\font\SweD =cmr5\SweD a}\kern1.4mm} 
\newcommand\meatimes{\kern1mm\raise.5mm\hbox{\font\SweD =cmbsy5\SweD \char'012}\kern1mm} 
\newcommand\circss[2]{\kern.1mm\kern.#1mm\circ\kern.#2mm\kern.1mm} 
\newcommand\mcircss[2]{\kern.#1mm\kern.7mm{\circ}\kern-1.85mm\lower.75mm\hbox{\font\Å=cmr5\Åm}\kern.8mm\kern.#2mm} 
\def\Circ{\kern.9mm\hbox{\font\SweD =cmbsy10\SweD \char'016}\kern.9mm}
\def\cardplus{\hbox{$\kern.77mm+\kern-1.95mm\raise.23mm\hbox{$_{_{\roman c}}$}\kern1.33mm$}}%
\def\ordplus{\hbox{$\kern.78mm+\kern-1.97mm\raise.23mm\hbox{$_{_{\roman o}}$}\kern1.22mm$}}%
\def\ccdot{\hbox{$\kern.77mm\cdot\kern-1mm\raise.45mm\hbox{$_{_{\roman c}}$}\kern.38mm$}} 
\def\svs#1{\sbi{\fiveroman{svs\,}#1}} 
\newcommand\vvs[1]{{_{\kern-0.1mm}}_{\hbox{\font\SweD =cmr5\SweD vs\kern.5mm}#1}} 
\def\minus{\kern.2mm\lower1.05mm\hbox{$^-$}}
\def\pplus{\raise.22mm\hbox{\font\SweD =cmr5\SweD \char'053}}
\def\mminus{\raise.18mm\hbox{\font\SweD =cmsy5\SweD \char'000}}
\def\plusinftyy{{\raise.18mm\hbox{\font\SweD =cmr5\SweD \char'053}\infty}}
\def\minusinftyy{{\raise.18mm\hbox{\font\SweD =cmsy5\SweD \char'000}\infty}}
\def\inftyy{{\raise.15mm\hbox{\font\SweD =cmsy7\SweD \char'061}}} 
\def\inftyyplus{\raise.15mm\hbox{\font\SweD =cmsy7\SweD \char'061}\raise.65mm\hbox{\font\SweD =cmr5\SweD +}} 
\def\plusinfty{\lower1.05mm\hbox{$^+$}\infty}
\def\minusinfty{\lower1.05mm\hbox{$^-$}\infty}
\def\ebit#1{\kern.1mm\hbox{\font\SweD =cmmib8\SweD #1}\kern.2mm} 
\def\ebiF{\kern.1mm\hbox{\font\SweD =cmmib8\SweD F}\kern.5mm} 
\def\ebiT{\kern.1mm\hbox{\font\SweD =cmmib8\SweD T}\kern.6mm} 
\def\ebiU{\kern.1mm\hbox{\font\SweD =cmmib8\SweD U}\kern.5mm} 
\def\ebiV{\kern.1mm\hbox{\font\SweD =cmmib8\SweD V}\kern.7mm} 
\newcommand\biiit[3]{\kern.1mm\hbox{\font\SweD =cmmib#2\SweD#3}\kern.#1pt\kern.#1pt} 
\newcommand\nCalT{\hbox{\font\SweD =cmbsy9\SweD T}^{\kern.5mm}} 
\newcommand\ebiGamma{\kern.1mm\hbox{\font\SweD =cmmib8\SweD \char'000}\kern.5mm} 
\def\bmii#1#2{\hbox{\font\SweD =cmmib#1\SweD #2}}
\def\fssi#1{\hbox{\font\Å=cmssi10\Å#1}\kern0.15mm} 
\newcommand\nfss[1]{\hbox{\font\Å=cmss9\Å#1}} %
\def\smb#1{\hbox{\font\sweD =cmmi8\sweD #1\kern.3mm}} 
\def\ssmb#1{\hbox{\font\SweD =cmmi6\SweD #1}} 
\def\eCal#1{\kern.1mm\hbox{\font\sweD =cmbsy8\sweD #1\kern.4mm}} 
\def\ecal#1{\kern.1mm\hbox{\font\sweD =cmsy8\sweD #1\kern.3mm}} 
\def\ncal#1{\kern.1mm\hbox{\font\sweD =cmsy9\sweD #1\kern.3mm}} 
\def\vcal#1{\kern-.1mm\vec{\kern.2mm\hbox{\font\sweD =cmsy7\sweD #1}\kern.3mm}} 
\newcommand\vCal[2]{\kern-.#1pt\kern-.#1pt\vec{\kern.#1pt\kern.#1pt\hbox{\font\Å=cmbsy7\Å#2}\kern-.#1pt\kern-.#1pt}\kern.#1pt\kern.#1pt} %
\newcommand\vbscr[2]{\kern-.#1pt\kern-.#1pt\vec{\kern.#1pt\kern.#1pt\hbox{\font\Å=eusb7\Å#2}\kern-.#1pt\kern-.#1pt}\kern.#1pt\kern.#1pt} 
\newcommand\scrm[2]{\hbox{\font\SweD=eusm#1\SweD#2}} 
\newcommand\scrb[2]{\hbox{\font\SweD=eusb#1\SweD#2}} 
\newcommand\scrmt[1]{\hbox{\font\SweD=eusm10\SweD#1}} 
\newcommand\scriptm[2]{\hbox{\font\SweD=eusm#1\SweD#2}} 
\newcommand\vcalO{\kern-.1mm\vec{\kern.3mm\hbox{\font\sweD =cmsy7\sweD O}}\kern.2mm} 
\newcommand\vcolQ{\vec{\hbox{\font\Å=cmbsy7\Å\char'121}}} 
\newcommand\conccc{\lower.3mm\hbox{${\bf{\hat{\phantom w}}}$}} 
\newcommand\ssconc[2]{\kern.#1mm
                                \hbox{${\bf{\hat{\phantom w}}}$}\kern.#2mm} 
\def\id{\kern.3mm\roman{id}\kern.7mm}
\def\idv{\hbox{\font\SweD =cmr10\SweD id}\kern.25mm\lower.7mm\hbox{\font\SweD =cmr6\SweD v}\kern.6mm} 
\def\idm{\hbox{\font\SweD =cmr10\SweD id}\kern.25mm\lower.7mm\hbox{\font\SweD =cmr6\SweD m}\kern.6mm} 
\def\seq#1{\langle#1\rangle}
\newcommand\seqss[3]{\langle\kern.7mm\kern.#1mm#3\kern.#2mm\kern.8mm\rangle} 
\def\Seq#1{\big\langle#1\big\rangle}
\def\vecs{\upsilon\kern-0.3mm\lower.15mm\hbox{$_s$}\kern0.3mm} 
\newcommand\svecs{\upsilon\kern-.3mm\lower.55mm\hbox{\font\Å=cmmi5\Ås}\kern.35mm} 
\def\vecss{\hbox{\font\SweD =cmitt10\SweD v}\kern-0.1mm\lower.15mm\hbox{$_s$}\kern0.2mm} 
\newcommand\svecss{\hbox{\font\SweD =cmitt7\SweD v}\kern-0.1mm\lower.45mm\hbox{\font\SweD =cmmi5\SweD s}\kern0.2mm} 
\def\nullv{0\lower.7mm\hbox{\font\SweD =cmr6\SweD v}\kern.6mm} 
\def\nullsv{0\lower.7mm\hbox{\font\SweD =cmr6\SweD sv}\kern.6mm} 
\def\Bnull_#1{\hbox{\font\SweD =cmssbx10\SweD 0}{_{\kern-0.1mm}}_{#1\kern.15mm}} 
\def\Bzero_#1{\hbox{\font\SweD =cmbx10\SweD 0}{_{\kern-0.1mm}}_{#1}} 
\def\bnull#1{\hbox{\font\SweD =cmssbx10\SweD 0}{}_{\font\SweD =cmmi6\lower.15mm\hbox{\kern-.1mm\SweD #1\kern.15mm}}} 
\def\bnulla#1_#2{\hbox{\font\SweD =cmssbx10\SweD 0}{}_{\font\SweD =cmmi6\lower.15mm\hbox{\SweD #1\kern-.1mm}}\lower.3mm\hbox{$_{_{#2}}$}} 
\def\bzero#1{\hbox{\font\SweD =cmbx10\SweD 0}{}_{\font\SweD =cmmi6\lower.15mm\hbox{\kern-.1mm\SweD #1\kern.15mm}}} 
\def\dom{{{}^{}{\rm dom}\,{}_{{}^{}}}}
\def\domm{\kern0.15mm{\rm dom}{^{\kern.3mm\hbox{\font\SweD =cmr6\SweD 2}}}\,}
\def\domr#1{\roman{dom}^{\font\SweD =cmr6\raise.0mm\hbox{\kern.3mm\SweD #1}}}
\def\domy^#1{\roman{dom}\kern.1mm\raise1.5mm\hbox{\font\Å=cmr5\Å#1}\kern.7mm}%
\def\rng{{}^{}{\rm rng}\,{}_{{}^{}}}
\def\RHB#1#2{\raise#1mm\hbox{$#2$}} 
\def\LHB#1#2{\lower#1mm\hbox{$#2$}} 
\def\fivemath#1{\hbox{\font\SweD =cmmi5\SweD #1}}
\def\fiveroman#1{\hbox{\font\SweD =cmr5\SweD #1\kern.1mm}}
\def\sixmath#1{\hbox{\font\SweD =cmmi6\SweD #1}}
\def\sixroman#1{\hbox{\font\SweD =cmr6\SweD #1\kern.1mm}}
\def\eightmath#1{\hbox{\font\SweD =cmmi8\SweD {#1}\kern.1mm}}
\newcommand\emath[1]{\hbox{\font\SweD =cmmi8\SweD {#1}\kern.1mm}}
\newcommand\emth[2]{\lower.0#2mm\hbox{\raise.#2pt\hbox{\font\SweD =cmmi8\SweD #1}}} 
\def\eightroman#1{\hbox{\font\SweD =cmr8\SweD {#1}\kern.1mm}}
\def\erm#1{{\font\SweD =cmr8\SweD #1}}
\newcommand\esl[1]{\hbox{\font\Å=cmsl8\Å#1}} 
\newcommand\eit[1]{\hbox{\font\Å=cmti8\Å#1}} 
\newcommand\efss[1]{\hbox{\font\Å=cmss8\Å#1}} 
\newcommand\eightsl{\font\Å=cmsl8\Å}
\newcommand\subtext[1]{_{\kern0.15mm\lower.30mm\hbox{\font\Å=cmr5\Å#1}}}
\newcommand\subText[1]{_{\lower.15mm\hbox{\font\SweD =cmr5\SweD #1}}} 
\newcommand\yxbtext[3]{_{\kern.#2mm\lower.#1mm\hbox{\lower.15mm\hbox{\font\SweD =cmr5\SweD #3}}}} 
\def\subtexT#1{\raise.2mm\hbox{$_{_{\kern0.15mm\hbox{\font\SweD =cmr5\SweD #1}}}$}}
\newcommand\suba{\lower.87mm\hbox{\font\Å=cmr5\Åa}} 
\def\lllnor_#1{\lower.87mm\hbox{\kern.2mm\font\Å=cmmi5\Å\char'140}\raise.23mm\hbox{\kern.5mm$_{#1}$}} 
\def\sLnorm_#1{\kern.35mm\lower.83mm\hbox{\font\Å=cmmi5\ÅL}\kern.5mm\raise.2mm\hbox{$_{#1}$}} 
\def\Lnorss#1#2^#3_#4{\lower.86mm\hbox{\kern.2mm\font\Å=cmssi5\ÅL}\kern.2mm\kern.#1mm\raise.3mm\hbox{$_{#3}$}\kern-.2mm\kern.#2mm\lower.3mm\hbox{$_{#4}$}} 
\def\sWnorm_#1{\kern.35mm\lower.8mm\hbox{\font\Å=cmmi5\ÅW}\kern.65mm\raise.2mm\hbox{$_{#1}$}} 
\def\ssNor#1_#2{\kern.4mm\kern.#1pt\lower.32mm\hbox{$_{#2}$}} 
\def\sNor#1{\kern.25mm\lower.38mm\hbox{$_{#1}$}}
\def\sNorr#1{\kern-.2mm\lower.38mm\hbox{$_{#1}$}}
\def\sNoreset_#1{\kern.13mm\lower.83mm\hbox{\font\SweD =cmmi6\SweD C}\kern.32mm\lower.1mm\hbox{$_{^{\emptyset,#1}}$}}
\def\norSig_#1{|\kern.23mm\lower.87mm\hbox{\font\å=cmr5\å\char'006\kern.15mm{#1}}} 
\def\sbi#1{{_{\kern-0.1mm}}_{#1}} 
\def\ais#1_#2{{}_{\font\SweD =cmmi6\lower.15mm\hbox{\kern-.1mm\SweD #1\kern.15mm}}\lower.3mm\hbox{${_{\kern-0.3mm_{#2}}}$}} %
\def\aais#1_#2{\kern.1mm{}_{\font\SweD =cmmi6\lower.25mm\hbox{\kern-.1mm\SweD #1\kern.15mm}}\lower.4mm\hbox{${_{\kern-0.3mm_{#2}}}$}} %
\def\ai#1{{}_{\font\SweD =cmmi6\lower.15mm\hbox{\kern-.1mm\SweD #1\kern.15mm}}} 
\def\yi#1{^{\font\SweD =cmmi6\raise.0mm\hbox{\kern-.1mm\SweD #1\kern.15mm}}} 
\def\ear#1{{}_{\font\SweD =cmr5\lower.15mm\hbox{\kern.1mm\SweD #1}}} 
\def\ar#1{{}_{\font\SweD =cmr6\lower.15mm\hbox{\kern.1mm\SweD #1}}} 
\newcommand\aR[1]{_{\lower.15mm\hbox{\font\SweD =cmr6\SweD #1}}} 
\def\aar#1{_{\font\SweD =cmr6\lower.15mm\hbox{\kern.1mm\SweD #1}}} 
\def\aars#1_#2{{#1_{\kern-.1mm}}_{#2}} 
\newcommand\Yr[2]{\raise1.5mm\hbox{\font\å=cmr5\å#1\kern.#2mm}}%
\def\yr#1{^{\font\SweD =cmr6\raise.0mm\hbox{\kern.3mm\SweD #1}}} 
\def\yrai^#1_#2{^{\kern.4mm\hbox{\font\SweD =cmr6\SweD {#1}}}_{\kern.2mm{#2}}}
\def\upparentes#1{^{\kern.2mm\raise.2mm\hbox{\font\SweD =cmr6\SweD \char'050}\kern.1mm{#1}\kern.1mm\raise.2mm\hbox{\font\SweD =cmr6\SweD \char'051}}} 
\def\lupar{\kern.2mm\lower1mm\hbox{$^{^(}$}} 
\def\rupar{\lower1mm\hbox{$^{^)}$}\kern-.15mm} 
\def\yyi#1{^{\font\SweD =cmmi6\lower.6mm\hbox{\kern-.25mm\SweD #1\kern-.05mm}}} 
\def\yyr#1{^{\font\SweD =cmr6\lower.45mm\hbox{\kern-.25mm\SweD #1\kern-.15mm}}} 
\def\yplus{\lower1mm\hbox{$^{^+}$}} 
\def\yminus{\lower1mm\hbox{$^{^-}$}} 
\def\aminus{{\kern.15mm\raise.3mm\hbox{$_{_-}$}\kern-.1mm}}%
\def\yvee{\LHB{.9}{^{^{\,\vee}}}\kern-.3mm} 
\def\ywed{\LHB{.9}{^{^{\,\wedge}}}\kern-.3mm} 
\newcommand\ywedge[1]{\kern.2mm\kern.#1mm\lower.9mm\hbox{$^{^\wedge}$}} %
\newcommand\sprim[1]{\kern.#1mm\raise1.5mm\hbox{\font\Å=cmsy5\Å\char'060}} 
\newcommand\sprimm[1]{\kern.#1mm\raise1.5mm\hbox{\font\Å=cmsy5\Å\char'060\kern-.15mm\char'060}} 
\newcommand\myprime{\kern.3mm\raise1.5mm\hbox{\font\SweD =cmsy5\SweD \char'060}} 
\newcommand\myprimes{\kern.3mm\raise1.5mm\hbox{\font\SweD =cmsy5\SweD \char'060\kern-.3mm\char'060}} 
\def\yplk{{}^{{}_{{}^{'\!}}}} 
\def\adot{\kern.2mm\hbox{\font\SweD =cmb10\SweD \char'056}}%
\def\aadot{\kern.1mm\lower.1mm\hbox{\font\SweD =cmb7\SweD \char'056}}%
\def\ydot{\kern.2mm\raise1.9mm\hbox{\font\SweD =cmb10\SweD \char'056}}
\def\yydot{\kern.2mm\raise1.35mm\hbox{\font\SweD =cmb7\SweD \char'056}\kern.2mm}
\def\yydott{\kern.2mm\raise1.35mm\hbox{\font\SweD =cmb6\SweD \char'056}\kern.2mm}
\def\ClTopR#1^#2{\roman{Cl}\kern.6mm_{\hbox{\font\å=eusm6\åT}_{\mathbb R}^{#2}}\kern.#1mm} 
\def\ClT{{\rm Cl}\kern.25mm\lower.4mm\hbox{$_{\Cal T}$}\kern0.2mm} 
\def\IntT{\sp{\rm Int}\kern.2mm\lower.4mm\hbox{$_{\Cal T}$}\kern0.2mm} 
\def\Cl_taurd#1{\roman{Cl_{}}_{\kern0.37mm\hbox{\font\SweD =cmmi8\SweD \char'034}\kern-0.15mm{_{}}_{\roman{rd}}\kern0.2mm#1\,}}
\def\Int_taurd#1{\roman{Int_{}}_{\kern0.37mm\hbox{\font\SweD =cmmi8\SweD \char'034}\kern-0.15mm{_{}}_{\roman{rd}}\kern0.2mm#1\,}}
\def\inc{\subseteq}
\def\exi#1{\exists\,#1\kern.2mm\,;}
\def\all#1{\forall\,#1\kern.2mm\,;}
\def\equivv{\Leftrightarrow}
\def\sP#1 {\kern.#1mm}
\newcommand\sppp{_{\kern0.2mm}} 
\def\spp{\kern0.07mm} 
\def\sp{\kern0.15mm} 
\def\ssp{\kern0.37mm} 
\def\snn{\kern-0.2mm} 
\def\sn{\kern-0.3mm} 
\def\ssn{\kern-0.63mm} 
\def\nKP#1{$\null$\kern#1mm}
\def\nKN#1\par{$\null$\kern-#1mm} 
\newcommand\KPt[1]{\kern.#1mm} 
\def\KP#1{\kern#1mm} 
\def\KPp#1.#2{\kern#1.#2mm} %
\def\KN#1{\kern-#1mm} 
\def\text#1{\hbox{\rm#1}}
\def\VBOX/#1/#2/HEREend{\vbox{#2\vskip-#1mm}\vfill\null\eject}
\def\PouN$#1${\hbox{$#1$}} 
\def\œ$#1${\hbox{$#1$}} 
\def\"{\"a} \def\"{\"o}
\def\q#1{``\kern0.37mm#1\kern0.37mm"}
\def\newProCla#1\par#2\par{\vskip1.7mm\noindent\bf#1\it#2\vskip1.7mm}
\def\Prooff{{\font\SweD =cmssi10\SweD P\kern.37mmr\kern.37mmo\kern.37mmo\kern.37mmf\kern.37mm. }\rm}
\def\QED{\hfill\hbox{$\ \sqcap\kern-2.45mm\sqcup$}}
\def\noin{\noindent}
\def\Newline{\kern-10mm\newline}
\font\rp=cmr8
\def\eps{\varepsilon}
\def\leu{\raise1.5mm\hbox{\font\SweD =cmmi5\SweD \char'074}\kern.2mm}%
\def\riu{\kern.2mm\raise1.5mm\hbox{\font\SweD =cmmi5\SweD \char'076}}%
\def\Symbol#1Ï{\kern.35mm\hbox{\font\SweD =cmr10\SweD \char'047}\kern.2mm#1\kern.35mm\hbox{\font\SweD =cmr10\SweD \char'047}}%
\def\Symboo#1Ï{\kern.35mm\text{`}\kern.2mm#1\kern.35mm\hbox{\font\SweD =cmr10\SweD \char'047}}
\def\newskline#1 \par{\newline$\null$\kern#1mm}%
\def\vinskip#1#2 \par{\vskip#1mm$\null$\kern-6mm\hspace{#2mm}}
\def\inskipline#1#2 \par{\vskip#1mm$\null$\kern-4.25mm\kern#2mm}
\def\itemb#1_#2 {\item$\null$\kern-1.7mm\lower.75mm\hbox{\font\SweD =cmr6\SweD #2}\kern#1mm\kern3.2mm} 
\newenvironment{myLeftskip}[3]{\leftskip#1mm\parindent0mm\addtolength{\parskip}{#2mm}\addtolength{\baselineskip}{#3mm}\def\newfline \par{\newline$\null\hfill$}}{\par}
\def\RunMyHead#1#2#3#4{%
 \headline{\ifnum\pageno=\firstpage\hfil%
           \else{\ifodd\pageno{\rp#3\phantom\folio\hfil#4\hfil\phantom{#3}\folio}%
                 \else{\rp\folio\phantom{#2}\hfil#1\hfil\phantom\folio#2}%
                 \fi}%
           \fi}%
 \footline{\ifnum\pageno=\firstpage\hfil{\rp[\,\folio\,]}\hfil%
           \else\hfil%
           \fi}%
}%
\def\bulgin{\noindent$\bullet$ \ \kern.1mm} 
\def\bulgen{\noindent\kern-1.5mm$\bullet$\kern3.95mm} 
\def\Fsubhead#1\par{\vskip2mm\noindent\hfill\hbox{\font\Å=cmssi12\Å#1}\hfill$\null$\nopagebreak\vskip3mm\noindent} 
\def\Sfubhead#1 #2\par{\vskip2mm\noindent\kern5mm\hbox{\font\Å=cmssbx11\Å#1\font\Å=cmbx12\Å.}\hfill\hbox{\font\Å=cmssi12\Å#2}\hfill\phantom{\hbox{\font\Å=cmssbx11\Å#1\font\Å=cmbx12\Å.}}\kern5mm$\null$\nopagebreak\vskip3mm\noindent} %
\def\Ssubhead#1 #2\par{\vskip6mm\noindent\kern5mm\hbox{\font\Å=cmssbx11\Å#1\font\Å=cmbx12\Å.}\hfill\hbox{\font\Å=cmssi12\Å#2}\hfill\phantom{\hbox{\font\Å=cmssbx11\Å#1\font\Å=cmbx12\Å.}}\kern5mm$\null$\nopagebreak\vskip3mm\noindent} %
\def\Binsubsubhead#1#2\par{\vskip4mm{\font\SweD=cmb11\SweD#1.}\hskip5mm{\font\SweD =cmss11\SweD #2}\nopagebreak\vskip2mm\nopagebreak\noindent}%
\def\subhead#1\par#2\par{\vskip4mm\smallbreak\null\smallskip\vbox{\noindent\bbf#1\hfill\kern1.5mm#2\hfill\phantom{#1}\vskip2.5mm\nopagebreak}\nopagebreak\noindent}
\def\subheadd#1\par#2\par#3\par{\vskip4mm\smallbreak\null\smallskip\vbox{\noindent\bbf#1\hfill#2\hfill\phantom{#1}\vskip1.5mm\centerline{#3}\vskip2.5mm\nopagebreak}\nopagebreak\noindent}
\def\insubsubhead#1\par{\vskip4mm$\null$\hskip2mm{\font\SweD =cmss10\SweD #1}\nopagebreak\vskip2mm\noindent}%
\def\insssubhead#1\par{\vskip2.5mm$\null$\hskip1mm{\font\SweD =cmss10\SweD #1}\nopagebreak\vskip1.2mm\noindent}%
\def\binSssubhead#1#2#3\par{\vskip2.5mm{\font\Å=cmss8 scaled\magstep1\Å#1\,}{\bf#2}\hskip3.5mm{\font\Å=cmss10\Å#3}\nopagebreak\vskip1.2mm\noindent}%
\def\nsubhead#1#2\par#3\par{\vskip2.5mm{\bf#1}\,\hbox{\font\Å=cmss8\Å#2}\hskip3.5mm{\font\Å=cmss10\Å#3}\nopagebreak\vskip1.2mm\noindent}%
\def\binsssubhead#1#2\par{\vskip2.5mm{\bf#1}\hskip3.5mm{\font\SweD =cmss10\SweD #2}\nopagebreak\vskip1.2mm\noindent}%
\def\Mfsubhead#1 #2\par{{\bf#1.}\hskip5mm{\font\SweD =cmss10\SweD #2}\nopagebreak\vskip2mm\nopagebreak\noindent}%
\def\Mysubhead#1 #2\par{\vskip4mm{\bf#1.}\hskip5mm{\font\SweD =cmss10\SweD #2}\nopagebreak\vskip2mm\nopagebreak\noindent}%
\def\binsubsubhead#1#2\par{\vskip4mm{\bf#1.}\hskip5mm{\font\SweD =cmss10\SweD #2}\nopagebreak\vskip2mm\nopagebreak\noindent}%
\def\subhead#1\par#2\par{\vskip4mm\smallbreak\null\smallskip\vbox{\noindent\bbf#1\hfill\kern1.5mm#2\hfill\phantom{#1}\vskip2.5mm\nopagebreak}\nopagebreak\noindent\rm}
\renewcommand\SS{\raise.15mm\hbox{\font\SweD =cmbsy9\SweD \char'170}\kern.4mm} 
\def\wave{\hbox{\font\sweD =cmsy10\sweD \hbox{\char'164}\kern-2.35mm\hbox{\char'165}\kern.55mm}}
\def\wavee{\hbox{\font\sweD =cmsy8\sweD \hbox{\char'164}\kern-2.0mm\hbox{\char'165}\kern.4mm}} 
\def\barmj{\kern.25mm\bar{\hbox{\font\SweD =cmr10\SweD \char'021}}\kern.4mm}
\def\fssit{\font\SweD =cmssi10\SweD }
\def\sigrd{\sigma\kern-.2mm\lower.7mm\hbox{\font\SweD =cmr6\SweD r\font\SweD =cmr5\SweD d}\kern.6mm}
\def\ssigrd{\sigma\kern-.2mm\lower.7mm\hbox{\font\SweD =cmr6\SweD r\font\SweD =cmr5\SweD d}\kern-1.7mm\raise1.25mm\hbox{\font\SweD =cmr6\SweD 2}\kern1mm}
\def\sssigrd{\sigma\kern-.2mm\lower.7mm\hbox{\font\SweD =cmr6\SweD r\font\SweD =cmr5\SweD d}\kern-1.7mm\raise1.25mm\hbox{\font\SweD =cmr6\SweD 3}\kern1mm}
\def\sigrdu^#1{\sigma\kern-.2mm\lower.7mm\hbox{\font\SweD =cmr6\SweD r\font\SweD =cmr5\SweD d}\kern-1.7mm\raise1.25mm\hbox{\font\SweD =cmr6\SweD #1}\kern1mm}
\def\taurd{\tau\kern-.4mm\lower.7mm\hbox{\font\SweD =cmr6\SweD r\font\SweD =cmr5\SweD d}\kern.6mm}
\def\ttaurd{\tau\kern-.4mm\lower.7mm\hbox{\font\SweD =cmr6\SweD r\font\SweD =cmr5\SweD d}\kern-1.7mm\raise1.25mm\hbox{\font\SweD =cmr6\SweD 2}\kern1mm}
\def\tsigrd{\tau\sigma\kern-.2mm\lower.7mm\hbox{\font\SweD =cmr6\SweD r\font\SweD =cmr5\SweD d}\kern.6mm}
\def\tauRe{\tau{_{\kern-0.6mm}}_{\hbox{\font\SweD =cmmi5\SweD I\!\!R}}} 
\def\bartauRe{\bar\tau{_{\kern-0.6mm}}_{\hbox{\font\SweD =cmmi5\SweD I\!\!R}}} 
\def\tauR#1{\tau_{_{I\!\!R}}\kern-1.5mm^{#1}}
\def\RN{I\!\!R\kern.3mm^{\hbox{\font\SweD =cmmi6\SweD N}}} 
\def\QTN{Q\kern.1mm_{\lower.2mm\hbox{\font\SweD =cmmi6\SweD T}}^{\kern.2mm\hbox{\font\SweD =cmmi6\SweD N}}} 
\def\lemod_#1(#2){\kern.2mm\raise1.3mm\hbox{\font\Å=cmsy5\Å\char'026}\kern-1.65mm\lower.8mm\hbox{\font\Å=cmr5\Å#1}\kern.4mm(\kern.15mm\boldsymbol{#2}\kern.37mm)} 
\def\preLCS(#1){\kern.2mm\raise1.3mm\hbox{\font\Å=cmsy5\Å\char'026}\kern-1.65mm\lower.8mm\hbox{\font\Å=cmr5\ÅLCS}\kern.7mm(\kern.15mm\boldsymbol{#1}\kern.37mm)} 
\def\preLCSv(#1)-{\kern.2mm\raise1.3mm\hbox{\font\Å=cmsy5\Å\char'026}\kern-1.65mm\lower.8mm\hbox{\font\Å=cmr5\ÅLCS}\kern.7mm(\kern.15mm\boldsymbol{#1}\kern.37mm)\text{\kern.85mm-\kern0.2mm}} 
\def\preTVS(#1){\kern.2mm\raise1.3mm\hbox{\font\Å=cmsy5\Å\char'026}\kern-1.65mm\lower.8mm\hbox{\font\Å=cmr5\ÅTVS}\kern.7mm(\kern.15mm\boldsymbol{#1}\kern.37mm)} 
\def\preTVSv(#1)-{\kern.2mm\raise1.3mm\hbox{\font\Å=cmsy5\Å\char'026}\kern-1.65mm\lower.8mm\hbox{\font\Å=cmr5\ÅTVS}\kern.7mm(\kern.15mm\boldsymbol{#1}\kern.37mm)\text{\kern.85mm-\kern0.2mm}} 
\def\leLCSv(#1)-{\kern.2mm\raise1.3mm\hbox{\font\SweD =cmsy5\SweD \char'024}\kern-1.65mm\lower.8mm\hbox{\font\SweD =cmr5\SweD LCS}\kern.4mm(\boldsymbol{#1}\kern.37mm)\text{\,-\kern0.15mm}} 
\def\lleLCS(#1)-{\hbox{\font\SweD =cmsy10\SweD \char'024}\kern.3mm\lower.62mm\hbox{\font\SweD =cmr5\SweD LCS}\kern.4mm(\kern0.15mm#1\kern0.37mm)\text{\,-\kern0.15mm}} 
\def\leLCSr{\hbox{\font\Å=cmsy10\Å\char'024}\kern.3mm\lower.62mm\hbox{\font\Å=cmr5\ÅLCS}\kern.4mm}
\def\leLCS-{\hbox{\font\Å=cmsy10\Å\char'024}\kern.3mm\lower.62mm\hbox{\font\Å=cmr5\ÅLCS}\kern.6mm\text{-}\kern.35mm}
\def\vspreceq{\kern1.4mm\raise1.7mm\hbox{\font\SweD =cmr5\SweD v}\kern-1.1mm\lower.12mm\hbox{\font\SweD =cmr5\SweD s}\kern-1.5mm\hbox{\font\SweD =cmsy10\SweD \char'026}\kern1mm} 
\def\tvpreceq{\kern1.4mm\raise1.7mm\hbox{\font\SweD =cmr5\SweD v}\kern-1.2mm\lower.25mm\hbox{\font\SweD =cmr5\SweD t}\kern-1.4mm\hbox{\font\SweD =cmsy10\SweD \char'026}\kern1mm} 
\def\Centerline#1\par#2\par#3{\noindent#1\phantom{#3}\hfill#2\hfill\phantom{#1}#3}
\newcommand\Lone{\hbox{\font\SweD=cmmi14\SweD L}\kern.4mm\raise2.3mm\hbox{\font\SweD=cmr9\SweD 1}\kern.3mm} 
\newcommand\Diamo{\kern.25mm\diamond\kern.25mm}
\newcommand\norm[1]{|\kern-.4mm|\kern.5mm#1\kern.5mm|\kern-.4mm|} %
\newcommand\dlty{\lower.7mm\hbox{\font\SweD=cmr5\SweD dl:}\lower.4mm\hbox{\font\SweD=cmr5\SweD y\kern1mm}} 
\newcommand\catmvp{\bold{mvp\kern.9mm}} 
\newcommand\catTLG{\kern.5mm\underline{\kern-.5mm\roman{TLG}\kern-.4mm}\kern1mm} 
\def\suptext#1#2{\raise1.65mm\hbox{\font\SweD=cmr5\SweD #1}\kern-.1mm\hskip.#2mm} 
\def\supbb#1pre#2{\raise1.25mm\hbox{\font\SweD=msbm5\SweD #1}\kern.2mm\raise1.65mm\hbox{\font\SweD=cmr5\SweD pre}\kern-.1mm\hskip.#2mm}
\def\flbb_#1{\kern.3mm\lower.7mm\hbox{\font\SweD=msbm5\SweD #1}\kern.15mm} 
\def\spX#1^#2#3#4#5_#6#7^#8#9{\lower.#1mm\hbox{\raise1.65mm\hbox{\font\SweD=cmr5\SweD #2}\kern-.1mm\hskip.#3mm}#4_{\kern0.15mm\kern0.#5mm\hbox{\font\SweD=cmr5\SweD #6}\kern0.15mm}^{\kern.7mm\kern0.#7mm#8\kern0.#9mm}} 
\def\ssrfnde#1#2#3 #4_#5{[\kern.6mm\kern.#1mm{#4}\kern.#2mm\kern.6mm]\kern.2mm\raise1.85mm\hbox{\font\SweD=cmr5\SweD e}\kern-.9mm\raise.#3mm\hbox{\lower.35mm\hbox{$_{#5}$}}} 
\def\rweak#1-{\raise1.5mm\hbox{\font\SweD=cmr5\SweD weak\,}#1\text{\kern.8mm-}} 
\def\rPettis#1-{\raise1.5mm\hbox{\font\SweD=cmr5\SweD Pettis\,}#1\text{\kern.8mm-}} 
\newcommand\meastss[2]{\raise1.65mm\hbox{\kern.4mm\kern.#1mm\font\Å=cmsy5\Å\char'003\kern-.2mm\font\Å=cmss5\Åm\kern-.2mme\kern-.2mma\kern.#2mm\kern.3mm}} 
\newcommand\otimea[1]{{\otimes}\lower.8mm\hbox{\font\Å=cmss5\Åm\kern-.2mme\kern-.2mma\kern.#1mm\kern.4mm}} 
\newcommand\rmMo{\roman M\kern.3mm\lower.8mm\hbox{\font\SweD=cmr7\SweD o}\kern.4mm} %
\newcommand\meaMsi{\roman M\kern.3mm\lower.73mm\hbox{\font\SweD=cmr6\SweD s}\lower.77mm\hbox{\font\SweD=cmr5\SweD i}\kern.4mm} %
\newcommand\meaMsc{\roman M\kern.3mm\lower.73mm\hbox{\font\SweD=cmr6\SweD sc}\kern.4mm} %
\newcommand\meafaMsi{\raise1.22mm\hbox{\font\SweD=cmr5\SweD f}\kern.2mm\raise1.67mm\hbox{\font\SweD=cmr5\SweD a}\kern.2mm\roman M\kern.3mm\lower.73mm\hbox{\font\SweD=cmr6\SweD s}\lower.77mm\hbox{\font\SweD=cmr5\SweD i}\kern.4mm} %
\newcommand\meafaMsc{\raise1.22mm\hbox{\font\SweD=cmr5\SweD f}\kern.2mm\raise1.67mm\hbox{\font\SweD=cmr5\SweD a}\kern.2mm\roman M\kern.3mm\lower.73mm\hbox{\font\SweD=cmr6\SweD sc}\kern.4mm} %
\newcommand\rmmd{\roman m\lower.75mm\hbox{\font\SweD=cmr5\SweD d}} 
\newcommand\Abs{\hbox{\font\SweD=cmmi8\SweD A}\kern.3mm\lower.75mm\hbox{\font\SweD=cmr5\SweD b}\lower.72mm\hbox{\font\SweD=cmr6\SweD s}} 
\def\aabs#1#2^#3{\hbox{\font\SweD=cmr12\SweD a}\kern.2mm\lower.9mm\hbox{\font\SweD=cmr5\SweD b\font\SweD=cmr6\SweD s}\kern-2mm\raise.#2pt\hbox{$\kern.#1pt{}^{#3}$}} 
\def\paw^#1{\roman p\lower.75mm\hbox{\font\SweD=cmr5\SweD aw}\kern-3.5mm{_{\hphantom{\hbox{\font\SweD=cmr5\SweD aw}}}^{\kern1.8mm{#1}}}} 
\def\Abrs#1#2^#3{\hbox{\font\Å=cmr12\Åa}\kern.2mm\lower.77mm\hbox{\font\Å=cmss5\Åb}\lower.7mm\hbox{\font\Å=cmss6\Ås}\kern-1.4mm\kern.#2mm\raise.#1mm\hbox{$_{\kern.95mm}^{#3}$}} 
\def\gFil{\raise1.6mm\hbox{\font\SweD=cmr5\SweD g}\mathcal F\!\subtext{il}} 
\def\Fils{\mathcal F\!\raise.2mm\hbox{$_{_{\kern0.15mm\roman{il:s}}}$}\kern.3mm} 
\newcommand\lliminf[1]{\liminf\kern.3mm\kern.#1mm}
\renewcommand\Pows{\lower.14mm\hbox{\font\Å=eusm9 scaled\magstep1\ÅP}\kern-.5mm\lower.7mm\hbox{\font\Å=cmss6\Ås}\kern.8mm} 
\renewcommand\vecs{\upsilon\kern-0.1mm\lower.7mm\hbox{\font\Å=cmssi6\Ås}\kern.8mm} %
\renewcommand\svecs{\upsilon\kern-.1mm\lower.45mm\hbox{\font\Å=cmssi5\Ås}\kern.6mm} 
\renewcommand\vecss{\hbox{\font\Å=cmitt10\Åv}\kern.1mm\lower.7mm\hbox{\font\Å=cmssi6\Ås}\kern.8mm}  
\renewcommand\svecss{\hbox{\font\Å=cmitt7\Åv}\lower.45mm\hbox{\font\Å=cmssi5\Ås}\kern.6mm} 
\newcommand\upCth{\raise1.3mm\hbox{\font\Å=cmr5\ÅC\kern-.25mmt\kern-.35mm h}} 
\newcommand\trN[2]{\kern.4mm\kern.#1mm|\ssn|\ssn|\kern.#2mm\kern.4mm} 
\newcommand\subnu{\kern.35mm\lower.86mm\hbox{\font\Å=cmmi7\Å\char'027}} 
\newcommand\trNu[1]{\kern.4mm\kern.#1mm|\ssn|\ssn|\kern.35mm\lower.86mm\hbox{\font\Å=cmmi7\Å\char'027}} 
\newcommand\trNun[1]{\kern.4mm\kern.#1mm|\ssn|\ssn|\kern.35mm\lower.6mm\hbox{\font\Å=cmmi7\Å\char'027}\lower1.15mm\hbox{\font\Å=cmr5\Å1}} 
\newcommand\sevib[1]{\hbox{\font\x=cmmib8\x#1}} %
\newcommand\CltaurdvPidualbeta{\Cl_taurd{(\vPi_{\hbox{\font\Å=cmssi5\Å\char'031}}^{\kern.35mm\prime})\ssp}} %
\def\vPettis#1int_#2{\kern.5mm\kern.#1mm\text{-}\kern.7mm\raise1.75mm\hbox{\font\=cmr5\Pettis}\kern-.8mm\int_{\,#2}} %
\newcommand\sNorF{\sNor{\fivemath F}}
\newcommand\sNorFp{\sNor{\fivemath F}\LHB{.65}{\sp_{^\prime}}}
\def\Step#1.#2 {\vskip.5mm\noindent$\null$\kern-3mm\hbox{\font\Å=cmss10\ÅStep #1\kern0.15mm\kern-#2pt. }}
\def\tcbbR_#1{\raise1.415mm\hbox{\font\Å=cmss5\Åt}\kern-.15mm\raise1.62mm\hbox{\font\Å=cmss5\Åc}\kern-.15mm\mathbb R\raise.3mm\hbox{$^*$}\kern-1.4mm\raise.52mm\hbox{$_{_{#1}}$}} %
\newcommand\tcovbbRplus{\raise1.415mm\hbox{\font\Å=cmss5\Åt}\kern-.15mm\raise1.62mm\hbox{\font\Å=cmss5\Åc}\kern-.15mm\ovbbR\kern.2mm\raise.52mm\hbox{$_{_+}$}} %
\def\BCSps#1(#2){\roman{BCS}\kern0.6mm(\kern.#1pt\boldsymbol#2\kern0.37mm)}
\def\LLw^#1{L\lower.7mm\hbox{\kern-.5mm\font\x=cmr5\x w}\kern-1.5mm\raise.45mm\hbox{$^{\,#1\sp}$}} %
\begin{document}

\title[$\text{\sc Duality of Bochner spaces}$]%
             {Duality of Bochner spaces}

\author[S. Hiltunen]{Seppo\ I\. Hiltunen}
\address{Aalto University                                               \vskip0mm$\hspace{2mm}$
           Department of Mathematics and Systems Analysis               \vskip0mm$\hspace{2mm}$
           P.O.\ Box 11100                                              \vskip0mm$\hspace{2mm}$
           FI-00076 Aalto                                               \vskip0mm
         Finland}
\email{seppo.i.hiltunen\,@\,aalto.fi}

\subjclass[2010]{Primary 46E40\ssp, 46A20\ssp, 46G10\ssp, 46E30\,; Secondary 
                         28A20\ssp, 28B05\ssp, 46A16\ssp, 28C20}

\keywords{Bochner space, Banach space, duality, positive measure, positive 
Radonian, vector measure, Lebesgue space, topological vector space, suitable 
space, measurability, bounded variation, Dunford\,--\,Pettis.}

\begin{abstract}                                                     \renewcommand\plusinfty{\lower.82mm\hbox{$^+$}\infty}\def\AbsLrs#1#2^#3{\raise.95mm\hbox{\font\Å=cmtt5\Åm\font\Å=cmssq5\Åv}\kern-.3mm\lower.25mm\hbox{\font\å=cmitt10\åL}\kern0.25mm\kern0.#2mm\lower.#1mm\hbox{\raise.4mm\hbox{$^{#3}$}}\kern0.25mm}

We construct the generalized Lebesgue\,--\,Bochner spaces \math{
\AbsLrs03^p(\ssp\mu\,,\spp\vPi\ssp) } for positive measures \math{\mu} and for 
suitable real or complex topological vector spaces \math{\vPi} so that for \linebreak
                                                                         \œ$
1<p<\plusinfty\ssp$ and Banachable \math{\vPi} with separable topology the 
strong dual of the classical Bochner space \math{
\AbsLrs03^p(\ssp\mu\,,\spp\vPi\ssp) } becomes canonically represented by \mathss37{
\AbsLrs23^p 
           \LHB{.2}{\KN{.2}^{^*}}\sn
                     (\ssp\mu\,,\spp\vPi^{\ssp\prime}_\sigma\spp) }. Hence we 
need no separability assumption of the norm topology of the strong dual \math{
\vPi^{\ssp\prime}_  
                  {
                   \hbox{\font\Å=cmssi5\Å\char'031}}} of \mathss30{\vPi}. For \linebreak
                                                                           \œ$
p=1\ssp$ and for suitably restricted positive measures \math{\mu} we even get 
a similar result without any separability of the norm topology of the target 
space \mathss30{\vPi}. For positive Radon measures on locally compact 
topological spaces these results are essentially contained on pages 
588\,--\,606 in R.\ E.\ Edwards' classical {\sl Functional Analysis\sp}.
  \end{abstract}

\maketitle


\Fsubhead            Introduction and some preliminaries

Our main objective in this article is the following

\begin{Atheorem}\label{main Th}

Let \œ$\, 1 \le p < \plusinfty ${\,\rm, }and let \œ$\,q = 
 (\ssp 1 - p\,^{\mminus 1}\sp\big){}^{\ssp\mminus 1} $ if \œ$\, p \not= 1 
${\ssp\rm, }and \œ$\,q=\plusinfty$ if \œ$\,p=1\ssp$. \hfill Further{\sp\rm, }%
let $\,\mu$ be a positive measure on $\,\Omega${\,\rm, }and with \œ$\,\bosy K
 \in\setRC$ let \linebreak
              \œ$\vPi\in\BaSps0(K)$ be such that {\,\rm(1)} or {\,\rm(2)} or 
{\,\rm(3)} or {\,\rm(4)} or {\,\rm(5)} or {\,\rm(6)} below holds when 
{\,\rm(\sp\erm D\sp)} means that $\,\mu$ is almost decomposable. \hfill Also 
let \œ$\, F = \mvLrs03^p(\ssp\mu\,,\spp\vPi\ssp) ${\KP1\rm, }and \œ$\, F\aar 1
 = {\ssn} $ \œ$\mvLrs03^q(\ssp\mu\,,\spp\vPi\dlsigss00\spp) $ if {\,\rm(1)} or 
{\,\rm(2)} or {\,\rm(5)} or {\,\rm(6)} below holds, \hfill otherwise letting \œ$\, 
F\aar 1 = {\ssn} $ \linebreak 
$ \mvsLrs03^q(\ssp\mu\,,\spp\vPi\dlsigss00\spp)
  \KP1 $. \ For \KP1 \vskip.5mm\centerline{$
\Iota = \seq{ \KP{1.2}
\vecs F\snn\times\mathbb C\capss31\{\,(\ssp\smb X\sp,\spp t\ssp) : 
 \aall{x\in\smb X\sp,\sp y\in\smb Y}\,
     t = \int_{\KP{1.1}\Omega\,}y\,.\KPt8 x\rmdss11\mu\KPt9\} : 
          \smb Y\in\vecs F\aar 1\, } $} \inskipline{.5}0

then $\,\Iota \in \Lis(\sp F\aar 1\sp,\spp F\dlbetss10\sp)$ holds. In addition $\,
F\aar 1=\mvLrs03^q(\ssp\mu\,,\spp\vPi\dlbetss01\sp)$ if {\,\rm(1)} or 
{\,\rm(5)} holds. {\rm \inskipline14

(1)} \ $p=1$ and {\,\rm(\sp\erm D\sp)} and $\,\vPi$ is reflexive{\ssp\rm, \inskipline{.5}4

(2)} \ $p=1$ and {\,\rm(\sp\erm D\sp)} and $\,\taurd\vPi$ is a separable 
       topology{\ssp\rm, \inskipline{.5}4

(3)} \ $p=1$ and {\,\rm(\sp\erm D\sp)} and $\,\taurd\mLrs42^1(\ssp\mu\sp)$ is 
       a separable topology{\ssp\rm, \inskipline{.5}4

(4)} \ $p=1$ and {\,\rm(\sp\erm D\sp)} and a choice function $\,c\in
              \Cal L\,(\ssp\mLrs23^\plusinftyy(\ssp\mu\sp)\,,\sp
               \lll^\plusinftyy\sp(\sp\Omega\sp)) $ exists{\ssp\rm, \inskipline{.5}4

(5)} \ $p\not=1$ and $\,\vPi$ is reflexive{\ssp\rm, \inskipline{.5}4

(6)} \ $p\not=1$ and $\,\taurd\vPi$ is a separable topology.
  \end{Atheorem}

The proof is given on pages \pageref{Sec D}\,--\,\pageref{endmpf} below. Here 
we first explain the notation appearing above, mentioning that we generally 
utilize the notational convention explained in \cite[pp.\ 4\,--\,8]{HiDim}\,, 
\cite[pp.\ 4\,--\,9]{SeBGN} and \cite[p.\ 1]{FKBGN}\,, and further to be 
  \q{polished} in \cite{Hif}\,.

Having \math{\bosy K\in\setRC} means that \math{\bosy K} is either the 
topological field of real numbers or that of the complex ones. The underlying 
sets of these fields are \math{\bbR} \nolinebreak and \nolinebreak \mathss38{
\mathbb C}, \linebreak 
            respectively. Then \math{\vPi\in\BaSps0(K)} means that \math{\vPi} 
is a \erm Banach{\sl able\sp}, i.e.\ a complete norm{\sl able\sp} real or 
complex topological vector space. Thus there is a compatible norm \math{\Nu} 
on the underlying vector space \math{\sigrd\vPi} such that \math{
(\ssp\sigrd\vPi\sp,\spp\Nu\ssp) } is a norm{\sl ed\ssp} Banach space. Being 
{\it compatible\ssp} here means that \math{
 \{\KPt8\Nu\invss44\image\spp[\KP{1.1} 0\,,\spp n^{\,\mminus 1}\ssp\big] : 
 n\in\rbb Z^+\ssp\big\} } is a filter base for the filter $\neiBoo\vPi$ of 
zero neighbourhoods. Above \math{\Nu\invss44\image\spp[\KP{1.1} 0\,,\spp 
 n^{\,\mminus 1}\ssp\big] } is the image of the closed interval \math{
[\KP{1.1} 0\,,\spp n^{\,\mminus 1}\ssp\big] } under the relational inverse \math{
\Nu\ssp\inve} of \mathss31{\Nu}. Here \linebreak
                                      $\Nu\ssp$ is a function \math{\vecs\vPi
 \to\lbb R_+} where \math{\vecs\vPi} is the underlying set of vectors of \mathss31{
\vPi}. For \linebreak
           $\xi\in\vecs\vPi\ssp$ one may sometimes write \math{\|\,\xi\,\| } 
for the value \math{\Nu\fvalss11\xi} of \math{\Nu} at \mathss34{\xi}.

For \mathss37{\vPi\aar 1\in\tvsps0(K)}, \,i.e.\ having \math{\vPi\aar 1} a 
real or complex topological vector space with possibly non\ssp-\sp Hausdorff 
topology \mathss34{\taurd\vPi\aar 1}, \,the exact construction of the space 
$E=
\mvLrs03^p(\ssp\mu\,,\spp\vPi\aar 1)$ is given in 
Constructions \ref{defi $L^p$}\,(\ref{simpL^p}) on page \pageref{simpL^p} 
below. Here we informally explain the basic ideas 
under the additional assumption that 
$\vPi\aar 1$ is {\sl suitable\sp} in the sense of 
Definitions \ref{df suit} on page \pageref{df suit} below. Then 
it suffices to consider one fixed 
{\sl dominating\sp} norm $\Nu$ for $\vPi\aar 1\ssp$.

We consider functions 
$x:\Omega\to\vecs\vPi$ such that on every set $A$ of finite measure, i.e.\ 
for $A\in\mu\invss44\image\spp\lbb R_+$ it holds that outside some set $N$ of 
measure zero, i.e.\ with $N\in\mu\invss44\image\snn\{\ssp 0\ssp\}$ we have $x$ 
a pointwise limit of a sequence of simple functions, with convergence in the 
sense of the topology $\taurd\vPi$. In the case $p<\plusinfty$ we then take 
the subset of those $x$ such that the generally nonmeasurable 
function \vskip.3mm\centerline{$
\Abrs33^p\circss00\Nu\circss00 x:\Omega\owns\eta\mapsto
(\ssp\Nu\circ x\fvalue\eta\ssp)\KP1\RHB{.3}{^p}\in\lbb R_+ $} \inskipline{.3}0

is dominated by some integrable function 
$\Alf:\Omega\to[\KP{1.1} 0\,,\plusinfty\KPt9] \,$. With the pointwise 
vector operations from $\sigrd\vPi$ the set of these $x$ becomes a 
vector substructure $X$ of $\sigrd\vPi\expnota^\ssp\Omega\ssp]_{vs} \,$. 
Then we take $E=(\sp X\sp/\vsquotient N\aar 0\,,\spp\scrmt T\,)$ when $
N\aar 0$ is the set of all $x\in\vecss X$ such that for all 
$u\in\Cal L\,(\sp\vPi\sp,\spp\bosy K\ssp)$ and 
$A\in\mu\invss44\image\spp\lbb R_+$ we have 
$\int_{\,A\,}u\circ x\rmdss11\mu=0 \,$. Here 
$\Cal L\,(\sp\vPi\sp,\spp\bosy K\ssp)$ is the set of all 
continuous linear maps $\vPi\to\bosy K\ssp$. Furthermore, we take the 
topology $\scrmt T$ so that 
a filter of zero neighbourhoods is formed by the sets \vskip.3mm\centerline{$
\vecss(\sp X\sp/\vsquotient N\aar 0\sp)
\capss31\{\,\smb X:
\eexi{x\in\smb X}\,
\upint\ssp \Abrs33^p\circss00\Nu\circss00 x\rmdss41\mu < \eps\KPt9\} $} \inskipline{.3}0

for $\eps\in\rbb R^+$. Here we have the upper integral 
of the 
not\ssp-\sp necessarily measurable function 
$\Abrs33^p\circss00\Nu\circss00 x$ that is defined 
as the infimum of the set of all 
$\int_{\KP{1.1}\Omega\,}\Alf\rmdss11\mu$ with $\Alf$ as above. 
For $p=\plusinfty$ we take the \q{obvious} modification.

The space \math{E\ar 1=
\mvsLrs03^p(\ssp\mu\,,\spp\vPi\aar 1)
} is constructed otherwise similarly except that 
we instead require the functions $x$ to be such that 
$u\circ x\KP1|\KP1(\sp A\sp\setminus N\ssp)$ is measurable, that is, 
for every 
$A\in\mu\invss44\image\spp\lbb R_+$ we require existence of 
some $N\in\mu\invss44\image\snn\{\ssp 0\ssp\}$ such that for all 
$u\in\Cal L\,(\sp\vPi\sp,\spp\bosy K\ssp)$ it holds that 
$u\circ x\KP1|\KP1(\sp A\sp\setminus N\ssp)$ is a measurable real or 
complex valued function on $A\sp\setminus N$. Then every vector of $E$ is 
contained in some vector of \mathss34{E\ar 1}, \,but we need not have \mathss34{
\vecs E\inc\vecs E\ar 1 }. Note above that \mathss39{ \vecs E = 
 \vecss(\sp X\sp/\vsquotient N\aar 0\spp) }, \,and that \mathss03{
X\sp/\vsquotient N\aar 0 } is the quotient vector space structure of \math{X} 
by the linear subspace \mathss36{N\aar 0}.

Having now informally explained the general construction of our generalized 
Bochner spaces, we note that if \math{\vPi} is \erm Banachable, then \math{
\vPi\dualsigma0} is its weak dual space, and that \math{
(\ssp\sigrd\vPi\sp,\spp\Nu\ssp) } is a Banach space for any compatible norm \math{
\Nu} for \mathss31{\vPi}. At least for \rsigma6finite positive measures \math{
\mu} then \math{
(\ssp\sigrd\mvLrs03^p(\ssp\mu\,,\spp\vPi\ssp)\,,\spp\Nu\aR 2\sp) } is a 
classical Bochner space when \math{\Nu\aar 2} is defined by \math{ \smb X
 \mapsto\big(\sp\int_{\KP{1.1}\Omega\,}\Abrs33^p\circss00\Nu\circss00 x
  \rmdss11\mu\ssp)\KP1\RHB{.2}{^p}{^{^{-1}}} } for any \mathss31{x\in\smb X}. 
The appearing \math{\Iota} is the function \math{ \vecs F\aar 1 \owns \smb Y 
 \mapsto\Iota\fvalss02\smb Y}  with \math{\Iota\fvalss02\smb Y } given by \vskip.3mm\centerline{$
\vecs F \owns \smb X \mapsto \int_{\KP{1.1}\Omega\,}y\,.\KPt8 x\rmdss11\mu $} \inskipline{.3}0

for any \math{x\in\smb X} and \mathss30{y\in\smb Y}. Here \math{y\,.\KPt8 x} 
is the function \vskip.3mm\centerline{$
 \Omega\owns\eta\mapsto y\fvalue\eta\fvalue(\ssp x\fvalue\eta\ssp) =
 (\ssp y\fvalue\eta\ssp)\fvalue(\ssp x\fvalue\eta\ssp) \in \vecs\bosy K \in 
   \{\ssbb97 R,\spp\ssbb08 C\} \KP1 $.} \inskipline{.3}0

The message of Theorem \nfss A\,\ref{main Th} is then that \math{ \Iota \in 
 \Lis(\sp F\aar 1\sp,\spp F\dlbetss10\sp) } holds, i.e.\ that \math{\Iota} is 
a linear homeomorphism \math{F\aar 1\to F\dlbetss10} where \math{F\dlbetss10} 
is the normable, hence \erm Banachable strong dual of \math{F} with \mathss38{
\vecs(\sp F\dlbetss10\sp)=\Cal L\,(\sp F\spp,\spp\bosy K\ssp) }. \hfill For 
the spaces appearing in (3) and \linebreak
                                (4) note that we define \mathss39{
\mLrs03^p(\ssp\mu\sp) = \mvLrs03^p(\ssp\mu\,,\sn\tfbbR\ssp) }. Below note that 
in the usual man- ner we have \math{ p\,^* = 
 (\ssp 1 - p\,^{\mminus 1}\sp\big){}^{\ssp\mminus 1} } for \math{
1 < p < \plusinfty} and \math{p\,^*=\plusinfty} for \mathss34{p=1}, \,and also $
p\,^*=1\ssp$ in the case where \math{p=\plusinfty} holds.

The last part in condition (4) means that 
there is a continuous linear map 
$c:\mLrs23^\plusinftyy(\ssp\mu\ssp)\to
               \lll^\plusinftyy\sp(\sp\Omega\sp)$ 
such that $c\sp\fvalue\snn\smb X\in\smb X$ holds for all $
\smb X\in\vecs\mLrs23^\plusinftyy(\ssp\mu\ssp) \, $. 
Continuity here being equivalent to the property that 
for some $\smb M\in\lbb R_+$ it holds that 
for $x\in\smb X\in\vecs
\mLrs23^\plusinftyy(\ssp\mu\ssp)$ and for all $
A\in\mu\invss44\image\spp\lbb R_+
$ there is $N\in\mu\invss44\image\snn\{\ssp 0\ssp\}$ such that 
$
|\KP1 c\sp\fvalue\snn\smb X\fvalue\eta\KP1|
\le\smb M\KP1|\KP1 x\fvalue\eta\KP1|$ holds for all $\eta\in
A\sp\setminus N$, our condition is 
weaker than the requirement (b) in 
\cite[Theorem 8.18.2\ssp, p.\ 588]{Edw} that \math{
\mLrs23^\plusinftyy(\ssp\mu\ssp) } can be \q{lifted}.

\begin{remarks}\label{Rem Rad-deco}

At first sight, it may seem that Theorem \nfss A\,\ref{main Th} is less 
general than the results contained in 
\cite[Theorems 8.18.2\ssp, 8.18.3\ssp, pp.\ 588\ssp, 590]{Edw} when \math{p=1} 
since in Edwards' presentation there is stated no assumption on any kind of 
\q{decompos- ability}. However, one should note that in \cite{Edw} one 
considers only positive measures that are {\sl positive \esl Radonian\sp} in 
the sense of \sp Definitions \ref{df top deco}\,(4) on page \pageref{df pos Radon}
below, and that by Proposition \ref{Propo top-deco} these are \q{automatically} 
almost decomposable. See also \cite[Proposition 4.14.9\ssp, p.\ 229]{Edw}\,.

We also remark the main ideas of the proof of \sp Theorem \nfss A\,\ref{main Th} 
are essentially, at least implicitly, contained in \cite[pp.\ 573\,--\,607]{Edw} 
although it is not quite straight- forward to see the exact details from the 
presentation there. 

Note that in \cite{Edw} positive measures are obtained from positive linear 
functionals in the vector spaces \math{
\sigrd C\sn\sbi{\rm c}\ssp(\ssp\scrmt T\,) } of compactly supported continuous 
functions for locally compact Hausdorff topologies \mathss30{\scrmt T}, \,%
  cf.\ \cite[4.3\ssp, pp.\ 177\,--\,179]{Edw}\,. Furthermore, in \cite{Edw} 
measurability of functions is defined by the Lusin property which is 
  meaningless for general measures.
  \end{remarks}

\begin{remark}

Using Theorem \nfss A\,\ref{main Th} one is able to prove 
\cite[5.22\ssp, p.\ 27]{Am97} in the more general case where only separability 
of the topology \math{\taurd\vPi} is required instead of having \math{
\taurd(\sp\vPi\dlbetss01\sp) } separable. Then for example in the case \math{
\vPi=\LLrs42^1(\ssbb44 I) } the strong dual of the Besov space \math{
\Besovrss600_q^{\emath s\sp,\,p}\sbig(2\yi N\ssbb67 R,\spp\vPi\ssp) } is seen 
to be canonically represented by \linebreak
                               \œ$
\Besovrss300_{q\sast}^{\mminus\emath s\sp,\,p\sast}\ssn \sbig(2\yi N\ssbb67 R,\spp
 \vPi\dlbetss01\sp) \ssp $ when \math{s\in\bbR} and \math{1\le p<\plusinfty } 
and \math{1\le q<\plusinfty} and \math{\smb N\in\bbN} hold. \linebreak
                                                            This is in 
constrast with the case of Bessel potential spaces where the strong dual \linebreak
                                                                         of \math{
\HBsrss606^{\emath s\sp,\,p}\sbig(2\yi N\ssbb67 R,\spp\vPi\ssp) } is 
represented only by \mathss38{\HBsrss300^{\mminus\emath s\sp,\,p\sast}
 \sbig(2\yi N\ssbb67 R,\spp\vPi\dlsigss00\spp) }. We hope to have the 
opportunity to give the details of the proof in a future publication.
  \end{remark}

We shortly review the {\fssit contents\ssp} which is organized according 
to the scheme: \vskip.5mm  {\newcommand\nrm[1]{$\null$\kern2.8mm{\font\Å=cmr9\Å#1\kern.4mm}}\newcommand\ntrm[2]{$\null$\kern2.8mm\ref{#1}\,{\font\Å=cmr8\Å#2\kern.4mm}}%
\parindent6mm \inskipline07

 1. Some special constructions \dotfill \ p.\KP{2.95} \pageref{Ss spec ctrs} \KP4 \inskipline07

 2. Suitable locally convex spaces \dotfill \ p.\ \pageref{Ss suit lcs} \KP4

A \ Measurability and integration \dotfill \ p.\ \pageref{Sec A} \KP4 \inskipline07

 1. Measurability of measure\ssp-\sp vector maps \dotfill \ p.\ \pageref{Ss C1} \KP4 \inskipline07

 2. Decomposable positive measures \dotfill \ p.\ \pageref{Ss decos} \KP4 \inskipline07

 3. Integration of scalar functions \dotfill \ p.\ \pageref{Ss int scal} \KP4 \inskipline07

 4. Pettis integration of vector functions \dotfill \ p.\ \pageref{Ss Pettis} \KP4

B \ Generalized Bochner spaces \dotfill \ p.\ \pageref{Sec B} \KP4

C \ Lifting and integral representations \dotfill \ p.\ \pageref{Sec C} \KP4 \inskipline07

 1. Dunford\,--\,Pettis property of $\,\mLrs42^1(\ssp\mu\ssp)$ \dotfill \ p.\ \pageref{Ss Dun-Pet} \KP4 \inskipline07

 2. Absolutely continuous vector measures \dotfill \ p.\ \pageref{Ss abs conti} \KP4

D \ Duality of Bochner spaces \dotfill \ p.\ \pageref{Sec D} \KP4

E \ Examples and open problems \dotfill \ p.\ \pageref{Sec E} \KP4 \inskipline0{9.2}

   References \dotfill \ p.\ \pageref{Sec Bib} \KP4

    \par} \vskip.5mm

In subsection 1 of this introductory section we give some special 
constructions in order to be able to express certain matters concisely and 
precisely at the same time. In 2 we give the basic definitions associated with 
suitable spaces. We also establish some lemmas that are needed in the sequel.

In section A we present our approach to measurability and integration of 
scalar and vector valued functions, or put more precisely, 
{\sl mv\ssp-\sp map\ssp}s. These are triplets \math{
(\ssp x\,;\spp\mu\,,\spp\vPi\ssp) } where \math{\vPi} is a real or complex 
topological vector space and \math{\mu} is a positive measure on some set \math{
\Omega} and \math{x:\Omega\to\vecs\vPi } is a function.

In section B we first give the formal construction of our generalized 
Lebesgue\,--\,Bochner spaces of equivalence classes \math{\smb X} of 
measurable functions \math{x:\Omega\to\vecs\vPi } when a positive measure \math{
\mu} on \math{\Omega} is given. Then we prove several results associated with 
these spaces that are needed in the proof of our main theorem.

Section C contains several auxiliary results that are needed to prove that a 
given continuous linear functional \math{ \smb U \sn : 
 \mvLrs03^p(\ssp\mu\,,\spp\vPi\ssp) = F \to \bosy K } can be represented by 
some vector \math{\smb Y} of the space \math{F\aar 1} in the sense that for \math{
x\in\smb X\in\vecs F} and \math{y\in\smb Y} we have the equality \mathss36{
\smb U\sp\fvalue\snn\smb X = \int_{\KPp1.1\Omega\,}y\,.\KPt8 x\rmdss11\mu}.

At the beginning of section D we note how other assertions 
of \sp Theorem \nfss A\,\ref{main Th} except surjectivity of \math{\Iota} 
follow from results that have already been established in section B\ssp. Then 
we prove the surjectivity in Lemmas \nfss A\,\ref{LeA(1)}$\,,\ldots\KPt8
$\nfss A\,\ref{final lemma} separately in the cases (1)$\,,\ldots\KPt8$(6) 
with (5) and (6) being treated together in \nfss A\,\ref{final lemma}\ssp.

In section E we have collected examples to make more concrete some points of 
the general theory. We also present some related open problems. \vskip.5mm

Above we already indicated that \math{\tvsps0(K)} is the class of all 
topological vector spaces over \math{\bosy K} when \math{\bosy K} is a 
topological field. We put \vskip.3mm\centerline{$
\TVSps0(K) = \tvsps0(K)\capss31\{\, E : 
              \taurd E\ssp\text{ is a Hausdorff topology } \} \KP1$,} \inskipline{.3}0 

and we let \math{\LCSps0(K)} be the subclass of \math{\TVSps0(K)} formed by 
the locally convex spaces. For \math{E\in\tvsps0(K)} we have \math{\bouSet E} 
the set of all bounded sets in \mathss35{E}, also cal- led the {\sl von Neumann 
bornology\sp} of \mathss35{E}. For \math{E\,,\sp F\in\tvsps0(K) } we let \math{
E\tvpreceq F} mean \linebreak 
                   that the identity \math{\idv F} is a continuous linear map \mathss35{
  F\to E}.

If \math{E} is a real or complex topological vector space, then \math{
\Cal S\sbi{\sp\emath r\,}E } and \math{\BSnorm E} and \linebreak
                                                    \œ$\Bqnorm E\ssp$ are the 
sets of continuous \mathss35{r}--\,seminorms, bounded seminorms and bounded 
quasi\ssp-\ssp seminorms, respectively, the formal constructions being given 
in (1)$\,,\ldots\,$(3) below. We also put \math{ \SemiNor E = 
 \Cal S\LHB{.3}{_{\,1\,}} E} thus getting the set of continuous seminorms. 
Note the implication \math{ \Nu \in \Cal S\sbi{\sp\emath r\,}E \impss33 
 0 < r \le 1 } and that a quasi\ssp-\ssp seminorm \math{\Nu} being 
{\sl bounded\ssp} means that \math{ \sup \KP1 (\ssp\Nu\ssp\image\snn B\ssp) < 
 \plusinfty } holds for every \mathss35{B\in\bouSet E}. We generally have \mathss35{
\SemiNor E\inc\BSnorm E}, \,and the converse inclusion holds if \math{E} is 
  normable.

\begin{enumerate}\begin{myLeftskip}{-2}{.5}{.3}

\item \ $\Cal S\sbi{\sp\emath r\,}E = \sp^{\svecs E}\KP1\lbb R_+\snn\cap\ssp 
          \{\KPt8\Nu : 0 < r \le 1\ssp\text{ and }\ssp                    \label{df r-semin E}
           \aall{t\ssp,\sp x\ssp,\sp y\ssp,\sp z}\, $ \newskline{22}

  $[\KP{1.4}(\ssp t\ssp,\spp x\ssp,\spp y\ssp)\in\tsigrd E\impss33
   \Nu\fvalss30 y = |\,t\,|\KP1(\ssp\Nu\fvalss30 x\ssp)\KP{1.4}] \ssp$ and \newskline{22}

  $[\KP{1.4}(\ssp x\ssp,\spp y\ssp,\spp z\ssp)\in\ssigrd E\impss33
    (\ssp\Nu\fvalss30 z\ssp)\RHB{.2}{\KPt8^{\emath r}} \sn \le \sp 
    (\ssp\Nu\fvalss30 x\ssp)\RHB{.2}{\KPt8^{\emath r}\sn} + 
    (\ssp\Nu\fvalss30 y\ssp)\RHB{.2}{\KPt8^{\emath r}} \KPp1.4\big]$ \newfline

  and \math{\Nu} is continuous \mathss39{\taurd E\to\nsTbb_R \, \} }, \KP{10}

\item \ $\Bqnorm E = \sp^{\svecs E}\KP1\lbb R_+\snn\cap\ssp \{\KPt8\Nu : \label{defi bqnor E}
          \eexi{\smb A}\,\aall{t\ssp,\sp x\ssp,\sp y\ssp,\sp z}\,
      \smb A\in\rbb R^+\sp$ and \newskline{22}

  $[\KP{1.4}(\ssp t\ssp,\spp x\ssp,\spp y\ssp)\in\tsigrd E\impss33
   \Nu\fvalss30 y = |\,t\,|\KP1(\ssp\Nu\fvalss30 x\ssp)\KP{1.4}] \ssp$ and \newskline{22}

  $[\KP{1.4}(\ssp x\ssp,\spp y\ssp,\spp z\ssp)\in\ssigrd E\impss33       \label{defi bqnor E p}
   \Nu\fvalss30 z \le \smb A\KP1(\ssp\Nu\fvalss30 x + \Nu\fvalss30 y\ssp) 
    \KP{1.4}]$ \newfline

  and $\ssp \Nu\KPt7\images\spp\bouSet E\inc\bouSet\tfbbR \KP1 \} \KP1 $, \KP{10}

\item \ $\BSnorm E = \Bqnorm E\capss30\{\KPt8\Nu:\aall{x\ssp,\sp y\ssp,\sp z}\,   \label{defi bsnor E}
         (\ssp x\ssp,\spp y\ssp,\spp z\ssp)\in\ssigrd E$ \newfline

     $\impss03 \Nu\fvalss30 z \le \Nu\fvalss30 x + \Nu\fvalss30 y\KPt9\}\KP1$. \KP{10}

  \end{myLeftskip}\end{enumerate}


\insubsubhead             Some special constructions                      \label{Ss spec ctrs}

We are working within a Kelley\,--\,Morse\,--\,G\"odel\,--\,Bernays\,--\,von 
Neumann type approach to set theory, like for example the one introduced in 
\cite[pp.\ 250\,--\,281]{Ky}\,. Then with \math{ x\ssp\yplus =
 x\cupss21\{\ssp x\ssp\} } putting \mathss38{ \bbNo = \ssp
 \bigcap\KPt8\{\,N\sn:\emptyset\in N\ssp\text{ and }\ssp\aall{k\in N}\,
 k\ssp\yplus\in N\KPt9\} }, \,we may call \math{\bbNo} the set of {\sl natural 
number\sp}s. It equals the set of finite cardinals, as well as the set of 
finite ordinals. Let \math{\infty=\bbNo} and \mathss38{ \bbN = 
 \bbNo\sn\setminus\{\ssp\emptyset\ssp\} }.

We assume that the set \math{\mathbb H} of {\sl quaternion\sp}s is constructed 
in a certain manner so that we have \math{\mathbb H\inc
 \ovbbR\ar 1\sn\times\ovbbR\ar 1\sn\times(\ssp\ovbbR\ar 1\sn\times\ovbbR\ar 1) } 
for some set \math{\ovbbR\ar 1} with \vskip.3mm\centerline{$
\ovbbR\ar 1 \inc \Pows(\ssp\Pows(\ssp\bbNo\snn\times\bbN\ssp)\times
                           \Pows(\ssp\bbNo\snn\times\bbN\ssp)) $} \inskipline{.3}0

where the {\sl power class\sp} \math{\Pows A} of \math{A} is defined in 
Definitions \ref{misc defs}\,(14) below. Then for some set \math{ 0\ar 1 \in 
 \ovbbR\ar 1} we have \math{\bbR \inc \ovbbR\ar 1\timesn\{\,0\ar 1\snn\}\snn
 \times\sbig(2\{\,0\ar 1\snn\}\timesn\{\,0\ar 1\snn\}\sp\sbig)0 } and \inskipline{.2}{41.7}

$\mathbb C \inc \ovbbR\ar 1\sn\times \ovbbR\ar 1\sn\times\sbig(2
               \{\,0\ar 1\snn\}\timesn\{\,0\ar 1\snn\}\sp\sbig)0 \KP1 $. \inskipline{.2}0

The definitions of the sets \math{\mathbb Z} and \math{\lbb Z_+} of 
{\sl integers\sp} and {\sl nonnegative\sp} integers, respectively, being given 
in \ref{misc defs}\,(5) and \ref{misc defs}\,(7) below, we have a bijection \math{
\bbNo\to\lbb Z_+} given by \mathss03{i\mapsto n=i\sp\ydot} with inverse \math{
n\mapsto i=n\sp\adot} and now for example \mathss30{ i\ssp\yplus\sp\ydot =
 (\ssp i + 1\sp\adot\spp)\spp\ydot = n + 1 } and \math{
 i\ssp\yplus\snn\yplus\sp\ydot = (\ssp i + 2\sp\adot\spp)\spp\ydot=n + 2 } and \mathss37{
i\ssp\yplus\sp\ydot\ssp^{\mminus 1} = (\ssp n + 1\ssp)\,^{\mminus 1} }. Also \math{
\emptyset = 0\sp\adot} holds.

Having \mathss38{ \ovbbR = [\sp\minusinfty\,,\plusinfty\KP1] = \{\,t : 
 \minusinfty\le t\le\plusinfty\KPt9\} }, \,we assume the formal definitions 
having been arranged so that for all \math{u\ssp,\sp v} we have \math{u\le v} 
if{}f \math{\minusinfty\le u\le v\le\plusinfty} or 
\math{u\ssp,\sp v} are functions with 
\math{u\cupss22 v\inc\dom v\times\ovbbR } and \mathss38{\dom u\inc
\{\,\eta:u\fvalue\eta\le v\fvalue\eta\KPt8\} }. Hence if \math{u} and \math{v} 
are extended real valued functions, then \math{u\le v} means that 
we have \math{\dom u\inc\dom v} and that 
\math{u\fvalue\eta\le v\fvalue\eta} holds for all 
\mathss34{\eta\in\dom u}. 
Furthermore \math{\emptyset\le v} is equivalent to having \math{v} a 
function with \mathss38{\rng v\inc\ovbbR}.

In order to specify some set theoretic notation already utilized above that 
also has largely been explained in \cite[pp.\ 4\,--\,8]{HiDim} and 
\cite[pp.\ 4\,--\,9]{SeBGN}\,, ending on page \pageref{END set th extract} 
below, we next present an extract from \cite{Hif}\,.

We assume that the {\sl intuitive class\sp} of all {\sl variable symbol\sp}s 
of our set theory is implicitly intuitively well\ssp-\ssp ordered so that it 
makes sense to speak of the first variable (\sp symbol\ssp) not possessing 
  some property.

\begin{def:al schemas}[set notation]\label{defi {F:...:P}}

Let \math{\mfrk F} be any term and \math{\mfrk P} a formula and $\afr x_1\sp,
\ldots\,\mfrk x\ssp\ai k\ssp,\sp\afr y_1\sp,\ldots\,\mfrk y\ssp\ai l\ssp$
distinct variable symbols such that \math{\afr x_1\sp,\ldots\,\mfrk x\ssp\ai k}
are precisely the variable symbols which have a free occurrence both in \math{
\mfrk F} and \math{\mfrak P} and are not in the list \math{\afr y_1\sp,\ldots\,
\mfrk y\ssp\ai l}. Also let \math{\mfrk x} be the first variable symbol not
occurring free in \math{\mfrk F} or \math{\mfrk P}. Then we let \math{
\{\,\mfrk F:\afr y_1\sp,\ldots\,\mfrk y\ssp\ai l\ssn:\mfrk P\,\} =
 \{\,\mfrk x:\exi{\afr x_1\sp,\ldots\,\mfrk x\ssp\ai k}\,\mfrk x=\mfrk F\ssp$
 and $\ssp\mfrk P\,\}\yxbtext{15}1b \sp}. \hfill In the case where \math{
\afr y_1\sp,\ldots\,\mfrk y\ssp\ai l} is an empty list, we further let \math{
\{\,\mfrk F:\mfrk P\,\} = \{\,\mfrk F:\ :\mfrk P\,\} \sp}.

The variable symbols which are free in the term \math{
\{\,\mfrk F:\afr y_1\sp,\ldots\,\mfrk y\ssp\ai l\ssn:\mfrk P\,\}} are (\ssp by
re- cursive definition\sp) exactly those which are free either in \math{
\mfrk F} or \math{\mfrk P}, and are not in the list \math{\afr x_1\sp,\ldots\,
 \mfrk x\ssp\ai k}. The free variables of \math{\{\,\mfrk F:\mfrk P\,\}} are
precisely those which are free in \math{\mfrk F} or \math{\mfrk P} but not in
both of them.
  \end{def:al schemas}

The above schemata, which we introduced to overcome the notational problem
presented in \cite[4 Notes, pp.\ 5\,--\,6]{Ky}\ssp, only provide reduction of
\PouN$\ssp\{\KPt8\mfrk F:\afr y_1\sp,\ldots\,\mfrak y\ssp\ai l\ssn:\mfrak P\,\}$
and \math{\{\KPt8\mfrk F:\mfrak P\,\} } to \math{
\{\KPt8\mfrk x:\mfrk Q\,\}\yxbtext{15}1b \sp}. In order to be able to prove
something nontrivial about $\{\KPt8\mfrk x:\mfrk Q\,\}\yxbtext{15}1b \,$, we
need some {\fssit axioms\ssp}. As such, we accept all the formulas

\begin{enumerate}\begin{myLeftskip}{-4}{.3}{.1}

\itemb0_ax $u=v\equivss22\aall x\,x\in u\equivss22 x\in v\,$, \label{ax of extent}

\itemb0_ax $u\in v\impss22\eexi{w\ssp,z}\,w\in z\ssp$ and $\,
           \aall x\,x\inc u\impss22 x\in w\ssp$, \label{ax of subsets}

\itemb0_ax $x\in u\ssp$ and $\ssp y\in v\impss22\eexi w\,
            x\inc w\ssp$ and $\ssp y\inc w\ssp$, \label{ax of union}

\itemb0_ax $u\in z\ssp$ and $\,[\KP{1.4}\aall{x\ssp,y\ssp,z}\,
           (\ssp x\ssp,y\sp)\ssp,(\ssp x\ssp,z\sp)\in f \impss22
            y=z \KP{1.4} ] \impss22 \eexi{v\ssp,w}$ \newfline

  $v\in w\ssp$ and $\,\aall y\,y\in v\equivss22\eexi x\,x\in u\ssp$
  and $\ssp(\ssp x\ssp,y\sp)\in f\sp$, \KP{17.9} \label{ax of substi}

\itemb0_ax $z\in w\impss22\eexi{u\ssp,v}\,u\in v\ssp$ and $\,\aall x\,
            x\in u\equivss22\eexi y\,x\in y\in z\,$, \label{ax of amalg}

\itemb0_ax $v\in u\impss22\eexi x\,x\in u\ssp$ and not $\eexi z\,
            z\in x\ssp$ and $\ssp z\in u\,$, \label{ax of regularity}

\itemb0_ax $\eexi{e\ssp,\spp N\sp,\spp S}\,e\in N\in S\ssp$ and \,[\ not $
            \eexi x\,x\in e \KP{1.4} ]\ssp$ and $\,\aall{n\ssp,\spp m}$ \newfline

  $n\in N\ssp$ and $\,[\KP{1.4}\aall x\,x\in m\equivss22
   x\in n\ssp$ or $\ssp x=n \KP{1.4} ]\impss22 m\in N\ssp$, \KP{17.9} \label{ax of infin}

\itemb0_ax $\eexi C\,[ \KP{1.4} \aall{x\ssp,z\ssp,u}\,(\sp u\ssp,x\sp)\ssp,
           (\sp u\ssp,z\sp)\in C\impss22 x=z\in u \KP{1.4} ]\ssp$ and \newfline

  $\aall{z\ssp,u\ssp,w}\,z\in u\in w\impss22\eexi x\,
           (\sp u\ssp,x\sp)\in C\ssp$, \KP{17.9} \label{ax of choice}

  \end{myLeftskip}\end{enumerate}

\noin and also all the formulas (\sp s\sp) given in the next

\begin{axiom schema}[classification]\label{class axi}

Let \math{\mfrk x} be any variable symbol and \math{\mfrk P} any formula. Let
\math{\mfrk y} be the first variable symbol distinct from \math{\mfrk x} and
not occurring free in \math{\mfrk P}. Then we accept as an axiom the formula
(\sp s\sp) \ $\mfrk x\in\{\KPt8\mfrk x:\mfrk P\,\}\yxbtext{15}1b\equivss22
           \eexi{\mfrk y}\,\mfrk x\in\mfrk y\ssp$ and $\ssp\mfrk P\,$.
  \end{axiom schema}

Above (\ref{ax of choice})$\ar{ax}$ is the {\sl global axiom of choice\sp} and 
(\ref{ax of infin})$\ar{ax}$ is the {\sl axiom of infinity\sp}.

\begin{remark}\label{rem about class sch}

Among others, we accept as logical axioms the formulas \inskipline{.3}3

(1)$\ar{az}\,$ \ $\mfrk P\impss22\aall{\mfrk x}\,\mfrk P\,$, \KP{20}
(2)$\ar{az}\,$ \ $[ \KP{1.4} \aall{\mfrk x}\,\mfrk Q \KP{1.4} ]\impss22
                 \mfrk Q\,(\ssp\mfrk x\sn\lleftarrow\sn\mfrak F\ssp) \,$, \inskipline{.3}0

when \math{\mfrk x} is any variable symbol and \math{\mfrk F} is any term and
\math{\mfrk P\ssp,\mfrk Q} are any formulas such that for any variable symbol \math{
\mfrk y} having a free occurrence in \math{\mfrk F} the bound (\ssp i.e.\
non\ssp-\ssp free\sp) occurrences of \math{\mfrk y} in \math{\mfrk Q} and \math{
\mfrk Q\,(\ssp\mfrk x\sn\leftarrow\sn\mfrak y\ssp)} are the same. Having these
logical axioms, we could give Axiom schema \ref{class axi} above a simpler 
formulation than has the corresponding \cite[\erm{II}\ssp, p.\ 253]{Ky} which 
in our notation would (\sp as already a bit corrected\ssp) read as follows. 
For any variable symbols \math{\mfrk x\ssp,\sp\mfrk y\ssp,\sp\mfrk z} and for 
any formula \mathss30{\mfrk P} such that \math{\mfrk y} is the first one 
distinct from \math{\mfrk x} and \mathss34{\mfrk z}, \,and not occurring free 
in \mathss34{\mfrk P}, we accept as an axiom the formula \inskipline{.3}3

(t) \ $\aall{\mfrk z}\,\mfrk z\in\{\,\mfrk x:\mfrk P\,\}\yxbtext{15}1b\equivss22
    \eexi{\mfrk y}\,\mfrk z\in\mfrk y\ssp$ and $\ssp
    \mfrk P\,(\ssp\mfrk x\sn\leftarrow\sn\mfrak z\ssp)\,$. \inskipline{.3}0

However, this would make the system contradictory as shown in Example \ref{exa about Kelley's class sch} 
below. One should put the additional restriction that the bound occurrences of \math{
\mfrk z} in $\ssp\mfrk P$ \linebreak 
                          and \math{
\mfrk P\,(\ssp\mfrk x\sn\leftarrow\sn\mfrk z\ssp) } are the same.
  \end{remark}

\begin{example}\label{exa about Kelley's class sch}

It follows from (\ref{ax of infin})$\ar{ax}$ and (\ref{ax of extent})$\ar{ax}$
and Proposition \ref{Pro basic}\,(18) below that there are \math{a\ssp,\sp b
 \ssp,\sp c} with \math{b\not=a} and \mathss36{a\ssp,\sp b\in c}. For \mathss36{ 
A = \{\,x:\eexi y\,x=y\ssp\text{ and }\ssp y=a\,\}\yxbtext{15}1b }, we then 
get from Remark \ref{rem about class sch}\,(t) and 
Proposition \ref{Pro basic}\,(17) that for all \math{x\ssp,\sp y} we have \vskip.4mm \centerline{$
 x\in A\equivss22[ \KP{1.4} x\ssp$ set and $\ssp\eexi y\, x = y \ssp$ and $\ssp
 y = a \KP{1.4} ]\equivss22 x = a \,$,} \inskipline{.2}0

and $\KP{13.7} y \in A \equivss22[ \KP{1.4} y \ssp$ set and $\ssp \eexi y\,
 y = y\ssp $ and $\ssp y = a \KP{1.4} ]\equivss22 y\ssp$ set\ssp, \inskipline{.4}0

whence taking \math{x=y=b\sp}, we obtain \mathss38{[ \KPp1.4 b \ssp\text{ set }
 \impss03 b\in A\impss33 b = a \KPp1.4 ] }, \,a {\sl contradiction\sp}. The 
formula \q{\math{\eexi y\, y = y\ssp\text{ and }\ssp y = a }} contains four 
occurrences of \math{\Symboo yÏ}. They are all bound and the second of them is 
not present in \q{\math{\eexi y\, x = y\ssp$ and $\ssp y = a }}.
  \end{example}

When we write a formula \mathss34{\mfrak P}, \,for example \q{\mathss00{ x = 
 \int_{\,\ssmb A}^{\KPt8\ssmb B}f\fvalss21 t\rmdss11 t}}, associated with the 
writing appearance of \math{\mfrak P} we assume that there is an implicitly 
understood well\sp-\ssp order between the occurring variable symbols so that 
e.g.\ it makes sense to refer to the first variable symbol occurring free in 
the {\sl writing appearance\sp} of \mathss34{\mfrak P}. This has nothing to do 
with the intuitive \q{overall} well\sp-\ssp order of all variable symbols of 
our set theoretic language.

For example in the above formula the variable symbols \math{\Symboo xÏ,
 \Symboo\ssn\smb A\snÏ,\Symboo\snn\smb B\snnÏ,\Symboo\ssn fÏ} occur free, and \math{
\Symboo tÏ} has two bound occurrences. We may assume that the order of the 
free variable\ssp( symbol\ssp)\ssp s is precisely the one given above, 
although it may not be perfectly clear which one of \math{
\Symboo\ssn\smb A\snÏ} and \math{\Symboo\snn\smb B\snnÏ} is before the other. 
To avoid confusion, in such vague cases we refrain from referring to that 
\q{implicit order}. In the above case we may then say that \math{\Symboo xÏ} 
is the first one, whereas in the case of the formula \q{\ssn\mathss01{
\int_{\,\ssmb A}^{\KPt8\ssmb B}f\fvalss21 t\rmdss11 t = x }} we would not 
speak of the free variable symbol that is in the first place in the writing 
  appearance.

Having the above preparative explanation, in order to have available a 
convenient means of specifying functions, we give the following

\begin{def:al schema}\label{<T:F>}

Let \math{\mfrak T} be a term and \math{\mfrak F} a formula and \math{
\mfrak x\ssp,\afr x_1\sp,\ldots\,\mfrak x\ssp\ai k\ssp,\sp\afr y_1\sp,\ldots\,
\mfrak y\ssp\ai l} distinct variable symbols such that \math{\afr x_1\sp,
\ldots\,\mfrak x\ssp\ai k} are precisely the variable symbols which have a
free occurrence both in \math{\mfrak T} and \math{\mfrak F} and are distinct
from any of $\,\mfrak x\,,\sp\afr y_1\sp,$ $\ldots\,\mfrak y\ssp\ai l\ssp$.
Also assume that \math{\mfrak F} is of the form \math{
\leu\,\mfrak p\sp\,\mfrak x\ssp\riu\sp\,\mfrak E} or \math{
\leu\,\mfrak k\sp\,\mfrak p\sp\,\mfrak x\ssp\riu\sp\,\mfrak E} where \math{
\mfrak p} is some predicate symbol and \math{\mfrak k} is a connective such
that in the writing appearance of $\ssp\mfrak F$ we have \math{\mfrak x} in
the first place. Then we let \vskip.2mm\centerline{$
\seq{\,\sp\mfrak T\sn:\afr y_1\sp,\ldots\,\mfrak y\ssp\ai l\ssn:
\mfrak F\sp\,}=\{\,\mfrak z:\exi{\mfrak x\ssp,\afr x_1\sp,\ldots\,
\mfrak x\ssp\ai k}\,\mfrak z=(\ssp\mfrak x\,,\mfrak T\ssp)\ssp$ and $\ssp
\mfrak F\,\}$} \vskip.2mm

\noin where \math{\mfrak z} is the first variable symbol not occurring free in
\math{\mfrak T} or \math{\mfrak F\sp}.

We also put \math{\seq{\,\sp\mfrak T\sn:\mfrak F\sp\,}=\seq{\,\sp\mfrak T\sn:
\ :\mfrak F\sp\,}} in the case where \math{\afr y_1\sp,\ldots\,
\mfrak y\ssp\ai l} is an empty list, and further \math{\seq{\,\sp\mfrak T\sn:
\mfrak x\in\mfrak U\sp\,}\subtext{old}=\{\,\mfrak z:\exi{\mfrak x}\,\mfrak z =
(\ssp\mfrak x\,,\mfrak T\ssp)\ssp$ and $\ssp\mfrak x\in\mfrak U\,\}} when \math{
\mfrak U} is any term not containing a free occurrence of \math{\mfrak x\sp},
and \math{\mfrak z} is the first variable symbol distinct from \math{\mfrak x}
and not occurring free in \math{\mfrak T} or \math{\mfrak U\ssp}.
  \end{def:al schema}

\begin{def:al schema}\label{uniqset}

We let \math{\uniqset\mfrak x:\mfrak P=\bigcap\,\{\,\mfrak z:\all{\mfrak x}\,
\mfrak P\sp\equivv\sp\mfrak x=\mfrak z\,\}\sp}, when \math{\mfrak x\ssp,
\mfrak z} are any distinct variable symbols and \math{\mfrak P} is any formula
where \math{\mfrak z} does not occur free. To get $\mfrak z$ uniquely chosen,
we may take as \math{\mfrak z} the first admissible w.r.t\ the intuitive 
well\ssp-\ssp ordering of the variable symbols of our set theoretic language.
  \end{def:al schema}

Under the agreement of unique choice of $\mfrak z$ above, for any formula $
\mfrak P$ and any distinct variable symbols $\mfrak x\ssp,\mfrak z$ with $
\mfrak z$ not occurring free in $\mfrak P\ssp$, now the formula $
\uniqset\mfrak x:\mfrak P=\bigcap\,\{\,\mfrak z:\all{\mfrak x}\,\mfrak P\sp
\equivv\sp\mfrak x=\mfrak z\,\}\,$ is a theorem. \vskip.2mm

One quickly deduces that if a unique {\it set\ssp} \math{\mfrak x} exists with
\math{\mfrak P}, then \math{\uniqset\mfrak x:\mfrak P=\mfrak x\sp}. In all
other cases, i.e.\ when there is no \math{\mfrak x\in\Univ} with \math{
\mfrak P}, or if (with $\mfrak y$ being a variable symbol not occurring in \math{
\mfrak P}) there are \math{\mfrak x\ssp,\mfrak y\in\Univ} with \math{
\mfrak x\not=\mfrak y} and \math{\mfrak P} and \math{
\mfrak P\,(\ssp\mfrak x\sn\leftarrow\sn\mfrak y\ssp)\sp}, substitution in
places of free occurrence, then \math{                                    \label{END set th extract}
      \uniqset\mfrak x:\mfrak P = \bigcap\ssp\emptyset = \Univ\ssp}. \vskip.4mm

Below in Definitions \ref{misc defs}\,(9) we have \math{\cinfty = 
 (\ssp 0\ar 1\sp,\plusinfty\ar 1\ssp;\sp 0\ar 1\sp,\spp 0\ar 1) } the 
{\sl complex infinity\sp} for some \math{\plusinfty\ar 1\in\ovbbR\ar 1} whose 
exact construction we here omit. Also omitting the precise definition, note 
that \math{|\,\zeta\,|\suba} is the standard \erm Euclidean absolute value of 
any quaternion \math{\zeta} and that we below usually have \math{\zeta} a real 
or complex number.

\begin{definitions}\label{misc defs}

(1) \ $\bbI=[\KPp1.1 0\,,\spp 1\KPt9]  \KP1 $, \KP{
                                                   8.3}
(2) \ $\mathbb J=\openIval{\KPt5 0\,,\spp 1\sp} \KP1 $, \inskipline{.5}2

(3) \ $\rbb R^+=\bbR\capss41\{\,t:0<t\KPt9\} \KP1 $, \KP{7.5}
(4) \ $\lbb R_+=\bbR\capss41\{\,t:0\le t\KPt9\} \KP1 $, \inskipline{.5}2

(5) \ $
\mathbb Z=
\bigcap\KPt8\{\,N\sn:
0\in N\inc\bbR\text{ and }\ssp
\aall{n\in N}\,
\{\,n-1\ssp,\sp n+1\,\}\inc N\KP1\}
  \KP1 $, \inskipline{.5}2

(6) \ $\rbb Z^+=\mathbb Z\capss41\{\,n:0<n\KPt9\} \KP1 $, \KP{6.5}
(7) \ $\lbb Z_+=\mathbb Z\capss41\{\,n:0\le n\KPt9\} \KP1 $, \inskipline{.5}2

(8) \ $p\,^* = \uniqset t:[\KPp1.4 1 < p < \plusinfty\ssp$ and $\ssp t =
    (\ssp 1 - p\,^{\mminus 1}\sp\big){}^{\ssp\mminus 1}\KP1\big]\ssp$ or \inskipline0{31.6}

   $[\KPp1.4 p=1\ssp$ and $\ssp t=\plusinfty\KPp1.4]\ssp$ or $\ssp
    [\KPp1.4 p=\plusinfty\ssp$ and $\ssp t=1\KPp1.4] \KP1 $, \inskipline{.5}2

(9) \ $\Abrs33^p = \uniqset\chi:p\in\rbb R^+\sp$ and \inskipline{.2}{28.65}

  $\chi \ssp = \ssp \{\sp\minusinfty\ssp,\plusinfty\ssp,\cinfty\,\}\timesn
                    \{\sp\plusinfty\,\} \cupss21 \big\langle\KP{1.2}
  |\,\zeta\,|\suba\RHB{.25}{^p} \sn : \zeta\in\mathbb H\KPp1.2 \rangle \KP1 $, \inskipline{.5}2

(10) \ $\scrb8 T\,$ is a {\it topology\ssp} 
$\equivss33 \emptyset\not=\scrb8 T\in\Univ\ssp$ and \inskipline{.2}{10.5}

$\aall{\Cal A}\,\Cal A\inc\scrb8 T\impss33
\bigcup\,\Cal A\in\scrb8 T\ssp\text{ and }\ssp
[\KP{1.2} \Cal A\not=\emptyset\ssp\text{ and }\ssp
\Cal A\ssp\text{ is finite }\Rightarrow\ssp\bigcap\,\Cal A\in
\scrb8 T \KP{1.5} ] \KP1$, \inskipline{.5}2

(11) \ $\scrb8 T\,$ is a {\it separable\ssp} topology $\equivss33
\scrb8 T\,$ is a topology and \inskipline{.2}{10.5}

$\eexi D\,D\ssp$ is countable and 
$\ssp\aall U\,U\in\scrb8 T\impss33 D\capss33 U\not=\emptyset\ssp\text{ or }\ssp
U=\emptyset\KPt8$, \inskipline{.5}2

(12) \ $\scrb8 T\,$ is a {\it compact\ssp} topology $\equivss33
\scrb8 T\,$ is a topology and \inskipline{.2}{10.5}

$\aall{\Cal A}\,\eexi{\Cal B}\,
\Cal A\inc\scrb8 T\impss33
\Cal B\inc\Cal A\ssp$ and $\ssp\Cal B\ssp$ is finite and \inskipline{.2}{10.5}

$\ssp[\KP{1.6}
\bigcup\,\Cal A\inc\bigcup\,\Cal B\ssp$ or $\ssp
\bigcup\,\Cal A\not=\bigcup\,\scrb8 T \KP{1.5} ] \KP1$, \inskipline{.5}2

(13) \ $\scrmt A\ssp$ is {\it disjoint\ssp} $\equivss33\aall{A\,,\sp B}\,
 A\,,\sp B\in\scrmt A\impss33 A=B\ssp$ or $\ssp A\capss32 B=\emptyset \,$, \inskipline{.5}2

(14) \ $\Pows A=\{\,B:B\inc A\KP1\} \KP1 $, \KP{7.2}
(15) \ $\scrmt A\leiss42 B=\{\,A\capss32 B : A\in\scrmt A\KP1\} \KP1 $, \inskipline{.5}2

(16) \ $\nsTbb_R=\big\{\ssp\bigcup\,\scrmt A:\scrmt A\inc\big\{\,
\openIval{\ssp s\ssp,\spp t\ssp}:\minusinfty < s < t < \plusinfty\KPt9\}
\sp\} \KP1 $, \inskipline{.5}2

(17) \ $\barscTbb_R=
\Pows[\ssp\minusinfty\,,\plusinfty\KP1]\capss51\{\KPt8 U\sn:
U\cap\ssbb40 R\in\nsTbb_R\ssp$ and $\ssp\eexi{s\ssp,\sp r\in\bbR}\,$
\inskipline{.2}{21}

$
[\KPp1.4 \plusinfty\in U\impss33 
{\ssp]}\KP1 s\ssp,\plusinfty\KPt9]
\inc U\KPp1.4]\ssp$ and $\ssp[\KPp1.4 \minusinfty\in U\impss33 
[\ssp\minusinfty\,,\spp r\KP1{[\ssp}
\inc U\KPp1.4]\KP1\big\} \KP1 $, \inskipline{.5}2

(18) \ $f\fvalss20 x=
\bigcap\KPt8\{\,y:\aall z\,
(\ssp x\ssp,\spp z\ssp)\in f\equivss33 y=z\,\} \KP1 $, \inskipline{.5}2

(19) \ $f\ssp\image\ssn A=\{\,y:
\eexi{x\in A}\,(\ssp x\ssp,\spp y\ssp)\in f\KP1\} \KP1 $, \KP4
(20) \ $f\KPt8[\KPt8 A\KPt9]=f\ssp\image\ssn A \KPt8 $, \inskipline{.5}2

(21) \ $f\,\images\sn\scrmt A=
\{\,f\ssp\image\ssn A:A\in\scrmt A\KP1\} \KP1 $, \KP{18.2}
(22) \ $f\invss40=\{\,(\ssp y\ssp,\spp x\ssp):
(\ssp x\ssp,\spp y\ssp)\in f\KP1\} \KP1 $, \inskipline{.5}2

(23) \ $\dom f=\{\,x:\eexi y\,(\ssp x\ssp,\spp y\ssp)\in f\KP1\} \KP1 $, \KP8
(24) \ $\rng f=\{\,y:\eexi x\,(\ssp x\ssp,\spp y\ssp)\in f\KP1\} \KP1 $, \inskipline{.5}2

(25) \ $E\Reit3=\uniqset F:\eexi{a\ssp,\sp c\,,\sp\scrmt S}\, E = 
          (\ssp a\ssp,\sp c\,,\spp\scrmt S\ssp)\ssp$ and \mathss38{F =
(\ssp a\ssp,\sp c\KPp1.1|\KP1(\ssbb40 R\times\Univ\ssp)\,,\spp\scrmt S\ssp) }, \inskipline{.5}2

(26) \ $^A\,B = A\times B\capss31\{\,f:f\ssp\text{ is a function and }\ssp 
          A\inc\dom f\KP1\} \KP1 $, \inskipline{.5}2

(27) \ $\prodc\sp\bmii8 A \sp = \sp ^{\dom\sn\bmii6 A}\,\Univ\capss31\{\,
          x:\aall{i\ssp,\sp\xi}\,(\ssp i\ssp,\spp\xi\ssp)\in x\impss33\xi\in
 \bmii8 A\fvalue\sp i\KPt9\} \KP1 $, \inskipline{.5}2

(28) \ $\bosy x\to x\ssp$ in top \math{\scrmt T\equivss33\bosy x \in \sp
          ^\sbbNo\,\bigcup\,\scrmt T } and \math{x\in\bigcup\,\scrmt T } and \inskipline0{24.2}

 $\aall{\ssp U}\,\eexi{\smb N}\,x\in\sp U\sn\in\scrmt T \impss33 \smb N \in 
 \bbNo \ssp$ and \mathss30{\bosy x\KP1[\KPp1.1\bbNo\sn\setminus\smb N\KP1]
 \inc \sp U}, \inskipline{.5}2

(29) \ $E\subsigrs04=\uniqset F:\eexi{\bosy K}\,\bosy K\ssp$ is a 
          topological division ring and \math{\Bnull_{\bosy K} = 0 } and \inskipline0{14}

 $E\in\tvsps0(K) \ssp $ and \math{\aall{\Iota\ssp,\sp I\sp,\sp\scrmt T}\,
 I = \Cal L\,(\sp E\ssp,\spp\bosy K\ssp) } and \inskipline0{31.5}

 $\Iota=\seqss03{\sn\seqss33{u\fvalue x:u\in I}:x\in\vecs E} \ssp$ and \mathss39{
 \scrmt T=\Iota\invss44\images\sp(\ssp
  \taurd\bosy K\expnota^\sp I\sp]_{ti}\big) } \inskipline0{90.5}

 $\impss03 F = (\ssp\sigrd E\ssp,\spp\scrmt T\,) \KP1 $.
  \end{definitions}

About the {\sl weakening\sp} \math{E\subsigrs04} of \math{E} in 
Definitions \ref{misc defs}\,(29) above we note the following. If \math{E} is 
a topological vector space over a topological division ring \mathss32{\bosy K
}, \,there may exist another topological division ring \math{\bosy K\sn\ar 1} 
with \mathss38{E\in\tvsps9(K\sn\ar 1\sn) }. In every case then \math{
\vecs\bosy K=\domm\tsigrd E=\vecs\bosy K\aR 1} holds, but \math{\bosy K} and \math{
\bosy K\sn\ar 1} may possess different zero elements if \math{ \vecs E = 
 \{\,\Bnull_E\} } holds. Then the condition \math{\Bnull_{\bosy K}=0=
 \Bnull_{\sp\aars{\bosy K\snn}_1} } excludes this possibility. If \math{
\vecs E=\{\,\Bnull_E\} } holds, for \math{ I = 
 \Cal L\,(\sp E\ssp,\spp\bosy K\ssp) } and \mathss30{ n = 
 \{\,\vecs E\times\snn\{\ssp 0\ssp\}\sp\} } then necessarily \math{ I = 
 \{\ssp n\ssp\} } holds, and we get \math{ \Iota = \{\KPt8(\,\Bnull_E\ssp,\sp
 \{\,(\ssp n\ssp,\spp 0\ssp)\,\}\ssp\sbig)2\ssp\big\} } and further \mathss06{
\scrmt T=\Pows\vecs E }. Hence in this case \math{\scrmt T} is uniquely 
determined although \mathss30{\taurd\bosy K\not=\taurd\bosy K\sn\ar 1 } may 
hold. If \math{I\not=\{\ssp n\ssp\} } holds, then one deduces from the 
postulates in the definition of a topological vector space that we necessarily 
have \math{\bosy K=\bosy K\sn\ar 1 } and consequently again \math{
(\ssp\sigrd E\ssp,\spp\scrmt T\,) } is uniquely determined.

Thus the above definition of \math{E\subsigrs04} is meaningful for precisely 
those topological vector spaces \math{E} that are \q{over} some topological 
division ring whose zero element is the same as that of the quaternionic one. 
For more general cases one has to use a more complicated notation e.g.\ from \math{
E\subsigrs04\ssp\langle\,\bosy K=E\subsigrs04\ssp(\sp I\ssp) } for \math{I} as 
above, once the appropriate additional definition is specified.

In \ref{misc defs}\,(18) above \math{f\fvalss20 x} is the function value of \math{
f} at \math{x} which usually is written in a more complicated manner 
\q{$f\ssp(x)$}, and possibly having a different formal definition as for 
example in \cite[Definition 68\ssp, p.\ 261]{Ky}\,. We further state some 
basic definitions and their simple consequences without proofs in the 
  following

\begin{proposition}\label{Pro basic}

$\null$ {\rm \inskipline{.5}2

(1) \ }$\emptyset=\{\,x:x\not=x\,\} \KP1 ${\rm, \KP{24} 
(2) \ }$\Univ=\{\,x:x=x\,\} \KP1 ${\rm, \inskipline{.5}2

(3) \ $ \roman{pr}\ar 1 = 
    \{\,(\ssp x\ssp,\spp y\ssp,\spp x\ssp):x\ssp,\sp y\in\Univ\KP1\} \KP1 $, \KP5 
(4) \ }$\roman{pr}\ar 2 =
    \{\,(\ssp x\ssp,\spp y\ssp,\spp y\ssp):x\ssp,\sp y\in\Univ\KP1\} \KP1 
       ${\rm, \inskipline{.5}2

(5) \ }$\roman{ev}=\{\,(\ssp x\ssp,\spp u\ssp,\spp y\ssp):u\ssp\text{ is a 
    function  and }\ssp(\ssp x\ssp,\spp y\ssp)\in u\,\} \KP1 ${\rm, \inskipline{.5}2

(6) \ }$\roman{ev}\sbi{\sp\emath x} = 
        \seqss33{u\fvalue x:u\ssp\text{ is a function}}$ \inskipline{.2}{14.45}

  ${} = \{\,(\ssp u\,,\spp y\ssp):u\ssp\text{ is a function and }\ssp
            (\ssp  x\ssp,\spp y\ssp)\in u\KPt8\} \KP1 ${\rm, \inskipline{.5}2

(7) \ }$\scrmt A\,,\sp\scrmt B$ disjoint iff $\,\scrmt A$ and $\,\scrmt B$ 
      disjoint iff $\,\scrmt A$ and $\,\scrmt B$ are disjoint iff \inskipline0{52.2}

 $\scrmt A$ is disjoint and $\,\scrmt B$ is disjoint{\sp\rm, \inskipline{.5}2

(8) \ }$                                                               \newcommand\opair[2]{\hbox{\kern.#1mm\kern-.2mm\font\Å=cmtt10\Å,\kern-.2mm\kern.#2mm}} 
        x \opair14 y = (\sp x\ssp,\spp y\ssp) =
        \{\sp\{\,x\ssp,\sp y\,\}\,,\sp\{\ssp y\ssp\}\sp\} \KP1 ${\rm, \KP{3.9}
(9) \ }$(\sp x\ssp,\spp y\ssp,\spp z\ssp) = 
        ((\ssp x\ssp,\spp y\ssp)\ssp,\spp z\ssp) \KP1 ${\rm, \inskipline{.5}2

(10) \ }$(\ssp x\,;\spp y\ssp,\spp z\ssp) = 
         (\ssp x\ssp,\spp(\ssp y\ssp,\spp z\ssp)) \KP1 ${\rm, \KP{15}
(11) \ }$(\sp x\ssp,\spp y\,;\spp u\ssp,\spp v\ssp) = 
         (\ssp x\ssp,\spp y\ssp,\spp(\ssp u\ssp,\spp v\ssp)) \KP1 ${\rm, \inskipline{.5}2

(12) \ }$\sigrd z = \bigcup\bigcup\sp z\setminus\bigcup\bigcap\sp z
                                    \sp\cup\sp  \bigcap\bigcup\sp z ${\,\rm, \KP{3.8}
(13) \ }$\taurd z=\bigcap\bigcap\sp z ${\,\rm, \inskipline{.5}2

(14) \ }$\ssigrd z = \sigrd(\ssp\sigrd z\ssp) ${\KP1\rm, \KP{22.8}
(15) \ }$\tsigrd z = \taurd(\ssp\sigrd z\ssp) ${\KP1\rm, \inskipline{.5}2

(16) \ }$z = (\ssp x\ssp,\spp y\ssp)\in\Univ\impss33 x = \sigrd z$ and $\ssp 
         y = \taurd z ${\,\rm, \inskipline{.5}2

(17) \ }$x\ssp$ is a set $\equivss22 x\ssp$ a set $\equivss22
         x\ssp$      set $\equivss22 \eexi y\,x\in y ${\ssp\rm, \inskipline{.5}2

(18) \ }$x \not= y\equivss22$ not $[\KP{1.4} x=y \KP{1.4} ] \KP1 $.
  \end{proposition}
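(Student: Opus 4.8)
The plan is to read Proposition~\ref{Pro basic} as a list of two kinds of assertions: \emph{definitional} ones, which merely fix notation and demand nothing beyond a check of well-formedness, together with a few genuine \emph{consequences} --- namely (7), (16), (17) and (18) --- that carry the content. Items (1)--(6) are direct instances of the set-notation and sequence schemata (Definitional schemata~\ref{defi {F:...:P}} and~\ref{<T:F>}), each obtained by unwinding $\{\,\mfrk F:\mfrk P\,\}$ or $\seq{\,\mfrk T:\mfrk F\,}$ for the indicated term and formula; items (8)--(15) introduce further abbreviations --- the ordered pair, the triples, and the coordinate operations $\sigrd,\taurd,\ssigrd,\tsigrd$ --- as direct terms. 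For all of these there is nothing to prove but legitimacy. Item (18) is the definition of $\not=$, and in (17) the phrasings \q{$x$ is a set}, \q{$x$ a set}, \q{$x$ set} are synonymous variants of one predicate, whose substance, $x$ is a set iff $\eexi y\, x\in y$, is just the definition of sethood in the present system.

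The substance sits in (16), and this is where I would concentrate. Given $z = (\ssp x\ssp,\spp y\ssp)\in\Univ$, item (8) gives $z = \{\sp\{\,x\ssp,\sp y\,\}\,,\sp\{\ssp y\ssp\}\sp\}$; since $z$ is a set, so are its members $\{\,x\ssp,\sp y\,\}$ and $\{\ssp y\ssp\}$, and hence $x$ and $y$ (by (17)). I would then compute the iterated unions and intersections directly: $\bigcup z = \{\,x\ssp,\sp y\,\}$ and $\bigcap z = \{\ssp y\ssp\}$, whence $\bigcup\bigcup z = x\cupss11 y$, $\bigcap\bigcup z = x\capss11 y$, and $\bigcup\bigcap z = \bigcap\bigcap z = y$. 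Substituting into (13) gives $\taurd z = \bigcap\bigcap z = y$ at once. For (12), taken with its operations associating to the left, i.e.\ as $(\KP1\bigcup\bigcup z\setminus\bigcup\bigcap z\KP1)\cupss11\bigcap\bigcup z$, the computation reduces to $(\KP1(x\cupss11 y)\setminus y\KP1)\cupss11(x\capss11 y) = (x\setminus y)\cupss11(x\capss11 y) = x$, so $\sigrd z = x$. The one point demanding care is the degenerate case $x = y$, where $z$ collapses to $\{\sp\{\ssp x\ssp\}\sp\}$; I would check separately that all four doubly-iterated operations then return $x$, so the formulas still give $\sigrd z = x = y = \taurd z$.

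Finally, (7) follows by unwinding Definitions~\ref{misc defs}\,(13): a family $\scrmt A$ \emph{is disjoint} just means its members are pairwise disjoint. The phrasings \q{$\scrmt A\ssp,\sp\scrmt B$ disjoint}, \q{$\scrmt A$ and $\scrmt B$ disjoint} and \q{$\scrmt A$ and $\scrmt B$ are disjoint} are governed by the grammatical convention that distributes the predicate over the conjunction, so each reduces to \q{$\scrmt A$ is disjoint and $\scrmt B$ is disjoint}; the chain of equivalences is then pure parsing, with no set-theoretic content beyond the single definition. The main --- indeed the only --- obstacle in the whole proposition is thus localized in (16): confirming that the nonstandard ordered pair $\{\sp\{\,x\ssp,\sp y\,\}\,,\sp\{\ssp y\ssp\}\sp\}$ is correctly decoded by the $\sigrd$- and $\taurd$-formulas, and in particular that the $\setminus$/$\cup$ combination in (12) returns the first coordinate uniformly, the degenerate case included. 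Everything else is bookkeeping.
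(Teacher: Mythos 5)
Your proposal is correct, but there is in fact no paper proof to compare it against: the proposition is introduced with the words "we further state some basic definitions and their simple consequences without proofs", so the paper deliberately supplies no argument. Your localization of the only substantive content in item (16), and the computation there — $\bigcup z=\{\,x\ssp,\sp y\,\}$ and $\bigcap z=\{\ssp y\ssp\}$ for $z=\{\sp\{\,x\ssp,\sp y\,\}\,,\sp\{\ssp y\ssp\}\sp\}$, hence $\taurd z=\bigcap\bigcap z=y$ and $\sigrd z=((\ssp x\cup y\ssp)\setminus y)\cup(\ssp x\cap y\ssp)=x$ under the left-associated reading of (12), an identity valid uniformly, including the degenerate case $x=y$ — is exactly the verification the paper leaves implicit, with the remaining items reading off the schemata and conventions as you say.
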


Observe for example that if \math{E=(\ssp a\ssp,\sp c\,,\spp\scrmt S\ssp)\not=
 \Univ } with \math{c} a function \mathss34{R\times S\to S}, \,then \ 
$\domm\tsigrd E=\dom(\sp\dom(\ssp\taurd(\ssp\sigrd E\ssp)))=
\dom(\sp\dom(\ssp\taurd(\ssp a\ssp,\sp c\ssp)))$ \inskipline0{28}

${}=\dom(\sp\dom c\ssp)=\dom(\sp R\times S\ssp)=R\ssp$ {\sl if\sp} \math{
S \not= \emptyset } holds. \vskip.5mm

To see that the above given convention of \q{\mathss00{u\le v}} having a 
meaning for both extended real numbers and extended real number valued 
functions \math{u\ssp,\sp v} does not create any contradiction in our logical 
system, we need the following

\begin{lemma}

For every function $\,u$ with $\,\rng u\inc\ovbbR$ it holds that $\,u\not\in
\ovbbR \, $.
  \end{lemma}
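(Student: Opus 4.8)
The plan is to argue by contradiction: assume $u$ is a function with $\rng u\inc\ovbbR$ and, at the same time, $u\in\ovbbR$, and from this extract a forbidden $\in$-cycle, contradicting the regularity axiom (\ref{ax of regularity})$\ar{ax}$.

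The one structural input I would read off from the construction of $\ovbbR$ is the following. Put $w_0=(\ssp 0\ar1\ssp,\spp 0\ar1\ssp)$. Every extended real $r\in\ovbbR$ is presented as an ordered pair $r=(\ssp a\ssp,\spp w_0\ssp)$ with $a\in\ovbbR\ar1$: for the reals this is the stated inclusion $\bbR\inc\ovbbR\ar1\timesn\{\,0\ar1\}\timesn(\{\,0\ar1\}\timesn\{\,0\ar1\})$, and the two adjoined points $\pm\infty$ are given in the same format, so the second coordinate $\taurd r$ equals the common tag $w_0$ for all $r\in\ovbbR$. By Proposition \ref{Pro basic}\,(8) this means $\{\,w_0\}\in r$ for every $r\in\ovbbR$; in particular each extended real is a nonempty set, which already settles the degenerate case $u=\emptyset$.

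Now suppose $u\in\ovbbR$. Then $\{\,w_0\}\in u$. Since $u$ is a function, every member of $u$ is an ordered pair, so $\{\,w_0\}$ is an ordered pair. But a singleton $\{\,w_0\}=\{\,\{\,a\ssp,\spp b\,\}\,,\spp\{\,b\,\}\,\}$ forces $a=b$ and $w_0=\{\,a\,\}$; hence $w_0$ is itself a singleton, say $w_0=\{\,e\,\}$, and the pair is $(\ssp e\ssp,\spp e\ssp)$, giving $u\fvalue e=e$ and therefore $e\in\rng u\inc\ovbbR$.

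Finally I apply the structural input a second time, now to $e\in\ovbbR$, obtaining $\{\,w_0\}\in e$. Substituting $w_0=\{\,e\,\}$ turns this into $\{\,\{\,e\,\}\,\}\in e$, while trivially $e\in\{\,e\,\}\in\{\,\{\,e\,\}\,\}$. Thus the nonempty set $\{\,e\ssp,\spp\{\,e\,\}\ssp,\spp\{\,\{\,e\,\}\,\}\,\}$ possesses no $\in$-minimal member, contradicting (\ref{ax of regularity})$\ar{ax}$; this completes the proof. The only delicate point, and the main thing to pin down, is the structural input of the second paragraph, namely that the construction genuinely presents every extended real, including $\pm\infty$, as an ordered pair with the common second coordinate $w_0$. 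Once that is fixed, the remainder is the short regularity argument above, and nothing about the detailed coordinatization of $\ovbbR\ar1$ is needed.
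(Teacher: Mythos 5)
Correct, and essentially the paper's own argument: both proofs use that the element $\{\ssp(\ssp 0\ar 1\sp,\spp 0\ar 1)\ssp\}$ of $u\in\ovbbR$ must, since $u$ is a function, be an ordered pair whose second coordinate lies in $\rng u\inc\ovbbR$, and both conclude with a three-term $\in$-cycle forbidden by the regularity axiom. Your $e$ is exactly the paper's $\{\,0\ar 1\}$, so your cycle $e\in\{\,e\,\}\in\{\,\{\,e\,\}\,\}\in e$ coincides, up to a shift, with the paper's cycle $0\ar 1\in\{\,0\ar 1\}\in\{\,\{\,0\ar 1\}\,\}\in 0\ar 1$.
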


\begin{proof} The {\sl regularity axiom\sp} (\ref{ax of regularity})$\ar{ax}$ 
on page \pageref{ax of regularity} above, cf.\ \cite[\erm{VII}\sp, p.\ 266]{Ky} 
or \cite[\erm{ZF\,}9\ssp, p.\ 401]{Du}\,, has the simple consequence that 
there {\sl do not exist\sp} any \mathss30{x\ar 0\,,\sp x\ar 1\sp,\sp x\ar 2} 
such that \math{x\ar 0\in x\ar 1\in x\ar 2\in x\ar 0} holds. We show that this 
will be contradicted if there exists a function \math{u} with \math{\rng u\inc
 \ovbbR} and \mathss37{u\in\ovbbR}. Indeed, then there is \math{r} with \inskipline{.42}{17.72}

$u = (\ssp r\sp,\spp 0\ar 1\sp;\spp 0\ar 1\sp,\spp 0\ar 1)
   = (\ssp r\sp,\spp 0\ar 1\sp,\sp(\ssp 0\ar 1\sp,\spp 0\ar 1))$ \inskipline{.2}{19.8}

${}= ((\ssp r\sp,\spp 0\ar 1)\,,\sp(\ssp 0\ar 1\sp,\spp 0\ar 1))
   = \{\sp\{\,(\ssp r\sp,\spp 0\ar 1)\,,\sp(\ssp 0\ar 1\sp,\spp 0\ar 1)\,\}\,,\sp
          \{\,(\ssp 0\ar 1\sp,\spp 0\ar 1)\,\}\sp\} \KP1 $. \inskipline{.4}0

Since \math{u} is a function with \math{\rng u\inc\ovbbR} there are \math{
x\ssp,\sp s} with \inskipline{.4}{22}

$ \{\spp\{\spp\{\,0\ar 1\}\spp\}\spp\}
= \{\,(\ssp 0\ar 1\sp,\spp 0\ar 1)\,\}
= (\ssp x\ssp,\sp(\ssp s\ssp,\spp 0\ar 1\sp;\sp 0\ar 1\sp,\spp 0\ar 1))$ \inskipline{.2}{36.8}

${}= \{\sp\{\,x\ssp,\sp(\ssp s\ssp,\spp 0\ar 1\sp;\sp 0\ar 1\sp,\spp 0\ar 1)\,
          \}\,,\sp\{\,(\ssp s\ssp,\spp 0\ar 1\sp;\sp 0\ar 1\sp,\spp 0\ar 1)\,
                  \}\sp\} \KP1 $, \inskipline{.4}0

and hence \mathss38{
 \{\,x\ssp,\sp(\ssp s\ssp,\spp 0\ar 1\sp;\sp 0\ar 1\sp,\spp 0\ar 1)\,\} = 
 \{\spp\{\,0\ar 1\}\spp\} = \{\,(\ssp s\ssp,\spp 0\ar 1\sp;\sp 0\ar 1\sp,\spp 
 0\ar 1) \, \} }, \,whence further \mathss30{\{\,0\ar 1\} } \mathss08{{\KN{.99}}
= (\ssp s\ssp,\spp 0\ar 1\sp;\sp 0\ar 1\sp,\spp 0\ar 1)
= \{\sp\{\,(\ssp s\ssp,\spp 0\ar 1)\,,\sp(\ssp 0\ar 1\sp,\spp 0\ar 1)\,\}\,,\sp
  \{\,(\ssp 0\ar 1\sp,\spp 0\ar 1)\,\}\sp\} }. Then we get \inskipline{.2}{7.7}

$0\ar 1\in\{\,0\ar 1\} \in \{\spp\{\,0\ar 1\}\spp\} \in 
 \{\spp\{\spp\{\,0\ar 1\}\spp\}\spp\} = \{\,(\ssp 0\ar 1\sp,\spp 0\ar 1)\,\} = 
 0\ar 1 \ssp $, \,a {\sl contradiction\sp}.
  \end{proof}

For \math{0<q<\plusinfty} we assume that \math{ \plusinfty\RHB{.25}{\KPt8^q} 
 = \plusinfty} in the following

\begin{constructions}[of Lebesgue quasi\ssp-\ssp norms]\label{Ctr |x|_lL^p} $\null$ \inskipline{.7}2

(1) \ $\|\,x\,\|\lllnor_p=\uniqset s : [\KPp1.4 0<p<\plusinfty\ssp$ and $\ssp 
      s=\big(\sp\sum_{\KPt8 i\ssp\in\ssp\dom\snn\emath x} \ssp | \KP1 
   x\fvalss01 i\KP1|\suba\RHB{.25}{^p}\ssp\sbig)0\,^{p^{-1}} \KPp 1.4 \big ] \ssp $ \inskipline{.2}{40}

  or $\ssp [\KPp1.4 p = \plusinfty\ssp$ and $\ssp s = \sup\sp\big\{\,
  |\,t\,|\suba\sn:t\in\rng x\KPt8\} \KPp 1.4 \big ] \KP1 $, \inskipline{.5}2

(2) \ $\|\,x\,\|\Lnorss33^p_\mu=\uniqset s : [\KPp1.4 0<p<\plusinfty\ssp$ and $\ssp \label{ctr L^p-norm}
       \aall\Omega\,\Omega = \bigcup\,\dom\mu\impss30{}$ \inskipline{.2}{35.8}

  $s = \inf\sp\big\{\,\big(\sp\int_{\KPp1.1\Omega\,}\varphi\rmdss11\mu\ssp)\KP1
   ^{p^{-1}}\ssn:\varphi\in\sp^\Omega\KP1[\KPp1.1 0\,,\plusinfty\KPt9]\ssp$ 
   and \inskipline{.2}{23.4}

  $\varphi\invss44\images\spp\barscTbb_R\inc\dom\mu\ssp$ and $\ssp
    \aall{\eta\,,\sp t}\,(\ssp\eta\ssp,\spp t\ssp)\in x\impss33
    |\,t\,|\suba\RHB{.25}{^p} \le \varphi\fvalue\eta\KP1\} \KPp 1.4 \big ] $ \inskipline{.2}{7.5}

  or $\ssp[\KPp1.4 p=\plusinfty\ssp$ and $\ssp s=\inf\,\{\,\smb M:\smb M\in
  \rbb R^+\sp$ and $\ssp\aall A\,\eexi N\,A\in\mu\invss44\image\spp\lbb R_+$ \inskipline{.2}{23.5}

  ${}\impss03 N\in\mu\invss33\image\snn\{\ssp 0\ssp\}\ssp$ and $\ssp\sup\sp\big\{\,
  |\,t\,|\suba\sn:t\in x\sp\image(\sp A\setminus N\ssp)\KPt8\}\le\smb M\KP1\}
    \KPp 1.4 \big ] \KP1 $.
  \end{constructions}

For completeness' sake, in Constructions \ref{defi $L^p$} below of the 
generalized Lebesgue\,--\,Bochner spaces we have included items (7) and (11) 
where we define \math{\suptext{vc}0\Lrs03^p(\vcal Q\sp) } and \mathss38{
\LLrs03^p(\ssp Q\ssp,\spp\vPi\ssp) }. There we utilize the concepts of 
{\sl quasi\ssp-\sp\esl Euclidean vector column\sp} and 
{\sl quasi\ssp-\sp usual space\sp}. To make matters precise, we give the 
  following

\begin{definitions}

(1) \ Say that \math{\vcolQ} is a {\it quasi\ssp-\sp\eit Euclidean \mathss36{
    \bosy K}--\,vector column\ssp} if{}f there are $Q\ssp,\sp\Yps\spp,\sp\vPi\ssp$ 
with \math{\vcolQ = (\ssp Q\ssp,\sp\Yps\sppp,\spp\vPi\ssp) } and such that \math{
Q\inc\vecs\Yps} and \math{\Yps\in\LCSps5(\tfbbR) } and $\vPi\in\tvsps0(K)$ 
hold with \math{\dimHa\Yps\in\bbNo} and \math{\bosy K\in\setRC} and for every 
\math{\xi\in\vecs\vPi
 } there is \math{
u\in\Cal L\,(\sp\vPi\sp,\spp\bosy K\ssp) } with \math{\xi=\Bnull_\vPi} or \mathss06{
u\fvalss02\xi \not= 0 }. \inskipline{.5}2

(2) \ Say that \math{\ebit F} {\it usualizes} \math{F} over \math{\bosy K} 
    if{}f \math{\bosy K\in\setRC} and there is \math{k\in\bbNo} with \math{
(\ssp\emptyset\,,\spp\bosy K\ssp)\,,\sp(\ssp k\ssp,\spp F\ssp)\in
 \ebit F\in\sp^{k\ssp +\ssp 1.}\,\Univ } and for every \math{i\in k} there are \math{
i\ar 1\ssp,\sp i\ar 2\in i\ssp\yplus } and \math{l\in\bbN} and \math{
\ebit E\in\sp^l\,(\ssp\ebit F\KPt8\image i\ssp\yplus)} with \mathss38{
\ebit F\fvalss61 i\ssp\yplus \in \{\KPt8\bmii8 F\fvalss61 i\ar 1\sqcap\sp
 (\spp\ebit F\fvalss61 i\ar 2\spp)\,,\spp\vecs\bosy K\text{\,-\sp}
   \prodsubtext{tvs}\ebit E\KP1\} }. \inskipline{.5}2

(3) \ Say that \math{F} is {\it quasi\ssp-\sp usual\,} over \math{\bosy K} 
    if{}f \inskipline0{18.8}

there is \math{\ebit F} such that \math{\ebit F} usualizes \math{F} over \math{
  \bosy K}.
  \end{definitions}

A quasi\ssp-\sp usual space necessarily has finite nonzero dimension. For
example \linebreak the space \math{ F =
\bosy R\sp\sqcap(\spp\bosy R\sp\sqcap\bosy R\ssp)\expnota^\ssmb N]_{tvs}} is
quasi\ssp-\sp usual over \math{\bosy R} when \math{ \bosy R \in \setRC} and $
\smb N\in\bbN\sp\,$, with for example \math{
 \seqss44{\bosy R\,,\sp\bosy R\sp\sqcap\bosy R\,,
      (\spp\bosy R\sp\sqcap\bosy R\ssp)\expnota^\ssmb N]_{tvs}\spp,\sp F} }
            usualizing \mathss31{F}.

\begin{lemma}\label{Le for q-usu} 

For \PouN$\ssp\sbi{\iota\ssp=\ssp\sixroman{1\sp,\ssp 2}}\,,$ let $\,
\varUpsilon\ssn\sbi\iota\sp$ be quasi\ssp-\sp usual over $\ssp
\bosy K\snn\sbi\iota$ with $\ssp Q\inc\vecs\varUpsilon\ssn\sbi\iota\ssp$. If
also $\,[\ Q\not=\emptyset\sp$ and $\,\bosy K\sn\ar 1=\bosy K\snn\ar 2\ ]$ or
$\,\Int_taurd{\varUpsilon_\iota} Q\not=\emptyset\,,$ \,then $\,
\varUpsilon\aar 1=\varUpsilon\aar 2\,$.
  \end{lemma}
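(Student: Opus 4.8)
The plan is to exploit the explicit set-theoretic form of the vectors of a quasi-usual space. By the definition of a quasi-usual space, together with the accompanying observation that such a space has finite nonzero dimension, each $\varUpsilon_\iota$ is a Hausdorff topological vector space over $\bosy K_\iota\in\setRC$ carrying the usual topology, built from a usualizing sequence $\ebit F$ by finitely many binary products (written $\sqcap$) and finite topological vector products, the leaves being copies of $\bosy K_\iota$. I would reduce the assertion to two points: first, that the \emph{shape} of this construction (the finite tree recording which nodes are $\sqcap$-products, which are finite products, of what length, and which are leaves) is recoverable from any single vector $\xi\in\vecs\varUpsilon_\iota$; and second, that $\bosy K_1=\bosy K_2$. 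Granting both, a common vector $\xi\in Q$ forces $\varUpsilon_1$ and $\varUpsilon_2$ to have the same shape, whence — the two construction operations being deterministic functions of their inputs — the same field and the same shape reproduce the identical structured object $\varUpsilon_1=\varUpsilon_2$; equality of the topologies is then also guaranteed by the uniqueness of the Hausdorff vector topology on a finite-dimensional space over $\bbR$ or $\bbC$.

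For the first and principal point I would induct on the length of a usualizing sequence, reading off the top node of the tree from the set-theoretic form of $\xi$. A leaf value, being an element of $\bbC\supseteq\bbR$, has by the encodings fixed in the subsection on special constructions the rigid form $\big(\,(\,\cdot\,,\cdot\,)\,,(\ssp 0\ar 1,0\ar 1)\big)$ with fixed trailing block $(\ssp 0\ar 1,0\ar 1)$; a $\sqcap$-product vector is a Kuratowski pair $(\ssp\alpha,\beta\ssp)$ in the sense of Proposition \ref{Pro basic}\,(8) whose components are again factor vectors; and a finite-product vector is a function on a finite ordinal $l\ge 1$. These forms must be shown mutually exclusive for one and the same set $\xi$: the required non-coincidences — for instance that the trailing block $(\ssp 0\ar 1,0\ar 1)$ of a scalar is never itself a legitimate factor vector, and that a Kuratowski pair is never a function on $l\ge 2$ — follow from those explicit encodings of $\bbR$, $\bbC$ and ordered pairs together with the regularity axiom (\ref{ax of regularity})$\ar{ax}$. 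Hence the top node, and by the induction hypothesis applied to the components the entire tree, is determined by $\xi$ alone; in particular two quasi-usual spaces sharing a vector have a common shape and extract \emph{identical} leaf values from that vector. I expect this coincidence-elimination to be the main obstacle, since it is entirely dependent on the concrete set-theoretic encodings and must be carried out by careful rank bookkeeping rather than abstractly.

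It remains to secure $\bosy K_1=\bosy K_2$, where I read the interior hypothesis as holding for each $\iota$. If one field were real and the other complex, relabel so that $\bosy K_1=\bbC$ and $\bosy K_2=\bbR$. By the previous paragraph $\varUpsilon_1$ and $\varUpsilon_2$ share a common shape and hence a common leaf-extraction; since $\Int_taurd{\varUpsilon_1} Q\not=\emptyset$, the set $Q$ contains a $\taurd\varUpsilon_1$-open set, and because in the usual topology the projection onto any $\bosy K_1$-leaf is open onto $\bbC$, this open set contains a vector $\xi$ with a leaf value in $\bbC\setminus\bbR$. But then $\xi\in Q\inc\vecs\varUpsilon_2$, while every vector of the real space $\varUpsilon_2$ has, through the common shape, all its leaf values in $\bbR$, a contradiction. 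Thus the fields agree in the interior case, and in the case $Q\not=\emptyset$ with $\bosy K_1=\bosy K_2$ they agree by hypothesis; either way $Q\not=\emptyset$, so a common vector of $Q$ yields $\varUpsilon_1=\varUpsilon_2$ as explained, which completes the proof.
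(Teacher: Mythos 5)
You should first be aware that the paper does not prove this lemma at all: immediately after the statement it defers the proof to \cite{Hif}\,, remarking that it is quite long and requires delving into the set-theoretic formal construction of the complex number system starting from $\bbNo$, and omits it. So there is no in-paper argument to compare yours against; one can only judge your proposal on its own terms and against that remark. Your skeleton is the natural (indeed essentially forced) one, and several of its points are correct and worth recording: the interior hypothesis must be read as holding for each $\iota$, exactly as you read it --- otherwise $Q=\bbR$, $\varUpsilon\ar 1=\tfbbR$, $\varUpsilon\ar 2=\tfbbC$ would be a counterexample, since $\bbR\inc\mathbb C$ under the paper's encodings; a nonempty open set in a complex quasi-usual space does contain a vector with a non-real leaf value, because coordinate projections are open for the product topology; and since $\sqcap$ and the finite tvs product are functions of their arguments, a common tree together with a common field reproduces one and the same structured space (your closing appeal to uniqueness of Hausdorff vector topologies is redundant at that point).

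The genuine gap is the step you yourself call the main obstacle: mutual exclusivity of the three set-theoretic forms. As a statement about bare sets it is false, and proving it for vectors requires information this paper deliberately withholds. Concretely: with the paper's pair convention $(x,y)=\{\,\{x,y\}\,,\{y\}\,\}$, the constant function on $2=\{\ssp 0\,,1\ssp\}$ with value $0=\emptyset$ is literally equal to the Kuratowski pair $(2\ssp,1)$, so \q{function on $l\ge 2$} and \q{ordered pair} do collide; exclusivity can only be salvaged relative to the class of vectors of quasi-usual spaces, using facts such as that no vector is empty. Similarly, every scalar $((\ssp a\ssp,b\ssp)\ssp,(\ssp 0\ar 1,0\ar 1))$ is itself a Kuratowski pair, so to exclude reading a scalar as a vector of some $F\ar 1\sqcap F\ar 2$ one must show that $(\ssp 0\ar 1,0\ar 1)$ and the codes $a\ssp,b\in\ovbbR\ar 1$ are never themselves vectors. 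Regularity disposes of some cases --- for instance $(\ssp 0\ar 1,0\ar 1)$ cannot be a scalar, as that would create a membership cycle --- but not of the decisive one: $(\ssp 0\ar 1,0\ar 1)=\{\,\{\ssp 0\ar 1\}\,\}$ is precisely the pair both of whose components are $0\ar 1$, so one must show $0\ar 1$ is not a vector, and whether that is so depends on what the set $0\ar 1$ actually is. The paper explicitly omits its construction (\q{whose exact construction we here omit}), so this cannot be settled from the material at hand; it is exactly the long verification in \cite{Hif} that the paper alludes to. Your proposal correctly identifies where the difficulty lies and what would discharge it, but it defers that verification rather than performing it, so it is a sound plan rather than a proof.
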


Thus for example quasi\ssp-\sp usual spaces \math{\Yps} over \math{\tfbbR} are 
such that every single point \math{\eta\in\vecs\Yps} uniquely determines the 
whole algebraic and topological structure \mathss30{\Yps}. The proof of 
Lemma \ref{Le for q-usu} is given in \cite{Hif}\,. It is quite long and 
requires delving in the set theoretic formal construction of the complex 
number system starting from the set \math{\bbNo} of natural numbers, and so we 
  omit it here.

For \math{Q\inc\vecs\Yps} this allows us to define a structured vector space 
\math{\roman S\,(\ssp Q\ssp,\spp\vPi\ssp) } based on a set of functions \math{
Q\to\vecs\vPi} without explicit reference to the structure \math{\Yps} by 
putting \math{\roman S\,(\ssp Q\ssp,\spp\vPi\ssp) = 
 \roman S\,(\ssp Q\,\sbi\Yps\sp,\spp\vPi\ssp) } when the latter is already 
defined. So we just get a bit simpler notation for the same space.


\insubsubhead           Suitable locally convex spaces                    \label{Ss suit lcs}

Suitable locally convex spaces are those that are obtained from some 
\erm Banachable space by weakening the topology so that we {\sl do not\sp} 
get more bounded sets. Our basic important examples of suitable spaces are the 
weak$^*$ duals \math{E\dlsigss22} of \erm Banachable spaces \mathss35{E}. We 
put the following

\begin{definitions}\label{df suit}

Say that\inskipline{.5}2

(1) \ $\Nu\ssp$ is a {\it dominating norm\ssp} for \math{E} if{}f 
    there is \math{\bosy K\in\setRC} with \inskipline09

 $E\in\tvsps0(K)\ssp$ and \math{\Nu} a norm on \math{\sigrd E} with \inskipline09

 $\bouSet E = \vecs E\capss31\{\,B:\eexi{n\in\rbb Z^+}\,B \inc
  \Nu\invss44\image\sp[\KP{1.1} 0\,,\spp n\KPt9]\KP1\big\} \KP1 $, \inskipline{.5}2

(2) \ $E\ssp$ is {\it almost suitable\ssp} over \math{\bosy K} if{}f \math{
      \bosy K\in\setRC} and \math{E\in\LCSps0(K)} and \inskipline09

 there is a normable \math{F\in\LCSps0(K)} with \math{E\tvpreceq F} and $\ssp
  \bouSet E\inc\bouSet F\sp$, \inskipline{.5}2

(3) \ $E\ssp$ is {\it suitable\ssp} over \math{\bosy K} if{}f \math{ \bosy K 
      \in\setRC} and \math{E\in\LCSps0(K)} and \inskipline09

 there is \math{F\in\BaSps0(K)} with \math{E\tvpreceq F} and $\ssp
  \bouSet E\inc\bouSet F\sp$. \inskipline{.5}2

For \q{almost suitable} or \q{suitable} in place of \sp X also say that \inskipline0{38.6}

 $E\ssp$ is X if{}f \math{E} is X over \math{\bosy K} for some \mathss31{
    \bosy K}.
  \end{definitions}

If \math{ \bosy K\in\setRC} and \math{E\in\LCSps0(K)} and \math{F\in\BaSps0(K)} 
with \math{E\tvpreceq F} and \œ$\ssp\bouSet E$ \linebreak
                                   \œ${\ssn}\inc\bouSet F\sp$, then \math{
\bouSet E=\bouSet F} holds since from \math{E\tvpreceq F} we get \mathss35{
\bouSet F\inc\bouSet E}. If we \linebreak
                             also have \math{E\tvpreceq G\in\BaSps0(K)} and \mathss32{
\bouSet E\inc\bouSet G}, \,then \math{F=G} holds. This is seen by noting that 
\erm Banachable spaces are bornological, and hence have the strongest locally 
convex topology with the same bounded sets. Thus \math{F} is the unique 
\erm Banachable space from which \math{E} is obtained by weakening the 
topology. The dominating norms \linebreak
                               $\Nu\ssp$ for \math{E} are precisely the 
compatible norms for \mathss30{F}, \,and then \math{
(\ssp\sigrd E\ssp,\spp\Nu\ssp) } is a corresponding (\sp norm{\sl ed\ssp}) 
  Banach space.

One should observe that the bornology of a suitable space {\sl does not\sp} 
determine the dual, i.e.\ there exist suitable spaces obtained by weakening 
the same \erm Banachable space but with different duals. This is seen by 
considering \math{\ell\KPt8^1\sp(\ssp\bbNo\spp)\subw0 } and \œ$\ssp
\ell\KPt8^1\sp(\ssp\bbNo\spp)\subsigrs03$ \linebreak
                                          which both are obtained by weakening \mathss38{
\ell\KPt8^1\sp(\ssp\bbNo\spp) }. The former has the initial topolo- gical 
vector structure from \math{(\ssp\roman I\,A\,,\sn\tvbbR4^A\sp\big) } for \math{
A=\vecs\co(\ssp\bbNo\spp) } and the latter for \mathss30{ A = \sn} \mathss03{
\vecs\lll^\plusinftyy\sp(\ssp\bbNo\spp) } when we let \mathss39{ \roman I\,A =
 \big\langle\big\langle\ssp\sum\KP1(\ssp x\cdot y\ssp) : y\in A\KP1\rangle : 
 x\in\vecs\ell\KPt8^1\sp(\ssp\bbNo\spp)\KP1\rangle }.

\begin{lemma}\label{Le suit dom}

Let $\,E$ be almost suitable with $\,\Nu$ a dominating norm. \hfill Then for 
every $\Nu\aR 1\in\Bqnorm E$ there is $\,\smb M\in\rbb R^+$ with $\,
\Nu\aR 1\sn\fvalue x\le\smb M\KP1(\ssp\Nu\fvalss10 x\ssp)$ for all $\,
  x\in\vecs E\,$.
  \end{lemma}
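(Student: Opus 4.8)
The plan is to reduce the statement to the boundedness of $\Nu\aR 1$ on the closed unit ball of $\Nu\ssp$, together with absolute homogeneity, so that the quasi\ssp-\sp triangle inequality for $\Nu\aR 1$ need never be used. First I would put $B=\Nu\invss44\image\sp[\KP{1.1} 0\,,\spp 1\KPt9]$ and observe that, by the defining equation for $\bouSet E$ in the notion of a dominating norm, $B\in\bouSet E$ (take $n=1\ssp$, since $B\inc B\ssp$). Because $\Nu\aR 1\in\Bqnorm E$ we have $\Nu\aR 1\images\spp\bouSet E\inc\bouSet\tfbbR\ssp$, so $\Nu\aR 1\ssp\image\ssn B$ is a bounded subset of $\tfbbR$ contained in $\lbb R_+\ssp$; hence there is $\smb M\in\rbb R^+$ with $\Nu\aR 1\fvalue x\le\smb M$ for every $x\in\vecs E$ satisfying $\Nu\fvalue x\le 1\ssp$.

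Next I would extend this bound to all of $\vecs E$ by scaling, using only the absolute homogeneity of $\Nu$ and of $\Nu\aR 1\ssp$. If $\Nu\fvalue x=0\ssp$, then $x=\Bnull_E$ because $\Nu$ is a genuine norm, and the homogeneity clause with $t=0$ gives $\Nu\aR 1\fvalue x=0=\smb M\KP1(\ssp\Nu\fvalue x\ssp)\ssp$. If instead $\lambda=\Nu\fvalue x>0\ssp$, let $y$ be the scalar multiple determined by $(\ssp\lambda\,^{\mminus 1}\sp,\spp x\ssp,\spp y\ssp)\in\tsigrd E\ssp$. Homogeneity of $\Nu$ gives $\Nu\fvalue y=\lambda\,^{\mminus 1}\KP1(\ssp\Nu\fvalue x\ssp)=1\ssp$, so $y\in B$ and therefore $\Nu\aR 1\fvalue y\le\smb M\ssp$; homogeneity of $\Nu\aR 1$ gives $\Nu\aR 1\fvalue y=\lambda\,^{\mminus 1}\KP1(\ssp\Nu\aR 1\fvalue x\ssp)\ssp$. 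Multiplying the inequality by $\lambda$ then yields $\Nu\aR 1\fvalue x\le\smb M\KP1\lambda=\smb M\KP1(\ssp\Nu\fvalue x\ssp)\ssp$, which is the assertion.

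There is no genuine obstacle here; what requires care is only the relational bookkeeping. One must read both the homogeneity clause in the definition of $\Bqnorm E$ and the scaling step at the level of the ternary scalar multiplication $\tsigrd E\ssp$, checking that the positive real $\lambda\,^{\mminus 1}$ is legitimately an element of $\vecs\bosy K$ whether $\bosy K=\tfbbR$ or $\bosy K=\tfbbC\ssp$, and that then $|\,\lambda\,^{\mminus 1}\,|=\lambda\,^{\mminus 1}$ in either field. One should also take $\smb M$ strictly positive, e.g.\ the supremum of $\Nu\aR 1\ssp\image\ssn B$ increased by $1\ssp$, so that $\smb M\in\rbb R^+$ precisely as stated. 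Finally it is worth noting that the quasi\ssp-\sp triangle inequality is never invoked and that $E$ being almost suitable enters only by guaranteeing the availability of a dominating norm; the proof rests solely on the bornological defining property of $\Nu$ and on absolute homogeneity, not on local convexity.
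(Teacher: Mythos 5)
Your proof is correct and follows essentially the same route as the paper's own argument: the closed unit ball $\Nu\invss44\image\ssbb20 I$ is bounded by the dominating\ssp-\sp norm property, the bounded quasi\ssp-\ssp seminorm $\Nu\aR 1$ is therefore bounded on it, and the estimate is extended to all of $\vecs E$ by scaling with absolute homogeneity, splitting off the zero\ssp-\ssp vector case exactly as the paper does. Your side remarks (adding $1$ to the supremum so that $\smb M\in\rbb R^+$ strictly, and noting that the quasi\ssp-\sp triangle inequality is never used) are sound refinements but do not change the substance.
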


\begin{proof} We have \mathss35{\Nu\invss44\image\ssbb 20 I\in\bouSet E
}, \,and letting \math{\Nu\aR 1\in\Bqnorm E} then \math{
\Nu\aR 1\,[\KP{1.1}\Nu\invss44\image\ssbb 26 I\,] \in {\ssn}} \mathss03{
\bouSet\tfbbR } and hence for \math{ \smb M = 
 \sup\KPt8(\ssp\Nu\aR 1\,[\KP{1.1}\Nu\invss44\image\ssbb 26 I\,]\ssp\sbig)0 } 
we have \mathss35{\smb M < \plusinfty }. Considering \œ$\ssp x\in {\ssn}$ $
\vecs E\,$, if \math{x=\Bnull_E} holds, we trivially have \mathss38{
\Nu\aR 1\sn\fvalue x=0\le 0=\smb M\KP1(\ssp\Nu\fvalss10 x\ssp) }. Otherwise 
taking \math{\smb A=\Nu\fvalss10 x} we have \mathss34{
(\sp\smb A\,^{\mminus 1\,}x\ssp)\svs E \in \Nu\invss44\image\ssbb 20 I}, \,and 
hence \mathss30{\Nu\aR 1\sn\fvalue(\sp\smb A\,^{\mminus 1\,}x\ssp)\svs E \le 
 \smb M} and further \mathss38{\Nu\aR 1\sn\fvalue x \le \smb M\,\smb A = 
 \smb M\KP1(\ssp\Nu\fvalss10 x\ssp) }.
  \end{proof}

\begin{lemma}\label{Le E'' adher}

With \œ$\,\bosy K\in\setRC$ let \œ$\,E\in\LCSps0(K)$ and \œ$\, w \in 
 \Cal L\,(\sp E\dlbetss12\ssp,\spp\bosy K\ssp) \KP1 $. Then there is some \œ$\,
B\in\bouSet E$ such that for every finite \œ$\, A \inc 
 \Cal L\,(\sp E\ssp,\spp\bosy K\ssp) $ there is \œ$\, x \in B $ with $\,
 |\KP1 u\fvalue x - w\fvalue u\KP1|\suba\le 1 $ for all $\,u\in A\,$.
  \end{lemma}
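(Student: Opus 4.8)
The plan is to read this as the locally convex incarnation of Goldstine's theorem: $w$ is an element of the strong bidual $\Cal L(E\dlbetss12, \bosy K)$, and the assertion is precisely that the canonical image of a suitable bounded set $B$ is dense near $w$ in the weak$^*$ sense. I would prove it not by compactness but by an elementary Hahn--Banach separation inside the finite-dimensional space $\bosy K^n$. Write $E' = \Cal L(E, \bosy K)$ and recall that the strong dual topology on $E'$, whose continuous functionals form $\Cal L(E\dlbetss12, \bosy K)$, has a base of zero neighbourhoods consisting of the polars $\{\,u \in E' : \sup_{x\in D}|u\fvalue x|\suba \le 1\,\}$ of the sets $D \in \bouSet E$.

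First I would fix the bounded set. Continuity of $w$ gives such a polar neighbourhood on which $|w|\suba \le 1$; replacing the underlying $D$ by its closed absolutely convex hull — which is again in $\bouSet E$, since in a locally convex space the absolutely convex hull and the closure of a bounded set stay bounded — I obtain a closed absolutely convex $B \in \bouSet E$ with $|w\fvalue u|\suba \le 1$ whenever $\sup_{x\in B}|u\fvalue x|\suba \le 1$. A one-line homogeneity argument (scaling $u$ by $\sup_{x\in B}|u\fvalue x|\suba$, which is finite because $u$ is continuous and $B$ bounded) upgrades this to the support-seminorm estimate $|w\fvalue u|\suba \le \sup\{\,|u\fvalue x|\suba : x \in B\,\}$ for every $u \in E'$. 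This $B$ is the set claimed by the lemma.

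Now I would fix a finite $A = \{\,u_1, \ldots, u_n\,\} \inc E'$, form the linear map $T : E \to \bosy K^n$ with $T(x) = (u_1\fvalue x, \ldots, u_n\fvalue x)$, and set $\vec w = (w\fvalue u_1, \ldots, w\fvalue u_n)$. The crux is to show $\vec w \in \overline{T\image B}$. The image $T\image B$ is absolutely convex, hence so is its closure; were $\vec w$ outside this closed absolutely convex subset of $\bosy K^n$, finite-dimensional separation would yield a functional $z \mapsto \sum_i \lambda_i z_i$ whose $\roman{Re}$ is strictly larger at $\vec w$ than its supremum over $\overline{T\image B}$. Putting $u = \sum_i \lambda_i u_i \in E'$ and using linearity of $w$, that supremum equals $\sup_{x\in B}|u\fvalue x|\suba$ (balancedness converts $\sup \roman{Re}$ into $\sup$ of the modulus), so the separation reads $\roman{Re}\,w\fvalue u > \sup_{x\in B}|u\fvalue x|\suba$; but the right-hand side dominates $|w\fvalue u|\suba \ge \roman{Re}\,w\fvalue u$ by the estimate of the previous paragraph, a contradiction. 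Hence $\vec w$ is adherent to $T\image B$, so the open neighbourhood $\{\,z : |z_i - w\fvalue u_i|\suba < 1\,\}$ meets it, producing $x \in B$ with $|u\fvalue x - w\fvalue u|\suba \le 1$ for all $u \in A$.

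The argument needs neither completeness nor weak$^*$ compactness, since all separation happens in $\bosy K^n$. The one delicate point — and the nearest thing to an obstacle — is the first step: converting strong-dual continuity of $w$ into the clean modulus estimate against a single closed absolutely convex bounded set, for this is exactly what makes the balanced set $\overline{T\image B}$ and the functional $u = \sum_i \lambda_i u_i$ interact correctly in the final separation.
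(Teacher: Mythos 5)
Your proof is correct, and it genuinely diverges from the paper's in its second half. The paper begins exactly as you do: continuity of $w$ on the strong dual produces a nonempty absolutely convex bounded set $B$ whose polar $U = \Cal L\,(E,\bosy K)\cap\{\,u : u\,[\,B\,]\subseteq D_1\,\}$ satisfies $w\,[\,U\,]\subseteq D_1$. But it then finishes in a single stroke by citing the bipolar theorem (Horv\'ath 3.3.1 or Jarchow 8.2.2): $w$ lies in the bipolar of the canonical image of $B$ in the bidual, hence in its weak$^*$ closure, and that closure statement is verbatim the asserted finite-set approximation property. You instead inline what that citation hides: since the lemma only quantifies over finite $A=\{u_1,\ldots,u_n\}\subseteq\Cal L\,(E,\bosy K)$, you transport everything into $\bosy K^n$ via $T=(u_1,\ldots,u_n)$ and run the Hahn--Banach separation there, using your homogeneity-upgraded estimate $|\,w(u)\,|\le\sup\{\,|\,u(x)\,| : x\in B\,\}$ (valid for all $u$, including the degenerate case $\sup=0$) to contradict any functional separating $\vec w$ from the closed absolutely convex set $\overline{T\,[\,B\,]}$; the balancedness step converting $\sup\operatorname{Re}$ into $\sup$ of the modulus is handled correctly. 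The paper's route is shorter and situates the lemma in its natural duality-pair context; yours is self-contained, requires only finite-dimensional separation rather than the bipolar machinery, and makes explicit why neither completeness nor Alaoglu-type compactness is needed --- which is precisely the content of the bipolar theorem's own proof, specialized to this Goldstine-type situation. The two arguments are the same mechanism, with the bipolar theorem unpacked in your version.
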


\begin{proof} Putting \mathss38{D\ar 1=\vecs\bosy K\capss21\{\,t:|\,t\,|\suba
 \le 1 \KPt8\} }, \,from \math{ w \in 
 \Cal L\,(\sp E\dlbetss12\ssp,\spp\bosy K\ssp) } we first get existence of 
some nonempty absolutely convex bounded set \math{B} in \math{E} such that for \mathss03{ 
U = \Cal L\,(\sp E\ssp,\spp\bosy K\ssp) \capss31 \{\, u : u\sp\image\snn B 
 \inc D\ar 1\ssp\} } we have \mathss34{w\spp\image\spp U \inc D\ar 1 }. Then 
for the canonical evaluation \math{ \Iota = \seqss33{
 \roman{ev}\sbi{\sp\emath x}\,|\KP1\Cal L\,(\sp E\ssp,\spp\bosy K\ssp) : 
 x\in\vecs E} } and for \math{\scrmt T=\taurd(\sp E\dlbetss12{\ssn}\dlsigss02) } 
from the {\sl bipolar theorem\sp} \cite[3.3.1\sp, p.\ 192]{Ho} or 
\cite[8.2.2\ssp, p.\ 149]{Jr} we see \math{ w \in 
 \roman{Cl}\sbi{\KPt8\scrm7 T\KP1}(\ssp\Iota\image\snn B\ssp) } to hold whence 
the assertion follows.
  \end{proof}

The content of \cite[Lemma 8.17.8 \erm B\ssp, p.\ 585]{Edw} is in the 
  following

\begin{lemma}\label{Le 8.17.8 B}

With \œ$\,\bosy K\in\setRC$ let \œ$\,E\in\LCSps0(K)$ be normable{\ssp\rm, }and 
let \œ$\,F=E\dlbetss12\,$. Also let $\,S\ar 1$ be a linear subspace in $\,
\sigrd F$ such that \œ$\,\taurd F\leiss22 S\ar 1$ is a separable 
topology{\ssp\rm, }and let \œ$\, w \in 
 \Cal L\,(\sp F_{\sp/\ssp\aars S_1},\spp\bosy K\ssp) \,$. Then there is \œ$\,
\bosy x\in\sp^\sbbNo\,\vecs E$ with \œ$\,\rng\bosy x\in\bouSet E$ and such that $\,
w\fvalue u = \lim\,(\ssp u\circss01\bosy x\ssp)$ holds for every $\,u\in 
 S\ar 1\ssp$.
  \end{lemma}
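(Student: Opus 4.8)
The plan is to reduce the assertion to the finite-approximation property already secured by Lemma \ref{Le E'' adher} and then to convert that property into a single convergent sequence by exploiting the separability of $S\ar 1\,$. First I would extend $w$ to all of $F\,$: since $E$ is normable, $F=E\dlbetss12$ is \erm Banachable, and if $\Nu$ is a compatible norm on $\sigrd E$ then $\taurd F$ is generated by the corresponding dual norm. Continuity of $w$ for the subspace topology $\taurd F\leiss22 S\ar 1$ thus just means that $w$ is bounded on the unit ball of $S\ar 1\,$, so by the Hahn\,--\,Banach theorem $w$ extends to some $\tilde w\in\Cal L\,(\sp E\dlbetss12\ssp,\spp\bosy K\ssp)$ with $\tilde w\fvalue u=w\fvalue u$ for all $u\in S\ar 1\,$. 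Note that $\tilde w$ is exactly of the form to which Lemma \ref{Le E'' adher} applies.

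Applying that lemma to $\tilde w$ yields one fixed $B\in\bouSet E$ such that for every finite $A\inc\Cal L\,(\sp E\ssp,\spp\bosy K\ssp)$ there is $x\in B$ with $|\KP1 u\fvalue x-\tilde w\fvalue u\KP1|\suba\le 1$ for all $u\in A\,$. Replacing a finite $A\inc S\ar 1$ by $\{\,\eps^{\,\mminus 1}u:u\in A\,\}$ and using homogeneity, the \emph{same} $B$ provides, for every finite $A\inc S\ar 1$ and every $\eps\in\rbb R^+\,$, some $x\in B$ with $|\KP1 u\fvalue x-w\fvalue u\KP1|\suba\le\eps$ for all $u\in A\,$.

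Now separability enters. Let $D=\{\,u\ai k:k\in\bbNo\,\}$ be countable and dense in $S\ar 1$ for the topology $\taurd F\leiss22 S\ar 1\,$. For each $n\in\bbNo$ apply the previous step to the finite set $\{\,u\ai k:k<n\,\}$ and to $\eps=(\ssp n+1\ssp)\,^{\mminus 1}\,$, obtaining $\bosy x\ai n\in B$ with $|\KP1 u\ai k\fvalue\bosy x\ai n-w\fvalue u\ai k\KP1|\suba\le(\ssp n+1\ssp)\,^{\mminus 1}$ whenever $k<n\,$. Then $\bosy x=\seqss33{\bosy x\ai n:n\in\bbNo}\in\sp^\sbbNo\,\vecs E$ has $\rng\bosy x\inc B\,$, hence $\rng\bosy x\in\bouSet E$ since the von Neumann bornology is stable under passage to subsets, and for each fixed $k$ one gets $\lim\,(\ssp u\ai k\circss01\bosy x\ssp)=w\fvalue u\ai k\,$, i.e.\ convergence on the dense set $D\,$.

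It remains to promote convergence from $D$ to all of $S\ar 1\,$, which is where boundedness of $B$ is decisive. With $\smb M=\sup\,(\ssp\Nu\sp\image\snn B\ssp)<\plusinfty$ one has $|\KP1 u\fvalue\bosy x\ai n-v\fvalue\bosy x\ai n\KP1|\suba\le\smb M\sp\|\,u-v\,\|$ for all $u\,,\sp v\in S\ar 1\,$, so the evaluations $u\mapsto u\fvalue\bosy x\ai n$ together with $w$ are uniformly \erm Lipschitz on $S\ar 1\,$. Given $u\in S\ar 1$ and $\delta\in\rbb R^+\,$, choosing $u\ai k\in D$ with $\|\,u-u\ai k\,\|<\delta$ and splitting $u\fvalue\bosy x\ai n-w\fvalue u$ through $u\ai k$ into the three usual terms yields $\limsup_n|\KP1 u\fvalue\bosy x\ai n-w\fvalue u\KP1|\suba\le(\ssp\smb M+\|\,w\,\|\ssp)\sp\delta\,$; letting $\delta\to 0$ gives $w\fvalue u=\lim\,(\ssp u\circss01\bosy x\ssp)$ for every $u\in S\ar 1\,$, as required. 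The one genuine obstacle is to obtain a single sequence with range inside \emph{one} bounded set rather than a mere net: Lemma \ref{Le E'' adher} removes it by furnishing a $B$ good for all finite approximations simultaneously, and the separability of $S\ar 1$ turns this into an honest sequence, after which the equicontinuity passage is routine.
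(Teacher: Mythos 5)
Your proof is correct and takes essentially the same route as the paper's: extend $w$ by Hahn--Banach, invoke Lemma \ref{Le E'' adher} to obtain the single bounded set $B\,$, and then use separability of $S\ar 1$ to extract the sequence from $B\,$. The only difference is presentational: the paper packages your explicit rescaling and $3\,\eps$\,--\,argument as the statement that the topology of pointwise convergence on $S\ar 1\,$, restricted to the weak$^*$ closure of the canonical image of $B$ in the bidual, is semimetrizable, which is exactly the content you verify by hand.
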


\begin{proof} By Hahn\,--\,Banach there is \math{\bar w\in
 \Cal L\,(\sp F\spp,\spp\bosy K\ssp) } with \mathss35{w\inc\bar w}. 
Furthermore, for \linebreak
        \mathss03{\Iota\ar 2} the canonical embedding \mathss36{E\to 
 E\dlbetss12{\!}\dlbetss02}, \,i.e.\ for \math{ \Iota\ar 2 = 
 \seqss33{\roman{ev}\sbi{\sp\emath x}\,|\KP1\vecs F:x\in\vecs E} } and for \mathss03{
\scrmt T=\taurd(\sp E\dlbetss12{\!}\dlsigss04) } by \sp Lemma \ref{Le E'' adher} 
above, there is some \math{B\in\bouSet E} such that \mathss03{\bar w\in B\ar 2} 
holds for \mathss38{B\ar 2 =
 \roman{Cl}\sbi{\KPt8\scrm7 T\KPt8}(\ssp\Iota\ar 2\sn\image\sn B\ssp) }. Now 
letting \math{\scrmt T\aR 1} be the initial topology from \mathss30{
\taurd((\sp \sp F_{\sp/\ssp\aars S_1}\sbig)0\dlsigss12\spp) } under \math{
\seqss33{z\KP1|\KP1 S\ar 1\sn:z\in\Cal L\,(\sp F\spp,\spp\bosy K\ssp)} } we 
have \math{\scrmt T\aR 1\leiss02 B\ar 2\inc \scrmt T\leiss32 B\ar 2 } with \math{
\scrmt T\aR 1\leiss02 B\ar 2 } semimet- rizable. Hence there is \math{\bosy x
 \in\sp^\sbbNo\,B} with \mathss30{w\fvalue u = \bar w\fvalue u = 
 \lim\,(\ssp u\circss01\bosy x\ssp) } for \mathss34{u\in S\ar 1}.
  \end{proof}

\begin{lemma}\label{Le qtvs}

With \œ$\,\bosy K\in\setRC$ let \œ$\,E\in\tvsps0(K)$ and let $\,S$ be a vector 
subspace in $\,\sigrd E\,$. \hfill Also let \œ$\,F=E\,/\tvsquotient S$ and \œ$\,
\Cal V = \{\,\vecs F\capss21\{\,\smb X:U\capss02\smb X\not=\emptyset\KP1\} : 
 U\sn\in\Cal U\KP1\}$ \linebreak
                      where $\,\Cal U$ is a filter base for $\,\neiBoo E\,$. 
Then $\,\Cal V$ is a filter base for $\,\neiBoo F\sp$.
  \end{lemma}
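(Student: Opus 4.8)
The plan is to identify $\Cal V$ as the family of images of the members of $\Cal U$ under the canonical projection and to reduce the claim to the openness and continuity of that projection. First I would let $h$ denote the canonical linear surjection $\vecs E\to\vecs F$ carrying each $x\in\vecs E$ to the unique $\smb X\in\vecs F$ with $x\in\smb X$; since the members of $\vecs F$ are exactly the cosets of $S$, a coset $\smb X$ satisfies $U\capss02\smb X\not=\emptyset$ precisely when $\smb X=h\fvalue x$ for some $x\in U$. Hence $\vecs F\capss31\{\,\smb X:U\capss02\smb X\not=\emptyset\,\}=h\ssp\image\ssn U$, so that $\Cal V=\{\,h\ssp\image\ssn U:U\in\Cal U\,\}$. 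I would also record that $\taurd F$ is the quotient topology, whence $h$ is continuous and a set $W$ lies in $\taurd F$ exactly when $h\invss44\image\spp W\in\taurd E$, and that $\Bnull_F=h\fvalue\Bnull_E$.

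The hard part will be showing that $h$ is an \emph{open} map, since everything else is formal. For open $O\in\taurd E$ one has $h\invss44\image\sp(\ssp h\ssp\image\ssn O\ssp)=\bigcup\,\{\,O+s:s\in S\,\}$; because each translation $\xi\mapsto\xi+s$ is a homeomorphism of the topological vector space $E$, every $O+s$ is open, hence so is the union, and by the definition of the quotient topology $h\ssp\image\ssn O\in\taurd F$. This is the only step invoking the linear structure, through translation invariance of $\taurd E$.

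Granting this, I would verify the three filter base conditions. Each $V=h\ssp\image\ssn U\in\Cal V$ is a zero neighbourhood of $F$: picking open $O$ with $\Bnull_E\in O\inc U$ gives $\Bnull_F\in h\ssp\image\ssn O\inc V$ with $h\ssp\image\ssn O\in\taurd F$, so $\Cal V\inc\neiBoo F$. The family $\Cal V$ is nonempty (as $\Cal U$ is), is directed downward, and consists of nonempty sets: given $U\ar 1,\sp U\ar 2\in\Cal U$ one chooses $U\ar 3\in\Cal U$ with $U\ar 3\inc U\ar 1\capss12 U\ar 2$, whence $h\ssp\image\ssn U\ar 3\inc h\ssp\image\ssn U\ar 1\capss12 h\ssp\image\ssn U\ar 2$, while each member contains $\Bnull_F$. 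Finally every $W\in\neiBoo F$ contains a member of $\Cal V$, since continuity gives $h\invss44\image\spp W\in\neiBoo E$, so some $U\in\Cal U$ has $U\inc h\invss44\image\spp W$ and then $h\ssp\image\ssn U\inc W$. As the filter generated by $\Cal V$ is contained in $\neiBoo F$ by the first point and contains it by the last, the two coincide, proving that $\Cal V$ is a filter base for $\neiBoo F$.
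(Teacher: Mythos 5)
Your proof is correct and takes essentially the same route as the paper: both identify $\Cal V$ as the family of images of the members of $\Cal U$ under the canonical quotient map, then use openness of that map to obtain $\Cal V\inc\neiBoo F$ and continuity to show that every member of $\neiBoo F$ contains some member of $\Cal V$. The only difference is cosmetic: the paper cites Horv\'ath for the continuity and openness of the quotient map, whereas you prove the openness inline via translation invariance and also spell out the filter-base axioms explicitly.
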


\begin{proof} With \mathss38{ \tweq = \vecs E\times\vecs F \capss31 \{\,
 (\ssp x\ssp,\spp\smb X\sp):x\in\smb X\,\} } we know from the discussion in 
\cite[p.\ 104]{Ho} that \math{\tweq} is continuous and open \mathss30{
\taurd E\to\taurd F}, \,and consequently \linebreak
                                       \œ$\Cal V=\tweq\,\images\sp\Cal U\inc
 \neiBoo F\ssp$ holds. Moreover, for every \math{V\in\neiBoo F} we have \œ$\ssp
\tweq\invss64\image\sp V\in\neiBoo E$ \linebreak
                                      and hence there is \math{U\in\Cal U} 
with $\ssp U\inc\tweq\invss64\image\sp V\spp$, but then \math{
\tweq\,\image U\inc V} holds.
  \end{proof}


\Ssubhead A             Measurability and integration                     \label{Sec A}

In this section, we first explain what it means for functions \math{ x : 
 \Omega\to\vecs\vPi} to be measurable when \math{\vPi} is a topological vector 
space and \math{\Omega} is a set equipped with a positive measure \mathss36{
\mu}. In the next section, for \math{0\le p\le\plusinfty} we construct the 
spaces $\mvLrs02^p(\ssp\mu\,,\spp\vPi\ssp)\ssp$ and \math{
\mvsLrs02^p(\ssp\mu\,,\spp\vPi\ssp) } of certain equivalence classes \math{
\smb X} of such \mathss34{x}.

By saying that \math{\mu} is a {\it positive measure\ssp} on \math{\Omega} we 
mean that \math{\mu} is a function with \math{\bigcup\,\dom\mu=\Omega} and \math{
\rng\mu\inc[\KP1 0\,,\plusinfty\KPt9] } and \math{\dom\mu} a 
\rsigma3algebra and such that \linebreak 
                            \œ$ \sum\,(\ssp\mu\KP1|\KP1\scrmt A\ssp) = 
 \mu\fvalue\big(\sp\bigcup\,\scrmt A\ssp) \ssp$ holds for any countable 
disjoint \mathss36{\scrmt A\inc\dom\mu}. Here we assume that the definitions 
associated with sum conventions are arranged so that \linebreak 
                                                   \œ$ \sum\,\emptyset = 0\ssp$ 
holds. Further, by a \rsigma0{\it algebra\ssp}, usually written 
\q{$\sigma\,$-\,algebra}, we mean any \linebreak 
                                      $\scrmt A\ssp$ such that \math{
\bigcup\,\scrmt A\sp\setminus\ssp\bigcup\,\scrmt B\in\scrmt A} holds for any 
countable \mathss36{\scrmt B\inc\scrmt A}. A positive measure $\mu\ssp$ is 
\rsigma1{\it finite\ssp} if{}f \math{
\bigcup\,\dom\mu\inc\bigcup\,\scrmt A } holds for some countable \mathss35{
 \scrmt A\inc\mu\invss44\image\sp\lbb R_+ }.

To compact language in some, quite rare cases, we introduce the concept of 
measure space as follows. Say that \math{P} is a {\it measure space\ssp} if{}f 
there are \math{\mu} and \math{\Omega} such \linebreak 
                                            that \math{\mu} is a positive 
measure on \math{\Omega} with \mathss38{P = (\ssp\Omega\ssp,\spp\mu\ssp) }. We 
also say that \math{\mu} is a positive measure if{}f \math{\mu} is a positive 
measure on \mathss36{\bigcup\,\dom\mu}, \,and a measure space \math{P} we \linebreak 
                                                                          say 
to be \rsigma6finite in the case where \math{\taurd P} is such. \vskip.5mm


\insubsubhead    Measurability of measure\ssp-\sp vector maps             \label{Ss C1}

We consider {\it mv\ssp-\sp map\,}s, short for \q{measure\ssp-\sp vector}, 
which are triplets, i.e.\ ordered pairs \math{ \tilde x = 
 (\ssp x\ssp,\spp\varXi\ssp) = (\ssp x\,;\sp\mu\,,\spp\vPi\ssp) } where in 
turn \math{\varXi = (\ssp\mu\,,\spp\vPi\ssp) } is an {\it mv\ssp-\sp pair\sp}. 
This means that \math{\vPi} is a real or complex topological vector space and \math{
\mu} is a positive measure on some \math{\Omega} and \math{ x : \Omega \to
 \vecs\vPi} is a function. In order to introduce some concepts of 
measurability for such mv\ssp-\sp maps we first put the following

\begin{definitions}\label{df simple}

(1) \ Say that \math{\sigma} is {\it simple\ssp} in \math{\varXi} if{}f 
there are \math{\bosy K\sp,\sp\mu\,,\sp\Omega\,,\sp\vPi} with \math{\mu} 
a po- sitive 
measure on \math{\Omega} and \math{\bosy K\in\setRC} and 
\math{\vPi\in\tvsps0(K)} and 
\math{\varXi=(\ssp\mu\,,\spp\vPi\ssp) } 
and \mathss03{\sigma\in\sp^\Omega\,\vecs\vPi} and 
\math{\rng\sigma} finite and 
\math{\{\,\sigma\invss44\image\snn\{\ssp\xi\ssp\}:\xi\in
\rng\sigma\spp\setminus\{\,\Bnull_\vPi\}\sp\}\inc
\mu\invss44\image\spp\lbb R_+}, \inskipline{.5}2

(2) \ Say that \math{\bosy\sigma} is a {\it simple sequence\ssp} in \math{\varXi} 
if{}f \math{\bosy\sigma\in\sp^\sbbNo\,\Univ} and \inskipline0{41.5}

$\sigma\ssp$ is simple in \math{\varXi} for all \math{\sigma\in
\rng\bosy\sigma}.
  \end{definitions}

Let \math{\text{\efss R\KPt8}\tilde x\,A=
\uniqset\tilde z:\eexi{x\ssp,\sp\mu\,,\sp\vPi}\,
\tilde x=(\ssp x\,;\sp\mu\,,\spp\vPi\ssp) } and \mathss38{
\tilde z=(\ssp x\KP1|\KP1 A\,;\sp\mu\KP1|\KP1\Pows A\,,\spp\vPi\ssp) }.

\begin{def:al schemas}\label{df meas}

For any mv\ssp-\sp map \math{\tilde x=
(\ssp x\ssp,\spp\varXi\ssp)=(\ssp x\,;\sp\mu\,,\spp\vPi\ssp) } with \math{
\Omega=\dom x} 
 assuming $\vPi\in\tvsps0(K)$ , first say that \inskipline12

(1) \ $\tilde x\ssp$ is {\it measurable\ssp} if{}f \math{
    \{\,x\invss44\image\spp U\sn:U\in\taurd\vPi\KP1\}\inc\dom\mu} holds, \inskipline{.5}2

(2) \ $\tilde x\ssp$ is {\it simply measurable\ssp} if{}f there is \math{
    \bosy\sigma} with \math{\bosy\sigma} a simple sequence in \math{\varXi} \inskipline09

 and \math{\bosy\sigma\to x} in top \mathss35{
 \taurd\vPi\expnota^\ssp\Omega\ssp]_{ti} },   $\hfill \bosy\sigma\in\sp^\sbbNo\ssp\big(\,^\Omega\,\vecs\vPi\ssp) $ \KP6 \inskipline{.5}2

(3) \ $\tilde x\ssp$ is {\it scalarly measurable\ssp} if{}f 
$(\ssp u\circ x\,;\sp\mu\,,\spp\bosy K\ssp)$ is measurable for all 
$u\in\Cal L\,(\sp\vPi\sp,\spp\bosy K\ssp) \,$.\KN9 \inskipline10

Then for any of \q{measurable}, \q{simply measurable} or 
\q{scalarly measurable} in place of \,X\ssp, say that \inskipline12

(4) \ $\tilde x\ssp$ is {\it almost\ssp} X if{}f 
\math{\text{\efss R\KPt8}\tilde x\KP1(\ssp\Omega\sp\setminus\sp N\ssp) } is X 
for some \mathss39{N\in\mu\invss44\image\snn\{\ssp 0\ssp\} }, \inskipline{.5}2

(5) \ $\tilde x\ssp$ is {\it finitely\ssp} X if{}f 
\math{\text{\efss R\KPt8}\tilde x\,A} is X for 
every \mathss36{A\in\mu\invss44\image\sp\lbb R_+}, \inskipline{.5}2

(6) \ $\tilde x\ssp$ is {\it finitely almost\ssp} X if{}f 
\math{\text{\efss R\KPt8}\tilde x\,A} is almost X for 
every \mathss36{A\in\mu\invss44\image\sp\lbb R_+}.
  \end{def:al schemas}

In loose speach, we may express the content of \ref{df meas}\,(2) by saying 
that \math{\bosy\sigma} is a {\sl sequence of simple functions converging 
pointwise\sp} to \mathss34{x}. Then for the \math{\sigma\in\bosy\sigma} there \linebreak
we may also say that \math{(\ssp\sigma\,;\sp\mu\,,\spp\vPi\ssp) } is a 
{\it simple\ssp} mv\ssp-\sp map, and we may loosely say that \linebreak
                                                             $\sigma\ssp$ is a 
{\sl simple function\sp}.

Note that by our definitions above we may also say e.g.\ that \math{\tilde x} 
is {\sl measurable\sp} if{}f there are \math{\mu\,,\sp\Omega\,,\sp\vPi\sp,\sp 
 x} with \math{\Omega\times\snn\{\,\Bnull_\vPi\} } simple in \math{
(\ssp\mu\,,\spp\vPi\ssp) } and \mathss03{ \tilde x = 
 (\ssp x\,;\spp\mu\,,\spp\vPi\ssp) } and \math{x\in\sp^\Omega\,\vecs\vPi } and \mathss36{
x\invss44\images\sp\taurd\vPi\inc\dom\mu}.

\begin{proposition}\label{pro-mea-equ}

Let \PouN$\,\bosy K\in\setRC${\,\rm, }and let \PouN$\,\vPi\in\LCSps0(K)$ be 
normable with $\,\taurd\vPi$ a separable topology. Also let $\,\mu$ be a \sp
\rsigma1finite positive measure on $\,\Omega\,$. If in addition $ \tilde x =
(\ssp x\,;\sp\mu\,,\spp\vPi\dlsigss00\spp)$ with $\, x \in\sp^\Omega\,
\Cal L\,(\spp\vPi\sp,\spp\bosy K\ssp) \KP1 ${\rm, \,}then $\ 
(1) \equivss22 (2) \equivss22 (3) $ \,where {\rm \inskipline{.9}4

(1)} \ $(\ssp\roman{ev}\KPt2\sbi\xi\snn\circ\spp x\,;\sp\mu\,,\spp\bosy K\ssp)$ is 
       measurable for all $\,\xi\in\vecs\vPi${\sp\rm, \inskipline{.4}4

(2)} \ $\tilde x$ is simply measurable{\ssp\rm, \inskipline{.4}4

(3)} \ $\tilde x$ is measurable\ssp.                       \end{proposition}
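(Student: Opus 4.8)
My plan is to prove the three conditions equivalent through the cycle $(3)\Rightarrow(1)\Rightarrow(2)\Rightarrow(3)$, in which only $(1)\Rightarrow(2)$ carries real content. Fix a compatible norm $\Nu$ making $\sigrd\vPi$ a Banach space, and, since $\taurd\vPi$ is separable, pick a sequence $\xi_0,\xi_1,\ldots\in\vecs\vPi$ that is $\Nu$-dense in the closed unit ball of $(\sigrd\vPi,\Nu)$. For $n\in\bbNo$ write $g_n=\roman{ev}_{\xi_n}\circ x$ for the scalar function $\eta\mapsto x(\eta)(\xi_n)$, and let $\tau_0$ be the initial topology on $\Cal L(\vPi,\bosy K)$ induced by the countable family $(\roman{ev}_{\xi_n})_{n\in\bbNo}$. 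Then $\tau_0$ separates points, so it is metrizable, and it is second countable; it is coarser than the weak${}^*$ topology $\taurd(\vPi\dlsigss00)$, and, from $|\phi(\xi)-\|\xi\|\,\phi(\xi_n)|\le\|\phi\|\,\|\xi-\|\xi\|\xi_n\|$, evaluation at a general point is a uniform-on-bounded-sets limit of (scalar multiples of) the $\roman{ev}_{\xi_n}$, so that on every $\Nu$-bounded subset of $\Cal L(\vPi,\bosy K)$ the two topologies agree. The implication $(3)\Rightarrow(1)$ is immediate, since each $\roman{ev}_\xi$ is continuous on $\vPi\dlsigss00$, whence preimages under $g=\roman{ev}_\xi\circ x$ of open subsets of $\bosy K$ are $x$-preimages of weak${}^*$-open sets, which lie in $\dom\mu$ by $(3)$.

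For $(1)\Rightarrow(2)$ I would first extract measurability of the scalar modulus. Assuming $(1)$, every $g_n$ is measurable, so $\eta\mapsto\|x(\eta)\|=\sup_n|g_n(\eta)|$ is measurable as a countable supremum, the supremum formula holding because $\{\xi_n\}$ is dense in the unit ball. Moreover $x$ is $\tau_0$-measurable: $\tau_0$ has a countable base formed by finite intersections of sets $\roman{ev}_{\xi_n}^{-1}(V)$ with $V$ open in $\bosy K$, and the $x$-preimage of each lies in $\dom\mu$ since the $g_n$ are measurable. I then combine the measurable exhaustion $\Omega_k=\{\eta:\|x(\eta)\|\le k\}$ with a $\sigma$-finite exhaustion $\Omega=\bigcup_j F_j$ by sets of finite measure to obtain a countable disjoint partition of $\Omega$ into measurable ``boxes'' $B_{k,j}$, each of finite measure and each mapped by $x$ into a fixed $\Nu$-ball.

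The crux is to upgrade this to a global simple sequence. On a box $B_{k,j}$ the values of $x$ lie in a closed $\Nu$-ball which, by Banach--Alaoglu, is weak${}^*$-compact, and which is weak${}^*$-metrizable because $\vPi$ is separable; there $\tau_0$ coincides with the weak${}^*$ topology, so $x\,|\,B_{k,j}$ is a weak${}^*$-measurable map into a compact metric space. Fixing a countable weak${}^*$-dense subset of that ball, I would, at each accuracy level $m$, select measurably for each $\eta$ a nearest member of a finite sub-grid, producing finitely-valued measurable approximants converging weak${}^*$ to $x$ on the box; since $\mu(B_{k,j})<\plusinfty$ these qualify as simple functions in the sense of Definitions \ref{df simple}. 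A diagonalisation over the boxes and over $m$ --- at stage $m$ using the $m$-th grid on the first $m$ boxes and the zero functional elsewhere --- then yields a single simple sequence $\bosy\sigma$ with $\bosy\sigma\to x$ pointwise in the weak${}^*$ topology, which is exactly $(2)$. The main obstacle is precisely this bookkeeping: performing the measurable nearest-point selection, keeping each $\sigma_m$ finitely valued with finite-measure support, and still guaranteeing pointwise weak${}^*$ convergence everywhere.

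Finally, for $(2)\Rightarrow(3)$ I would note that each simple function is weak${}^*$-measurable, its value-preimages being finite unions of members of $\dom\mu$, and that under $(2)$ the functions $g_n$, with $g_n(\eta)=\lim_m\roman{ev}_{\xi_n}(\sigma_m(\eta))$, are again measurable as pointwise limits of measurable scalar functions; hence, exactly as above, $\eta\mapsto\|x(\eta)\|$ is measurable and $x$ is $\tau_0$-measurable. The same box decomposition then upgrades $\tau_0$-measurability to weak${}^*$-measurability: for weak${}^*$-open $U$ the set $x^{-1}(U)$ is the countable union over the boxes of $(x\,|\,B_{k,j})^{-1}(U)$, each lying in $\dom\mu$ since on the relevant ball $\tau_0$ and the weak${}^*$ topology agree. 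This gives $(3)$ and closes the cycle.
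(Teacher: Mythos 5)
Your proposal is correct and rests on exactly the same core ingredients as the paper's proof: Alaoglu compactness of dual-norm balls, their weak$^*$-metrizability coming from separability of $\taurd\vPi$, a countable dense family of evaluations, and finite nets with measurable cell (nearest-point) assignment to manufacture the simple sequence. The only difference is organizational: the paper factors the work as $(1)\Rightarrow(3)$ followed by $(3)\Rightarrow(2)$, with $(3)\Rightarrow(1)$ and $(2)\Rightarrow(1)$ trivial, so that the $\sigma$-finiteness hypothesis is used only in $(3)\Rightarrow(2)$, whereas you prove $(1)\Rightarrow(2)$ in one pass and then repeat the measurability upgrade for $(2)\Rightarrow(3)$, which injects $\sigma$-finiteness (harmlessly, given the hypotheses) into both nontrivial implications.
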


\begin{proof} Since \math{ \{\KP1 \roman{ev}\sp\sbi\xi \, | \KP1
 \Cal L\,(\spp\vPi\sp,\spp\bosy K\ssp) : \xi\in\vecs\vPi\KP1\} \inc
 \Cal L\,(\spp\vPi\dualsigma0\ssp,\spp\bosy K\ssp) } holds, we trivially have 
the implication \mathss36{(3)\impss22(1)}. Likewise, we trivially have \mathss36{
(2)\impss22(1)}. It now suffices to prove that the implications \math{
(1)\impss22(3)} and \math{(3)\impss22(2)} hold.

For \mathss36{(1)\impss22(3)}, letting \math{\Nu} be some compatible norm for \mathss33{
\vPi}, \,let \math{\Nu\aar 1} be the corresponding dual norm, i.e.\ put \mathss39{
\Nu\aR 1=\seqss44{
\sup\KPt8(\ssp\Abrs00^1\circ\sp u\circss01\Nu\invss44\image\ssbb15 I)
:u\in\Cal L\,(\sp\vPi\sp,\spp\bosy K\ssp)} }. 
\newline$\null\hfill$
For \mathss36{
i\in\bbNo}, \,then put \mathss38{ \roman B\,i =
\Nu\aar 1\!\inve\ssp\image\sp[\KP1 0\,,\sp i\,\yplus\sp\ydot\,] }. Assuming 
(1)\,, if 
\linebreak
now \mathss37{U\in\taurd(\spp\vPi\dlsigss00\spp) }, \,for any 
fixed \math{i\in\bbNo} and for \mathss35{\roman U\,i=U\capss14\roman B\,i }, \,
it suffices to prove that \math{x\invss33\image\sp\roman U\,i\in\dom\mu} 
holds. For \mathss35{ \scriptm9 T\ar 1 =
\taurd(\spp\vPi\dlsigss00\spp)\leiss33\roman B\,i }, \,now \math{
\scriptm9 T\aR 1} is the uniform $\ssp\scriptm{10}U\,$-- \linebreak 
topology where \math{\scriptm{10}U } is the uniformity generated by \PouN$\ssp
\{\KP1\roman V\,\xi\,n:\xi\in D\ssp\text{ and }\ssp n\in\rbb Z^+\,\}$ where \math{
\roman V\,\xi\,n = \roman B\,i\times\roman B\,i \capss31 \{\,
 (\ssp u\ssp,\spp v\ssp) :
  |\KP1 u\fvalss01\xi - v\fvalss01\xi\KP1| < n^{\ssp\mminus 1}\,\} } and \math{
D} is any countable $
\taurd\vPi\leiss22(\,\Nu\invss43\image\sp[\KPt9 0\,,1\KPt8]\ssp)\,$--\,dense 
set. By Alaoglu's theorem, we have \math{\scriptm9 T\aR 1} a compact topology. 
Since \math{\scriptm{10}U } is generated by a countable set, we see that there 
is some \mathss33{\scriptm9 T\aR 1}-- dense and countable set \mathss34{
D\aar 1}. Using (1) and noting that \ 
$x\invss33\image(\ssp\roman V\,\xi\,n\sp\image\{\sp u\sp\}\sp) = {} $ \vskip.4mm\centerline{$
\bigcap\KP1\{\,\{\KP1 t:|\KP1 x\fvalss01 t\fvalss21\xi -
u\fvalss01\xi\KP1| < n^{\ssp\mminus 1}
\ssp\text{ and }\,
|\KP1 x\fvalss01 t\fvalss21\zeta\KP1|\le i\,\yplus\sp\ydot\,\} :
\zeta\in D\KP1\}\KP1$,} \inskipline{.4}0

we see that \math{
x\invss33\image(\ssp\roman V\,\xi\,n\sp\image\{\sp u\sp\}\sp)\in\dom\mu} holds 
when \mathss31{ (\ssp\xi\,,\spp u\ssp,\spp n\ssp) \in
D\times D\ar 1\sn\times\sp\rbb Z^+}. It is left as an exercise to the reader 
to show that \math{\roman U\,i} can be expressed as a union of finite 
intersections of the sets \mathss37{\roman V\,\xi\,n\sp\image\{\sp u\sp\} }. 
Then \math{x\invss33\image\sp\roman U\,i\in\dom\mu} follows.

For \mathss36{(3)\impss22(2)}, we need the assumption that \math{\mu} be 
\rsigma6finite. It is an easy exercise to show that if the implication to be 
established holds for bounded measures, then it holds also for \rsigma6finite 
ones. So we assume that \mathss36{\mu\fvalss01\Omega < \plusinfty}. Further, 
if we can show that for any fixed \mathss37{i\in\bbNo}, \,and for \math{\bar x
=x\capss33(\ssp\Univ\times\roman B\,i\ssp)} with \mathss34{B=\dom\bar x}, \,
the required implication holds for \math{
(\ssp\bar x\,;\sp\mu\KP1|\KP1\Pows B\ssp,\spp\vPi\dlsigss00\spp)} in place of \mathss34{
\tilde x}, \,then it easily follows also for \mathss34{\tilde x}, observing 
that \mathss36{ B = \bigcap \KP1 \{\, \{ \KP1 t :
 |\KP1 x\fvalss01 t\fvalss21\zeta\KP1|\le i\,\yplus\sp\ydot\,\} :
   \zeta\in D\KP1\}\in\dom\mu}.

As just explained, for \mathss36{(3)\impss22(2)}, assuming (3) and making the 
additional assumptions that \math{\mu\fvalss01\Omega < \plusinfty} and \math{
\rng x\inc B\ar 1=\roman B\,i\ar 0 } for some fixed \mathss36{i\ar 0\in\bbNo}, 
we should establish (2)\ssp. For this, we construct \math{ \bosy s \in
 \sp^\sbbNo\ssp\big(\,^\Omega\ssp B\ar 1\spp) } with \math{
(\ssp\smb S\,;\sp\mu\,,\spp\vPi\dlsigss00\spp) } a simple mv\ssp-\ssp map for 
every \mathss36{\smb S\in\rng\bosy s}, \,and such that \math{\bosy s\to x} in 
top \math{\scriptm9 T\expnota^\KPt8\Omega\sp]_{ti} } when we take \linebreak \PouN$
\scriptm9 T=\taurd(\spp\vPi\dlsigss00\spp)\leiss33 B\ar 1\,$. Since \math{
\scriptm9 T} is a compact topology, and also the uniform $\ssp\scriptm{10}U\,
$-- topology with \math{\scriptm{10}U} being countably generated, we may first 
choose some decreasing \linebreak \PouN$
\bosy w:\bbNo\to\scriptm{10}U\ssp$ with \math{\rng\bosy w} a symmetric base 
for \mathss36{\scriptm{10}U}, \,and then some \math{\bosy u:\bbNo\to\Univ} 
with the following property. For every \math{i\in\bbNo} there is \math{
k\in\bbNo} with \math{\bosy u\fvalss01 i\in B\ar 1\sp^k } and $ B\ar 1 \inc
\bigcup\KP1\{\,\bosy w\fvalss01 i\,\image\{\,\bosy u\fvalss01 i\fvalss21 j\,\}
:j\in k\,\}\KPt8$. For short writing \inskipline{.8}{6.75}

$\roman U\,i\,j = 
 \bosy w\fvalss01 i\,\image\{\,\bosy u\fvalss01 i\fvalss21 j\,\} 
 \sp\setminus\, \bigcup \KPt8 \{\,
 \bosy w\fvalss01 i\,\image\{\,\bosy u\fvalss01 i\fvalss21 l\,\}
 : l\in j\,\} $ \ and \inskipline{.4}{6.75}

$\roman A\,i\,j = x\invss33\image\ssp\roman U\,i\,j\,$, \,and taking \vskip.4mm

$ \null\hfill
 \bosy s = \big\langle\, \bigcup \KPt8 \{\,
 \roman A\,i\,j\times\{\,\bosy u\fvalss01 i\fvalss21 j\,\} :
 \bosy u\ssp,\spp i : j\in\dom(\ssp\bosy u\fvalss01 i\ssp)\,\} :
 i\in\bbNo \,\big\rangle \KP1$, \,we are done, \inskipline{.5}0

leaving the required straightforward verifications as exercises to the reader.
  \end{proof}

\begin{example}

Without separability of the topology \mathss30{\taurd\vPi}, \,the implication \mathss30{
(1)\impss11(2)} \linebreak
                in Proposition \ref{pro-mea-equ} need not hold. Indeed, with \math{
1<p<\plusinfty} and \math{\vPi=\lll^p(\ssbb44 I) } and letting \math{\mu} be 
the Lebesgue measure defined for all Lebesgue measurable sets \mathss35{A\inc
 \bbI}, \,taking \math{\tilde x=(\ssp x\,;\spp\mu\,,\spp\vPi\dlsigss00\spp) } 
where \mathss38{x = \seqss34{\roman{ev\sp}\sbi t\,|\KP1\vecs\vPi\snn:t\in\bbI} 
}, \,we trivially have (1) since \linebreak
                        \mathss02{\roman{ev}\KPt2\sbi\xi\snn\circ\spp x\fvalss01 t = 
 x\fvalss01 t\fvalss21\xi = \xi\fvalss11 t }, \,and so \math{
\roman{ev}\KPt2\sbi\xi\snn\circ\spp x\fvalss01 t \not = 0 } only for countably many \math{
t\in\bbI} for \linebreak
              each fixed \mathss30{\xi\in\vecs\vPi }. However \math{\tilde x} 
cannot be simply measurable since otherwise there would exist some countable 
set \math{N\aar 0\inc\bbI} such that \math{x\fvalss01 t\fvalss21\xi\not=0 } 
holds only for vec- \linebreak
                    tors \math{\xi\in\vecs\vPi} with \math{
\xi\invss46[\KP{1.2}\Univ\sp\setminus\snn\{\ssp 0\ssp\}\KP1]\capss42 N\aar 0
 \not=\emptyset} when \mathss34{t\in\bbI}.
  \end{example}

\begin{proposition}\label{Pro rfx si mea}

With \œ$\,\bosy K\in\setRC$ let \œ$\,\vPi\in\BaSps0(K)$ be reflexive{\sp\rm, }%
and let \inskipline06

$(\ssp x\,;\spp\mu\,,\spp\vPi\subsigrs03\spp)$ be simply measurable. Then $\,
(\ssp x\,;\spp\mu\,,\spp\vPi\ssp)$ is simply measurable.
  \end{proposition}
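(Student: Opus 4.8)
The plan is to notice first that, by Definitions~\ref{df simple}\,(1), being \emph{simple} in an mv\ssp-\sp pair depends only on the underlying vector space, its zero vector and the measure, and not at all on the topology; since the weakening $\vPi\subsigrs03$ has the same underlying vector space $\vecs\vPi$ and the same zero $\Bnull_\vPi$ as $\vPi$, a sequence is simple in $(\ssp\mu\,,\spp\vPi\subsigrs03\spp)$ if{}f it is simple in $(\ssp\mu\,,\spp\vPi\ssp)$. Thus the given hypothesis provides, via Definitions~\ref{df meas}\,(2), some simple sequence $\bosy\sigma$ in $(\ssp\mu\,,\spp\vPi\ssp)$ whose terms $\sigma_n=\bosy\sigma\fvalss01 n$ satisfy $\sigma_n\fvalue\eta\to x\fvalue\eta$ \emph{weakly} for every $\eta$, and the whole task is to replace weak pointwise convergence by norm pointwise convergence of some, possibly different, simple sequence.

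First I would confine the range of $x$ to a separable subspace. Let $D=\bigcup\,\{\,\rng\sigma:\sigma\in\rng\bosy\sigma\}$; this is countable, and its closed linear span $\vPi_0$ is a separable \erm Banachable subspace of $\vPi$, reflexive since $\vPi$ is. Each $\sigma_n\fvalue\eta$ lies in $\vecs\vPi_0$, and $x\fvalue\eta$ is a weak limit of these, so by Mazur's theorem, the norm closed subspace $\vecs\vPi_0$ being weakly closed, $x\fvalue\eta\in\vecs\vPi_0$ for every $\eta$. Moreover $x$ is scalarly measurable: for $u\in\Cal L\,(\sp\vPi\sp,\spp\bosy K\ssp)$ the function $u\circ x$ is the everywhere pointwise limit of the simple scalar functions $u\circ\sigma_n$, hence measurable. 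Fixing a countable family $\{\,u_i\}$ in $\Cal L\,(\sp\vPi_0\sp,\spp\bosy K\ssp)$ that norms $\vPi_0$, obtained from a dense sequence by Hahn\,--\,Banach, and extending each $u_i$ to $\vPi$, we get that $\eta\mapsto|\KP1 x\fvalue\eta-a\KP1|\suba$ is measurable for every $a\in\vecs\vPi_0$, being the countable supremum of the measurable functions $\eta\mapsto|\KP1 u_i\fvalue(\ssp x\fvalue\eta\ssp)-u_i\fvalue a\KP1|\suba$.

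The enabling observation is that $x$ is supported on a \rsigma6finite set. Indeed $\{\,\eta:\sigma_n\fvalue\eta\not=\Bnull_\vPi\}$ is a finite union of members of $\mu\invss44\image\spp\lbb R_+$, so $\Omega_0=\bigcup_n\{\,\eta:\sigma_n\fvalue\eta\not=\Bnull_\vPi\}$ is \rsigma6finite; and if $\eta\notin\Omega_0$ then every $\sigma_n\fvalue\eta=\Bnull_\vPi$, whence $x\fvalue\eta=\Bnull_\vPi$. Writing $\Omega_0$ as an increasing union of finite\ssp-\sp measure measurable sets $F_k$ and fixing a norm\ssp-\sp dense sequence $\seq{\,a_m:m\in\bbNo}$ in $\vecs\vPi_0$, I would define $\tau_k\fvalue\eta=\Bnull_\vPi$ for $\eta\notin F_k$ and, for $\eta\in F_k$, let $\tau_k\fvalue\eta=a_m$ for that $m\le k$ which makes $|\KP1 x\fvalue\eta-a_m\KP1|\suba$ smallest, ties broken by least index. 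Each $\tau_k$ has finite range inside $\{\,\Bnull_\vPi\,,\spp a_0\,,\ldots\,,a_k\}$, and its nonzero level sets are measurable subsets of the finite\ssp-\sp measure set $F_k$ by the measurability of the distances; hence each $\tau_k$ is simple in $(\ssp\mu\,,\spp\vPi\ssp)$. For $\eta\in\Omega_0$ one has $\eta\in F_k$ for all large $k$, and the least distance from $x\fvalue\eta$ to $\{\,a_0\,,\ldots\,,a_k\}$ tends to $0$ by density, so $\tau_k\fvalue\eta\to x\fvalue\eta$ in norm; for $\eta\notin\Omega_0$ both sides equal $\Bnull_\vPi$. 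Therefore $\seq{\,\tau_k:k\in\bbNo}$ is a simple sequence witnessing that $(\ssp x\,;\spp\mu\,,\spp\vPi\ssp)$ is simply measurable.

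The main obstacle is exactly this last step: upgrading everywhere \emph{weak} convergence to everywhere \emph{norm} convergence by functions that really meet the finite\ssp-\sp range and finite\ssp-\sp measure\ssp-\sp support demands of Definitions~\ref{df simple}\,(1). This is a Pettis\ssp-\sp type measurability argument, and three ingredients make it run: Mazur's theorem, confining $x$ to the separable subspace $\vPi_0$; the measurability of the norm distances $\eta\mapsto|\KP1 x\fvalue\eta-a\KP1|\suba$; and the \rsigma6finiteness of $\{\,\eta:x\fvalue\eta\not=\Bnull_\vPi\}$ forced by the given simple sequence, which secures the finite\ssp-\sp measure supports. Using the nearest\ssp-\sp point choice among finitely many dense values rather than a fixed threshold is what yields everywhere convergence at each point's own rate; reflexivity enters only in that $\vPi_0$ is then reflexive, the substantive work being carried by separability.
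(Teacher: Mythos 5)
Your proof is correct, and its skeleton coincides with the paper's: both arguments pass from the given weakly--pointwise convergent simple sequence to (i) scalar measurability of $x$ (trivially, as a pointwise limit of measurable scalar functions) and (ii) confinement of the values of $x$, via Mazur/Hahn--Banach, to the separable closed linear span of the countably many values of the simple functions. Where the paper then simply says ``by Pettis' theorem the assertion follows'', you instead prove the needed implication outright: a countable norming family of functionals makes each $\eta\mapsto\Nu\fvalue(\ssp x\fvalue\eta-a\ssp)\svs\vPi$ measurable, the union of the supports of the given simple functions is a $\sigma$-finite set outside of which the weak limit is forced to be $\Bnull_\vPi$, and the approximants are built by nearest-point selection from a dense sequence over an exhaustion by finite-measure sets. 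This explicit route buys something genuine: the paper's notion of ``simply measurable'' demands convergence at \emph{every} point and simple functions whose nonzero level sets have \emph{finite} measure, over a positive measure that need not be $\sigma$-finite, and the textbook Pettis theorem (a.e.\ convergence, finite or $\sigma$-finite measure) does not literally hand over that statement; your $\sigma$-finite-support observation is exactly the bridge the paper leaves implicit. You are also right that reflexivity does no work -- it is likewise unused in the paper's proof and appears only because this is the case needed later, e.g.\ in Corollary~\ref{Cor L^p Ban}. The only blemishes are notational: you write $|\KP1 x\fvalue\eta-a\KP1|\suba$ for what should be the value of a compatible norm $\Nu$ of $\vPi$ on the vector $(\ssp x\fvalue\eta-a\ssp)\svs\vPi$, the symbol $|\,\cdot\,|\suba$ being reserved in the paper for the scalar absolute value; this does not affect the argument.
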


\begin{proof} Putting \mathss35{\Omega=\bigcup\ssp\dom\mu}, \,let \math{
\bosy\sigma} be some simple sequence in \math{
(\ssp\mu\,,\spp\vPi\subsigrs03\spp) } with \math{ \bosy\sigma\to x} in top \mathss35{
 \taurd(\sp\vPi\subsigrs03\spp)\expnota^\ssp\Omega\ssp]_{ti} }, \,and let \math{
S} be the closed linear span of \math{\rng\bigcup\ssp\rng\bosy\sigma} in \mathss31{
\vPi}. Then trivially \math{(\ssp x\,;\spp\mu\,,\spp\vPi\ssp) } is scalarly 
measurable, and by Hahn\,--\,Banach also \math{\taurd\vPi\leiss22 S} is a 
separable topology with \mathss34{\rng x\inc S}. Consequently by 
Pettis' theorem the assertion follows.
  \end{proof}

\begin{lemma}\label{Le Nu_1 ci y meas}

With \œ$\,\bosy K\in\setRC$ let \œ$\,\vPi\in\LCSps0(K)$ be normable with $\,
\Nu$ a compatible norm and $\,\taurd\vPi$ a separable topology. Also let \vskip.5mm\centerline{$
\Nu\aR 1 = \seqss44{ \sup \KPt8(\ssp\Abrs00^1\circ\sp u\circss01
 \Nu\invss44\image\ssbb15 I) : u\in\Cal L\,(\sp\vPi\sp,\spp\bosy K\ssp)} $} \inskipline{.5}0

and let $\,(\KPt5 y\,;\spp\mu\,,\spp\vPi\dlsigss00\spp)$ be scalarly 
measurable. Then $\,
(\ssp\Nu\aR 1\sn\circ\sp y\,;\spp\mu\,,\sn\tfbbR\ssp)$ is measurable.
  \end{lemma}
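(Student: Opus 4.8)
The plan is to recognise $\Nu\aR 1$ as the dual norm on $\Cal L\,(\sp\vPi\sp,\spp\bosy K\ssp)=\vecs(\sp\vPi\dlsigss00\spp)$, namely $\Nu\aR 1\sn\fvalue u=\sup\,\{\,|\KP1 u\fvalss01\xi\KP1|\suba:\Nu\fvalss10 x\le 1\,\}$, and then to exhibit $\Nu\aR 1\sn\circ\sp y$ as a \emph{countable} supremum of functions already known to be measurable. The reduction to countably many evaluations is where separability of $\taurd\vPi$ enters, and measurability of a countable supremum is the mechanism that delivers the conclusion.

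First I would fix $\Omega=\bigcup\ssp\dom\mu$ and, using that $\Nu$ is compatible so that $(\sp\vecs\vPi\sp,\spp\Nu\ssp)$ is a separable metric space, choose a countable set $D$ dense in the closed unit ball $B=\Nu\invss44\image\ssbb15 I$ (every subspace of a separable metric space being separable). Next I would verify the density reduction $\Nu\aR 1\sn\fvalue u=\sup\,\{\,|\KP1 u\fvalss01\xi\KP1|\suba:\xi\in D\,\}$ for each $u\in\Cal L\,(\sp\vPi\sp,\spp\bosy K\ssp)$: the inequality $\ge$ is immediate from $D\inc B$, and for $\le$ one uses that $u$ is continuous with finite dual norm, so that any $\xi\in B$ is approximated in $\Nu$ by some $\zeta\in D$ with $|\KP1 u\fvalss01\xi-u\fvalss01\zeta\KP1|\suba$ as small as desired.

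Then, exactly as already recorded in the proof of Proposition \ref{pro-mea-equ}, for each $\xi\in\vecs\vPi$ the evaluation $\roman{ev}\sbi\xi\,|\KP1\Cal L\,(\sp\vPi\sp,\spp\bosy K\ssp)$ lies in $\Cal L\,(\sp\vPi\dlsigss00\ssp,\spp\bosy K\ssp)$, so scalar measurability of $(\KPt5 y\,;\spp\mu\,,\spp\vPi\dlsigss00\spp)$ gives that $(\ssp\roman{ev}\sbi\xi\snn\circ\sp y\,;\spp\mu\,,\spp\bosy K\ssp)$ is measurable, whence $(\ssp\Abrs00^1\circ\sp\roman{ev}\sbi\xi\snn\circ\sp y\,;\spp\mu\,,\sn\tfbbR\ssp)$ is measurable because preimages of open sets pull back through the continuous map $\Abrs00^1$ to open sets. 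By the density reduction, $(\ssp\Nu\aR 1\sn\circ\sp y\ssp)\fvalue\eta=\sup\,\{\,(\ssp\Abrs00^1\circ\sp\roman{ev}\sbi\xi\snn\circ\sp y\ssp)\fvalue\eta:\xi\in D\,\}$ for every $\eta\in\Omega$ (the value being finite since $y\fvalue\eta$ is continuous), and because $D$ is countable the conclusion follows from the standard fact that a countable supremum of measurable real valued functions is measurable.

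The main obstacle is the density reduction step: passing from the supremum over the whole unit ball $B$, which is exactly $\Nu\aR 1$, to the supremum over the countable $D$. This is the only place separability is genuinely used, and it must be carried out with care, since $\Nu\aR 1\sn\fvalue u$ has to be known finite first for the approximation estimate $|\KP1 u\fvalss01\xi-u\fvalss01\zeta\KP1|\suba\le\Nu\aR 1\sn\fvalue u\,(\ssp\Nu\fvalss10(\sp\xi-\zeta\sp)\ssp)$ to be meaningful. Everything else\,---\,identifying the evaluations as weak$^*$ continuous functionals and invoking measurability of a countable supremum\,---\,is routine, the former already being available from Proposition \ref{pro-mea-equ}.
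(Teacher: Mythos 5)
Your proposal is correct and takes essentially the same approach as the paper's own proof: both choose a countable set $D$ dense in the unit ball, write $\Nu\aR 1\snn\circ\sp y\fvalue\eta$ as the countable supremum $\sup\,\{\,\Abrs00^1\circ\KPt2\roman{ev}\KPt2\sbi\xi\snn\circ\sp y\fvalue\eta : \xi\in D\,\}$, obtain measurability of each term from the scalar measurability hypothesis, and conclude by measurability of a countable supremum of measurable functions. The only distinction is expository: you verify the density reduction and the finiteness of the dual norm explicitly, points the paper leaves implicit.
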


\begin{proof} Putting \math{\Omega=\sp\bigcup\ssp\dom\mu} and 
taking any countable \math{D} such that \math{D} 

is 
\mathss37{\taurd\vPi\leiss22(\ssp\Nu\invss44\image\ssbb15 I)}--\,dense, 
for every \math{\eta\in\Omega} we 

have \mathss38{\Nu\aR 1\sn\circ\sp y\fvalue\eta=                          \label{Nu_1 = sup ...}
\sup \KPt8(\ssp\Abrs00^1\circ\sp(\ssp y\fvalue\eta\ssp)\spp\image\sn D\ssp)
=\sup\KPt8\{\KPt8\Abrs00^1\circ\KPt2
\roman{ev}\KPt2\sbi\xi\snn\circ\sp y\fvalue\eta:\xi\in D\KP1\} }. Noting that 
by our assumption for every fixed \math{\xi\in D} we have 

that 
\math{
(\ssp\Abrs00^1\circ\KPt2
\roman{ev}\KPt2\sbi\xi\snn\circ\sp y\,;\spp\mu\,,\sn\tfbbR\ssp)
} is 
measurable, the assertion immediately follows.
  \end{proof}

\begin{lemma}\label{Le Nu_1 = sup ...}

With \œ$\,\bosy K\in\setRC$ let \œ$\,\vPi\in\BaSps0(K)$ be reflexive with $\,
\Nu$ a compatible norm and $\,S$ a separable linear subspace in 
$\,\vPi\dlbetss01\,$. Also let \vskip.5mm\centerline{$
\Nu\aR 1 = \seqss44{ \sup \KPt8(\ssp\Abrs00^1\circ\sp u\circss01\Nu\invss44
 \image\ssbb15 I) : u\in\Cal L\,(\sp\vPi\sp,\spp\bosy K\ssp)} \KP1 $.} \inskipline{.5}0

Then there is a countable $\,D\inc\Nu\invss44\image\ssbb10 I$ with 
$\,\Nu\aR 1\ssn\fvalue u=
\sup \KPt8(\ssp\Abrs00^1\circ\sp u\spp\image\sn D\ssp)$ for $\,u\in S\,$.
  \end{lemma}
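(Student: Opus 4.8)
The statement is the classical fact that a separable subspace of a dual Banach space carries a countable norming subset drawn from the unit ball of the predual, and the plan is to prove it in exactly that spirit. Write $B = \Nu\invss44\image\ssbb15 I$ for the closed unit ball of $\vPi$, so that by the very definition of $\Nu\aR 1$ we have $\Nu\aR 1\fvalue u = \sup\,(\ssp\Abrs00^1\circ\sp u\ssp\image\sn B\ssp)$ for each $u\in\Cal L\,(\sp\vPi\sp,\spp\bosy K\ssp)$. Since $\vPi$ is Banachable with compatible norm $\Nu$, its bounded sets are precisely the subsets of scalar multiples of $B$, and hence $\vPi\dlbetss01$ is Banachable with $\Nu\aR 1$ as a compatible norm. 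In particular, separability of $S$ means separability for the $\Nu\aR 1$-topology, so I would fix a countable $\Nu\aR 1$-dense set $\{\ssp u_n : n\in\bbNo\ssp\}\inc S$.

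For each pair $(\ssp n\ssp,\spp k\ssp)\in\bbNo\times\rbb Z^+$ I would use the defining supremum for $\Nu\aR 1\fvalue u_n$ to choose $x_{n,k}\in B$ with $|\ssp u_n\fvalue x_{n,k}\ssp| > \Nu\aR 1\fvalue u_n - k\,^{\mminus 1}$. Then $D = \{\ssp x_{n,k} : n\in\bbNo\ssp\text{ and }\ssp k\in\rbb Z^+\ssp\}$ is a countable subset of $B$, and because $D\inc B$ the inequality $\sup\,(\ssp\Abrs00^1\circ\sp u\ssp\image\sn D\ssp)\le\Nu\aR 1\fvalue u$ is immediate for every $u$.

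The content lies in the reverse inequality for an arbitrary $u\in S$. Fixing $\eps\in\rbb R^+$, I would pick $n$ with $\Nu\aR 1\fvalue(\ssp u - u_n\ssp) < \eps\ssp 3\,^{\mminus 1}$ and then $k\in\rbb Z^+$ with $k\,^{\mminus 1} < \eps\ssp 3\,^{\mminus 1}$. Because $x_{n,k}\in B$, evaluation at $x_{n,k}$ is $1$-Lipschitz from $(\ssp\sigrd(\ssp\vPi\dlbetss01\sp)\,,\spp\Nu\aR 1\sp)$ to $\bosy K$, so that $|\ssp(\ssp u - u_n\ssp)\fvalue x_{n,k}\ssp|\le\Nu\aR 1\fvalue(\ssp u - u_n\ssp)$. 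Combining this estimate, the choice of $x_{n,k}$, and $|\ssp\Nu\aR 1\fvalue u - \Nu\aR 1\fvalue u_n\ssp|\le\Nu\aR 1\fvalue(\ssp u - u_n\ssp)$ then gives $|\ssp u\fvalue x_{n,k}\ssp| > \Nu\aR 1\fvalue u - \eps$. Hence $\sup\,(\ssp\Abrs00^1\circ\sp u\ssp\image\sn D\ssp)\ge\Nu\aR 1\fvalue u - \eps$ for every $\eps\in\rbb R^+$, which yields the reverse inequality and therewith the asserted equality for all $u\in S$.

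I do not anticipate a genuine obstacle. Reflexivity of $\vPi$, though assumed in the statement for the later application of the lemma, is not used here; the argument needs only that $\vPi$ is Banachable and that $S$ is a separable subspace of its dual. The only points demanding some care are the verification that $\{\ssp u_n\ssp\}$ is dense precisely in the $\Nu\aR 1$-topology and the remark that the simultaneous selection of all the $x_{n,k}$ is legitimate within the set theory of the paper, the latter being covered by the global axiom of choice (\ref{ax of choice})$\ar{ax}$ postulated above.
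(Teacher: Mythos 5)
Your proof is correct, and since the later application (in Lemma~\ref{LeA(1)}) uses only the stated property of $D$, it would serve as a drop-in replacement; but it follows a genuinely different route from the paper's. The paper fixes a countable dense set $A$ in $S$ for the relative strong dual topology and then, invoking reflexivity together with Hahn--Banach, selects for each $v\in A$ a \emph{single} point $\xi$ of the closed unit ball at which $v$ attains its dual norm \emph{exactly}, i.e.\ $v(\xi)=\Nu\aR 1\fvalue v$; the set $D$ consists of these norm-attaining points, and the norming identity for a general $u\in S$ is then obtained by a contradiction argument with $\eps$ equal to half the putative gap, perturbing $u$ to a nearby $v\in A$. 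You instead use only \emph{approximate} attainment --- for each element $u_n$ of your dense set and each $k$ you pick $x_{n,k}$ in the unit ball with $|\,u_n(x_{n,k})\,|>\Nu\aR 1\fvalue u_n-k\,^{\mminus 1}$ --- which costs countably many points per $u_n$ but requires nothing beyond the definition of the supremum, and you close with a routine three-$\eps$ estimate in place of the paper's contradiction. What your route buys is precisely what you observe: reflexivity is never used, so your argument establishes the lemma for an arbitrary Banachable $\vPi$ (the classical fact that the unit ball of a Banach space contains a countable set norming any prescribed separable subspace of the dual); the paper's reflexivity hypothesis, which is in any case needed for the lemma's intended application, is exploited only to make the selection cleaner, not because the statement requires it. Both arguments rest on countable choice for the selection step, so neither is more economical in that respect.
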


\begin{proof} Putting \math{E=\vPi\dlbetss01\ssp_{/\,S} } let \math{A} be 
countable and \mathss37{\taurd E}--\,dense, and let \math{\scrmt R} be the set 
of all pairs \math{(\ssp u\ssp,\spp\xi\ssp)\in A\times\vecs\vPi} with 
\math{\Nu\fvalss11\xi\le 1} and \mathss35{u\fvalss01\xi=\Nu\aR 1\ssn\fvalue u
}. By reflexivity and Hahn\,--\,Banach we then have \math{A\inc\dom\scrmt R} 
and hence by {\sp countable choice\sp} there is a function \math{\scrmt P\inc
 \scrmt R} with \mathss31{A\inc\dom\scrmt P}. Let \mathss31{D=\rng\scrmt P}.

Now for all \math{u\in S} we trivially have \mathss38{\sup \KPt8(\ssp
\Abrs00^1\circ\sp u\spp\image\sn D\ssp)\le\Nu\aR 1\ssn\fvalue u }, \,and hence 
assuming that \math{\sup \KPt8(\ssp\Abrs00^1\circ\sp u\spp\image\sn D\ssp)
 < \Nu\aR 1\ssn\fvalue u } holds for some \mathss34{u\in S}, \,it suffices to 
get a contradiction. Taking \math{ \eps = \frac 12\KP1(\ssp\Nu\aR 1\ssn\fvalue 
 u - \sup \KPt8(\ssp\Abrs00^1\circ\sp u\spp\image\sn D\ssp)) } we first find 
some \math{v\in A} with \mathss34{\Nu\aR 1\ssn\fvalue(\ssp u - v\ssp) < \eps}. 
Then for \math{\xi=\scrmt P\fvalss30 v} we have \math{\Nu\fvalss11\xi\le 1} 
and \mathss34{v\fvalss01\xi=\Nu\aR 1\ssn\fvalue v}, \,and hence \ $
  \sup \KPt8(\ssp\Abrs00^1\circ\sp u\spp\image\sn D\ssp)
= \Nu\aR 1\ssn\fvalue u - 2\KPt8\eps 
< \Nu\aR 1\ssn\fvalue u - \Nu\aR 1\ssn\fvalue(\ssp u - v\ssp) - \eps$ \inskipline{.25}{35.7}

${}\le\Nu\aR 1\ssn\fvalue v - \eps = v\fvalss01\xi - \eps = 
      (\ssp v - u\ssp)\fvalss01\xi + u\fvalss01\xi - \eps $ \inskipline{.25}{35.7}

${}\le|\KP1 u\fvalss01\xi\KP1| + \Nu\aR 1\ssn\fvalue(\ssp u - v\ssp) -\eps
   < |\KP1 u\fvalss01\xi\KP1| \KP1 $, \,a contradiction.
  \end{proof}


\insubsubhead          Decomposable positive measures                    \label{Ss decos}

Decomposability, as well as being {\sl almost decomposable\sp}, is a property 
for a positive measure $\mu$ that is weaker than the usual 
\q{$\sigma\,$-\,finiteness} which we call \rsigma6finiteness, and that is 
sufficiently strong still to have \mathss38{\mLrs23^\plusinftyy(\ssp\mu\sp) } 
canonically represent the strong dual of \mathss38{\mLrs42^1(\ssp\mu\sp) }. 
For example Haar measures of suitably \q{large} locally compact topological 
groups are almost decomposable but not \rsigma6finite. See 
Example \ref{Exa Haar} on page \pageref{Exa Haar} below for some details 
concerning this assertion.

\begin{definitions}\label{df decomp}

(1) \ Say that \math{N\sprim1} is \mathss37{\mu}{\it--\,negligible\ssp} if{}f \math{
    \mu} is a positive measure with \inskipline09

$N\sprim1\inc \sp\bigcup\,\dom\mu\ssp$ and \mathss38{
 \mu\invss44\image\sp\rbb R^+\ssn\lei\sp N\sprim1 \inc \ssp
 \bigcup\KPt8\{\KPt8\Pows N\sn:N\in\mu\invss44\image\snn\{\ssp 0\ssp\}\sp\} }, \inskipline{.5}2

(2) \ For a positive measure \math{\mu} on \math{\Omega} say that \math{\mu} 
    is {\it almost decomposable\ssp} if{}f there are \mathss30{\scrmt A \inc \label{decos A}
 \mu\invss44\image\sp\rbb R^+} and \math{N\sprim1} with \math{
\scrmt A\cupss42\{\ssp N\sprim1\sp\} } disjoint and \math{ \Omega = 
 \bigcup\,\scrmt A\cupss42 N\sprim1 } and such that \math{N\sprim1} is \mathss37{
\mu}--\,negligible, and such that also \math{N\sprimm1} is \mathss37{\mu
}--\,negligible whenever \œ$\ssp N\sprimm1\inc\Omega$ \linebreak
                                                      is such that for every \math{
A\in\scrmt A} there is \math{N} with \mathss38{A \capss31 N\sprimm1 \inc N \in 
 \mu\invss44\image\snn\{\ssp 0\ssp\} }, \inskipline{.5}2

(3) \ For a positive measure \math{\mu} on \math{\Omega} say that \math{\mu} 
    is {\it decomposable\ssp} if{}f there is some disjoint \math{\scrmt A \inc 
 \mu\invss44\image\sp\lbb R_+} with \math{\Omega=\bigcup\,\scrmt A } and such 
that every \math{N\sprim1\inc\Omega} is \mathss37{\mu}--\,negligible whenever \math{
\scrmt A\leiss42 N\sprim1 \inc \ssp \bigcup\KPt8\{\KPt8\Pows N \sn : 
 N\in\mu\invss44\image\snn\{\ssp 0\ssp\}\sp\} } holds, \inskipline{.5}2

(4) \ For a positive measure \math{\mu} on \math{\Omega} say that \math{\mu} 
    is {\it truly decomposable\ssp} if{}f there is some disjoint \math{
\scrmt A\inc\mu\invss44\image\sp\lbb R_+} with \math{\Omega=\bigcup\,\scrmt A } 
and such that \math{N\in\mu\invss44\image\snn\{\ssp 0\ssp\} } holds for every 
\math{N\inc\Omega} with \mathss38{\scrmt A\leiss42 N \inc 
 \mu\invss44\image\snn\{\ssp 0\ssp\} }.
  \end{definitions}

Trivially \rsigma6finite positive measures are truly decomposable, and these 
in turn are decomposable by 
Proposition \ref{Propo top-deco} below. If 
\math{\mu} is a positive measure on \math{\Omega} such that \mathss03{
\mu\invss44\image\sp\rbb R^+=\emptyset} holds, \,then trivially every \math{
A\inc\Omega} is \mathss37{\mu}--\,negligible, and hence \math{\mu} is 
almost decomposable. A positive measure \math{\mu} on \math{ \Omega = 
 \bbR\timesn\bbR} that is decomposable but not truly decomposable is given in 
Example \ref{Exa not trul deco} on page \pageref{Exa not trul deco} below. It 
seems to be quite difficult to find positive measures that {\sl are not\sp} 
almost decomposable. \hfill See Problem \ref{Prblm z-z mea} on page \pageref{Prblm z-z mea} 
as well as the subsequent examples and problems.

\begin{definitions}\label{df top deco}

(1) \ Say that \math{\scrmt T} {\it positively almost \eit Radonizes\ssp} \math{
    \mu} if{}f there is \math{\Omega} with \math{\mu} a \linebreak 
                                                        positive measure on \math{
\Omega} and \math{(\ssp\Omega\,,\spp\scrmt T\,) } a locally compact Hausdorff 
topological space such that for \math{\scrmt K=\{\,K:K\ssp\text{ is \mathss37{
 \scrmt T}--\,compact } \} } it holds that \math{ \scrmt K \inc 
 \mu\invss44\image\spp\lbb R_+ } and also for all \math{ A \in 
 \mu\invss44\image\spp\lbb R_+ } it holds that \mathss38{ \mu\fvalue\ssn A =
 \sup\KPt8\{\,\mu\fvalue K:K\in\scrmt K\capss22\Pows A\KP1\} }, \inskipline{.5}2

(2) \ Say that \math{\scrmt T} {\it positively \eit Radonizes\ssp} \math{\mu \label{df pos rdz} } 
    if{}f there is \math{\Omega} with \math{\mu} a positive measure on \math{
\Omega} and \math{(\ssp\Omega\,,\spp\scrmt T\,) } a locally compact Hausdorff 
topological space such that for \œ$\ssp\scrmt K={\ssn}$ \linebreak 
                                                \œ$\{\,K:K\ssp\text{ is \mathss37{
 \scrmt T}--\,compact } \} \ssp$ it holds that \math{\scrmt T\inc\dom\mu } and \math{
\scrmt K\inc\mu\invss44\image\spp\lbb R_+ } and also for all \math{ U \sn\in 
 \scrmt T} it holds that \mathss38{\mu\fvalss01 U = \sup\KPt8\{\,\mu\fvalue K:
 K\in\scrmt K\capss22\Pows U\KP1\} } and for all \œ$\ssp A \in 
 \mu\invss44\image\spp\lbb R_+\ssn$ \linebreak
                                    it holds that \mathss38{ \mu\fvalue\ssn A 
 = \inf\,\{\,\mu\fvalss01 U:A\inc U\in\scrmt T\KP1\} }, \inskipline{.5}2

(3) \ Say that \math{\mu} is {\it positive almost \eit Radonian\ssp} if{}f \inskipline0{11}

    there is \math{\scrmt T} such that \math{\scrmt T} positively almost 
\erm Radonizes \mathss35{\mu}, \inskipline{.5}2

(4) \ Say that \math{\mu} is {\it positive \eit Radonian\ssp} if{}f \label{df pos Radon} \inskipline0{11}

    there is \math{\scrmt T} such that \math{\scrmt T} positively 
\erm Radonizes \mathss35{\mu}, \inskipline{.5}2

(5) \ Say that \math{\mu} is {\it topologically almost decomposable\ssp} if{}f 
    there are \math{\scrmt A\,,\sp\scrmt T\spp,\sp N\sprim1} such that \math{
\scrmt T} positively almost \erm Radonizes \mathss35{\mu} and for \math{
\scrmt K = \{\,K:K\ssp\text{ is \mathss37{\scrmt T}--\,compact } \} } it holds 
that \math{\scrmt A\inc\scrmt K\sp\setminus 1\spp\adot } and \math{
\scrmt K\leiss32 N\sprim1 \inc \ssp \bigcup\KPt8\{\, \Pows N : N \in
 \mu\invss44\image\snn\{\ssp 0\ssp\}\sp\} } and \math{
\scrmt A\cupss42\{\,N\sprim1\sp\} } is disjoint with \math{\bigcup\,\dom\mu = 
 \bigcup\,\scrmt A\cupss42 N\sprim1 } and \math{\scrmt A\leiss42 K} is 
countable for all \math{K\in\scrmt K} and also \math{ A\capss33 U \in 
 \mu\invss44\image\spp\rbb R^+\sn\cup\sp 1\spp\adot } holds for all \math{
A\in\scrmt A} and \mathss30{U\sn\in\scrmt T}, \inskipline{.5}2

(6) \ Say that \math{\mu} is {\it topologically decomposable\ssp} if{}f there 
    are \math{\scrmt A\,,\sp\scrmt T\spp,\sp N\sprim1} such that \math{
\scrmt T} positively \erm Radonizes \mathss35{\mu} and such that for 
\math{\scrmt K=
\{\,K:K\ssp\text{ is \mathss37{\scrmt T}--\,compact } \} } it 
holds that $\scrmt A\inc\scrmt K\sp\setminus 1\spp\adot$ and 
$\scrmt K\leiss32 N\sprim1\inc\mu\invss44\image\snn\{\ssp 0\ssp\}$ and 
$\scrmt A\cupss42\{\,N\sprim1\sp\}$ is disjoint with 

$\bigcup\,\dom\mu=\bigcup\,\scrmt A\cupss42 N\sprim1$ 
and \math{\scrmt A\leiss42 K} is countable for all \math{K\in\scrmt K} 

and \math{
A\capss33 U\in\mu\invss44\image\spp\rbb R^+\sn\cup\sp 1\spp\adot } holds for 
all \math{A\in\scrmt A} and \mathss30{U\sn\in\scrmt T}.
  \end{definitions}

Note that the condition \math{ A\capss33 U \in 
 \mu\invss44\image\spp\rbb R^+\sn\cup\sp 1\spp\adot } in (5) and (6) of 
Definitions \ref{df top deco} above means that we have \math{ A\capss33 U \in 
 \mu\invss44\image\spp\rbb R^+} or \math{A\capss33 U \in 1\spp\adot = 
 \{\ssp\emptyset\ssp\} } which in turn is equivalent to having \math{
0 < \mu\fvalue(\sp A\capss33 U\ssp) < \plusinfty} or \mathss35{ A\capss33 U = 
 \emptyset}. Note also the impli- cations \mathss35{
\mu\fvalue(\sp A\capss33 U\ssp) \in \rbb R^+ \impss13 
 \mu\fvalue(\sp A\capss33 U\ssp) \not= \Univ \impss33 
 A\capss33 U\in\dom\mu}. That positive (\sp almost\sp) \erm Radonian measures 
are topologically (\sp almost\sp) decomposable, and that these in turn are 
almost decomposable is seen from the next

\begin{proposition}\label{Propo top-deco}

For the properties given below the implications \œ$\,(5)\impss11(6)\impss11(7)$ 
and $\,(3)\impss11(4)\impss11(7)$ and $\,(1)\impss11[\KPp1.4(2)\text{ and }(3)\KPp1.4
 ]$ and $\,(2)\impss11(4)$ hold. {\rm\inskipline{.6}4

(1) \ }$\mu$ is positive \eit Radonian{\sp\rm,\inskipline{.2}4

(2) \ }$\mu$ is positive almost \eit Radonian{\sp\rm,\inskipline{.2}4

(3) \ }$\mu$ is topologically decomposable{\sp\rm,\inskipline{.2}4

(4) \ }$\mu$ is topologically almost decomposable{\sp\rm,\inskipline{.2}4

(5) \ }$\mu$ is truly decomposable{\sp\rm,\inskipline{.2}4

(6) \ }$\mu$ is decomposable{\sp\rm,\inskipline{.2}4

(7) \ }$\mu$ is almost decomposable.
  \end{proposition}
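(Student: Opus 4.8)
The plan is to establish the eight implications as a chain, in each case reducing to the definitions in \ref{df decomp} and \ref{df top deco}, and to reserve the only genuine construction for the two passages $(1)\Rightarrow(3)$ and $(2)\Rightarrow(4)$ from a Radon-type topology to a decomposition. I begin with the two purely measure-theoretic steps. For $(6)\Rightarrow(7)$ I keep the decomposing family $\scrmt A$ and split off its null members: grouping the $A\in\scrmt A$ according to whether $\mu\fvalss11 A>0$ or $\mu\fvalss11 A=0$ and letting $N\sprim1$ be the union of the latter, the positive-measure members together with $N\sprim1$ form a disjoint cover of $\Omega$ by sets in $\mu^{-1}[\rbb R^+]$ and $N\sprim1$. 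Both the negligibility of $N\sprim1$ and the maximality clause of almost decomposability then reduce immediately to the defining property of $(6)$, because for any candidate $N\sprimm1$ the trace $A\capss32 N\sprimm1$ is contained in a null set for \emph{every} $A\in\scrmt A$ (trivially so, namely inside $A$ itself, on the null members). For $(5)\Rightarrow(6)$ I reuse the same $\scrmt A$; given $N\sprim1$ with each $A\capss32 N\sprim1$ inside a null set, I fix $B$ with $\mu\fvalss11 B\in\rbb R^+$, observe that the sets $A\capss32 B$ of positive measure are disjoint and hence countable, and then apply true decomposability to $N\ar 2=\bigcup\{\,A\capss32 B:\mu\fvalue(\sp A\capss32 B\ssp)=0\,\}$, whose trace on each member of $\scrmt A$ is a genuine null set, to conclude $N\ar 2\in\mu^{-1}[\{0\}]$. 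Then $B\capss32 N\sprim1$ lies inside the countable union of the chosen null sets together with $N\ar 2$, so $N\sprim1$ is negligible.

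Next come the two transfers from topological to abstract data, where I keep $(\scrmt A\,,\sp\scrmt T\spp,\sp N\sprim1)$ essentially unchanged. The only substantial point in $(3)\Rightarrow(4)$ is that a topology positively Radonizing $\mu$ also positively \emph{almost} Radonizes it (this same argument yields $(1)\Rightarrow(2)$): given $A$ of finite measure and $\eps\in\rbb R^+$, I outer-regularize $A$ by an open $U$, inner-regularize $U$ by a compact $K$, and outer-regularize $U\setminusn A$ by an open $V$; then $K\setminusn V$ is a compact subset of $A$ of measure exceeding $\mu\fvalss11 A-3\eps$. The null condition $\scrmt K\lei N\sprim1\inc\mu^{-1}[\{0\}]$ only weakens to containment in null sets, and the remaining clauses are literally inherited.

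For $(4)\Rightarrow(7)$ I first read off $\scrmt A\inc\mu^{-1}[\rbb R^+]$ by taking $U=\Omega$ in the clause $A\capss32 U\in\mu^{-1}[\rbb R^+]\cupss21\{\,\emptyset\,\}$. Then for any $B$ of finite measure, inner regularity supplies an increasing sequence of compacts $K\ai n\inc B$ with $\mu(\sp B\setminusn\bigcup\ai n K\ai n\ssp)=0$; since each $\scrmt A\lei K\ai n$ is countable, only countably many members of $\scrmt A$ meet $\bigcup\ai n K\ai n$, while the rest of $B$ is null. Combining the null traces of these countably many members with $B\setminusn\bigcup\ai n K\ai n$ and with $B\capss32 N\sprim1$ (the last being null because $\scrmt K\lei N\sprim1$ lands in null sets and compacts inner-exhaust $B$) shows simultaneously that $N\sprim1$ is negligible and that the maximality clause holds, i.e.\ that $\mu$ is almost decomposable.

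The real work is $(1)\Rightarrow(3)$, with $(2)\Rightarrow(4)$ completely parallel. Call a nonempty compact $K$ \emph{self-supporting} if $\mu(\sp K\capss32 U\ssp)>0$ for every open $U$ meeting $K$; here $K\capss32 U$ is measurable since $K\setminusn U$ is compact, so no hypothesis $\scrmt T\inc\dom\mu$ is needed (this is the point that must be watched in $(2)\Rightarrow(4)$, where outer regularity is everywhere replaced by inner regularity and the conclusion is only $\scrmt K\lei N\sprim1\inc\bigcup\{\KPt8\Pows N:N\in\mu^{-1}[\{0\}]\}$). By inner regularity every positive-measure compact contains a self-supporting compact of equal measure, namely the support of $\mu$ on it, obtained by deleting the largest relatively open null set. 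I then invoke Zorn's lemma to choose a maximal disjoint family $\scrmt A$ of self-supporting compacts and put $N\sprim1=\Omega\setminusn\bigcup\,\scrmt A$. Local compactness lets me cover any compact $K$ by finitely many opens with compact closure, hence of finite measure, and on each such open the self-supporting members of $\scrmt A$ have pairwise disjoint positive-measure traces; this gives countability of $\scrmt A\lei K$, which in turn makes $K\capss32\bigcup\,\scrmt A$ a countable union of measurable sets and so renders $K\capss32 N\sprim1$ measurable. If $\mu(\sp K\capss32 N\sprim1\ssp)>0$ I could extract from $K\capss32 N\sprim1$ a self-supporting compact disjoint from all of $\scrmt A$, contradicting maximality; hence $\scrmt K\lei N\sprim1\inc\mu^{-1}[\{0\}]$, and the other clauses are immediate. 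The main obstacle is precisely this maximal-family construction: one must verify that self-supportingness survives passage to the support, that $A\capss32 U$ remains measurable and positive-or-empty, and above all that the countability of $\scrmt A\lei K$ genuinely follows from local compactness together with finiteness of $\mu$ on relatively compact opens.
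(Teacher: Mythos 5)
Your proof is correct and follows essentially the same route as the paper's: the same three-step regularization (outer, inner, outer on the complement) for $(1)\Rightarrow(2)$, the same Zorn-maximal disjoint family of nonempty compacts whose open traces are positive-or-empty (your ``self-supporting'' compacts, with the complement serving as the negligible remainder and countability coming from finiteness of the measure on relatively compact neighbourhoods) for $(1)\Rightarrow(3)$ and $(2)\Rightarrow(4)$, and the same reductions by inner exhaustion and splitting off null members for the remaining implications. The only cosmetic differences are that in $(5)\Rightarrow(6)$ you localize the application of true decomposability to a fixed set $B$ of finite positive measure (using only countable choice there) where the paper builds one global null superset by the axiom of choice, and that your ``support'' extraction packages the paper's double-contradiction argument in a cleaner form.
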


\begin{proof} For \math{(1)\impss11(2)} letting \math{\scrmt K\,,\spp\scrmt T\sp
,\sp\mu} be as in Definitions \ref{df top deco}\,(2) above, we need to verify 
that for \math{ A \in \mu\invss44\image\spp\lbb R_+ } we have \mathss38{ 
\mu\fvalue\ssn A = \sup\KPt8\{\,\mu\fvalue K:K\in\scrmt K\capss22\Pows A\KP1\} 
}. Thus for given \math{\eps\in\rbb R^+} it suffices to find some \math{K\aR 1
 \in\scrmt K\capss22\Pows A} with \mathss34{\mu\fvalue\ssn A - \eps < 
 \mu\fvalue K\aR 1}. Now, we first find some set \math{U\sn\in\scrmt T} with \math{
A\inc U} and \mathss35{\mu\fvalss01 U < \mu\fvalue\ssn A + \frac 13\KP1\eps}. 
Then we find some \math{V\sn\in\scrmt T} with \math{U\ssp\setminus A\inc V} 
and \mathss35{\mu\fvalss01 V < 
 \mu\fvalue(\ssp U\ssp\setminus A\ssp) + \frac 13\KP1\eps}. We further find 
some \math{K\in\scrmt K} with \math{K\inc U} and \mathss31{
\mu\fvalss01 U - \frac 13\KP1\eps < \mu\fvalue K}, \,and taking \math{K\aR 1 = 
 K\ssp\setminus V} we now see  that \math{K\aR 1\in\scrmt K} holds with \mathss36{
K\aR 1\inc A}. Furthermore, we have \inskipline{.5}{14.6}

$\mu\fvalss01 V < 
 \mu\fvalue(\ssp U\ssp\setminus A\ssp) + \frac 13\KP1\eps
=\mu\fvalss01 U - \mu\fvalue\ssn A + \frac 13\KP1\eps < \frac 23\KP1\eps
$ \ and hence \vskip.2mm\centerline{$
    \mu\fvalue\ssn A - \eps \le \mu\fvalss01 U - \eps
=   \mu\fvalss01 U - \frac 13\KP1\eps - \frac 23\KP1\eps
<   \mu\fvalue K - \mu\fvalss01 V \le \mu\fvalue K\aR 1\,$.} \vskip.5mm

Having the above, for the proofs of \math{(1)\impss11(3)} and \math{
(2)\impss11(4)} letting \math{\scrmt K\,,\spp\scrmt T\sp,\sp\mu} be as in 
Definitions \ref{df top deco}\,(5) above, it suffices to show existence of \math{
\scrmt A} and \math{N\sprim1} such that we have \math{ \scrmt A \inc
 \scrmt K\sp\setminus 1\spp\adot } and \math{\scrmt K\leiss32 N\sprim1 \inc 
 \mu\invss44\image\snn\{\ssp 0\ssp\} } and such that \math{
\scrmt A\cupss42\{\,N\sprim1\sp\} } is disjoint with \math{ \bigcup\,\dom\mu = 
 \bigcup\,\scrmt A\cupss42 N\sprim1 } and for all \math{K\in\scrmt K} we have 
that \math{\scrmt A\leiss42 K} is countable and also \math{
A\capss33 U\in\mu\invss44\image\spp\rbb R^+\sn\cup\sp 1\spp\adot } holds for 
all \math{A\in\scrmt A} and \mathss30{U\sn\in\scrmt T}.

To get such \math{\scrmt A\,,\sp N\sprim1} we let \math{\Cal K} be the set of 
all disjoint \math{\scrmt A\inc\scrmt K\sp\setminus 1\spp\adot } with the 
property that \math{ A\capss33 U \in 
 \mu\invss44\image\spp\rbb R^+\sn\cup\sp 1\spp\adot } holds for all \math{A\in
 \scrmt A} and \mathss30{U\sn\in\scrmt T}. Since trivi- ally \math{\emptyset 
 \in\Cal K} holds, by {\sl Zorn's lemma\sp} there is some \math{\scrmt A} that 
is maximal in \mathss34{\Cal K}. Then we take \mathss36{ N\sprim1 = 
 \bigcup\,\dom\mu\spp\setminus\sp\bigcup\,\scrmt A }.

We first show that \math{\scrmt A\leiss42 K} is countable for all \mathss35{
K\in\scrmt K}. So we fix \math{K} and using local compactness of \math{
\scrmt T} find a relatively \mathss37{\scrmt T}--\,compact \math{ U \sn \in
 \scrmt T} with \mathss30{K\inc U}. Then we have \math{
    \sum_{\,A\ssp\in\ssp\scrm7 A\,}\mu\fvalue(\sp A\capss33 U\ssp) 
\le \mu\fvalue U < \plusinfty } from which it follows that the set \vskip.3mm

$\scrmt A\capss31\{\,A:\mu\fvalue(\sp A\capss33 U\ssp)
\not=0\KP1\} \ssp$ is countable. Since we have 

an injection $\ssp \scrmt A\leiss42 K\ssp\setminus 1\spp\adot
\to
\scrmt A\capss31\{\,A:\mu\fvalue(\sp A\capss33 U\ssp)
\not=0\KP1\} $ 

\noin
given by \mathss35{A\capss31 K\mapsto A}, \,the assertion follows.

To establish \math{ \scrmt K\leiss32 N\sprim1 \inc 
 \mu\invss44\image\snn\{\ssp 0\ssp\} } indirectly, suppose that we have some \math{
K\aar 0\in\scrmt K} with \mathss38{K\aar 0\capss02 N\sprim1 \not \in
 \mu\invss44\image\snn\{\ssp 0\ssp\} }. Since \math{\scrmt A\leiss42 K\aar 0} 
is countable, we first see 

that \math{K\aar 0\capss02 N\sprim1\in\dom\mu} holds, and hence we have 
\mathss30{K\aar 0\capss02 N\sprim1\in
\mu\invss44\image\spp\rbb R^+}. Then 

by \math{ 
\mu\fvalue(\sp K\aar 0\capss02 N\sprim1\sp) = 
\sup\KPt8\{\,\mu\fvalue K:K\in\scrmt K\capss22
\Pows(\sp K\aar 0\capss02 N\sprim1\sp)\,\} } we find 

some \math{K\aR 1\in
\mu\invss44\image\spp\rbb R^+\sn\cap\ssp\scrmt K} with 
\mathss30{K\aR 1\inc K\aar 0\capss02 N\sprim1}, \,and we take 

$A = K\aR 1\ssn\setminus\sp\bigcup\KP1(\ssp
 \mu\invss44\image\snn\{\ssp 0\ssp\}\capss22(\ssp\scrmt T\leiss31 K\aR 1)) $ 

having now $A\in\scrmt K$ with 
$A\capss33 U\in\mu\invss44\image\spp\rbb R^+\sn\cup\sp 1\spp\adot$ 
for all $U\sn\in\scrmt T$.

Indeed, with \math{U\sn\in\scrmt T} supposing that \math{ \emptyset \not = 
 A\capss33 U\not\in\mu\invss44\image\spp\rbb R^+ } holds, we then have 
$A\capss33 U\in\mu\invss44\image\snn\{\ssp 0\ssp\}$ , and 
{\sl for the moment supposing\sp} (\sp$*$\sp) that also 

$K\aR 1\ssn\setminus A\capss33 U\in\mu\invss44\image\snn\{\ssp 0\ssp\}$ 
holds, we 

obtain \math{\emptyset\not=A\capss33 U\inc 
K\aR 1\sn\cap\ssp U\inc\bigcup\KP1(\ssp
 \mu\invss44\image\snn\{\ssp 0\ssp\}\capss22(\ssp\scrmt T\leiss31 K\aR 1))
 } and 

hence \mathss36{A\capss34\bigcup\KP1(\ssp
 \mu\invss44\image\snn\{\ssp 0\ssp\}\capss22(\ssp\scrmt T\leiss31 K\aR 1))
\not=\emptyset }, \,a {\sl contradiction\sp}. 

Since now \math{A\capss35\bigcup\,\scrmt A\inc K\aR 1\capss05\bigcup\,\scrmt A
 \inc N\sprim1\capss05\bigcup\,\scrmt A = \emptyset} holds, by the maximality 
of \linebreak 
   $\scrmt A\ssp$ we have \math{A=\emptyset} and hence \mathss38{ K\aR 1 = 
 \bigcup\KP1(\ssp
 \mu\invss44\image\snn\{\ssp 0\ssp\}\capss22(\ssp\scrmt T\leiss31 K\aR 1)) }. 
From this we see existence of some set \math{V} with \math{K\aR 1\inc \sp
V\sn\in\scrmt T} and \mathss36{ \mu\fvalue K\aR 1 = 
 \mu\fvalue(\ssp V\capss01 K\aR 1)=0 }. Hence we obtain \mathss38{ K\aR 1 \in
 \mu\invss44\image\spp\rbb R^+\sn\cap\ssp(\ssp
 \mu\invss44\image\snn\{\ssp 0\ssp\}\sp\sbig)0 }, \,a {\sl contradiction\sp}.

So, to finish the indirect proof of 
\mathss39{\scrmt K\leiss32 N\sprim1\inc\mu\invss44\image\snn\{\ssp 0\ssp\} 
}, \,we must show that (\sp$*$\sp) above holds. Indeed, 
in the contrary case we have \math{
K\aR 1\ssn\setminus A\capss33 U\in\mu\invss44\image\spp\rbb R^+} 
and then, as above, we find 
some \math{K\aar 2\in
\mu\invss44\image\spp\rbb R^+\sn\cap\ssp\scrmt K} 

with 
\mathss38{K\aar 2\inc 
K\aR 1\ssn\setminus A\capss33 U
\inc K\aR 1\ssn\setminus A\inc
\bigcup\KP1(\ssp
 \mu\invss44\image\snn\{\ssp 0\ssp\}\capss22(\ssp\scrmt T\leiss31 K\aR 1))
}, \,and further 
some 
\math{V\sn\in\scrmt T} with 
\math{K\aar 2\inc V\capss01 K\aR 1} and 
\mathss36{\mu\fvalue(\ssp V\capss01 K\aR 1)=0}. 
Hence we obtain 
\mathss38{K\aar 2\in\mu\invss44\image\snn\{\ssp 0\ssp\} }, \,a 
{\sl contradiction\sp}.

For \math{(5)\impss11(6)} letting \math{\mu\,,\sp\Omega\,,\sp N\sprim1} be as 
in Definitions \ref{df decomp}\,(3) above and letting $\ssp\scrmt A$ \linebreak 
                                                                     be as in 
(4) there, using the {\sl axiom of choice\sp} we find \math{N} with \math{
N\sprim1\inc N\inc\Omega } and such that \math{\scrmt A\leiss42 N \inc 
 \mu\invss44\image\snn\{\ssp 0\ssp\} } holds. Now it easily follows that \math{
N\sprim1} is \mathss37{\mu}--\,negligible.

For \math{(6)\impss11(7)} letting \math{\mu\,,\sp\Omega} be as 
in Definitions \ref{df decomp}\,(2) and letting \math{\scrmt A\sprim0} stand 
for the \math{\scrmt A} in (3) there, we take \math{ \scrmt A = 
 \mu\invss44\image\sp\rbb R^+\snn\cap\KPt5\scrmt A\sprim0 } and \mathss38{
N\sprim1=\ssp\bigcup\KP1(\ssp\mu\invss44\image\snn\{\ssp 0\ssp\}\capss22
 \scrmt A\sprim0\sp) }. It is now a simple exercise to show that \math{
N\sprim1} is \mathss37{\mu}--\,negligible, and that also the condition 
concerning \math{N\sprimm1} there holds.

By \math{(1)\impss11(2)} we trivially have \mathss37{(3)\impss11(4)}, \,and 
for \math{(4)\impss11(7)} letting \œ$\ssp\scrmt A\,,\sp\scrmt K\,,\sp\scrmt T\spp
,$ \linebreak
 \œ$N\sprim1,\sp\mu\ssp$ be as in Definitions \ref{df top deco}\,(5) it 
suffices to show that \math{\scrmt A \inc \mu\invss44\image\sp\rbb R^+ } holds 
and that \math{N\sprim1} is \mathss37{
\mu}--\,negligible, and that also \math{N\sprimm1} is \mathss37{\mu
}--\,negligible whenever \mathss30{N\sprimm1\inc\Omega \sn} \mathss03{
\sn=\bigcup\ssp\dom\mu } is 
such that for \math{
A\spp\ar 1\in\scrmt A} there is \math{N} with \mathss38{
A\spp\ar 1\snn\cap\sp N\sprimm1 \inc N \in 
 \mu\invss44\image\snn\{\ssp 0\ssp\} }.

Now for \math{\scrmt A \inc \mu\invss44\image\sp\rbb R^+ } taking \math{ A \in 
 \scrmt A} we have \math{A\not=\emptyset } and hence \mathss30{ A = 
 A\capss32\Omega\in\sn} \mathss03{
                   \mu\invss44\image\spp\rbb R^+\sn\cup\sp 1\spp\adot } whence 
\mathss30{A\in\mu\invss44\image\spp\rbb R^+ \sn}. To show that \math{N\sprim1} 
is \mathss37{\mu}--\,negligible, given any \mathss03{ A \in 
 \mu\invss44\image\spp\rbb R^+ } we must find some \math{N} with \mathss38{
A\capss31 N\sprim1\inc N\in\mu\invss44\image\snn\{\ssp 0\ssp\} }. Using 
{\sl countable choice\sp}, we first find an increasing \math{ \bmii8 K \in \sp
 ^\sbbNo\,(\ssp\scrmt K\capss22\Pows A\ssp) } with \mathss36{ \mu\circ\bmii8 K 
 \to\mu\fvalue\ssn A }. Then we find \math{ \bmii8 N \in \sp 
 ^\sbbNo\,(\ssp\mu\invss44\image\snn\{\ssp 0\ssp\}\sp\sbig)0 } with \math{
\bmii8 K\fvalss51 i\capss42 N\sprim1\inc\bmii8 N\fvalss41 i } for all \mathss36{
i\in\bbNo}. Now it suffices to take \mathss32{ N = 
 A\spp\setminus\bigcup\ssp\rng\bmii8 K\cupss54\bigcup\ssp\rng\bmii8 N }.

To get the assertion concerning \math{N\sprimm1} given \math{A} we first take \math{
\bmii8 K} as above. Then noting that \math{
\scrmt A\leiss32(\ssp\bmii8 K\fvalss51 i\ssp) } is countable for every \math{
i\in\bbNo } by {\sl countable choice\sp} we get \mathss03{
\bmii8 K\aR 1\in\sp^\sbbNo\,(\ssp\scrmt K\capss22\Pows A\ssp) } with \math{
\bigcup\ssp\rng\bmii8 K\ssp\setminus N\sprim1\inc
\bigcup\ssp\rng\bmii8 K\aR 1 } and such that for every \mathss30{i\in\bbNo } 
there is some \math{A\spp\ar 1\in\scrmt A } with \mathss34{
\bmii8 K\aR 1\sn\fvalue\sp i\inc A\spp\ar 1 }. Then again by {\sl countable 
choice\sp} we get \mathss03{ \bmii8 N \in \sp
 ^\sbbNo\,(\ssp\mu\invss44\image\snn\{\ssp 0\ssp\}\sp\sbig)0 } with \math{
\bmii8 K\aR 1\sn\fvalue\sp i\capss42 N\sprimm1\inc\bmii8 N\fvalss41 i } for 
all \mathss36{i\in\bbNo }. Since we already know that \math{N\sprim1} is \mathss37{
\mu}--\,negligible, we find some \math{N\aar 1} with \mathss38{
A\capss31 N\sprim1\inc N\aar 1\in\mu\invss44\image\snn\{\ssp 0\ssp\} }. Now it 
suffices to take \mathss32{ N = A\spp\setminus\bigcup\ssp\rng\bmii8 K\cupss54
 \bigcup\ssp\rng\bmii8 N\cupss52 N\aar 1 } to get some \math{N} such that we 
have \mathss38{ A\capss32 N\sprimm1 \inc N \in 
 \mu\invss44\image\snn\{\ssp 0\ssp\} }.
  \end{proof}

The idea for the proofs of \math{(1)\impss11(3)} and \math{(2)\impss11(4)} 
above is taken from \cite[Proposition 4.14.9\ssp, p.\ 229]{Edw}\,. Note that 
the logical structure of these proofs of the implication \math{(\ssp i\ssp)
 \impss33(\,i\ssp\yplus\sn\yplus\spp) } is basically the following: \ 
{\sl Axioms\sp} \ $\vdash\KN{1.64}\RHB{.35}-$ \inskipline{.5}5

$[\KPp1.4(\ssp i\ssp)\impss33\eexi{\scrmt Z}\,\mfrak P\ssp$ and \math{[\KPp1.4
 \mfrak Q\ssp\text{ or }\neg\KP1\mfrak Q\KPp1.4]\KP1]} and \inskipline{.3}5

$[\KP1[\KPp1.4\eexi{\scrmt Z}\,\mfrak P\ssp\text{ and }\ssp\mfrak Q\KPp1.4]
 \impss33(\,i\ssp\yplus\sn\yplus\spp)\KPp1.4] \ssp$ and \math{[\KP1[\KPp1.4
 \aall{\scrmt Z}\,\mfrak P\ssp\text{ and }\ssp\neg\KP1 \mfrak Q\KPp1.4]\impss33
 \mfrak R\ssp\text{ or }\neg\KP1\mfrak R\KPp1.4]} \inskipline0{26}

and \math{ [\KPp1.4 \mfrak R \impss33 \mfrak S\ar 0\ssp\text{ and }\ssp \neg\KP1
 \mfrak S\ar 0 \KP1 ] } and \math{ [\KP1 \neg\KP1\mfrak R \impss33 \mfrak R^*\sp\text{ 
 or }\ssp \neg\KP1\mfrak R^* \KP1 ] } \inskipline0{26}

and \math{ [\KPp1.4 \mfrak R^* \impss33 \mfrak S\ar 1\sp\text{ and }\ssp \neg\KP1
 \mfrak S\ar 1 \, ] } and \mathss39{ [\KP1 \neg\KP1 \mfrak R^* \impss33 
 \mfrak S\ar 2\ssp\text{ and }\ssp \neg\KP1\mfrak S\ar 2 \KP1 ] }.

\begin{lemma}[schema]\label{Le deco meas}

Let $\,\mu$ be a positive measure on $\,\Omega${\,\rm, }and with \œ$\,\bosy K
 \in\setRC$ let \œ$\,\vPi\in\tvsps0(K)$ hold. Also let \œ$\, x \in \sp
 ^\Omega\,\vecs\vPi$ and let {\,\rm X} stand for any of {\rm\,\q{almost}} or 
{\rm\,\q{almost scalarly}} or {\rm\,\q{almost simply}}. Further{\sp\rm, }let $\,
\scrmt A\,,\sp N\sprim1$ be as in {\,\rm Definitions \ref{df decomp}\,(2)} on 
page {\,\rm\pageref{decos A}} above. If \œ$\,
(\ssp x\KP1|\KP1 A\KPt8;\sp\mu\KP1|\KP1\Pows A\,,\spp\vPi\ssp)$ is {\,\rm X} 
measurable for all \œ$\,A\in\scrmt A${\,\rm, }then $\,
(\ssp x\,;\spp\mu\,,\spp\vPi\ssp)$ is finitely {\,\rm X} measurable.
  \end{lemma}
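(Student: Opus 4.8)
Throughout write \emph{X} for the prefix occurring in the statement, so that \q{\emph{X} measurable} is one of \q{almost measurable}, \q{almost scalarly measurable}, \q{almost simply measurable}, and write \emph{Y} for the corresponding plain notion among \q{measurable}, \q{scalarly measurable}, \q{simply measurable} from Definitions \ref{df meas}\,(1),(2),(3). Since, by Definitions \ref{df meas}\,(4),(6), finitely \emph{X} measurability is the assertion that $(\ssp x\KP1|\KP1 B\,;\spp\mu\KP1|\KP1\Pows B\,,\spp\vPi\ssp)$ is \emph{X} measurable\,---\,i.e.\ almost \emph{Y} measurable\,---\,for every $B\in\mu\invss44\image\spp\lbb R_+$, the plan is to fix one such finite-measure $B$ and to exhibit a single $N_B\in\mu\invss44\image\snn\{\ssp 0\ssp\}$ with $N_B\inc B$ for which $(\ssp x\KP1|\KP1(\ssp B\sp\setminus N_B\ssp)\,;\ldots)$ has property \emph{Y}. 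Using the axiom of choice I would first select, for each $A\in\scrmt A$, a null set $N\aar A\inc A$ witnessing \emph{X} measurability of $(\ssp x\KP1|\KP1 A\,;\ldots)$, i.e.\ such that $x$ restricted to $A\sp\setminus N\aar A$ has \emph{Y}. I also record once and for all that \emph{Y} is inherited by restriction to a measurable subset: this is immediate for \q{measurable} and \q{scalarly measurable}, and for \q{simply measurable} it holds because restrictions of simple functions are simple and pointwise convergence is preserved.

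Next I would single out the part of $\scrmt A$ that meets $B$ in positive measure. As $\scrmt A\cupss42\{\ssp N\sprim1\sp\}$ is disjoint with $\bigcup\,\scrmt A=\Omega\sp\setminus N\sprim1$, every finite subfamily $\scrmt F\inc\scrmt A$ satisfies $\sum_{A\in\scrmt F}\mu\fvalue(\sp B\capss32 A\ssp)=\mu\fvalue\big(\sp\bigcup_{A\in\scrmt F}(\ssp B\capss32 A\ssp)\big)\le\mu\fvalue B<\plusinfty$, so the family $\scrmt A\ar 0=\scrmt A\capss31\{\,A:\mu\fvalue(\sp B\capss32 A\ssp)>0\,\}$ is countable. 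I then let $N^*$ be the union of $N\sprim1$, of all the sets $B\capss32 A$ for those $A\in\scrmt A$ with $\mu\fvalue(\sp B\capss32 A\ssp)=0$, and of all the null sets $N\aar A$ for $A\in\scrmt A\ar 0$.

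The crucial point, and the place where the second clause of Definitions \ref{df decomp}\,(2) is used, is that $N^*$ is $\mu$--\,negligible. Indeed, by the disjointness of $\scrmt A\cupss42\{\ssp N\sprim1\sp\}$ together with $N\aar A\inc A$, one checks for each $A\in\scrmt A$ that $A\capss32 N^*$ equals $B\capss32 A$ when $\mu\fvalue(\sp B\capss32 A\ssp)=0$ and equals $N\aar A$ when $A\in\scrmt A\ar 0$, so in every case $A\capss32 N^*\in\mu\invss44\image\snn\{\ssp 0\ssp\}$. Thus the hypothesis placed on $N\sprimm1$ in Definitions \ref{df decomp}\,(2) is met by $N^*$, whence $N^*$ is $\mu$--\,negligible. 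Since $B$ has finite measure, negligibility (Definitions \ref{df decomp}\,(1)) provides a genuine null set containing $B\capss32 N^*$; intersecting it with $B$ yields the sought $N_B\in\mu\invss44\image\snn\{\ssp 0\ssp\}$ with $B\capss32 N^*\inc N_B\inc B$.

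It then remains to verify \emph{Y} on $B\sp\setminus N_B$. Because $N\sprim1\inc N^*$ and $B\capss32 N^*\inc N_B$, one gets $B\sp\setminus N_B=\bigcup\,\{\,(\ssp B\sp\setminus N_B\ssp)\capss32 A:A\in\scrmt A\ar 0\,\}$, a \emph{countable} disjoint union of measurable sets, each piece being contained in $A\sp\setminus N\aar A$, so that $x$ restricted to it has \emph{Y}. For \q{measurable} and \q{scalarly measurable} the property on this countable union is automatic, since the relevant preimages of open sets are the countable unions of the corresponding measurable pieces. The only delicate case, and the main obstacle, is \q{simply measurable}: enumerating $\scrmt A\ar 0=\{\,A\ai j:j\in\bbNo\,\}$ and choosing for each $j$ a simple sequence $\bosy\sigma\yi j$ on the $j$-th piece converging pointwise to $x$, I would build a single simple sequence $\seqss33{\tau\ai n:n\in\bbNo}$ on $B\sp\setminus N_B$ by letting $\tau\ai n$ agree with $\sigma\yi j\ai n$ on the $j$-th piece for each $j\le n$ and take the value $\Bnull_\vPi$ elsewhere. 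Each $\tau\ai n$ then has finite range and carries every nonzero value to a finite-measure set, hence is simple, while any point lies in a unique piece $A\ai j$ and satisfies $\tau\ai n=\sigma\yi j\ai n\to x$ as soon as $n\ge j$. This establishes that $(\ssp x\KP1|\KP1(\ssp B\sp\setminus N_B\ssp)\,;\ldots)$ has \emph{Y}, hence that $(\ssp x\KP1|\KP1 B\,;\ldots)$ is \emph{X} measurable; as $B$ was an arbitrary finite-measure set, $(\ssp x\,;\spp\mu\,,\spp\vPi\ssp)$ is finitely \emph{X} measurable.
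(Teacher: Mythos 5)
Your proof is correct and follows essentially the same route as the paper's: reduce \q{finitely X} to a fixed finite\ssp-\ssp measure set, use the axiom of choice to pick null witnesses $N\aar A$ for each $A\in\scrmt A$, observe that only countably many $A\in\scrmt A$ meet the fixed set in positive measure, form the same exceptional set ($N'$ together with the chosen null sets and the pieces meeting in measure zero), and invoke the second clause of Definitions \ref{df decomp}\,(2) to conclude it is $\mu$--\,negligible. The only difference is that you spell out the verifications (countability, the check that $A\capss32 N^*$ is null for each $A\in\scrmt A$, and the interleaved simple\ssp-\ssp sequence construction on the countable disjoint union) which the paper compresses into \q{it is straightforward to verify}.
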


\begin{proof} Noting that we can write \q{X measurable} in the form \q{almost 
Z}, assuming that \math{
(\ssp x\KP1|\KP1 A\KPt8;\sp\mu\KP1|\KP1\Pows A\,,\spp\vPi\ssp) } is X 
measurable for every \mathss36{A\in\scrmt A}, \,for given \linebreak
                                                 \mathss03{
A\sp\ar 0\in\mu\invss44\image\spp\rbb R^+} it suffices to find some \math{
N\in\mu\invss44\image\snn\{\ssp 0\ssp\} } such that for \math{ B = 
 A\sp\ar 0\ssn\setminus N } and \math{\mu\ar 0=\mu\KP1|\KP1\Pows B } and \math{
x\ar 0=x\KP1|\KP1 B} it holds that \math{
(\ssp x\ar 0\,;\spp\mu\ar 0\,,\spp\vPi\ssp)} is Z\ssp. For this letting \math{
\scrmt N} be the set of all pairs \math{(\sp A\,,\spp N\aar 1)} with \math{
A\in\scrmt A} and \math{N\aar 1\in\mu\invss44\image\snn\{\ssp 0\ssp\} } and 
such that for \math{A\spp\ar 1=A\spp\setminus N\aar 1 } it holds that \math{
(\ssp x\KP1|\KP1 A\spp\ar 1\ssp;\sp\mu\KP1|\KP1\Pows A\spp\ar 1\sp,\spp
 \vPi\ssp) } is Z\ssp, by our assumption \math{\scrmt A\inc\dom\scrmt N} 
holds, and hence by the {\sl axiom of choice\sp} there is a function \math{
\scrmt N\spp\ar 0\inc\scrmt N } with \mathss36{ \scrmt A \inc 
 \dom\scrmt N\spp\ar 0}. Then taking \math{ \scrmt A\sp\ar 0 = 
 \scrmt A\capss41\{\,A : A\capss31 A\sp\ar 0 \in 
 \mu\invss44\image\spp\rbb R^+\sp\} } we have \math{\scrmt A\sp\ar 0} 
countable, and for \math{N\sprimm1 = N\sprim1 \cupss04 \bigcup \KP1 (\ssp
 \scrmt N\spp\ar 0\sn\image\sn\scrmt A\sp\ar 0\spp) \cupss24 \bigcup\KP1 (\ssp
 \scrmt A\setminus\scrmt A\sp\ar 0\spp) \capss21 A\sp\ar 0} it holds that \math{
N\sprimm1} is \mathss37{\mu}--\,negligible. Since \math{N\sprim1\inc A\sp\ar 0} 
holds, for some \math{N} we have \mathss38{N\sprim1 \inc N \in 
 \mu\invss44\image\snn\{\ssp 0\ssp\} }, \,and taking \math{B} as above, it is 
straightforward to verify that \math{
(\ssp x\ar 0\,;\spp\mu\ar 0\,,\spp\vPi\ssp)} is Z\ssp.
  \end{proof}


\insubsubhead          Integration of scalar functions                    \label{Ss int scal}

Since for arbitrary functions \math{ u \inc 
 \Omega\times[\KPp1.1 0\,,\plusinfty\KPt9] } we need to consider upper 
inte- grals \math{\upint u\rmdss11\mu } where \math{\mu} is a positive measure 
on \mathss36{\Omega}, \,we here shortly give the asso- ciated formal 
definitions in order to make things precise. Note that in the definition of 
the Lebesgue quasi\ssp-\ssp norm \math{\|\,x\,\|\Lnorss33^p_\mu } in 
Constructions \ref{Ctr |x|_lL^p}\,(2) on page \pageref{ctr L^p-norm} above we 
already implicitly used the concept of upper integral.

\begin{constructions}[\sp positive, real and pseudo\ssp-\sp usual integrals]\label{defi re scal int} $\null$ \vskip.5mm

\begin{enumerate}\begin{myLeftskip}{-4}{.6}{.6}

\item \ $\loint u\rmdss20\mu = \uniqset t: \mu \in \sp 
      ^{\dom\sn\mu}\KP1[\KP{1.1} 0\,,\plusinfty\KP1] \ssp$ and \newskline{32}

 $ u \in\sp ^{\dom\snn\eightmath u}\KP1[\KP{1.1} 0\,,\plusinfty\KP1] \ssp$ and $\ssp
     \dom u\inc\bigcup\ssp\dom\mu \ssp$ and \newskline{28}

 $ t = \sup\sp\big\{\ssp\sum\,\seq{\KPt8
   t\cdotn(\ssp\mu\spp\fvalue(\sp\sigma\invss33\image\snn\{\sp t\ssp\}\sp))
   : \sigma : t\in\rng\sigma\KPt8} : \mu : $ \newskline{25.5}

 $\sigma\in\sp^{\dom\snn\eightmath\sigmaa}\KPt8\lbb R_+ \ssp$ and $\ssp
  \dom\sigma\inc\{\,\eta:\sigma\spp\fvalue\eta\le u\fvalue\eta\,\}$ \newfline

 and $\ssp\rng\sigma\ssp$ is finite and $\ssp \{\,\sigma\invss33\image\snn\{\ssp
     t\ssp\} : t \in \Univ\KP1\} \inc \dom\mu \,\big\} \KP1 $, \KP9

\item \ $\upint u\rmdss20\mu = \uniqset t : \mu \in \sp     \label{ctr upint}
      ^{\dom\sn\mu}\KP1[\KP{1.1} 0\,,\plusinfty\KP1]  \ssp$ and \newskline{31.9}

 $u \in\sp ^{\dom\snn\eightmath u}\KP1[\KP{1.1} 0\,,\plusinfty\KP1] \ssp$ and $\ssp
    \dom u\inc\bigcup\ssp\dom\mu \ssp$ and \newskline{11.5}

 $ t = \inf\sp\big\{\sp\loint v\rmdss20\mu:\mu:v\in\sp 
   ^{\dom\snn\eightmath v}\KP1[\KP{1.1} 0\,,\plusinfty\KP1]\ssp$ and \newfline

 $v\invss44\images\spp\barscTbb_R\inc\dom\mu \ssp $ and $\ssp 
 \dom u\inc\{\KPt8\eta:u\fvalue\eta\le v\fvalue\eta \KPt9 \}\,\big\} \KP1 $, \KP9

\item \ $\plusint x\rmdss20\mu = \uniqset t : t \label{def +int}
         = \loint x\rmdss20\mu = \upint x\rmdss20\mu\,$,

\item \ $\Reint x\rmdss10\mu=\uniqset\smb I\sn\ar 1\sn:x\ssp$ a function \label{defi Reint}
        and $\ssp\rng x\inc[\minusinfty\,,\plusinfty\,]\ssp$ and $\,
        \aall{\smb I\sp,\spp\smb J}$ \newskline{25.7}

 $\smb I = \plusint\,\seq{\KP1\sup\,\{\,0\,,x\fvalue\eta\,\}:\eta=\eta\KP1}
            \rmdss20\mu\ssp$ and \newfline

 $\smb J=\plusint\,\seq{\KP1\sup\,\{\,0\,,\minus(\ssp x\fvalue\eta\ssp)\,\} :
          \eta=\eta\KP1}\rmdss20\mu\impss22\smb I\sn\ar 1=\smb I-\smb J\,$, \KP9

\item \ $\int_{\,A\,}x\rmdss10\mu=\uniqset\smb I:A\inc\bigcup\sp\dom\mu\ssp$ \label{defi ps-usual int}
        and $\,\eexi{\vPi\sp,\spp S}\,x \in \sp
         ^{\dom\snn\eightmath x}\ssp S \ssp $ and \newskline{23}

 $\vPi\ssp$ is complex pseudo\ssp-\sp usual and $\ssp\vPi\not=\tfbbC\ssp$
 and $\,\big[\ \big[\KP{1.5} S = $ \newskline{9.5}

 $\mathbb C\cupss31\{\minusinfty\,,\plusinfty\,\}\ssp$ and $\ssp
 \smb I = \Reint\,(\sp\fRe x\,|\,A\sp)\rmdss10\mu\sp + \ssp\roman i\,
          \Reint\,(\sp\fIm x\,|\,A\sp)\rmdss10\mu $ \newskline{13}

 $\in S \KP{1.5}\big]\ssp$ or $\ssp\big[\KP{1.5} x \not= \emptyset\ssp$ and $\ssp
   \smb I \in S = \vecs\vPi\ssp$ and $\, \aall \ell\,
    \ell\in\Cal L\,(\spp\vPi\Reit2\ssp,\tfbbR\ssp)$ \newfline

 $\impss02 \ell\ssp\fvalue\smb I = \Reint\,
  (\ssp\ell\sp\circ\spp x\KP1|\KP1 A\ssp)\rmdss10\mu\KP{1.5}\big]\ \big]\ $.

  \end{myLeftskip}\end{enumerate}
  \end{constructions}

From Constructions \ref{defi re scal int}\,(1) we get the {\it lower integral\ssp} 
of a\q{positive} valued function \math{u} with respect to a positive measure \mathss35{
\mu}, \,and \ref{defi re scal int}\,(2) and \ref{defi re scal int}\,(3) give 
the corresponding {\it upper\ssp} and {\it positive\ssp} integral. The 
{\it real\ssp} integral of an extended real valued function w.r.t.\ a positive 
measure is given in \ref{defi re scal int}\,(\ref{defi Reint})\,, and item (5) 
defines the {\it pseudo\ssp-\sp usual\,} integral. Without delving in the 
relevant formal definition given in \cite{Hif} we shortly remark that 
pseudo\ssp-\sp usual spaces \math{E} are such structured vector spaces over 
some subfield \math{\bold K} of the complex field \math{\fbbC} that e.g.\ we 
have unambiguously \math{(\ssp x + y\ssp)\svs E = x + y } and \math{
(\ssp t\,x\ssp)\svs E = t\,y } for all \math{x\ssp,\sp y\in\vecs E } and \mathss36{
t\in\vecs\bold K}. If \linebreak
                    \œ$\vPi\ssp$ is pseudo\ssp-\sp usual and \math{I} is any 
set with \math{I\in\{\,1\spp\adot\ssp,\spp 2\sp\adot\sp\} } or \mathss31{
3\sp\adot\inc\Card\sp I}, \,then \œ$\ssp(\sp X\sp,\spp S\ssp)$ \linebreak
                                                               is 
pseudo\ssp-\sp usual for any set \math{S} and any vector substructure \math{X} 
of \math{\sigrd\vPi\expnota^\sp I\sp]_{vs} }.

\begin{definitions}

(1) \ Say that \math{u} is {\it positive \mathss37{\mu}--\,measurable\ssp} 
    if{}f there is \math{\Omega} such that \math{\mu} is a positive measure on \math{
\Omega} and \math{u} is a function with \math{ u \inc
 \Omega\times[\KPp1.1 0\,,\plusinfty\KPt9] } and such that \math{
u\invss44\image\sp[\KP1 r\sp,\plusinfty\KPt9]\in\dom\mu} holds for all \mathss30{
r\in\rbb R^+}, \inskipline{.5}2

(2) \ Say that \math{u} is {\it fully positive \mathss37{\mu}--\,measurable\ssp} 
    if{}f \inskipline0{23.6}

$u\ssp$ is positive \mathss37{\mu}--\,measurable with \mathss36{ \dom u = \sp
 \bigcup\,\dom\mu}, \inskipline{.5}2

(3) \ Say that \math{\sigma} is {\it positive \mathss37{\mu}--\,simple\ssp} 
    if{}f 
there is \math{\Omega} such that \math{\mu} is a positive measure on \math{
\Omega} and \math{\sigma} is a function with \math{\sigma\inc\Omega\times
 \lbb R_+ } and such that \math{\rng\sigma} is finite and also \math{
\{\,\sigma\invss33\image\snn\{\ssp t\ssp\} : t \in \Univ\KP1\} \inc \dom\mu } 
  holds.
  \end{definitions}

Thus in the case where \math{\mu} is a positive measure, in 
Constructions \ref{defi re scal int}\,(\ref{ctr upint}) above we have \math{t} 
the infimum of the set of lower integrals of all positive \mathss37{\mu
}--\,measurable functions \math{v} dominating \math{u} in the sence that \math{
u\fvalue\eta\le v\fvalue\eta} holds for all \mathss34{\eta\in\dom u}.

\begin{lemma}\label{Le +int}

Let $\,\mu$ be a positive measure. Then for all $\,x$ the equivalences \vskip.5mm\centerline{$
\plusint x\rmdss20\mu \not= \Univ \equivss33 0\le\plusint x\rmdss20\mu =
\loint x\rmdss20\mu = \upint x\rmdss20\mu\le\plusinfty $} \inskipline{.5}{20}

and $ \KPp18.6 0\le\plusint x\rmdss20\mu < \plusinfty \equivss33 (\sp*\sp) 
\null\hfill $ hold when $\,(\sp*\sp)$ \inskipline{.5}0

means that there exist positive $\,\mu\,$--\,measurable functions $\,u\ssp,\sp 
v$ with \œ$\,u\le x\le v$ and $\,\loint u\rmdss20\mu \not= \plusinfty$ and $\,
v\invss44\image\spp\rbb R^+\snn\setminus\sn\dom u\cupss31\{\,\eta : 
 v\fvalue\eta\not=u\fvalue\eta\in\Univ\KP1\} \in 
 \mu\invss44\image\snn\{\ssp 0\ssp\} \KP1 $.
  \end{lemma}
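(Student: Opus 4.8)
I would dispose of the two equivalences separately. For the first, recall from Constructions \ref{defi re scal int}\,(\ref{def +int}) that $\plusint x\rmdss20\mu$ is the unique set $t$ with $t=\loint x\rmdss20\mu=\upint x\rmdss20\mu$ when such a $t$ exists, and equals $\Univ$ otherwise. The two integrals $\loint x\rmdss20\mu$ and $\upint x\rmdss20\mu$ carry in Constructions \ref{defi re scal int}\,(1) and \ref{defi re scal int}\,(\ref{ctr upint}) the very same preconditions: that $\mu$ be a positive measure, that $x\in{}^{\dom x}[\KP{1.1} 0\,,\plusinfty\KP1]$, and that $\dom x\inc\bigcup\,\dom\mu$. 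First I would observe that if these preconditions fail then both integrals equal $\Univ$, so that no set $t$ can satisfy $t=\Univ$ and hence $\plusint x\rmdss20\mu=\Univ$; moreover $0\le\Univ$ is false by the $\le$--convention, so the right-hand side fails as well. When the preconditions hold, each of $\loint x\rmdss20\mu$ and $\upint x\rmdss20\mu$ is a genuine member of $[\KP{1.1} 0\,,\plusinfty\KP1]$, a unique $t$ exists iff the two agree, and then $\plusint x\rmdss20\mu$ is their common value in $[\KP{1.1} 0\,,\plusinfty\KP1]$. This yields $(\Rightarrow)$ directly, while $(\Leftarrow)$ follows because $0\le\plusint x\rmdss20\mu$ already forces $\plusint x\rmdss20\mu$ to be an extended real, hence distinct from $\Univ$.

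For the second equivalence I would first invoke the first one to rewrite $0\le\plusint x\rmdss20\mu<\plusinfty$ as: the preconditions hold and $\loint x\rmdss20\mu=\upint x\rmdss20\mu<\plusinfty$. For the implication from $(\sp*\sp)$, given positive $\mu$--measurable $u\le x\le v$ with $\loint u\rmdss20\mu\not=\plusinfty$ and the displayed exceptional set $\mu$--null, monotonicity of the lower integral (immediate from its supremum definition) together with the fact that every positive $\mu$--measurable majorant of $x$ competes in the infimum of Constructions \ref{defi re scal int}\,(\ref{ctr upint}) gives
\[
\loint u\rmdss20\mu\le\loint x\rmdss20\mu\le\upint x\rmdss20\mu\le\loint v\rmdss20\mu\,.
\]
Since $u$ and $v$ agree off a $\mu$--null set one has $\loint u\rmdss20\mu=\loint v\rmdss20\mu<\plusinfty$, the chain collapses, and $\loint x\rmdss20\mu=\upint x\rmdss20\mu<\plusinfty$ results, which is the required right-hand side.

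Conversely, assuming the preconditions and $\loint x\rmdss20\mu=\upint x\rmdss20\mu<\plusinfty$, I would extract from the infimum in Constructions \ref{defi re scal int}\,(\ref{ctr upint}) positive $\mu$--measurable majorants $v_n\ge x$ with $\loint v_n\rmdss20\mu$ tending to $\upint x\rmdss20\mu$ and set $v$ to be their pointwise infimum, obtaining a positive $\mu$--measurable $v\ge x$ with $\loint v\rmdss20\mu=\upint x\rmdss20\mu$; dually, from the supremum in Constructions \ref{defi re scal int}\,(1) I would take simple minorants $\sigma_n\le x$ with integrals tending to $\loint x\rmdss20\mu$ and set $u$ to be their pointwise supremum, a positive $\mu$--measurable $u\le x$ with $\loint u\rmdss20\mu=\loint x\rmdss20\mu<\plusinfty$. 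Then $u\le v$ with equal finite lower integrals forces $\loint\,(\ssp v-u\ssp)\rmdss20\mu=0$, and a nonnegative $\mu$--measurable function of vanishing lower integral is $0$ off a $\mu$--null set; this null set is exactly the one named in $(\sp*\sp)$, so $(\sp*\sp)$ holds.

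The routine but delicate part — and where I expect the real work — is the $\Univ$--valued bookkeeping: one must check that $(\sp*\sp)$ forces the pointwise preconditions on $x$ (nonnegativity and finiteness of values, and $\dom x\inc\bigcup\,\dom\mu$), and that the almost-everywhere equality $u=v$ extracted from $\loint\,(\ssp v-u\ssp)\rmdss20\mu=0$ matches, set for set, the two pieces of the exceptional set displayed in $(\sp*\sp)$, namely the locus where $v$ is positive finite but $u$ is undefined and the locus where both are defined but differ. The analytic inputs — finite additivity of the lower integral and the statement that $\loint w\rmdss20\mu=0$ forces $w=0$ $\mu$--almost everywhere for nonnegative $\mu$--measurable $w$ — are the standard ones and would be supplied from the scalar integration theory of this subsection.
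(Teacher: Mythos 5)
Your proof is correct and follows essentially the same route as the paper: the first equivalence is read off from the definitions (both integrals lie in $[\KPp1.1 0\,,\plusinfty\KPt9]$ exactly when the common preconditions hold), and the hard direction of the second is obtained by extracting monotone sequences of simple minorants and measurable majorants from the sup/inf definitions, passing to the pointwise sup $u$ and inf $v$, and showing the exceptional set is $\mu$--null. The only differences are cosmetic: you package the null-set step as \PouN$\ssp\loint\,(\ssp v-u\ssp)\rmdss20\mu=0$ plus a.e.\ vanishing where the paper argues directly by contradiction (\,a non-null exceptional set would force $\loint u\rmdss20\mu<\loint v\rmdss20\mu$\,), and you spell out the converse implication which the paper dismisses as straightforward.
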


\begin{proof} Assuming \math{\plusint x\rmdss20\mu \not= \Univ } we have \math{
\plusint x\rmdss20\mu =\loint x\rmdss20\mu = \upint x\rmdss20\mu = t } for 
some \mathss36{t\in\Univ}. Then taking \math{\sigma = \emptyset } in 
Constructions \ref{defi re scal int}\,(1) we see that \math{t = \sup\ssp A } 
for some \math{A} with \math{0\in A\inc[\KPp1.1 0\,,\plusinfty\KPt9] } whence \math{
0\le t\le\plusinfty } follows. Conversely, if we have \mathss35{ 0 \le 
 \plusint x\rmdss20\mu\le\plusinfty }, \,then \math{ \plusint x\rmdss20\mu \in 
 \Univ } and hence \mathss35{\plusint x\rmdss20\mu\not=\Univ }.

Assuming \math{0\le\plusint x\rmdss20\mu < \plusinfty } from (1) and 
(\ref{ctr upint}) in Constructions \ref{defi re scal int} we see existence of 
sequences \math{\bosy u} of positive \mathss37{\mu}--\,simple and \math{
\bosy v} of positive \mathss37{\mu}--\,measurable functions with \math{
\lim\sbi{\sp i\ssp\to\ssp\infty}\sp\loint\bosy w\fvalss01 i\rmdss11\mu=0 } for \math{
\bosy w=\seq{\seqss33{
\bosy v\fvalss01 i\fvalss10\eta - \bosy u\fvalss01 i\fvalss10\eta:
\eta=\eta}:i\in\bbNo\,} } and 
\math{\loint\bosy v\fvalss01\emptyset\rmdss11\mu < \plusinfty} and such that 
\math{\bosy u\fvalss01 i\le\bosy u\fvalss01 i\ssp\yplus\le x\le
\bosy v\fvalss01 i\ssp\yplus\le\bosy v\fvalss01 i} holds for all 
\mathss36{i\in\bbNo}. Taking then 
\math{u=\seqss43{\sup\KPt8\{\KPt8\bosy u\fvalss01 i\fvalss10\eta:
i\in\bbNo\ssp\}:\bosy u:\eta\in\bigcup\ssp\rng\bosy u} } 

and 
\mathss39{v=\langle\KP1\inf\,\{\KPt8\bosy v\fvalss01 i\fvalss10\eta:
i\in\bbNo\ssp\}:\bosy v:\eta\in\bigcap\KPt8\{\,\dom v:
v\in\rng\bosy v\KPt8\} \,\big\rangle }, \,now 

$u$ and $v$ are positive $\mu\,$--\,measurable and hence 

$\{\KPt8
v\invss44\image\spp\rbb R^+\snn\setminus\sn\dom u\ssp,\sp\{\,\eta : 
 v\fvalue\eta\not=u\fvalue\eta\in\Univ\KP1\}\sp\}
\inc\dom\mu$ holds. If we 

have \mathss38{
v\invss44\image\spp\rbb R^+\snn\setminus\sn\dom u
\not\in\mu\invss44\image\snn\{\ssp 0\ssp\} }, \,we see that 
$\loint u\rmdss11\mu < \loint v\rmdss11\mu$ holds, leading to 
a contradiction. Similarly we see that \math{\{\,\eta : 
 v\fvalue\eta\not=u\fvalue\eta\in\Univ\KP1\}
\not\in\mu\invss44\image\snn\{\ssp 0\ssp\} } is impossible. So we have 
\mathss38{v\invss44\image\spp\rbb R^+\snn\setminus\sn\dom u\cupss31\{\,\eta : 
 v\fvalue\eta\not=u\fvalue\eta\in\Univ\KP1\} \in 
 \mu\invss44\image\snn\{\ssp 0\ssp\} }.

The implication \math{(\sp*\sp)\impss33 0\le\plusint x\rmdss20\mu < \plusinfty } 
is straightforward.
  \end{proof}

Assuming that \math{\mu} is a positive measure on \math{\Omega} and that \math{
x} is a function \mathss36{\Omega\to\mathbb C}, \,from Lemma \ref{Le +int} via 
inspection of items (\ref{defi Reint}) and (\ref{defi ps-usual int}) in 
Constructions \ref{defi re scal int} above we see that \math{ \smb I = 
 \int_{\KPp1.1\Omega}\ssp x\rmdss11\mu\not=\Univ } implies that \math{\smb I
 \in\mathbb C} holds together with \math{(\ssp x\,;\spp\mu\,,\sn\tfbbC\ssp) \label{int fin a.e. meas} } 
being finitely almost \mathss37{\mu}--\,measurable. Thus in the case of an 
incomplete probability measure an {\sl integrable function need not be    \label{int not meas}
measurable\sp} according to our conventions.

\begin{proposition}\label{Pro upint}

Let $\,p\in\rbb R^+$ and 
let $\,\mu$ be a positive measure on $\,\Omega\,$. Also let 

$\,w=
\seqss43{u\fvalue\eta + v\fvalue\eta:\eta=\eta}$ 

where $\,u$ and $\,v$ are any functions with 
$\,u\cupss22 v\inc\Omega\times[\KPp1.1 0\,,\plusinfty\KPt9] \KP1 $. 

Then $\,\|\,w\,\|\Lnorss33^p_\mu
\le
\sup\KPt8\{\,1\ssp,\spp 2\KP1^{p^{-1}-\ssp 1\ssp}\big\}\KP1\big(\ssp
\|\,u\,\|\Lnorss33^p_\mu + \|\,v\,\|\Lnorss33^p_\mu\sp\sbig)0$ holds.
  \end{proposition}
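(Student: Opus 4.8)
The plan is to bound $\|\,w\,\|\Lnorss33^p_\mu$ from above by producing, for each $\eps\in\rbb R^+$, one measurable majorant $\chi$ of the pointwise $p$\ssp-\sp th power of $w$ whose integral is controlled by the data defining $\|\,u\,\|\Lnorss33^p_\mu$ and $\|\,v\,\|\Lnorss33^p_\mu$, and then to let $\eps\to 0$. If either $\|\,u\,\|\Lnorss33^p_\mu=\plusinfty$ or $\|\,v\,\|\Lnorss33^p_\mu=\plusinfty$ the assertion is trivial, so I assume both finite. By the definition in Constructions \ref{Ctr |x|_lL^p}\,(\ref{ctr L^p-norm}) I first choose measurable $\varphi$ and $\psi$ from $\Omega$ into $[\KP{1.1} 0\,,\plusinfty\KPt9]$ with $(\ssp u\fvalue\eta\ssp)^p\le\varphi\fvalue\eta$ and $(\ssp v\fvalue\eta\ssp)^p\le\psi\fvalue\eta$ for $\eta\in\dom w=\dom u\capss32\dom v$\ssp, and with $A^{\ssp 1/p}\le\|\,u\,\|\Lnorss33^p_\mu+\eps$ and $B^{\ssp 1/p}\le\|\,v\,\|\Lnorss33^p_\mu+\eps$\ssp, where $A=\int_\Omega\varphi\rmdss11\mu$ and $B=\int_\Omega\psi\rmdss11\mu$\ssp; such $\varphi\ssp,\sp\psi$ exist precisely because the two norms are infima of exactly these quantities. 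Since the Euclidean absolute value of a nonnegative extended real equals that number, any measurable $\chi$ on $\Omega$ with $(\ssp w\fvalue\eta\ssp)^p\le\chi\fvalue\eta$ for $\eta\in\dom w$ is admissible in the infimum for $w$\ssp, so it suffices to exhibit such a $\chi$ with $(\ssp\int_\Omega\chi\rmdss11\mu\ssp)^{1/p}$ suitably small.

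For $p\ge 1$ the key is the two\ssp-\sp point convexity estimate $(\ssp a+b\ssp)^p\le\lambda^{\ssp 1-p}\,a^p+(\ssp 1-\lambda\ssp)^{1-p}\,b^p$\ssp, valid for $a\ssp,\sp b\in[\KP{1.1} 0\,,\plusinfty\KPt9]$ and $\lambda\in\mathbb J$\ssp, which follows from convexity of $t\mapsto t^p$ by writing $a+b=\lambda\,(\ssp a/\lambda\ssp)+(\ssp 1-\lambda\ssp)\,(\ssp b/(\ssp 1-\lambda\ssp))$. Applied with $a=u\fvalue\eta$ and $b=v\fvalue\eta$ it shows that $\chi=\seqss43{\lambda^{\ssp 1-p}\,\varphi\fvalue\eta+(\ssp 1-\lambda\ssp)^{1-p}\,\psi\fvalue\eta:\eta=\eta}$ is an admissible majorant for every $\lambda\in\mathbb J$\ssp, whence positive homogeneity and subadditivity of the integral give $\|\,w\,\|\Lnorss33^p_\mu\le(\ssp\lambda^{\ssp 1-p}A+(\ssp 1-\lambda\ssp)^{1-p}B\ssp)^{1/p}$. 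Minimizing the right\ssp-\sp hand side over $\lambda\in\mathbb J$ gives the value $A^{\ssp 1/p}+B^{\ssp 1/p}$ (attained at $\lambda=A^{\ssp 1/p}/(\ssp A^{\ssp 1/p}+B^{\ssp 1/p}\ssp)$ when both summands are positive), hence $\|\,w\,\|\Lnorss33^p_\mu\le A^{\ssp 1/p}+B^{\ssp 1/p}\le\|\,u\,\|\Lnorss33^p_\mu+\|\,v\,\|\Lnorss33^p_\mu+2\,\eps$\ssp; since $\sup\{\,1\ssp,\spp 2^{p^{-1}-1}\,\}=1$ for $p\ge 1$\ssp, letting $\eps\to 0$ yields the claim in this case.

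For $0<p<1$ I instead use the subadditivity $(\ssp a+b\ssp)^p\le a^p+b^p$\ssp, which holds because $t\mapsto t^p$ is then concave and vanishes at $0$\ssp. Hence $\chi=\seqss43{\varphi\fvalue\eta+\psi\fvalue\eta:\eta=\eta}$ is an admissible majorant of the $p$\ssp-\sp th power of $w$\ssp, giving $\|\,w\,\|\Lnorss33^p_\mu\le(\ssp A+B\ssp)^{1/p}$. As $p^{-1}>1$\ssp, the outer convexity estimate $(\ssp A+B\ssp)^{1/p}\le 2^{p^{-1}-1}(\ssp A^{\ssp 1/p}+B^{\ssp 1/p}\ssp)$ then bounds this by $2^{p^{-1}-1}(\ssp\|\,u\,\|\Lnorss33^p_\mu+\|\,v\,\|\Lnorss33^p_\mu+2\,\eps\ssp)$\ssp, and since $\sup\{\,1\ssp,\spp 2^{p^{-1}-1}\,\}=2^{p^{-1}-1}$ here\ssp, letting $\eps\to 0$ finishes the proof.

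The verifications left are routine rather than conceptual: that $\chi\invss44\images\spp\barscTbb_R\inc\dom\mu$\ssp, i.e.\ that a positive linear combination or a sum of measurable $[\KP{1.1} 0\,,\plusinfty\KPt9]$\ssp-\sp valued functions is again measurable\ssp; that $\int_\Omega\chi\rmdss11\mu\le\lambda^{\ssp 1-p}A+(\ssp 1-\lambda\ssp)^{1-p}B$ by homogeneity and subadditivity of the integral\ssp; and that the majorant inequality is only required on $\dom w$ while $\chi$ is defined on all of $\Omega$\ssp. The one genuinely delicate point is the sharp constant when $p\ge 1$: the crude majorant $2^{\ssp p-1}(\ssp\varphi+\psi\ssp)$ would only produce the constant $2^{\ssp 1-p^{-1}}>1$\ssp, and it is exactly the freedom in $\lambda$\ssp, optimized, that recovers the genuine triangle inequality with constant $1$ without invoking Minkowski's inequality as a black box.
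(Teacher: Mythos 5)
Your proof is correct, but it takes a genuinely different route from the paper's. The paper treats the quasi-triangle inequality for \emph{measurable} real-valued functions as known, citing \cite[pp.~49--50]{Jr}, and then handles arbitrary (possibly non-measurable) $u,v$ by an indirect argument: supposing the asserted inequality fails, it picks measurable majorants $\varphi,\psi$ whose norms nearly realize $\|u\|$ and $\|v\|$, observes that the pointwise sum $\varphi+\psi$ majorizes $w$, and derives a contradiction by applying the quoted measurable-case estimate to $\varphi+\psi$. You instead prove everything from scratch at the level of pointwise inequalities on majorants of the $p$-th powers: for $p\ge 1$ the weighted convexity bound $(a+b)^p\le\lambda^{1-p}a^p+(1-\lambda)^{1-p}b^p$ followed by optimization in $\lambda$ (which is exactly the classical duality-free proof of Minkowski's inequality), and for $0<p<1$ subadditivity of $t\mapsto t^p$ combined with convexity of $t\mapsto t^{1/p}$; your edge cases ($A$ or $B$ zero, one norm infinite) all check out, and the $\lambda$-trick is indeed what recovers the sharp constant $1$ for $p\ge1$, as you note. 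What your approach buys is self-containedness: no appeal to the literature and no black-box Minkowski. What the paper's approach buys is brevity and reusability: once the measurable case is quoted, the extension to upper integrals is a few lines, and the same template (near-optimal majorants plus the known estimate for measurable functions) is then repeated essentially verbatim for the H\"older analogue in the next proposition. One further small difference: you work with majorants of $|x|^p$, which matches the actual definition in Constructions \ref{Ctr |x|_lL^p}\,(2), whereas the paper's proof switches to measurable majorants of $x$ itself and measures them with the $L^p$ quasi-norm; the two formulations of the infimum are equivalent (pass between $\varphi$ and $\varphi^{1/p}$), so nothing is lost either way, but your version requires one fewer silent identification.
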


\begin{proof} Let $\,\roman M\,x=
\sp^\Omega\KP1[\KPp1.1 0\,,\plusinfty\KPt9]
\capss51\{\,\varphi:\varphi\invss44\images\spp\barscTbb_R\inc\dom\mu$ \inskipline{.2}{56}

$\text{ and }\ssp\aall{\eta\,,\sp t}\,
(\ssp\eta\,,\spp t\ssp)\in x\impss33 
t\le\varphi\fvalue\eta
\KP1\} $ . \inskipline{.4}0

From \cite[pp.\ 49\,--\,50]{Jr} we know that the assertion 
holds under the additional restriction that we have 
\math{u\in\roman M\,u} and \math{v\in\roman M\,v} with 
\math{\rng(\ssp u\cupss22 v\ssp)\inc\ssbb05 R}, \,noting that 
it is trivial if \math{
\|\,u\,\|\Lnorss33^p_\mu + \|\,v\,\|\Lnorss33^p_\mu=\plusinfty} holds. From 
this one easily extends the result to the case 
where the restriction $\plusinfty\not\in\rng(\ssp u\cupss22 v\ssp)$ is 
removed.

Now putting 
\math{\smb A=\sup\KPt8\{\,1\ssp,\spp 2\KP1^{p^{-1}-\ssp 1\ssp}\big\} } for 
the general case, to proceed indirectly, 
suppose that we have 
$\smb A\,\big(\ssp
\|\,u\,\|\Lnorss33^p_\mu + \|\,v\,\|\Lnorss33^p_\mu\sp\sbig)0
<\|\,w\,\|\Lnorss33^p_\mu$ and take any $\eps\in\rbb R^+$ with 
$2\KPt8\smb A\KPt8\eps < \|\,w\,\|\Lnorss33^p_\mu - 
\smb A\,\big(\ssp
\|\,u\,\|\Lnorss33^p_\mu + \|\,v\,\|\Lnorss33^p_\mu\sp\sbig)0$ . 
Then there are $\varphi\in\roman M\,u$ and $\psi\in\roman M\,v$ with 
$
\|\,\varphi\,\|\Lnorss33^p_\mu
< \|\,u\,\|\Lnorss33^p_\mu + \eps \,$ and $\,
\|\,\psi\,\|\Lnorss33^p_\mu < \|\,v\,\|\Lnorss33^p_\mu + \eps \,$, 
whence with 
$\chi=\seqss43{\varphi\fvalue\eta + \psi\fvalue\eta:\eta=\eta}$ 
we obtain $\chi\in\roman M\,w$ and consequently \inskipline{.7}{8.5}

$\|\,\chi\,\|\Lnorss33^p_\mu\le\smb A\,\big(\ssp
\|\,\varphi\,\|\Lnorss33^p_\mu + \|\,\psi\,\|\Lnorss33^p_\mu\sp\sbig)0
$ \inskipline{.4}{21}

${}<\smb A\,\big(\ssp
\|\,u\,\|\Lnorss33^p_\mu + \|\,v\,\|\Lnorss33^p_\mu\sp\sbig)0
+2\KPt8\smb A\KPt8\eps
 < \|\,w\,\|\Lnorss33^p_\mu \,$, \,a contradiction.
  \end{proof}

\begin{remark}\label{Rem +*}

According to our updated definitional conventions in \cite{Hif} concerning 
sums and products of elements in a {\sl pseudo\ssp-\sp usual algebroid\sp}, if 
in Proposition \ref{Pro upint} for \mathss03{ \scrmt A = 
 \{\,\dom u\ssp,\dom v\,\} } we have \math{\bigcap\,\scrmt A \not= \emptyset } 
or \mathss36{\bigcap\,\scrmt A = \emptyset = \bigcup\,\scrmt  A }, \,then also \mathss30{w = 
 u + v } holds. Hence under this additional assumption we could have written 
the expression for \math{w} a bit more simply. However, if \math{
\bigcap\,\scrmt A = \emptyset \not= \bigcup\,\scrmt  A } holds and we also 
have \mathss06{\|\,u\,\|\Lnorss33^p_\mu \not= \plusinfty \not= 
 \|\,v\,\|\Lnorss33^p_\mu }, \,then \math{u + v = \Univ } and for this in 
place of \math{w} we would \linebreak \vskip-3.3mm\noindent get $ \KP{3.3}
  \plusinfty = \inf\ssp\emptyset = \|\KPt8\Univ\KPt8\|\Lnorss33^p_\mu
= \|\KP1 u + v\KP1\|\Lnorss33^p_\mu = \|\,w\,\|\Lnorss33^p_\mu$ \inskipline{.2}{15}

${} \le \sup\KPt8\{\,1\ssp,\spp 2\KP1^{p^{-1}-\ssp 1\ssp}\big\}\KP1\big(\ssp
         \|\,u\,\|\Lnorss33^p_\mu + \|\,v\,\|\Lnorss33^p_\mu\sp\sbig)0
    < \plusinfty \,$, \,a contradiction. \inskipline{.6}0

A similar remark applies to \math{\varphi\,,\sp\psi\ssp,\sp\chi} in the proof, 
thus having \math{\chi = \varphi + \psi } the function given by \math{\Omega
 \owns\eta\mapsto\varphi\fvalue\eta + \psi\fvalue\eta } since here \math{
\Omega=\dom\varphi=\dom\psi } holds.

We also suggest the reader to see \cite{A+Mo} for another kind of treatment of 
the notational \q{plus\ssp-\sp times} problem referred to above.
  \end{remark}

We also extend H\"older's inequality to upper integrals in the next

\begin{proposition}\label{Pro Hölder}

Let $\,1\le p < \plusinfty$ and let $\,\mu$ be a positive measure on $\,\Omega
\,$. Also let 

$\,w=\seqss43{u\fvalue\eta\cdot(\ssp v\fvalue\eta\ssp):\eta=\eta}$ where $\,u$ 
and $\,v$ are any functions with 

$\,u\cupss22 v\inc\Omega\times[\KPp1.1 0\,,\plusinfty\KPt9] \KP1 $. 
Then $\,\upint w\rmdss11\mu
\le \|\,u\,\|\Lnorss33^p_\mu \KP1 \|\,v\,\|\Lnorss40^{p\sast}_\mu$ holds.
  \end{proposition}

\begin{proof} Let \math{\roman M\,x} be as in the proof of Proposition \ref{Pro upint} 
above. For the indirect verification, suppose now that we have \mathss37{
\|\,u\,\|\Lnorss33^p_\mu \KP1 \|\,v\,\|\Lnorss40^{p\sast}_\mu < 
 \upint w\rmdss11\mu}, \,and with \mathss03{ A = 
 \big\{\,\|\,u\,\|\Lnorss33^p_\mu \,,\sp \|\,v\,\|\Lnorss40^{p\sast}_\mu\ssp\} } 
then put \mathss36{\smb M=\sup\,A}. We cannot have \math{0\in A} since by a 
simple exercise this would force \mathss36{ \upint w\rmdss11\mu = 0
}, \,contradicting our assumption. It follows that \math{\smb M < \plusinfty} 
holds, and we then take any \math{\eps} with \vskip.5mm\centerline{$
0 < \eps < \inf\,\{\,1\ssp,\sp(\ssp 2\KPt8\smb M + 1\ssp)\,^{\mminus 1}\ssp
 \big(\sp\upint w\rmdss11\mu - \|\,u\,\|\Lnorss33^p_\mu \KP1 
 \|\,v\,\|\Lnorss40^{p\sast}_\mu \sp \sbig) 0 \,\big\} \KP1 $.} \inskipline{.5}0

Now there are functions \math{\varphi\in\roman M\,u} and \math{ \psi \in 
 \roman M\,v} with \math{ \|\,\varphi\,\|\Lnorss33^p_\mu < 
 \|\,u\,\|\Lnorss33^p_\mu + \eps } and \mathss05{
\|\,\psi\,\|\Lnorss40^{p\sast}_\mu < \|\,v\,\|\Lnorss40^{p\sast}_\mu + \eps 
}, \,whence taking \math{ \chi = 
 \seqss43{\varphi\fvalue\eta \cdot(\ssp\psi\fvalue\eta\ssp):\eta=\eta} } we 
then have \math{\chi\in\roman M\,w} and consequently by the usual H\"older's 
inequality extended to measurable functions with values in \math{
[\KPp1.1 0\,,\plusinfty\KPt9] } we obtain \inskipline{.7}{7.78}

$   \upint w\rmdss11\mu \le \int_{\KPp1.1\Omega\,}\chi\rmdss11\mu
\le \|\,\varphi\,\|\Lnorss33^p_\mu \KP1 \|\,\psi\,\|\Lnorss40^{p\sast}_\mu $ \inskipline{.4}{20}

${}< \big(\ssp\|\,u\,\|\Lnorss33^p_\mu + \eps\ssp)\KP1
     \big(\ssp\|\,v\,\|\Lnorss40^{p\sast}_\mu + \eps\ssp) $ \inskipline{.4}{20}

${}= \|\,u\,\|\Lnorss33^p_\mu\KP1\|\,v\,\|\Lnorss40^{p\sast}_\mu + \big(\ssp
     \|\,u\,\|\Lnorss33^p_\mu + \|\,v\,\|\Lnorss40^{p\sast}_\mu\ssp)\KP1\eps 
     + \eps\KPt8^2 $ \inskipline{.4}{20}

${}\le \|\,u\,\|\Lnorss33^p_\mu\KP1\|\,v\,\|\Lnorss40^{p\sast}_\mu +
       (\ssp 2\KPt8\smb M + 1\ssp)\KP1\eps 
 < \upint w\rmdss11\mu \,$, \,a contradiction.
  \end{proof}

\begin{constructions}[standard Lebesgue measures]\label{defi Leb mea} $\null$ \vskip.5mm

\begin{enumerate}\begin{myLeftskip}{-4}{.6}{.4}

\item \ $\upCth\mu = \mu\KP1|\KP1\{\,A:\aall{B\in\dom\mu}\,               \label{ctr Carath}
       \mu\fvalue(\sp A\capss21 B\ssp) +
       \mu\fvalue(\sp B\sp\setminus A\ssp) \le \mu\fvalue\snn B\KP1\} \KP1 $,

\item \ $\mu\meatimes\nu = \uniqset\mu\ar 1\sn:\aall{\mu\ar 2\ssp,\sp\mu\ar 3}$ \newskline8

 $\mu\ar 2 = \{\,(\spp A\times B\ssp,\sp s\cdotn t\,):
  (\spp A\,  ,\spp s\ssp)\in\mu    \ssp$ and $\ssp
  (\spp B\ssp,\spp t\ssp)\in\nu\,\}\ssp$ and \vskip-.3mm

 $\nKP{12.7} \mu\ar 3 = 
   \big\langle\,\inf\sp\big\{\,\sum\,(\ssp\mu\ar 2\circ\ebit B\ssp)
  : \mu\ar 2\snn : \ebit B\in\sp^\sbbNo\,(\sp\dom\mu\ar 2)\ssp$ and \newfline

 $ A\inc\bigcup\ssp\rng\ebit B\KPt8\} : \mu\ar 2\snn :
   A\inc\bigcup\ssp\dom\mu\ar 2\,\big\rangle \impss22
    \mu\ar 1 = \upCth\mu\ar 3 \KPt8 $, \KP{8.5}

\item \ $\Lebmef^{\ssp\ssmb N} = \uniqset\mu\ar 1\sn:\smb N\in\bbNo\ssp$ and $\,   \label{ctr Lebmea}
      [\ [\KP{1.3}\smb N = \emptyset\ssp$ and $\ssp
                \mu\ar 1 = \seq{\,0\,,1\,} \KP{1.3}]\ssp$ or \newskline{15}

 $[\KP{1.3} \smb N \not= \emptyset\ssp$ and $\,\aall{
 \Cal B\ssp,\sp\Cal J\spp,\spp\mu\,,\spp\nu\ssp,\spp\nu\ar 1}\,
 \Cal J = \{\,\openIval{\smb A\ssp,\smb B} :
  \smb A\ssp,\spp\smb B\in\ssbb09 R\}\ssp$ and \newskline{6.5}

 $\nu\ar 1 = \seq{\,\smb B-\smb A : J = \openIval{\smb A\ssp,\smb B}
   \in\Cal J\ssp$ and $\ssp\smb A\le\smb B\,}\ssp$ and \newskline{8}

 $\nu = \Seq{\,\prod\,(\ssp\nu\ar 1\snn\circ\spp\ebit I\ssp) : B =
  \prodc\ebit I\ssp$ and $\ssp\ebit I\in\sp\yi N\sp\Cal J\KP1}\ssp$ and $\ssp
   \Cal B=\dom\nu$

 $\nKP{8} \mu = \Seq{\,\inf\sp\big\{\,\sum\,(\ssp\nu\circ\ebit B\ssp) :
  \ebit B\in\sp^\sbbNo\,\Cal B\ssp$ and $\ssp A\inc\bigcup\sp\rng\ebit B\,\} :
                                            A\inc\sp\yi N\ssbb69 R}$ \newfline

 $\impss02 \mu\ar 1 = \upCth\mu \KP{1.3}]\ ]\ $, \KP{8.5}

\item \ $\Lebmef^{} = \Lebmef^{\sp 1.} \circ\sp
        \big\langle\KP1^{1.}A:A\inc\ssbb09 R\sp\rangle \KP1 $, \label{def Lebm on R} \vskip.3mm

\item \ $\int_{\,\ssmb A}^{\,\ssmb B\,}x = \uniqset\smb I :  \label{df Leb int}
    \eexi{A\,,\spp\sigma}\,\big[\ \big[\KP{1.5} \sigma = 1\ssp$ and $\ssp
                  \smb A < \smb B\ssp$ and $\ssp
          A = \openIval{\smb A\ssp,\spp\smb B} \KP{1.5} \big]$ \inskipline{-.1}{43.5}

 or $\ssp\big[\KP{1.5} \sigma = \minus 1\ssp$ and $\ssp \smb B \le \smb A\ssp$
 and $\ssp A = \openIval{\smb B\ssp,\spp\smb A} \KP{1.5} \big]\ \big]$ \inskipline{-.1}{44.33}

 and $\, \smb I = \sigma\spp\int_{\,A}\ssp x\rmdss10\Lebmef^{} \KP1 $.

  \end{myLeftskip}\end{enumerate}
  \end{constructions}

\newcommand\sPows{{\lower.22mm\hbox{\font\Å=eusm8\ÅP}\kern-.3mm\lower.7mm\hbox{\font\Å=cmss5\Ås}\kern.65mm}} 
Saying that \math{\mu} is an {\it outer measure\ssp} on \math{\Omega} if{}f \math{
\mu\in\sp^{\sPows\Omega}\KP1[\KPp1.1 0\,,\plusinfty\KPt9] } and \mathss30{
\mu\fvalue\ssn A\le\mu\fvalue\snn B } and \math{
\mu\spp\fvalue\snn\bigcup\,\scrmt A\le\sum\KP1(\ssp\mu\KP1|\KP1\scrmt A\ssp) } 
hold whenever we have \math{A\inc B\inc\Omega } and \math{ \scrmt A \inc 
 \Pows\Omega } with \linebreak
                    $\scrmt A\ssp$ countable, essentially from 
\cite[Lemma 3.1.8\ssp, Proposition 3.1.9\ssp, pp.\ 67\,--\,68]{Du} we get the 
proof of the following

\begin{proposition}\label{Pro Cth}

If $\,\mu$ is an outer measure on $\,\Omega${\,\rm, }then $\,\upCth\mu$ is a 
complete positive measure on $\,\Omega\,$.
  \end{proposition}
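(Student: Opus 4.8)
The plan is to recognize $\upCth\mu$ as the classical Carathéodory restriction and to verify the three properties required of a complete positive measure: that its domain $\scrmt M = \dom(\upCth\mu)$ is a $\sigma$-algebra with $\bigcup\scrmt M = \Omega$, that $\mu$ restricted to $\scrmt M$ is countably additive on disjoint families, and that the resulting measure is complete. Since $\mu\in{}^{\sPows\Omega}[0,\infty]$ forces $\dom\mu = \Pows\Omega$, the membership condition ``$A\in\scrmt M$'' reads ``$\mu(A\cap B)+\mu(B\setminus A)\le\mu(B)$ for all $B\subseteq\Omega$''. First I would record the elementary consequences of $\mu$ being an outer measure: taking the empty family in the subadditivity clause gives $\mu(\emptyset)=0$; monotonicity is assumed; and subadditivity applied to $B=(A\cap B)\cup(B\setminus A)$ shows the reverse inequality $\mu(B)\le\mu(A\cap B)+\mu(B\setminus A)$ holds automatically, so membership of $A$ in $\scrmt M$ is equivalent to equality for every test set $B$.

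Next I would show $\scrmt M$ is an algebra. Both $\emptyset$ and $\Omega$ lie in $\scrmt M$ trivially, and the defining condition is manifestly symmetric under $A\mapsto\Omega\setminus A$, which merely swaps the two summands $\mu(A\cap B)$ and $\mu(B\setminus A)$; hence $\scrmt M$ is closed under relative complements. Closure under finite unions is the standard two-test-set argument: given $A_1,A_2\in\scrmt M$ and any $B$, apply measurability of $A_1$ to $B$, then measurability of $A_2$ to the pieces $B\cap A_1$ and $B\setminus A_1$, and combine with subadditivity to bound $\mu(B\cap(A_1\cup A_2))+\mu(B\setminus(A_1\cup A_2))$ by $\mu(B)$.

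The crux is the passage to countable unions and additivity, and this is where I expect the main obstacle to lie, since the $\sigma$-algebra property and the countable additivity must be established simultaneously through a limit on an arbitrary test set. For a disjoint sequence $\langle A_i:i\in\bbNo\rangle$ in $\scrmt M$ I would prove by induction on $n$ that $\mu(B\cap\bigcup_{i<n}A_i)=\sum_{i<n}\mu(B\cap A_i)$ for every $B$, the inductive step testing measurability of $A_n$ against $B\cap\bigcup_{i<n+1}A_i$. Using that each finite union $\bigcup_{i<n}A_i$ lies in $\scrmt M$ one obtains $\mu(B)\ge\sum_{i<n}\mu(B\cap A_i)+\mu(B\setminus\bigcup_i A_i)$; letting $n\to\infty$ and invoking countable subadditivity for the opposite direction yields both $\bigcup_i A_i\in\scrmt M$ and $\mu(B\cap\bigcup_i A_i)=\sum_i\mu(B\cap A_i)$. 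A general countable $\scrmt B\subseteq\scrmt M$ is disjointified inside the algebra, so its union is again in $\scrmt M$; together with complement closure this gives $\Omega\setminus\bigcup\scrmt B\in\scrmt M$, which is exactly the paper's $\sigma$-algebra clause for $\scrmt M$ (noting $\bigcup\scrmt M=\Omega$), and specializing the additivity formula to $B=\bigcup\scrmt A$ gives $\sum(\mu\,|\,\scrmt A)=\mu(\bigcup\scrmt A)$ for disjoint countable $\scrmt A\subseteq\scrmt M$.

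Finally, completeness follows cheaply: if $\mu(N)=0$ then monotonicity gives $\mu(N\cap B)\le\mu(N)=0$ and $\mu(B\setminus N)\le\mu(B)$ for every $B$, so $N\in\scrmt M$; in particular every subset of a set of $\upCth\mu$-measure zero is itself in $\scrmt M$, which is precisely completeness of $\upCth\mu$. Throughout the routine verifications I would lean on the cited \cite[Lemma 3.1.8, Proposition 3.1.9, pp.\ 67\,--\,68]{Du}, reserving explicit work only for the inductive additivity identity of the third paragraph.
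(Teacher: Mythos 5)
Your proposal is correct and is essentially the paper's own proof: the paper disposes of this proposition by citing the classical Carath\'eodory argument in \cite[Lemma 3.1.8, Proposition 3.1.9, pp.\ 67\,--\,68]{Du}, which is exactly the algebra/countable-additivity/completeness verification you carry out (and also lean on). No gap; you have merely written out in detail what the paper outsources to the reference.
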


Thus by Proposition \ref{Pro Cth} in \ref{defi Leb mea}\,(\ref{ctr Carath}) we 
have the standard {\sl Carath\'eodory construc- tion\sp} associating a 
complete positive measure with any outer measure. For \mathss36{\smb N\in\bbN
}, \,the function \math{\Lebmef^{\ssp\ssmb N}} is the standard complete 
Lebesgue measure on \math{\yi N\ssbb60 R} defined on the class of Lebesgue 
measurable subsets. The corresponding measure on \math{\bbR} is \math{
\Lebmef^{}\sp}. Note that if we had not separately defined \math{
\Lebmef^{\sp 0.} = \seqss20{0\,,\spp 1} = \{\,(\ssp\emptyset\,,\spp 0\ssp)\,,\sp
 (\ssp 1\adot\ssp,\spp 1\ssp)\,\} } by inserting \q{\mathss03{ \smb N = 
 \emptyset\ssp\text{ and }\ssp\mu\ar 1=\seqss20{0\,,\spp 1} }} in 
\ref{defi Leb mea}\,(\ref{ctr Lebmea})\,, then it would have given \mathss30{
\Lebmef^{\sp 0.}=\sn} \mathss08{2\adot\timesn\{\sp\plusinfty\,\} }. We also 
put \math{ \LeBmef^{\ssp\ssmb N} = 
 \Lebmef^{\ssp\ssmb N}\,|\KP1\sigmAlg3\nsTbb_R\ssn\yi N } and \math{\LeBmef^{}
 = \Lebmef^{}\,|\KP1\sigmAlg3\nsTbb_R } get- ting the restrictions of the 
Lebesgue measures to the standard Borel \rsigma3algebras.


\insubsubhead       Pettis integration of vector functions                \label{Ss Pettis}

In some places of the proof of Theorem \nfss A\,\ref{main Th} we refer to 
something being {\sl Pettis\sp}. In order to make the meaning of this 
explicit, we give the following

\begin{definitions}[for Pettis integration]\label{df Petti}

(1) \ Say that \math{\tilde c} is {\it scalar integrable\ssp} to \math{x} 
    if{}f \math{\tilde c} is an mv\ssp-\sp map and for all \math{\bosy K\ssp,\sp 
E\ssp,\sp\mu\,,\sp\Omega\ssp,\sp x\ssp,\sp u} from \math{ \tilde c = 
 (\,c\KPt8;\sp\mu\,,\spp E\ssp) } and \math{\Omega=\bigcup\,\dom\mu} and \math{
\bosy K\in\setRC} and \math{\domm\tsigrd E=\vecs\bosy K } and \math{ u \in 
 \Cal L\,(\sp E\ssp,\spp\bosy K\ssp) } it follows that \newline
$u\fvalue x=\int_{\KPp1.1\Omega}\,u\circss01 c\rmdss21\mu\not=\Univ$ holds, \inskipline{.5}2

(2) \ $E\vPettis3int_A\,c\rmdss21\mu=\uniqset x:c\ssp$ a function and \math{
    \mu} is a positive measure and \inskipline{.2}{8.5}

$A\inc\dom c\capss43\bigcup\,\dom\mu\ssp$ and \math{
(\,c\KP{1.2}|\KP1 A\,;\sp\mu\KP1|\KP1\Pows A\,,\spp E\ssp)} is scalar 
  integrable to \mathss34{x}, \inskipline{.5}2

(3) \ Say that \math{\tilde c} is {\it Pettis\ssp} if{}f 
\math{\tilde c} is an mv\ssp-\sp map and for all 
\math{A\,,\sp E\ssp,\sp\mu\,,\sp c} from 
$\ssp\tilde c={}$ \inskipline{.2}{8.5}

$(\,c\KPt8;\sp\mu\,,\spp E\ssp) \ssp$ and 
\math{A\in\dom\mu} it follows that \math{
E\vPettis3int_A\,c\rmdss21\mu\not=\Univ } holds.
  \end{definitions}

Because of the manner we have put the definitions, from the discussion after 
the proof of Lemma \ref{Le +int} on page \pageref{int not meas} above, it 
follows that \math{\tilde c} being Pettis implies it being {\sl finitely 
almost scalarly measurable\sp}. Also from \math{(\,c\KPt8;\sp\mu\,,\spp E\ssp) } 
being Pettis with \mathss03{\rng\mu\capss34\rbb R^+\not=\emptyset} it follows 
that \math{\Card\sp\vecs E\not=1\spp\adot=\Card\sp\vecs(\sp E\dlsigss11\sp) } 
cannot hold. That is, if \math{\mu} and \math{E} are nontrivial, then also the 
dual of \math{E} must be such. For example \mathss03{
(\,c\KPt8;\Lebmef^{}\ssp,\spp\LLrs42^{\frac 12}(\ssbb44 I)) } cannot be 
Pettis, whereas \math{
(\,c\KPt8;\Lebmef^{}\ssp,\sp\ell\KPt8^{\frac 12\ssp}(\ssp\bbNo\spp)) } can. \vskip.3mm

To have at our disposal also some partially weaker and more general notions of 
integrability of mv\ssp-\sp maps, we put the following

\begin{definitions}\label{df sc+Gel-int}

(1) \ Say that \math{\tilde c} is {\it scalarly integrable\ssp} if{}f \math{
    \tilde c} is an mv\ssp-\sp map and for all \mathss03{c\,,\sp\mu\,,\sp
\bosy K\sp,\sp\vPi} from \math{\tilde c=(\,c\KPt8;\sp\mu\,,\spp\vPi\ssp) } and \math{
\bosy K\in\setRC} and \math{\vecs\bosy K=\domm\tsigrd\vPi } it follows that 
for all \math{\xi\in\vecs\vPi\sp\setminus\{\,\Bnull_\vPi\} } there is \math{
u\in\Cal L\,(\sp\vPi\sp,\spp\bosy K\ssp) } with \math{u\fvalss01\xi\not=0 } 
and for all \math{u\in\Cal L\,(\sp\vPi\sp,\spp\bosy K\ssp) } and \math{ A \in 
 \dom\mu} it holds that \mathss36{
\int_{\,A\,}u\circss01 c\rmdss21\mu \not= \Univ }, \inskipline{.5}2

(2) \ Say that \math{\tilde c} is {\it finitely scalarly integrable\ssp} if{}f \math{
    \tilde c} is an mv\ssp-\sp map and for all \mathss30{c\,,\sp\mu\,,} $
\vPi\ssp$ from \math{\tilde c=(\,c\KPt8;\sp\mu\,,\spp\vPi\ssp) } and \math{
A\in\mu\invss44\image\spp\lbb R_+} it follows that \math{
(\,c\KPp1.2|\KP1 A\,;\sp\mu\KP1|\KP1\Pows A\,,\spp\vPi\ssp) } is scalarly 
integrable, \inskipline{.5}2

(3) \ Say that \math{\tilde c} is {\it Gelfand\,} if{}f \math{\tilde c} is  
    scalarly integrable and for all \mathss30{c\,,\sp\mu\,,\sp A\,,\sp\bosy K\sp
 ,\sp\vPi} from \math{\tilde c=(\,c\KPt8;\sp\mu\,,\spp\vPi\ssp) } and \math{
\bosy K\in\setRC} and \math{\vecs\bosy K=\domm\tsigrd\vPi } and \mathss30{A\in 
 \dom\mu} it follows that \math{
\big\langle\sp\int_{\,A\,}u\circss01 c\rmdss21\mu : u\in
 \Cal L\,(\sp\vPi\sp,\spp\bosy K\ssp)\KP1\rangle } is continuous \mathss32{
\taurd(\sp\vPi\dlbetss01\sp)\to\taurd\bosy K}.
  \end{definitions}

A simple example of a Banach space valued mv\ssp-\sp map that is Gelfand but 
not Pettis is given in the following

\begin{example}\label{Exa Gel /= Pet}

Let \math{\tilde x=(\ssp x\,;\spp\mu\,,\spp\vPi\ssp) } where \math{\mu=
 \seqss30{\card3 A:A\inc\bbNo\snn} }  and \math{\vPi=\co(\ssp\bbNo\spp) } 
and \mathss08{x=\seqss30{(\ssp\bbNo\sn\setminus\{\ssp i\ssp\}\sp\sbig)0\times\snn
 \{\ssp 0\ssp\}\cupss22\{\,(\ssp i\ssp,\spp 1\ssp)\,\}:i\in\bbNo\snn} }. For 
every \math{A\inc\bbNo } and \mathss30{ \zeta \in 
 \vecs\ell\KPt8^1\spp(\ssp\bbNo\spp) } then \mathss38{
\int_{\,A\,}x\fvalue\eta\cdot\zeta\rmdss21\mu\,(\sp\eta\sp) = 
 \sum\KP1(\ssp\zeta\KP1|\KP1 A\ssp) }, \,and hence \math{\tilde x} is scalarly 
integrable. It is also Gelfand since for \math{ \lambda = 
 \bbNo\timesn\{\ssp 1\ssp\} } we have \math{ \lambda \in 
 \vecs\lll^\plusinftyy\spp(\ssp\bbNo\spp) } with \mathss30{
\sum\KP1(\ssp\lambda\cdot\zeta\KP1|\KP1 A\ssp) = \sn} \mathss03{
 \sum\KP1(\ssp\zeta\KP1|\KP1 A\ssp) = 
  \int_{\,A\,}x\fvalue\eta\cdot\zeta\rmdss21\mu\,(\sp\eta\sp) } for all \math{
A\inc\bbNo} and \mathss38{\zeta\in\vecs\ell\KPt8^1\spp(\ssp\bbNo\spp) }. Since 
we here have \mathss31{\lambda\not\in\vecs\vPi}, \,we see that \math{\tilde x} 
  is not Pettis.
  \end{example}


\Ssubhead B               Generalized Bochner spaces                      \label{Sec B}

In this section, we first give the formal construction of the generalized 
Lebesgue\,--\,Bochner spaces  spaces \math{ F = 
 \mvLrs03^p(\ssp\mu\,,\spp\vPi\ssp) } and \math{ F\aar 1 = 
 \mvsLrs03^p(\ssp\mu\,,\spp\vPi\ssp) } of equivalence classes of order \math{p} 
integrable functions \math{x:\Omega\to\vecs\vPi} when \math{ 0 \le p \le 
 \plusinfty} and \math{\mu} is a positive measure on some set \math{\Omega} 
and \math{\vPi} is a real or complex topological vector space. Then we 
establish the basic relevant properties of these spaces under the additional 
assumption that the space \math{\vPi} is suitable.

For the construction of the space $\ssp F$, the functions \math{x} are 
required to be such that \math{\tilde x=(\ssp x\,;\spp\mu\,,\spp\vPi\ssp)} is 
{\sl finitely almost simply measurable\sp} in the sense of 
Definitional schemata \ref{df meas} on page \pageref{df meas} above. For \math{
F\aar 1} we instead require \math{\tilde x} to be only {\sl finitely almost 
scalarly measurable\sp}. 

The integrability condition is formulated so that in the case \math{p\not=0} 
for any bounded quasi\ssp-\ssp seminorm \math{\Nu} in \math{\vPi} we should 
have \math{\|\KP1\Nu\circss01 x\KP1\|\Lnorss33^p_\mu<\plusinfty } which in the 
case \math{p\in\rbb R^+} is equivalent to the function \math{
\aabs99^p\circss01\Nu\circss10 x : \Omega \owns \eta \mapsto
 (\ssp\Nu\circss10 x\fvalss11\eta\ssp)\,^p } \nolinebreak pos- sessing a 
dominating \mathss37{
\mu}--\,integrable function \mathss38{ \varphi : \Omega \to 
 [\KPp1.1 0\,,\plusinfty\KPt9] }. Then 

$\null$
\math{
\upint\ssp\aabs99^p\circss01\Nu\circss10 x\rmdss11\mu<\plusinfty} holds, and 
this determines

by $x\mapsto\|\KP1\Nu\circss01 x\KP1\|\Lnorss33^p_\mu$ a corresponding 
quasi\ssp-\ssp seminorm
. 

For \math{p=0} no integrability is required, and in this case the topology is 
determined by the quasi\ssp-\ssp semimetrics \math{ \roman d\,A\KP1\Nu : 
 (\sp x\ssp,\sp y\ssp)\mapsto\upint\ssp\rmmd\circ\spp\Nu\circss10 z\rmdss11\mu} 
where with given \math{A\in\mu\invss44\image\sp\lbb R_+} we have \mathss38{
z = (\ssp\Omega\spp\setminus A\ssp)\times\snn\{\,\Bnull_\vPi\}\cupss22
 \seqss33{(\ssp x\fvalue\eta-y\fvalue\eta\ssp)\svs\vPi\snn:\eta\in A} }, \,and \math{
\rmmd = \seqss30{ (\ssp 1 + t\ssp)^{\,\mminus 1}\,t : t \in \lbb R_+ \sn } } 
hence \math{\lbb R_+\to[\KPp1.1 0\,,\spp 1\KPt9{[\,} } given by \mathss34{
t \mapsto (\ssp 1 + t\ssp)^{\ssp\mminus 1}\,t }.

To get Hausdorff topologies for the spaces \math{F} and \mathss32{F\aar 1
}, \,we finally take the quotient space by the vector subspace \math{N\aar 0} 
of functions \math{x} with \math{\int_{\,A}\sp u\circss01 x\rmdss11\mu=0} for 
all \linebreak 
    $A\in\mu\invss44\image\spp\lbb R_+\sp$ and for all \mathss38{
u\in\Cal L\,(\spp\vPi\sp,\spp\bosy K\ssp) }.

Observe that if with \math{\bosy K=\tfbbR} we have for example \mathss38{\vPi=
 \LLrs42^{\frac 12}(\ssbb44 I) }, \,then the dual set \math{
\Cal L\,(\spp\vPi\sp,\spp\bosy K\ssp) = 
 \{\KPt8\vecs\vPi\timesn\{\ssp 0\ssp\}\sp\} } and hence the spaces \math{F} 
and \math{F\aar 1 \label{triv L^p exa}} become trivial. For $ \vPi = 
 \ell\KPt8^{\frac 12}\sp(\ssp\bbNo\spp) \ssp $ the situation is different 
since then \math{\vPi} has nontrivial dual.

\begin{constructions}[of generalized Lebesgue\,--\,Bochner spaces]\label{defi $L^p$} $\null$ \vskip.5mm

\begin{enumerate}\begin{myLeftskip}{-4}{.6}{.6}

\item \ $\roman{Leb}\sbi{\sixroman n\fiveroman{bh}}\!\RHB{.3}{^p}\ssp
         \varXi\sbi M=\uniqset\Cal V:0\le p\le\plusinfty\ssp$ and \math{  \label{L^p nbhs}
 \eexi{\bosy K\ssp,\sp\mu\,,\sp\Omega\,,\sp\vPi} } \newskline{20}

 $\bosy K\in\setRC \ssp$ and $\ssp
  \mu\ssp$ is a positive measure on $\ssp\Omega \ssp$ and \newfline

 $ \vPi\in\tvsps0(K)\ssp$ and $\ssp \varXi=(\ssp\mu\,,\spp\vPi\ssp) \ssp$
 and $ [ \KP1 [ \KP{1.4} p = 0 \ssp$ and \KP{11.1} \newskline6     

 $\Cal V=\{\KPt8 V\ssn:\eexi{\Nu\ssp,\spp A\,,\spp\eps}\,\Nu\in\Bqnorm\vPi \ssp$
 and $\ssp A\in\mu\invss23\image\sp\lbb R_+\ssp$
 and $\ssp \eps\in\rbb R^+ \ssp$ and \newfline

 $V=M\capss21\{\,x:
  \upint\ssp\rmmd\snn\circ\sp\Nu\circss00 x\KP1|\KP1 A\rmdss41\mu<\eps\,\}
   \sp\} \KP{1.4} ] \ssp$ or $\ssp [ \KP{1.4} p\in\rbb R^+\ssp$ and \KP{8.3} \newskline6

 $\Cal V=\{\KPt8 V\ssn:\eexi{\Nu\in\Bqnorm\vPi}\,V = M\capss21\{\, x :
 \upint\ssp \Abrs33^p\circss00\Nu\circss00 x\rmdss41\mu<1\,\}\sp\} \KP{1.4} ] $ \newfline

 or $\ssp [ \KP{1.4} p = \plusinfty\ssp$ and \KP7 \newskline6

 $\Cal V=\{\KPt8 V\ssn:\eexi{\Nu\in\Bqnorm\vPi}\, V = M\capss21\{\,x:
  \aall{A\in\mu\invss23\image\sp\lbb R_+}$ \newskline{16}

 $\eexi{N\in\mu\invss44\image\snn\{\ssp 0\ssp\} }\, 
  \sup\,(\ssp\Nu\circss00 x\KP1 [\KP1 A\setminus N\KP1]\ssp) < 1 \KPt8\}\,\} 
   \KP{1.4} ] \KP{1.2} ] \KP{1.4} $,

\item \ $\raise1.7mm\hbox{\font\Å=cmssi5\Åpr}\kern-.3mm
         \LLrs02^p\varXi\sp\sbi{M\ssp\aars N_0}=\uniqset F:               \label{preL^p_{MN_0}}
         \roman{Leb}\sbi{\sixroman n\fiveroman{bh}}\!\RHB{.3}{^p}\ssp
         \varXi\sbi M\not=\Univ\ssp$ and \newskline6

 $
 \aall{\mu\,,\sp\Omega\,,\sp\vPi\sp,\sp S\ssp,\sp\scrmt T\sp,\sp\Cal V\sp,
 \sp\scrmt V\aar 0\,,\sp X\sp,\sp Y}\,
 \varXi=(\ssp\mu\,,\spp\vPi\ssp) \ssp$ and $\ssp\Omega=\bigcup\,\dom\mu\ssp$ 
 and \newskline6

 $X=\sigrd\vPi\expnota^\,\Omega\ssp]_{vs}\sp$ and $\ssp\Cal V=
 \roman{Leb}\sbi{\sixroman n\fiveroman{bh}}\!\RHB{.3}{^p}\ssp
         \varXi\sbi M\sp$ and \newskline6

 $S = \ssp
  \bigcap\ssp\big\{\KP{1.1}[\KP{1.2}\mathbb Z\KP{1.1}V\KP{1.1}]\vvs X\sn:
  V\in\Cal V\KP1\} \ssp$ and $\ssp Y=
 X_{\ssp|\,S}\,/\vsquotient N\aar 0\ssp$ and \newskline6

 $\scrmt V\aar 0 = \{\,\vecss Y\capss01\{\,\smb X : \smb X\capss02 V\aar 1 
   \not= \emptyset \KP1 \} : V\aar 1\in\Cal V\KP1\} \ssp $ and \newskline6

 $\scrmt T=
 \{\KPt8 U\sn:\aall{\smb X\in U}\,\eexi{V\in\scrmt V\aar 0}\,
 [\KP{1.1}\{\ssp\smb X\ssp\} + V\KP{1.2}]\vvs Y\inc U\inc\vecs Y\KP1\}$ \newskline6

 $
 \impss33 M\ssp$ is a vector subspace in $\ssp X\ssp$ and \newskline6
 
 $N\aar 0\ssp$ is a vector subspace in $\ssp X_{\ssp|\,S}\ssp$ and 
 $\ssp F=
 (\ssp Y\sppp,\spp\scrmt T\,) \KPt9 $,

\item \ $\mvLrs02^p(\ssp\mu\,,\spp\vPi\ssp)=\uniqset F: \eexi{\bosy K}\,  \label{simpL^p}
         \bosy K\in\setRC\ssp$ and $\ssp\vPi\in\tvsps0(K)\ssp$ and \newskline{11}
 
 $\aall{M\sp,\sp N\aar 0\,,\sp\Omega\,,\sp S\ssp,\sp 
 \Cal V\sp,\sp X}\,\Omega=\bigcup\,\dom\mu\ssp$ and $\ssp 
 X = \sigrd\vPi\expnota^\,\Omega\ssp]_{vs}\sp$ and \newskline{14}

 $ M = \vecs X\capss01\{\,x : (\ssp x\,;\spp\mu\,,\spp\vPi\ssp) \ssp $ is 
        finitely \newskline{50.5}

                 almost simply measurable $ \} \ssp $ and \newskline6

 $\Cal V=
 \roman{Leb}\sbi{\sixroman n\fiveroman{bh}}\!\RHB{.3}{^p}\ssp
         (\ssp\mu\,,\spp\vPi\ssp)\ssp\sbi M\sp$ and $\ssp S = \ssp
  \bigcap\ssp\big\{\KP{1.1}[\KP{1.2}\mathbb Z\KP{1.1}V\KP{1.1}]\vvs X\sn:
  V\in\Cal V\KP1\}\ssp$ and \newskline6

 $N\aar 0=S\capss21\{\,x:\aall{A\,,\sp u}\,\eexi N\,A\in
 \mu\invss44\image\spp\lbb R_+\sp$ and $\ssp u\in
 \Cal L\,(\sp\vPi\sp,\spp\bosy K\ssp)$ \newskline{30}

 $\impss03
 N\in\mu\invss44\image\snn\{\ssp 0\ssp\}\ssp$ and $\ssp
 u\circ x\image(\sp A\setminus N\ssp)\inc\{\ssp 0\ssp\}\,\} $ \newfline

 $\impss03 \Cal V\not=\Univ\ssp$ and $\ssp F=
 \raise1.8mm\hbox{\font\Å=cmssi5\Åpr}\kern-.3mm
         \LLrs02^p(\ssp\mu\,,\spp\vPi\ssp)\ssp\sbi{M\ssp\aars N_0} \,$, \KP9

\item \ $\mvsLrs02^p(\ssp\mu\,,\spp\vPi\ssp)=\uniqset F: \eexi{\bosy K}\,  \label{ctr mvL_s^p}
         \bosy K\in\setRC\ssp$ and $\ssp\vPi\in\tvsps0(K)\ssp$ and \newskline{11}
 
 $\aall{M\sp,\sp N\aar 0\,,\sp\Omega\,,\sp S\ssp,\sp 
 \Cal V\sp,\sp X}\,\Omega=\bigcup\,\dom\mu\ssp$ and $\ssp 
 X = \sigrd\vPi\expnota^\,\Omega\ssp]_{vs}\sp$ and \newskline{14}

 $ M = \vecs X\capss01\{\,x : (\ssp x\,;\spp\mu\,,\spp\vPi\ssp) \ssp $ is 
        finitely \newskline{49}

                 almost scalarly measurable $ \} \ssp $ and \newskline6

 $\Cal V=
 \roman{Leb}\sbi{\sixroman n\fiveroman{bh}}\!\RHB{.3}{^p}\ssp
         (\ssp\mu\,,\spp\vPi\ssp)\ssp\sbi M\sp$ and $\ssp S = \ssp
  \bigcap\ssp\big\{\KP{1.1}[\KP{1.2}\mathbb Z\KP{1.1}V\KP{1.1}]\vvs X\sn:
  V\in\Cal V\KP1\}\ssp$ and \newskline6

 $N\aar 0=S\capss21\{\,x:\aall{A\,,\sp u}\,\eexi N\,A\in
 \mu\invss44\image\spp\lbb R_+\sp$ and $\ssp u\in
 \Cal L\,(\sp\vPi\sp,\spp\bosy K\ssp)$ \newskline{30}

 $\impss03
 N\in\mu\invss44\image\snn\{\ssp 0\ssp\}\ssp$ and $\ssp
 u\circ x\image(\sp A\setminus N\ssp)\inc\{\ssp 0\ssp\}\,\} $ \newfline

 $\impss03 \Cal V\not=\Univ\ssp$ and $\ssp F=
 \raise1.8mm\hbox{\font\Å=cmssi5\Åpr}\kern-.3mm
         \LLrs02^p(\ssp\mu\,,\spp\vPi\ssp)\ssp\sbi{M\ssp\aars N_0} \,$, \KP9

\item \ $\mLrs03^p(\ssp\mu\sp) =
       \mvLrs03^p(\ssp\mu\,,\tfbbR\ssp) \KP1 $, \hfill
(6) \ $\mLrs03^p(\ssp\mu\sp)\lfbb_C =
       \mvLrs02^p(\,\mu\,,(\sp\tfbbC\ssp)\Reit1) \KP1 $, \KP{13} \inskipline0{-2}

(7) \ $\suptext{vc}0\Lrs03^p(\vcal Q\sp) = \uniqset F:\eexi{\bosy K}\,\vcal Q\ssp$
      is a quasi\ssp-\ssp\erm Euclidean \mathss37{\bosy K}--\,vector column \newskline{17}

  and $\,\aall{\ell\,,\sp\smb N\sp,\sp\mu\,,Q\ssp,\sp\Yps\spp,\spp\vPi}\,
  \vcal Q = (\sp Q\ssp,\Yps\sp,\spp\vPi\ssp) \ssp$ and $\ssp
  \smb N \in \bbNo \ssp$ and \newfline

  $\ell\in\Lis(\,\Yps\Reit0\ssp,\tvbbR5^{\ssmb N}\ssp\big)\ssp$ and $\ssp
  \mu = \seq{\KP1 r : A \inc Q \ssp$ and $\ssp
    r = \Lebmef^{\ssp\ssmb N}\!\fvalue(\ssp\ell\,\image\ssn A\,) \KP1 } $ \KP{8.95} \newfline

  $\impss02 \mu\fvalue Q \not = \Univ \ssp$ and $\ssp
    F = \mvLrs02^p(\ssp\mu\,,\spp\vPi\ssp) \KP1 $, \KP9 \inskipline0{-2}

(8) \ $\LLrs03^p(\sp Q\,\sbi\Yps\ssp,\spp\vPi\ssp) =
       \suptext{vc}0\Lrs03^p(\sn(\sp Q\ssp,\Yps\sp,\spp\vPi\ssp)\sn) \KP1 $, \inskipline0{-2}

(9) \ $\LLrs03^p(\sp Q\,\sbi\Yps) =
       \LLrs03^p(\sp Q\,\sbi\Yps\ssp,\tfbbR\ssp) \KP1 $, \hfill
(10) \ $\LLrs03^p(\sp Q\,\sbi\Yps)\lfbb_C =
       \LLrs03^p(\ssp Q\,\sbi\Yps\ssp,(\sp\tfbbC\ssp)\Reit1) \KP1 $, \KP{11.6} \inskipline0{-3.75}

(11) \ $\LLrs03^p(\sp Q\ssp,\spp\vPi\ssp) = \uniqset F:\eexi\Yps\,\Yps\ssp$ is
       quasi\ssp-\sp usual over $\ssp\tfbbR\ssp$ \newfline

       and $\ssp Q\inc\vecs\Yps\ssp$ and $\ssp
       F = \LLrs03^p(\sp Q\,\sbi\Yps\ssp,\spp\vPi\ssp) \KP1 $, \KP{17.4} \inskipline0{-3.75}

(12) \ $\LLrs03^p(\spp Q\sp) =
        \LLrs03^p(\sp Q\ssp,\tfbbR\ssp) \KP1 $, \hfill
(13) \ $\LLrs03^p(\spp Q\sp)\lfbb_C = \label{df $L^p(Q)_C$}
        \LLrs03^p(\ssp Q\ssp,(\sp\tfbbC\ssp)\Reit1) \KP1 $. \KP{16.9}

  \end{myLeftskip}\end{enumerate}
  \end{constructions}

\begin{theorem}\label{L^p in TVS}

Let \œ$\,0\le p\le\plusinfty$ and let $\,\mu$ be a positive measure. With \œ$\,
\bosy K \in\setRC $ also let \œ$\,\vPi\in\tvsps0(K)$ and either \œ$\, F = 
 \mvLrs02^p(\ssp\mu\,,\spp\vPi\ssp) $ or \œ$ F = 
 \mvsLrs02^p(\ssp\mu\,,\spp\vPi\ssp) \KPt8 $. Then \œ$\,F\in{\ssn}$ $
\TVSps0(K)$ holds. If in addition \œ$\,1\le p$ and $\,\vPi$ is almost 
suitable{\sp\rm, }then \œ$\,F\in\LCSps0(K)$ holds with $\,F$ normable. 
Furthermore{\sp\rm, }for $\,\Nu$ any dominating norm for $\,\vPi$ it holds 
that $\,\seqss43{\inf\sp\big\{\KPt8\|\KP1\Nu\circss01 x\KP1\|\Lnorss33^p_\mu\snn 
 : x\in\smb X\,\} : \smb X\in\vecs F} $ is a compatible norm for $\,F\sp$.
  \end{theorem}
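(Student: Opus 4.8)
The plan is to separate the topological\ssp-\ssp vector\ssp-\ssp space bookkeeping from the measure theory: first install the candidate topology on the function space, verify the \TVS axioms there, and only afterwards descend to $F$ by means of Lemma~\ref{Le qtvs}. Write $\Omega=\bigcup\,\dom\mu$, let $\Nu$ range over $\Bqnorm\vPi$, and note that for either choice $F=\mvLrs03^p(\ssp\mu\,,\spp\vPi\ssp)$ or $F=\mvsLrs03^p(\ssp\mu\,,\spp\vPi\ssp)$ the reasoning is identical, only the measurability class defining $M$ in Constructions~\ref{defi $L^p$} differing. First I would settle the algebra. That $M$ is a vector subspace reduces to the fact that a pointwise sum of two finitely almost simply (resp.\ scalarly) measurable functions is again such, which is immediate from Definitional schemata~\ref{df meas}. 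That $S$ is a vector subspace is the analytic core of this step: given $x\ssp,\sp y$ with $\|\KP1\Nu\circss01 x\KP1\|\Lnorss33^p_\mu<\plusinfty$ and $\|\KP1\Nu\circss01 y\KP1\|\Lnorss33^p_\mu<\plusinfty$ for every $\Nu$, I would apply Proposition~\ref{Pro upint} to the positive functions $\Nu\circss01 x$ and $\Nu\circss01 y$, after first using the quasi\ssp-\ssp seminorm inequality for $\Nu$ pointwise, to obtain finiteness of $\|\KP1\Nu\circss01(\ssp x+y\ssp)\KP1\|\Lnorss33^p_\mu$; linearity of the scalar integral shows $N\aar 0$ is a vector subspace.

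Next comes the \TVS structure on $E=X\sbi{\ssp|\,S}$ carrying the topology whose base of zero neighbourhoods is the trace of $\Cal V$. The balls $V\sbi\Nu=\{\,x:\upint\ssp\Abrs33^p\circss00\Nu\circss00 x\rmdss41\mu<1\,\}$ and their scalar multiples form a filter base of balanced absorbing sets: downward directedness follows from $V\sbi{\Nu\ar 1+\Nu\ar 2}\inc V\sbi{\Nu\ar 1}\capss22 V\sbi{\Nu\ar 2}$, since a sum of bounded quasi\ssp-\ssp seminorms is one; balancedness and the absorbing property follow from the homogeneity $\|\KP1\Nu\circss01(\ssp t\,x\ssp)\KP1\|\Lnorss33^p_\mu=|\,t\,|\suba\KP1\|\KP1\Nu\circss01 x\KP1\|\Lnorss33^p_\mu$ together with the finiteness available on $S$; and the decisive \q{$W+W\inc V$} axiom is obtained by combining the quasi\ssp-\ssp seminorm constant of $\Nu$ with the Lebesgue constant $\sup\KPt8\{\,1\ssp,\spp 2\KP1^{p^{-1}-\ssp 1\ssp}\big\}$ supplied by Proposition~\ref{Pro upint}. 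This yields $E\in\tvsps0(K)$; the cases $p=\plusinfty$ and $p=0$ are handled analogously, using the essential\ssp-\ssp supremum ball and the bounded subadditive semimetrics $\roman d\,A\,\Nu$ respectively. Lemma~\ref{Le qtvs} then gives $F=E\,/\tvsquotient N\aar 0\in\tvsps0(K)$ with the quotient images $\scrmt V\aar 0$ of $\Cal V$ forming a base of $\neiBoo F$, matching $\scrmt T$.

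For the sharper conclusion $F\in\TVSps0(K)$ I must show $N\aar 0$ is closed in $E$. The crucial observation is that for each $u\in\Cal L\,(\sp\vPi\sp,\spp\bosy K\ssp)$ the map $\xi\mapsto|\,u\fvalss01\xi\,|\suba$ is a bounded seminorm, hence lies in $\Bqnorm\vPi$, so its balls belong to $\Cal V$. Thus if $x$ lies in the closure of $N\aar 0$, then for every such $u$, every $A\in\mu\invss44\image\sp\lbb R_+$ and every $\eps$ there is $y\in N\aar 0$ with $\|\KP1 u\circss01 x-u\circss01 y\KP1\|\Lnorss33^p_\mu<\eps$; as $u\circss01 y=0$ almost everywhere on $A$, letting $\eps$ tend to $0$ forces $u\circss01 x=0$ almost everywhere on $A$, whence $x\in N\aar 0$. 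Here one uses the measure\ssp-\ssp theoretic fact that vanishing of the upper\ssp-\ssp integral seminorm $\|\cdot\|\Lnorss33^p_\mu$ is equivalent to local almost\ssp-\ssp everywhere vanishing, which holds for arbitrary positive $\mu$ straight from the definition in Constructions~\ref{Ctr |x|_lL^p}. Hence $N\aar 0$ is closed and $F$ is Hausdorff.

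Finally, assume $1\le p$ and $\vPi$ almost suitable with dominating norm $\Nu$. By Lemma~\ref{Le suit dom} every $\Nu\ar 1\in\Bqnorm\vPi$ satisfies $\Nu\ar 1\sn\le\smb M\,\Nu$, so the single family $\{\,V\sbi\Nu:\eps\in\rbb R^+\,\}$ is already a base; and since $1\le p$ makes $\sup\KPt8\{\,1\ssp,\spp 2\KP1^{p^{-1}-\ssp 1\ssp}\big\}=1$, Proposition~\ref{Pro upint} upgrades the quasi\ssp-\ssp triangle inequality to the genuine one, so that with $\Nu$ a norm the map $\rho:x\mapsto\|\KP1\Nu\circss01 x\KP1\|\Lnorss33^p_\mu$ is a seminorm with convex balls and $E$, hence $F$, is locally convex. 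The asserted functional $\seqss43{\inf\sp\big\{\KPt8\|\KP1\Nu\circss01 x\KP1\|\Lnorss33^p_\mu\snn:x\in\smb X\,\}:\smb X\in\vecs F}$ is precisely the quotient seminorm of $\rho$ by $N\aar 0$, so it induces the quotient topology of $F$ and is therefore compatible; it is definite because $\rho\fvalss01 x=0$ gives $|\,u\circss01 x\,|\suba\le\smb M\ar u\KP1(\ssp\Nu\circss01 x\ssp)$ pointwise for each $u$, whence $u\circss01 x=0$ almost everywhere and $x\in N\aar 0$. I expect the reconciliation carried out in this last paragraph, namely that the family\ssp-\ssp indexed topology is in truth generated by one norm and that the quotient seminorm is genuinely definite, to be the main obstacle, since it leans at once on Lemma~\ref{Le suit dom} and on the delicate equivalence between $\|\cdot\|\Lnorss33^p_\mu$\ssp-\ssp nullity and local almost\ssp-\ssp everywhere vanishing for possibly non\ssp-\ssp $\sigma$\ssp-\ssp finite and possibly incomplete $\mu$.
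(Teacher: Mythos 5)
Your proposal follows the same skeleton as the paper's proof: the algebraic preliminaries, the verification of the zero\hbox{-}neighbourhood\hbox{-}base axioms via Proposition \ref{Pro upint}, the descent from $E=X_{\ssp|\,S}$ to $F$ through Lemma \ref{Le qtvs}, and the reduction of the almost suitable case to a single dominating norm through Lemma \ref{Le suit dom}. Your treatment of Hausdorffness is a presentational variant: you prove that $N\aar 0$ is closed in $E$ and invoke the standard fact that a quotient of a topological vector space by a closed subspace is Hausdorff, whereas the paper separates a nonzero class from zero directly, by exhibiting a seminorm of the form $\xi\mapsto|\,u\fvalss01\xi\,|\suba$ whose induced quasi\hbox{-}norm is bounded away from $0$ on the whole class. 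The underlying measure\hbox{-}theoretic computation (scalar almost\hbox{-}everywhere vanishing versus vanishing of the upper\hbox{-}integral quasi\hbox{-}norm) is identical, so these two routes are equivalent and both are sound.

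There is, however, a genuine gap in your final paragraph. The asserted compatible norm assigns to a class $\smb X$ the \emph{infimum} of $\|\,\Nu\circ x\,\|_{L^p(\mu)}$ over the representatives $x\in\smb X$, and your definiteness argument starts from \q{$\rho\ssp(x)=0$ gives $\ldots$}, i.e.\ it presupposes that this infimum is attained. Nothing in the hypotheses guarantees this: $\vPi$ is only assumed almost suitable, so it need not be normable (it may, for instance, be a weak$^*$ dual), and then $\rho$ is genuinely \emph{not} constant on equivalence classes. Indeed, the paper's example following Proposition \ref{pro-mea-equ} (the target space being the weak$^*$ dual of $\ell\KPt8^p$ of the unit interval, with Lebesgue measure) produces a representative of the \emph{zero} class whose $\rho$\hbox{-}value equals $1$, while the representative $0$ has $\rho$\hbox{-}value $0$. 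Consequently \q{infimum $=0$} does not yield a representative with $\rho\ssp(x)=0$, and your argument as written does not exclude a nonzero class whose infimum is $0$ without being attained. The repair is short and uses only what you have already established: since the balls of the quotient functional form a filter base for $\neiBoo F$ (your Lemma \ref{Le suit dom} step) and $\taurd F$ is a Hausdorff topology (your closedness step), any class with infimum $0$ lies in every zero neighbourhood, hence in the closure of $\{\,\Bnull_F\}$, hence equals $\Bnull_F$. This is exactly how the paper derives definiteness — from Hausdorffness, not from attainment of the infimum.
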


\begin{proof} Let \math{\Omega=\bigcup\,\dom\mu} and \math{ X = 
 \sigrd\vPi\expnota^\,\Omega\ssp]_{vs} } and \vskip.5mm\centerline{$
M=
\vecs X\capss01\{\,x : (\ssp x\,;\spp\mu\,,\spp\vPi\ssp) \ssp \text{ is 
 finitely almost S measurable } \} $} \inskipline{.5}0

where S stands for either \q{simply} or \q{scalarly}. Then \math{X} is a 
vector structure over \mathss32{\sigrd\bosy K}, \,and it is a straightforward 
standard exercise (\sp to the reader\sp) to verify that \math{M} is a vector 
subspace in \mathss31{X}. So \math{X_{\ssp|\,M}} is a vector structure over \mathss32{
\sigrd\bosy K}. Now for \math{\Cal V=
 \roman{Leb}\sbi{\sixroman n\fiveroman{bh}}\!\RHB{.3}{^p}\ssp
         (\ssp\mu\,,\spp\vPi\ssp)\ssp\sbi M} and \math{S = \ssp
  \bigcap\ssp\big\{\KP{1.1}[\KP{1.2}\mathbb Z\KP{1.1}V\KP{1.1}]\vvs X\sn:
  V\in\Cal V\KP1\} } we first see that \œ$\ssp S\inc{\ssn}$ \linebreak
                                                            $M\ssp$ holds and 
that \math{S} is a vector subspace in \mathss31{X}. Hence \math{X_{\ssp|\,S}} 
is a vector structure over \mathss32{\sigrd\bosy K}. For the set \math{
\Cal V\leiss42 S} in \math{X_{\ssp|\,S}} one verifies that the properties 
(\sp\erm{NB}\,1\sp) and (\sp\erm{NB}\,2\sp) given in \cite[p.\ 33]{Jr} hold. 
Indeed, for given \math{\Nu\in\Bqnorm\vPi} utilizing the short- \linebreak
                                                                hands \math{
\|\,x\,\|\subnu=\|\KP1\Nu\circss01 x\KP1\|\Lnorss33^p_\mu } and \mathss38{
 \trN03\smb X\trNu2=\inf\sp\big\{\KPt8\|\,x\,\|\subnu\snn:x\in\smb X\,\} 
}, \,in the case \œ$\ssp p\not=0$ \linebreak
                                  from Proposition \ref{Pro upint} on page \pageref{Pro upint} 
above, putting \math{ \smb M = \smb A \sn \cdot 
 \sup\KPt8\{\,1\ssp,\spp 2\KP1^{p^{-1}-\ssp 1\ssp}\big\} } 

$\null\hfill$ where \math{\smb A} is as on line 3 in (\ref{defi bqnor E}) on 
page \pageref{defi bqnor E p} above, we first see \linebreak
                                                  that \math{
\|\KP1(\ssp x + y\ssp)\svs X\,\|\subnu \sp \le \ssp
 \smb M\KPt8\big(\ssp\|\,x\,\|\subnu + \|\,y\,\|\subnu\sp) } holds for all \mathss31{
x\ssp,\sp y\in\vecss X}. This gives (\sp\erm{NB}\,2\sp) and 
(\sp\erm{NB}\,1\sp) follows trivially from the property given on line 2 in 
(\ref{defi bqnor E}) above. In the case \math{p=0} again (\sp\erm{NB}\,1\sp) 
is trivial, and (\sp\erm{NB}\,2\sp) is seen by observing that we have 
\math{1\le\smb A} and hence for all 
\math{x\ssp,\sp y\in S} and \math{\eta\in\Omega} it holds that \inskipline{.5}{19}

$   \rmmd\snn\circ\sp\Nu\circss00(\ssp x + y\ssp)\vvs X\sn\fvalue\eta
\le \rmmd\sn\fvalue(\ssp\smb A\KPt8(\ssp
     \Nu\circss00 x\fvalue\eta + \Nu\circss00 y\fvalue\eta\ssp)) $ \inskipline{.2}{53}

${} \le \smb A\KPt8(\ssp\rmmd\snn\circ\sp\Nu\circss00 x\fvalue\eta + 
                        \rmmd\snn\circ\sp\Nu\circss00 y\fvalue\eta\ssp) $ \inskipline{.5}0

whence further $\,\upint\ssp\rmmd\snn\circ\sp\Nu\circss00
                  (\ssp x + y\ssp)\vvs X\,|\KP1 A\rmdss41\mu$ \inskipline{.4}{25}

${} \le \smb A\,\big(\ssp
 \upint\ssp\rmmd\snn\circ\sp\Nu\circss00 x\KP1|\KP1 A\rmdss41\mu\sp + \sn
 \upint\ssp\rmmd\snn\circ\sp\Nu\circss00 y\KP1|\KP1 A\rmdss41\mu\ssp) \KP1 $. \inskipline{.5}0

Consequently, we see that there is a unique vector topology \math{
\scrmt T\aR 1} for \math{X_{\ssp|\,S}} such that with \math{E =
 (\sp X_{\ssp|\,S}\ssp,\spp\scrmt T\aR 1)} we have \math{\Cal V\leiss42 S} a 
filter base for \mathss35{\neiBoo E}.

Now letting \math{N\aar 0} be as on lines 6--7 in 
Constructions \ref{defi $L^p$}\,(\ref{simpL^p}) or (\ref{ctr mvL_s^p})\,, it 
is a simple matter to verify that \math{N\aar 0} is a vector subspace in \mathss37{
X_{\ssp|\,S}}. So for \math{Y= X_{\ssp|\,S}\,/\vsquotient N\aar 0} and \linebreak
\œ$F\aar 1=E\,/\tvsquotient N\aar 0\ssp$ we have \math{F\aar 1} a topological 
vector space over \math{\bosy K} with \œ$\ssp\sigrd F\aar 1=Y\sppp$. Let- ting \math{
\scrmt V\aar 0} and \math{\scrmt T} be as on lines 5--6 in 
Constructions \ref{defi $L^p$}\,(\ref{preL^p_{MN_0}})\,, we have \math{ F = 
 (\ssp Y\sppp,\spp\scrmt T\,)} and from Lemma \ref{Le qtvs} on 
page \pageref{Le qtvs} above we see that \math{\scrmt V\aar 0} is a filter 
base for \mathss34{\neiBoo F\aar 1}, \,and hence \math{ F = F\aar 1 \in 
 \tvsps0(K) } holds.

To prove that \math{F\in\TVSps0(K)} holds, we need to show that \math{\taurd F} 
is a Hausdorff topology. For this, 
arbitrarily fixing \mathss38{\smb X\in\vecs F\ssp\setminus\{\,\Bnull_F\} 
}, \,in the case \math{p\not=0} it 
suffices to show existence of some \math{\Nu\in\Bqnorm\vPi} such that \math{
\trN03\smb X\trNu2\not=0 } holds.

To proceed, fixing any \mathss30{x\ar 0\in\smb X}, \,there are some 
\math{u\in\Cal L\,(\sp\vPi\sp,\spp\bosy K\ssp) } and 
\math{A\in\mu\invss44\image\spp\lbb R_+ } 

with $\int_{\,A}\ssp u\circss00 x\ar 0\rmdss01\mu\not=0$ and 
hence also 
$\int_{\,A\,}|\KP1 u\circss00 x\ar 0\sn\fvalue\eta\KP1|\rmdss11\mu\not=0$ . 

Consequently for \math{A\sp\ar 1=
A\capss31\{\,\eta:
|\KP1 u\circss00 x\ar 0\sn\fvalue\eta\KP1| \not= 0 \KP1\} } 

and \math{\Nu\sp=\ssp
\big\langle\KP1|\KP1 u\fvalss02\xi\KP1|:\xi\in\vecs\vPi\KP1\rangle } 
now \math{\mu\fvalue\ssn A\sp\ar 1 > 0} and 
\math{\Nu\in\SemiNor\vPi\inc\Bqnorm\vPi } hold.

For every \math{x\in\smb X} and 
\math{B\in\mu\invss44\image\spp\lbb R_+} we have 
$\int_{\KPt8 B\,}u\circss00 x\rmdss11\mu
=\int_{\KPt8 B\,}u\circss00 x\ar 0\rmdss01\mu$

and hence there is some $N
\in\mu\invss44\image\snn\{\ssp 0\ssp\}$ such that 

$u\circss00 x\fvalue\eta
=u\circss00 x\ar 0\sn\fvalue\eta$ and hence also 
$\Nu\circss00 x\fvalue\eta
=\Nu\circss00 x\ar 0\sn\fvalue\eta$ 
holds for all $\eta\in A\sp\ar 1\ssn\setminus N$ . 

In the case 
\math{p\not=0} we hence 

get \math{
0<\|\KP1\Nu\circss01 x\ar 0\,|\KP1 A\sp\ar 1\,\|\Lnorss33^p_\mu
 =\|\KP1\Nu\circss01 x\KP1|\KP1 A\sp\ar 1\,\|\Lnorss33^p_\mu
\le\|\,x\,\|\subnu } . 

Since this holds for arbitrarily 
given \math{x\in\smb X} we 
consequently 

obtain 
\mathss38{0<\|\KP1\Nu\circss01 x\ar 0\,|\KP1 A\sp\ar 1\,\|\Lnorss33^p_\mu
\le\inf\sp\big\{\KPt8\|\,x\,\|\subnu\snn:x\in\smb X\,\}=
\trN03\smb X\trNu2 }. 

In the case \math{p=0} the above deduction gives \vskip.5mm\centerline{$
0 < \int_{\,\aars A_1\sn}\rmmd\circ\spp\Nu\circss00 x\ar 0\rmdss01\mu
  = \int_{\,\aars A_1\sn}\rmmd\circ\spp\Nu\circss00 x\rmdss11\mu
  = \upint\ssp\rmmd\snn\circ\sp\Nu\circss00 x\KP1|\KP1 A\sp\ar 1\rmdss01\mu $} \inskipline{.3}0

for all \math{x\in\smb X} and hence taking \math{ \eps = 
 \int_{\,\aars A_1\sn}\rmmd\circ\spp\Nu\circss00 x\ar 0\rmdss01\mu } and \inskipline{.2}{7.3}

$V\aar 1 = M\capss21\{\,x : 
 \upint\ssp\rmmd\snn\circ\sp\Nu\circss00 x\KP1|\KP1 A\sp\ar 1\rmdss01\mu
  < \eps \KPt8 \} $ \inskipline{.4}0

and \math{V\aar 0 = \vecs F\capss21\{\,\smb X : \smb X\capss02 V\aar 1 
 \not= \emptyset \KP1 \} } we have \mathss36{ \smb X \sn\not\in\sp V\aar 0 
 \in \scrmt V\aar 0 }. \vskip.3mm

Finally assuming that also \math{1\le p} holds and that \math{\vPi} is almost 
suitable, we fix any dominating norm \math{\Nu} for \mathss31{\vPi}. Then by 
Lemma \ref{Le suit dom} on page \pageref{Le suit dom} above, we see that the 
set \math{ \{\,\vecs F\capss21\{\,\smb X\sn:n\KPt8\trN03\smb X\trNu2 < 1\KPt8
 \} : n\in\rbb Z^+\ssp\big\} } a filter base for \mathss31{\neiBoo F}. 
Consequently $\ssp F$ \linebreak
                      is locally convex and normable with a compatible norm as 
asserted. Note that we get the triangle inequality \math{
    \trN04(\ssp\smb X + \smb Y\,)\vvs Y\trNu2
\le \trN03\smb X\trNu2 + \trN03\smb Y\trNu4 } for \math{\smb X\sp,\sp\smb Y\sp
 \in\vecs F } from \vskip.4mm\centerline{$
    \inf\,\{\,\|\,z\,\|\subnu\snn:z\in(\ssp\smb X+\smb Y\,)\vvs Y\,\}
\le \inf\,\{\,\|\,x\,\|\subnu\snn:x\in\smb X\,\} +
    \inf\,\{\,\|\,y\,\|\subnu\snn:y\in\smb Y\KP1\} \KP1 $,} \inskipline{.4}0

and that the implication \math{\trN03\smb X\trNu2=0\impss33 \smb X=\Bnull_F} 
holds for all \math{\smb X\in\vecs F} since we already know that \math{
\taurd F} is a Hausdorff topology.
  \end{proof}

\begin{lemma}\label{Le 0_{L^p}}

Let \œ$\,0 \le p \le\plusinfty$ and let $\,\mu$ be a positive measure. With \œ$\,
\bosy K\in\setRC$ also let \œ$\,\vPi\in\LCSps0(K)$ be normable{\sp\rm, }and 
let \œ$\,F=\mvLrs02^p(\ssp\mu\,,\spp\vPi\ssp)$ and \œ$\,x\in\smb X\in\vecs F$ 
and \œ$\,y\in\bigcup\ssp\vecs F\sp$. Then \œ$\,y\in\smb X$ holds if and only 
if for every $\,A\in\mu\invss44\image\spp\lbb R_+$ there is some $\,
N\in\mu\invss44\image\snn\{\ssp 0\ssp\}$ with $\,
x\KP1|\KP1(\sp A\spp\setminus N\ssp)\inc y \, $.
  \end{lemma}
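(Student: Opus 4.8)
The plan is to reduce the assertion to the defining condition of the subspace $N\aar 0$ appearing in Constructions \ref{defi $L^p$}\,(\ref{simpL^p})\ssp, and then to bridge the gap between the scalar vanishing condition used there and the vector equality $x=y$ demanded in the statement by exploiting separability of the range of a simply measurable function together with Hahn\,--\,Banach. First I would record that $\bigcup\,\vecs F=S$, since the vectors of $F$ are precisely the cosets $z+N\aar 0$ with $z\in S$, and $N\aar 0\inc S$ is a vector subspace; hence both $x$ and $y$ lie in $S$, and $y\in\smb X$ is equivalent to $(\ssp y-x\ssp)\vvs X\in N\aar 0$. Writing $z=(\ssp y-x\ssp)\vvs X$ and unfolding the definition of $N\aar 0$, the membership $z\in N\aar 0$ says exactly that $z\in S$ and that for every $A\in\mu\invss44\image\spp\lbb R_+$ and every $u\in\Cal L\,(\sp\vPi\sp,\spp\bosy K\ssp)$ there is some $N\in\mu\invss44\image\snn\{\ssp 0\ssp\}$ with $u\circ z\ssp\image(\sp A\setminus N\ssp)\inc\{\ssp 0\ssp\}$. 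As $x\ssp,\sp y\in S$ and $S$ is a subspace, $z\in S$ is automatic, so only the scalar almost-everywhere vanishing of $z$ on each finite-measure set is at issue.

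The direction from the right-hand condition to $y\in\smb X$ is immediate. Given $A$ and the null set $N$ with $x\KP1|\KP1(\sp A\setminus N\ssp)\inc y$, for each $\eta\in A\setminus N$ we have $z\fvalue\eta=y\fvalue\eta-x\fvalue\eta=\Bnull_\vPi$, whence $u\circ z\fvalue\eta=0$ for \emph{every} $u\in\Cal L\,(\sp\vPi\sp,\spp\bosy K\ssp)$. Thus the same $N$ witnesses the defining clause of $N\aar 0$ for all $u$ simultaneously, giving $z\in N\aar 0$ and hence $y\in\smb X$.

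For the converse I would fix $A\in\mu\invss44\image\spp\lbb R_+$ and use that $x\ssp,\sp y\in S\inc M$, so that $z$ is finitely almost simply measurable in the sense of Definitional schemata \ref{df meas}. Hence after deleting a null set $N\aR 1$ the restriction of $z$ to $A\setminus N\aR 1$ is a pointwise limit of a simple sequence; the union $D\aar 0$ of the finite ranges of its members is countable, so $\rng(\ssp z\KP1|\KP1(\sp A\setminus N\aR 1))$ lies in the closed linear span $S\aar 0$ of $D\aar 0$, a separable subspace of $\vPi$. Because $\vPi$ is normable, hence locally convex, Hahn\,--\,Banach furnishes a countable family $\{\,u\ai k:k\in\bbNo\,\}\inc\Cal L\,(\sp\vPi\sp,\spp\bosy K\ssp)$ separating the points of $S\aar 0$. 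For each $k$ the hypothesis $z\in N\aar 0$ yields a null set $N\ai k$ with $u\ai k\circ z$ vanishing on $A\setminus N\ai k$, and then $N=N\aR 1\cupss44\bigcup\,\{\,N\ai k:k\in\bbNo\,\}$ is again $\mu$-null by countable subadditivity. For every $\eta\in A\setminus N$ the vector $z\fvalue\eta$ lies in $S\aar 0$ and is annihilated by every $u\ai k$, forcing $z\fvalue\eta=\Bnull_\vPi$, that is $x\fvalue\eta=y\fvalue\eta$; this gives $x\KP1|\KP1(\sp A\setminus N\ssp)\inc y$, as required.

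The one genuinely non-trivial point, and the step I expect to be the main obstacle, is precisely this last amalgamation: the defining clause of $N\aar 0$ supplies, for each functional \emph{separately}, a null set off which $u\circ z$ vanishes, and there are in general uncountably many $u\in\Cal L\,(\sp\vPi\sp,\spp\bosy K\ssp)$. Reducing to countably many functionals via separability of the range of $z$ — which is exactly what simple measurability provides — and only then combining the corresponding null sets is the crux; the remaining verifications, namely $\bigcup\,\vecs F=S$ and the coset description of $\smb X$, are routine bookkeeping with the quotient construction.
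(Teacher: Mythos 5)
Your proposal is correct and takes essentially the same route as the paper's own proof: both exploit simple measurability to confine the relevant values to a separable subspace of $\vPi$, thereby reduce the uncountable family of functionals to a countable one, and then amalgamate the per-functional null sets by countable choice — exactly the crux you identify. The only divergence is in implementation: you build a countable point-separating family on the separable span directly from Hahn--Banach, whereas the paper takes a countable weak-$*$ dense subset of the dual ball of the quotient $\vPi_{\ssp/\ssp S}$ (via Jarchow's Proposition 8.5.3) and finishes with a pointwise approximation argument followed by Hahn--Banach separation; your variant is marginally more economical but not a different method.
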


\begin{proof} The asserted sufficiency being trivial, we only verify 
necessity. So letting \math{y\in\smb X} and \math{A\in
\mu\invss44\image\spp\lbb R_+ } we need to get some \math{N\in
\mu\invss44\image\snn\{\ssp 0\ssp\} } with \mathss34{
x\KP1|\KP1(\sp A\spp\setminus N\ssp)\inc y }. Now we first find some \math{
N\aar 1\in\mu\invss44\image\snn\{\ssp 0\ssp\} } and simple sequences 
\math{\bosy\sigma\aR 1} and \math{\bosy\sigma\aR 2} in 
 \newline
\math{(\ssp\mu\KP1|\KP1\Pows(\sp A\spp\setminus N\aar 1)\,,\spp\vPi\ssp) } 
with \math{
\roman{ev}\sbi\eta\snn\circ\sp\bosy\sigma\aR 1\to x\fvalue\eta } and \math{
\roman{ev}\sbi\eta\snn\circ\sp\bosy\sigma\aR 2\to y\fvalue\eta } in top \mathss34{
\taurd\vPi} for all \math{\eta\in A\spp\setminus N\aar 1}. Then 
letting \math{S} be the linear 
\mathss37{\sigrd\vPi}--\,span of \math{
\bigcup\ssp\rng(\ssp\bosy\sigma\aR 1\snn\cup\ssp\bosy\sigma\aR 2\spp) } we 
have 
\math{\taurd\vPi\leiss22 S } a separable topology, and for \math{B\ar 1} 
the closed unit dual ball corresponding to some fixed compatible norm for 
\math{\vPi} and for \math{\scrmt T=
\taurd((\spp\vPi_{\ssp/\,S}\spp)\dlsigss00\spp)\leiss22 B\ar 1 } hence 
by \cite[Proposition 8.5.3\ssp, p.\ 157]{Jr} we see that 
\math{\scrmt T} is a 
metrizable and separable topology. Let then 
\math{D} be countable and \mathss37{\scrmt T}--\,dense. Now 
by {\sl Hahn\,--\,Banach\sp} for 
every fixed \math{u\in D} and for all \math{
B\in\dom\mu\capss22\Pows(\sp A\spp\setminus N\aar 1) } we have \math{
\int_{\KP1 B\,}u\circss00 x\rmdss11\mu = 
\int_{\KP1 B\,}u\circss00 y\rmdss11\mu } and hence there is some 
\math{N\sprim1\in\mu\invss44\image\snn\{\ssp 0\ssp\} } with \mathss34{
u\circss00 x\KP1|\KP1(\sp A\spp\setminus N\sprim1\sp)\inc 
u\circss00 y }. By {\sl countable choice\sp} taking the union of these 
\math{N\sprim1} for 
\math{u\in D} we obtain  \math{N} with 
\math{N\aar 1\inc N\in\mu\invss44\image\snn\{\ssp 0\ssp\} } and \mathss34{
u\circss00 x\KP1|\KP1(\sp A\spp\setminus N\ssp)\inc 
u\circss00 y } for all \math{u\in D}. Then to get \math{
x\KP1|\KP1(\sp A\spp\setminus N\ssp)\inc y } arbitrarily fixing \math{\eta\in
A\spp\setminus N } and \math{v\in\Cal L\,(\sp\vPi\sp,\spp\bosy K\ssp) } 
by {\sl Hahn\,--\,Banach\sp} it suffices to have 
\mathss34{v\circss00 x\fvalue\eta=v\circss00 y\fvalue\eta}. Now we find some 
\math{\bosy u\in\sp^\sbbNo\,D } with 
\math{\roman{ev}\KPt2\sbi\xi\snn\circ\sp\bosy u \to v\fvalss01\xi } 
for all 
 \newline
\math{\xi\in\{\,x\fvalue\eta\ssp,\sp y\fvalue\eta\KPt8\} } and then we get 
\inskipline{.2}{21}

$ v\circss00 x\fvalue\eta
= \lim\,(\ssp\roman{ev}\sp\sbi{\emath x\ffvalue\eta}\snn\circ\sp\bosy u\ssp)
= \lim\,(\ssp\roman{ev}\sp\sbi{y\ffvalue\eta}\snn\circ\sp\bosy u\ssp)
= v\circss00 y\fvalue\eta \, $.
  \end{proof}

From Lemma \ref{Le 0_{L^p}} we see in particular that in the case where \math{
\mu} is \rsigma5finite, elements \mathss03{x\ssp,\sp y\in\bigcup\ssp\vecs F} 
represent the same vector of \math{F} if and only if they are equal almost 
everywhere in the classical sense. In the case \math{p\not=0} even without 
\rsigma5finiteness we also see that corresponding to any given compatible norm \math{
\Nu} for \math{\vPi} we have the equality \math{
\inf\sp\big\{\KPt8\|\KP1\Nu\circss01 z\KP1\|\Lnorss33^p_\mu \snn :        \label{discus inf N = N}
 z\in\smb X\,\} = \|\KP1\Nu\circss01 x\KP1\|\Lnorss33^p_\mu } for \mathss31{
x\in\smb X\in\vecs F}. \vskip.3mm

On page \pageref{triv L^p exa} above we noted that for \math{ 0 \le p \le 
 \plusinfty } and e.g.\ for \œ$\ssp F=\mvLrs03^p(\ssp\mu\,,\spp\vPi\ssp) $ \linebreak
with \math{\vPi=\LLrs42^{\frac 12}(\ssbb44 I) } we have \math{F} trivial in 
the sense that \math{\vecs F=\{\,\Bnull_F\} } holds. However, in 
Constructions \ref{defi $L^p$}\,(\ref{preL^p_{MN_0}}) taking \math{ M = 
 \bigcup\ssp\vecs F=\Bnull_F} and \vskip.5mm\centerline{$
N\aar 0 = M\capss21\{\,x:\aall{\Nu\in\Bqnorm\vPi}\,
  \upint\ssp \Abrs33^p\circss00\Nu\circss00 x\rmdss41\mu = 0\KP1\} $} \inskipline{.5}0

we generally get a nontrivial space \math{ E = 
\raise1.7mm\hbox{\font\Å=cmssi5\Åpr}\kern-.3mm
        \LLrs03^p(\ssp\mu\,,\spp\vPi\ssp)\sp\sbi{M\ssp\aars N_0} } such that 
e.g.\ for \œ$\ssp p=\frac 12$ \linebreak
                              and \math{\mu=\Lebmef^{}\,|\KP1\Pows\bbI } the 
spaces \math{E} and \math{\LLrs03^p(\ssbb40 I\times\ssbb04 I) } become 
naturally linearly homeomorphic. We leave the proof as an exercise to the 
  reader.

\begin{theorem}\label{Th L_s^p Ba}

Let \œ$\,1\le p\le\plusinfty$ and with \œ$\,\bosy K\in\setRC$ let \œ$\,\vPi\in
 \LCSps0(K)$ be suitable. Let $\,\mu$ be a positive measure such that in the 
case \œ$\,p=\plusinfty$ it holds that $\,\mu$ is almost decomposable. Then $\,
 \mvsLrs02^p(\ssp\mu\,,\spp\vPi\ssp) \in \BaSps0(K) $ holds. If in addition $\,
\vPi$ is \eit Ba- nachable{\sp\rm, }then also $\,
\mvLrs02^p(\ssp\mu\,,\spp\vPi\ssp) \in \BaSps0(K) $ holds.
  \end{theorem}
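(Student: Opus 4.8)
The plan is to reduce the claim to \emph{completeness} and then verify it by a Riesz--Fischer argument. Since every suitable space is almost suitable and we assume $1\le p$, Theorem~\ref{L^p in TVS} already gives $F\in\LCSps0(K)$ normable, with $\smb X\mapsto\trN03\smb X\trNu2=\inf\sp\big\{\KPt8\|\KP1\Nu\circss01 x\KP1\|\Lnorss33^p_\mu:x\in\smb X\,\big\}$ a compatible norm, where $\Nu$ is any dominating norm for $\vPi$ and $F$ denotes either $\mvsLrs02^p(\ssp\mu\,,\spp\vPi\ssp)$ or, when $\vPi$ is Banachable, $\mvLrs02^p(\ssp\mu\,,\spp\vPi\ssp)$. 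By the discussion following Definitions~\ref{df suit}, $\Nu$ is a compatible norm for the unique Banachable space $G$ from which $\vPi$ is obtained by weakening, so $(\ssp\sigrd\vPi\sp,\spp\Nu\ssp)$ is a genuine Banach space and, as $\vPi\tvpreceq G$, every $u\in\Cal L\,(\sp\vPi\sp,\spp\bosy K\ssp)$ is $\Nu$\ssp-continuous. Hence only completeness remains, and I would check the standard criterion that every $\trN03{\cdot}\trNu2$\ssp-absolutely convergent series in $\vecs F$ is summable. Given $\smb X_n\in\vecs F$ with $\sum_n\trN03\smb X_n\trNu2<\plusinfty$, I would fix representatives $x_n\in\smb X_n$ with $\|\KP1\Nu\circss01 x_n\KP1\|\Lnorss33^p_\mu\le\trN03\smb X_n\trNu2+2^{-n}$.

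For $1\le p<\plusinfty$ and $F=\mvsLrs02^p(\ssp\mu\,,\spp\vPi\ssp)$, iterating the triangle inequality of Proposition~\ref{Pro upint} (whose constant is $1$ since $p\ge 1$) and passing to the limit bounds $\|\,h\,\|\Lnorss33^p_\mu$ by $\sum_n\|\KP1\Nu\circss01 x_n\KP1\|\Lnorss33^p_\mu<\plusinfty$, where $h=\seqss33{\sum_n\Nu\circss00 x_n\fvalue\eta:\eta=\eta}$. Thus for every $A\in\mu\invss44\image\sp\lbb R_+$ one has $h\fvalue\eta<\plusinfty$ outside a set in $\mu\invss44\image\snn\{\ssp 0\ssp\}$, and there $\sum_n x_n\fvalue\eta$ converges in the Banach space $(\ssp\sigrd\vPi\sp,\spp\Nu\ssp)$. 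Defining $x\fvalue\eta$ to be this $\Nu$\ssp-sum where $h\fvalue\eta<\plusinfty$ and $\Bnull_\vPi$ otherwise, $\Nu$\ssp-continuity of each $u$ gives $u\circss00 x=\seqss33{\sum_n u\circss00 x_n\fvalue\eta:\eta=\eta}$ finitely almost everywhere, so $(\ssp x\,;\spp\mu\,,\spp\vPi\ssp)$ is finitely almost scalarly measurable and $x$ represents some $\smb X\in\vecs F$. The same estimate applied to the tails yields $\|\KP1\Nu\circss01(\ssp x-\sum_{n<N}x_n\ssp)\KP1\|\Lnorss33^p_\mu\to 0$, i.e.\ $\smb X=\sum_n\smb X_n$ in $F$, proving completeness in this case.

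When $\vPi$ is Banachable and $F=\mvLrs02^p(\ssp\mu\,,\spp\vPi\ssp)$, the same $x$ must instead be shown finitely almost \emph{simply} measurable. Now $\taurd\vPi$ is the metrizable $\Nu$\ssp-topology, the partial sums $\sum_{n<N}x_n$ are finitely almost simply measurable, and off a null set on each finite\ssp-measure $A$ they converge pointwise in $\taurd\vPi$ to $x$. I would diagonalise the simple functions approximating successive partial sums against the metric of $\taurd\vPi$ to extract a single simple sequence converging pointwise to $x$; metrizability is exactly what legitimises this diagonalisation, which is why the Banachable hypothesis is needed here but not for $\mvsLrs02^p(\ssp\mu\,,\spp\vPi\ssp)$.

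The delicate case is $p=\plusinfty$, where $\trN03\smb X\trNu2$ is the essential supremum of $\Nu\circss00 x$ and $\mu$ is assumed almost decomposable. I would take $\scrmt A\,,\sp N\sprim1$ as in Definitions~\ref{df decomp}\,(2): on each $A\in\scrmt A$ the chosen representatives satisfy $\Nu\circss00 x_n\fvalue\eta\le\trN03\smb X_n\trNu2+2^{-n}$ off a null set, so $\sum_n x_n\fvalue\eta$ converges $\Nu$\ssp-uniformly off a null set on $A$ to a bounded limit, and I would glue these local limits, together with $\Bnull_\vPi$ on $N\sprim1$, into a single $x$. The main obstacle is assembling the exceptional sets into one $\mu$\ssp-negligible set: the negligibility clauses built into almost decomposability --- in particular that $N\sprimm1$ is $\mu$\ssp-negligible whenever it meets every $A\in\scrmt A$ in a null set --- are precisely what guarantee that the union of the local null sets stays negligible, so that $(\ssp x\,;\spp\mu\,,\spp\vPi\ssp)$ is finitely almost scalarly (respectively simply, for Banachable $\vPi$) measurable and that the uniform local estimates assemble into $\|\KP1\Nu\circss01(\ssp x-x_n\ssp)\KP1\|\Lnorss33^\plusinftyy_\mu\to 0$. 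Carrying out this verification against the conditions of Definitions~\ref{df decomp}\,(2) is where the real work lies; the remaining normed\ssp-space bookkeeping is then routine and gives $F\in\BaSps0(K)$.
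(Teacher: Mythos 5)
Your proposal follows the paper's own route: Theorem~\ref{L^p in TVS} reduces everything to completeness; completeness is checked by a Riesz--Fischer argument with near-optimal representatives; for $1\le p<\plusinfty$ one produces a single global null set off which the partial sums converge in the Banach space $(\ssp\sigrd\vPi\sp,\spp\Nu\ssp)$ and recovers scalar measurability from the $\Nu$-continuity of the functionals; for $p=\plusinfty$ the per-$A$ exceptional sets ($A\in\scrmt A$) are assembled into one $\mu$-negligible set via the second clause of Definitions~\ref{df decomp}\,(2) --- the paper does exactly this, packaging the measurability gluing into Lemma~\ref{Le deco meas}. One point you gloss over: the ``passing to the limit'' in the iterated triangle inequality for the upper-integral quasi-norm is not a formal consequence of Proposition~\ref{Pro upint}; it is a Fatou/monotone-convergence property of the upper integral, which the paper secures by first choosing dominating fully positive $\mu$-measurable functions $\bosy u$ and only then applying Fatou's lemma to those. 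This is fillable by the same device, so it is expository rather than substantive.

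The one step whose justification would fail as written is the diagonalisation in the Banachable case. There you have simple functions $\sigma_{N,k}$ with $\sigma_{N,k}(\eta)\to y_N(\eta)$ as $k\to\infty$ and partial sums $y_N(\eta)\to x(\eta)$, both only pointwise almost everywhere on a finite-measure set $A$, and you must choose indices $k_N$ \emph{independent of} $\eta$ with $\sigma_{N,k_N}(\eta)\to x(\eta)$ a.e. Metrizability of $\taurd\vPi$ does not legitimise this: the rate of the inner convergence depends on $\eta$, and pointwise a.e. convergence of functions is not governed by any metric, however nice the target is. What actually makes the extraction possible is the \emph{measure}: since $\mu$ restricted to $A$ is finite and (in the Banachable case) $\Nu$ is $\taurd\vPi$-continuous, the functions $\Nu\circ(\ssp\sigma_{N,k}-y_N\ssp)$ are measurable off a null set and tend to $0$ a.e., hence in measure; choosing $k_N$ so that the set where $\Nu\circ(\ssp\sigma_{N,k_N}-y_N\ssp)>2^{-N}$ has measure less than $2^{-N}$ and invoking Borel--Cantelli yields a diagonal converging a.e. on $A$. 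Equivalently one may use Egorov's theorem, or the route the paper itself indicates for this part (which it leaves as an exercise): by \cite[Theorem 4.2.2 and Corollary 4.2.7]{Du}, an a.e.\ pointwise limit of simply measurable, essentially separably valued functions is again measurable and essentially separably valued, hence finitely almost simply measurable. Without one of these measure-theoretic devices, the sentence ``metrizability is exactly what legitimises this diagonalisation'' is not a proof.
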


\begin{proof} We give the proof for \math{\mvsLrs02^p(\ssp\mu\,,\spp\vPi\ssp) } 
and leave it as an exercise to the reader to make the slight modifications 
that are needed to get the assertion related to \math{
\mvLrs02^p(\ssp\mu\,,\spp\vPi\ssp) } that is classical in the cases where \math{
p\not=\plusinfty} holds or \math{\mu} is \rsigma5finite. For hint we only 
mention that \cite[Theorem 4.2.2\ssp, p.\ 95]{Du} and 
\cite[Corollary 4.2.7, p.\ 97]{Du} together can be utilized to deduce that for 
the obtained \math{y} it then holds that \mathss03{
(\KPt5 y\,;\spp\mu\,,\spp\vPi\ssp) } is finitely almost simply measurable.

Now we put \math{\Omega=\bigcup\,\dom\mu} and \math{ X = 
 \sigrd\vPi\expnota^\ssp\Omega\ssp]_{vs} } and \mathss38{ F = 
 \mvsLrs02^p(\ssp\mu\,,\spp\vPi\ssp)}, \,and let \linebreak
                                                 $\Nu\ssp$ be any dominating 
norm for \mathss31{\vPi}. By Theorem \ref{L^p in TVS} only completeness of \math{
F} has to be verified. For this, it suffices to show that 
for any \PouN$\sevib X \in\sp^\sbbNo\,\vecs F$ with 
\PouN$\trN05\sevib X\fvalss32 i\trNu6 < 
 4^{\,\mminus(\sp i\sp\yydot\spp)}$ for all $i\in\bbNo\,$, the sequence 
\math{\sevib Y=
\seqss33{\ssigrd F\text{\,-\sp}\sum_{\,k\ssp\in\ssp i^+\ssp}(\sp
\sevib X\fvalss32 k\ssp):i\in\bbNo} } converges in the 
topology $\taurd F$. In order to get this, we first take some $\bosy x \in
\prodc\sp\sevib X$ with 
$\|\KP1\bosy x\fvalss12 i\KP1\|\subnu < 
 4^{\,\mminus(\sp i\sp\yydot\spp)}$ for all $i\in\bbNo\,$, 
and put $\bosy y=
\seqss33{\ssigrd\vPi\text{\,-\sp}\sum_{\,k\ssp\in\ssp i^+\ssp}(\ssp
\bosy x\fvalss12 k\ssp):i\in\bbNo}\,$.

First considering the case \mathss35{p=\plusinfty}, \,letting \math{\scrmt A} 
and \math{N\sprim1} be as in Definitions \ref{df decomp}\,(2) on page \pageref{decos A} 
above, let \math{\scrmt N\ar 1} be the set of all \math{(\sp A\,,\spp N\aar 1) } 
with \math{A\in\scrmt A} and \œ$\ssp N\aar 1 \in 
 \mu\invss44\image\snn\{\ssp 0\ssp\} $ \linebreak
                                       and such that \math{
\Nu\circ(\ssp\bosy x\fvalss01 i\ssp)\fvalue\eta 
 < 4^{\,\mminus(\sp i\sp\yydot\spp)} } holds for all \math{i\in\bbNo} and \math{
\eta\in A\sp\setminus N\aar 1}. Then we have \mathss32{ \scrmt A \inc 
 \dom\scrmt N\ar 1}, \,and hence by the {\sl axiom of choice\sp} there is a 
function \œ$\ssp\scrmt N\inc\scrmt N\ar 1$ \linebreak
                                           with \mathss35{ \scrmt A \inc 
 \dom\scrmt N}. Now taking \mathss35{N\sprimm1 = 
 N\sprim1\cupss24\bigcup\,\rng\scrmt N }, \,we see that \math{N\sprimm1} is \mathss37{
\mu}--\,negli- gible and such that \math{
 \Nu\circ(\ssp\bosy x\fvalss01 i\ssp)\fvalue\eta
 < 4^{\,\mminus(\sp i\sp\yydot\spp)} } holds for \math{i\in\bbNo} and \mathss32{
\eta\in\Omega\spp\setminus N\sprimm1}.

Letting \math{\vPi\ar 0} be the Banach{\sl able\sp} space determined by the 
norm \math{\Nu} for \mathss30{\sigrd\vPi}, \,from the above we see that for 
every fixed \math{\eta\in\Omega\spp\setminus N\sprimm1 } the sequence \math{
\roman{ev}\sbi\eta\circ\spp\bosy y } converges in the topology \math{
\taurd\vPi\aar 0} and hence also in the weaker topology \mathss30{\taurd\vPi}. 
Taking \vskip.3mm\centerline{$
y = N\sprimm1\sn\times\snn\{\,\Bnull_\vPi\} \cupss22 \{ \,
 (\ssp\eta\ssp,\spp\xi\ssp):\eta\in\Omega\spp\setminus N\sprimm1\sp\text{ and }\ssp
   \roman{ev}\sbi\eta\circ\spp\bosy y\to\xi\ssp\text{ in top }\ssp
    \taurd\vPi\aar 0\,\} \KP1 $,} \inskipline{.3}0

by Lemma \ref{Le deco meas} on page \pageref{Le deco meas} above \math{
(\KPt5 y\,;\sp\mu\,,\spp\vPi\ssp) } is finitely almost scalarly measurable. It 
is also a simple exercise to see that \math{y\in\bigcup\ssp\vecs F} holds, and 
that for the unique class \math{\smb Y} with \math{y\in\smb Y\in\vecs F} we 
indeed have \math{\ebit Y\to\smb Y} in top \mathss30{\taurd F}.

Next, for the case \mathss35{p<\plusinfty}, \,we choose a sequence \math{
\bosy u} of fully positive \mathss37{\mu}--\,mea- surable functions such that 
for all \math{i\in\bbNo} and \math{\eta\in\Omega} we have

$\Nu\circ(\ssp\bosy x\fvalss01 i\ssp)\fvalue\sp\eta\le
\bosy u\fvalss01 i\fvalss10\eta$ and $
\int_{\KP1\Omega}\ssp\aabs99^p\circss01(\ssp
  \bosy u\fvalss01 i\ssp)\rmdss11\mu 
< 4^{\,\mminus(\sp i\sp\yydot\spp)\,p}\,$. Putting

$\roman A\,i=\{\,\eta:
\bosy u\fvalss01 i\fvalss10\eta
\ge 2^{\,\mminus(\sp i\sp\yydot\spp)}\,\big\}$ and 
$\roman B\,i=
\bigcup\,\{\,\roman A\,j:i\inc j\in\bbNo\,\}\,$, for $i\in\bbNo$ we 

have $
\mu\fvalue\roman A\,i\cdot 2^{\,\mminus(\sp i\sp\yydot\spp)\,p}
\le\int_{\KP1\roman A\ssp i}\ssp\aabs99^p\circss01(\ssp
  \bosy u\fvalss01 i\ssp)\rmdss11\mu
\le\int_{\KP1\Omega}\ssp\aabs99^p\circss01(\ssp
  \bosy u\fvalss01 i\ssp)\rmdss11\mu 
< 4^{\,\mminus(\sp i\sp\yydot\spp)\,p}\,$ 

and hence $
\mu\fvalue\roman A\,i < 2^{\,\mminus(\sp i\sp\yydot\spp)\,p}\,$, whence 
further $
\mu\fvalss13\roman B\,i < 2\KP1^{(\sp 1\ssp-\ssp(\sp i\sp\yydot\spp))\,p}\,$, 
and 

consequently for 

$N=\bigcap\,\{\KP1\roman B\,i:i\in\bbNo\,\}\,$, we get $
N\in\mu\invss34\image\snn\{\sp 0\sp\}\,$. For each fixed $\eta\in
\Omega\spp\setminus N$ there 

is $i\ar 0\in\bbNo$ with $\eta\not\in
\roman A\,i$ for all $i\in\bbNo\sn\setminus\spp i\ar 0\,$. Hence for $i\in
\bbNo\sn\setminus\spp i\ar 0\,$, we have 

$\Nu\circ(\ssp\bosy x\fvalss01 i\ssp)\fvalue\sp\eta\le
\bosy u\fvalss01 i\fvalss10\eta < 
2^{\,\mminus(\sp i\sp\yydot\spp)}\sp$, and consequently the sequence $
\,\roman{ev}\sbi\eta\circ\spp\bosy y$ 

converges in the topology $\taurd\vPi\aar 0\,$. It follows that 

there is a function $y:\Omega\to\vecs\vPi$ with $y\fvalue\eta=\Bnull_\vPi$ for $
\eta\in N$ and 

$\roman{ev}\sbi\eta\circ\spp\bosy y\to y\fvalue\eta$ in top 
$\taurd\vPi$ for $\eta\in\Omega\spp\setminus N\sp$. This immediately 
gives that

$(\ssp u\circss01 y\,;\spp\mu\,,\spp\bosy K\ssp)$ is finitely 
measurable for every $u\in\Cal L\,(\spp\vPi\sp,\spp\bosy K\ssp)\,$.

To show that $y\in\bigcup\ssp\vecs F\sp$, we must verify that
$\upint\ssp\aabs99^p\circss01\Nu\circss10 y\rmdss11\mu < \plusinfty$
holds.


For each fixed $\eta\in
\Omega\spp\setminus N$ we have

\vskip1mm

$\Nu\circ\sp y\fvalue\eta
=   \lim\sp\sbi{i\ssp\to\ssp\infty\,}(\ssp\Nu\sp\fvalue(\ssp
    \sigrd\vPi\text{\KP1-}\sum\KP1(\ssp
    \roman{ev}\spp\sbi\eta\snn\circ\sp\bosy x\KP1|\KP1 i\ssp))) $ \inskipline{.7}{15.5}

${}
=   \liminf\sbi{i\ssp\to\ssp\infty\,}(\ssp\Nu\sp\fvalue(\ssp
    \sigrd\vPi\text{\KP1-}\sum\KP1(\ssp
    \roman{ev}\spp\sbi\eta\snn\circ\sp\bosy x\KP1|\KP1 i\ssp)))$ \inskipline{.7}{15.5}

${}
\le \liminf\sbi{i\ssp\to\ssp\infty\,}\sum\KP1(\ssp
    \Nu\circ\sp\roman{ev}\spp\sbi\eta\snn\circ\sp\bosy x\KP1|\KP1 i\ssp)
\le \liminf\sbi{i\ssp\to\ssp\infty\,}\sum\KP1(\ssp
               \roman{ev}\spp\sbi\eta\snn\circ\sp\bosy u\KP1|\KP1 i\ssp)$

\vskip1mm
\noin
and hence by Fatou's lemma we get \vskip1mm

$  \upint\ssp\aabs99^p\circss01\Nu\circss10 y\rmdss11\mu
\le
\int_{\KP{1.1}\Omega\,}(\,
\liminf\sbi{i\ssp\to\ssp\infty\,}\sum\KP1(\ssp
               \roman{ev}\spp\sbi\eta\snn\circ\sp\bosy u\KP1|\KP1 i\ssp)
\sbig)3\RHB{.3}{^{\,p}}\rmdss01\mu\,(\sp\eta\sp)$ \inskipline{.7}{30.2}

${}=\int_{\KP{1.1}\Omega\,}
\liminf\sbi{i\ssp\to\ssp\infty\,}\big(\sp\sum\KP1(\ssp
               \roman{ev}\spp\sbi\eta\snn\circ\sp\bosy u\KP1|\KP1 i\ssp)
\sbig)3\RHB{.3}{^{\,p}}\rmdss01\mu\,(\sp\eta\sp)$ \inskipline{.7}{30.2}

${}\le\liminf\sbi{i\ssp\to\ssp\infty}\int_{\KP{1.1}\Omega\sp}
\big(\sp\sum\KP1(\ssp
               \roman{ev}\spp\sbi\eta\snn\circ\sp\bosy u\KP1|\KP1 i\ssp)
\sbig)3\RHB{.3}{^{\,p}}\rmdss01\mu\,(\sp\eta\sp)$ \inskipline{.7}{30.2}

${}\le
\liminf\sbi{i\ssp\to\ssp\infty\,}\big(\sp
      \sum_{\,k\ssp\in\ssp i\sp}
\big(\sp\int_{\KP{1.1}\Omega\,}
(\ssp\bosy u\fvalss01 k\fvalss10\eta\ssp)\RHB{.3}{\KP1^p}
\rmdss01\mu\,(\sp\eta\sp))\KP1^{p^{-1}}\big)
\RHB{.7}{\,^p}$ \inskipline{.7}{30.2}

${} \le \liminf\sbi{i\ssp\to\ssp\infty\,}\big(\sp
      \sum_{\,k\ssp\in\ssp i\,}4^{\,\mminus(\sp k\spp\yydot)}\big)
       \RHB{.7}{\,^p}
    = \big(\sp\frac43\sp\big)\RHB{.7}{\,^p}<\plusinfty$ . \inskipline10

So we have $y\in\bigcup\ssp\vecs F\sp$, and hence there is 
$\smb Y$ with $y\in\smb Y\in\vecs F$. It remains to show that 
$\sevib Y\to\smb Y$ in top $\taurd F$. For this, similarly as above, we 
compute \vskip1mm

$   \upint\ssp\aabs99^p\circss01\Nu\circss10 
    (\ssp\bosy y\fvalss01 i - y\ssp)\vvs X\rmdss11\mu
\le \int_{\KP1\Omega}\,(\,\liminf_{\,j\ssp\to\ssp\infty}\sum_{\KPt8 
     k\ssp\in\ssp j\ssp\setminus\ssp i\sp^+\,}\bosy u\fvalss01 k\fvalss10\eta
      \,)\RHB{.3}{\KPt8^p}\rmdss01\mu\,(\sp\eta\sp) $ \inskipline1{30}

${}=\int_{\KP1\Omega}\ssp\liminf_{\,j\ssp\to\ssp\infty}\big(\sum_{\KPt8 
   k\ssp\in\ssp j\ssp\setminus\ssp i\sp^+\,}\bosy u\fvalss01 k\fvalss10\eta\,)
    \RHB{.3}{\KPt8^p}\rmdss01\mu\,(\sp\eta\sp)$ \inskipline1{15}

${}\le\liminf_{\,j\ssp\to\ssp\infty}\int_{\KP1\Omega}\ssp\big(\sum_{\KPt8 
   k\ssp\in\ssp j\ssp\setminus\ssp i\sp^+\,}
 \bosy u\fvalss01 k\fvalss10\eta\,)\RHB{.3}{\KPt8^p}\rmdss01\mu\,(\sp\eta\sp)$ \inskipline1{15}

${}\le\liminf_{\,j\ssp\to\ssp\infty}\big(\sum_{\KPt8 
   k\ssp\in\ssp j\ssp\setminus\ssp i\sp^+}\big(\int_{\KP1\Omega}\ssp(\ssp
 \bosy u\fvalss01 k\fvalss10\eta\,)\RHB{.3}{\KPt8^p}\rmdss01\mu\,(\sp\eta\sp)
 )\RHB{.3}{\KPt8^{p^{-1}}}\sp\big)^{\,p}$ \inskipline1{15}

${}\le\liminf_{\,j\ssp\to\ssp\infty}\big(\sum_{\KPt8 
   k\ssp\in\ssp j\ssp\setminus\ssp i\sp^+\,} 
    4^{\,\mminus(\sp k\yydot)}\sp\big)\RHB{.7}{\,^p}
= \lim_{\,j\ssp\to\ssp\infty}\big(\sum_{\KPt8 
   k\ssp\in\ssp j\ssp\setminus\ssp i\sp^+\,} 
    4^{\,\mminus(\sp k\yydot)}\sp\big)\RHB{.7}{\,^p}$ \inskipline1{15}

${}=(\ssp 3^{\,\mminus 1}\,4^{\,\mminus(\sp i\yydot)}\sp\big)\RHB{.7}{\,^p}
\to 0 \,$ as $\, i\to\infty \KPt7 $, \,whence the assertion.
  \end{proof}

\begin{corollary}\label{Cor L^p Ban} 

Let \œ$\,1\le p\le\plusinfty$ and \œ$\,\vPi\in\BaSps0(K)$ with \œ$\,\bosy K\in
 \setRC\KPt7$. Let $\,\mu$ be a positive measure such that in the case \œ$\,p=
 \plusinfty$ it holds that $\,\mu$ is decomposable. Also let \œ$\, F = 
 \mvLrs02^p(\ssp\mu\,,\spp\vPi\ssp) ${\KP1\rm, }or let \œ$\, F = 
 \mvLrs02^p(\ssp\mu\,,\spp\vPi\dlsigss00\spp)$ with $\,\vPi$ reflexive or $\,
\taurd\vPi$ a separable topology. Then \œ$\,F\in\BaSps0(K)$ holds. Furthermore \œ$\,
\mvLrs03^p(\ssp\mu\,,\spp\vPi\dlsigss00\spp) =
 \mvLrs03^p(\ssp\mu\,,\spp\vPi\dlbetss01\sp)$ holds when the space $\,\vPi$ is 
  reflexive.
  \end{corollary}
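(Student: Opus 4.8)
The plan is to reduce every assertion to Theorem \ref{Th L_s^p Ba}, exploiting that the constructions $\mvLrs02^p(\ssp\mu\,,\spp\vPi\ssp)$ and $\mvsLrs02^p(\ssp\mu\,,\spp\vPi\ssp)$ in Constructions \ref{defi $L^p$}\,(\ref{simpL^p}) and (\ref{ctr mvL_s^p}) differ \emph{only} in the measurability condition defining the set $M$ (finitely almost simply measurable versus finitely almost scalarly measurable); the vector structure $X$, the neighbourhood data $\Cal V$, the subspaces $S$ and $N\aar 0$, and the final quotient are computed identically from the same target once $M$ is fixed. First I would record two elementary reductions: a Banachable $\vPi$ is suitable, by taking $F=\vPi$ itself in Definitions \ref{df suit}\,(3)\,, and a decomposable $\mu$ is almost decomposable by the implication $(6)\impss11(7)$ of Proposition \ref{Propo top-deco}\,. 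With these, $\mvLrs02^p(\ssp\mu\,,\spp\vPi\ssp)\in\BaSps0(K)$ for Banachable $\vPi$ is exactly Theorem \ref{Th L_s^p Ba}\,.

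For the weak$^*$ dual target I would observe that $\vPi\dlsigss00$ is suitable, being obtained from the Banachable strong dual $\vPi\dlbetss01$ by weakening the topology without enlarging the bornology, since weak$^*$ bounded and norm bounded sets coincide in the dual of a Banachable space; thus Theorem \ref{Th L_s^p Ba} already gives $\mvsLrs02^p(\ssp\mu\,,\spp\vPi\dlsigss00\spp)\in\BaSps0(K)$\,. It then suffices, in the separable case, to show that $\mvLrs02^p(\ssp\mu\,,\spp\vPi\dlsigss00\spp)$ coincides with this space, i.e.\ that the two $M$ sets agree, which amounts to proving that for the target $\vPi\dlsigss00$ finitely almost scalar measurability implies finitely almost simple measurability. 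Here I would use that the continuous dual of $\vPi\dlsigss00$ is precisely $\{\,\roman{ev}\sbi\xi:\xi\in\vecs\vPi\,\}$\,, so that scalar measurability of $\tilde x=(\ssp x\,;\spp\mu\,,\spp\vPi\dlsigss00\spp)$ in the sense of Definition \ref{df meas}\,(3) is literally condition (1) of Proposition \ref{pro-mea-equ}\,. Restricting to any $A\in\mu\invss44\image\spp\lbb R_+$\,, on which $\mu$ is finite and hence \rsigma6finite, and discarding the relevant null sets, Proposition \ref{pro-mea-equ} yields $(1)\equivss11(2)$\,; thus $\text{\efss R\KPt8}\tilde x\,A$ is almost simply measurable for every such $A$\,, which is finitely almost simple measurability and settles the separable case.

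For the reflexive case I would instead prove the final equality $\mvLrs03^p(\ssp\mu\,,\spp\vPi\dlsigss00\spp)=\mvLrs03^p(\ssp\mu\,,\spp\vPi\dlbetss01\sp)$ and then invoke the already established Banachable case applied to $\vPi\dlbetss01$\,. Reflexivity of $\vPi$ makes $\vPi\dlbetss01$ a reflexive Banachable space whose weakening in the sense of Definitions \ref{misc defs}\,(29) is exactly $\vPi\dlsigss00$\,, and it makes the two continuous duals agree, $\Cal L\,(\sp\vPi\dlsigss00\ssp,\spp\bosy K\ssp)=\{\,\roman{ev}\sbi\xi:\xi\in\vecs\vPi\,\}=\Cal L\,(\sp\vPi\dlbetss01\ssp,\spp\bosy K\ssp)$\,; consequently $N\aar 0$ and the sets $\Bqnorm{\vPi\dlsigss00}$ and $\Bqnorm{\vPi\dlbetss01}$ of bounded quasi\ssp-\ssp seminorms coincide for the two targets. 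Simple functions and their pointwise limits transfer from the stronger to the weaker topology trivially, while the reverse transfer of finitely almost simple measurability is provided by Proposition \ref{Pro rfx si mea} applied to $\vPi\dlbetss01$ on each finite\ssp-\sp measure restriction. Hence the two $M$ sets coincide, the whole constructions agree, and $\mvLrs03^p(\ssp\mu\,,\spp\vPi\dlsigss00\spp)=\mvLrs03^p(\ssp\mu\,,\spp\vPi\dlbetss01\sp)\in\BaSps0(K)$\,.

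The step I expect to be the main obstacle is the scalar\ssp-\sp to\ssp-\sp simple measurability upgrade for the weak$^*$ dual under mere separability of $\taurd\vPi$\,: one must localize carefully to finite\ssp-\sp measure sets in order to apply Proposition \ref{pro-mea-equ}\,, which is stated only for \rsigma6finite $\mu$\,, and then reassemble the almost\ssp-\sp everywhere exceptional sets; and one must pin down the identification of the continuous dual of $\vPi\dlsigss00$ with the evaluations, so that Definition \ref{df meas}\,(3) matches condition (1) there and the two $N\aar 0$ coincide. The remaining points\,—\,that a Banachable space and a weak$^*$ dual are suitable, and that equality of $M$ forces equality of the whole space\,—\,are routine unwindings of Constructions \ref{defi $L^p$}\,.
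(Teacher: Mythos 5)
Your proposal is correct and follows essentially the same route as the paper's own proof: Theorem \ref{Th L_s^p Ba} (together with the reduction decomposable $\Rightarrow$ almost decomposable and suitability of $\vPi\dlsigss00$) handles the Banachable and suitable targets, Proposition \ref{pro-mea-equ} identifies the simply and scalarly measurable versions in the separable case, and Proposition \ref{Pro rfx si mea} transfers simple measurability from $\vPi\dlsigss00$ to $\vPi\dlbetss01$ in the reflexive case, after which the Banachable case applies. The extra care you take in checking that the bornologies, bounded quasi\ssp-\ssp seminorms and the subspace $N\aar 0$ agree for the two targets is a detail the paper leaves implicit, but it is not a different argument.
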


\begin{proof} The first alternative is immediate. For the second in the 
separable case we note that by Proposition \ref{pro-mea-equ} on page \pageref{pro-mea-equ} 
for \math{\vPi\aar 1=\vPi\dualsigma0} we have \mathss37{ F = 
 \mvsLrs02^p(\ssp\mu\,,\spp\vPi\aar 1\spp) }. Since the conditions of Theorem \ref{Th L_s^p Ba} 
for \math{\vPi\aar 1} in place of \math{\vPi} hold true, consequently the 
assertion follows.

For the reflexive case putting \mathss37{ F\aar 0 = 
 \LLrs02^p(\ssp\mu\,,\spp\vPi\dlbetss01\sp) }, \,it suffices to verify that \math{
F=F\aar 0} holds, and this in turn follows if \math{\vecs F=\vecs F\aar 0} can 
be established. Trivially every \math{\smb Y\in\vecs F\aar 0} is contained in 
some \mathss30{\smb Y\aR 1\in\vecs F}. For the converse, letting \mathss30{ y 
\in \bigcup\ssp\vecs F} and \mathss30{A\in\mu\invss44\image\spp\rbb R^+
}, \,there is \math{N\in\mu\invss44\image\snn\{\ssp 0\ssp\} } such that \math{
(\ssp y\KP1|\KP1 B\,;\sp\mu\KP1|\KP1\Pows B\ssp,\spp\vPi\dlsigss00\spp) } is 
simply measurable for \mathss30{B=A\spp\setminus N}. Hence by Proposition \ref{Pro rfx si mea} 
on page \pageref{Pro rfx si mea} above also \mathss03{
(\ssp y\KP1|\KP1 B\,;\sp\mu\KP1|\KP1\Pows B\ssp,\spp\vPi\dlbetss01\sp) } is 
simply measurable, and so \math{
(\ssp y\KP1|\KP1 A\KPt8;\sp\mu\KP1|\KP1\Pows A\,,\spp\vPi\dlbetss01\sp) } is 
almost simply measurable. Having here \math{A} arbitrary, consequently \math{
y\in\bigcup\ssp\vecs F\aar 0} holds, and we are done.
  \end{proof}

Note that by Banach\,--\,Steinhaus for the second alternative in \label{non Ban but Bai} 
Corollary \ref{Cor L^p Ban} we could weaken the assumption that \math{
\vPi\in\BaSps0(K)} hold to requiring \œ$\ssp\vPi\in\LCSps0(K)$ \linebreak 
                                                               with \math{\vPi} 
normable and barrelled. For an example of an incomplete normable barrelled 
space, see e.g.\ \cite[5.7.\sp\erm B\ssp, p.\ 97]{Jr}\,. \vskip3mm

Since in \cite[10.7, p.\ 214]{Jr} the term {\sl quasi\ssp-\sp normable\sp} is 
reserved for a different meaning, for Proposition \ref{Pro simp Lp dense} 
below we here agree to say that a real or complex topological vector space \math{
E} is {\it pseudonormable\ssp} if{}f there is some \math{\Nu\in\Bqnorm E} with \mathss03{
\{\KPt8\Nu\invss44\image\sp[\KPp1.1 0\,,\spp n^{\,\mminus 1}\sp{\big[\sp} : 
 n\in\rbb Z^+\sp\big\} } a filter base for \mathss34{\neiBoo E}. 

Now the Hausdorff quotients of pseudonormable spaces correspond to the locally 
bounded spaces in the following sense. If \math{E} is pseudonormable, then for \linebreak \mathss03{
F=E\,/\tvsquotient\sn\bigcap\KPt8\neiBoo E} we have that \math{F} is locally 
bounded, and from \cite[Theorem 6.8.3\ssp, p.\ 114]{Jr} it follows existence 
of \math{r\sp,\sp\Nu\aR 0} with \math{0 < r \le 1} and \math{ \Nu\aR 0 \in 
 \Cal S\sbi{\sp\emath r\,}F } with \linebreak
                          \mathss03{
\Nu\aR 0\sn\inve\ssp\image\snn\{\ssp 0\ssp\}\inc\{\,\Bnull_F\} } and \math{
\{\KPt8\Nu\aR 0\sn\inve\ssp\image\sp[\KPp1.1 0\,,\spp n^{\,\mminus 1}\sp{\big[\sp} 
 : n\in\rbb Z^+\sp\big\} } a filter base for \mathss30{\neiBoo F}. Now with \math{
\tweq=\vecs E\times\vecs F\capss21\{\,(\ssp x\ssp,\spp\smb X\sp) : 
 x\in\smb X\,\} } taking \math{\Nu=\Nu\aR 0\circ\sp\tweq} we see that \mathss30{
\Nu\in\Cal S\sbi{\sp\emath r\,}E } \linebreak
                                   with also \math{
\{\KPt8\Nu\invss44\image\sp[\KPp1.1 0\,,\spp n^{\,\mminus 1}\sp{\big[\sp} : 
 n\in\rbb Z^+\sp\big\} } a filter base for \mathss34{\neiBoo E}. Thus the 
{\sl zero neighbourhoods of a pseudonormable space \math{E} are given by a 
single continuous \mathss35{r}--\,seminorm\sp} which is an \mathss35{r
}--\,norm if \math{\taurd E} is a Hausdorff topogy. In particular, the 
Hausdorff pseudonormable spaces are precisely the locally bounded ones.

\begin{proposition}\label{Pro simp Lp dense}

Let $\,p\in\rbb R^+$ and let $\,\mu$ be a positive measure{\sp\rm, }and with 
$\,\bosy K\in{}$ 
\newline
$\setRC$ let $\,\vPi\in\tvsps0(K)$ be pseudonormable. Also let 
$\,F=\mvLrs03^p(\ssp\mu\,,\spp\vPi\ssp)$ and 
\newline
$\,D=
{\ssn}$ $ \vecs F\capss21\{\,\smb X:
\eexi{x\in\smb X}\,x\ssp\text{ is simple in }\ssp
(\ssp\mu\,,\spp\vPi\ssp)\KPt8\} \KPt8 $. Then $\,D$ is $\,\taurd F\,
$--\,dense.
  \end{proposition}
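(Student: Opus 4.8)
The plan is to reduce the whole neighbourhood filter of $F$ to a single continuous $r$\ssp-\sp seminorm and then to approximate a fixed representative by one suitably chosen simple function, using a finite\ssp-\sp measure truncation together with Egorov's theorem to control the simple function off a set of uniform convergence.

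First I would invoke pseudonormability: by the discussion preceding the proposition there are $r$ and $\Nu$ with $0 < r \le 1$ and $\Nu\in\Cal S\sbi{\sp\emath r\,}\vPi$ such that $\{\KPt8\Nu\invss44\image\sp[\KP{1.1} 0\,,\spp n^{\,\mminus 1}\sp{\big[\sp}:n\in\rbb Z^+\sp\big\}$ is a filter base for $\neiBoo\vPi$. Since these balls are $\taurd\vPi$\ssp-\sp bounded, from boundedness of any $\Nu\aR 1\in\Bqnorm\vPi$ on $\Nu\invss44\image\sp[\KP{1.1} 0\,,\spp 1\sp{\big[\sp}$ together with homogeneity one gets $\Nu\aR 1\le\lambda\,\Nu$ for some $\lambda\in\rbb R^+$, whence the images in $F$ of the sets $M\capss21\{\,x:\upint\ssp\Abrs33^p\circss00\Nu\circss00 x\rmdss41\mu<\delta\,\}$, for $\delta\in\rbb R^+$, form a filter base for $\neiBoo F$. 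Thus it suffices, given $\smb X\in\vecs F$ and $\eps\in\rbb R^+$, to produce some $\smb S\in D$ and $z\in\smb X-\smb S$ with $\upint\ssp\Abrs33^p\circss00\Nu\circss00 z\rmdss41\mu<\eps$.

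Next, fixing $x\in\smb X$, from $\smb X\in\vecs F$ I have $\upint\ssp\Abrs33^p\circss00\Nu\circss00 x\rmdss41\mu<\plusinfty$, so there is a positive $\mu$\ssp-\sp measurable $\varphi\ge\Abrs33^p\circss00\Nu\circss00 x$ with $\int_{\KPp1.1\Omega}\varphi\rmdss11\mu<\plusinfty$. I would pick $c\in\rbb R^+$ with $\int_{\,\{\ssp\eta:\varphi\fvalue\eta<c\}}\varphi\rmdss11\mu<\frac13\eps$ and set $A=\{\,\eta:\varphi\fvalue\eta\ge c\,\}$, a measurable set with $\mu\fvalue A\le c\,^{\mminus 1}\int_{\KPp1.1\Omega}\varphi\rmdss11\mu<\plusinfty$ and with $\upint\ssp\Abrs33^p\circss00\Nu\circss00 x\KP1|\KP1(\ssp\Omega\setminus A\ssp)\rmdss41\mu\le\int_{\,\{\ssp\eta:\varphi\fvalue\eta<c\}}\varphi\rmdss11\mu<\frac13\eps$. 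On $A$ the restriction $(\ssp x\KP1|\KP1 A\,;\sp\mu\KP1|\KP1\Pows A\,,\spp\vPi\ssp)$ is almost simply measurable by Definitional schemata \ref{df meas}, so there are $N\in\mu\invss44\image\snn\{\ssp 0\ssp\}$ and a simple sequence converging pointwise on $A\setminus N$ to $x$; as $\Nu$ is continuous, Egorov's theorem (applicable since $\mu\KP1|\KP1\Pows A$ is bounded) furnishes a measurable $B\inc A\setminus N$ carrying uniform convergence and with $\mu\fvalue(\ssp A\setminus B\ssp)$ so small that $\int_{\,A\setminus B}\Abrs33^p\circss00\Nu\circss00 x\rmdss41\mu<\frac13\eps$, the latter by absolute continuity of the finite integral $\int_{\,A}\Abrs33^p\circss00\Nu\circss00 x\rmdss11\mu$. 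Uniformity together with $\mu\fvalue B\le\mu\fvalue A<\plusinfty$ then let me select one simple $\sigma$ from the sequence with $\int_{\,B}\Abrs33^p\circss00\Nu\circss00(\ssp x-\sigma\ssp)\rmdss41\mu<\frac13\eps$, and I put $\tau=(\ssp\Omega\setminus B\ssp)\times\{\,\Bnull_\vPi\}\cupss22\sigma\KP1|\KP1 B$, which is simple in $(\ssp\mu\,,\spp\vPi\ssp)$ because $B\inc A$ has finite measure.

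Finally, splitting $\Omega=B\cupss02(\ssp A\setminus B\ssp)\cupss02(\ssp\Omega\setminus A\ssp)$ and using subadditivity of the upper integral (cf.\ Proposition \ref{Pro upint}), I would estimate $\upint\ssp\Abrs33^p\circss00\Nu\circss00(\ssp x-\tau\ssp)\rmdss41\mu<\eps$: on $\Omega\setminus A$ and on $A\setminus B$ one has $\tau=\Bnull_\vPi$, so the integrand equals $\Abrs33^p\circss00\Nu\circss00 x$ and each piece contributes less than $\frac13\eps$, while on $B$ it equals $\Abrs33^p\circss00\Nu\circss00(\ssp x-\sigma\ssp)$ and so contributes less than $\frac13\eps$ as well. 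Since $\tau$ is simple it lies in $\bigcup\ssp\vecs F$, its class $\smb S$ belongs to $D$, and $x-\tau\in\smb X-\smb S$ yields $\trN03(\ssp\smb X-\smb S\ssp)\trNu2\le\|\KP1\Nu\circss01(\ssp x-\tau\ssp)\KP1\|\Lnorss33^p_\mu<\eps^{\,p^{-1}}$; as $\eps$ is arbitrary, $\smb X$ lies in the $\taurd F$\ssp-\sp closure of $D$. The main obstacle is precisely the control of the approximating simple function on the set where convergence is merely pointwise: this is what forces the preliminary reduction to a finite\ssp-\sp measure carrier $A$ and the use of Egorov's theorem plus absolute continuity, a naive dominated\ssp-\sp convergence argument being unavailable since $\Nu\circss00(\ssp x-\sigma\ssp)$ need not admit an integrable dominator.
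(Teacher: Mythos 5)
Your proof is correct, but its key mechanism is genuinely different from the paper's. The paper also reduces everything to the single continuous $r$\ssp-\sp seminorm $\Nu$ and a dominating integrable $\Alf$ with $\Abrs33^p\circ\Nu\circ x\le\Alf$, but instead of truncating the \emph{domain} it truncates the \emph{values} of the approximants: it slices $\Omega$ into the dyadic level sets $\{\,\eta:2\KP1^{n-1}\le\Alf\fvalue\eta<2\KP1^{n}\ssp\}$ for $n\in\mathbb Z$, each of finite measure, takes on each slice a simple sequence converging pointwise to $x$ off a null set, replaces by $\Bnull_\vPi$ every value $\xi$ of an approximant for which $(\,\Nu\fvalue\xi\,)^p\ge 2\KP1^{n}$ --- this preserves the pointwise convergence (by continuity of $\Nu$) and forces $\Abrs33^p\circ\Nu\circ\sigma\le 2\,\Alf$ --- then concatenates the slices into one global simple sequence and concludes by the dominated convergence theorem, the integrands $(\Nu\circ(\ssp\sigma_i-x\ssp)\svs\vPi)^p$ now being dominated by a fixed multiple of $\Alf$. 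So your closing remark is accurate only for the raw approximating sequence: the paper makes dominated convergence available precisely by that value\ssp-\sp truncation, whereas you make uniform convergence available by Egorov. What the paper's route buys is a whole sequence of simple classes converging to $\smb X$ in $\taurd F$, using only tools it has formally set up (Egorov's theorem is nowhere stated in the paper, which matters in its highly formalized framework); what yours buys is economy --- one finite\ssp-\sp measure carrier $A$, Egorov, and absolute continuity of a finite integral yield a single $\eps$\ssp-\sp approximant without the $\mathbb Z$\ssp-\sp indexed bookkeeping. Two points you should tighten. First, Egorov must be applied to the real functions $\Nu\circ(\ssp x-\sigma_i\ssp)\svs\vPi$ on $A\setminus N$, and their measurability needs a word: they are pointwise limits of the simple real functions $\Nu\circ(\ssp\sigma_j-\sigma_i\ssp)\svs\vPi$, $\Nu$ being continuous. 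Second, the subadditivity of the upper integral over the disjoint pieces $B$, $A\setminus B$, $\Omega\setminus A$ follows directly from the definition of the upper integral by gluing measurable dominators on the three measurable pieces; Proposition \ref{Pro upint} is the quasi\ssp-\sp norm triangle inequality for $\|\cdot\|\Lnorss33^p_\mu$ and is not quite the statement you need there.
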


\begin{proof} Put \math{\Omega=\sp\bigcup\ssp\dom\mu} and let \mathss31{x\in
 \smb X\in\vecs F}. Let \math{\Nu\in\Cal S\sbi{\sp\emath r\,}\vPi} be such 
that \math{
\{\KPt8\Nu\invss44\image\sp[\KPp1.1 0\,,\spp n^{\,\mminus 1}\sp{\big[\sp} : 
 n\in\rbb Z^+\sp\big\} } is a filter base for \mathss31{\neiBoo\vPi}. Then 
we take 
some fully positive 
\mathss37{\mu}--\,measurable \math{\Alf} with \math{
\Abrs33^p\snn\circ\KPt2\Nu\circss00 x\le\Alf} 
and \mathss36{       
\int_{\KPp1.1\Omega\,}
\Alf\rmdss11\mu < 
 \plusinfty}. Putting 
\newline
\math{\roman A\,n=
\Alf\invss44\image\spp[\KP1 2\KP1^{\emath n\ssp - \ssp 1}\spp,\spp 
 2\KPt9^{\emath n}\ssp{[\sp} } we have \math{\{\,\roman A\,n:n\in\ssbb03 Z\,\}
\inc
\mu\invss44\image\spp\lbb R_+}, \,and by 
{\sl countable choice\sp} we find \math{
N\in\mu\invss44\image\snn\{\ssp 0\ssp\} } and 
\math{
\bmii8 S\in\sp^{\ssbb05 Z}\big(\KPt6^\sbbNo\,\Univ\ssp) } such that for 
every \math{n\in\mathbb Z} we have \math{\bmii8 S\fvalss20 n} a 
simple sequence in 
\math{(\ssp\mu\KP1|\KP1\Pows\roman A\,n\ssp,\spp\vPi\ssp) } with 
\math{\roman{ev}\sbi\eta\snn\circ\spp(\sp\bmii8 S\fvalss20 n\ssp)
\to x\fvalue\eta } in top \math{\taurd\vPi} 
for all \mathss30{\eta\in\roman A\,n\setminus N}. Then let 
\math{\bmii8 S\ar 1\in\sp^{\ssbb05 Z}\big(\KPt6^\sbbNo\,\Univ\ssp) } be 
the unique one such that for all 
\math{n\in\mathbb Z} and \math{i\in\bbNo} and \math{
\sigma=\bmii8 S\ar 1\ssn\fvalue n\fvalss01 i} we have \math{
\sigma\in\sp^{\roman A\,\emath n}\,\vecs\vPi} and such that 
for all \math{\eta\in
\roman A\,n} and \math{\xi\ar 1=
\sigma\fvalue\eta } and \math{\xi=
\bmii8 S\fvalss20 n\fvalss01 i\fvalss10\eta } we have 
\math{\xi\ar 1=\xi} if \math{\Abrs33^p\snn\circ\KPt2\Nu\fvalss11\xi < 
2\KPt9^{\emath n} } holds, otherwise having \mathss32{
\xi\ar 1=\Bnull_\vPi}. Then for 
every \math{n\in\mathbb Z} we have \math{\bmii8 S\ar 1\ssn\fvalue n} a 
simple sequence in 
\math{(\ssp\mu\KP1|\KP1\Pows\roman A\,n\ssp,\spp\vPi\ssp) } with 
\math{\roman{ev}\sbi\eta\snn\circ\spp(\sp\bmii8 S\ar 1\ssn\fvalue n\ssp)
\to x\fvalue\eta } in top \math{\taurd\vPi} for 
all \mathss30{\eta\in\roman A\,n\setminus N}, \,and in addition \math{
\Abrs33^p\snn\circ\KPt2\Nu\circss00(\sp
 \bmii8 S\ar 1\ssn\fvalue n\fvalss01 i\ssp)
\le 2\KP1\Alf } holds for all \math{n\in\mathbb Z} and \mathss36{i\in\bbNo}.

Letting \math{\roman E\,\sigma=
(\ssp\Omega\spp\setminus\sn\dom\sigma\ssp)\times\snn\{\,\Bnull_\vPi\}\cupss22
 \sigma } we now take 
 \newline
\math{\bosy\sigma=
\seqss40{\roman E\KPt8\bigcup\KPt8\{\,
\bmii8 S\ar 1\ssn\fvalue n\fvalss01 i:
i:n\in\mathbb Z\ssp\text{ and }\ssp|\,n\,|\suba\le i\sp\ydot\ssp\}:
i\in\bbNo\snn} } thus obtaining a 
simple sequence \math{\bosy\sigma} in 
\math{(\ssp\mu\,,\spp\vPi\ssp) } with 
\math{\roman{ev}\sbi\eta\snn\circ\spp\bosy\sigma
\to x\fvalue\eta } in top \math{\taurd\vPi} for all \mathss30{\eta\in
\Omega\spp\setminus N} with 
\mathss36{\Alf\fvalss10\eta\not=\plusinfty}, \,and such that also 
\math{
\Abrs33^p\snn\circ\KPt2\Nu\circss00(\ssp\bosy\sigma\fvalss01 i\ssp)\le
2\KP1\Alf } holds for all \mathss36{i\in\bbNo}. Noting 
\math{\int_{\KPp1.1\Omega\,}\Alf\rmdss11\mu < 
 \plusinfty } and that 
from \math{\roman{ev}\sbi\eta\snn\circ\spp\bosy\sigma
\to x\fvalue\eta } in top \math{\taurd\vPi} we get 
\newline
\math{\lim_{\KPt8 i\ssp\to\ssp\infty\,}(\ssp
\Abrs33^p\snn\circ\KPt2\Nu\fvalss10(
\ssp\bosy\sigma\fvalss01 i\fvalss10\eta - x\fvalue\eta\ssp)\svs\vPi\spp
) = 0 } it now follows from the dominated convergence theorem 
that \math{
\lim_{\KPt8 i\ssp\to\ssp\infty}\sp
\int_{\KPp1.1\Omega\,}\Abrs33^p\snn\circ\KPt2\Nu\fvalss10(
\ssp\bosy\sigma\fvalss01 i\fvalss10\eta - x\fvalue\eta\ssp)\svs\vPi
\rmdss11\mu\,(\sp\eta\sp) = 0 } holds, giving the conclusion.
  \end{proof}

\begin{proposition}\label{Pro LpLp* dual}

Let \œ$\,1\le p\le\plusinfty$ and let $\,\mu$ be a positive measure on $\,
\Omega${\,\rm, }and with $\,\bosy K\in\setRC$ let $\,\vPi\in\BaSps0(K)$ and $\,
F=\mvLrs03^p(\ssp\mu\,,\spp\vPi\ssp) \KPt8 $. Also let \inskipline0{23.4}

$\,F\aar 1=\mvLrs14^{p\sast}\ssn(\ssp\mu\,,\spp\vPi\dlsigss00\spp)$ or $\,
   F\aar 1=\mvsLrs14^{p\sast}\ssn(\ssp\mu\,,\spp\vPi\dlsigss00\spp) \, $. For \vskip.5mm\centerline{$
\beta=\vecs F\times\vecs F\aar 1\sn\times\sp\ssbb00 C\capss41\{\,
 (\sp\smb X\sp,\sp\smb Y\sppp,\spp t\ssp):\aall{x\in\smb X\sp,\sp y\in\smb Y}\,
 t=\int_{\KP1\Omega}\,y\,.\KPt8 x\rmdss11\mu\KPt9\} \KP1 $,} \inskipline{.5}0

then $\,\beta$ is a continuous bilinear map $\,F\ssp\sqcap\sp F\aar 1\to
 \bosy K$ \inskipline0{17}

with $\,\seqss40{\beta\,(\,\cdot\,,\smb Y\,) : \smb Y\sp\in\vecs F\aar 1\sn}$ 
  an injection.
  \end{proposition}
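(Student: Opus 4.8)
The plan is to secure existence of the defining integral by H\"older's inequality, then to check that it depends only on the two equivalence classes, to read off bilinearity and continuity from the same estimate, and finally to establish injectivity by pairing against indicator\ssp-\sp type simple functions. First I would fix a compatible norm $\Nu$ for $\vPi$ and let $\Nu\aR 1$ be the associated dual norm of Lemma \ref{Le Nu_1 ci y meas}; since $\vPi\dlsigss00$ arises from the \erm Banachable strong dual $\vPi\dlbetss01$ by weakening the topology without enlarging the bornology (weak$^*$\sp-\sp and norm\ssp-\sp boundedness agreeing on the dual of a \erm Banach space), $\Nu\aR 1$ is a dominating norm for $\vPi\dlsigss00$. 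For $x\in\smb X\in\vecs F$ and $y\in\smb Y\in\vecs F\aar 1$ the pointwise bound $|\,(y\fvalue\eta)\fvalue(x\fvalue\eta)\,|\suba\le\Nu\aR 1\fvalue(y\fvalue\eta)\cdot\Nu\fvalue(x\fvalue\eta)$ holds by definition of the dual norm. Taking $u=\Nu\circ x$ and $v=\Nu\aR 1\circ y$ and applying Proposition \ref{Pro H�lder} (with whichever of $\,p\,,\sp p\sast$ is finite as its exponent) bounds the relevant upper integral by $\|\,\Nu\circ x\,\|\Lnorss33^p_\mu\,\|\,\Nu\aR 1\circ y\,\|\Lnorss40^{p\sast}_\mu<\plusinfty$, the factors being finite because $x\in\bigcup\ssp\vecs F$ and $y\in\bigcup\ssp\vecs F\aar 1$. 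Hence $\int_{\Omega}y\,.\,x\rmdss11\mu\in\vecs\bosy K$, so in particular it is $\ne\Univ$, and its common value is the unique $t$ admitted into $\beta$.

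For representative independence I would treat the two arguments separately. For the first, Lemma \ref{Le 0_{L^p}} applies since $\vPi$ is normable: any $x\ssp,\sp x\myprime\in\smb X$ agree off a null set on each set of finite measure, so $y\,.\,x$ and $y\,.\,x\myprime$ agree locally almost everywhere and have equal integral. The second argument is the delicate step, because $\vPi\dlsigss00$ is not normable and Lemma \ref{Le 0_{L^p}} is unavailable; here I would use the description of the null subspace in Constructions \ref{defi $L^p$}\,(\ref{simpL^p}) together with the identification $\Cal L\,(\sp\vPi\dlsigss00\ssp,\spp\bosy K\ssp)=\{\,\roman{ev}\sbi\xi\,|\KP1\vecs(\sp\vPi\dlsigss00\sp):\xi\in\vecs\vPi\,\}$. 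For $y\ssp,\sp y\myprime\in\smb Y$ the pointwise difference $z=y-y\myprime$ lies in that null subspace, so for each fixed $\xi\in\vecs\vPi$ the function $\eta\mapsto(z\fvalue\eta)\fvalue\xi$ vanishes off a ($\xi$-dependent) null set on every finite-measure set. Approximating $x$ on such a set by a dominated sequence of simple functions in $(\ssp\mu\,,\spp\vPi\ssp)$---available as in the proof of Proposition \ref{Pro simp Lp dense}, since $x$ is finitely almost simply measurable---and noting that each $z\,.\,\sigma$ is, off a null set, a finite linear combination of the functions $\eta\mapsto(z\fvalue\eta)\fvalue\xi$ with $\xi\in\rng\sigma$ and hence null, dominated convergence (the dominant supplied by Proposition \ref{Pro H�lder}) yields $\int z\,.\,x\rmdss11\mu=0$ over each finite-measure set. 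As $z\,.\,x$ is $\mu$-integrable its support is \rsigma6finite, so vanishing off null sets on every finite-measure set forces $\int_{\Omega}z\,.\,x\rmdss11\mu=0$. Thus $\beta$ is a genuine function $\vecs F\times\vecs F\aar 1\to\vecs\bosy K$.

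Bilinearity is then immediate from the linearity of each $y\fvalue\eta$ and of the integral. For continuity I would note that the H\"older estimate holds for every choice of representatives, so taking infima over $x\in\smb X$ and $y\in\smb Y$ independently and invoking Theorem \ref{L^p in TVS}---which gives the compatible norms of $F$ and $F\aar 1$ as $\smb X\mapsto\inf\sp\{\,\|\,\Nu\circ x\,\|\Lnorss33^p_\mu:x\in\smb X\,\}$ and its analogue with $\Nu\aR 1$ and $p\sast$, the hypotheses $1\le p$, $1\le p\sast$ and almost suitability of $\vPi$ and $\vPi\dlsigss00$ being met---produces $|\,\beta\,(\sp\smb X\sp,\smb Y\,)\,|\suba\le\|\smb X\|\,\|\smb Y\|$ in the respective compatible norms. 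A bounded bilinear form on a product of normable spaces is continuous, so $\beta$ is a continuous bilinear map $F\ssp\sqcap\sp F\aar 1\to\bosy K$.

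Finally, for injectivity of $\seqss40{\beta\,(\,\cdot\,,\smb Y\,):\smb Y\in\vecs F\aar 1}$ I would assume $\beta\,(\sp\smb X\sp,\smb Y\,)=0$ for all $\smb X\in\vecs F$ and fix $y\in\smb Y$. Given $\xi\in\vecs\vPi$ and $A\in\mu\invss44\image\spp\lbb R_+$, the function $x=(\ssp\Omega\setminusn A\ssp)\times\snn\{\,\Bnull_\vPi\}\cupss22 A\times\{\,\xi\,\}$ is simple in $(\ssp\mu\,,\spp\vPi\ssp)$ and satisfies $\|\,\Nu\circ x\,\|\Lnorss33^p_\mu<\plusinfty$, hence lies in $\bigcup\ssp\vecs F$; for its class $\smb X$ the hypothesis gives $0=\beta\,(\sp\smb X\sp,\smb Y\,)=\int_A(y\fvalue\eta)\fvalue\xi\rmdss11\mu=\int_A\roman{ev}\sbi\xi\circ y\rmdss11\mu$. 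As $\xi$ and $A$ range freely and $\roman{ev}\sbi\xi\circ y$ is finitely almost measurable, the vanishing of these integrals forces $\roman{ev}\sbi\xi\circ y$ to vanish off a null set on each finite-measure set, which is precisely the condition placing $y$ in the null subspace defining $F\aar 1$; hence $\smb Y=\Bnull_{F\aar 1}$, and by linearity $\smb Y\mapsto\beta\,(\,\cdot\,,\smb Y\,)$ is injective. I expect the representative independence in the second argument to be the principal obstacle, forced by the non\ssp-\sp normability of the weak dual $\vPi\dlsigss00$, and to be exactly where the simple\ssp-\sp function approximation and dominated convergence are indispensable.
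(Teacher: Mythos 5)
Your overall route — the H\"older bound for finiteness and continuity, the null-space description plus simple approximation and dominated convergence for representative independence, and indicator-type functions for injectivity — parallels the paper's own proof. But there is a genuine gap at your very first step, and it sits exactly where you predict the least difficulty. You claim that the H\"older inequality for upper integrals already yields $\int_{\KP1\Omega}\,y\,.\KPt8 x\rmdss11\mu\in\vecs\bosy K$, hence $\not=\Univ$. That inference is invalid in this framework: H\"older only bounds an \emph{upper} integral, whereas by the discussion following Lemma \ref{Le +int} the integral being $\not=\Univ$ already entails that $(\ssp y\,.\KPt8 x\,;\spp\mu\,,\sn\tfbbC\ssp)$ is finitely almost measurable; existence of the positive integral requires lower integral $=$ upper integral, which is a measurability statement, not a size statement. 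Measurability of the scalar function $y\,.\KPt8 x$ — the pairing of a merely scalarly measurable $y$ with $x$ — is precisely the substantive point of the proposition, and it is what the paper's proof of its condition ($\sp*\sp$) establishes: approximate $x$ by a dominated simple sequence $\bosy\sigma$ as in Proposition \ref{Pro simp Lp dense}, note that each $y\,.\KPt8(\ssp\bosy\sigma\fvalss01 i\ssp)$ is integrable because it reduces to finitely many integrals of $\roman{ev}\KPt2\sbi\xi\snn\circ\sp y$ over finite-measure sets, and pass to the limit by dominated convergence with the H\"older dominant (with a separate level-set decomposition of the dominant of $\Nu\aR 1\snn\circ\sp y$ when $p=\plusinfty$, since then the support of $x$ need not be \rsigma5finite).

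The repair is local, because you already possess the machinery: your second paragraph deploys exactly this approximation-plus-dominated-convergence argument, but only for the null difference $z=y-y\myprime$. Running it with $y$ itself simultaneously gives existence of the integral, its value as a limit of integrals of $y\,.\KPt8(\ssp\bosy\sigma\fvalss01 i\ssp)$, and independence of the representative $y$ — which is how the paper organizes matters, treating existence and independence in one stroke rather than as separate steps. Your closing expectation that second-argument independence is "the principal obstacle" while existence is secured by H\"older has it backwards: in this formalism, existence \emph{is} the measurability problem. The remaining components of your proposal — first-argument independence via Lemma \ref{Le 0_{L^p}} together with the \rsigma5finiteness of supports, the continuity estimate via the compatible norms of Theorem \ref{L^p in TVS} (your observation that the dual norm dominates $\vPi\dlsigss00$ is correct), and the injectivity argument, which is the contrapositive of the paper's — are sound and agree with the paper.
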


\begin{proof} First we note that \math{\beta} is trivially a function since 
the vectors of \math{F} and $\ssp F\aar 1$ \linebreak 
                                           are nonempty sets. Further, if we 
know ($\sp*\sp$) that \math{\vecs F\times\vecs F\aar 1\inc\dom\beta } holds, 
then bilinearity is readily seen. So we only need to prove ($\sp*\sp$) 
together with continuity and the last nondegeneracy assertion. For short let \mathss36{
\roman I\KP1 x\KPt8 y = \int_{\KP1\Omega}\,y\,.\KPt8 x\rmdss11\mu}.

For ($\sp*\sp$) arbitrarily given \math{x\ssp,\sp x\ar 1\in\smb X\in\vecs F} 
and \mathss31{y\ssp,\sp y\ar 1\in\smb Y\in\vecs F\aar 1}, \,we need to verify 
that \math{\roman I\KP1 x\KPt8 y=\roman I\KP1 x\ar 1\,y\ar 1 \in \mathbb C} 
holds. For this we first note that \math{\roman I\KP1 x\KPt8 y \in\mathbb C } 
under the additional assumption that \math{(\ssp x\,;\spp\mu\,,\spp\vPi\ssp) } 
is a simple mv\ssp-\sp map. Indeed, in this case \mathss03{
\roman I\KP1 x\KPt8 y} a finite sum of expressions of the type \math{
\int_{\,A}\ssp\roman{ev}\KPt2\sbi\xi\snn\circ\sp y\rmdss11\mu } where \math{\xi
 \in\vecs\vPi} and \mathss04{A\in\mu\invss44\image\sp\lbb R_+}. Noting that we 
  here have \vskip.5mm\centerline{$
\roman{ev}\KPt2\sbi\xi\snn\circ\sp y\KP1|\KP1 A \in
\bigcup\ssp\vecs\mvLrs24^{p\sast}\ssn(\ssp\mu\KP1|\KP1\Pows A\,,\spp
  \bosy K\ssp) \inc
\bigcup\ssp\vecs\mvLrs42^1(\ssp\mu\KP1|\KP1\Pows A\,,\spp
  \bosy K\ssp) \KP1 $,} \inskipline{.5}0

the assertion follows. Directly from the definition we then see that \mathss30{
\roman I\KP1 x\KPt8 y=\roman I\KP1 x\KPt8 y\ar 1} holds. Then considering the 
general \math{x} first with \math{p\not=\plusinfty} and taking a compatible 
norm \math{\Nu} for \math{\vPi} and letting \math{\Nu\aR 1} be the 
corresponding dual norm, similarly as in the proof of Proposition \ref{Pro simp Lp dense} 
above we find 
a 
simple sequence \math{\bosy\sigma} in 
\math{(\ssp\mu\,,\spp\vPi\ssp) } and 
some \math{N\in\mu\invss44\image\snn\{\ssp 0\ssp\} } 
and a positive \mathss37{\mu}--\,measurable 
\math{\Alf} with \math{\int_{\KPp1.1\Omega\,}\Alf\rmdss11\mu < 
 \plusinfty } and such that 
\math{
\Abrs33^p\snn\circ\KPt2\Nu\circss00(\ssp\bosy\sigma\fvalss01 i\ssp)\le
2\KP1\Alf } holds for all \mathss36{i\in\bbNo} and also 
\newline
\math{\roman{ev}\sbi\eta\snn\circ\spp\bosy\sigma
\to x\fvalue\eta } in top \math{\taurd\vPi} for all \mathss30{\eta\in
\Omega\spp\setminus N}. Then with 
 \newline
\math{A=\Alf\invss46[\KPp1.1\Univ\spp\setminus\{\ssp 0\ssp\}\KP1]
\ssp\setminus N } we take 
a positive \mathss37{\mu}--\,measurable \math{\Alf\ar 1} with 
 \newline
\math{\|\KPt8\Alf\ar 1\ssp\|\Lnorss33^{\sp p\sast\sn}_\mu < \plusinfty } 
and \math{\Nu\aR 1\sn\circ\sp z\KP1|\KP1 A \le \Alf\ar 1} for 
\mathss38{z\in\{\KPt8 y\ssp,\sp y\ar 1\sp\} }. 
For 
 \newline
\math{\Alf\ar 2=2\KP1\RHB{.3}{^p}\LHB{.2}{\sp^{^{-1}}\ssp}
\Abrs33^p\LHB{.2}{\sp^{^{-1}}}\KN{.5}\circ\KPt2\Alf\cdot\Alf\ar 1} now 
\math{
\seqss40{z\,.\KPt8(\ssp\bosy\sigma\fvalss01 i\ssp)
\KP1|\KP1 A
:i\in\bbNo} } converges 
pointwise to 
 \newline
\math{z\,.\KPt8 x\KP1|\KP1 A } and is dominated by 
\math{\Alf\ar 2} 
for which H\"older's inequality gives 
 \newline
\mathss36{
\|\KP1\Alf\ar 2\,\|\Lnorss33^1_\mu
\le
(\,2\sp
\int_{\KPp1.1\Omega\,}\Alf\rmdss11\mu
\ssp)\KP1\RHB{.3}{^p}\LHB{.2}{\sp^{^{-1}}\ssp}
\|\KPt8\Alf\ar 1\ssp\|\Lnorss33^{\sp p\sast\sn}_\mu < \plusinfty
}. Consequently, by the dominated convergence theorem we obtain \inskipline{.5}{14.8}

$\roman I\KP1 x\KPt8 y=
\int_{\,A}\ssp y\,.\KPt8 x\rmdss11\mu=
\lim_{\KPt8 i\ssp\to\ssp\infty\sp}
 \int_{\,A}\ssp y\,.\KPt8(\ssp\bosy\sigma\fvalss01 i\ssp)\rmdss11\mu$ \inskipline{.5}{21.75}

${}=
\lim_{\KPt8 i\ssp\to\ssp\infty\sp}
 \int_{\,A}\ssp y\ar 1\ssp.\KPt8(\ssp\bosy\sigma\fvalss01 i\ssp)\rmdss11\mu
=
\int_{\,A}\ssp y\ar 1\ssp.\KPt8 x\rmdss11\mu
=
\roman I\KP1 x\KPt8 y\ar 1\in\mathbb C \KPt9 $. \vskip.5mm

In the case \math{p=\plusinfty} we modify the above deduction as follow. 
Indeed, now we have \math{p^{\,*}=1} and taking 
a positive \mathss37{\mu}--\,measurable \math{\Alf\ar 1} with 
\math{\|\KPt8\Alf\ar 1\ssp\|\Lnorss33^1_\mu < \plusinfty } 
and \math{\Nu\aR 1\sn\circ\sp z \le \Alf\ar 1} for 
\mathss38{z\in\{\KPt8 y\ssp,\sp y\ar 1\sp\} } we let \mathss38{\scrmt A=
\{\KP1\Alf\ar 1\sn\inve\ssp\image
\spp[\KP1 2\KP1^{\emath n\ssp - \ssp 1}\spp,\spp 
 2\KPt9^{\emath n}\ssp{[\sp}:n\in\ssbb03 Z\,\} }. Then we find 
\math{\smb M\in\lbb R_+} and \math{N\in\mu\invss44\image\snn\{\ssp 0\ssp\} } 
such that for \math{B=\bigcup\,\scrmt A\spp\setminus N} we have 
 \newline
\mathss38{\Nu\circss00 x\KP1|\KP1 B\le B\times\snn\{\ssp\smb M\,\} }. Now 
with the notation \math{\roman E\KP1 x\,A=
(\ssp\Omega\spp\setminus A\ssp)\times\snn\{\KPt8\Bnull_\vPi\}\cupss22
(\ssp x\KP1|\KP1 A\ssp) } a slight modification of 
the above deduction gives us 
\math{\roman I\KP1\roman E\KP1 x\,A\KP1 y=
      \roman I\KP1\roman E\KP1 x\,A\KP1 y\ar 1 \in\mathbb C } 
for all \mathss36{A\in\scrmt A}. Then again 
by dominated convergence we obtain \inskipline{.4}{22}

$\roman I\KP1 x\KPt8 y = \roman I\KP1\roman E\KP1 x\,B\KP1 y = 
     \sum_{\,A\ssp\in\ssp\scrm7 A\,}\roman I\KP1\roman E\KP1 x\,A\KP1 y$ \inskipline{.2}{29}

${}= \sum_{\,A\ssp\in\ssp\scrm7 A\,}\roman I\KP1\roman E\KP1 x\,A\KP1 y\ar 1 
   = \roman I\KP1\roman E\KP1 x\,B\KP1 y\ar 1
   = \roman I\KP1 x\KPt8 y\ar 1 \in \mathbb C \KPt9 $. \inskipline{.5}0

Now for the general case by the above we get \math{ \roman I\KP1 x\KPt8 y = 
 \roman I\KP1 x\KP1 y\ar 1 = \roman I\KP1 x\ar 1\,y\ar 1} by noting that for 
some \math{N\in\mu\invss44\image\snn\{\ssp 0\ssp\} } we have \math{
y\,.\KPt8 x\fvalue\eta=y\,.\KPt8 x\ar 1\ssn\fvalue\eta } for all \mathss30{
\eta\in\Omega\spp\setminus N}. \vskip.3mm

For continuity putting \math{ \trN03\smb X\trNu2 = \inf\sp \big\{ \KPt8
 \|\KP1\Nu\circss01 x\KP1\|\Lnorss33^p_\mu\snn:x\in\smb X\,\} } and \inskipline{.25}{40.1}

$\trN03\smb Y\trNun5 = \inf\sp\big\{\KPt8 \|\KP1 \Nu\aR 1\sn\circ\sp y \KP1
 \|\Lnorss40^{p\sast}_\mu\snn : y\in\smb Y\KP1\} \KP1 $, \inskipline{.25}0

by Theorem \ref{L^p in TVS} on page \pageref{L^p in TVS} above it suffices 
that we have \vskip.3mm\centerline{$
   |\KPp1.2\beta\fvalss10(\sp\smb X\spp,\smb Y\,)\KP1|
\le\trN03\smb X\trNu2\,\trN03\smb Y\trNun5 $} \inskipline{.3}0

for \math{\smb X\in\vecs F} and \math{\smb Y\in\vecs F\aar 1}. By Proposition \ref{Pro Hölder} 
on page \pageref{Pro Hölder} for \math{x\in\smb X} and \math{y\in\smb Y} we 
have \math{ |\KPp1.2\beta\fvalss10(\sp\smb X\spp,\smb Y\,)\KP1| \le
 \|\KP1\Nu\circss01 x\KP1\|\Lnorss33^p_\mu\,
 \|\KP1\Nu\aR 1\sn\circ\sp y\KP1\|\Lnorss40^{p\sast}_\mu} trivially giving the 
  result.

Finally, letting \math{\Bnull_{\aars F_1}\not=\smb Y\sp\in\vecs F\aar 1} we 
need to show existence of some \math{\smb X\in\vecs F} with \linebreak \mathss03{
\beta\fvalss10(\sp\smb X\spp,\smb Y\,)\not=0}. Now, by \math{ \smb Y \not =
 \Bnull_{\aars F_1}} there are \math{y\in\smb Y} and \math{ A \in 
 \mu\invss44\image\spp\rbb R^+} and \mathss30{\xi\in\vecs\vPi} with \mathss36{
\int_{\,A}\ssp\roman{ev}\KPt2\sbi\xi\snn\circ\sp y\rmdss11\mu\not=0}. Then taking \math{
x=(\ssp\Omega\sp\setminus A\ssp)\times\snn\{\,\Bnull_\vPi\}\cupss22
 (\sp A\times\snn\{\ssp\xi\ssp\}\sp\sbig)0 } there is \linebreak \mathss03{
\smb X} with \mathss30{x\in\smb X\in\vecs F}, \,and we now have \mathss36{
\beta\fvalss10(\sp\smb X\spp,\smb Y\,) = 
 \int_{\,A}\ssp\roman{ev}\KPt2\sbi\xi\snn\circ\sp y\rmdss11\mu \not = 0 }.
  \end{proof}

\begin{corollary}\label{Coro Io inj etc}

Let \œ$\,1\le p\le\plusinfty$ and let $\,\mu$ be a positive 
measure on $\,\Omega$ and with 
\newline
$\,\bosy K
 \in\setRC$ let $\vPi\in\BaSps0(K) \KP1 $. Also let 
$\,F=\mvLrs03^p(\ssp\mu\,,\spp\vPi\ssp) $ and let 
\newline
$\,F\aar 1\in\{\,
\mvLrs14^{p\sast}\ssn(\ssp\mu\,,\spp\vPi\dlbetss01\sp)\,,
\mvLrs14^{p\sast}\ssn(\ssp\mu\,,\spp\vPi\dlsigss00\spp)\,,
\mvsLrs14^{p\sast}\ssn(\ssp\mu\,,\spp\vPi\dlsigss00\spp)\KPt8\}$ and 

$
\Iota = \vecs F\aar 1\sn\times\Univ\capss31\{\,(\ssp\smb Y\sppp,\spp\smb U
\sp):\aall{y\in\smb Y}\,$ \inskipline{0}{37.7}

$\smb U=
\vecs F\snn\times\mathbb C\capss31\{\,(\ssp\smb X\sp,\spp t\ssp) : 
 \aall{x\in\smb X}\,
     t = \int_{\KP{1.1}\Omega\,}y\,.\KPt8 x\rmdss11\mu\KPt9\}\sp\}
\KP1 $. 

\noin
Then 
$\,\Iota\in\Cal L\,(\sp F\aar 1\sp,\spp F\dlbetss10\sp)$ holds with 
$\,\Iota$ an injection.
  \end{corollary}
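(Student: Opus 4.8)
The plan is to reduce the entire statement to Proposition~\ref{Pro LpLp* dual}, whose continuous bilinear form I denote by $\beta$. For the two choices $F\aar 1=\mvLrs14^{p\sast}\ssn(\ssp\mu\,,\spp\vPi\dlsigss00\spp)$ and $F\aar 1=\mvsLrs14^{p\sast}\ssn(\ssp\mu\,,\spp\vPi\dlsigss00\spp)$ there is essentially nothing left to do. By the well\ssp-\ssp definedness clause $(\sp*\sp)$ of that proposition the number $\int_{\Omega}y\,.\KPt8 x\rmdss11\mu$ depends only on the classes $\smb X\sp,\smb Y$, so $\Iota(\smb Y)=\beta\,(\,\cdot\,,\smb Y\,)$ as sets and $\Iota$ is linear; continuity of $\beta$ then gives simultaneously $\Iota(\smb Y)\in\Cal L\,(\sp F\spp,\spp\bosy K\ssp)=\vecs(\sp F\dlbetss10\sp)$ for each $\smb Y$ and the operator\ssp-\ssp norm bound $\norm{\Iota(\smb Y)}\le\norm{\smb Y}$, that is, $\Iota\in\Cal L\,(\sp F\aar 1\sp,\spp F\dlbetss10\sp)$; and the injectivity of $\seqss40{\beta\,(\,\cdot\,,\smb Y\,):\smb Y\sp\in\vecs F\aar 1\sn}$ asserted there is exactly injectivity of $\Iota$.

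For the remaining case $F\aar 1=\mvLrs14^{p\sast}\ssn(\ssp\mu\,,\spp\vPi\dlbetss01\sp)$ I would factor $\Iota$ through the weak$^*$ space $G=\mvLrs14^{p\sast}\ssn(\ssp\mu\,,\spp\vPi\dlsigss00\spp)$ just handled. Since $\vPi\dlbetss01$ and $\vPi\dlsigss00$ carry the same vectors and the same zero, a sequence simple in one is simple in the other, and pointwise convergence in the finer topology $\taurd(\sp\vPi\dlbetss01\sp)$ forces pointwise convergence in the coarser $\taurd(\sp\vPi\dlsigss00\spp)$; moreover $\vPi\dlsigss00$ is the weakening of the Banachable $\vPi\dlbetss01$ with the same bounded sets, so $\Bqnorm(\sp\vPi\dlbetss01\sp)=\Bqnorm(\sp\vPi\dlsigss00\spp)$ and the integrability conditions and dominating norm coincide. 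Hence each $y\in\bigcup\ssp\vecs F\aar 1$ lies in $\bigcup\ssp\vecs G$, and sending the $F\aar 1$\ssp-\ssp class of $y$ to its $G$\ssp-\ssp class gives a linear $J:F\aar 1\to G$; as the $G$\ssp-\ssp class contains the $F\aar 1$\ssp-\ssp class, the defining infima make $J$ norm\ssp-\ssp decreasing, hence continuous. Writing $\Iota\sbi G$ for the map of the first paragraph attached to $G$, one has $\Iota=\Iota\sbi G\circ J$, whence $\Iota\in\Cal L\,(\sp F\aar 1\sp,\spp F\dlbetss10\sp)$.

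The one substantial point, and the expected main obstacle, is injectivity of $\Iota$ in this last case, for which it suffices that $J$ be injective. So I would assume $J(\smb Y)$ is the zero of $G$; choosing $y\in\smb Y$ this says $\int_{\,A}\roman{ev}\sbi\xi\snn\circ\sp y\rmdss11\mu=0$ for all $A\in\mu\invss44\image\sp\lbb R_+$ and all $\xi\in\vecs\vPi$, whence for each fixed $\xi$ the scalar function $\roman{ev}\sbi\xi\snn\circ\sp y$ vanishes locally almost everywhere. Because $\vPi\dlbetss01$ is Banachable and $y$ is finitely almost simply measurable into it, on each set of finite measure $y$ has, off a null set, separable range $S\inc\vecs\vPi\dlbetss01$; since $\vecs\vPi$ separates the points of $\vecs\vPi\dlbetss01$, Hahn\,--\,Banach furnishes a countable family $\{\,\xi\sbi n:n\in\bbNo\,\}\inc\vecs\vPi$ separating the points of $S$. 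Amalgamating the countably many exceptional null sets shows that $y\fvalue\eta$ annihilates every $\xi\sbi n$ for almost every $\eta$ on each finite\ssp-\ssp measure set, so $y$ vanishes locally almost everywhere. As $\vPi\dlbetss01$ is normable, Lemma~\ref{Le 0_{L^p}} applies to $F\aar 1$ and identifies this with $y\in N\aar 0$, i.e.\ $\smb Y$ is the zero of $F\aar 1$. The decisive step is precisely this passage from nullity tested only against $\vPi$ to genuine local almost\ssp-\ssp everywhere nullity, which is where the separable\ssp-\ssp range Hahn\,--\,Banach argument cannot be dispensed with.
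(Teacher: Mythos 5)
Your proposal is correct, and for the two weak$^*$ cases and the factorization $\Iota=\Iota\sbi G\circ J$ it coincides with the paper's own proof (which likewise reduces those cases to Proposition~\ref{Pro LpLp* dual} and writes $\Iota=\Iota\ar 0\circ\Iota\ar 1$ with $\Iota\ar 1$ the class\ssp-\ssp inclusion map). Where you genuinely diverge is the injectivity of $J$ in the strong\ssp-\ssp dual case. The paper does \emph{not} show that a representative $y$ of $\ker J$ vanishes almost everywhere; instead, for each $w\in\Cal L\,(\sp\vPi\dlbetss01\KPt2,\spp\bosy K\ssp)$ it proves $\int_{\,A}\ssp w\circss00 y\rmdss11\mu=0$ by invoking Lemma~\ref{Le 8.17.8 B} (a Goldstine\,/\,bipolar\ssp-\ssp type approximation, via Lemma~\ref{Le E'' adher}, of $w$ by a bounded sequence $\bosy\xi$ in $\vecs\vPi$ converging pointwise on a separable subspace of $\vPi\dlbetss01$ containing the local range of $y$) followed by dominated convergence. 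You instead exploit the fact that simple measurability into the \emph{norm} topology of $\vPi\dlbetss01$ forces locally essentially separable range, extract a countable family in $\vecs\vPi$ that detects nonzero elements of that range, amalgamate null sets, and conclude $y=0$ locally a.e., finishing with Lemma~\ref{Le 0_{L^p}}. Your route is more elementary — it needs neither Lemma~\ref{Le 8.17.8 B} nor dominated convergence — and it yields the stronger intermediate fact that weak$^*$ a.e.\ nullity plus norm\ssp-\ssp simple measurability implies genuine a.e.\ nullity (the Pettis\ssp-\ssp measurability phenomenon), which is exactly why the two quotients have the same kernel; the paper's route buys reusability of Lemma~\ref{Le 8.17.8 B}, which it needs anyway elsewhere (e.g.\ in Proposition~\ref{Pro Edw 8.17.8}). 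One small imprecision: the countable separating family is not really given by Hahn\,--\,Banach (no extension of functionals is involved, since the points of $S$ are themselves functionals on $\vPi$); it comes from choosing near\ssp-\ssp norming vectors $\xi\sbi k\in\vecs\vPi$ for a countable norm\ssp-\ssp dense subset of $S$ and a standard $\tfrac12$\ssp-\ssp$\tfrac14$ perturbation estimate — the same device as in Lemma~\ref{Le Nu_1 = sup ...}. With that attribution corrected, the argument is complete.
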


\begin{proof} Note that although written differently, the \math{\Iota} above 
is precisely the same as in Theorem \nfss A\,\ref{main Th} above. Now we first 
see that the assertion directly follows from Proposition \ref{Pro LpLp* dual} 
above in the cases where \math{ F\aar 1 = 
 \mvLrs14^{p\sast}\ssn(\ssp\mu\,,\spp\vPi\dlsigss00\spp) } or \mathss30{
F\aar 1 = \mvsLrs14^{p\sast}\ssn(\ssp\mu\,,\spp\vPi\dlsigss00\spp) } holds. 
For the case \math{ F\aar 1 = 
 \mvLrs14^{p\sast}\ssn(\ssp\mu\,,\spp\vPi\dlbetss01\sp) } putting \math{
F\aar 0=\mvLrs14^{p\sast}\ssn(\ssp\mu\,,\spp\vPi\dlsigss00\spp) } and 
letting \math{\Iota\ar 0} be the corresponding \math{\Iota} 
in the corollary, taking 
\math{\Iota\ar 1=
\vecs F\aar 1\sn\times F\aar 0\capss01\{\,(\sp\smb X\spp,\spp\smb Z\sp):
\smb X\inc\smb Z\,\} } we then have 
\mathss34{\Iota=\Iota\ar 0\circ\spp\Iota\ar 1 }. Trivially having 
\math{\Iota\ar 1\in\Cal L\,(\sp F\aar 1\sp,\spp F\aar 0\spp) } we get 
\math{\Iota\in\Cal L\,(\sp F\aar 1\sp,\spp F\dlbetss10\sp) } and we only 
need to show that \math{\Iota\ar 1} is injective. Indeed, supposing that we 
have \math{x\in\smb X} and \mathss34{
(\sp\smb X\spp,\spp\Bnull_{\aars F_0}\sbig)0\in
 \Iota\ar 1 }, \,for arbitrarily given \math{A\in\mu\invss44\image\spp\rbb R^+} 
and \math{w\in\Cal L\,(\sp\vPi\dlbetss01\KPt2,\spp\bosy K\ssp) } we then must 
show that \math{
\int_{\,A}\ssp w\circss00 x\rmdss11\mu = 0 } holds. In order to get this, we 
first note that there are some \math{
N\in\mu\invss44\image\snn\{\ssp 0\ssp\} } and a 
separable linear subspace 
\math{S\ar 1} in \math{\vPi\dlbetss01} with \mathss34{
x\KP1[\KP1 A\setminus N\KP1]\inc S\ar 1 }. Then from 
Lemma \ref{Le 8.17.8 B} on page \pageref{Le 8.17.8 B} above we get 
existence of some \math{\bosy\xi\in\sp^\sbbNo\,\vecs\vPi } with 
\math{\rng\bosy\xi\in\bouSet\vPi } and such that \math{
w\fvalue u = \lim\,(\ssp u\circss11\bosy\xi\ssp) } holds for every 
\mathss34{u\in  S\ar 1}. Now for all 
\math{\eta\in A\setminus N} we have \vskip.25mm\centerline{$
w\circss00 x\fvalue\eta=\lim\,(\ssp x\fvalue\eta\circss11\bosy\xi\ssp)
=
\lim_{\KPt8 i\ssp\to\ssp\infty\,}(\KPt5
\roman{ev}\KPt2\sbi{\bosy\xi\ffvalue i}\circ\sp x\fvalue\eta\ssp) \KP1 $.} \vskip.25mm

Since \math{x\in\smb X\inc\Bnull_{\aars F_0} } holds, for any fixed \math{ \xi
 \in\vecs\vPi} we have \mathss36{\int_{\,A\ssp\setminus\ssp N\,}
  \roman{ev}\KPt2\sbi\xi\snn\circ\spp x\rmdss11\mu = 0 }. From \math{ x \in 
 \bigcup\ssp\vecs F\aar 1} we see that \math{ x\KP1|\KP1 A \in 
 \bigcup\ssp\vecs\mvLrs42^1(\ssp\mu\KP1|\KP1\Pows A\,,\spp\vPi\dlbetss01\sp) } 
holds, and taking into account \math{\rng\bosy\xi\in\bouSet\vPi } we get 
existence of some positive \mathss37{\mu}--\,measurable \mathss30{\Alf} with \math{
\|\KPt8\Alf\,\|\Lnorss33^1_\mu < \plusinfty } and such that \math{
\Abrs03^1\snn\circ\sp\roman{ev}\KPt2\sbi{\bosy\xi\ffvalue i}\circ\sp x
 \KP1|\KP1 A \le \Alf } holds for all \mathss36{i\in\bbNo}. Then by dominated 
convergence we obtain \inskipline{.5}{11}

$ \int_{\,A}\ssp w\circss00 x\rmdss11\mu
= \int_{\,A\ssp\setminus\ssp N\,}w\circss00 x\rmdss11\mu
= \lim_{\KPt8 i\ssp\to\ssp\infty\sp}\int_{\,A\ssp\setminus\ssp N\,}
                \roman{ev}\KPt2\sbi{\bosy\xi\ffvalue i}\circ\sp x\rmdss11\mu 
= 0 \KPt8 $.
  \end{proof}

In the next lemma we utilize the formal definitions \inskipline{.6}{18}

$^{\Omega\sp,\sp\vPi}\ssp\xi\sp\sbi A = 
 (\ssp\Omega\sp\setminus A\ssp)\times\snn\{\,\Bnull_\vPi\} \cupss22
 (\sp A\times\snn\{\ssp\xi\ssp\}\spp\sbig)0 \hfill $ and \KP{18} \inskipline{.4}{18}

$\lfloor\,^{p\sp,\ssp\mu\sp,\ssp\vPi}\ssp\xi\sp\sbi A = \uniqset\smb X : {}
 ^{\bigcup\sp\dom\snn\mu\sp,\KPt3\vPi}\ssp\xi\sp\sbi A \in \smb X \in 
 \vecs\mvLrs03^p(\ssp\mu\,,\spp\vPi\ssp) \KP1 $. \vskip.5mm

If \math{\mu} is a positive measure on \mathss36{\Omega}, \,for all \math{
A\in\mu\invss44\image\spp\lbb R_+} and \math{\xi\in\vecs\vPi} thus \mathss30{
^{\Omega\sp,\sp\vPi}\ssp\xi\sp\sbi A } is the simple function \math{\Omega\to
 \vecs\vPi} that has the value \math{\xi} at points \math{\eta\in A} and \math{
\Bnull_\vPi} else- where. Then \math{
\lfloor\,^{p\sp,\ssp\mu\sp,\ssp\vPi}\ssp\xi\sp\sbi A } is the unique vector of \math{
\mvLrs03^p(\ssp\mu\,,\spp\vPi\ssp) } having \math{
^{\Omega\sp,\sp\vPi}\ssp\xi\sp\sbi A } as one of its representatives.

\begin{lemma}\label{Le-first}

Let $\,1\le p < \plusinfty$ and let $\,\mu$ be a positive measure on 
$\,\Omega\,$. Also with $\,\bosy K\in\setRC$ let 
$\,\vPi\in\BaSps0(K)$ with $\,\Nu$ a compatible norm and 

$\,\Nu\aR 1=\seqss44{
\sup\KPt8(\ssp\Abrs00^1\circ\sp u\circss01\Nu\invss44\image\ssbb15 I)
:u\in\Cal L\,(\sp\vPi\sp,\spp\bosy K\ssp)}$ 

\noin
and $\,F=\mvLrs03^p(\ssp\mu\,,\spp\vPi\ssp)$ and 
$\,\smb U\in\Cal L\,(\sp F\sp,\spp\bosy K\ssp) ${\KP1\rm, }and let 
$\,(\KPt5 y\,;\spp\mu\,,\spp\vPi\dlsigss00\spp)$ be finitely almost 

scalarly measurable with $\,
\|\KP1\Nu\ar 1\snn\circ\sp y\KP1 \|\Lnorss50^{p^*}_\mu < \plusinfty$ 
and such that 

$\,
\smb U\fvalss11\lfloor\,^{p\sp,\ssp\mu\sp,\ssp\vPi}\ssp\xi\sp\sbi A
=\int_{\,A}\ssp\roman{ev}\KPt2\sbi\xi\snn\circ\sp y\rmdss11\mu
$ holds for all $\,A\in
\mu\invss44\image\spp\rbb R^+$ and 
$\,\xi\in\vecs\vPi\sp$. 

Then $\,y\in\bigcup\ssp\vecs
\mvsLrs23^{p\sast}\ssn(\ssp\mu\,,\spp\vPi\dlsigss00\spp)$ holds with 

$\,\smb U=\vecs F\snn\times\mathbb C\capss31\{\,(\ssp\smb X\sp,\spp t\ssp) : 
 \aall{x\in\smb X}\,
  t = \int_{\KPp1.1\Omega\,}y\,.\KPt8 x\rmdss11\mu\KPt9\} \KP1 $.
  \end{lemma}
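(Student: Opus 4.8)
The plan is to proceed in two stages: first to exhibit $y$ as a representative of a genuine vector $\smb Y$ of $\mvsLrs23^{p\sast}\ssn(\ssp\mu\,,\spp\vPi\dlsigss00\spp)$, and then to identify $\smb U$ with the pairing functional $\beta\,(\,\cdot\,,\smb Y\,)$ of Proposition \ref{Pro LpLp* dual}, since that pairing is exactly the asserted description of $\smb U$.

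For the first stage I would use that $\vPi\dlsigss00$ is suitable, being the weak dual of the Banachable space $\vPi$, with dominating norm precisely $\Nu\aR 1$ (the compatible norm of the strong dual $\vPi\dlbetss01$ from which $\vPi\dlsigss00$ arises by weakening the topology). Lemma \ref{Le suit dom} then supplies, for each $\Nu\aR 2\in\Bqnorm(\sp\vPi\dlsigss00\spp)$, some $\smb M\in\rbb R^+$ with $\Nu\aR 2\sn\fvalue v\le\smb M\KP1(\ssp\Nu\aR 1\sn\fvalue v\ssp)$ for all $v\in\vecs(\sp\vPi\dlsigss00\spp)$, whence $\|\KP1\Nu\aR 2\snn\circ\sp y\KP1\|\Lnorss50^{p\sast}_\mu\le\smb M\KP1\|\KP1\Nu\aR 1\snn\circ\sp y\KP1\|\Lnorss50^{p\sast}_\mu<\plusinfty$ by hypothesis. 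Combining this integrability for every bounded quasi\ssp-\sp seminorm with the assumed finite almost scalar measurability of $(\KPt5 y\,;\spp\mu\,,\spp\vPi\dlsigss00\spp)$, inspection of Constructions \ref{defi $L^p$}\,(\ref{ctr mvL_s^p}) yields $y\in\bigcup\ssp\vecs\mvsLrs23^{p\sast}\ssn(\ssp\mu\,,\spp\vPi\dlsigss00\spp)$; I let $\smb Y$ be the unique class with $y\in\smb Y$ there.

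For the second stage I would set $\smb U\aR 1=\beta\,(\,\cdot\,,\smb Y\,)$, so that by Proposition \ref{Pro LpLp* dual} one has $\smb U\aR 1\in\Cal L\,(\sp F\sp,\spp\bosy K\ssp)$ with $\smb U\aR 1\sn\fvalue\smb X=\int_{\KPp1.1\Omega\,}y\,.\KPt8 x\rmdss11\mu$ for every $x\in\smb X\in\vecs F$, independently of the representative. It then suffices to prove $\smb U=\smb U\aR 1\ssp$. Both being continuous linear functionals on $F$, and $\vPi$ being pseudonormable (Banachable), Proposition \ref{Pro simp Lp dense} shows the subspace $D$ of classes with a representative simple in $(\ssp\mu\,,\spp\vPi\ssp)$ is $\taurd F$--\,dense, so the task reduces to agreement on $D\ssp$. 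To check this I would write a simple $\sigma$ as $\bigcup\,\{\,{}^{\Omega\sp,\sp\vPi}\ssp\xi\sp\sbi{A_\xi}:\xi\in\rng\sigma\spp\setminus\{\,\Bnull_\vPi\}\,\}$ with each $A_\xi=\sigma\invss33\image\snn\{\ssp\xi\ssp\}\in\mu\invss44\image\spp\lbb R_+\ssp$, so that the class $\smb X$ of $\sigma$ is the finite sum $\sum_\xi\lfloor\,^{p\sp,\ssp\mu\sp,\ssp\vPi}\ssp\xi\sp\sbi{A_\xi}\ssp$. Linearity of $\smb U$ and the hypothesis $\smb U\fvalss11\lfloor\,^{p\sp,\ssp\mu\sp,\ssp\vPi}\ssp\xi\sp\sbi A=\int_{\,A}\ssp\roman{ev}\KPt2\sbi\xi\snn\circ\sp y\rmdss11\mu$ give $\smb U\fvalss11\smb X=\sum_\xi\int_{\,A_\xi}\roman{ev}\KPt2\sbi\xi\snn\circ\sp y\rmdss11\mu\ssp$; and since $y\,.\KPt8\sigma$ takes at $\eta\in A_\xi$ the value $y\fvalue\eta\fvalue\xi=\roman{ev}\KPt2\sbi\xi\snn\circ\sp y\fvalue\eta$ and vanishes off $\bigcup_\xi A_\xi\ssp$, additivity of the integral gives the same value for $\smb U\aR 1\sn\fvalue\smb X=\int_{\KPp1.1\Omega\,}y\,.\KPt8\sigma\rmdss11\mu\ssp$.

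Hence $\smb U$ and $\smb U\aR 1$ agree on $D$, and by density and continuity on all of $\vecs F\ssp$, which is the asserted formula. The hard part will be the first stage: correctly recognising $\Nu\aR 1$ as a dominating norm for the suitable space $\vPi\dlsigss00$, so that the single integrability hypothesis propagates through Lemma \ref{Le suit dom} to all bounded quasi\ssp-\sp seminorms and thereby places $y$ in the target space. The density\ssp-\sp and\ssp-\sp continuity argument of the second stage is then routine.
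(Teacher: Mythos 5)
Your proof is correct and follows essentially the same route as the paper's own: membership $y\in\bigcup\ssp\vecs\mvsLrs23^{p\sast}\ssn(\ssp\mu\,,\spp\vPi\dlsigss00\spp)$ (which the paper dismisses as \q{directly from the definition} and you usefully unwind via suitability of $\vPi\dlsigss00$ and Lemma \ref{Le suit dom}), then agreement of $\smb U$ with the pairing functional on vectors with simple representatives, then density (Proposition \ref{Pro simp Lp dense}) plus continuity to conclude. Your appeal to Proposition \ref{Pro LpLp* dual} for continuity of the pairing is interchangeable with the paper's explicit H\"older estimate and Corollary \ref{Coro Io inj etc}, since the latter rests on the former.
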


\begin{proof} We get \math{y\in\bigcup\ssp\vecs
\mvsLrs23^{p\sast}\ssn(\ssp\mu\,,\spp\vPi\dlsigss00\spp) } directly from the 
definition, and hence 
only the last formula has to be verified. To get this, we note that for \math{
\smb X} and \math{x} with \math{\rng x} finite and \mathss30{
x \in \smb X\in\vecs F}, \,i.e.\ for some finite function \math{ \scrmt S 
 \inc (\ssp\mu\invss44\image\spp\rbb R^+\spp\sbig)0\times\vecs\vPi } with \math{
\dom\scrmt S} disjoint and \math{x=
\sigrd\vPi\expnota^\ssp\Omega\sp]_{vs}\,\text{-}\sum_{\,A\ssp\in\ssp
\dom\snn\scrm7 S\,}
{}^{\Omega\sp,\sp\vPi}\ssp
(\ssp\scrmt S\fvalue\ssn A\ssp)\sp\sbi A } we trivially have \inskipline{.5}{20}

$ \smb U\sp\fvalue\snn\smb X
= \sum_{\,A\ssp\in\ssp\dom\snn\scrm7 S}\sp
   \int_{\,A}\ssp y\fvalue\eta\fvalue(\ssp\scrmt S\fvalue\ssn A\ssp)
    \rmdss11\mu\,(\sp\eta\sp)
= \int_{\KP{1.1}\Omega\,}y\,.\KPt8 x\rmdss11\mu $ \inskipline{.7}0

and by Proposition \ref{Pro Hölder} with \math{ \Alf = \seqss33{
 \Nu\aR 1\sn\circ\sp y\fvalue\eta\cdot(\ssp\Nu\circss11 x\fvalue\eta\ssp) : 
  \eta = \eta} } we get \inskipline1{19.1}

$ \big|\sp\int_{\KP{1.1}\Omega\,}y\,.\KPt8 x\rmdss11\mu\KP1|
= \big|\,\sum_{\,A\ssp\in\ssp\dom\snn\scrm7 S}\sp
   \int_{\,A}\ssp y\fvalue\eta\fvalue(\ssp\scrmt S\fvalue\ssn A\ssp)
    \rmdss11\mu\,(\sp\eta\sp)\KP1|$ \inskipline{.7}{41}

${}\le\sum_{\,A\ssp\in\ssp\dom\snn\scrm7 S}\sp\int_{\,A\,}|\KP{1.2}
    y\fvalue\eta\fvalue(\ssp\scrmt S\fvalue\ssn A\ssp) \KP{1.1} |
     \rmdss21\mu\,(\sp\eta\sp)$ \inskipline{.7}{41}

${}\le\sum_{\,A\ssp\in\ssp\dom\snn\scrm7 S}\sp\upint\ssp
       \Nu\fvalss10(\ssp\scrmt S\fvalue\ssn A\ssp)\KP1(\ssp
       \Nu\aR 1\sn\circ\sp y\KP1|\KP1 A\ssp)\rmdss11\mu$ \inskipline{.7}{41}

${}\le \upint\sp\Alf\rmdss11\mu
   \le \|\KP1\Nu\aR 1\sn\circ\sp y\KP1\|\Lnorss40^{p\sast}_\mu\,
        \|\KP1\Nu\circss11 x\KP1\|\Lnorss33^p_\mu \KP1 $. \inskipline10

Since by Proposition \ref{Pro simp Lp dense} on page \pageref{Pro simp Lp dense} 
above the set of vectors with simple representatives is \mathss35{\taurd F
}--\,dense, from Corollary \ref{Coro Io inj etc} it follows that \math{
\smb U\sp\fvalue\snn\smb X = \int_{\KP{1.1}\Omega\,}y\,.\KPt8 x\rmdss11\mu } 
holds for all \math{x\ssp,\sp\smb X} with \math{x\in\smb X\in\vecs F}, \,and 
this is precisely what we needed.
  \end{proof}

\begin{proposition}\label{Pro L^1'=L^i}

Let $\,\mu$ be an almost decomposable positive measure on $\,\Omega${\,\rm, }%
and with $\,\bosy K\in\setRC$ let $\,F=\mvLrs42^1(\ssp\mu\,,\spp\bosy K\ssp)$ 
and $\,F\aar 1=\mvLrs23^\plusinftyy(\ssp\mu\,,\spp\bosy K\ssp)$ and \vskip.5mm\centerline{$
 \Iota\ar 1=\seq{ \KP{1.2} \vecs F\snn\times\mathbb C\capss31\{\,
 (\ssp\smb X\sp,\spp t\ssp) : \aall{x\in\smb X\sp,\sp y\in\smb Y}\,
  t = \int_{\KP{1.1}\Omega\,}x\cdot y\rmdss11\mu\KPt9\} : 
       \smb Y\in\vecs F\aar 1\, } \KP1 $.} \inskipline{.5}0

Then $\,\Iota\ar 1\in\Lis(\sp F\aar 1\sp,\spp F\dlbetss10\sp)$ holds.
  \end{proposition}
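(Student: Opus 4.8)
The plan is to reduce everything to surjectivity plus the open mapping theorem, the surjectivity being the substantial point. First note that here $\vPi=\bosy K$ is one\ssp-\sp dimensional, so its weak and strong duals both reduce to $\bosy K$ itself, $\mvLrs23^\plusinftyy(\ssp\mu\,,\spp\bosy K\ssp)$ coincides with $\mvsLrs23^\plusinftyy(\ssp\mu\,,\spp\bosy K\ssp)$ (scalar measurability and simple measurability agreeing for $\bosy K$--\,valued functions), and the pairing $y\,.\KPt8 x$ reduces to the pointwise product $x\cdot y$. Thus $\Iota\ar 1$ is exactly the map $\Iota$ of Corollary \ref{Coro Io inj etc} taken with $p=1$ and $\vPi=\bosy K$, whence $\Iota\ar 1\in\Cal L\,(\sp F\aar 1\sp,\spp F\dlbetss10\sp)$ holds with $\Iota\ar 1$ an injection. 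Moreover $F\in\BaSps0(K)$ by Corollary \ref{Cor L^p Ban}, $F\aar 1\in\BaSps0(K)$ by Theorem \ref{Th L_s^p Ba} (here $\mu$ is almost decomposable and $\bosy K$ is suitable), and the strong dual $F\dlbetss10$ is Banachable as well. A continuous linear bijection between Banachable spaces is a linear homeomorphism by the open mapping theorem (cf.\ \cite{Jr}), so it will suffice to prove that $\Iota\ar 1$ is surjective; this then gives $\Iota\ar 1\in\Lis(\sp F\aar 1\sp,\spp F\dlbetss10\sp)$.

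For surjectivity I would fix $\smb U\in\Cal L\,(\sp F\sp,\spp\bosy K\ssp)$, write $\smb M$ for its norm, and let $\scrmt A\,,\sp N\sprim1$ be as in Definitions \ref{df decomp}\,(2). For each $A\in\scrmt A$ the isometric extension\ssp-\sp by\ssp-\sp zero embedding of $\mvLrs42^1(\ssp\mu\KP1|\KP1\Pows A\,,\spp\bosy K\ssp)$ into $F$ turns $\smb U$ into a functional of norm $\le\smb M$ on the $L^1$ space of the finite measure $\mu\KP1|\KP1\Pows A$. By the classical scalar duality over a finite measure space (Radon--Nikodym; cf.\ \cite{Du}) there is then some $y\ar A$ with essential supremum $\le\smb M$ representing this restriction, and, using the axiom of choice to select one such $y\ar A$ for each $A\in\scrmt A$ and recalling that $\scrmt A$ is disjoint, I would define $y$ on $\Omega=\bigcup\,\scrmt A\cupss42 N\sprim1$ by $y\KP1|\KP1 A=y\ar A$ for $A\in\scrmt A$ and $y\KP1|\KP1 N\sprim1=N\sprim1\times\snn\{\ssp 0\ssp\}$.

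It then remains to check the three hypotheses of Lemma \ref{Le-first} for this $y$, with $\Nu$ the absolute value and $\Nu\aR 1$ its dual norm. The bound $\|\KP1\Nu\aR 1\snn\circ\sp y\KP1\|\Lnorss50^{p^*}_\mu\le\smb M<\plusinfty$ is where the strength of almost decomposability is needed: the set $N\sprimm1$ on which $|\,y\,|\suba>\smb M$ meets each $A\in\scrmt A$ in the $\mu$--\,null exceptional set of $y\ar A$, so the second clause of Definitions \ref{df decomp}\,(2) forces $N\sprimm1$ to be $\mu$--\,negligible. Finitely almost scalar measurability of $(\KPt5 y\,;\spp\mu\,,\spp\vPi\dlsigss00\spp)$ follows from Lemma \ref{Le deco meas}, each restriction $y\ar A$ being scalarly measurable over $A$. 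Finally, for $A\in\mu\invss44\image\spp\rbb R^+$ the identity $\smb U\fvalss11\lfloor\,^{p\sp,\ssp\mu\sp,\ssp\vPi}\ssp\xi\sp\sbi A=\int_{\,A}\ssp\roman{ev}\KPt2\sbi\xi\snn\circ\sp y\rmdss11\mu$ is obtained by splitting $A$ along the countably many pieces $A\capss31 A\sp\ar 1$ of positive measure (the remainder lying in the $\mu$--\,null set $A\capss31 N\sprim1$), summing the piecewise identities, and passing to the limit by continuity of $\smb U$ and $L^1$--\,convergence of the partial sums. Lemma \ref{Le-first} then yields $y\in\bigcup\ssp\vecs\mvsLrs23^\plusinftyy(\ssp\mu\,,\spp\vPi\dlsigss00\spp)=\bigcup\ssp\vecs F\aar 1$ and that the class $\smb Y$ of $y$ in $F\aar 1$ satisfies $\Iota\ar 1\fvalss02\smb Y=\smb U$, completing the surjectivity and hence the proof.

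The main obstacle is the gluing in the second step together with the first verification of the third: producing one globally essentially bounded density out of the local Radon--Nikodym derivatives, and showing that the union over $\scrmt A$ of their individual exceptional null sets is itself $\mu$--\,negligible. This is precisely the stability clause built into almost decomposability in Definitions \ref{df decomp}\,(2), and it is what replaces the $\sigma$--\,finiteness used in the classical argument.
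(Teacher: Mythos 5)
Your proposal is correct, and its core coincides with the paper's own proof: the same reduction to surjectivity (continuity and injectivity from Corollary \ref{Coro Io inj etc} with $p=1$ and $\vPi=\bosy K$, Banachability, open mapping theorem), the same application of classical finite\ssp-\sp measure duality \cite[Theorem 6.4.1\sp, p.\ 162]{Du} on each $A\in\scrmt A$, the same gluing by the axiom of choice with value $0$ on $N\sprim1$, and Lemma \ref{Le deco meas} for measurability. You diverge only at the end: the paper verifies $\smb U\sp\fvalue\snn\smb X = \int_{\KPp1.1\Omega\,}x\cdot y\rmdss11\mu$ directly for arbitrary $x\in\smb X\in\vecs F$, by enumerating the countably many $A\in\scrmt A$ carrying mass of $x$ and passing to the limit by continuity of $\smb U$ and dominated convergence, whereas you check the hypotheses of Lemma \ref{Le-first} (the representation identity only on the classes $\lfloor\,^{1\sp,\ssp\mu\sp,\ssp\vPi}\ssp\xi\sp\sbi A$) and let that lemma extend the identity to general $\smb X$. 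Since Lemma \ref{Le-first} precedes this proposition and its proof does not depend on it, there is no circularity; indeed your finish is exactly the pattern the paper itself uses in Lemmas \nfss A\,\ref{LeA(1)}$\,,\ldots\KPt8$\nfss A\,\ref{LeA(4)}, so it is a legitimate and slightly more economical route. A second small difference: the paper insists on representatives $y\ar 1$ with $\sup\ssp\rng(\ssp\Abrs00^1\circ\sp y\ar 1) \le \smb M$ everywhere, so the glued $y$ is globally bounded and the essential\ssp-\sp supremum estimate is immediate, while you keep essentially bounded representatives and recover the global bound from the second clause of Definitions \ref{df decomp}\,(2); both work.

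One detail should be corrected. In your verification of the third hypothesis of Lemma \ref{Le-first}, the remainder of $A$ after removing the countably many pieces $A\capss31 A\sp\ar 1$ of positive measure does \emph{not} lie in $A\capss31 N\sprim1$: it also contains the intersection of $A$ with the union of the uncountably many $A\sp\ar 1\in\scrmt A$ for which $\mu\fvalue(\sp A\capss31 A\sp\ar 1)=0$, and that part is disjoint from $N\sprim1$. Its negligibility requires, once more, the second clause of Definitions \ref{df decomp}\,(2) --- the same clause you invoke for the essential bound, and the same device the paper employs at the corresponding point in the proof of Lemma \nfss A\,\ref{LeA(1)}. The fix is one line, but the parenthetical as written is false.
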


\begin{proof} Taking \math{p=1} and \math{\vPi=\bosy K} in Corollary \ref{Coro Io inj etc} 
above, we see that $\Iota\ar 1$ is a continuous linear injection \mathss35{
F\aar 1\to F\dualbeta}. Since \math{F} is normable by Theorem \ref{L^p in TVS} 
above, by Corollary \ref{Cor L^p Ban} the spaces \math{F\aar 1} and \math{
F\dlbetss10} are \erm Banachable, and so by the open mapping theorem we only 
need to prove that \math{ \Cal L\,(\sp F\spp,\spp\bosy K\ssp) \inc 
 \rng\Iota\ar 1 } holds. To establish this, arbitrarily fixing \mathss37{
\smb U\in\Cal L\,(\sp F\spp,\spp\bosy K\ssp) }, \,let 

$\smb M=\sup\sp\big\{\KPt8|
\KP{1.1}\smb U\sp\fvalue\snn\smb X\KPt9|:
\smb X\in\vecs F\ssp\text{ and }\ssp\aall{x\in\smb X}\,
\int_{\KP{1.1}\Omega\,}
|\KP1 x\fvalue\eta\KP1|\rmdss11\mu\,(\spp\eta\spp)
\le 1\KPt9\}$ , 

and let $\scrmt A$ and $N\sprim1$ be as in 
Definitions \ref{df decomp}\,(2) on page \pageref{decos A} above. 
Then $\smb M\in\lbb R_+$ holds, and we let $\scrmt Y\ar 1$ be the set of all 
pairs $(\sp A\ar 1\sp,\spp y\ar 1)$ with $A\ar 1\in\scrmt A$ and 

$y\ar 1\in\bigcup\ssp\vecs
\mvLrs23^\plusinftyy(\ssp\mu
\KP1|\KP1\Pows A\ar 1\sp,\spp\bosy K\ssp) \,$ and $\,
\sup\ssp\rng(\ssp\Abrs00^1\circ\sp y\ar 1) \le \smb M \,$ and

such that 
$\smb U\sp\fvalue\snn\smb X=
\int_{\,\aars A_1}x\cdot y\ar 1\rmdss01\mu$ 

holds for all $x\ssp,\sp\smb X$ with $x\in\smb X\in\vecs F$ and 
$x\invss44\image\sp[\,\ssbb52 C\setminus\{\ssp 0\ssp\}\KP{1.1}]\inc A\ar 1
\ssp$. 

Then from \cite[Theorem 6.4.1\sp, p.\ 162]{Du} we know that 
$\scrmt A\inc\dom\scrmt Y\ar 1$ holds, and hence by the {\sl axiom of choice\sp} 
there is a function $\scrmt Y\inc\scrmt Y\ar 1$ with 
$\scrmt A\inc\dom\scrmt Y\ar 1\inc\dom\scrmt Y\,$. Taking \math{y=
N\sprim1\sn\times\snn\{\ssp 0\ssp\}\cupss24\bigcup\ssp\rng\scrmt Y}, \,by 
Lemma \ref{Le deco meas} on page \pageref{Le deco meas} 
above \math{(\KPt5 y\,;\spp\mu\,,\spp\bosy K\ssp) } now 
\linebreak 
is finitely almost measurable, and hence 
\math{y\in\smb Y} holds for some \mathss31{\smb Y\in\vecs F\aar 1}.

Then for given \math{x\in\smb X\in\vecs F} letting 
\mathss38{\scrmt A\sp\ar 0=\scrmt A\capss31\{\,A:
\int_{\,A\ssp}\Abrs00^1\circ\sp x\rmdss11\mu\not=0\KP1\} }, \,we have 
\math{\scrmt A\sp\ar 0} countable. If \math{\scrmt A\sp\ar 0} is infinite, 
we take any bijection \mathss36{\ebit A:\bbNo\to\scrmt A\sp\ar 0}, \,and if 
it is finite, for some 
\math{\smb N\in\bbNo} we first take a 
bijection \mathss36{\ebit A\ar 0\snn:\smb N\to\scrmt A\sp\ar 0} and then 
put \math{\ebit A=(\ssp\bbNo\sn\setminus\smb N\ssp)\times\snn\{\ssp
\emptyset\ssp\}\cupss21\ebit A\ar 0}. 
Let now 
\math{\ebit B=\sp\big\langle\,\bigcup\KP1(
\sp\ebit A\KPt8|\KP1 i\ssp):i\in\bbNo\,\rangle} and 

$\roman x\,i=(\ssp\Omega\spp\setminus(\ssp\ebit B\fvalss01 i\ssp))\times\snn
\{\ssp 0\ssp\}\cupss22
(\ssp\ebit B\fvalss01 i
\times\snn\{\ssp 1\ssp\}\sp\sbig)0 \,$ and

$\bosy x=
\seqss30{(\ssp\Omega\spp\setminus(\sp\ebit A\sn\fvalue\sp i\ssp))\times\snn
\{\ssp 0\ssp\}\cupss22(\ssp x\KP1|\KP1(\sp\ebit A\sn\fvalue\sp i\ssp)):
i\in\bbNo}$

and \math{\ebit X=
\seqss30{\uniqset\smb X\sn:\bosy x\fvalss01 i\in\smb X\in\vecs F\sn:
i\in\bbNo} } 

and 
\mathss39{\ebit Y\ssp=
\seqss33{\sigrd F\KPt8\text{-\sp}\sum\KP1(\sp\ebit X\KPt8|\KP1 i\ssp)
:i\in\bbNo} }.

Then we have 
\math{\ebit Y\ssp\to\smb X} in top \math{\taurd F} and hence also 
\math{\smb U\circ\ebit Y\to
\smb U\sp\fvalue\snn\smb X}. Consequently, by 
dominated convergence we obtain \inskipline1{11}

$\smb U\sp\fvalue\snn\smb X
= \lim\sp\sbi{i\ssp\to\ssp\infty}\,(\ssp\smb U\circ\ebit Y\fvalss81 i\ssp)
=
 \lim\sp\sbi{i\ssp\to\ssp\infty}\sum_{\KPt8 k\ssp\in\ssp i\,}
(\ssp\smb U\circ\ebit X\fvalss21 k\ssp) $ \inskipline1{18}

${}
= \lim\sp\sbi{i\ssp\to\ssp\infty}\sum_{\KPt8 k\ssp\in\ssp i\sp}
   \int_{\,\bmii6 A\ffvalue k\,}x\cdot y\rmdss11\mu 
= \lim\sp\sbi{i\ssp\to\ssp\infty}
   \int_{\KPt8\bmii6 B\sp\ffvalue i\,}x\cdot y\rmdss11\mu $ \inskipline1{18}

${}
= \lim\sp\sbi{i\ssp\to\ssp\infty}
   \int_{\KPp1.1\Omega\,}x\cdot y\cdot\roman x\,i\rmdss11\mu
= \int_{\KPp1.1\Omega\,}x\cdot y\rmdss11\mu
= \Iota\ar 1\ssn\fvalue\ssp\smb Y\KPt8\fvalue\smb X\ssp$.
  \end{proof}

For a topology \math{\scrmt T} we say that that \math{\scrmt T} is 
{\it separably metrizable\ssp} if{}f \math{\scrmt T} is a metrizable topology 
and there is a countable \math{D\inc\bigcup\,\scrmt T} with \mathss34{
\bigcup\,\scrmt T\inc\roman{Cl\KPt8}\sbi{\scrm7 T\KPt8}D }. In particular then \math{
D} is \mathss37{\scrmt T}--\,dense. Now, for the purpose of Lemma \ref{Le |int| < M imp ...} 
below we put the following

\begin{definitions}\label{df sep cnv metr}

(1) \ Say that \math{C} is {\it separably uniform metrizable\ssp} in \math{E} 
    if{}f \math{E} is a real or complex topological vector space and there are 
some nonempty countable sets \mathss03{D\ssp,\sp\scrmt U} with \math{D\inc C
 \inc\vecs E} and \math{\scrmt U\inc\neiBoo E} and such that \math{D} is \mathss37{
\taurd E\leiss33 C}--\,dense and for every \math{x\in C} it holds that \math{
\big\{\,[\KP1\{\ssp x\ssp\} +\sp U\KPp1.1]\svs E\capss13 C : \sp U \sn \in 
 \scrmt U\KP1\} } is a filter base for \mathss38{
\Nbh(\ssp x\ssp,\spp\taurd E\leiss33 C\ssp) }, \inskipline{.5}2

(2) \ Say that \math{E} is {\it countably separably convex metrizable\ssp} 
    if{}f \math{E} is a real or complex Hausdorff locally convex space and 
there is a countable \math{\scrmt C} with \math{ \vecs E = \bigcup\,\scrmt C} 
and such that \math{C} is separably uniform metrizable in \mathss03{E} for 
every \mathss34{C\in\scrmt C}.
  \end{definitions}

Examples of countably separably convex metrizable spaces are all locally 
convex spaces \math{E} with \math{\taurd E} separably metrizable as well as 
countable strict inductive limits of such spaces. In particular, for example \math{
\mathscr D\,(\ssbb43 R) } and \math{\Cinfty(\ssbb43 R) } are countably 
separably convex metrizable. Also \math{\vPi\dlsigss00\spp} is countably 
separably convex metrizable when \math{\vPi} is normable with \math{
\taurd\vPi} a separable topology.

Note that by the metrization theorem \cite[6.13\ssp, p.\ 186]{Ky} the 
\q{uniform} filter base condition in Definitions \ref{df sep cnv metr}\,(1) 
implies that \math{\taurd E\leiss33 C} is a metrizable topology. We leave it 
as an {\sl open problem\sp} whether we would have obtained an equivalent 
definition if in \ref{df sep cnv metr}\,(1) instead of that uniformity 
condition we had just required \math{\taurd E\leiss33 C} to be a metrizable 
topology. We also remark that the definition given above is precisely what we 
  need in the next

\begin{lemma}\label{Le |int| < M imp ...}

Let $\,\vPi$ be countably separably convex metrizable{\sp\rm, }and let $\,C$ 
be closed and convex in $\,\vPi\sp$. Also let $\,
(\ssp x\,;\spp\mu\,,\spp\vPi\ssp)$ be finitely scalarly integrable and such 
that \vskip.4mm\centerline{$
\int_{\,A\,}u\circss00 x\rmdss11\mu\in\{\KPt8\mu\fvalue\ssn A\cdot t : 
 t\in u\sp\image\sp C\KP1\} $} \inskipline{.4}0

for $\, A \in \mu\invss44\image\spp\rbb R^+$ and $\, u \in 
 \Cal L\,(\sp\vPi\Reit2\sp,\sn\tfbbR\ssp) \KPt8 $. Then  $\,
x\invss46[\KP1\vecs\vPi\sp\setminus C\KP1]$ is $\,\mu\,$--\,negligible.
  \end{lemma}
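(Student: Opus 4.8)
The plan is to use that the closed convex set $C$ is an intersection of closed half\,-\,spaces, reduce the statement to a single\,-\,functional fact of measure theory, and then replace the a priori uncountable family of separating functionals by a countable one, which is precisely where the hypothesis that $\vPi$ be countably separably convex metrizable enters. Write $W=\vecs\vPi\setminus C$, which is open since $C$ is closed, and for $u\in\Cal L\,(\sp\vPi\Reit2\sp,\sn\tfbbR\ssp)$ abbreviate $c\sbi u=\sup\,(\ssp u\ssp\image\ssn C\ssp)$ and $H\sbi u=\{\,\xi:u\fvalue\xi>c\sbi u\,\}$. The assertion will follow once I establish two steps. Step (I): for each such $u$ with $c\sbi u<\plusinfty$ the set $\{\,\eta:u\circss01 x\fvalue\eta>c\sbi u\,\}$ is $\mu$--\,negligible. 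Step (II): for each member $C\ar 0$ of a countable cover $\scrmt C$ of $\vecs\vPi$ witnessing that $\vPi$ is countably separably convex metrizable, there is a countable family $\{\,u\sbi j:j\in\bbNo\,\}$ with $C\ar 0\capss22 W\inc\bigcup\,\{\,H\sbi{u\sbi j}:j\in\bbNo\,\}$.

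For Step (I) I first note that finite scalar integrability, in the sense of Definitions \ref{df sc+Gel-int}\,(2), makes $\int_{\,A\,}v\circss01 x\rmdss11\mu$ a finite scalar for every $A\in\mu\invss44\image\spp\rbb R^+$ and every $v\in\Cal L\,(\sp\vPi\sp,\spp\bosy K\ssp)$; taking real parts in the complex case this yields finiteness of $\int_{\,A\,}u\circss01 x\rmdss11\mu$ for every continuous real\,-\,linear $u$, so that by the discussion following Lemma \ref{Le +int} the function $u\circss01 x$ is finitely almost $\mu$--\,measurable. The hypothesis gives $\int_{\,A\,}u\circss01 x\rmdss11\mu=\mu\fvalue\ssn A\cdot t$ for some $t\in u\ssp\image\ssn C$, hence $\le\mu\fvalue\ssn A\cdot c\sbi u$. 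Fixing $A\ar 0\in\mu\invss44\image\spp\rbb R^+$ and a null set $N\ar 0$ off which $u\circss01 x$ is measurable, for each $n\in\rbb Z^+$ the measurable set $B\sbi n=(\ssp A\ar 0\setminus N\ar 0)\capss22\{\,\eta:u\circss01 x\fvalue\eta\ge c\sbi u+n^{\,\mminus 1}\,\}$ must be $\mu$--\,null, since $\mu\fvalue B\sbi n>0$ would put $B\sbi n\in\mu\invss44\image\spp\rbb R^+$ and force $\int_{\,B\sbi n\,}u\circss01 x\rmdss11\mu>\mu\fvalue B\sbi n\cdot c\sbi u$, contradicting the bound. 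Thus $A\ar 0\capss22\{\,\eta:u\circss01 x\fvalue\eta>c\sbi u\,\}\inc N\ar 0\cupss22\bigcup\,\{\,B\sbi n:n\in\rbb Z^+\,\}$, a null set, which is exactly the negligibility required in Step (I).

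Step (II) is the main obstacle and is where the separability is indispensable. Fix $C\ar 0\in\scrmt C$; by Definitions \ref{df sep cnv metr} together with the metrization remark made there, the trace topology $\taurd\vPi\leiss22 C\ar 0$ is separable metrizable, hence second countable, so that every subspace of $C\ar 0$ is Lindel\"of. For each $\xi\in C\ar 0\capss22 W$ we have $\xi\not\in C$, whence by the Hahn\,--\,Banach separation theorem in the Hausdorff locally convex space $\vPi$ there is a continuous real\,-\,linear $u\sbi\xi$ with $u\sbi\xi\fvalue\xi>c\sbi{u\sbi\xi}<\plusinfty$, so $\xi\in H\sbi{u\sbi\xi}$ and $H\sbi{u\sbi\xi}\capss22 C=\emptyset$. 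The sets $H\sbi{u\sbi\xi}\capss22 C\ar 0$ form an open cover of the subspace $C\ar 0\capss22 W$, and by the Lindel\"of property this cover has a countable subcover indexed by some $\{\,\xi\sbi j:j\in\bbNo\,\}$; putting $u\sbi j=u\sbi{\xi\sbi j}$ gives the required family. The delicate points I anticipate here are that each pointwise separation already delivers an open half\,-\,space containing an entire neighbourhood of $\xi$ (so no uniform refinement is needed), and that Lindel\"ofness is legitimately inherited by the non\,-\,closed subspace $C\ar 0\capss22 W$ — both of which hold once second countability of $C\ar 0$ is in hand.

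Finally I assemble the pieces. By Steps (I) and (II), for each fixed $C\ar 0$ the set $\{\,\eta:x\fvalue\eta\in C\ar 0\capss22 W\,\}$ is contained in $\bigcup\,\{\,\{\,\eta:u\sbi j\circss01 x\fvalue\eta>c\sbi{u\sbi j}\,\}:j\in\bbNo\,\}$, a countable union of $\mu$--\,negligible sets. Such a union is again $\mu$--\,negligible, since the trace of each summand on a given $A\in\mu\invss44\image\spp\rbb R^+$ lies in a null set $N\sbi j$ and hence the trace of the union lies in the null set $\bigcup\,\{\,N\sbi j:j\in\bbNo\,\}$. As $\vecs\vPi=\bigcup\,\scrmt C$ is a countable union and $\rng x\inc\vecs\vPi$, the set $\{\,\eta:x\fvalue\eta\in W\,\}$ is a further countable union over $\scrmt C$ of negligible sets, and is therefore $\mu$--\,negligible, as asserted. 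The degenerate possibility $C=\emptyset$ is automatic: the hypothesis then demands a real number belong to $u\ssp\image\ssn\emptyset=\emptyset$, which is impossible whenever $\mu\invss44\image\spp\rbb R^+\not=\emptyset$, so in that case $\mu\invss44\image\spp\rbb R^+=\emptyset$ and every set, in particular $\{\,\eta:x\fvalue\eta\in\vecs\vPi\setminus C\,\}$, is trivially $\mu$--\,negligible.
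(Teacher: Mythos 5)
Your proof is correct, and it shares the paper's overall architecture: represent $\vecs\vPi\setminus C$ as a countable union of open half\,-\,spaces attached to separating real\,-\,linear functionals, then show for each single functional $u$ that $u\circss00 x$ stays on the $C\ssp$-\,side almost everywhere on every set of finite positive measure, and finish by taking countable unions of negligible sets. The two key steps are, however, carried out by genuinely different means. For the countability step the paper never invokes metrizability at all: it works directly with the raw data of Definitions \ref{df sep cnv metr} (the countable dense sets $D$ and the countable neighbourhood families $\scrmt U$), uses dependent choice to arrange refinements $[\KPp1.1 V + V\KPp1.1]\svs\vPi\inc U$, and applies Hahn\,--\,Banach to separate whole translated basic neighbourhoods $[\KPp1.1\{\ssp\xi\ssp\}+V\KPp1.1]\svs\vPi$ with $\xi$ in the dense set from $C$, so that the countable family of functionals is constructed by hand. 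You instead separate each point of $C\ar 0\capss22 W$ from the closed convex $C$ and then invoke the metrization remark following Definitions \ref{df sep cnv metr} to conclude that each $C\ar 0$ is separable metrizable, hence second countable, hence hereditarily Lindel\"of, extracting a countable subcover of half\,-\,spaces. This is shorter and more conceptual, at the price of leaning on that remark (which the paper states but its own proof never uses) and on a heavier use of choice (one functional per point of a possibly uncountable set, prior to the Lindel\"of extraction) --- harmless here, since the paper's set theory includes global choice. For the per\,-\,functional step, the paper proves the two\,-\,sided statement that the values of the measurable representative lie in the interval $u\ssp\image\ssn C$ almost everywhere, whereas your one\,-\,sided version (values do not exceed $\sup\,(\ssp u\ssp\image\ssn C\ssp)$, via the sets $B\sbi n$) is all that the covering argument requires, and your argument for it is sound; you also dispose of the degenerate case $C=\emptyset$ explicitly, a point the paper passes over in silence.
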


\begin{proof} Let \math{\scrmt C} be as in 
Definitions \ref{df sep cnv metr}\,(2) above when in place of \math{E} we have 
taken the \math{\vPi} in the lemma. Then taking into account 
(\sp\erm{NB}\,2\sp) in \cite[p.\ 33]{Jr} by {\sl dependent choice\sp} we find 
countable sets \math{D\inc\vecs\vPi} and \math{\scrmt P\inc
 \scrmt C\times(\ssp\taurd\vPi\capss12\neiBoo\vPi\ssp)} with 
\math{\scrmt C\inc\dom\scrmt P } and such that for \math{
(\ssp C\ar 1\sp,\sp U\ssp)\in\scrmt P } and 
\math{\scrmt U=\scrmt P\,\image\snn\{\,C\aar 1\} } it holds that 
\math{U} is absolutely \mathss37{\sigrd\vPi}--\,convex and there is 
\math{V\in\scrmt U} with 
\math{[\KPp1.1 V\sn + \sp V\KPp1.1]\svs\vPi\inc U} 
and 
also \math{C\ar 1\snn\cap\KPt3 D} is \mathss37{
\taurd\vPi\leiss33 C\ar 1}--\,dense and 
                                 for every \math{\xi\in C\ar 1} it holds that 
 \newline
\math{
\big\{\,[\KP1\{\ssp\xi\ssp\} +\sp V\KPp1.1]\svs\vPi\capss13 C\ar 1\sn : 
\sp V \sn \in 
 \scrmt U\KP1\} } is a filter base for \mathss38{
\Nbh(\,\xi\,,\spp\taurd\vPi\leiss33 C\ar 1\spp) }.

Now we let \math{\scrmt R} be the countable set of all triplets \math{
(\ssp C\ar 1\sp,\spp\xi\,,\spp U\aar 1) } such that there is 
\math{U} with \math{
(\ssp C\ar 1\sp,\sp U\ssp)\in\scrmt P } and 
\math{\xi\in C\ar 1\snn\cap\KPt3 D } and 
\math{U\aar 1=[\KPp1.1\{\ssp\xi\ssp\} + \sp U\KPp1.1]\svs\vPi } and 
\mathss36{C\capss23 U\aar 1=\emptyset}. Then by 
{\sl Hahn\,--\,Banach\sp} \cite[7.3.2\ssp, p.\ 130]{Jr} in conjunction 
with {\sl countable choice\sp} we get existence of a function 
\math{\scrmt R\to\Cal L\,(\sp\vPi\Reit2\sp,\sn\tfbbR\ssp) } with the 
property that 
\newline
\math{\sup\,(\ssp u\image C\ssp) < u\fvalss01\xi\ar 1} 
holds for 
\math{(\ssp C\ar 1\sp,\spp\xi\,,\spp U\aar 1\sp,\spp u\ssp)\in
\scrmt S} and \mathss32{\xi\ar 1\in\sp U\aar 1}.

Now taking \mathss38{ \scrmt O = \{\,u\invss44\image\sp\openIval{\sup\,(\ssp 
 u\image C\ssp)\,,\plusinfty\sp}:u\in\rng\scrmt S\KPt8\} }, \,we have \mathss30{
\vecs\vPi\sp\setminus C={\ssn}} \mathss04{\bigcup\,\scrmt O}. Indeed, 
trivially \math{\bigcup\,\scrmt O\inc\vecs\vPi\sp\setminus C } holds, and for 
the converse inclusion arbitrarily fixing \math{ \xi\ar 0 \in 
 \vecs\vPi\sp\setminus C} we first find some \math{C\ar 1} 
with \mathss34{\xi\ar 0\in C\ar 1\in \scrmt C}. Then we find 
\math{U\in\scrmt P\,\image\snn\{\,C\aar 1\} } such that for 
\math{U\aar 0=[\KPp1.1\{\,\xi\ar 0\sp\} + \sp U\KPp1.1]\svs\vPi } 
we have \mathss36{C\capss23 U\aar 0=\emptyset}. We further find 
\math{V\in\scrmt P\,\image\snn\{\,C\aar 1\} } with 
\math{[\KPp1.1 V\sn + \sp V\KPp1.1]\svs\vPi\inc U} 
and then there is some \math{\xi\in C\ar 1\snn\cap\KPt3 D } 
with \mathss30{(\,\xi - \xi\ar 0\spp)\svs\vPi\in\sp V}. Now 
putting \math{U\aar 1=[\KPp1.1\{\ssp\xi\ssp\} + \sp V\KPp1.1]\svs\vPi } 
we have \math{(\ssp C\ar 1\sp,\spp\xi\,,\spp U\aar 1)
 \in\scrmt R=\dom\scrmt S } and hence there is 
\math{u} with \mathss34{
(\ssp C\ar 1\sp,\spp\xi\,,\spp U\aar 1\sp,\spp u\ssp)\in\scrmt S }. Noting 
that now \math{\xi\ar 0\in U\aar 1\inc
u\invss44\image\sp\openIval{\sup\,(\ssp 
 u\image C\ssp)\,,\plusinfty\sp}
} holds, 
we obtain \mathss34{\xi\ar 0\in\bigcup\,\scrmt O }.

Now, to prove that \math{x\invss46[\KP1\vecs\vPi\sp\setminus C\KP1] } is \mathss37{
\mu}--\,negligible, arbitrarily fixing 
 \newline
\mathss30{A\sp\ar 0\in
 \mu\invss44\image\spp\rbb R^+}, \,by 
{\sl countable choice\sp} and the discussion after 
the proof of Lemma \ref{Le +int} on page \pageref{int not meas} we find 
\math{N\aar 1\in\mu\invss44\image\snn\{\ssp 0\ssp\} } and 
a function \math{\varPhi:\rng\scrmt S\to
\sp^{\aars A_0}\,\bbR} such that \math{
(\ssp\varphi\,;\spp\mu\KP1|\KP1\Pows A\sp\ar 0\,,\sn\tfbbR\ssp)
} is measurable and such that 
\math{u\circss00 x\fvalue\eta=\varphi\fvalue\eta} holds for 
\math{(\ssp u\ssp,\spp\varphi\ssp)\in\varPhi} and 
\mathss32{\eta\in A\sp\ar 0\sn\setminus N\aar 1}. For 
\math{A\in\dom\mu\capss22\Pows A\sp\ar 0} we then also have 
 \newline
\mathss38{
\int_{\,A}\ssp\varphi\rmdss21\mu=
\int_{\,A\,}u\circss00 x\rmdss11\mu\in
\{\KPt8
\mu\fvalue\ssn A\cdot t:t\in u\sp\image\sp C\KP1\} }. Now 
\math{u\sp\image\sp C} is a real interval and hence for 
\math{N\sprim1=\varphi\invss44
[\,\ssbb42 R\setminus u\sp\image\sp C\KPp1.1] } we have 
\mathss36{N\sprim1\in
\dom\mu\capss22\Pows A\sp\ar 0}. Since \math{0 < \mu\fvalue\snn N\sprim1} 
would 
trivially give a contradiction, we in fact have 
\mathss38{N\sprim1\in\mu\invss44\image\snn\{\ssp 0\ssp\} }. 
Then {\sl countable choice\sp} gives us existence of 
\math{N} with 
\math{N\aar 1\inc N\in\mu\invss44\image\snn\{\ssp 0\ssp\} } and such that 
 \newline
\math{\varphi\KPp1.1[\KP1 A\sp\ar 0\sn\setminus N\KP1]\inc 
u\sp\image\sp C} holds for 
\mathss34{(\ssp u\ssp,\spp\varphi\ssp)\in\varPhi}. It being a trivial 
exercise to check that now \math{
x\invss46[\KP1\vecs\vPi\sp\setminus C\KP1]\capss41 A\sp\ar 0
\inc N} holds, we are done.
  \end{proof}

From Lemma \ref{Le |int| < M imp ...} we obtain the following immediate

\begin{corollary}\label{Coro |f|<M}

Let $\,\mu$ be a positive measure on $\,\Omega$ with \œ$\,\mu\fvalss01\Omega < 
 \plusinfty${\,\rm, }and with \œ$\,\smb M\in\lbb R_+$ 
and 
\œ$\,\varphi\in\bigcup\ssp\vecs\mLrs42^1(\ssp\mu\,,\snn\tfbbC\ssp) $ let \œ$\,
\big|\sp\int_{\,A}\ssp\varphi\rmdss11\mu\KP1|\le\smb M
\KP1(\ssp\mu\fvalue\ssn A\ssp)$ hold for all 
\œ$\,A\in\dom\mu\,$. Then there is 
$\,N\in\mu\invss44\image\snn\{\ssp 0\ssp\}$ such that $\,
|\KP1\varphi\fvalue\eta\KP1|\le\smb M$ 
holds for all $\,\eta\in\Omega\spp\setminus N\spp$.
  \end{corollary}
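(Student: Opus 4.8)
The plan is to deduce the statement directly from Lemma \ref{Le |int| < M imp ...}, applied with $\vPi=\tfbbC$, with the closed disk $C=\vecs\tfbbC\capss21\{\,z:|\,z\,|\suba\le\smb M\,\}$ in place of the convex set, and with $\varphi$ in place of $x$. Since that lemma concludes that $\varphi\invss46[\KP1\vecs\tfbbC\sp\setminus C\KP1]=\{\,\eta:|\,\varphi\fvalue\eta\,|\suba>\smb M\,\}$ is $\mu$--negligible, it will only remain to turn negligibility into a single null set. So the work consists in checking the three hypotheses of the lemma.

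First I would dispose of the standing hypotheses. The field $\tfbbC$ is normable and $\taurd\tfbbC$ is a separable topology, so $\tfbbC$ is countably separably convex metrizable by the remark following Definitions \ref{df sep cnv metr}. The disk $C$ is evidently closed and convex in $\tfbbC$. Finally, since $\varphi\in\bigcup\ssp\vecs\mLrs42^1(\ssp\mu\,,\snn\tfbbC\ssp)$ and $\mu\fvalss01\Omega<\plusinfty$, the map $(\ssp\varphi\,;\spp\mu\,,\snn\tfbbC\ssp)$ is finitely scalarly integrable: on any set of finite measure the restriction of $\varphi$ is still $\mu$--integrable, so $\int u\circss00\varphi\rmdss11\mu$ exists for every continuous linear functional $u$.

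The substantial point is the averaging condition, and this is where care is needed. Every $u\in\Cal L\,(\ssp(\sp\tfbbC\ssp)\Reit1\,,\sn\tfbbR\ssp)$ is of the form $u\fvalss01 z=\roman{Re}\,(\ssp\bar c\,z\ssp)$ for a unique $c\in\vecs\tfbbC$, and as $z$ ranges over $C$ the quantity $\bar c\,z$ ranges over the disk of radius $|\,c\,|\suba\,\smb M$, whence $u\sp\image\sp C=[\,-|\,c\,|\suba\,\smb M\,,\spp|\,c\,|\suba\,\smb M\,]$. Fixing $A\in\mu\invss44\image\spp\rbb R^+$ and putting $w=\int_{\,A}\ssp\varphi\rmdss11\mu$, linearity and continuity give $\int_{\,A}\ssp u\circss00\varphi\rmdss11\mu=u\fvalss01 w=\roman{Re}\,(\ssp\bar c\,w\ssp)$, so the assumed bound yields $|\,\roman{Re}\,(\ssp\bar c\,w\ssp)\,|\suba\le|\,c\,|\suba\,|\,w\,|\suba\le|\,c\,|\suba\,\smb M\,(\ssp\mu\fvalue\ssn A\ssp)$. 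Hence $\int_{\,A}\ssp u\circss00\varphi\rmdss11\mu$ lies in $\{\,\mu\fvalue\ssn A\cdot t:t\in u\sp\image\sp C\,\}$, which is exactly the hypothesis of the lemma. Realizing $u\sp\image\sp C$ as this symmetric interval, and matching its $\mu\fvalue\ssn A$--scaling to the given complex bound, is the one step that is not mere bookkeeping.

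It remains to pass from $\mu$--negligibility of $N\sprim1=\{\,\eta:|\,\varphi\fvalue\eta\,|\suba>\smb M\,\}$ to a single null set. If $\mu\fvalss01\Omega=0$ I would simply take $N=\Omega$. Otherwise $\Omega\in\mu\invss44\image\spp\rbb R^+$ because $\mu\fvalss01\Omega<\plusinfty$, and since $N\sprim1\inc\Omega$ we have $\Omega\capss02 N\sprim1=N\sprim1\in\mu\invss44\image\spp\rbb R^+\lei N\sprim1$; the definition of $\mu$--negligibility then provides $N\in\mu\invss44\image\snn\{\ssp 0\ssp\}$ with $N\sprim1\inc N$. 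In both cases $\eta\in\Omega\spp\setminus N$ gives $\varphi\fvalue\eta\in C$, that is $|\,\varphi\fvalue\eta\,|\suba\le\smb M$, as required.
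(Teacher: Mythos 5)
Your proposal is correct and follows exactly the route the paper intends: the paper gives no explicit proof but introduces the corollary with "From Lemma \ref{Le |int| < M imp ...} we obtain the following immediate," and your argument is precisely that derivation spelled out — taking $C$ the closed disk of radius $\smb M$, identifying real-linear functionals on the realification of $\tfbbC$ as $z\mapsto\roman{Re}\,(\ssp\bar c\,z\ssp)$ to verify the averaging hypothesis, and using $\mu\fvalss01\Omega<\plusinfty$ to convert $\mu$--negligibility into a single null set. All three hypothesis checks and the final conversion are sound.
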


\begin{lemma}\label{Le L^1_si-compa}

Let 
$\,\bosy K\in\setRC$ and let $\,\mu$ be a positive measure with 

$\,\mu\fvalue\snn\bigcup\ssp\dom\mu < \plusinfty\,$. Also let 
$\,K\in
\bouSet
\mvLrs42^1(\ssp\mu\,,\spp\bosy K\ssp) \KP1$. Then $\,K$ is relatively 

$\,
\taurd\sbig(3\mvLrs42^1(\ssp\mu\,,\spp\bosy K\ssp)\subsigma\spp)\,
$--\,compact if and only if 
for every $\,\eps\in\rbb R^+$ there is $\,\delta\in\rbb R^+$ 

such that $\,\|\KP1\varphi\KP1|\KP1 A\KP1\|\Lnorss33^1_\mu
<\eps$ holds for all 
$\,\varphi\in\bigcup\,K$ and $\,
A\in\mu\invss44\image\spp[\KPp1.1 0\,,\spp\delta\KP1{[}
\KP1$.
  \end{lemma}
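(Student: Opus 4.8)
The statement is a form of the \emph{Dunford\,--\,Pettis theorem}, and the plan is to deduce it from the bidual representation of $E=\mvLrs42^1(\ssp\mu\,,\spp\bosy K\ssp)$. Write $\Omega=\bigcup\,\dom\mu$. By Corollary~\ref{Cor L^p Ban} (with $p=1$) the space $E$ is Banachable, and by Proposition~\ref{Pro L^1'=L^i} its strong dual $E\dlbetss10$ is canonically the space $\mvLrs23^\plusinftyy(\ssp\mu\,,\spp\bosy K\ssp)$, which I shall treat as $L^\infty$; let $\Iota_2$ be the canonical embedding $\vecs E\to\vecs(\sp E\dlbetss12{\ssn}\dlbetss02\sp)$ into the bidual. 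I shall use the standard fact that $\taurd(\ssp E\subsigma\ssp)$ is the trace on $\Iota_2\ssp\image\ssn\vecs E$ of the weak$^*$ topology $\taurd(\sp E\dlbetss12{\ssn}\dlsigss02\sp)$, so that, Alaoglu's theorem applied to the bounded set $K$ giving weak$^*$ compactness of the weak$^*$ closure of $\Iota_2\ssp\image\ssn K$, the set $K$ is relatively $\taurd(\ssp E\subsigma\ssp)$\,--\,compact if and only if this weak$^*$ closure lies in $\Iota_2\ssp\image\ssn\vecs E$.

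For sufficiency, assume $K$ uniformly integrable and let $w$ lie in the weak$^*$ closure of $\Iota_2\ssp\image\ssn K$. For $A\in\dom\mu$ put $\nu\,A=w\fvalue\chi_A$, where $\chi_A\in\vecs(\sp E\dlbetss10\sp)$ is the class of the indicator of $A$; then $\nu$ is finitely additive on the \rsigma3algebra $\dom\mu$. Since $w$ is a weak$^*$ limit of maps $\Iota_2\ssp\varphi$ with $\varphi\in K$ and $\Iota_2\ssp\varphi\fvalue\chi_A=\int_{\,A}\varphi\rmdss11\mu$, the hypothesis furnishes, for each $\eps\in\rbb R^+$, a $\delta\in\rbb R^+$ with $|\,\nu\,A\,|\suba\le\eps$ whenever $\mu\fvalue\ssn A<\delta$. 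As $\mu$ is finite, this uniform absolute continuity forces $\nu$ to be countably additive with $\nu\ll\mu$ (the tail of a disjoint countable union has arbitrarily small $\mu$\,--\,measure), so by the Radon\,--\,Nikodym theorem \cite[Theorem~6.4.1\sp, p.\ 162]{Du} there is $\psi\in\bigcup\ssp\vecs E$ with $\nu\,A=\int_{\,A}\psi\rmdss11\mu$ for all $A\in\dom\mu$. Since simple functions are norm\ssp-\sp dense in $\vecs(\sp E\dlbetss10\sp)$ and both $w$ and $g\mapsto\int_{\,\Omega}g\cdot\psi\rmdss11\mu$ are $\taurd(\sp E\dlbetss12\sp)$\,--\,continuous, they agree throughout $E\dlbetss10$; hence $w=\Iota_2\ssp\smb X$ for the class $\smb X\ni\psi$, giving $w\in\Iota_2\ssp\image\ssn\vecs E$ and thus relative weak compactness.

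For necessity I argue by contraposition. Suppose $K$ is \emph{not} uniformly integrable: there are $\eps_0\in\rbb R^+$ together with $\varphi_n\in\bigcup K$ and $A_n\in\dom\mu$ satisfying $\mu\fvalue\ssn A_n<2^{-n}$ and $\int_{\,A_n}|\,\varphi_n\,|\suba\rmdss11\mu\ge\eps_0$ for all $n\in\bbNo$. Assuming toward a contradiction that $K$ is relatively weakly compact, the Banach space $E$ and the Eberlein\,--\,\v Smulian theorem yield a subsequence $\varphi_{n_k}$ converging weakly to some $\psi\in\bigcup\ssp\vecs E$; testing against the indicators $\chi_A$ shows that the countably additive measures $A\mapsto\int_{\,A}\varphi_{n_k}\rmdss11\mu$ converge setwise on $\dom\mu$. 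By the Vitali\,--\,Hahn\,--\,Saks theorem this sequence is then uniformly absolutely continuous, and decomposing each $A\in\dom\mu$ according to the sign of $\varphi_{n_k}$ (respectively into the four quadrants when $\bosy K=\tfbbC$) upgrades this to uniform integrability of $\{\,\varphi_{n_k}:k\in\bbNo\,\}$. This contradicts $\int_{\,A_{n_k}}|\,\varphi_{n_k}\,|\suba\rmdss11\mu\ge\eps_0$ with $\mu\fvalue\ssn A_{n_k}\to0$, and the contraposition is complete.

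The main obstacle is the necessity direction, and inside it the passage from control of the signed integrals $\int_{\,A}\varphi\rmdss11\mu$ to control of the moduli $\int_{\,A}|\,\varphi\,|\suba\rmdss11\mu$: the natural testing functional $\roman{sgn}\,\overline\varphi\cdot\chi_A$ depends on $\varphi$, so a single weak$^*$ cluster point of $\{\,\Iota_2\ssp\varphi_n\,\}$ cannot track it, which is exactly why I route the argument through a weakly convergent \emph{subsequence} (Eberlein\,--\,\v Smulian) and the Vitali\,--\,Hahn\,--\,Saks setwise\ssp-\sp limit theorem rather than through the bidual alone. The sufficiency direction is comparatively routine once Proposition~\ref{Pro L^1'=L^i} has identified $E\dlbetss10$ with $L^\infty$; there the only genuine content is that uniform absolute continuity of a finitely additive $\nu$ over a finite measure space forces countable additivity. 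Should a self\ssp-\sp contained treatment be wanted, the Vitali\,--\,Hahn\,--\,Saks step may itself be extracted from a Baire\,--\,category (Nikodym) argument on the complete pseudometric space $(\ssp\dom\mu\,,\spp d\ssp)$ with $d\,(\ssp A\,,\spp B\ssp)=\mu\fvalue(\ssp A\,\triangle\,B\ssp)$, while the adherent bidual vector in the sufficiency part could alternatively be produced from Lemma~\ref{Le E'' adher}.
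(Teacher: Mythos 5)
Your proof is correct, and its two halves relate to the paper's treatment differently. For the sufficiency direction (uniform integrability implies relative weak compactness) -- the only direction the paper proves explicitly, since only it is needed later -- your argument is essentially the paper's own: pass to a weak\ssp-\sp star compact closure via Alaoglu, read off a finitely additive set function from the adherent functional by evaluating at indicator classes, use the hypothesis to get uniform absolute continuity and hence countable additivity, apply Radon--Nikodym, and conclude by density of the span of indicators in $L^\infty$. The only cosmetic difference is that you work with the canonical embedding of $E$ into its bidual, whereas the paper routes through the transpose of the isomorphism $\Iota\ar 1$ of Proposition~\ref{Pro L^1'=L^i} and takes closures in the weak\ssp-\sp star topology of the dual of $L^\infty$; under that proposition these are the same argument. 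For the necessity direction the paper gives no proof at all: it cites \cite[Theorem 3.2.1, p.\ 376]{Du-pe} and suggests adapting \cite[Theorem 4.21.2, pp.\ 274--275]{Edw}. You instead give an explicit contraposition through Eberlein--\v{S}mulian (relative weak compactness yields a weakly convergent subsequence), Vitali--Hahn--Saks (setwise convergence of the indefinite integrals yields uniform absolute continuity), and the sign\ssp/\ssp quadrant decomposition that upgrades control of $\int_A\varphi\,$ to control of $\int_A|\,\varphi\,|$ -- that last step correctly circumvents the obstacle you identify, namely that the testing function depends on $\varphi$. What each approach buys: the paper's citation keeps the lemma short at the price of externality; your route is self\ssp-\sp contained in outline but leans on two large classical theorems nowhere established in the paper, so the logical debt is comparable, while your version has the merit of actually recording the decomposition trick that makes the necessity direction work.
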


\begin{proof}                                                          \newcommand\sFsigmaprime{F^{\kern.2mm\prime}_{\kern-.2mm\sigma}}%
              The assertion is already in 
\cite[Theorem 3.2.1\sp, p.\ 376]{Du-pe}\,, although one should note that 
\q{weakly compact} there means \q{relatively weakly sequentially compact}. To 
get a proper proof, suitably adapt the proof of 
\cite[Theorem 4.21.2\ssp, pp.\ 274\,--\,275]{Edw}\,. Since we shall below need 
the \q{if\sp} part, we here give an explicit proof of it. Indeed, letting 
(\sp$*$\sp) denote the asserted sufficient condition, and putting \mathss03{E=
 \mvLrs42^1(\ssp\mu\,,\spp\bosy K\ssp) } and \mathss38{F=
 \mvLrs23^\plusinftyy(\ssp\mu\,,\spp\bosy K\ssp) }, \,let \mathss30{ \twEps = 
 \seqss43{\roman{ev}\ssp\sbi{\ssmb\Phii}\KPt8|\KP1\Cal L\,(\sp E\ssp,\spp
 \bosy K\ssp):\smb\Phii\in\vecs E} } \linebreak
                                     and \math{\Iota=\Iota\ar 2\circ\sp\twEps} 
where \math{\Iota\ar 2:E\dlbetss12\ssn\dlbetss01\to F\dlbetss10} is the 
transpose of \math{\Iota\ar 1\snn:F\to E\dlbetss11} when \math{\Iota\ar 1} is 
as in Proposition \ref{Pro L^1'=L^i} on page \pageref{Pro L^1'=L^i} above. 
Then \math{\Iota} is a strict morphism \math{E\subsigrs04\to F\dlsigss10} in 
the sense of \cite[Definition 2.5.1\ssp, p.\ 100]{Ho}\,. Now assuming that 
(\sp$*$\sp) holds, since by Alaoglu's theorem from \math{K\in\bouSet E} we 
know that \math{\Cl_taurd{(\sFsigmaprime)}(\ssp\Iota\sp\image\sn K\ssp) } is \mathss37{
\taurd(\sp F\dlsigss00\sp)}--\,compact, it suffices to prove that \math{
\Cl_taurd{(\sFsigmaprime)}(\ssp\Iota\sp\image\sn K\ssp)\inc\rng\Iota } holds. 

Thus arbitrarily given \math{w\in
\Cl_taurd{(\sFsigmaprime)}(\ssp\Iota\sp\image\sn K\ssp) } with \math{\Omega=
\bigcup\ssp\dom\mu} and \vskip.3mm\centerline{$
\roman x\,A=(\ssp\Omega\spp\setminus A\ssp)\times\snn\{\ssp 0\ssp\}\cupss22
 (\sp A\times\snn\{\ssp 1\ssp\}\sp\sbig)0$} \inskipline{.3}0

and \math{\eightroman X\,A=\uniqset\smb\Psii:\roman x\,A\in\smb\Psii\in\vecs F} 
putting \math{\lambda=\seqss33{w\fvalss02\text{\erm X\,}A:A\in\dom\mu} } we 
see that now \math{\lambda} is a real or complex measure that is absolutely \mathss37{
\mu}--\,continuous.

Indeed, given \math{\eps\in\rbb R^+} by (\sp$*$\sp) there is \math{ \delta \in 
 \rbb R^+} such that for all \math{ A \in
 \mu\invss44\image\spp[\KPp1.1 0\,,\spp\delta\KP1{[} } we have \math{
|\KP1 z\fvalss02\text{\erm X\,}A\KP1|\le\eps } for all \math{ z \in 
 \Iota\sp\image\sn K} and hence also \math{|\KP1\lambda\fvalue\ssn A\KP1| = 
 |\KP1 w\fvalss02\text{\erm X\,}A\KP1| \le \eps } holds. 
Note that \math{\lambda} is trivially finitely additive, and that countable 
additivity then follows from the established absolute continuity. Now by 
Radon\,--\,Nikodym there is some \mathss03{\varphi\in\bigcup\ssp\vecs E} with \math{
w\fvalss02\text{\erm X\,}A = \lambda\fvalue\ssn A = 
 \int_{\,A\sp}\varphi\rmdss11\mu } for all \mathss36{A\in\dom\mu}. Then there 
is \mathss30{\smb\Phii} with \mathss35{\varphi\in\smb\Phii\in\vecs E}, \,and 
noting that the linear \mathss37{\sigrd F}--\,span of \math{
\{\KPt8\text{\erm X\,}A:A\in\dom\mu\KPt8\} } is \mathss37{\taurd F}--\,dense, 
we first see that \mathss30{w\fvalue\smb\Psii=
\int_{\KPp1.1\Omega\,}\varphi\cdot\psi\rmdss11\mu} holds for \mathss31{\psi\in
 \smb\Psii\in\vecs F}. Then we get \math{w=\Iota\fvalue\smb\Phii\in\rng\Iota} 
from \inskipline{.5}{19.6}

$  w\fvalue\smb\Psii
= \int_{\KPp1.1\Omega\,}\varphi\cdot\psi\rmdss11\mu
= \Iota\ar 1\ssn\fvalue\smb\Psii\fvalue\smb\Phii
= \twEps\fvalue\smb\Phii\fvalue(\ssp\Iota\ar 1\ssn\fvalue\smb\Psii\sp) $ \inskipline{.4}{26}

${}
= \twEps\fvalue\smb\Phii\circss00\Iota\ar 1\ssn\fvalue\smb\Psii
= \Iota\ar 2\ssn\fvalue(\ssp\twEps\fvalue\smb\Phii\ssp)\fvalue\smb\Psii
= \Iota\ar 2\circ\twEps\fvalue\smb\Phii\fvalue\smb\Psii
= \Iota\fvalue\smb\Phii\fvalue\smb\Psii\,$.
  \end{proof}


\Ssubhead C          Lifting and integral representations                 \label{Sec C}

As auxiliary results for the proof of Theorem \nfss A\,\ref{main Th} we 
reformulate some forms of the Dunford\,--\,Pettis theorem 
\cite[8.17.6\,--\,8\ssp, p.\ 584]{Edw} in Propositions \ref{Pro Edw 8.17.6} 
and \ref{Pro Edw 8.17.8}  below. The essential content of 
\cite[Lemma 8.17.1\,(a)\,, p.\ 579]{Edw} is in the following

\begin{proposition}\label{Pro lift}

Let $\,\mu$ be an almost decomposable positive measure on $\,\Omega
${\,\rm, }and with \œ$\,\bosy K\in\setRC$ and \œ$\,
G=\mvLrs23^\plusinftyy(\ssp\mu\,,\spp\bosy K\ssp)$ let $\,S$ be a 
vector subspace in $\,\sigrd G$ such that $\,\taurd G\leiss32 S$ is a 
separable topology. \vskip.2mm\centerline{%
Then a choice function $\,c\in
\Cal L\,(\ssp G_{\sp/\ssp S}\ssp,\sp\lll^\plusinftyy\sp
(\ssp\Omega\,,\spp\bosy K\ssp))$ exists.}
  \end{proposition}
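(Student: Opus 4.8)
The plan is to build $c$ by first lifting the separable subspace $S$ exactly and then using the injectivity of the target Banach space to pass the resulting selection to the whole quotient $G_{/S}$. Throughout I would fix the almost decomposable decomposition $\bigcup\,\dom\mu = \bigcup\,\scrmt A\cupss42 N\sprim1$ supplied by Definitions \ref{df decomp}\,(2), with $\scrmt A$ a disjoint family of sets of positive finite measure and $N\sprim1$ being $\mu$--negligible. Note that $G = \mvLrs23^\plusinftyy(\ssp\mu\,,\spp\bosy K\ssp)$ is Banachable by Theorem \ref{Th L_s^p Ba}, and that the assignment sending a genuinely bounded function to its $\mu$--class is a norm\ssp-\sp decreasing linear surjection $q:\lll^\plusinftyy(\ssp\Omega\,,\spp\bosy K\ssp)\to\sigrd G$.

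First I would exploit the separability of $\taurd G\leiss32 S$. Choosing a countable $\taurd G\leiss32 S$--dense $D\inc S$ together with bounded representatives of its members, I would pass to the countably generated sub\ssp-\sp$\sigma$--algebra $\Sigma\aar 0$ they generate, enlarged by a countable family adapted to $\scrmt A$, so that every class in $S$ acquires a $\Sigma\aar 0$--measurable representative up to a null set. On each finite\ssp-\sp measure piece $A\in\scrmt A$ the restricted measure algebra of $\Sigma\aar 0$ is separable, hence, after separating atoms, isomorphic to the measure algebra of standard Lebesgue measure, for which the classical lifting theorem furnishes a linear selection of everywhere\ssp-\sp defined bounded representatives. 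Assembling these piecewise selections over the disjoint $\scrmt A$ and setting the value equal to $0$ on $N\sprim1$, I would obtain a continuous linear lifting $\rho\sbi S:S\to\lll^\plusinftyy(\ssp\Omega\,,\spp\bosy K\ssp)$ with $q\circss00\rho\sbi S$ the inclusion of $S$ into $\sigrd G$.

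Next I would transport $\rho\sbi S$ to the quotient. For real scalars $\lll^\plusinftyy(\ssp\Omega\,,\spp\bosy R\ssp)$ is an order\ssp-\sp complete $C(K)$--space, hence $1$--injective, and the complex case follows by treating real and imaginary parts; thus any continuous linear map into $\lll^\plusinftyy(\ssp\Omega\,,\spp\bosy K\ssp)$ defined on a subspace extends continuously to any larger space. I would use this to carry the exact selection off $S$ and to descend through $\pi:\sigrd G\to\sigrd(\sp G_{/S}\sp)$, arranging that the resulting $c\in\Cal L\,(\sp G_{/S}\ssp,\spp\lll^\plusinftyy(\ssp\Omega\,,\spp\bosy K\ssp)) $ selects, continuously and linearly, a bounded representative attached to each coset while agreeing with $\rho\sbi S$ on the separable directions. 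The verification that $c$ meets the defining requirement of a choice function then reduces, piece by piece over $\scrmt A$, to the exactness of $\rho\sbi S$ on $S$ together with the local triviality contributed by the remaining directions on each finite\ssp-\sp measure set.

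The main obstacle I anticipate is the coherent global assembly. The classical lifting theorem is only directly available on each finite, separably generated piece, so the real work is to glue the piecewise liftings into a single everywhere\ssp-\sp defined bounded selection that is simultaneously linear, continuous for the essential\ssp-\sp supremum norm, and insensitive both to the chosen representatives and to the null sets discarded on each $A\in\scrmt A$. Controlling this gluing across the possibly uncountable family $\scrmt A$, while keeping the $\mu$--negligibility clauses of almost decomposability in force so that nothing escapes through $N\sprim1$, is where the hypotheses must be used most delicately, and is the step I would expect to occupy the bulk of the argument.
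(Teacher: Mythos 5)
Your first half breaks down at its central step. A Maharam‑type isomorphism between the (separable, atomless part of the) measure algebra generated on a piece $A\in\scrmt A$ and the measure algebra of Lebesgue measure is a purely Boolean isomorphism: it supplies no point map from $A$ into $[\,0\,,1\,]$, so a lifting of Lebesgue measure cannot be pulled back through it to a selection of \emph{everywhere-defined} functions on $A$. This is not a removable technicality: the Borel $\sigma$-algebra on $[\,0\,,1\,]$ with Lebesgue measure has the same measure algebra as its completion, yet the existence of a Borel lifting is independent of ZFC, so abstract measure-algebra isomorphism demonstrably does not transport liftings. The honest repair is to apply the von Neumann--Maharam lifting theorem directly to the completion of $(\,A\ssp,\spp\mu\KP1|\KP1\Pows A\ssp)$ -- admissible here because representatives need only be finitely almost measurable -- but then your countably generated sub-$\sigma$-algebra and the separability hypothesis do no work at all, and the proposition is made to rest on a theorem far deeper than needed. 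The paper's own proof is elementary and uses separability essentially: it forms the \emph{rational}-linear span $S\ar 1$ of a countable $\taurd G\leiss32 S$--dense set (still countable), picks by countable choice one representative of each member of a rational-linear basis of $S\ar 1$, extends rational-linearly (so representatives combine to representatives), then uses countability of $S\ar 1$ together with almost decomposability to modify everything on one common $\mu$--negligible set so that $|\,c\ar 1\sn\fvalue\Phi\fvalue\eta\,|\le\sup\,\{\,|\,\varphi\fvalue\eta\,|:\eta\in\Omega\,\}$ holds for all $\varphi\in\Phi\in S\ar 1$ and all $\eta$, and finally extends by uniform continuity and completeness of $\lll^\plusinftyy\sp(\ssp\Omega\,,\spp\bosy K\ssp)$, checking by standard convergence results that the extension is still a choice function. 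Your gluing over $\scrmt A$ and $N\sprim1$, which you rightly flag as the place where the negligibility clauses of Definitions \ref{df decomp}\,(2) enter, is the one piece of your sketch matching the needed mechanism, but you defer it rather than carry it out.

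Your second half rests on a misreading of the statement. In this paper $G_{\sp/\ssp S}$ denotes the topological vector \emph{subspace} of $G$ determined by $S$, not the quotient of $G$ by $S$ (compare its use in Lemma \ref{Le 8.17.8 B} and Proposition \ref{Pro Edw 8.17.2}), and a choice function $c\in\Cal L\,(\ssp G_{\sp/\ssp S}\ssp,\sp\lll^\plusinftyy\sp(\ssp\Omega\,,\spp\bosy K\ssp))$ must satisfy $c\fvalue\Phi\in\Phi$ for every class $\Phi\in S$. Consequently, once your $\rho\sbi S$ is constructed the proof is already finished: there is no quotient to descend to, and \q{a bounded representative attached to each coset} does not even typecheck, since a coset of a genuine quotient would be a set of classes rather than a class of functions. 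Moreover, injectivity of $\lll^\plusinftyy\sp(\ssp\Omega\,,\spp\bosy K\ssp)$ could not accomplish what you intend in any reading: a $1$-injective extension is merely \emph{some} norm-controlled linear extension, and nothing in a Hahn--Banach-type extension argument preserves the property of being a choice function off the subspace where that property was established.
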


\begin{proof} Letting 
\math{\sigrd G_{\ssp|\ssp S}=(\ssp a\,,\sp c\ssp)} and 
\math{R=
\{\,s + t\KP1\imag:s\ssp,\sp t\in\ssbb04 Q\,\} } we first put 
\œ$\ssp X={\ssn}$ 
\linebreak 
\œ$(\ssp a\,,\sp c\KP1|\KP1(\sp R\times S\ssp)) \ssp$ and 
consider vector subspaces in the 
possibly complex rational vector space \mathss31{X}. 
Thus, letting \math{D} be countable and \mathss37{\taurd G\leiss32 S
}--\,dense, 
let $S\ar 1$ be the linear 
\mathss37{X}--\,span of $D$. Then let $B$ be a linear basis of $
X_{\ssp|\ssp\aars S_1}$. 
By {\sl countable choice\sp} there is a choice function 
$c\ar 0$ of $B$, and we let $\bar c\ar 0$ be its unique linear extension $
X_{\ssp|\ssp\aars S_1}\to
\sigrd\bosy K\expnota^\,\Omega\ssp]_{vs}\ssp_{|
\,\bigcup\,S} \,$. Letting $\scrmt A$ and $N$ be as in 
Definitions \ref{df decomp}\,(2) on page \pageref{decos A} above, 
for every fixed $A\in\scrmt A$ we then see existence of some $N\aar 1\in
\mu\invss44\image\snn\{\ssp 0\ssp\}$ such that 
$|\KP{1.2}\bar c\ar 0\sn\fvalue\smb\Phii\fvalue\eta\KP1|\le
\sup\ssp\big\{\,|\KP1\varphi\fvalue\eta\KP1|:
\eta\in A\KP1\}$ holds for $\varphi\in\smb\Phii\in S\ar 1$ and 
$\eta\in A\sp\setminus N\aar 1\ssp$. Then by 
the {\sl axiom of choice\sp} from the property of being almost 
decomposable we see existence of a \mathss37{\mu}--\,negligible 
$N\spp\yplk$ such that 
$|\KP{1.2}\bar c\ar 0\sn\fvalue\smb\Phii\fvalue\eta\KP1|\le
\sup\ssp\big\{\,|\KP1\varphi\fvalue\eta\KP1|:
\eta\in\Omega\KP1\}$ holds for 
$\varphi\in\smb\Phii\in S\ar 1$ and 
$\eta\in\Omega\spp\setminus N\spp\yplk\sp$. Now taking 
\mathss38{c\ar 1=
\seqss40{
\bar c\ar 0\sn\fvalue\smb\Phii\KP1|\KP1(\ssp\Omega\spp\setminus N\spp\yplk\spp
)\cupss22(\sp N\spp\yplk\timesn\{\ssp 0\ssp\}\sp\sbig)0
:\smb\Phii\in S\ar 1} }, \,we have $c\ar 1$ a 
linear map $X_{\ssp|\ssp\aars S_1}\to
\sigrd\lll^\plusinftyy\sp(\ssp\Omega\,,\spp\bosy K\ssp) $ 
and also 
\math{|\KP{1.2} c\ar 1\sn\fvalue\smb\Phii\fvalue\eta\KP1|\le
\sup\ssp\big\{\,|\KP1\varphi\fvalue\eta\KP1|:
\eta\in\Omega\KP1\} } holds for $\varphi\in\smb\Phii\in S\ar 1$ and all 
$\eta\in\Omega\,$. Then by density of $S\ar 1$ and completeness of 
$\lll^\plusinftyy\sp
(\ssp\Omega\,,\spp\bosy K\ssp)$ letting $c$ be the unique continuous 
extension of $c\ar 1$ we first get 
$c\in
\Cal L\,(\ssp G_{\sp/\ssp S}\ssp,\sp\lll^\plusinftyy\sp
(\ssp\Omega\,,\spp\bosy K\ssp)) \,$, and further using classical 
convergence results for sequences of measurable functions we see that also 
$c$ is a choice function.
  \end{proof}

For a positive measure \math{\mu} on \math{\Omega} and for \mathss38{ X = 
 \sigrd\sn\mLrs23^\plusinftyy(\ssp\mu\sp) }, \,by a {\it lift\ssp} of a linear 
subspace \math{S} in \math{X} one means a linear map \math{c:X_{\ssp|\,S} \to 
 \sigrd\lll^\plusinftyy\sp(\sp\Omega\sp) } that is also a choice function such 
that for \math{(\ssp\smb\Phii,\spp\varphi\ssp)\in c} and for every \math{
\varphi\ar 1\in\smb\Phii } and \math{A\in\mu\invss44\image\spp\lbb R_+} we \linebreak
have \mathss38{ \sup\KP1(\ssp\varphi\ssp\image\ssn A\ssp) \le \sup\KP1(\ssp
 \varphi\ar 1\sn\image\ssn A\ssp) }. So from the proof of 
Proposition \ref{Pro lift} we see that we could have more specifically stated 
that a lift exists. However, below we shall have no essential use of this 
additional information encoded in the definition of lift. \vskip.3mm

Essentially the content of \cite[Theorem 8.17.2\ssp, p.\ 582]{Edw} is in the 
following

\begin{proposition}\label{Pro Edw 8.17.2}

Let $\,\mu$ be an almost decomposable positive measure on $\,\Omega${\,\rm, }%
and with \œ$\,\bosy K\in\setRC$ let \œ$\,\vPi\in\LCSps0(K)$ be normable and 
such that $\,\taurd\vPi$ is a separable topology. Also let \œ$\,\smb U \in 
 \Cal L\,(\sp\vPi\sp,\mvLrs23^\plusinftyy(\ssp\mu\,,\spp\bosy K\ssp))$ be such 
that there is a choice function \œ$\, c \in 
 \Cal L\,(\ssp\mvLrs23^\plusinftyy(\ssp\mu\,,\spp\bosy K\ssp)
      _{\ssp/\ssp\rng\snn\sixmath U} \, , \ssp
         \lll^\plusinftyy\sp(\ssp\Omega\,,\spp\bosy K\ssp)) \, $. Then there 
is \œ$\, y \in \bigcup\ssp\vecs\sn
 \mvsLrs23^\plusinftyy(\ssp\mu\,,\spp\vPi\dlsigss00\spp)$ with $\, \smb U = 
 \vecs\vPi\times\vecs\sn\mvLrs23^\plusinftyy(\ssp\mu\,,\spp\bosy K\ssp)\capss31
  \{\,(\ssp\xi\,,\sp\smb\Phii\spp) : \roman{ev}\KPt2\sbi\xi\snn\circ\sp y \in
   \smb\Phii\,\} \KP1 $.
  \end{proposition}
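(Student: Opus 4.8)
The plan is to use the given lift $c$ to convert $\smb U$, which a priori is defined only up to $\mu$\ssp-\sp equivalence, into a genuine pointwise\ssp-\sp defined weak$^*$ valued function. Write $G=\mvLrs23^\plusinftyy(\ssp\mu\,,\spp\bosy K\ssp)$ and fix a compatible norm $\Nu$ for $\vPi$ with corresponding dual norm $\Nu\aR 1$ for $\vPi\dlsigss00\spp$. By continuity of $\smb U$ there is some $\smb M\in\lbb R_+$ such that the essential supremum norm of $\smb U\fvalss11\xi$ is at most $\smb M\KP1(\ssp\Nu\fvalss10\xi\ssp)$ for every $\xi\in\vecs\vPi$. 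I would then define $y$ so that for each $\eta\in\Omega$ the value $y\fvalue\eta$ is the map $\xi\mapsto c\sp\fvalue(\ssp\smb U\fvalss11\xi\ssp)\fvalue\eta$ on $\vecs\vPi$; equivalently $\roman{ev}\KPt2\sbi\xi\snn\circ\sp y=c\sp\fvalue(\ssp\smb U\fvalss11\xi\ssp)$ for every $\xi\in\vecs\vPi$.

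The first point to settle is that $y\fvalue\eta\in\Cal L\,(\sp\vPi\sp,\spp\bosy K\ssp)$ for \emph{every} $\eta$, so that $y$ is a legitimate function $\Omega\to\vecs(\sp\vPi\dlsigss00\spp)$. Linearity of $y\fvalue\eta$ in $\xi$ holds identically in $\eta$, because $\smb U$ and the lift $c$ are linear and $c$ takes values in the genuine function space $\lll^\plusinftyy\sp(\ssp\Omega\,,\spp\bosy K\ssp)$, so that evaluation at a fixed $\eta$ is a bona fide linear functional there. Boundedness is exactly where the defining domination property of a lift enters: it gives $|\KP1 c\sp\fvalue(\ssp\smb U\fvalss11\xi\ssp)\fvalue\eta\KP1|$ bounded by the essential supremum norm of $\smb U\fvalss11\xi$, whence $|\KP1 y\fvalue\eta\fvalss11\xi\KP1|\le\smb M\KP1(\ssp\Nu\fvalss10\xi\ssp)$ and therefore $\Nu\aR 1\sn\fvalue(\ssp y\fvalue\eta\ssp)\le\smb M$ for all $\eta$ off the negligible set supplied by almost decomposability. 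This one estimate simultaneously yields $y\fvalue\eta\in\Cal L\,(\sp\vPi\sp,\spp\bosy K\ssp)$ and the essential boundedness demanded by the case $p=\plusinfty$ of membership in $\mvsLrs23^\plusinftyy(\ssp\mu\,,\spp\vPi\dlsigss00\spp)$. The asserted formula for $\smb U$ then reads off immediately, since $\roman{ev}\KPt2\sbi\xi\snn\circ\sp y=c\sp\fvalue(\ssp\smb U\fvalss11\xi\ssp)$ is by construction a representative of $\smb U\fvalss11\xi$.

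What remains, and what I expect to be the crux, is to check that $(\KPt5 y\,;\spp\mu\,,\spp\vPi\dlsigss00\spp)$ is finitely almost scalarly measurable. Since the continuous linear functionals on $\vPi\dlsigss00$ are precisely the evaluations $\roman{ev}\KPt2\sbi\xi$ with $\xi\in\vecs\vPi$, scalar measurability means measurability of each $\roman{ev}\KPt2\sbi\xi\snn\circ\sp y=c\sp\fvalue(\ssp\smb U\fvalss11\xi\ssp)$; the latter is a representative of a class of $G$ and hence measurable away from a null set, but that exceptional set depends a priori on $\xi$. To produce a single exceptional null set on each $A\in\mu\invss44\image\spp\lbb R_+$ I would invoke separability of $\taurd\vPi$: picking a countable $\taurd\vPi$\ssp-\sp dense $\{\,\xi\sbi k:k\in\bbNo\,\}$ and discarding the union of the associated null sets together with the negligible set of almost decomposability, one gets every $\roman{ev}\KPt2\sbi{\xi\sbi k}\snn\circ\sp y$ measurable on $A$ off this fixed null set. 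The uniform bound $|\KP1 y\fvalue\eta\fvalss11\xi-y\fvalue\eta\fvalss11\xi\sbi k\KP1|\le\smb M\KP1(\ssp\Nu\fvalss10(\ssp\xi-\xi\sbi k\ssp))$, valid for \emph{every} $\eta$, then shows that $\roman{ev}\KPt2\sbi\xi\snn\circ\sp y$ is a uniform limit of such functions and so is itself measurable off the same null set, for arbitrary $\xi\in\vecs\vPi$. This establishes finite almost scalar measurability, gives $y\in\bigcup\ssp\vecs\mvsLrs23^\plusinftyy(\ssp\mu\,,\spp\vPi\dlsigss00\spp)$, and completes the proof.
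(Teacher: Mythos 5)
Your construction is the paper's own: the paper defines \math{y} by exactly the same formula (\math{y\fvalue\eta} being \math{\xi\mapsto c\fvalue(\ssp\smb U\fvalss11\xi\ssp)\fvalue\eta}), obtains the stated representation of \math{\smb U} immediately from \math{c} being a choice function, and then verifies the two membership conditions, namely a uniform bound and finite almost scalar measurability, the latter likewise via a countable \math{\taurd\vPi}--\,dense set followed by extension to all \math{\xi\in\vecs\vPi}. Your measurability step (merge countably many null sets on each set of finite measure, then pass to arbitrary \math{\xi} by uniform limits) is sound, and is in fact slightly more direct than the paper's, which first works on the almost decomposability partition and then glues by the decomposability property.

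There is, however, one misjustified step. You write that ``boundedness is exactly where the defining domination property of a lift enters''; but that domination property is \emph{not} among the hypotheses --- the proposition assumes only that \math{c} is a choice function belonging to \math{\Cal L\,(\ssp\mvLrs23^\plusinftyy(\ssp\mu\,,\spp\bosy K\ssp)_{\ssp/\ssp\rng\snn\sixmath U}\,,\ssp\lll^\plusinftyy\sp(\ssp\Omega\,,\spp\bosy K\ssp))}, and the paper explicitly remarks, right after introducing the notion of lift, that this extra information encoded in a lift is never used. The estimate you need follows instead from continuity of \math{c}\,: since the codomain carries the genuine supremum norm over \math{\Omega}, continuity gives a constant \math{\smb A\in\lbb R_+} with \math{|\KP1 c\fvalue(\ssp\smb U\fvalss11\xi\ssp)\fvalue\eta\KP1|} bounded by \math{\smb A} times the norm of \math{\smb U\fvalss11\xi}, for \emph{every} \math{\eta\in\Omega}\,; composing with continuity of \math{\smb U} then yields \math{|\KP1 y\fvalue\eta\fvalss11\xi\KP1|\le\smb M\KP1(\ssp\Nu\fvalss10\xi\ssp)} for every \math{\eta} and every \math{\xi}. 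This repair also resolves an internal inconsistency in your write\ssp-\sp up: you rightly insist that \math{y\fvalue\eta\in\Cal L\,(\sp\vPi\sp,\spp\bosy K\ssp)} must hold for \emph{every} \math{\eta} (so that \math{y} is a function into \math{\vecs(\sp\vPi\dlsigss00\spp)}), yet your lift\ssp-\sp based bound is claimed only off a negligible set, while your final paragraph then uses the bound ``valid for every \math{\eta}''. With the continuity\ssp-\sp based estimate, as in the paper, there is no exceptional set at all, and the rest of your argument goes through unchanged.
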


\begin{proof} With \math{ y = \seqss33{
 \roman{ev}\sp\sbi\eta\snn\circ\sp c\circss11\smb U:\eta\in\Omega} } we have \math{
y} a function \mathss38{\Omega\to\Cal L\,(\sp\vPi\sp,\spp\bosy K\ssp) }, \,and 
for fixed \math{\xi\in\vecs\vPi} noting that \math{c} is a choice function, we 
  obtain \inskipline{.5}2

(\sp$*$\sp) $ \KP{10.55}
  \roman{ev}\KPt2\sbi\xi\snn\circ\sp y 
= \seqss33{\roman{ev}\sp\sbi\eta\snn\circ\sp c\circss11\smb U\fvalss10\xi : 
   \eta\in\Omega}
= \seqss33{c\fvalue(\ssp\smb U\fvalss10\xi\ssp)\fvalue\eta:\eta\in\Omega}$ \inskipline{.5}{29}

${}
=   c\fvalue(\ssp\smb U\fvalss10\xi\ssp)
\in \smb U\fvalss10\xi
\in \vecs\sn\mvLrs23^\plusinftyy(\ssp\mu\,,\spp\bosy K\ssp) \KP1 $. \KP6 From 
  this, \inskipline{.5}0

we see that \math{\smb U = 
 \vecs\vPi\times\vecs\sn\mvLrs23^\plusinftyy(\ssp\mu\,,\spp\bosy K\ssp)\capss31
  \{\,(\ssp\xi\,,\sp\smb\Phii\spp) : \roman{ev}\KPt2\sbi\xi\snn\circ\sp y \in
   \smb\Phii\,\} } holds.

It remains to verify that \math{y \in \bigcup\ssp\vecs\sn
 \mvsLrs23^\plusinftyy(\ssp\mu\,,\spp\vPi\dlsigss00\spp) } holds. First, to 
prove that $(\KPt5 y\,;\sp\mu\,,\spp\vPi\dlsigss00\spp)$ 
is finitely almost scalarly measurable, let 
$\scrmt A$ and $N\sprim1$ be as in 
Definitions \ref{df decomp}\,(2) on page \pageref{decos A} above, and let 
$D$ be countable and $\taurd\vPi\,$--\,dense. 
Then for every fixed 
$A\in\scrmt A$ and $\xi\in D$ from (\sp$*$\sp) we see existence of 
$N\in\mu\invss44\image\snn\{\ssp 0\ssp\}$ such that 
with 
\linebreak
\œ$B=A\sp\setminus N\ssp$ we have 
\math{(\,\roman{ev}\KPt2\sbi\xi\snn\circ\sp y\KP1|\KP1 B\,;
\sp\mu\KP1|\KP1\Pows B\ssp,\spp\bosy K\ssp) } measurable. 
By countability of $D$ we can here take $N$ independent of 
\mathss34{\xi\in D}. 
Then by the {\sl axiom of choice\sp} 
in conjunction with the decomposability property we get 
existence of a $\mu\,$--\,negligible $N\sprimm1$ such that 
for all $A\in\mu\invss44\image\spp\lbb R_+$ and $\xi\in D$ 
with $B=A\sp\setminus N\sprimm1$ we have 

$(\,\roman{ev}\KPt2\sbi\xi\snn\circ\sp y\KP1|\KP1 B\,;
\sp\mu\KP1|\KP1\Pows B\ssp,\spp\bosy K\ssp)$ 

\noin
almost measurable. 
By countability and 
density of $D$ we then see that 
$(\KPt5 y\,;\sp\mu\,,\spp\vPi\dlsigss00\spp)$ 
is finitely almost scalarly measurable.

To complete the proof of \mathss38{y \in \bigcup\ssp\vecs\sn
 \mvsLrs23^\plusinftyy(\ssp\mu\,,\spp\vPi\dlsigss00\spp) }, \,by \math{
\bouSet(\sp\vPi\dlbetss01\sp)\inc\bouSet(\sp\vPi\dlsigss00\spp)} it suffices 
to show that \math{\rng y\in\bouSet(\sp\vPi\dlbetss01\sp) } holds. For this, 
we first note that there is some \mathss03{\smb A\in\lbb R_+} such that for \math{
\varphi\in\smb\Phii\in\rng\smb U} we have \mathss36{
 \|\KP1 c\fvalss11\smb\Phii\,\|\Lnorss33^\plusinftyy_\mu \le \smb A \KP1 
 \|\KP1\varphi\KP1\|\Lnorss33^\plusinftyy_\mu}. Then with \math{ B\ar 1 = 
 \rng\smb U\capss21\{\KPt8\smb\Phii \snn : \aall{\varphi\in\smb\Phii}\,
 \|\KP1\varphi\KP1\|\Lnorss33^\plusinftyy_\mu\le 1\KPt8\} } taking \math{ U = 
 \smb U\invss44\image\snn B\ar 1 } we have \mathss03{U\in\neiBoo\vPi } and 
hence \vskip.3mm\centerline{$
    |\KPp1.1 y\fvalue\eta\fvalss01\xi\KP1|
=   |\KPp1.1 c\circss00\smb U\fvalss11\xi\fvalss10\eta\KP1|
\le \|\KP1 c\circss00\smb U\fvalss11\xi\KP1\|\Lnorss33^\plusinftyy_\mu
\le \smb A\KP1\|\KP1\varphi\KP1\|\Lnorss33^\plusinftyy_\mu
\le \smb A $} \inskipline{.5}0

for \math{\eta\in\Omega} and \math{\xi\in U} and \mathss35{ \varphi \in 
 \smb U\fvalss11\xi}. Consequently \math{ \rng y \in 
 \bouSet(\sp\vPi\dlbetss01\sp) } holds.
  \end{proof}

The essential content of \cite[Theorem 8.17.6\ssp, p.\ 584]{Edw} is in the 
following

\begin{proposition}\label{Pro Edw 8.17.6}

Let $\,\mu$ be an almost decomposable positive measure on $\,\Omega${\,\rm, }%
and with \œ$\,\bosy K\in\setRC$ let \œ$\,\vPi\in\LCSps0(K)$ be normable and 
such that $\,\taurd\vPi$ is a separable topology. Then for every \œ$\,\smb V
\in\Cal L\,(\ssp\mvLrs42^1(\ssp\mu\,,\spp\bosy K\ssp)\,,\spp\vPi\dlbetss01\sp)$ 
there is \œ$\, y \in \bigcup\ssp\vecs\sn
 \mvsLrs23^\plusinftyy(\ssp\mu\,,\spp\vPi\dlsigss00\spp)$ such that $\,
\smb V\fvalss60\smb\Phii\fvalss00\xi = \int_{\KP{1.1}\Omega\,} 
 \roman{ev}\KPt2\sbi\xi\snn\circ\sp y\cdot\varphi\rmdss21\mu$ holds for $\,
\varphi\in\smb\Phii\in\dom\smb V$ and $\,\xi\in\vecs\vPi\sp$.
  \end{proposition}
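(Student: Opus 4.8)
The plan is to reduce the vector-valued statement of Proposition \ref{Pro Edw 8.17.6} to the scalar transpose situation and then apply Proposition \ref{Pro Edw 8.17.2}. First I would fix some compatible norm \math{\Nu} for the normable space \math{\vPi} and consider the transpose of \math{\smb V}. Since \math{\smb V \in \Cal L\,(\ssp\mvLrs42^1(\ssp\mu\,,\spp\bosy K\ssp)\,,\spp\vPi\dlbetss01\sp) } and the codomain carries the strong dual topology of \math{\vPi}, composing \math{\smb V} with the evaluation maps \math{\roman{ev}\KPt2\sbi\xi} for \math{\xi\in\vecs\vPi} should produce, for each fixed \math{\xi}, a continuous linear functional on \math{\mvLrs42^1(\ssp\mu\,,\spp\bosy K\ssp)}. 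In this way I would construct a map \math{\smb U : \vPi\to\mvLrs23^\plusinftyy(\ssp\mu\,,\spp\bosy K\ssp) } where, for \math{\xi\in\vecs\vPi}, the class \math{\smb U\fvalss10\xi} represents the functional \math{\smb\Phii\mapsto\smb V\fvalss60\smb\Phii\fvalss00\xi } via the canonical duality of \math{\mvLrs42^1} with \math{\mvLrs23^\plusinftyy} established in Proposition \ref{Pro L^1'=L^i}.

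The key step is then to verify that this \math{\smb U} is itself continuous \math{\vPi\to\mvLrs23^\plusinftyy(\ssp\mu\,,\spp\bosy K\ssp) } and lies in the scope of Proposition \ref{Pro Edw 8.17.2}. For continuity I would use that \math{\smb V} is bounded, so \math{\smb V} maps the unit ball of \math{\mvLrs42^1} into a bounded, hence equicontinuous, subset of \math{\vPi\dlbetss01}; translating this through the \math{\mvLrs42^1}-\math{\mvLrs23^\plusinftyy} pairing gives a uniform \math{\LL^\plusinftyy}-bound on \math{\smb U\fvalss10\xi } in terms of \math{\Nu\fvalss11\xi}, which is exactly the estimate needed for \math{\smb U\in\Cal L\,(\sp\vPi\sp,\mvLrs23^\plusinftyy(\ssp\mu\,,\spp\bosy K\ssp)) }. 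The second hypothesis of Proposition \ref{Pro Edw 8.17.2}, namely existence of a choice function \math{c} on \math{\mvLrs23^\plusinftyy(\ssp\mu\,,\spp\bosy K\ssp)_{\ssp/\ssp\rng\snn\sixmath U}}, I would obtain from Proposition \ref{Pro lift}: the range \math{\rng\smb U} is contained in the closed span of a separable subspace because \math{\taurd\vPi} is a separable topology (so \math{\vPi} has a countable dense set \math{D}, and \math{\rng\smb U} is spanned topologically by the countably many \math{\smb U\fvalss10\xi } for \math{\xi\in D}), whence \math{\taurd G\leiss32 S } is separable for the relevant subspace \math{S} and Proposition \ref{Pro lift} yields the desired lift.

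Having applied Proposition \ref{Pro Edw 8.17.2}, I obtain \math{ y \in \bigcup\ssp\vecs\sn \mvsLrs23^\plusinftyy(\ssp\mu\,,\spp\vPi\dlsigss00\spp)} with \math{\smb U = \vecs\vPi\times\vecs\sn\mvLrs23^\plusinftyy(\ssp\mu\,,\spp\bosy K\ssp)\capss31 \{\,(\ssp\xi\,,\sp\smb\Phii\spp) : \roman{ev}\KPt2\sbi\xi\snn\circ\sp y \in \smb\Phii\,\} }. The final step is to unwind the duality identity: for \math{\varphi\in\smb\Phii\in\dom\smb V } and \math{\xi\in\vecs\vPi} the value \math{\smb V\fvalss60\smb\Phii\fvalss00\xi } equals the pairing of \math{\smb\Phii } against \math{\smb U\fvalss10\xi } in the \math{\mvLrs42^1}-\math{\mvLrs23^\plusinftyy} duality, which by Proposition \ref{Pro L^1'=L^i} and the characterization of \math{y} is precisely \math{\int_{\KP{1.1}\Omega\,} \roman{ev}\KPt2\sbi\xi\snn\circ\sp y\cdot\varphi\rmdss21\mu }. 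This gives the asserted formula.

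The main obstacle I expect is the bookkeeping around the separability reduction in the middle paragraph: one must be careful that the separable subspace \math{S} used for the lift genuinely contains \math{\rng\smb U} up to closure, and that the choice function produced by Proposition \ref{Pro lift} is compatible with the quotient by \math{\rng\snn\sixmath U } as required verbatim by Proposition \ref{Pro Edw 8.17.2}. Establishing that the topological span of \math{\{\ssp\smb U\fvalss10\xi : \xi\in D\ssp\} } exhausts \math{\rng\smb U } uses continuity of \math{\smb U} together with density of \math{D}, and then a short argument that a separable-range continuous map into \math{\mvLrs23^\plusinftyy } lands in a subspace whose trace topology is separable. Once this is in place, the remaining identifications are the routine duality manipulations already licensed by Propositions \ref{Pro L^1'=L^i} and \ref{Pro Edw 8.17.2}.
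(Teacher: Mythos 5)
Your proposal is correct and takes essentially the same route as the paper's proof: both construct the transpose map \math{ \smb U : \vPi \to \mvLrs23^\plusinftyy(\ssp\mu\,,\spp\bosy K\ssp) } by representing each functional \math{ \smb\Phii \mapsto \smb V\fvalss60\smb\Phii\fvalss00\xi } through \math{ \Iota\ar 1\sn\inve } from Proposition \ref{Pro L^1'=L^i}\,, and then invoke Propositions \ref{Pro lift} and \ref{Pro Edw 8.17.2} before unwinding the duality identity. The only difference is cosmetic: where you argue separability via countable dense sets and topological spans, it suffices to observe that \math{ \rng\smb U }, being the continuous linear image of the separable space \math{\vPi}, already carries a separable trace topology, so Proposition \ref{Pro lift} applies directly with \math{ S = \rng\smb U }.
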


\begin{proof} We first get a continuous bilinear map \math{ \beta : 
 \mvLrs42^1(\ssp\mu\,,\spp\bosy K\ssp)\ssp\sqcap\sp\vPi\to\bosy K } defined by \mathss35{
(\ssp\smb\Phii\spp,\spp\xi\ssp)\mapsto\smb V\fvalss70\smb\Phii\fvalss01\xi }, \,%
and then a continuous linear map \math{ \smb U : \vPi \to 
 \mvLrs23^\plusinftyy(\ssp\mu\,,\spp\bosy K\ssp) } de- fined by \math{\xi
 \mapsto\Iota\ar 1\sn\inve\sp\fvalue(\ssp\beta\,(\,\cdot\,,\spp\xi\ssp)) } 
where \math{\Iota\ar 1} is as in Proposition \ref{Pro L^1'=L^i} on page \pageref{Pro L^1'=L^i} 
above. Then by Propositions \ref{Pro lift} and \ref{Pro Edw 8.17.2} there is \math{
y \in \bigcup\ssp\vecs\sn
 \mvsLrs23^\plusinftyy(\ssp\mu\,,\spp\vPi\dlsigss00\spp) } with \vskip.4mm\centerline{$
\smb U = 
 \vecs\vPi\times\vecs\sn\mvLrs23^\plusinftyy(\ssp\mu\,,\spp\bosy K\ssp)\capss31
  \{\,(\ssp\xi\,,\sp\smb\Psii\sp) : \roman{ev}\KPt2\sbi\xi\snn\circ\sp y \in
   \smb\Psii\,\} \KP1 $.} \inskipline{.4}0

Now for \math{\xi\in\vecs\vPi} with \math{\varphi\in\smb\Phii\in\dom\smb V } 
and \math{\roman{ev}\KPt2\sbi\xi\snn\circ\sp y \in
   \smb\Psii\in\vecs\sn\mvLrs23^\plusinftyy(\ssp\mu\,,\spp\bosy K\ssp) } we 
have \mathss03{\Iota\ar 1\sn\inve\sp\fvalue(\ssp
 \roman{ev}\KPt2\sbi\xi\snn\circ\sp\smb V\,) =
 \Iota\ar 1\sn\inve\sp\fvalue(\ssp\beta\,(\,\cdot\,,\spp\xi\ssp)) =
 \smb U\fvalue\xi = \smb\Psii } and hence \math{
\roman{ev}\KPt2\sbi\xi\snn\circ\sp\smb V = 
 \Iota\ar 1\ssn\fvalue\ssp\smb\Psii } whence finally \mathss36{
\smb V\fvalss60\smb\Phii\fvalss00\xi = 
 \roman{ev}\KPt2\sbi\xi\snn\circ\sp\smb V\,\fvalue\smb\Phii = 
 \Iota\ar 1\ssn\fvalue\ssp\smb\Psii\fvalue\smb\Phii =
 \int_{\KP{1.1}\Omega\,}
 \roman{ev}\KPt2\sbi\xi\snn\circ\sp y\cdot\varphi\rmdss21\mu}.
  \end{proof}

The content of \cite[Lemma 8.17.8 \erm A\ssp, p.\ 584]{Edw} is in the 
  following

\begin{lemma}\label{Le 8.17.8 A}

With \œ$\,\bosy K\in\setRC$ let \œ$\,E\in\LCSps0(K)$ be normable{\ssp\rm, }and 
let \œ$\,F=E\dlbetss12\,$. Also let $\,S\ar 1$ be a closed linear subspace in $\,
F$ such that \œ$\,\taurd F\leiss22 S\ar 1$ is a separable topology. Then there 
is a closed linear subspace $\,S$ in $\,E$ with \œ$\,\taurd E\leiss32 S$ a 
separable topology and such that \œ$\,\seq{\KP1 u\KPt9|\KP1 S:u\in S\ar 1\,}$ 
is a strict morphism \œ$\, F_{\sp/\ssp\aars S_1} \to 
 (\sp E_{\ssp/\ssp S}\spp)\dlbetss12 $ in the sense of 
  {\,\rm\cite[Definition 2.5.1\sp, p.\ 100]{Ho}\,.}
  \end{lemma}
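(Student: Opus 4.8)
The plan is to build $S$ as the closed span of countably many unit vectors of $E$ chosen to norm a countable dense subset of $S\ar 1$, and then to check that restriction to $S$ is an isometry on $S\ar 1$; since a continuous linear isometric embedding is exactly a strict morphism in the sense of \cite[Definition 2.5.1\sp, p.\ 100]{Ho}, this gives the assertion. Note first that here $F\ar{\ssp/\ssp S\ar 1}$ is to be read as the closed subspace $S\ar 1$ of $F$ with its induced structure and $(E_{\ssp/\ssp S})\dlbetss12$ as the strong dual $S\dlbetss12$ of the subspace $S$, so that $\seq{\,u|S:u\in S\ar 1\,}$ is literally the restriction map $\rho\colon S\ar 1\to S\dlbetss12$.

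First I would fix a compatible norm $\Nu$ for $E$ and let $\Nu\aR 1$ be the corresponding dual norm on $F=E\dlbetss12$, so that $\Nu\aR 1\sn\fvalue u=\sup\,\{\,|\KP1 u\fvalue x\KP1|:x\in\vecs E\ssp\text{ and }\ssp\Nu\fvalue x\le 1\,\}$ for $u\in\vecs F$. Since $F$ is \erm Banachable and $\taurd F\leiss22 S\ar 1$ is separable, the closed subspace $S\ar 1$ carries a countable $\taurd F\leiss22 S\ar 1$--dense set $\{\,u\ar n:n\in\bbNo\,\}$. For each $n\in\bbNo$ the definition of $\Nu\aR 1$ lets me pick, by countable choice, a sequence $\bosy x\ar n\in\sp^\sbbNo\vecs E$ with $\Nu\fvalue(\ssp\bosy x\ar n\fvalue k\ssp)\le 1$ and $|\KP1 u\ar n\fvalue(\ssp\bosy x\ar n\fvalue k\ssp)|\to\Nu\aR 1\sn\fvalue u\ar n$ as $k\to\infty$. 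Then I would take $S$ to be the $\taurd E$--closure of the linear span of $\{\,\bosy x\ar n\fvalue k:n\ssp,\sp k\in\bbNo\,\}$; this is a closed linear subspace of $E$ with $\taurd E\leiss32 S$ separable, so that $E_{\ssp/\ssp S}=S$ is normable and $(E_{\ssp/\ssp S})\dlbetss12=S\dlbetss12$ is its \erm Banachable dual.

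Next I would study $\rho$, which is linear with $\Nu\aR 1\sn\fvalue(\ssp u|S\ssp)\le\Nu\aR 1\sn\fvalue u$, hence continuous into $S\dlbetss12$. The key point is that $\rho$ is isometric. Indeed, because each $\bosy x\ar n\fvalue k$ lies in the closed unit ball of $S$, for every $n$ one has $\Nu\aR 1\sn\fvalue(\ssp u\ar n|S\ssp)\ge\sup\,\{\,|\KP1 u\ar n\fvalue(\ssp\bosy x\ar n\fvalue k\ssp)|:k\in\bbNo\,\}=\Nu\aR 1\sn\fvalue u\ar n$, while the reverse inequality is trivial, so $\Nu\aR 1\sn\fvalue(\ssp u\ar n|S\ssp)=\Nu\aR 1\sn\fvalue u\ar n$. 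Since both $u\mapsto\Nu\aR 1\sn\fvalue(\ssp u|S\ssp)$ and $u\mapsto\Nu\aR 1\sn\fvalue u$ are $1$--Lipschitz on $S\ar 1$ (via $|\KP1\Nu\aR 1\sn\fvalue(\ssp u|S\ssp)-\Nu\aR 1\sn\fvalue(\ssp v|S\ssp)\KP1|\le\Nu\aR 1\sn\fvalue(\ssp(\ssp u-v\ssp)|S\ssp)\le\Nu\aR 1\sn\fvalue(\ssp u-v\ssp)$) and agree on the dense set $\{\,u\ar n\,\}$, they agree on all of $S\ar 1$; thus $\rho$ preserves norms.

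Finally, an isometric continuous linear map has trivial kernel and induces a topological isomorphism onto its image in the subspace topology, which is precisely the condition for $\rho$ to be a strict morphism $F_{\sp/\ssp\aars S_1}\to(E_{\ssp/\ssp S})\dlbetss12$ in the sense of \cite[Definition 2.5.1\sp, p.\ 100]{Ho}; completeness of $S\ar 1$ even forces the image to be closed, though this is not needed. The step I expect to be the main obstacle is exactly the passage from the one--sided norming estimate available only on the dense family $\{\,u\ar n\,\}$ to a genuine two--sided isometry on all of $S\ar 1$, which the Lipschitz--continuity argument above handles; a secondary, purely bookkeeping point is to confirm the identifications $F_{\sp/\ssp\aars S_1}=S\ar 1$ and $(E_{\ssp/\ssp S})\dlbetss12=S\dlbetss12$ and that, in this normable setting, being a strict morphism reduces to being an isometric embedding.
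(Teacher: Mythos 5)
Your proof is correct and takes essentially the same approach as the paper's: both build $S$ as the closed linear span of countably many unit vectors of $E$ chosen to (nearly) norm a countable dense subset of $S_{1}$, and then verify that the restriction map preserves the dual norm, which in this normable setting is exactly the strict-morphism condition (your reading of $F_{\sp/\ssp S_1}$ as the subspace with induced structure and of $(\sp E_{\ssp/\ssp S}\spp)\dlbetss12$ as the dual of the subspace $S$ matches the paper's usage). The only differences are cosmetic: the paper selects a single approximately norming vector per element of the dense sequence, with accuracy $(\ssp 1+(\ssp i+1\ssp)^{\mminus 1}\sp)^{\mminus 1}$ improving along the sequence, and absorbs the error directly in the final supremum estimate, whereas you select a whole norming sequence per element to get an exact isometry on the dense set and then extend it to all of $S_{1}$ by the $1$-Lipschitz continuity argument.
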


\begin{proof} Fixing a compatible norm \math{\Nu} for \mathss35{E}, \,let \math{
\bosy u\in\sp^\sbbNo\,S\ar 1} be such that $\rng\bosy u$ is \mathss34{
\taurd F\leiss22 S\ar 1}--\,dense, and let $\bosy x\in\sp
 ^\sbbNo\,(\ssp\Nu\invss44\image\ssbb24 I)$ be such that $
(\ssp 1 + i\ssp\yplus\sp\ydot\ssp^{\mminus 1}\sp\big)\KP1
 (\ssp\bosy u\,.\KPt8\bosy x\fvalss01 i\ssp)
=\sup\ssp\big\{\KPt8|\KP1\bosy u\fvalss01 i\fvalss10 x\KP1|:
 x\in\Nu\invss44\image\ssbb23 I\,\}$ holds for all $i\in\bbNo\,$. Then we let 
$S$ be the closed linear span of $\rng\bosy x$ in $E\,$, and take \mathss38{
\Iota=\seqss40{u\KPt9|\KP1 S:u\in S\ar 1} }.

One easily verifies that \math{
\sup\ssp\big\{\KPt8|\KP1 u\fvalue x\KP1|:x\in\Nu\invss44\image\ssbb23 I\,\} 
\le \sup\ssp\big\{\KPt8|\KP1 u\fvalue x\KP1|:x\in\rng\bosy x\KPt9\} } holds 
for every fixed $u\in S\ar 1\ssp$, and hence we get \vskip.6mm

$
\sup\ssp\big\{\KPt8|\KP1\Iota\fvalss10 u\fvalue x\KP1|:x\in
   \Nu\invss44\image\ssbb20 I\capss32 S\KP1\}
=   \sup\ssp\big\{\KPt8|\KP1 u\fvalue x\KP1|:x\in
   \Nu\invss44\image\ssbb20 I\capss32 S\KP1\}
$ \inskipline{.4}{10.6}

${}\le \sup\ssp\big\{\KPt8|\KP1 u\fvalue x\KP1|:x\in
    \Nu\invss44\image\ssbb23 I\,\}
\le \sup\ssp\big\{\KPt8|\KP1 u\fvalue x\KP1|:x\in\rng\bosy x\KPt9\}
$ \inskipline{.4}{10.6}

${}\le \sup\ssp\big\{\KPt8|\KP1 u\fvalue x\KP1|:x\in
    \Nu\invss44\image\ssbb20 I\capss32 S\KP1\}
=   \sup\ssp\big\{\KPt8|\KP1\Iota\fvalss10 u\fvalue x\KP1|:x\in
    \Nu\invss44\image\ssbb20 I\capss32 S\KP1\} \KP1 $, \inskipline{.6}0

from which the assertion easily follows.
  \end{proof}

The content of \cite[8.17.8\ssp, pp.\ 584\,--\,586]{Edw} is in the following

\begin{proposition}\label{Pro Edw 8.17.8}

Let $\,\mu$ be an almost decomposable positive measure on $\,\Omega${\,\rm, }%
and with \œ$\,\bosy K\in\setRC$ let \œ$\,\vPi\in\BaSps0(K)$ and \œ$\,\smb V\in
 \Cal L\,(\ssp\mvLrs42^1(\ssp\mu\,,\spp\bosy K\ssp)\,,\spp\vPi\dlbetss01\sp)$ 
be such that $\,\taurd(\spp\vPi\dualbeta\spp)\leiss32\rng\smb V$ is a 
separable topology. Then there is \œ$\, y \in \bigcup\ssp\vecs\sn
 \mvsLrs23^\plusinftyy(\ssp\mu\,,\spp\vPi\dlsigss00\spp)$ with $\,
\rng y\inc\CltaurdvPidualbeta\rng\smb V$ and such that \vskip.1mm\centerline{$
\smb V\fvalss60\smb\Phii\fvalss00\xi = \int_{\KP{1.1}\Omega\,} 
 \roman{ev}\KPt2\sbi\xi\snn\circ\sp y\cdot\varphi\rmdss21\mu $} \inskipline{.5}0

holds for $\,\varphi\in\smb\Phii\in\dom\smb V$ and $\,\xi\in\vecs\vPi\sp$.
  \end{proposition}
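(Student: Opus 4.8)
The plan is to reduce to the separable case already settled in Proposition~\ref{Pro Edw 8.17.6} by replacing $\vPi$ with a suitable separable subspace, and then to transport the representative obtained there back into $\vPi\dlsigss00$. Writing $E=\vPi$ and $F=\vPi\dlbetss01$, I would first put $S\ar 1=\CltaurdvPidualbeta\rng\smb V$. Since $\vPi\in\BaSps0(K)$, the strong dual $F$ is \erm Banachable, so $S\ar 1$ is a closed, hence \erm Banachable, and (by the hypothesis on $\rng\smb V$) separable subspace of $F$. Applying Lemma~\ref{Le 8.17.8 A} to $E$, $F$, $S\ar 1$ then yields a closed separable linear subspace $S$ in $\vPi$ with $\taurd\vPi\leiss32 S$ separable such that the restriction map $\rho=\seqss40{v\KPt9|\KP1 S:v\in S\ar 1}$ is a strict morphism; the feature I shall actually use is that $\rho$ is \emph{isometric}, i.e.\ that $S$ is norming for $S\ar 1$ and that $\rho$ carries $S\ar 1$ isometrically onto a closed subspace $\rho\sp\image\sn S\ar 1$ of $S\dlbetss01$.

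Next I would form the continuous linear map $\smb W=\rho\circ\smb V:\mvLrs42^1(\ssp\mu\,,\spp\bosy K\ssp)\to S\dlbetss01$, which satisfies $\smb W\fvalss60\smb\Phii=\smb V\fvalss60\smb\Phii\KP1|\KP1 S$ and has range inside $\rho\sp\image\sn S\ar 1$. Because $S$ is normable with $\taurd S$ separable, Proposition~\ref{Pro Edw 8.17.6} applies to $\smb W$ with $S$ in place of $\vPi$ and furnishes some $y\ar 0\in\bigcup\ssp\vecs\sn\mvsLrs23^\plusinftyy(\ssp\mu\,,\spp S\dlsigss00\spp)$ with $\smb W\fvalss60\smb\Phii\fvalss00\zeta=\int_{\KP{1.1}\Omega\,}\roman{ev}\sbi\zeta\snn\circ\sp y\ar 0\cdot\varphi\rmdss21\mu$ holding for $\varphi\in\smb\Phii\in\dom\smb V$ and $\zeta\in\vecs S$; in particular $\rng y\ar 0\in\bouSet(\sp S\dlbetss01\sp)$.

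The hard part will be to show that the values $y\ar 0\fvalue\eta$, which a priori only lie in $S\dlbetss01$, in fact lie in the isometric image $\rho\sp\image\sn S\ar 1$ for almost every $\eta$; equivalently, that each such value extends to a (necessarily unique) element of $S\ar 1$. This is exactly the point where the norm closure $S\ar 1$ and the generally larger weak$^*$ closure of $\rho\sp\image\sn S\ar 1$ in $S\dlbetss01$ have to be reconciled, and it is here that Lemma~\ref{Le 8.17.8 B} enters. Since $S$ is separable, by Alaoglu's theorem the weak$^*$ topology on bounded subsets of $S\dlbetss01$ is metrizable, so the values of the lift are weak$^*$ limits along sequences; the bounded sequences in $\vPi$ produced by Lemma~\ref{Le 8.17.8 B} certify that these limit functionals are induced by genuine elements of $S\ar 1$ rather than escaping $\rho\sp\image\sn S\ar 1$. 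After discarding a $\mu$-null set I would thereby define $y$ by letting $y\fvalue\eta$ be the unique element of $S\ar 1$ with $y\fvalue\eta\KP1|\KP1 S=y\ar 0\fvalue\eta$, so that $\rng y\inc S\ar 1=\CltaurdvPidualbeta\rng\smb V$, which is the asserted range condition.

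It then remains to check membership in $\bigcup\ssp\vecs\sn\mvsLrs23^\plusinftyy(\ssp\mu\,,\spp\vPi\dlsigss00\spp)$ and the integral formula for all $\xi\in\vecs\vPi$, not merely for $\zeta\in\vecs S$. For measurability I would use that $S$ norms the separable space $S\ar 1$: a countable $\taurd S$-dense $D\inc\vecs S$ gives a countable family $\seqss40{\roman{ev}\sbi\zeta:\zeta\in D}$ that is norming for $S\ar 1$, and since $\roman{ev}\sbi\zeta\snn\circ\sp y=\roman{ev}\sbi\zeta\snn\circ\sp y\ar 0$ is finitely almost measurable for $\zeta\in S$ while $\rng y$ lies in the separable $S\ar 1$, Pettis' theorem shows that $y$ is finitely almost scalarly measurable into $\vPi\dlsigss00$, and it is essentially bounded; hence $y\in\bigcup\ssp\vecs\sn\mvsLrs23^\plusinftyy(\ssp\mu\,,\spp\vPi\dlsigss00\spp)$. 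For the formula, fix $\smb\Phii\in\dom\smb V$ and $\varphi\in\smb\Phii$; as $\rng y\inc S\ar 1$ with $S$ norming and $S\ar 1$ \erm Banachable, the strong integral $\smb Y=\int_{\KP{1.1}\Omega\,}y\,\varphi\rmdss21\mu$ exists in $S\ar 1$ (being a limit of integrals of $S\ar 1$-valued simple functions) and represents the weak integral $\xi\mapsto\int_{\KP{1.1}\Omega\,}\roman{ev}\sbi\xi\snn\circ\sp y\cdot\varphi\rmdss21\mu$. By the case $\zeta\in\vecs S$ already proved, $\smb Y$ and $\smb V\fvalss60\smb\Phii$ agree on $S$; both lying in $S\ar 1$ and $\rho$ being injective on $S\ar 1$, they coincide in $\vPi\dlbetss01$, which is precisely the required identity $\smb V\fvalss60\smb\Phii\fvalss00\xi=\int_{\KP{1.1}\Omega\,}\roman{ev}\sbi\xi\snn\circ\sp y\cdot\varphi\rmdss21\mu$ for every $\xi\in\vecs\vPi$.
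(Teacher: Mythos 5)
Your overall route is the same as the paper's: put $S_1=\CltaurdvPidualbeta\rng\smb V$, use Lemma~\ref{Le 8.17.8 A} to produce a closed separable subspace $S$ of $\vPi$ on which $S_1$ acts isometrically through the restriction map $\rho$, apply Proposition~\ref{Pro Edw 8.17.6} to $\rho\circ\smb V$ with $S$ in place of $\vPi$, and transport the resulting kernel back through $\rho$. Your end\-game is a legitimate variant of the paper's: where you invoke Pettis' theorem with a countable norming family and then compare Bochner integrals through the injective $\rho$, the paper instead applies Lemma~\ref{Le 8.17.8 B} (with $S$ as predual and $\rng\Iota$ as the separable subspace) to write evaluation at an arbitrary $\xi\in\vecs\vPi$ as a pointwise limit, on $S_1$, of a bounded sequence of evaluations at points of $S$, and then gets both the scalar measurability and the integral identity by dominated convergence. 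Either finish is fine \emph{once the kernel is known to take its values in} $S_1$.

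The genuine gap is at the step you yourself call the hard part, and your proposed repair does not close it. Lemma~\ref{Le 8.17.8 B} points in the wrong direction for this purpose: given a continuous linear functional $w$ \emph{on} the subspace $S_1$, it produces a bounded sequence in the predual whose evaluations converge to $w$ pointwise on $S_1$; it says nothing about whether a given element of the dual of $S$ --- such as a value $y_0(\eta)$ of the kernel furnished by Proposition~\ref{Pro Edw 8.17.6} --- is the restriction of some member of $S_1$. Nor can ``being a weak$^*$ limit along sequences'' certify membership in $\rho[S_1]$: that image is norm closed, but norm\-closed subspaces of a dual space are in general neither weak$^*$ closed nor weak$^*$ sequentially closed (think of $c_0$ inside $\ell^\infty$).

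Worse, the claim you are trying to prove can actually fail, so no argument could fill this hole as stated. Take $\vPi=\ell^1$, let $\mu$ be Lebesgue measure on $[\,0,1]$, and let $\smb V$ be the Fourier\-coefficient map $\varphi\mapsto(\hat\varphi(j))_j$, which lands in $c_0\subset\ell^\infty=(\ell^1)'$ by Riemann--Lebesgue; its range is norm separable, so all hypotheses hold, and $S_1\inc c_0$. If $y$ is any essentially bounded, weak$^*$ scalarly measurable kernel satisfying the integral identity, then testing against the unit vectors $e_j\in\ell^1$ forces $y(\eta)(e_j)=e^{-2\pi i j\eta}$ for almost every $\eta$, for each $j$; intersecting these co\-null sets over $j$ shows that $|\,y(\eta)(e_j)\,|=1$ for all $j$ on a co\-null set, so $y(\eta)\notin c_0$ almost everywhere, and no representing kernel can have values in $S_1$. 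You should also be aware that the paper's own proof is silent at exactly this point: it sets $y=\Iota\inve\circ\, y_1$ and simply declares this to be a function $\Omega\to S_1$, which presupposes that every value of $y_1$ lies in $\rng\Iota$ --- precisely the assertion you were (rightly) worried about. So your instinct that something must be proved here was sound, but neither your sketch nor the paper supplies the proof, and the example shows that with the norm\-closure range condition and no reflexivity hypothesis it cannot be supplied. (In the paper's later application carrying the range condition, Proposition~\ref{Pro mA=int ev_x c mu}, the space $\vPi$ is reflexive; there norm\-closed subspaces of the dual are weak$^*$ closed and the obstruction disappears.)
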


\begin{proof} Taking \math{S\ar 1=\CltaurdvPidualbeta\rng\smb V } and \math{E=
 \vPi} in Lemma \ref{Le 8.17.8 A} above, there is a closed linear subspace \math{
S} in \math{\vPi} with \math{\taurd\vPi\leiss32 S } a separable topology and 
such that for \math{\Iota=\seq{\KP1 u\KPt9|\KP1 S:u\in S\ar 1\,} } we have \math{
\Iota} a strict morphism \mathss37{\vPi\dlbetss01{}_{\sp/\ssp\aars S_1} \to
 (\sp\vPi_{\ssp/\ssp S}\spp)\dlbetss12 }.

Now we have \mathss38{ \Iota\circ\sp\smb V \in 
 \Cal L\,(\ssp\mvLrs42^1(\ssp\mu\,,\spp\bosy K\ssp) \, , \sp
 (\sp\vPi_{\ssp/\ssp S}\spp)\dlbetss12\sp) }, \,and by separability of the 
topol- \linebreak
       ogy \math{ \taurd(\sp\vPi_{\ssp/\ssp S}\spp) = 
 \taurd\vPi\leiss32 S } we can apply Proposition \ref{Pro Edw 8.17.6} to 
deduce existence of some \linebreak
                         $y\ar 1 \in \bigcup\ssp\vecs\sn
 \mvsLrs23^\plusinftyy(\ssp\mu\,,\spp(\sp\vPi_{\ssp/\ssp S}\spp)\dlsigss12) \ssp$ 
 with \inskipline{.5}2

(\sp r\sp)               $      \KP{12}
  \smb V\fvalss60\smb\Phii\fvalss00\xi 
= \Iota\fvalue(\ssp\smb V\fvalss60\smb\Phii\ssp)\fvalue\sp\xi
= \Iota\circ\spp\smb V\fvalss60\smb\Phii\fvalss00\xi = \int_{\KP{1.1}\Omega\,} 
   \roman{ev}\KPt2\sbi\xi\snn\circ\sp y\ar 1\ssn\cdot\varphi\rmdss21\mu$ \inskipline{.5}0

whenever \math{\varphi\in\smb\Phii\in\dom\smb V} and \math{\xi\in S} hold. 
Then taking \math{ y = \Iota\inve\snn\circ\sp y\ar 1 } we get a function \mathss38{
y:\Omega\to S\ar 1\inc\Cal L\,(\sp\vPi\sp,\spp\bosy K\ssp) }, \,and it remains 
to establish \œ$\ssp y \in \bigcup\ssp\vecs\sn
 \mvsLrs23^\plusinftyy(\ssp\mu\,,\spp\vPi\dlsigss00\spp) $ and to show that \inskipline{.5}2

(\sp s\sp)               $      \KP{12}
\smb V\fvalss60\smb\Phii\fvalss00\xi = \int_{\KP{1.1}\Omega\,} 
 \roman{ev}\KPt2\sbi\xi\snn\circ\sp y\cdot\varphi\rmdss21\mu$ \inskipline{.5}0

holds for \math{\varphi\in\smb\Phii\in\dom\smb V} and \mathss31{ \xi \in 
 \vecs\vPi}.

Noting that \math{\bouSet((\sp\vPi_{\ssp/\ssp S}\spp)\dlsigss12) \inc
 \bouSet((\sp\vPi_{\ssp/\ssp S}\spp)\dlbetss12\sp) } by Banach\,--\,Steinhaus, 
we see that \vskip.5mm\centerline{$
\Iota\invss44\images\bouSet((\sp\vPi_{\ssp/\ssp S}\spp)\dlsigss12) \inc
\Iota\invss44\images\bouSet((\sp\vPi_{\ssp/\ssp S}\spp)\dlbetss12\sp) \inc
\bouSet(\sp\vPi\dlbetss01{}_{\sp/\ssp\aars S_1}\spp\sbig)0 \inc
\bouSet(\sp\vPi\dlbetss01\sp) \inc \bouSet(\sp\vPi\dlsigss12) \KP1 $,} \inskipline{.5}0

and hence \math{y} is similarly \q{finitely almost bounded} as \math{y\ar 1} 
is. So, in order to establish \linebreak
                            \œ$y \in \bigcup\ssp\vecs\sn
 \mvsLrs23^\plusinftyy(\ssp\mu\,,\spp\vPi\dlsigss00\spp) \ssp$ we only need 
to show that \math{(\KPt5 y\,;\sp\mu\,,\spp\vPi\dlsigss00\spp) } is finitely 
almost scalarly measurable. For this, arbitrarily fixing \mathss31{ \xi \in 
 \vecs\vPi }, \,we first observe that \linebreak
                                      $\roman{ev}\sp\sbi\xi\,|\KP1 S\ar 1 \in 
 \Cal L\,(\sp\vPi\dlbetss01{}_{\sp/\ssp\aars S_1},\spp\bosy K\ssp) \ssp$ and 
hence also \mathss38{
\roman{ev}\sp\sbi\xi\,|\KP1 S\ar 1\snn\circ\spp\Iota\inve \in 
 \Cal L\,(
 (\sp\vPi_{\ssp/\ssp S}\spp)\dlbetss12{}_{\ssp/\ssp\rng\snn\emath\iotaa}\ssp , \spp
   \bosy K\ssp) }. 

By separability of the topology \math{
\taurd((\sp\vPi_{\ssp/\ssp S}\spp)\dlbetss12\sp)\leiss32\rng\Iota } we are 
able to apply Lemma \ref{Le 8.17.8 B} on page \pageref{Le 8.17.8 B} above to 
get existence of \math{ \bosy\xi \in \sp ^\sbbNo\,S } with \math{\rng\bosy\xi
 \in\bouSet\vPi } and \vskip.4mm\centerline{$
  \Iota\inve\fvalss02\zeta\ar 1\sn\fvalue\sp\xi 
= \roman{ev}\sp\sbi\xi\,|\KP1 S\ar 1\snn\circ\spp\Iota\inve\fvalss02\zeta\ar 1
=\lim\,(\ssp\zeta\ar 1\sn\circ\ssp\bosy\xi\ssp) $} \inskipline{.4}0

for all \math{\zeta\ar 1\in\rng\Iota }, \,and hence \math{\zeta\fvalss11\xi = 
 \lim\,(\ssp\Iota\fvalss11\zeta\circss11\bosy\xi\ssp) } for all \mathss34{
\zeta\in S\ar 1}. In particular for \linebreak
                                  \œ$(\ssp\eta\ssp,\spp\zeta\ssp)\in y \ssp $ 
we obtain \mathss38{\roman{ev}\KPt2\sbi\xi\snn\circ\sp y\fvalue\eta = 
 \lim\,(\ssp y\ar 1\sn\fvalue\eta\circss11\bosy\xi\ssp) }, \,giving the 
required measurability. To get (\sp s\sp) we note that by the above we also 
have \inskipline{.7}{8.5}

$ \smb V\fvalss60\smb\Phii\fvalss00\xi 
= \lim\,(\ssp\Iota\circ\spp\smb V\fvalss60\smb\Phii\circ\sp\bosy\xi\ssp) 
= \lim\,(\ssp\smb V\fvalss60\smb\Phii\circ\sp\bosy\xi\ssp)$ \inskipline{.5}{19}

${} 
= \lim\sp\sbi{i\ssp\to\ssp\infty}\sp\int_{\KP{1.1}\Omega\,}
   y\ar 1\sn\fvalue\eta\fvalue(\ssp\bosy\xi\fvalss11 i\ssp)\KP1(\ssp
    \varphi\fvalue\eta\ssp)\rmdss11\mu\,(\sp\eta\sp)
= \int_{\KP{1.1}\Omega\,}
   \roman{ev}\KPt2\sbi\xi\snn\circ\sp y\cdot\varphi\rmdss21\mu \,$, \inskipline{.7}0

where we used (\sp r\sp) with dominated convergence, noting that it is 
legitimate by the above established boundedness and measurability properties.
  \end{proof}


\insubsubhead   Dunford\,--\,Pettis property of $\,\mLrs42^1(\ssp\mu\sp)$  \label{Ss Dun-Pet}

When treating the reflexive case in the proof of 
Theorem \nfss A\,\ref{main Th}\sp, we need to know that \mathss03{
\mvLrs42^1(\ssp\mu\,,\spp\bosy K\ssp) } has the Dunford\,--\,Pettis property, 
or in our terminology introduced below, is a {\eightsl DP}{\sl\,--\,space\sp}. 
This is equivalent to \math{\mLrs42^1(\ssp\mu\sp) } being a 
\erm{DP}\,--\,space, and for this reason we here consider this matter to some 
extent. Although we shall need the result only for positive measures \math{\mu} 
with \mathss36{\mu\fvalue\bigcup\ssp\dom\mu < \plusinfty}, \,we anyhow 
consider the situation for general positive measures.

In what follows, note 
that \math{\pi} is said to be a {\it probability measure\ssp} if{}f \math{
\pi} is a positive measure with \mathss34{\pi\fvalue\bigcup\ssp\dom\pi=1}.

\begin{lemma}\label{Le L^p(m)=L^p(p)}

Let $\,1\le p\le\plusinfty${\,\rm, }and let $\,\mu$ be a \rsigma0finite 
positive measure with 

$\,\rng\mu\not=\{\ssp 0\ssp\}\KPt8$. Then 
there is a probability measure $\,\pi$ with 

$\,\mLrs03^p(\ssp\mu\sp)$ and $\,\mLrs03^p(\sp\pi\sp)$ 
linearly homeomorphic.
  \end{lemma}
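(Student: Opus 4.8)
The plan is to reduce the $\sigma$--finite measure $\mu$ to a probability measure by constructing a \emph{weight function} and then showing that multiplication by a suitable power of this weight gives the desired linear homeomorphism. First I would use $\sigma$--finiteness to write $\bigcup\,\dom\mu = \bigcup\ssp\rng\ebit A$ for some increasing $\ebit A\in\sp^\sbbNo\,(\mu\invss44\image\sp\lbb R_+)$ with the sets $\ebit A\fvalss01 i$ of finite measure; since $\rng\mu\not=\{\ssp 0\ssp\}$ we may moreover arrange that $\mu\fvalue(\ebit A\fvalss01 i)>0$ for at least one $i$. From this I would build a strictly positive fully positive $\mu$--measurable function $w:\Omega\to\rbb R^+$ with $\int_{\KPp1.1\Omega\,}w\rmdss11\mu = 1$, for instance by taking $w$ constant on the successive difference sets $\ebit A\fvalss01 i\ssp\setminus\ebit A\fvalss01(i-1)$ with the constants chosen small enough that the total integral converges to $1$ (normalizing afterward). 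Then I would define the probability measure $\pi$ on the same $\sigma$--algebra $\dom\mu$ by $\pi = \seqss33{\int_{\,A\,}w\rmdss11\mu : A\in\dom\mu}$.

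Next I would produce the candidate isomorphism. For $1\le p<\plusinfty$ the natural map sends a class represented by $\varphi$ to the class represented by $w^{\,p^{-1}}\cdot\varphi$, viewed as a map $\mLrs03^p(\ssp\mu\sp)\to\mLrs03^p(\sp\pi\sp)$; the point is that $\int_{\KPp1.1\Omega\,}|\KP1 w^{\,p^{-1}}\varphi\KP1|\RHB{.25}{^p}\rmdss11\pi = \int_{\KPp1.1\Omega\,}w\,|\KP1\varphi\KP1|\RHB{.25}{^p}\,w^{-1}\rmdss11\mu = \int_{\KPp1.1\Omega\,}|\KP1\varphi\KP1|\RHB{.25}{^p}\rmdss11\mu$, so the Lebesgue norm is exactly preserved (here I would invoke $\rmdss11\pi = w\rmdss11\mu$, i.e.\ that integration against $\pi$ equals integration of $w\,\cdot$ against $\mu$, which follows from the construction of $\pi$ via the standard approximation of positive measurable functions by simple ones). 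For $p=\plusinfty$ the weight plays no role: since $w$ is strictly positive, a set is $\mu$--null if and only if it is $\pi$--null, hence $\mLrs23^\plusinftyy(\ssp\mu\sp)$ and $\mLrs23^\plusinftyy(\sp\pi\sp)$ have literally the same underlying classes and the same essential--sup norm, so the identity map works. In every case the map is visibly linear, norm--preserving (hence continuous and injective), and its inverse $\psi\mapsto w^{\,-\ssp p^{-1}}\psi$ is of the same form, giving surjectivity; thus it lies in $\Lis$.

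The main obstacle I anticipate is not the analytic estimate but the bookkeeping needed to make everything well--defined at the level of \emph{equivalence classes} and to verify that the weighted map respects the null--set structure used in Constructions \ref{defi $L^p$}. Concretely, I would need to check that $w^{\,p^{-1}}\cdot\varphi$ is again finitely almost simply measurable with respect to $\pi$ whenever $\varphi$ is so with respect to $\mu$ (multiplication by a fixed strictly positive measurable scalar function preserves simple--measurability, and $\dom\mu=\dom\pi$ with the same null ideal), and that the map is independent of the chosen representative, which reduces to the observation that $w$ is strictly positive so $\mu$--a.e.\ equality is equivalent to $\pi$--a.e.\ equality. I would also confirm that the change--of--variable identity $\int_{\,A\,}f\rmdss11\pi = \int_{\,A\,}w\cdot f\rmdss11\mu$ holds for all positive $\mu$--measurable $f$ by the usual monotone passage from simple functions, using Lemma \ref{Le +int} to control finiteness. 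Once these compatibility points are settled, the norm identity above finishes the proof immediately.
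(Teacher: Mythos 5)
Your overall strategy is the same as the paper's: build a strictly positive density $w$ with $\int_\Omega w\,d\mu=1$, set $d\pi=w\,d\mu$, and use multiplication by a power of $w$ as the isomorphism. (The paper's density is the piecewise constant function taking the value $a_B/\mu(B)$ on each member $B$ of a countable partition into sets of finite positive measure, and its multiplier $\mathrm{b}\,B=a_B^{-1/p}\,\mu(B)^{1/p}$ is exactly the $(-1/p)$-th power of that density on each piece.) However, your power points the wrong way, and the central norm identity is computed inconsistently with your own definition of $\pi$. With $d\pi=w\,d\mu$ one has $\int_\Omega |\,w^{1/p}\varphi\,|^p\,d\pi=\int_\Omega w^2\,|\varphi|^p\,d\mu$, \emph{not} $\int_\Omega |\varphi|^p\,d\mu$; your middle expression $\int_\Omega w\,|\varphi|^p\,w^{-1}\,d\mu$ tacitly substitutes $d\pi=w^{-1}d\mu$, contradicting $\pi(A)=\int_A w\,d\mu$.

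This is not a harmless slip: the map $\varphi\mapsto w^{1/p}\varphi$ is genuinely not a linear homeomorphism of $L^p(\mu)$ onto $L^p(\pi)$ when $\mu$ has infinite total mass, which is the only interesting case. Take $\mu$ to be Lebesgue measure on $[\,0,\infty\,[$ and $w(t)=e^{-t}$, and let $\psi$ satisfy $|\psi|^p=e^{t/2}$; then $\int_0^\infty e^{t/2}e^{-t}\,dt<\infty$, so $\psi$ represents a vector of $L^p(\pi)$, but the only candidate preimage $w^{-1/p}\psi$ satisfies $\int_0^\infty e^{t}e^{t/2}\,dt=\infty$, so it does not lie in $L^p(\mu)$. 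Hence your map is not surjective, and your claimed inverse $\psi\mapsto w^{-1/p}\psi$ does not carry $L^p(\pi)$ into $L^p(\mu)$ at all. The repair is to interchange the two maps: send $\varphi\mapsto w^{-1/p}\varphi$ from $L^p(\mu)$ to $L^p(\pi)$, with inverse $\psi\mapsto w^{1/p}\psi$; then $\int_\Omega|\,w^{-1/p}\varphi\,|^p\,d\pi=\int_\Omega w^{-1}|\varphi|^p\,w\,d\mu=\int_\Omega|\varphi|^p\,d\mu$ is a genuine isometry of the Lebesgue quasi-norms, and with this correction the rest of your argument (well-definedness on equivalence classes via the shared null ideal, and the $p=\infty$ case, where $\sigma$-finiteness lets one pass from sets of finite $\mu$-measure to arbitrary measurable sets in the essential-sup and measurability conditions) goes through and coincides with the paper's proof.
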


\begin{proof} Letting \math{\scrmt A\inc\mu\invss44\image\spp\rbb R^+} be a 
finite or countably infinite partition of \mathss36{\Omega}, \,let \math{
\bosy a:\scrmt A\to\rbb R^+} be any function with \math{\sum\,\bosy a=1} and 
take \vskip.5mm\centerline{$
\pi =
\ssp\big\langle\,\sum_{\KPt8 B\ssp\in\ssp\scrm7 A\,}
(\ssp\bosy a\fvalue B\KP1(\ssp\mu\fvalue B\ssp)^{\ssp\mminus 1}\,
(\ssp\mu\fvalue(\sp A\capss31 B\ssp))) : A\in\dom\mu\KP1\rangle \KP1 $.} \inskipline{.5}0

Then \math{\pi} is a probability measure with \math{\dom\pi=\dom\mu} and we 
define 

$\Iota:\vecs\mLrs03^p(\ssp\mu\sp)
\to\vecs\mLrs03^p(\sp\pi\sp)$ 

by $\smb\Phii\mapsto\smb\Psii$ when $x\in\smb\Phii$ and 
$\bigcup\KPt8\{\KPt9\roman b\,B\KP1(\ssp x\KP1|\KP1 B\ssp) 
:B\in\scrmt A\KP1\}\in\smb\Psii$

where  $\roman b\,B=
(\ssp\bosy a\fvalue B\ssp)\ssp^{\mminus p^{-1}}
(\ssp\mu\fvalue B\ssp)\KP1^{p^{-1}}$ 

for $1\le p < \plusinfty$ 
and \math{\roman b\,B=1} for 
\mathss36{p = \plusinfty}.
  \end{proof}

\begin{definitions}\label{df D-P}

(1) \ Say that \math{E} is a {\it\eit{DP\,}--\,space over\ssp} \math{\bosy K} 
    if{}f \math{\bosy K\in\setRC} and \mathss30{E \in \sn} \mathss03{
 \LCSps0(K)} and for all \math{F\in\BaSps0(K) } and \math{ \smb U \in 
  \Cal L\,(\sp E\ssp,\spp F\ssp) } and for \inskipline07

 $\scrmt K=\{\,A:A\ssp\text{ is absolutely \mathss37{\sigrd E}--\,convex and \mathss37{
  \taurd(\sp E\subsigrs04) }--\,compact }\} \hfill $ from \KP3 \inskipline0{11}

 $\smb U\ssp\image\bouSet E\inc\{\,B:B\ssp\text{ is relatively \mathss37{
  \taurd(\sp F\subsigrs00) }--\,compact }\} \hfill $ it follows that \KP3 \inskipline0{14.8}

 $\smb U\ssp\image\sp\scrmt K\inc\{\,B:B\ssp\text{ is relatively \mathss37{
  \taurd F}--\,compact }\} \KP1 $ holds, \inskipline{.5}2

(2) \ Say that \math{E} is a {\it\eit{DP\,}--\,space\ssp} if{}f there is \math{
    \bosy K\in\setRC} \inskipline0{7.7}

 such that \math{E} is a \erm{DP\,}--\,space over \mathss32{\bosy K}.
  \end{definitions}

Instead of saying that \math{E} is a \erm{DP}\,--\,space, we may also say that 
it is a {\it Dunford\,--\,Pettis\ssp} space. For application of the 
Dunford\,--\,Pettis property one should note that by 
\cite[Remarks 9.4.1\,(3)\,, p.\ 634]{Edw} for \math{E} complete in 
Definitions \ref{df D-P}\,(1) we get an equivalent condition if we instead 
take \vskip.2mm\centerline{$
 \scrmt K = \{\,A:A\ssp\text{ is relatively \mathss37{
  \taurd(\sp E\subsigrs04) }--\,compact }\} \KP1 $.} \inskipline{.4}0

In particular, this holds if with \math{\bosy K\in\setRC} and \math{\mu} a 
positive measure we take \mathss08{E=\mvLrs42^1(\ssp\mu\,,\spp\bosy K\ssp) }. 
For the proof in the general case one possibly uses {\sl Krein's theorem\sp} 
\cite[9.8.5\ssp, p.\ 192]{Jr}\,. The Lebesgue case with \math{
\mu\fvalue\snn\bigcup\ssp\dom\mu < \plusinfty } also follows from Lemma \ref{Le L^1_si-compa} 
on page \pageref{Le L^1_si-compa} above, and only this will be needed in the 
  sequel.

\begin{proposition}\label{Pro DP-seq}

Let \œ$\,E\in\BaSps0(K)$ with \œ$\,\bosy K\in\setRC\KP1$. Then $\,E$ is a 
Dunford\,--\,Pettis space if and only if \œ$\,
\roman{ev}\circ\spp[\KP1\bosy x\ssp,\spp\bosy y\rbrakf\to 0$ holds for all $\,
\bosy x\ssp,\sp\bosy y$ with $\,\bosy x\to\Bnull_E$ in top $\,\taurd(\sp 
E\subsigrs04)$ and $\,\bosy y\to\vecs E\times\snn\{\ssp 0\ssp\}$ in top $\,
\taurd((\sp E\dlbetss22\sp)\subsigrs03) \KP1 $.
  \end{proposition}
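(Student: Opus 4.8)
The statement characterizes the Dunford\,--\,Pettis property of a Banachable space $E$ in sequential terms, asserting that $E$ is a \erm{DP}\,--\,space precisely when $\roman{ev}\circ\spp[\KP1\bosy x\ssp,\spp\bosy y\rbrakf\to 0$ for every pair of sequences $\bosy x$ weakly null in $E$ (i.e.\ null in $\taurd(\sp E\subsigrs04)$) and $\bosy y$ weak$^*$ null in the bidual-weakened sense. The plan is to prove both implications via the equivalence already noted after Definitions \ref{df D-P}, namely that for complete $E$ one may take $\scrmt K$ to consist of relatively $\taurd(\sp E\subsigrs04)$--\,compact sets rather than absolutely convex compact ones; combined with \erm{Krein}'s theorem this lets us test the \erm{DP} property on arbitrary weakly relatively compact sets.

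First I would establish the forward direction. Assume $E$ is a \erm{DP}\,--\,space. Given weakly null $\bosy x\to\Bnull_E$ and weak$^*$ null $\bosy y\to\vecs E\times\snn\{\ssp 0\ssp\}$, the range $\rng\bosy x$ is relatively $\taurd(\sp E\subsigrs04)$--\,compact since weakly convergent sequences have weakly compact closure (Eberlein\,--\,\v Smulian). Take $F=\bosy K$ and the operator $\smb U$ sending $E$ into $\bosy K$ determined by pairing against a fixed weak$^*$ limit point; more precisely, I would reduce to showing that a weakly compact operator maps weakly null sequences to norm null ones, which is the classical reformulation of \erm{DP}. The key observation is that $\rng\bosy y$ lies in a $\taurd((\sp E\dlbetss22\sp)\subsigrs03)$--\,compact set, so the associated operator $E\subsigrs04\to\bosy K$ built from $\bosy y$ factors through a weakly compact map; applying Definitions \ref{df D-P}\,(1) with this $\smb U$ forces $\smb U\ssp\image\sp\scrmt K$ to be relatively norm compact, and evaluating along $\bosy x$ yields $\roman{ev}\circ\spp[\KP1\bosy x\ssp,\spp\bosy y\rbrakf\to 0$.

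For the converse I would argue contrapositively: suppose $E$ fails the \erm{DP} property, so there exist $F\in\BaSps0(K)$ and $\smb U\in\Cal L\,(\sp E\ssp,\spp F\ssp)$ mapping $\bouSet E$ into relatively $\taurd(\sp F\subsigrs00)$--\,compact sets, and a weakly compact set $K\in\scrmt K$ whose image $\smb U\ssp\image K$ is \emph{not} relatively $\taurd F$--\,norm compact. By \erm{Eberlein}\,--\,\v{Smulian} and a standard gliding-hump selection I would extract a weakly null sequence $\bosy x$ in $K$ and a sequence $\bosy\psi$ in the unit ball of $F\dlbetss01$ with $\bosy\psi\fvalss01 i\fvalue(\ssp\smb U\fvalue\bosy x\fvalss01 i\ssp)$ bounded away from $0$, while $\bosy\psi$ may be taken weak$^*$ null by the weak compactness hypothesis on $\smb U$. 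Transporting $\bosy\psi$ back through the transpose ${}^t\smb U$ produces a weak$^*$ null sequence $\bosy y$ in $E\dlbetss01$ (realized inside the bidual-weakened topology) with $\roman{ev}\circ\spp[\KP1\bosy x\ssp,\spp\bosy y\rbrakf$ bounded away from $0$, contradicting the sequential hypothesis. The main obstacle I anticipate is the precise bookkeeping of the three weak topologies $\taurd(\sp E\subsigrs04)$, $\taurd(\sp F\subsigrs00)$ and $\taurd((\sp E\dlbetss22\sp)\subsigrs03)$ and verifying that the selected $\bosy y$ genuinely converges in the stated bidual topology rather than merely weak$^*$ in $E\dlbetss01$; handling this cleanly will require invoking \erm{Goldstine}'s theorem to embed $E$ weak$^*$ densely in its bidual and checking that the transpose construction respects the weakened structures $E\subsigrs04$ as defined in \ref{misc defs}\,(29).
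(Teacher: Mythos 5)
Your proposal has genuine gaps in both directions; note for calibration that the paper offers no argument of its own here, its proof being a citation of Jarchow (Proposition 20.7.1, p.\ 473) and of Edwards (p.\ 636), so the whole burden of the classical argument falls on you. In the forward direction your operator construction fails. Taking $F=\bosy K$ in Definitions \ref{df D-P}\,(1) is vacuous: every bounded subset of $\bosy K$ is already relatively compact, so \emph{every} scalar-valued operator satisfies both the hypothesis and the conclusion of that definition, and the DP property of $E$ is never tested; relatedly, no single operator $E\to\bosy K$ ``built from $\bosy y$'' can encode the \emph{diagonal} pairing $i\mapsto y_i\ssp(x_i)$, where $x_i\ssp,\sp y_i$ denote the terms of $\bosy x\ssp,\sp\bosy y$. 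The construction you are missing is the operator $U\sn:E\to\co(\ssp\bbNo\spp)$, $x\mapsto\seq{\,y_i\ssp(x):i\in\bbNo\,}$: it takes values in $\co(\ssp\bbNo\spp)$ because $\bosy y$ is $\sigma(E',E'')$-null, hence weak$^*$-null; it is weakly compact because its adjoint maps the closed unit ball of $\ell^{\ssp 1}(\ssp\bbNo\spp)$ into the closed absolutely convex hull of $\rng\bosy y\cupss31\{\ssp 0\ssp\}$, a set which is $\sigma(E',E'')$-compact by Krein's theorem, so that Gantmacher's theorem applies; and the DP property applied to this $U$ together with the weakly compact set $\rng\bosy x\cupss31\{\,\Bnull_E\}$ yields $\sup_{\ssp j}\ssp|\KP1 y_j\ssp(x_i)\KP1|\to 0$ as $i\to\infty$, which dominates $|\KP1 y_i\ssp(x_i)\KP1|$.

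In the converse direction, the step you yourself flag as the ``main obstacle'' is exactly where the argument breaks, and Goldstine's theorem does not repair it. The transported sequence ${}^t U\circ\bosy\psi$ is in general \emph{not} null in any relevant topology: one has $({}^t U\psi_i)\ssp(x)=\psi_i\ssp(U x)$, and nothing makes this vanish for fixed $x$, since $(\ssp\psi_i)$ is merely a bounded sequence of norming functionals. What weak compactness of $U$ actually gives --- via Gantmacher's theorem, the tool you need in place of Goldstine --- is that $\{\,{}^t U\psi_i : i\in\bbNo\,\}$ is relatively $\sigma(E',E'')$-compact; by Eberlein--\v Smulian you pass to a subsequence with ${}^t U\psi_{i_k}\to y$ in $\sigma(E',E'')$, and the sequence to feed into the hypothesis is $\bosy y=\seq{\,{}^t U\psi_{i_k}-\ssp y : k\in\bbNo\,}$, which \emph{is} null in $\taurd((\sp E\dlbetss22\sp)\subsigrs03)$; the diagonal stays bounded away from zero after this subtraction because $y\ssp(x_{i_k})\to 0$ by weak nullity of $\bosy x$. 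Without this subtract-the-limit step no contradiction with the sequential hypothesis is reached. Finally, even if your claim that ${}^t U\circ\bosy\psi$ is weak$^*$-null were true, it would not suffice: the proposition quantifies only over sequences null for $\sigma(E',E'')$, and weak$^*$ nullity does not imply $\sigma(E',E'')$ nullity for nonreflexive $E$, so a merely weak$^*$-null sequence with large diagonal would not contradict the stated condition.
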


\begin{proof} See \cite[Proposition 20.7.1\sp, p.\ 473]{Jr} or 
\cite[p.\ 636]{Edw}\,.
  \end{proof}

\begin{lemma}\label{Le E_{/S} DP imp ...}

With \œ$\,\bosy K\in\setRC$ let \œ$\,E\in\BaSps0(K)$ be such that for every $\,
\bosy x$ with \œ$\,\bosy x\to\Bnull_E$ in top $\,\taurd(\sp E\subsigrs04)$ 
there is a closed linear subspace $\,S$ in $\,E$ with \œ$\,\rng\bosy x\inc S$ 
and such that $\,E_{\,/\,S}$ is a \eit{DP}\,--\,space. Then $\,E$ is a 
  \eit{DP}\,--\,space.
  \end{lemma}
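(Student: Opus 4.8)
The plan is to deduce the conclusion from the sequential characterisation of the Dunford\,--\,Pettis property furnished by Proposition \ref{Pro DP-seq}. So I would begin by fixing, arbitrarily, sequences $\bosy x$ and $\bosy y$ with $\bosy x\to\Bnull_E$ in top $\taurd(\sp E\subsigrs04)$ and $\bosy y\to\vecs E\times\snn\{\ssp 0\ssp\}$ in top $\taurd((\sp E\dlbetss22\sp)\subsigrs03)$, the task being to verify that $\roman{ev}\circ\spp[\KP1\bosy x\ssp,\spp\bosy y\rbrakf\to 0$ holds; by Proposition \ref{Pro DP-seq} this then gives that $E$ is a \eit{DP}\,--\,space. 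Applying the hypothesis of the lemma to this very $\bosy x$, there is a closed linear subspace $S$ in $E$ with $\rng\bosy x\inc S$ and such that $E_{\,/\,S}$, that is the subspace $S$ equipped with its induced structure, is a \eit{DP}\,--\,space. Since $S$ is closed in the Banachable $E$, it is itself complete and normable, so $S\in\BaSps0(K)$ and Proposition \ref{Pro DP-seq} is available for $S$ as well.

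The core of the argument is to carry the two null sequences over to $S$ and to its dual. For $\bosy x$ this is the easy direction: given $v\in\Cal L\,(\sp S\ssp,\spp\bosy K\ssp)$, by Hahn\,--\,Banach there is $u\in\Cal L\,(\sp E\ssp,\spp\bosy K\ssp)$ with $v=u\KP1|\KP1\sigrd S$, and then $v\circ\bosy x=u\circ\bosy x\to 0$; as $\rng\bosy x\inc S$, this shows $\bosy x\to\Bnull_S$ in top $\taurd(\sp S\subsigrs04)$. For $\bosy y$ I would use the restriction map $\scrmt R=\seqss33{u\KP1|\KP1\sigrd S:u\in\Cal L\,(\sp E\ssp,\spp\bosy K\ssp)}$, which is the transpose of the inclusion $S\embed E$. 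Because the transpose of a continuous linear map between Banachable spaces is continuous for the weak topologies induced by the respective biduals — concretely, for each $\varphi$ in the bidual $\Cal L\,(\sp S\dlbetss12\ssp,\spp\bosy K\ssp)$ the composite $\varphi\circ\scrmt R$ is the image of $\varphi$ under the bitranspose and so belongs to $\Cal L\,(\sp E\dlbetss12\ssp,\spp\bosy K\ssp)$ — from $\bosy y\to\vecs E\times\snn\{\ssp 0\ssp\}$ in $\taurd((\sp E\dlbetss22\sp)\subsigrs03)$ I would conclude $\seqss33{\bosy y\fvalss01 i\KP1|\KP1\sigrd S:i\in\bbNo}\to\vecs S\times\snn\{\ssp 0\ssp\}$ in top $\taurd((\sp S\dlbetss22\sp)\subsigrs03)$.

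With both sequences now null in the appropriate topologies of $S$ and $S\dlbetss22$, Proposition \ref{Pro DP-seq} applied to the \eit{DP}\,--\,space $S$ yields $\seqss33{(\ssp\bosy y\fvalss01 i\KP1|\KP1\sigrd S\ssp)\fvalue(\ssp\bosy x\fvalss01 i\ssp):i\in\bbNo}\to 0$. Since $\bosy x\fvalss01 i\in S$ for every $i\in\bbNo$, we have $(\ssp\bosy y\fvalss01 i\KP1|\KP1\sigrd S\ssp)\fvalue(\ssp\bosy x\fvalss01 i\ssp)=\bosy y\fvalss01 i\fvalue(\ssp\bosy x\fvalss01 i\ssp)$, so this limit is precisely the assertion $\roman{ev}\circ\spp[\KP1\bosy x\ssp,\spp\bosy y\rbrakf\to 0$. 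As $\bosy x$ and $\bosy y$ were arbitrary, Proposition \ref{Pro DP-seq} gives that $E$ is a \eit{DP}\,--\,space, as required.

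The step I expect to demand the most care is the second weak\,--\,continuity claim, that restriction to $\sigrd S$ sends $\taurd((\sp E\dlbetss22\sp)\subsigrs03)$\,--\,null sequences to $\taurd((\sp S\dlbetss22\sp)\subsigrs03)$\,--\,null ones: this is exactly the point at which the bidual intervenes, and one must take the transpose of the inclusion to be weak\,--\,weak (not merely weak$^*$\,--\,weak$^*$) continuous, which is what the bitranspose computation secures. The remaining ingredients — that a closed subspace of a Banachable space is Banachable, and the Hahn\,--\,Banach transfer of the weak nullity of $\bosy x$ — are routine and may be stated without elaboration.
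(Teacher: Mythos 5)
Your proof is correct and takes essentially the same route as the paper: both reduce via Proposition \ref{Pro DP-seq} to the subspace \math{E_{\,/\,S}} furnished by the hypothesis, transfer \math{\bosy x} by Hahn\,--\,Banach, restrict \math{\bosy y} to \math{S}, and then apply Proposition \ref{Pro DP-seq} to the \erm{DP}\,--\,space \mathss34{E_{\,/\,S}}. The only difference is bookkeeping in the one delicate step: where you obtain the weak\,--\,weak continuity of the restriction map from the bitranspose of the inclusion, the paper realizes each functional on the dual of \math{E_{\,/\,S}} as an element of the annihilator \math{S\ar 1} of \math{N\aar 0} in the bidual via \cite[3.13\ssp, pp.\ 261\,--\,263]{Ho}\,, which is the same duality fact in different clothing.
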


\begin{proof} Given \math{\bosy x\ssp,\sp\bosy y } with \math{\bosy x\to
 \Bnull_E } in top \math{\taurd(\sp E\subsigrs04) } and \math{\bosy y\to
 \vecs E\times\snn\{\ssp 0\ssp\} } in top \mathss08{\taurd((\sp 
 E\dlbetss22\sp)\subsigrs03) }, \,by Proposition \ref{Pro DP-seq} it suffices 
to show that \math{\roman{ev}\circ\spp[\KP1\bosy x\ssp,\spp\bosy y\rbrakf\to 0 } 
holds. To get this, putting \math{F=E_{\,/\,S} } and \math{\bosy z=\seqss30{
 \bosy y\fvalss01 i\KP1|\KP1 S:i\in\bbNo} } we now have \mathss30{
\roman{ev}\circ\spp[\KP1\bosy x\ssp,\spp\bosy y\rbrakf = \sn} \mathss03{
\roman{ev}\circ\spp[\KP1\bosy x\ssp,\spp\bosy z\rbrakf } and hence we are done 
{\sl if\sp} we can show (\sp a\sp) that \math{\bosy x\to\Bnull_E } in top \mathss30{
\taurd(\sp F\subsigrs00) } holds, and (\sp b\sp) that \math{\bosy z\to
 S\times\snn\{\ssp 0\ssp\} } in top \math{\taurd((\sp 
 F\dlbetss20\sp)\subsigrs03) } holds. Now (\sp a\sp) follows trivially from 
Hahn\,--\,Banach since given \math{v\in\Cal L\,(\sp F\spp,\spp\bosy K\ssp) } 
there is \math{u} with \mathss30{v\inc u\in\Cal L\,(\sp E\ssp,\spp\bosy K\ssp) } 
and hence \math{ v\circss00\bosy x = u\circss00\bosy x\to 0 } holds. For 
(\sp b\sp) taking the annihilators \inskipline{.2}{16.5}

$N\aar 0 = \Cal L\,(\sp E\ssp,\spp\bosy K\ssp)\capss31\{\,u : 
         u\sp\image\sn S\inc\{\ssp 0\ssp\}\sp\} \KP{13.5} $  and \inskipline0{17}

$S\ar 1 = \Cal L\,(\sp E\dlbetss22\sp,\spp\bosy K\ssp)\capss31\{\,w : 
        w\sp\image\sn N\aar 0\inc\{\ssp 0\ssp\}\sp\} \KP9 $ and putting \inskipline{.2}0

\œ$F\aar 1=E\dlbetss22\ssp/\tvsquotient N\aar 0 \KPt7 $, \,for \math{ w \ar 1 
 \in \Cal L\,(\sp F\dlbetss20,\spp\bosy K\ssp) } from 
\cite[3.13\ssp, pp.\ 261\,--\,263]{Ho} we first get ex- istence of \math{
w\ar 2 \in \Cal L\,(\sp F\aar 1\sp,\spp\bosy K\ssp) } with \math{
w\ar 2\ssn\fvalue\sp\smb U = w\ar 1\ssn\fvalue(\ssp u\KP1|\KP1 S\ssp) } for \mathss34{
u\in\smb U\in\vecs F\aar 1 }. Then we get existence of \math{w\in S\ar 1} with \math{
w\fvalue u=w\ar 2\ssn\fvalue\sp\smb U=w\ar 1\ssn\fvalue(\ssp u\KP1|\KP1 S\ssp) } 
for \math{u\ssp,\sp\smb U} as above. Hence we finally get \math{
w\ar 1\sn\circ\spp\bosy z=w\circss00\bosy y\to 0 } as required.
  \end{proof}

\begin{proposition}\label{Pro L^(p) is DP}

If $\,\pi$ is any probability measure{\sp\rm, }then $\,\mLrs42^1(\sp\pi\sp)$ 
is a \eit{DP}\,--\,space.
  \end{proposition}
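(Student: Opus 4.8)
The plan is to verify the sequential criterion of Proposition~\ref{Pro DP-seq} for the Banachable space $E=\mLrs42^1(\sp\pi\sp)$. By Proposition~\ref{Pro L^1'=L^i} the space $\mLrs23^\plusinftyy(\sp\pi\sp)$ canonically represents the strong dual $E\dlbetss10$, so it is enough to show that $\int_{\Omega}f_i\cdot g_i\rmdss11\pi\to 0$ whenever $\bosy x=\seqss30{f_i:i\in\bbNo}$ is null for $\taurd(\sp E\subsigrs04)$ in $E$ and $\bosy y=\seqss30{g_i:i\in\bbNo}$ is null for $\taurd((\sp E\dlbetss22\sp)\subsigrs03)$ in $\mLrs23^\plusinftyy(\sp\pi\sp)$. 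Two facts should be recorded at once. First, by the uniform boundedness principle $\bosy y$ is norm bounded, say $\sup\,(\ssp\Abrs00^1\circ\sp g_i)\le\smb M$ for all $i\in\bbNo$. Second, since $\{\,f_i:i\in\bbNo\,\}\cupss21\{\,\Bnull_E\}$ is bounded and relatively $\taurd(\sp E\subsigrs04)$--compact, Lemma~\ref{Le L^1_si-compa} applies because $\pi\fvalss01\Omega=1<\plusinfty$, and it yields uniform integrability: for every $\eps\in\rbb R^+$ there is $\delta\in\rbb R^+$ with $\int_{A}\Abrs00^1\circ\sp f_i\rmdss11\pi<\eps$ for all $i\in\bbNo$ and all $A\in\dom\pi$ having $\pi\fvalss01 A<\delta$.

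A convenient preliminary step is a reduction to the separable case. Let $P$ be the conditional\ssp-\sp expectation projection of $E$ onto the separable subspace $R=\mLrs42^1(\sp\pi\KP1|\KP1\scrmt G\sp)$ determined by the countably generated $\sigma$--algebra $\scrmt G$ generated by the $f_i$ together with a \mathss37{\pi}--null set. Being a norm\ssp-\sp one projection, $P$ is continuous for the weak topologies of both $E$ and $\mLrs23^\plusinftyy(\sp\pi\sp)$; and since each $f_i$ is \mathss37{\scrmt G}--measurable we have $\int_{\Omega}f_i\cdot g_i\rmdss11\pi=\int_{\Omega}f_i\cdot(\ssp P\,g_i)\rmdss11\pi$, where $\seqss30{P f_i:i\in\bbNo}=\bosy x$ is weakly null in $R$ and $\seqss30{P g_i:i\in\bbNo}$ is weakly null in $R\dlbetss10=\mLrs23^\plusinftyy(\sp\pi\KP1|\KP1\scrmt G\sp)$. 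Thus it suffices to treat the separable space $R$, which is linearly isometric to $\mLrs42^1$ of a standard probability space. This reduction is routine; it does not lower the genuine difficulty, which may equally well be faced for $E$ itself.

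The remaining pairing estimate is the heart of the matter, and I expect it to be the only genuinely hard step. Arguing by contradiction, if $\int_{\Omega}f_i\cdot g_i\rmdss11\pi\not\to 0$ then along a subsequence $\big|\sp\int_{\Omega}f_i\cdot g_i\rmdss11\pi\KP1|\ge\alpha$ for some $\alpha\in\rbb R^+$. The crucial point is that $\bosy y$ is null for the \emph{full} weak topology of $\mLrs23^\plusinftyy(\sp\pi\sp)$ and not merely weak${}^*$ null: the Rademacher functions show that weak${}^*$ nullity of $\bosy y$ together with weak nullity of $\bosy x$ would \emph{not} force convergence, so the stronger hypothesis on $\bosy y$ must be used in an essential way. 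The plan is to combine the uniform integrability of $\{\,f_i:i\in\bbNo\,\}$ with \emph{Rosenthal's lemma} applied to the uniformly bounded, uniformly \mathss37{\pi}--continuous scalar measures $A\mapsto\int_{A}g_i\rmdss11\pi$: after a diagonal extraction the off\ssp-\sp diagonal interactions are rendered negligible, while uniform integrability keeps the mass of each $f_i$ concentrated where $g_i$ is being tested, so that a persistent lower bound $\alpha$ on the diagonal terms would let one manufacture a single functional in $(\mLrs23^\plusinftyy(\sp\pi\sp))\dlbetss10$ witnessing that $\bosy y$ fails to be weakly null, a contradiction. This is precisely the classical Dunford\,--\,Pettis argument; alternatively one may simply invoke the classical theorem that $\mLrs42^1$ of a probability space has the Dunford\,--\,Pettis property and transport it through Propositions~\ref{Pro L^1'=L^i} and~\ref{Pro DP-seq}.
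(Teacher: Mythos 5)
Your proposal and the paper diverge sharply in ambition: the paper's entire proof of this proposition is the citation \q{See \cite{Brg} and \cite{Sch}\ssp, and take \math{T=1\spp\adot} in the latter} --- that is, it simply invokes the classical Dunford\,--\,Pettis theorem for $L^1$ of a probability measure, in the Bourgain\,--\,Schachermayer form with the parameter space collapsed to a point. Your closing fallback (\q{alternatively one may simply invoke the classical theorem \ldots and transport it through Propositions \ref{Pro L^1'=L^i} and \ref{Pro DP-seq}}) therefore coincides with the paper's proof, and read that way your proposal is correct: the transport machinery you assemble is exactly what is needed --- the sequential criterion of Proposition \ref{Pro DP-seq}, the identification of the strong dual via Proposition \ref{Pro L^1'=L^i} (legitimate, since a probability measure is \rsigma1finite, hence almost decomposable by Proposition \ref{Propo top-deco}), and uniform integrability of weakly null sequences from Lemma \ref{Le L^1_si-compa}. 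The conditional\ssp-\sp expectation reduction to the separable case is also sound (bounded linear maps are weak\ssp-\ssp weak continuous, and Radon\,--\,Nikodym supplies the projection), though, as you concede, it is not needed: the classical theorem requires no separability.

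Your primary route, however --- the direct argument --- is not a proof as written; the core step is only gestured at. You never specify the disjoint sequence of sets to which Rosenthal's lemma is to be applied, nor how \q{rendering off\ssp-\sp diagonal interactions negligible} produces a functional contradicting weak nullity of $(g_i)$. The place where weak (as opposed to weak$^*$) nullity of $(g_i)$ enters the classical argument is different and much cleaner: identify $L^\infty(\pi)$ with a space $C(K)$; a bounded sequence in $C(K)$ is weakly null if and only if it is pointwise null on $K$ (Riesz representation plus dominated convergence), and since the functional $g\mapsto\int_\Omega g\KP1\mathrm{d}\pi$ corresponds to a Radon measure on $K$, weak nullity of $(g_i)$ forces $\int_\Omega|\KP1 g_i\KP1|\KP1\mathrm{d}\pi\to 0$, hence $g_i\to 0$ in $\pi\ssp$--\,measure. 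Then the split $\big|\int_\Omega f_i\ssp g_i\KP1\mathrm{d}\pi\ssp\big|\le\smb M\int_{\{|g_i|>\eps\}}|\KP1 f_i\KP1|\KP1\mathrm{d}\pi+\eps\,\sup_i\|\KP1 f_i\KP1\|_{L^1}$ finishes in two lines, using uniform integrability of $(f_i)$ on the first term. That is the missing idea if you want a self\ssp-\sp contained argument (and it vindicates your Rademacher remark, since Rademacher functions are weak$^*$ null but not weakly null in $L^\infty$); as it stands, the Rosenthal sketch is a genuine gap, which your citation alternative papers over --- exactly as the paper itself does.
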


\begin{proof} See \cite{Brg} and \cite{Sch}\,, and take \math{T=1\spp\adot} in 
  the latter.
  \end{proof}

\begin{corollary}\label{Cor L^1 is D-S}

For $\,\bosy K\in\setRC$ and for any \inskipline0{21.5}

positive measure $\,\mu$ it holds that $\,
\mvLrs42^1(\ssp\mu\,,\spp\bosy K\ssp)$ is a \eit{DP}\,--\,space.
  \end{corollary}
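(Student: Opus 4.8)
The plan is to reduce the statement about $\mvLrs42^1(\ssp\mu\,,\spp\bosy K\ssp)$ for an arbitrary positive measure $\mu$ to the already-established Proposition \ref{Pro L^(p) is DP} about $\mLrs42^1(\sp\pi\sp)$ for a probability measure $\pi$, using the machinery built up in Lemmas \ref{Le E_{/S} DP imp ...}, \ref{Le L^p(m)=L^p(p)} and Proposition \ref{Pro DP-seq}. First I would observe that, since $\bosy K\in\{\tfbbR\sp,\tfbbC\sp\}$ and $\mvLrs42^1(\ssp\mu\,,\spp\bosy K\ssp)$ is a scalar $L^1$-space, the vector-valued construction coincides with $\mLrs42^1(\ssp\mu\sp)$ up to the real/complex distinction; so it suffices to treat $E=\mvLrs42^1(\ssp\mu\,,\spp\bosy K\ssp)$ directly. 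By Theorem \ref{Th L_s^p Ba} and Corollary \ref{Cor L^p Ban} we already know $E\in\BaSps0(K)$, so Lemma \ref{Le E_{/S} DP imp ...} is applicable: it suffices to show that for every $\bosy x$ with $\bosy x\to\Bnull_E$ in top $\taurd(\sp E\subsigrs04)$ there is a closed linear subspace $S$ in $E$ with $\rng\bosy x\inc S$ and $E_{\,/\,S}$ a \erm{DP}\,--\,space.

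Next I would construct such an $S$. Each $\bosy x\fvalss01 i$ is a vector of $E$, hence an equivalence class containing some integrable function; the countable family $\rng\bosy x$ is supported, up to null sets, on a countable union of sets of finite measure, so there is a $\sigma$-finite measurable $\Omega\ar 1\inc\Omega$ carrying all of $\rng\bosy x$. I would take $S$ to be the closed subspace of $E$ consisting of (classes of) functions vanishing almost everywhere off $\Omega\ar 1$; then $\rng\bosy x\inc S$ and $S$ is canonically linearly homeomorphic to $\mvLrs42^1(\ssp\mu\KP1|\KP1\{\,A\capss32\Omega\ar 1:A\in\dom\mu\KP1\}\,,\spp\bosy K\ssp)$, a $\sigma$-finite scalar $L^1$-space. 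The point of passing to $E_{\,/\,S}$ rather than to $S$ itself is dictated by the hypothesis of Lemma \ref{Le E_{/S} DP imp ...}; but since $E_{\,/\,S}$ is again a scalar $L^1$-space, now over the restriction of $\mu$ to the complementary $\sigma$-algebra, it is itself $\sigma$-finite or else has $\rng\mu\capss34\rbb R^+=\emptyset$ on the relevant part, and in either case is linearly homeomorphic to an $L^1$ of a probability measure by Lemma \ref{Le L^p(m)=L^p(p)}. Then Proposition \ref{Pro L^(p) is DP} gives that $E_{\,/\,S}$ is a \erm{DP}\,--\,space, and Lemma \ref{Le E_{/S} DP imp ...} yields the conclusion.

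The main obstacle I expect is the bookkeeping in the reduction to the probability case, specifically verifying cleanly that the quotient $E_{\,/\,S}$ is itself an $L^1$-space of the expected kind and that the degenerate subcases (where $\mu$ restricted to the complement has no sets of positive finite measure, so that the relevant quotient is trivial) are handled — a trivial space is vacuously a \erm{DP}\,--\,space, so these cause no real difficulty but must be mentioned. A secondary technical point is confirming that convergence $\bosy x\to\Bnull_E$ in the weak topology $\taurd(\sp E\subsigrs04)$ indeed forces $\rng\bosy x$ into a $\sigma$-finite carrier: this is where one uses that a weakly convergent sequence in $L^1$ is, in particular, a countable set of individual integrable classes, each supported on a $\sigma$-finite set, so their union is $\sigma$-finite. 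Once the carrier $\Omega\ar 1$ and the splitting $E=S\oplus(E_{\,/\,S})$ are set up, the remaining steps are immediate invocations of the cited results, and no further analytic estimate is needed.
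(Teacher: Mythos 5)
You have the right skeleton, and it is in fact the paper's own route: extend Proposition \ref{Pro L^(p) is DP} to the $\sigma$-finite case via Lemma \ref{Le L^p(m)=L^p(p)}, then, given a weakly null sequence in $E=\mvLrs42^1(\ssp\mu\,,\spp\bosy K\ssp)$, find a countable family of finite-measure sets carrying its range, let $S$ be the closed subspace of classes supported there, and invoke Lemma \ref{Le E_{/S} DP imp ...}. The genuine error lies in how you discharge the hypothesis of that lemma, and it stems from misreading the notation $E_{\,/\,S}$. In this paper the lowered slash denotes the closed linear subspace $S$ equipped with the structure induced from $E$ (see the proof of Lemma \ref{Le E_{/S} DP imp ...}, where a functional on $E_{\,/\,S}$ is extended to one on $E$ by Hahn--Banach, or Proposition \ref{Pro lift}, where a choice function is defined on $G_{\sp/\ssp S}$); the quotient is written $E\,/\tvsquotient S$. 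So what must be verified is that $S$ itself, i.e.\ your $\sigma$-finite scalar $L^1$-space, is a \erm{DP\,}--\,space --- which your own construction already yields, and which is precisely how the paper concludes, phrasing the identification of $S$ with $\mvLrs42^1(\ssp\mu\ar 1\sp,\spp\bosy K\ssp)$ (where $\mu\ar 1$ is the restriction of $\mu$ to subsets of the carrier) as the existence of a strict morphism whose range contains the given sequence.

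Your substitute justification under the quotient reading is not merely misdirected but false. You claim the quotient is an $L^1$-space over the complementary part of $\mu$ and that this part is ``$\sigma$-finite or else has no sets of positive finite measure''. For non-$\sigma$-finite $\mu$ --- the only case in which anything remains to be proved --- this dichotomy fails: for counting measure on an uncountable set, the complement of any $\sigma$-finite carrier is again an uncountable set carrying counting measure, which is neither $\sigma$-finite nor trivial, so Lemma \ref{Le L^p(m)=L^p(p)} does not apply to it and your argument circles back to the original problem. (A further sign that the quotient reading cannot be the intended one: under it, taking $S=E$ would satisfy the hypothesis of Lemma \ref{Le E_{/S} DP imp ...} for every Banachable space, making every such space \erm{DP}, which is absurd.) Two smaller gaps: Proposition \ref{Pro L^(p) is DP} and Lemma \ref{Le L^p(m)=L^p(p)} concern real scalars only, and the case $\bosy K=\tfbbC$ requires the paper's separate argument via $G=\mLrs42^1(\sp\pi\sp)\ssp\sqcap\sp\mLrs42^1(\sp\pi\sp)$ and realification, which your phrase ``up to the real/complex distinction'' does not supply; and the claim that $S$ with its induced topology is linearly homeomorphic to the $\sigma$-finite $L^1$-space is asserted but nowhere verified.
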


\begin{proof} From Proposition \ref{Pro L^(p) is DP} we first see that also \math{
\mvLrs42^1(\ssp\pi\sp,\sn\tfbbC\ssp) } is a \erm{DP\,}--\,space for any 
probability measure \mathss32{\pi}. Indeed, for \math{ G = 
 \mLrs42^1(\sp\pi\sp)\ssp\sqcap\sp\mLrs42^1(\sp\pi\sp) } from 
\cite[9.4.3\,(a)\,, p.\ 635]{Edw} we first see that \math{G} is a 
\erm{DP\,}--\,space, and since \math{\mLrs42^1(\sp\pi\sp)\flbb_C} and \math{G} 
are naturally linearly homeomorphic, also \math{\mLrs42^1(\sp\pi\sp)\flbb_C} 
is a \erm{DP\,}--\,space. If now \math{\smb U} is a continuous linear map \mathss30{
\mvLrs42^1(\ssp\pi\sp,\sn\tfbbC\ssp)\to F}, \,it is also a continuous real 
linear map \mathss30{\mLrs42^1(\sp\pi\sp)\flbb_C\to F\Reit0} whence the 
assertion follows by noting that the equality \math{
\taurd(\sp E\RHB{.15}{\subsigma}\spp) = 
 \taurd(\sp E\Reit1\snn\RHB{.15}{\subsigma}\spp) } holds for every \mathss38{
E\in\LCSps5(\tfbbC) }.

By Lemma \ref{Le L^p(m)=L^p(p)} from the above we know that \math{
\mvLrs42^1(\ssp\mu\,,\spp\bosy K\ssp) } is a \erm{DP\,}--\,space for any 
\rsigma5finite positive measure \mathss36{\mu}. Then by Lemma \ref{Le E_{/S} DP imp ...} 
we get the general case as follows. Putting \math{ E = 
 \mvLrs42^1(\ssp\mu\,,\spp\bosy K\ssp) } and letting \math{
\bmii8\Phii\to\Bnull_E} in top \math{\taurd(\sp E\subsigrs04) } we first find 
some countable \math{\scrmt A\inc\mu\invss44\image\lbb R_+} such that \math{
\|\KP1\varphi\KP1|\KP1 A\KP1\|\Lnorss33^1_\mu=0 } holds for all \mathss30{A\in
 \scrmt A} and \math{\varphi\in\bigcup\ssp\rng\sp\bmii8\Phii}. Then taking \math{
\mu\ar 1=\mu\KP1|\KP1\Pows\sn\sbig(1\bigcup\,\scrmt A\ssp) } we have \math{
\mvLrs42^1(\ssp\mu\ar 1\sp,\spp\bosy K\ssp) } a \erm{DP\,}--\,space. Moreover, 
we have an obvious strict morphism \math{ \Iota : 
 \mvLrs42^1(\ssp\mu\ar 1\sp,\spp\bosy K\ssp)\to E } with \mathss03{
\rng\sp\bmii8\Phii\inc\rng\Iota} whence Lemma \ref{Le E_{/S} DP imp ...} gives 
  the conclusion.
  \end{proof}

To fill the gap \q{exists $\ldots\,\eightroman M_{\ssp\roman t}\,\ldots$} in 
\cite[p.\ 3]{Sch} we give the following

\begin{lemma}

With $\,\pi$ a probability measure let \œ$\,E=\mLrs42^1(\sp\pi\sp)$ and also 
let \œ$\,\bmii8\Phii\to\Bnull_E$ in top $\, \taurd(\sp E\subsigrs04)\KP1$. 
Further let $\,\eps\in\rbb R^+$ and $\, \bosy\varphi \in 
 \prodc\ssp\bmii8\Phii\ssp$ with \vskip.3mm\centerline{$
\rng\bosy\varphi\inc\{\,\varphi:(\KPt5\varphi\KPt8;\spp\pi\sp,\ssn\tfbbR\ssp)\ssp\text{ 
 is measurable }\} \KP1 $.} \inskipline{.5}0

Then there is some \œ$\,\smb M\in\rbb R^+$ such that \œ$\,\|\KP1\varphi\KP1|\KP1
 ((\ssp\Abrs03^1\snn\circ\sp\varphi\ssp)\invss24\image\sp[\KP1\smb M\sp,
 \plusinfty\KP1{[\KPt8}\sbig)3\,
  \big\|\LHB{.4}{\Lnorss33^1_{\emath\pii}} < \eps$ holds for all $\,
\varphi \in \rng\bosy\varphi \, $.
  \end{lemma}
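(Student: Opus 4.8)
The assertion is precisely the statement that a weakly null sequence in $\mLrs42^1(\sp\pi\sp)$ is uniformly integrable, the force of it being that the tail estimate holds \emph{uniformly} along the sequence. The plan is to reduce it to the relative weak compactness criterion already established in Lemma \ref{Le L^1_si-compa} on page \pageref{Le L^1_si-compa}: once I have a uniform $\Lnorss33^1_{\emath\pii}$\,--\,bound on the representatives together with relative weak compactness of $\rng\sp\bmii8\Phii$, that lemma supplies a uniform absolute continuity, and a single application of the Markov inequality converts it into the desired tail bound.

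First I would put $\Omega=\bigcup\ssp\dom\pi$ and note $\pi\fvalue\ssn\Omega=1<\plusinfty$, so the finiteness hypothesis of Lemma \ref{Le L^1_si-compa} is satisfied. From $\bmii8\Phii\to\Bnull_E$ in top $\taurd(\sp E\subsigrs04)$ the range $\rng\sp\bmii8\Phii$ is weakly bounded, hence, $E$ being Banachable, norm bounded by Banach\,--\,Steinhaus; using the remark after Lemma \ref{Le 0_{L^p}} that the norm of a class equals the $\Lnorss33^1_{\emath\pii}$\,--\,norm of any representative, I would record a single $\smb C\in\lbb R_+$ with $\|\KP1\varphi\KP1\|\Lnorss33^1_{\emath\pii}\le\smb C$ for all $\varphi\in\bigcup\ssp\rng\sp\bmii8\Phii$, and in particular for every $\varphi\in\rng\bosy\varphi$, since $\bosy\varphi\fvalss01 i\in\bmii8\Phii\fvalss01 i$. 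Next I would observe that $K=\rng\sp\bmii8\Phii$ is relatively weakly compact, because the convergent sequence together with its limit $\Bnull_E$ is weakly compact and the weak closure of $K$ sits inside it; as $K\in\bouSet E$ by the bound just obtained, Lemma \ref{Le L^1_si-compa} then yields, for the given $\eps$, some $\delta\in\rbb R^+$ with $\|\KP1\varphi\KP1|\KP1 A\KP1\|\Lnorss33^1_{\emath\pii}<\eps$ for all $\varphi\in\bigcup K$ and all $A\in\pi\invss44\image\spp[\KPp1.1 0\,,\spp\delta\KP1{[}$.

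It then remains to pick $\smb M$ and apply Markov. I would take any $\smb M\in\rbb R^+$ with $\smb C<\smb M\,\delta$ and, fixing $\varphi\in\rng\bosy\varphi$, set $A=(\ssp\Abrs03^1\snn\circ\sp\varphi\ssp)\invss24\image\sp[\KP1\smb M\sp,\plusinfty\KP1{[}$; this lies in $\dom\pi$ by the assumed measurability of $\varphi$. From $\smb M\KP1(\ssp\pi\fvalue\ssn A\ssp)\le\int_{\,A\,}\Abrs03^1\snn\circ\sp\varphi\rmdss11\pi\le\smb C$ I get $\pi\fvalue\ssn A<\delta$, so $A\in\pi\invss44\image\spp[\KPp1.1 0\,,\spp\delta\KP1{[}$ and the estimate of the previous paragraph gives the required $\|\KP1\varphi\KP1|\KP1 A\KP1\|\Lnorss33^1_{\emath\pii}<\eps$. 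The step that will need the most care is the reduction itself: I must verify, inside the present formalism, that weak convergence really delivers both the norm bound and the relative weak compactness of $\rng\sp\bmii8\Phii$, and that each chosen representative $\bosy\varphi\fvalss01 i$ genuinely belongs to $\bigcup K$ so the lemma's uniform estimate applies to it; the Markov passage is then entirely routine.
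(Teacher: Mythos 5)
Your proof is correct and takes essentially the same route as the paper: both hinge on the weak-compactness-implies-uniform-integrability direction of Lemma \ref{Le L^1_si-compa} applied to $\rng\ssp\bmii8\Phii$, combined with norm boundedness of the weakly null sequence and a Markov-type estimate on the tail sets. The paper merely arranges these same ingredients as an indirect argument (a bad subsequence obtained by dependent choice yields $\,i\ssp\yplus\,\delta\le\|\,\varphi\,\|\Lnorss33^1_{\emath\pii}$, contradicting boundedness), which is just the contrapositive of your direct choice of $\,\smb M$ with $\,\smb C<\smb M\,\delta\,$.
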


\begin{proof} If the assertion is false, by {\sl dependent choice\sp} there is 
a stricly increasing \mathss03{\bosy n:\bbNo\to\bbNo } such that \math{\eps\le
 \|\KP1\varphi\KP1|\KP1((\ssp\Abrs03^1\snn\circ\sp\varphi\ssp)\invss24\image\sp
 [\KP1 i\ssp\yplus\sp\ydot,\plusinfty\KP1{[\KPt8}\sbig)3\,
 \big\|\LHB{.4}{\Lnorss33^1_{\emath\pii}} } holds for \mathss30{
(\ssp i\ssp,\spp\varphi\ssp) \in \sn} \mathss06{\bosy\varphi\circss00\bosy n 
}. Noting that \math{\rng\ssp\bmii8\Phii } is relatively \mathss37{
\taurd(\sp E\subsigrs04)}--\,compact, then from Lemma \ref{Le L^1_si-compa} 
on page \pageref{Le L^1_si-compa} above it follows indirectly that there is \math{
\delta\in\rbb R^+ } with the property that for \math{
(\ssp i\ssp,\spp\varphi\ssp)\in\bosy\varphi\circss00\bosy n } we have \mathss38{
\delta\le\pi\fvalue((\ssp\Abrs03^1\snn\circ\sp\varphi\ssp)\invss24\image\sp[\KP1
 i\ssp\yplus\sp\ydot,\plusinfty\KP1{[\KPt8}\sbig)0 }. This implies that \mathss03{
i\ssp\yplus\,\delta\le\|\,\varphi\,\|\Lnorss33^1_{\emath\pii} } holds, giving 
a {\sl contradiction\sp} with \mathss36{\rng\ssp\bmii8\Phii\in\bouSet E }.
  \end{proof}


\insubsubhead       Absolutely continuous vector measures                 \label{Ss abs conti}

We here give some basic definitions for vector measures in order to be able to 
present a decent proof for Proposition \ref{Pro Phi5.4} below that is needed 
as an auxiliary result for the proof of Theorem \nfss A\,\ref{main Th} above.

\begin{definitions}\label{df vec mea}

(1) \ Say that \math{E} is a {\it topologized conoid\,} if{}f there are \math{
    a\ssp,\sp c\,,\sp o\ssp,\sp R\,,\sp S\ssp,\sp\scrmt T}  with \math{
\lbb R_+\inc R\inc\mathbb C } and \math{o\in S} and \math{
(\sp S\ssp,\spp\scrmt T\,) } a Hausdorff topological space and \mathss30{ E = {\ssn}} \mathss03{
(\ssp a\ssp,\spp c\,,\spp\scrmt T\,) } and \math{a} a function \math{S\times S
 \to S} and \math{c} a function \math{R\times S\to S} and such that for all \math{
x\ssp,\sp y\ssp,\sp z\in S } and for all \math{s\ssp,\sp t\in R} it holds that \math{
a\,(\ssp x\ssp,\sp\cdot\,) } and \math{c\KPt8(\ssp t\ssp,\sp\cdot\,) } are 
continuous \math{\scrmt T\to\scrmt T } and in addition \inskipline09

$a\fvalue(\ssp a\fvalue(\ssp x\ssp,\spp y\ssp)\,,\sp z\ssp)=
a\fvalue(\ssp x\ssp,\spp a\fvalue(\ssp y\ssp,\spp z\ssp))\,$ and 
$\,a\fvalue(\ssp x\ssp,\spp y\ssp)=a\fvalue(\ssp y\ssp,\spp x\ssp)\,$ and \inskipline09

$a\fvalue(\ssp x\ssp,\spp o\ssp)=c\fvalue(\ssp 1\sp,\spp x\ssp)=x\,$ and 
$\,c\fvalue(\ssp s\KPt8 t\ssp,\spp x\ssp)=
c\fvalue(\ssp s\ssp,\sp c\fvalue(\ssp t\ssp,\spp x\ssp ))\,$ and \inskipline09

$c\fvalue(\ssp s + t\ssp,\spp x\ssp)=
a\fvalue(\,c\fvalue(\ssp s\ssp,\spp x\ssp)\,,\sp 
           c\fvalue(\ssp t\ssp,\spp x\ssp)) \,$ and \inskipline09

$c\fvalue(\ssp s\ssp,\spp a\fvalue(\ssp x\ssp,\spp y\ssp))=
a\fvalue(\,c\fvalue(\ssp s\ssp,\spp x\ssp)\,,\spp
           c\fvalue(\ssp s\ssp,\spp y\ssp)) \KP1 $, \inskipline{.5}2

(2) \ Say that \math{m} is an \mathss35{E}{\it--\,measure\ssp} if{}f \math{E} 
    is a topologized conoid and \math{(\ssp\emptyset\,,\spp\Bnull_E) \in {\ssn}} \inskipline09

$m\in\sp^{\roman{dom}\KPt8 m}\,\vecs E\ssp$ and for all \math{ A\,,\sp B \in
 \dom m} it holds that \math{\{\,A\cupss31 B\ssp,\sp A\setminus B\,\}\inc {\ssn}} \inskipline0{8.7}

$\dom m \ssp $ and \mathss36{A\capss31 B=\emptyset\impss33 
m\fvalue(\sp A\cupss31 B\ssp) = (\ssp m\fvalue\ssn A + m\fvalue B\ssp)\svs E}, \inskipline{.5}2

(3) \ Say that \math{m} is {\it countably \mathss37{E}--\,additive\ssp} if{}f \math{
    m} is an \mathss37{E}--\,measure and \inskipline09

for all countable disjoint \math{\scrmt A\inc\dom m} with \math{
\bigcup\,\scrmt A\in\dom m} \inskipline0{36.5}

it holds that \mathss38{m\fvalue\sn\bigcup\,\scrmt A = 
 E\vtopsum3\KP1(\ssp m\KP1|\KP1\scrmt A\ssp) }, \inskipline{.5}2

(4) \ Say that \math{m} is {\it absolutely \mathss57{\mu}--\,continous\ssp} in 
    \math{E} if{}f \math{\mu} is a positive measure \inskipline09

and \math{m} is an \mathss37{E}--\,measure and for every \math{ U\sn \in 
 \neiBoo E} there is some \math{\delta\in\rbb R^+} \inskipline0{76}

with \mathss30{\mu\invss44\image\sp[\KPp1.1 0\,,\spp\delta\KP1{[\sp} \inc 
 m\invss44\image U}, \inskipline{.5}2

(5) \ Say that \math{m} has {\it bounded \mathss57{\mu}--$\KP2^p\,
    $variation\ssp} in \math{E} if{}f \math{1\le p < \plusinfty} and \math{\mu} 
is a positive measure and \math{m} is an \mathss35{E}--\,measure with \math{
\mu\invss44\image\spp\rbb R^+\inc\dom m} and for ev- ery \math{ \Nu \in 
 \Bqnorm E} there is \math{\smb M\in\lbb R_+} such that \mathss30{
\sum_{\,A\ssp\in\ssp\scrm7 A\,}( (\ssp\mu\fvalue\ssn A\ssp)
 \KP1^{1\ssp-\,p}\,(\ssp\Nu\circss00 m\spp\fvalue\sn A\ssp)
  \RHB{.3}{\KP1^p}\ssp\sbig)0\le\smb M } holds for all finite disjoint \mathss30{
\scrmt A\inc\mu\invss44\image\spp\rbb R^+}.
  \end{definitions}

In Example \ref{Exa sign mea} on page \pageref{Exa sign mea} we demonstrate 
how also the concepts of positive measure and of {\sl signed measure\sp} in 
the sense of \cite[5.6\ssp, p.\ 137]{Du} can be subsumed in Definitions \ref{df vec mea} 
above. By a {\it real measure\ssp} we mean any countably \mathss37{\tfbbR
}--\,additive \linebreak 
              $m\ssp$ such that \math{\dom m} is a \rsigma3algebra. The 
definition of {\it complex measure\ssp} is obtained by taking here \math{
\tfbbC} in place of \mathss36{\tfbbR}.                           \vskip.3mm

The essential content of \cite[Lemma 5.3\ssp, p.\ 133]{Phil} is reformulated 
in the next

\begin{lemma}\label{Le Phi5.3}

Let \œ$\,E\in\LCSps0(K)$ be normable with \œ$\,\bosy K\in\setRC$ and $\,\Nu$ a 
compatible norm for $\,E${\,\rm, }and let $\,m$ be absolutely $\,\mu\,$--\,%
continuous in $\,E$ with \œ$\,\dom\mu\inc\dom m$ and \œ$\,\mu\fvalss01\Omega < 
 \infty$ for the set \œ$\,\Omega=\bigcup\,\dom\mu\,$. Then for every \œ$\,
\smb M\in\rbb R^+$ there exist some \œ$A\ar 0\in\dom\mu$ and a countable set \œ$\,
\scrmt A\inc\dom\mu$ with \œ$\,\scrmt A\cupss31\{\,A\ar 0\sp\}$ disjoint and 
also with \œ$\Omega = \bigcup\,\scrmt A\cupss31 A\ar 0$ and such that for all $\,
  A\,,\sp B$ {\rm\inskipline{.7}2

(1) \ }$A\in\dom\mu\capss22\Pows A\ar 0 \impss33 
       \Nu\circss01 m\sp\fvalue\sn A\le\smb M\KP1(\ssp\mu\sp\fvalue\sn A\ssp) \KP1 
       ${\rm,\inskipline{.4}2

(2) \ }$A\in\dom\mu$ and $\,A\inc B\in\scrmt A \impss33 
       \Nu\circss01 m\sp\fvalue\sn A\ge\smb M\KP1(\ssp\mu\sp\fvalue\sn A\ssp) \KP1 
       $.
  \end{lemma}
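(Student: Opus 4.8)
The plan is a two\ssp-\sp level exhaustion argument resting on a Hahn\,--\,type selection lemma for vector measures. Throughout, call a set $B \in \dom\mu$ \emph{heavy} if $\Nu\circss01 m\sp\fvalue\sn A \ge \smb M\KP1(\ssp\mu\sp\fvalue\sn A\ssp)$ holds for every $A \in \dom\mu\capss22\Pows B$; then (2) asserts exactly that every member of $\scrmt A$ is heavy, and (1) asserts the reverse bound $\Nu\circss01 m\sp\fvalue\sn A \le \smb M\KP1(\ssp\mu\sp\fvalue\sn A\ssp)$ for every $A \in \dom\mu\capss22\Pows A\ar 0$. Before the main argument I would record that $m$ is countably additive along disjoint countable families in $\dom\mu$: given such a family $\scrmt C$ with union in $\dom\mu \inc \dom m$, finite additivity (Definitions \ref{df vec mea}(2)) reduces $m\sp\fvalue\bigcup\,\scrmt C$ minus any finite partial sum of $m\KP1|\KP1\scrmt C$ to the $m$\ssp-\sp value of the union of a cofinite subfamily, and since $\mu\fvalss01\Omega < \plusinfty$ this residual union has $\mu$\ssp-\sp measure tending to $0$, so absolute $\mu$\ssp-\sp continuity (Definitions \ref{df vec mea}(4)) pushes its $m$\ssp-\sp value into every $U \in \neiBoo E$; hence $m\sp\fvalue\bigcup\,\scrmt C = E\vtopsum3\KP1(\ssp m\KP1|\KP1\scrmt C\ssp)$. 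The same remark applied to a single set shows $\mu\sp\fvalue\sn A = 0 \impss33 m\sp\fvalue\sn A = \Bnull_E$, using that $\taurd E$ is Hausdorff.

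The heart of the proof, and the step I expect to be the main obstacle, is the selection lemma: \emph{if $A\sprim1 \in \dom\mu$ satisfies $\smb M\KP1(\ssp\mu\sp\fvalue\sn A\sprim1\sp) < \Nu\circss01 m\sp\fvalue\sn A\sprim1$, then $A\sprim1$ contains a heavy $B$ with $\mu\sp\fvalue\sn B > 0$.} I would prove it by removing \q{light} subsets maximally. By Zorn's lemma let $\scrmt C$ be a maximal disjoint family of sets $C \in \dom\mu\capss22\Pows A\sprim1$ with $\Nu\circss01 m\sp\fvalue\sn C < \smb M\KP1(\ssp\mu\sp\fvalue\sn C\ssp)$; since $\mu\fvalss01\Omega < \plusinfty$ this family is countable, and I put $B = A\sprim1\sp\setminus\bigcup\,\scrmt C$. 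Then $B$ is heavy: any $C \inc B$ with $\Nu\circss01 m\sp\fvalue\sn C < \smb M\KP1(\ssp\mu\sp\fvalue\sn C\ssp)$ must have $\mu\sp\fvalue\sn C > 0$ (otherwise the strict inequality would read $\Nu\circss01 m\sp\fvalue\sn C < 0$), so $\scrmt C\cupss31\{\,C\,\}$ would enlarge $\scrmt C$, contradicting maximality. Finally the excess survives the removal: from $m\sp\fvalue\sn A\sprim1 = m\sp\fvalue\sn B + E\vtopsum3\KP1(\ssp m\KP1|\KP1\scrmt C\ssp)$, continuity of the compatible norm $\Nu$ together with the triangle inequality give $\Nu\circss01 m\sp\fvalue\sn A\sprim1 \le \Nu\circss01 m\sp\fvalue\sn B + \smb M\KP1(\ssp\mu\sp\fvalue\sn A\sprim1 - \mu\sp\fvalue\sn B\ssp)$, whence $\Nu\circss01 m\sp\fvalue\sn B \ge \smb M\KP1(\ssp\mu\sp\fvalue\sn B\ssp) + (\ssp\Nu\circss01 m\sp\fvalue\sn A\sprim1 - \smb M\KP1\mu\sp\fvalue\sn A\sprim1\ssp) > 0$; thus $m\sp\fvalue\sn B \ne \Bnull_E$ and therefore $\mu\sp\fvalue\sn B > 0$. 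The two delicate points are the passage of the norm inequality through the countable sum, which is exactly where the preliminary countable additivity and continuity of $\Nu$ are used, and the observation that the strict light\ssp-\sp inequality forbids $\mu$\ssp-\sp null sets.

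With the selection lemma established, the outer exhaustion is routine. By Zorn's lemma (as in Proposition \ref{Propo top-deco}) I would choose a maximal disjoint family $\scrmt A$ of heavy sets of positive measure; finiteness of $\mu$ again forces $\scrmt A$ to be countable, so $A\ar 0 = \Omega\sp\setminus\bigcup\,\scrmt A$ lies in the $\sigma$\ssp-\sp algebra $\dom\mu$, the family $\scrmt A\cupss31\{\,A\ar 0\sp\}$ is disjoint, and $\Omega = \bigcup\,\scrmt A\cupss31 A\ar 0$. Assertion (2) is immediate since each member of $\scrmt A$ is heavy. For (1), suppose some $A \in \dom\mu\capss22\Pows A\ar 0$ had $\Nu\circss01 m\sp\fvalue\sn A > \smb M\KP1(\ssp\mu\sp\fvalue\sn A\ssp)$; the selection lemma applied to $A\sprim1 = A$ would yield a heavy $B \inc A \inc A\ar 0$ with $\mu\sp\fvalue\sn B > 0$, whence $\scrmt A\cupss31\{\,B\,\}$ would be a strictly larger disjoint family of heavy positive\ssp-\sp measure sets, contradicting maximality of $\scrmt A$. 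This contradiction gives (1) and completes the argument.
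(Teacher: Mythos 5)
Your proof is correct, but it takes a genuinely different route from the paper's. The paper scalarizes: after reducing to real scalars, for each norm\kern0.15mm-\kern0.15mm one functional $u\in\Cal L\,(\sp E\ssp,\tfbbR\ssp)$ it uses absolute continuity and the scalar Radon--Nikodym theorem to produce a set $B$ on which $u\circ m\ge\smb M\,\mu$ holds hereditarily while $u\circ m\le\smb M\,\mu$ holds on all sets disjoint from $B$ (a Hahn decomposition of $u\circ m-\smb M\,\mu$); Zorn's lemma is then applied to disjoint families of positive\kern0.15mm-\kern0.15mm measure sets contained in such $B$'s, and assertion (1) follows by taking the supremum over $u$ via Hahn--Banach, maximality forcing $A\capss31 B$ to be $\mu$--\,null for the $B$ attached to each $u$. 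You never scalarize: your notion of heavy set is phrased directly in terms of the norm $\Nu$, your selection lemma replaces Radon--Nikodym by an inner Zorn exhaustion of light sets combined with countable additivity of $m$ (which you correctly derive from finite additivity, absolute continuity and $\mu\fvalss01\Omega<\plusinfty$) and countable subadditivity of $\Nu$ along the resulting convergent series, and an outer Zorn exhaustion finishes. The two delicate points you flag are handled correctly: lightness forces positive measure (so the disjoint families are countable), and the norm estimate passes through the countable sum because the series converges and $\Nu$ is continuous and subadditive. As to what each approach buys: the paper outsources the real work to classical scalar measure theory, and its members of $\scrmt A$ come with a single witnessing functional --- a slightly stronger conclusion, though nothing downstream (in particular Proposition \ref{Pro Phi5.4}) uses that extra strength; your argument is self\kern0.15mm-\kern0.15mm contained at the norm level, avoiding both Hahn--Banach and Radon--Nikodym, and it makes explicit the countable additivity of $m$, a fact the paper also needs (in order to apply Radon--Nikodym to $u\circ m$) but leaves implicit. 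Both proofs share the same outer skeleton: a maximal disjoint family $\scrmt A$ of \q{good} positive\kern0.15mm-\kern0.15mm measure sets, with $A\ar 0$ its complement.
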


\begin{proof} We first note that in the case \math{\bosy K=\tfbbC} we have \math{
\Nu} also a compatible norm for the realification \math{E\Reit4} of \math{E} 
and hence we may without loss of generality assume that \math{\bosy K=\tfbbR} 
holds. Now we let \math{\roman P\,B\,u} mean that \math{B\in\dom\mu} and \œ$\ssp
u\in\Cal L\,(\sp E\ssp,\snn\tfbbR\ssp) $ \linebreak
                                         with \math{
 \sup\ssp\big\{\KPt8|\KP1 u\fvalue x\KP1|:x\in\Nu\invss44\image\ssbb24 I\,\} 
 \le 1 } and that for all \math{A\in\dom\mu\capss22\Pows B } we have \linebreak
                                                                   \œ$
u\circss01 m\sp\fvalue\sn A\ge\smb M\KP1(\ssp\mu\sp\fvalue\sn A\ssp) \ssp $ 
and that \math{ u\circss01 m\sp\fvalue\sn A \le 
 \smb M\KP1(\ssp\mu\sp\fvalue\sn A\ssp) } holds for all \math{A\in\dom\mu} 
with \linebreak
   \œ$A\capss31 B=\emptyset \,$. Also let \mathss38{ \scrmt A\sp\ar 0 = 
 \mu\invss44\image\spp\rbb R^+\sn\cap\sp\{\,A:\eexi{B\ssp,\sp u}\,A\inc B\ssp
  \text{ and }\ssp\roman P\,B\,u\KPt8\} }. By con- sidering the set \math{
\scrmt P} of disjoint subsets \math{\scrmt A} of \math{\scrmt A\sp\ar 0} 
partially ordered by inclusion, from \linebreak
                                {\sl Zorn's lemma\sp} we get existence of some 
maximal \math{\scrmt A} of \mathss30{\scrmt P}. Then by \math{
\mu\fvalss02\Omega < \plusinfty } we \linebreak
                                     see that \math{\scrmt A} is countable, 
and we take \mathss36{ A\ar 0 = \Omega\spp\setminus\bigcup\,\scrmt A }.

Now, for the proof (1) letting \math{A\in\dom\mu\capss22\Pows A\ar 0 } we 
first note (\sp$*$\sp) that by \linebreak
                               maximality of \math{\scrmt A} there cannot 
exist \math{B\ssp,\sp u} with \math{\roman P\,B\,u} and \mathss31{A\capss31 B
 \in \mu\invss44\image\spp\rbb R^+ }. Fur- \linebreak
                                           thermore, by Hahn\,--\,Banach it 
suffices for arbitrarily fixed \math{u\in\Cal L\,(\sp E\ssp,\snn\tfbbR\ssp) } 
with norm \math{ \sup \ssp \big\{ \KPt8 |\KP1 u\fvalue x\KP1| : x \in 
 \Nu\invss44\image\ssbb24 I\,\} \le 1 } to verify that \math{
u\circss01 m\fvalue\sn A \le \smb M\KP1(\ssp\mu\fvalue\sn A\ssp) } holds. \linebreak
Since \math{u\circss01 m\KP1|\KP1\dom\mu } is absolutely \mathss37{\mu
}--\,continuous, there is a Radon\,--\,Nikodym deri- vative of it, and by 
considering one such we see existence of \math{B} with \mathss36{
\roman P\,B\,u}. Then by (\sp$*$\sp) we have \math{ A\capss31 B \in 
 \mu\invss44\image\snn\{\ssp 0\ssp\} } whence with \math{ A\ar 1 = 
 A\spp\setminus B } we finally get \vskip.3mm\centerline{$
u\circss01 m\fvalue\sn A = u\circss01 m\fvalue\sn A\ar 1 \le 
\smb M\KP1(\ssp\mu\fvalue\sn A\ar 1) = 
\smb M\KP1(\ssp\mu\fvalue\sn A\ssp) \KP1 $.} \vskip.3mm

For the proof of (2) letting \math{A\in\dom\mu} with \mathss36{ A \inc B \in 
 \scrmt A\inc\scrmt A\sp\ar 0 }, \,there are some \linebreak
                                             \œ$B\ar 1\ssp$ and \math{u} with \math{
\roman P\,B\ar 1\ssp u} and \mathss34{B\inc B\ar 1}. Then we also have \math{
A\inc B\ar 1} and consequently \linebreak
                               $u\circ m\fvalue\sn A \ge 
 \smb M\KP1(\ssp\mu\fvalue\sn A\ssp) \ssp $ whence further \math{
\Nu\circss01 m\sp\fvalue\sn A\ge\smb M\KP1(\ssp\mu\sp\fvalue\sn A\ssp) } 
  trivially follows.
  \end{proof}

The essential content of \cite[Lemma 5.4\ssp, p.\ 133]{Phil} is reformulated 
in the next

\begin{proposition}\label{Pro Phi5.4}

Let \œ$\,E\in\LCSps0(K)$ be normable with \œ$\,\bosy K\in\setRC$ and $\,\Nu$ a 
compatible norm for $\,E${\,\rm, }and let $\,m$ be absolutely $\,\mu\,$--\,%
continuous in $\,E$ with \œ$\,\mu\fvalss01\Omega < \infty$ for \œ$\, \Omega = 
 \bigcup\,\dom\mu\,$. Also let \œ$\,1\le p < \plusinfty$ and let $\,m$ have 
bounded $\,\mu\,$--$\RHB{.3}{\KP{1.5} ^p \ssp}$variation in $\,E\,$. Then 
there is a decreasing \œ$\,\bmii8 A\in\sp^\sbbNo\,\dom\mu$ with \œ$\,
\lim\,(\ssp\mu\spp\circ\bmii8 A\ssp)=0$ and such that $\,
\Nu\circss01 m\fvalue\sn A\le i\ssp\yplus\,(\ssp\mu\fvalue\sn A\ssp)$ holds 
for $\,i\in\bbNo$ and $\,A\in\dom\mu$ with $\,
A\capss32(\ssp\bmii8 A\fvalss01 i\ssp)=\emptyset\,$.
  \end{proposition}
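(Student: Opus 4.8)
The plan is to reduce the statement to Lemma~\ref{Le Phi5.3} applied at the increasing thresholds $i\ssp\yplus\sp\ydot$ for $i\in\bbNo$, using bounded $\mu$--$^p$variation only to bound the sizes of the exceptional sets. First I would record that $\dom\mu\inc\dom m$ already holds under the present hypotheses: Definitions~\ref{df vec mea}\,(5) gives $\mu\invss44\image\spp\rbb R^+\inc\dom m$, while absolute $\mu$--continuity in the sense of Definitions~\ref{df vec mea}\,(4) forces every $N\in\mu\invss44\image\snn\{\ssp 0\ssp\}$ into $\dom m$, since choosing any $U\in\neiBoo E$ with its $\delta\in\rbb R^+$ we have $\mu\fvalue N=0<\delta$ and hence $N\in m\invss44\image U\inc\dom m$. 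As $\mu\fvalss01\Omega<\plusinfty$ makes $\dom\mu=\mu\invss44\image\snn\{\ssp 0\ssp\}\cupss22\mu\invss44\image\spp\rbb R^+$, this yields $\dom\mu\inc\dom m$, so that Lemma~\ref{Le Phi5.3} applies to $m$ and to the restrictions of $m$ and $\mu$ to any power class $\Pows S$ with $S\in\dom\mu$. The compatible norm $\Nu$ is a bounded seminorm, hence $\Nu\in\Bqnorm E$, and bounded $\mu$--$^p$variation furnishes a single $\smb M\in\lbb R_+$ with $\sum_{\,B\ssp\in\ssp\scrm7 A\,}(\ssp(\ssp\mu\fvalue B\ssp)^{\ssp 1-p}\,(\ssp\Nu\circss00 m\fvalue B\ssp)^p\ssp)\le\smb M$ for every finite disjoint $\scrmt A\inc\mu\invss44\image\spp\rbb R^+$.

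Next I would build $\bmii8 A$ recursively by \emph{dependent choice}, together with pairwise disjoint auxiliary good sets $\seqss33{G\sbi k:k\in\bbNo}$. At the initial step apply Lemma~\ref{Le Phi5.3} to $m$ on $\Omega$ at threshold $0\ssp\yplus\sp\ydot$, obtaining a good set $G\sbi 0$ and a countable disjoint bad family whose union is $\bmii8 A\fvalss01 0$, with $\Omega=G\sbi 0\cupss22\bmii8 A\fvalss01 0$ disjointly. At the $i$-th step apply Lemma~\ref{Le Phi5.3} to the restrictions of $m$ and $\mu$ to $\Pows(\ssp\bmii8 A\fvalss01 i\ssp)$ at the threshold $(\ssp i\ssp\yplus\spp)\ssp\yplus\sp\ydot$, obtaining $G\sbi{i\ssp\yplus}$ and a countable disjoint bad family in $\mu\invss44\image\spp\rbb R^+$ whose union is $\bmii8 A\fvalss01(\ssp i\ssp\yplus\spp)$, with $\bmii8 A\fvalss01 i=G\sbi{i\ssp\yplus}\cupss22\bmii8 A\fvalss01(\ssp i\ssp\yplus\spp)$ disjointly. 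Then $\bmii8 A$ is decreasing, and telescoping gives $\Omega\spp\setminus\bmii8 A\fvalss01 i=\bigcup\,\{\,G\sbi k:k\in i\ssp\yplus\,\}$ with the $G\sbi k$ disjoint and, by Lemma~\ref{Le Phi5.3}\,(1), satisfying $\Nu\circss00 m\fvalue\ssn A\le k\ssp\yplus\,(\ssp\mu\fvalue\ssn A\ssp)$ for all $A\in\dom\mu\capss22\Pows G\sbi k$.

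Then the two required properties follow. For the density bound take $A\in\dom\mu$ with $A\capss32(\ssp\bmii8 A\fvalss01 i\ssp)=\emptyset$; then $A\inc\bigcup\,\{\,G\sbi k:k\in i\ssp\yplus\,\}$, so finite additivity of $m$ and subadditivity of $\Nu$ give $\Nu\circss00 m\fvalue\ssn A\le\sum_{\,k\ssp\in\ssp i\ssp\yplus\,}\Nu\circss00 m\fvalue(\ssp A\capss32 G\sbi k\ssp)\le\sum_{\,k\ssp\in\ssp i\ssp\yplus\,}k\ssp\yplus\,(\ssp\mu\fvalue(\ssp A\capss32 G\sbi k\ssp))\le i\ssp\yplus\,(\ssp\mu\fvalue\ssn A\ssp)$, the last step because $k\ssp\yplus\le i\ssp\yplus$ for $k\in i\ssp\yplus$ and the pieces $A\capss32 G\sbi k$ partition $A$. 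For the limit, the bad family whose union is $\bmii8 A\fvalss01 i$ consists of sets $B\in\mu\invss44\image\spp\rbb R^+$ with $\Nu\circss00 m\fvalue B\ge i\ssp\yplus\,(\ssp\mu\fvalue B\ssp)$ by Lemma~\ref{Le Phi5.3}\,(2), so for every finite subfamily $\scrmt A\aar 1$ the variation bound gives $(\ssp i\ssp\yplus\spp)^p\,\sum_{\,B\ssp\in\ssp\scrm7 A\aar 1\,}\mu\fvalue B\le\sum_{\,B\ssp\in\ssp\scrm7 A\aar 1\,}(\ssp(\ssp\mu\fvalue B\ssp)^{\ssp 1-p}\,(\ssp\Nu\circss00 m\fvalue B\ssp)^p\ssp)\le\smb M$; taking the supremum over finite subfamilies and using countable additivity of $\mu$ yields $\mu\fvalue(\ssp\bmii8 A\fvalss01 i\ssp)\le\smb M\,(\ssp i\ssp\yplus\spp)^{-p}$, whence $\lim\,(\ssp\mu\spp\circ\bmii8 A\ssp)=0$.

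The main obstacle is monotonicity. Applying Lemma~\ref{Le Phi5.3} independently at the several thresholds yields exceptional sets that need not be nested, and forcing nesting by passing to unions over $j\ssp\ge\ssp i$ destroys the size estimate precisely when $p=1$, since $\sum\,(\ssp j\ssp\yplus\spp)^{-1}$ diverges. The recursion above sidesteps this by peeling off the next good piece \emph{inside} the previous bad set $\bmii8 A\fvalss01 i$, which renders $\bmii8 A$ decreasing automatically while preserving the clean per-level estimate $\mu\fvalue(\ssp\bmii8 A\fvalss01 i\ssp)\le\smb M\,(\ssp i\ssp\yplus\spp)^{-p}$ for every $p$ with $1\le p<\plusinfty$. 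The one point that needs care is that the density estimate must survive across the nested good pieces $G\sbi 0\,,\ldots\,,G\sbi i$, which carry the different constants $1\,,\ldots\,,i\ssp\yplus$; this is exactly where subadditivity of $\Nu$ together with additivity of $m$ is used, letting the largest constant $i\ssp\yplus$ absorb all the smaller ones.
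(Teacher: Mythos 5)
Your proof is correct, but it takes a genuinely different route from the paper's. Both arguments rest on Lemma \ref{Le Phi5.3} and both begin by extracting \math{\dom\mu\inc\dom m} from Definitions \ref{df vec mea}\,(4) and (5); they diverge in how the monotonicity of \math{\bmii8 A} is obtained. The paper applies Lemma \ref{Le Phi5.3} \emph{independently} at every threshold \math{i\ssp\yplus\sp\ydot} on all of \mathss36{\Omega}, \,writes \math{\bmii8 B\fvalss21 i} for the union of the \math{i}-th bad family, and then forces nesting in exactly the way you dismissed, namely by setting \mathss38{\bmii8 A\fvalss01 i = \bigcup\KP1(\ssp\bmii8 B\KP1|\KP1(\ssp\bbNo\sn\setminus i\ssp)) }. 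Your objection --- that for \math{p=1} the tail sum of the \math{\mu\fvalue(\ssp\bmii8 B\fvalss21 j\ssp)} diverges --- is circumvented there not by summing tails but by showing that each difference \math{\bmii8 B\fvalss21 i\ssp\yplus\snn\setminus(\ssp\bmii8 B\fvalss21 i\ssp)} is \mathss37{\mu}--\,null\ssp: any measurable piece of it lies inside a bad set of level \math{i\ssp\yplus} and inside the good set of level \mathss36{i}, \,so it is squeezed between the constants \math{i\ssp\yplus\sp\ydot + 1} and \mathss36{i\ssp\yplus\sp\ydot}, \,forcing measure zero\ssp; hence \math{\mu\fvalue(\ssp\bmii8 A\fvalss01 i\ssp) = \mu\fvalue(\ssp\bmii8 B\fvalss21 i\ssp)} and the density bound comes from the good\ssp-\sp set estimate at the single level \mathss36{i}. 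Your recursion, nesting each application of the lemma inside the previous bad set, makes the monotonicity and the measure estimate automatic, but shifts the work to the density bound, where you must decompose \math{A} over the disjoint good pieces \math{G\sbi 0\ssp,\ldots\ssp,\sp G\sbi i} and invoke finite additivity of \math{m} together with the triangle inequality for \mathss36{\Nu}, \,absorbing the smaller constants into \mathss36{i\ssp\yplus\sp\ydot}; that step is sound. One small imprecision: Lemma \ref{Le Phi5.3} does not guarantee that the bad family lies in \math{\mu\invss44\image\spp\rbb R^+} --- bad sets may be \mathss37{\mu}--\,null --- but this is harmless, since null members contribute nothing to the variation sum nor to \mathss36{\mu\fvalue(\ssp\bmii8 A\fvalss01 i\ssp)}. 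In sum, the two proofs use the same key lemma but opposite decompositions, and the effort is simply traded between the limit assertion and the density assertion.
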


\begin{proof} We first note that the requirement \math{\dom\mu\inc\dom m} in 
Lemma \ref{Le Phi5.3} holds since from (4) and (5) in Definitions \ref{df vec mea} 
we get \math{\mu\invss44\image\snn\{\ssp 0\ssp\}\inc\dom m } and \mathss30{
\mu\invss44\image\spp\rbb R^+} \mathss04{\inc\dom m }. Now, for each fixed \math{
i\in\bbNo} taking \math{ \smb M = i\ssp\yplus\sp\ydot } in Lemma \ref{Le Phi5.3} 
above, by {\sl countable choice\sp} we get existence of \math{ \scrb8 A \in \sp
 ^\sbbNo\,\Pows\dom\mu } with the property that for every \math{i\in\bbNo} we 
have \math{\scrb8 A\fvalss11 i} countable and disjoint and such that for all \math{
A\,,\sp B} and for \math{ A\sp\ar 0 = 
 \Omega\spp\setminus\bigcup\KP1(\sp\scrb8 A\fvalss11 i\ssp) } we have \inskipline{.7}3

(a) \ $A\in\dom\mu\capss22\Pows A\ar 0 \impss33 
    \Nu\circss01 m\sp\fvalue\sn A \le
    i\ssp\yplus\sp\ydot\,(\ssp\mu\sp\fvalue\sn A\ssp) \KP1 $, \inskipline{.4}3

(b) \ $A\in\dom\mu$ and $\,A\inc B\in\scrb8 A\fvalss11 i \impss33 
    \Nu\circss01 m\sp\fvalue\sn A \ge
    i\ssp\yplus\sp\ydot\,(\ssp\mu\sp\fvalue\sn A\ssp) \KP1 $. \inskipline{.7}0

Then we take \ $\bmii8 B \ssp = \ssp 
 \big\langle\KPt8\bigcup\KP1(\sp\scrb8 A\fvalss11 i\ssp) : i\in\bbNo\,\rangle \,$ 
and \inskipline{.5}{23}

$ \bmii8 A \ssp = \ssp 
 \big\langle\KPt8\bigcup\KP1(\ssp\bmii8 B\KP1|\KP1(\ssp
 \bbNo\sn\setminus i\ssp)) : i\in\bbNo\,\rangle \KP1 $. \inskipline{.4}0

It is now clear that \math{\bmii8 A} is decreasing with \mathss36{\bmii8 A \in
 \sp^\sbbNo\,\dom\mu}, \,and for the proof of the remaining required 
properties we proceed as follows.

By the bounded variation property there is \math{\smb M\aR 1\in\rbb R^+ } such 
that for all \math{i\in\bbNo} and for all finite \math{\scrmt A \inc
 \mu\invss44\image\spp\rbb R^+\sn\cap\sp(\sp\scrb8 A\fvalss11 i\ssp) } in view 
of (b) above we have \vskip.5mm\centerline{$
    i\ssp\yplus\sp\ydot\RHB{.3}{\,^p}\,(\ssp\mu\fvalue\bigcup\,\scrmt A\ssp)
\le \sum_{\,A\ssp\in\ssp\scrm7 A\,}((\ssp\mu\fvalue\sn A\ssp
    )\KP1^{1\ssp-\ssp p}\,(\ssp\Nu\circss01 m\fvalue\sn A\ssp
    )\RHB{.3}{\KP1^p}\sp\big)
\le \smb M\aR 1 \, $,} \inskipline{.5}0

and hence \mathss36{\mu\circss11\bmii8 B\fvalss21 i \le 
 \smb M\aR 1\,i\ssp\yplus\sp\ydot\RHB{.3}{\,^{\mminus p}} }, \,whence further \mathss36{
\lim\,(\ssp\mu\spp\circ\bmii8 B\ssp) = 0 }. Next considering \mathss03{B\ar 1=
 \bmii8 B\fvalss21 i\ssp\yplus\snn\setminus(\ssp\bmii8 B\fvalss21 i\ssp) } for 
all \math{A\in\scrb8 A\fvalss11 i\ssp\yplus } by both (a) and (b) above we 
  have \vskip.5mm\centerline{$
   (\ssp i\ssp\yplus\sp\ydot\snn + 1\ssp)\KP1(\ssp\mu\fvalue(\sp 
    A\capss31 B\ar 1))
\le \Nu\circss01 m\fvalue(\sp A\capss31 B\ar 1)
\le i\ssp\yplus\,(\ssp\mu\fvalue(\sp A\capss31 B\ar 1)) $} \inskipline{.5}0 

and hence \math{\mu\fvalue(\sp A\capss31 B\ar 1) = 0 } whence further \mathss36{
\mu\fvalue B\ar 1 = 0 }. Now for every \math{i\in\bbNo} we have \math{
     \mu\spp\circ\bmii8 A\fvalue\sp i 
 \le \mu\spp\circ\bmii8 B\fvalss21 i + 
      \sum_{\,j\ssp\in\KPt5\sbbNo\sp\setminus\ssp i\,}(\,\mu\fvalue(\sp
       \bmii8 B\fvalss20 j\ssp\yplus\snn\setminus(\ssp\bmii8 B\fvalss20 j\ssp)
        )) 
 = \mu\spp\circ\bmii8 B\fvalss21 i} and hence we obtain \mathss36{
\lim\,(\ssp\mu\spp\circ\bmii8 A\ssp) = 0 }. For the remaining property letting \math{
i\in\bbNo} and \math{A\in\dom\mu} with \mathss38{ \emptyset = 
 A\capss32(\ssp\bmii8 A\fvalss01 i\ssp) = 
 A\capss34\bigcup\KP1(\ssp\bmii8 B\KP1|\KP1(\ssp\bbNo\sn\setminus i\ssp)) 
}, \,we hence also have \mathss30{ \emptyset = 
 A\capss32(\ssp\bmii8 B\fvalss21 i\ssp) } \mathss03{{\KN{.7}} =
 A\capss34\bigcup\KP1(\sp\scrb8 A\fvalss11 i\ssp) } and consequently by (a) we 
obtain \mathss38{\Nu\circss01 m\fvalue\sn A \le 
 i\ssp\yplus\,(\ssp\mu\fvalue\sn A\ssp) }.
  \end{proof}

\begin{proposition}\label{Pro mA=int ev_x c mu}

Let $\,\mu$ be a positive measure with \œ$\,\mu\fvalue\snn\bigcup\ssp\dom\mu 
 < \plusinfty${\,\rm, }and with \œ$\,\bosy K\in\setRC$ let \œ$\, \vPi \in 
 \BaSps0(K)$ be such that either $\,\vPi$ is reflexive or $\,\taurd\vPi$ is a 
separable topology. Also let \œ$\,
\Nu\aR 1=\seqss44{
\sup\KPt8(\ssp\Abrs00^1\circ\sp u\circss01\Nu\invss44\image\ssbb15 I)
:u\in\Cal L\,(\sp\vPi\sp,\spp\bosy K\ssp)}$ where $\,\Nu$ is a compatible norm 
for $\,\vPi${\sp\rm, }and let $\,m$ be a $\,
\vPi\dlbetss01\,$--\,measure with \œ$\,\dom m=\dom\mu$ and such that 
\œ$\,\Nu\aR 1\snn\circ\sp m\fvalue\ssn A \le \mu\fvalue\ssn A$ holds 
for all \œ$\,A\in\dom\mu\,$. Then there are $\,y\ssp,\sp S$ such that 
$\,S$ is a separable closed linear subspace in $\,\vPi\dlbetss01$ and 
\œ$\,
(\KPt5 y\,;\spp\mu\,,\spp\vPi\dlsigss00\spp)$ is 
simply measurable and Pettis with 
\œ$\, m\fvalue\ssn A\fvalss12\xi = 
 \int_{\,A}\ssp\roman{ev}\KPt2\sbi\xi\snn\circ\sp y\rmdss11\mu$ for all 
\œ$\, A 
 \in\dom m$ and \œ$\,\xi\in\vecs\vPi${\sp\rm, }and in addition such that also 
\œ$\,\rng y\inc S$ holds and 
\œ$\,(\KPt5 y\,;\spp\mu\,,\spp\vPi\dlbetss01\sp)$ is simply 
measurable in the case where $\,\vPi$ is reflexive.
  \end{proposition}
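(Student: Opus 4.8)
The plan is to realise the vector measure $m$ as a continuous operator on $\mLrs42^1(\ssp\mu\,,\spp\bosy K\ssp)$ and then to quote the Dunford\,--\,Pettis representation of Proposition \ref{Pro Edw 8.17.8}. Write $\Omega=\bigcup\ssp\dom\mu$. First I would collect three preliminaries. From $\Nu\aR 1\snn\circ\sp m\fvalue\ssn A\le\mu\fvalue\ssn A$ together with countable additivity and finiteness of $\mu$, the tail estimate $\Nu\aR 1\snn\circ\sp m\fvalue(\text{tail})\le\mu\fvalue(\text{tail})\to 0$ shows that $m$ is countably $\vPi\dlbetss01$--additive and of bounded variation with total variation at most $\mu\fvalss01\Omega<\plusinfty$. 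Being finite, $\mu$ is $\rsigma6$finite, hence truly decomposable, hence almost decomposable by Proposition \ref{Propo top-deco}, so the hypotheses on $\mu$ needed below are met. Finally, for each $\xi\in\vecs\vPi$ the scalar measure $A\mapsto m\fvalue\ssn A\fvalss12\xi$ is countably additive and absolutely $\mu$--continuous with $|\KP1 m\fvalue\ssn A\fvalss12\xi\KP1|\suba\le\mu\fvalue\ssn A\cdot(\ssp\Nu\fvalss11\xi\ssp)$, so the scalar Radon\,--\,Nikodym theorem yields $g\sbi\xi\in\bigcup\ssp\vecs\mLrs42^1(\ssp\mu\,,\spp\bosy K\ssp)$ with $m\fvalue\ssn A\fvalss12\xi=\int_{\,A}g\sbi\xi\rmdss11\mu$ and $|\,g\sbi\xi\,|\suba\le\Nu\fvalss11\xi$ almost everywhere. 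Sending the class of the characteristic function of $A$ to $m\fvalue\ssn A$ and extending linearly over simple functions then gives a map of $\Nu\aR 1$--length at most $\|\,\varphi\,\|\Lnorss33^1_\mu$; since $\vPi\dlbetss01$ is Banachable and simple classes are $\taurd\mLrs42^1(\ssp\mu\,,\spp\bosy K\ssp)$--dense, this extends to a unique $\smb V\in\Cal L\,(\sp\mLrs42^1(\ssp\mu\,,\spp\bosy K\ssp)\,,\spp\vPi\dlbetss01\sp)$ with $\smb V\fvalss60\smb\Phii\fvalss00\xi=\int_{\KP{1.1}\Omega\,}\varphi\cdot g\sbi\xi\rmdss21\mu$ for $\varphi\in\smb\Phii$ and $\xi\in\vecs\vPi$.

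For the case where $\taurd\vPi$ is separable I would fix a countable $\taurd\vPi$--dense $D\inc\vecs\vPi$, let $\mfrk S\aR 0$ be the countably generated sub-$\rsigma3$algebra of $\dom\mu$ generated by the $g\sbi\xi$ with $\xi\in D$, and let $\roman E$ be the associated $\mu$--conditional expectation. As each such $g\sbi\xi$ is $\mfrk S\aR 0$--measurable and bounded, $\int_{\KP{1.1}\Omega\,}\varphi\cdot g\sbi\xi\rmdss21\mu=\int_{\KP{1.1}\Omega\,}\roman E\fvalss01\varphi\cdot g\sbi\xi\rmdss21\mu$, so $\smb V\fvalue\smb\Phii$ and $\smb V\fvalss11(\ssp\roman E\fvalss01\varphi\ssp)$ agree on $D$, hence by density and continuity as functionals. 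Thus $\rng\smb V=\smb V\ssp\image(\ssp\mLrs42^1(\ssp\mu\KP1|\KP1\mfrk S\aR 0\sp))$, which is separable because $\mLrs42^1(\ssp\mu\KP1|\KP1\mfrk S\aR 0\sp)$ is (a finite measure on a countably generated $\rsigma3$algebra) and $\smb V$ is continuous; so $\taurd(\sp\vPi\dlbetss01\sp)\leiss32\rng\smb V$ is separable. Now Proposition \ref{Pro Edw 8.17.8} supplies $y\in\bigcup\ssp\vecs\mvsLrs23^\plusinftyy(\ssp\mu\,,\spp\vPi\dlsigss00\spp)$ with $\rng y\inc S$ for $S=\CltaurdvPidualbeta\rng\smb V$ and $\smb V\fvalss60\smb\Phii\fvalss00\xi=\int_{\KP{1.1}\Omega\,}\roman{ev}\KPt2\sbi\xi\snn\circ\sp y\cdot\varphi\rmdss21\mu$; taking $\varphi$ the characteristic function of $A$ gives $m\fvalue\ssn A\fvalss12\xi=\int_{\,A}\roman{ev}\KPt2\sbi\xi\snn\circ\sp y\rmdss11\mu$. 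Since $\vPi$ is normable with $\taurd\vPi$ separable and $\mu$ is $\rsigma6$finite, Proposition \ref{pro-mea-equ} upgrades the finite almost scalar measurability of $(\KPt5 y\,;\spp\mu\,,\spp\vPi\dlsigss00\spp)$ to simple measurability, and the weak$^*$ integrals just displayed are exactly the $\vPi\dlsigss00$--Pettis integrals $m\fvalue\ssn A$, giving the Pettis property.

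For the reflexive case I would instead use that $\vPi\dlbetss01$ is reflexive and therefore enjoys the Radon\,--\,Nikodym property, so the bounded-variation $\mu$--continuous measure $m$ admits a Bochner density, i.e.\ some $y\in\sp^\Omega\,\vecs(\sp\vPi\dlbetss01\sp)$ with $\Nu\aR 1\snn\circ\sp y\fvalue\eta\le 1$ almost everywhere and $m\fvalue\ssn A$ its Bochner integral over $A$. A Bochner density is strongly, hence simply, measurable in $\vPi\dlbetss01$ with essentially separable range, so $S$ may be taken as the closed linear span of that range; commuting the continuous functional $\roman{ev}\KPt2\sbi\xi$ with the Bochner integral yields $m\fvalue\ssn A\fvalss12\xi=\int_{\,A}\roman{ev}\KPt2\sbi\xi\snn\circ\sp y\rmdss11\mu$, simple measurability in $\vPi\dlbetss01$ descends to the weaker $\vPi\dlsigss00$, and Bochner integrability gives the Pettis property, so in particular the additional assertion that $(\KPt5 y\,;\spp\mu\,,\spp\vPi\dlbetss01\sp)$ be simply measurable holds.

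The hard part will be the separability of $\rng\smb V$ (equivalently of the closed linear span of $\rng m$): in the separable case this is exactly what the conditional-expectation factorization through the countably generated $\mfrk S\aR 0$ delivers, whereas in the reflexive case the genuine input is the Radon\,--\,Nikodym property of $\vPi\dlbetss01$ (one could alternatively quote that a countably additive measure of bounded variation into a Banachable space has separable closed linear span). The remaining points are routine but must be checked carefully: that $m$ really is countably additive from the norm estimate, that the null-set adjustments implicit in \emph{almost} measurability do not disturb the pointwise identities, and that the weak$^*$ Pettis integral of $y$ over $A$ coincides with $m\fvalue\ssn A$ as required by Definitions \ref{df Petti}.
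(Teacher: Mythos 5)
Your proposal is sound, but it departs from the paper's proof in both cases, in instructive ways. The common starting point is the same: extend $1_A\mapsto m\fvalue\ssn A$ linearly from indicator classes and use the estimate $\Nu\aR 1\snn\circ\sp m\fvalue\ssn A\le\mu\fvalue\ssn A$ to obtain a continuous linear $V$ from $L^1(\mu,\bosy K)$ into $\vPi\dlbetss01$. After that you diverge. In the separable case the paper simply invokes Proposition \ref{Pro Edw 8.17.6}, which requires only that $\taurd\vPi$ be separable and says nothing about $\rng V$; your conditional-expectation factorization through the countably generated sub-$\sigma$-algebra generated by the scalar Radon--Nikodym densities $g_\xi$, $\xi\in D$, is a correct and rather elegant device for making $\rng V$ norm separable so that Proposition \ref{Pro Edw 8.17.8} applies instead, and it buys the extra conclusion $\rng y\inc S$ with $S$ separable even in the nonreflexive case (which the statement does not require there). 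In the reflexive case the paper stays inside its own machinery: by Lemma \ref{Le L^1_si-compa} the indicator classes form a relatively weakly compact set $K$, by the Dunford--Pettis property of $L^1$ (Corollary \ref{Cor L^1 is D-S}) together with reflexivity the image $V\image K$ is relatively norm compact in $\vPi\dlbetss01$, hence $\rng V$ is separable, Proposition \ref{Pro Edw 8.17.8} produces $y$, and Pettis' theorem (Proposition \ref{Pro rfx si mea}) upgrades to simple measurability for $\vPi\dlbetss01$. You instead quote the Radon--Nikodym property of the reflexive space $\vPi\dlbetss01$ and take a Bochner density; this is classically correct and shortens the argument considerably, but be aware that the standard proof of that RNP statement is precisely the Dunford--Pettis/Phillips machinery this paper is developing, so within the paper's self-contained treatment it is not available for free. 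Your deferred null-set adjustments (passing from the finitely almost scalarly measurable representative that the representation propositions give to a genuinely simply measurable $y$, via a countable dense set, countable choice and Proposition \ref{pro-mea-equ}) are indeed routine; the paper carries them out explicitly.

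One aside in your final paragraph is false and should be deleted: it is not true that a countably additive measure of bounded variation with values in a Banachable space has separable closed linear span. For a counterexample take $\mu$ to be the uncountable product measure of Example \ref{Exa big compact} and $m$ the map sending $A$ to the class of $1_A$ in $L^1(\mu)$; then $m$ is dominated by $\mu$, hence countably additive and of bounded variation, yet the closed linear span of $\rng m$ is all of $L^1(\mu)$, whose topology is nonseparable by that very example. Separability of the range is exactly what the Dunford--Pettis compactness argument, or the RNP, has to provide; it is not automatic. Since you offered this only as an alternative to the RNP, it does not invalidate your main line of argument.
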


\begin{proof} Let \math{\Omega=\bigcup\ssp\dom\mu} and \mathss38{ E = 
 \mvLrs42^1(\ssp\mu\,,\spp\bosy K\ssp) }. Putting 
\newline
\math{\roman x\,A=
(\ssp\Omega\spp\setminus A\ssp)\times\snn\{\ssp 0\ssp\}\cupss22
(\sp A\times\snn\{\ssp 1\ssp\}\sp\sbig)0} and 
\math{\eightroman X\,A=\uniqset\smb\Phii:
\roman x\,A\in\smb\Phii\in\vecs E} let \math{D} 
be the linear \mathss37{\sigrd E}--\,span of 
\mathss38{\{\KPt8\eightroman X\,A:
A\in\mu\invss44\image\spp\rbb R^+\sp\big\} }. Thus 
\math{D} is the set of all \math{\smb\Phii\in\vecs E} such that 
there is \math{\varphi\in\smb\Phii} with 
\math{\rng\varphi} finite. We know that 
\math{D} is \mathss37{\taurd E}--\,dense. Then 
we let \math{\smb V\aR 0} be the unique linear extension 
\math{\sigrd E_{\KPt8|\,D}\to\sigrd(\sp\vPi\dlbetss01\sp) } of 
\newline
\mathss38{\{\,(\ssp\eightroman X\,A
\,,\spp m\fvalue\ssn A\ssp):
A\in\mu\invss44\image\spp\rbb R^+\sp\big\} }, \,noting 
that by the assumptions on \math{m} we indeed get a linear map.

Now for finite functions \math{\bosy s\inc
\dom\mu\times\vecs\bosy K} with \math{\dom\bosy s} disjoint and 
\newline
\math{\varphi=
\sigrd\bosy K\expnota^\ssp\Omega\sp]_{vs}\text{\KPt8-}
\sum_{\,A\ssp\in\ssp\dom\sn\bmii8 s\,}
(\ssp\bosy s\fvalue\ssn A\ssp)\KP1\roman x\,A
\in\smb\Phii\in\vecs E } we obtain \vskip.5mm\centerline{$
|\KP1\smb V\aR 0\sn\fvalue\smb\Phii\fvalss01\xi\KP1|=
\big|\,
\sum_{\,A\ssp\in\ssp\dom\sn\bmii8 s\,}
(\ssp\bosy s\fvalue\ssn A\ssp)\KP1
(\ssp m\fvalue\ssn A\fvalss01\xi\ssp)\KP1|\le
\sum_{\,A\ssp\in\ssp\dom\sn\bmii8 s\,}|\KP1
\bosy s\fvalue\ssn A\KP1|\KP1|\KPp1.2
m\fvalue\ssn A\fvalss01\xi\KP1| $} \inskipline1{21.6}

${}\le
\sum_{\,A\ssp\in\ssp\dom\sn\bmii8 s\,}|\KP1
\bosy s\fvalue\ssn A\KP1|\KP1
(\ssp\mu\fvalue\ssn A\ssp)\KP1(\ssp\Nu\fvalss01\xi\ssp)=
(\ssp\Nu\fvalss01\xi\ssp)\KP1\|\,\varphi\,\|\Lnorss33^1_\mu \, $. \inskipline{.7}0

Consequently \math{\smb V\aR 0} has a unique continuous extension \mathss38{
\smb V\in\Cal L\,(\sp E\ssp,\spp\vPi\dlbetss01\sp) }. 

Now assuming that \math{\vPi} is reflexive and  taking 
\mathss38{K=\{\KPt8\eightroman X\,A:A\in\dom\mu\KPt8\} }, \,from 
Lemma \ref{Le L^1_si-compa} on page \pageref{Le L^1_si-compa} above 
we see that 
\math{K} 
is relatively \mathss37{\taurd(\sp E\subsigrs04)}--\,compact. Since by 
Corollary \ref{Cor L^1 is D-S} on page \pageref{Cor L^1 is D-S} above 
\math{E} is a \erm{DP\,}--\,space, noting that by reflexivity 
of \math{\vPi} all bounded sets in \math{\vPi\dlbetss01} are relatively 
\mathss37{\taurd((\sp\vPi\dlbetss01\sp)\subsigma)}--\,compact, we 
see that \math{\smb V\KPt8\image\snn K} is 
relatively \mathss37{\taurd(\sp\vPi\dlbetss01\sp)}--\,compact. 
Since the linear \mathss37{\sigrd E}--\,span of 
\math{K} is \mathss37{\taurd E}--\,dense, it follows that 
\math{\taurd(\spp\vPi\dlbetss01\sp)\leiss32\rng\smb V } is a 
separable topology. Taking \math{S=
\CltaurdvPidualbeta\rng\smb V} hence by 
Proposition \ref{Pro Edw 8.17.8} on page \pageref{Pro Edw 8.17.8} above 
there is $y\ar 1 \in \bigcup\ssp\vecs\sn
 \mvsLrs23^\plusinftyy(\ssp\mu\,,\spp\vPi\dlsigss00\spp)$ with \math{
\rng y\ar 1\inc S} and (\sp$*$\sp) that \math{
\smb V\fvalss60\smb\Phii\fvalss00\xi=
\int_{\KP{1.1}\Omega\,} 
 \roman{ev}\KPt2\sbi\xi\snn\circ\sp y\ar 1\ssn\cdot\spp\varphi\rmdss21\mu 
} holds for \math{
\varphi\in\smb\Phii\in\vecs E} and \mathss31{\xi\in\vecs\vPi }. By Pettis' 
theorem and reflexivity of \math{\vPi} in fact \math{
y\ar 1 \in \bigcup\ssp\vecs\sn
 \mvLrs23^\plusinftyy(\ssp\mu\,,\spp\vPi\dlbetss01\sp)
} holds. Hence 
there is some \math{N\in\mu\invss44\image\snn\{\ssp 0\ssp\} } such that 
for \math{B=\Omega\spp\setminus N} we have \math{
(\ssp y\ar 1\ssp|\KP1 B\,;\spp\mu\KP1|\KP1\Pows B\ssp,\spp\vPi\dlbetss01\sp) } 
simply measurable, and so taking 
\newline
\math{y=N\timesn\{\KPt8\vecs\vPi\timesn\{\ssp 0\ssp\}\sp\}\cupss22
 (\ssp y\ar 1\ssp|\KP1 B\ssp) } we get 
\math{(\KPt5 y\,;\spp\mu\,,\spp\vPi\dlbetss01\sp) } simply measurable. To 
conclude the proof in the reflexive case, it suffices to take 
\math{\varphi=\roman x\,A} in (\sp$*$\sp) above.

In the separable case we instead apply Proposition \ref{Pro Edw 8.17.6} on 
page \pageref{Pro Edw 8.17.6} above to get existence of \math{y\ar 1} with 
(\sp$*$\sp) above. To see that \math{y\ar 1} can be modified on a set of 
measure zero to get some \math{y} with \math{
(\KPt5 y\,;\spp\mu\,,\spp\vPi\dlsigss00\spp) } simply measurable, we proceed as 
follows. We take a countable \math{D\ar 1} such that \math{D\ar 1} is \mathss37{
\taurd\vPi}--\,dense. For every fixed \linebreak
                                      \mathss03{\xi\in D\ar 1} we now know 
that \math{
(\,\roman{ev}\KPt2\sbi\xi\snn\circ\sp y\ar 1\ssp;\spp\mu\,,\spp\bosy K\ssp) } 
is almost measurable, and hence there is some \math{ N\aar 1 \in 
 \mu\invss44\image\snn\{\ssp 0\ssp\} } such that \math{
(\,\roman{ev}\KPt2\sbi\xi\snn\circ\sp y\ar 1\,|\KP1 B\,;\spp
 \mu\KP1|\KP1\Pows B\ssp,\spp\bosy K\ssp) } for \math{ B = 
 \Omega\spp\setminus N\aar 1 } is measurable. Since \math{D\ar 1} is 
countable, by {\sl countable choice\sp} we then find \mathss30{ N \in 
 \mu\invss44\image\snn\{\ssp 0\ssp\} } \linebreak
                                       such that with \math{ B = 
 \Omega\spp\setminus N } we have \math{
(\,\roman{ev}\KPt2\sbi\xi\snn\circ\sp y\ar 1\,|\KP1 B\,;\spp
 \mu\KP1|\KP1\Pows B\ssp,\spp\bosy K\ssp) } measurable for all \linebreak \mathss04{
\xi\in D\ar 1 }. By density, we can extend this to hold for all \mathss31{
\xi\in\vecs\vPi}. By Proposition \ref{pro-mea-equ} on page \pageref{pro-mea-equ} 
above, this gives that \math{
(\ssp y\ar 1\,|\KP1 B\,;\spp\mu\KP1|\KP1\Pows B\ssp,\spp\vPi\dlsigss00\spp) } 
is simply measurable, and so it suffices to take \math{y} as in the reflexive 
  case above.
  \end{proof}

From the logical point of view, note that in the nonreflexive case in 
Proposition \ref{Pro mA=int ev_x c mu} we may trivially take for example \mathss38{
S=\{\KPt8\vecs\vPi\timesn\{\ssp 0\ssp\}\sp\} }. We give below an alternative 
proof for the existence of \math{y} above. It has the drawback of not giving 
existence of the separable \math{S} that allowed us to deduce the stronger 
measurability in the reflexive case. The underlying argument of applying 
Alaoglu's theorem is already shortly sketched in \cite[p.\ 131]{Phil}\,, and 
in a more explicit manner it is also utilized in 
\cite[pp.\ 594\,--\,595]{Edw}\,. This alternative in fact was our first 
approach but then we noticed that using Propositions \ref{Pro Edw 8.17.8} and 
\ref{Pro Edw 8.17.6} offers a more uniform way to treating the cases (5) and 
(6) in Theorem \nfss A\,\ref{main Th} together.

\vskip1mm

{\it$\null\hfill$
Let $\,\mu$ be a positive measure on 
$\,\Omega$ with $\,\mu\fvalss02\Omega < \plusinfty${\,\rm, }and 
with 
\linebreak
$\,\bosy K\in\setRC$ let \œ$\,E\in\BaSps0(K)$ 
with $\,\Nu\aR 1$ a compatible dual norm for 
$\,F=E\dlbetss12\,$. 

Also let $\,m$ be an $\,F\,$--\,measure with $\,\dom m=\dom\mu$ and 

such that $\,\Nu\aR 1\snn\circ\sp m\fvalue\ssn A \le \mu\fvalue\ssn A$ 
holds for all $\,A \in\dom\mu\,$.

Then there is $\, c \in \sp ^\Omega\,\Cal L\,(\sp E\ssp,\spp\bosy K\ssp)$ such 
that $\,(\,c\KPt8;\sp\mu\,,\spp E\dlsigss12\spp)$ is Pettis and such that 

$\, m\fvalue\ssn A\fvalue x = 
 \int_{\,A}\ssp\roman{ev}\sp\sbi{\emath x}\snn
\circ\sp c\rmdss11\mu$ holds for all \œ$\, A 
 \in\dom\mu$ and $\,x\in\vecs E\,$.
} 

\begin{proof} The assertion being trivial if \math{\mu\fvalss02\Omega=0} 
holds, assuming \math{\mu\fvalss02\Omega > 0} we consider the net \math{
(\sp\varDelta\,,\spp\bosy c\ssp) } obtained as follows. 
Let $\varDelta$ be the set of all pairs 
$(\sp\scrmt A\,,\spp\scrmt B\ssp)$ where 
\math{\scrmt A\,,\sp\scrmt B\inc\dom m\setminus\sp
\mu\invss44\image\snn\{\ssp 0\ssp\} } are finite partitions of 
$\Omega$ such that for every $B\in\scrmt B$ there is some $A\in\scrmt A$ 
with $B\inc A \,$. Then $\varDelta$ is a direction, and we take \vskip.3mm\centerline{$
\bosy c=\langle\KP1 
\Omega\times\Univ\capss31\{\,(\ssp\eta\ssp,\spp u\ssp):
\all A\,\eta\in A\in\scrmt A\impss33
u=(\ssp\mu\fvalue\ssn A\ssp)^{\,\mminus 1}\,
(\ssp m\fvalue\ssn A\ssp)\KPt9\}:\scrmt A\in\dom\varDelta
\KP{1.3}\rangle $} \inskipline{.3}0

thus obtaining a function $\dom\varDelta\to
\sp^\Omega\,\Cal L\,(\sp E\ssp,\sp\bosy K\ssp)$ 
such that for every $\scrmt A\in\dom\varDelta$ we have 
$\bosy c\sp\fvalue\sn\scrmt A$ the function $\Omega\owns\eta\mapsto
(\ssp\mu\fvalue\ssn A\ssp)^{\,\mminus 1}\,(\ssp m\fvalue\ssn A\ssp) $ 
when $\eta\in A\in\scrmt A$ holds.

We further let \math{\varLambda} be the set of all pairs \math{
(\ssp\eta\ssp,\spp u\ssp)\in\Omega\times\Cal L\,(\sp E\ssp,\spp\bosy K\ssp) } 
such that \math{u} is a \mathss37{\taurd(\sp E\dlsigss12\spp) }--\,limit point 
of the net \mathss38{ (\sp \varDelta \, , \spp
 \roman{ev}\sbi\eta\snn\circ\spp\bosy c\ssp) }. Then by {\sl Alaoglu's theorem\sp} 
we have \mathss36{\Omega\inc\dom\varLambda }, \,and hence by the {\sl axiom of 
choice\sp} there is a function \math{c\inc\varLambda} with \mathss06{ \dom c = 
 \Omega}. Arbitrarily fixing \mathss34{x\in\vecs E}, \,it remains to show that 
\math{\roman{ev}\sp\sbi{\emath x}\snn\circ\sp c } is inte- grable over every \mathss36{
A\in\dom\mu}, \,and that \math{m\fvalue\ssn A\fvalue x = 
 \int_{\,A}\ssp\roman{ev}\sp\sbi{\emath x}\snn\circ\sp c\rmdss11\mu } holds.

To see this, we let \math{\varphi} be a Radon\,--\,Nikodym derivative 
with respect to $\mu$ of 
$\dom\mu\owns A\mapsto 
m\fvalue\ssn A\fvalss10 x \,$, 
noting that some such exist since by our assumption 
for some $\smb M\in\rbb R^+$ we have  
$|\KP1 m\fvalue\ssn A\fvalss10 x\KP1|
\le\smb M\KP1(\ssp\mu\fvalue\sn A\ssp)$ for all 
$A\in\dom\mu\,$. For the same reason we may assume that 
$|\KP1\varphi\fvalue\eta\KP1|\le\smb M$ holds for all $
\eta\in \Omega\,$. We now have 
$m\fvalue\ssn A\fvalss10 x=
\int_{\,A}\ssp\varphi\rmdss11\mu$ 
for $A\in\dom\mu\, $, and it suffices to show 
existence of some $N\in\mu\invss44\image\snn\{\ssp 0\ssp\}$ such that 
$\roman{ev}\sp\sbi{\emath x}\snn\circ\sp c\fvalue\eta=
\varphi\fvalue\eta$ holds for all 
$\eta\in \Omega\spp\setminus N\ssp$. 

By taking inverse images under $\varphi$ of partitions of 
$\mathbb C\capss31\{\,z:|\,z\,|\le\smb M\KPt8\}$ into sets of diameter 
${}<i\ssp\yplus\sp\ydot\,^{\mminus 1}$, we obtain a sequence 
$\bosy s\ar 1$ of simple functions such that 
$|\KP{1.1}\bosy s\ar 1\sn\fvalue\sp i\fvalss10\eta - \varphi\fvalue\eta\KP{1.2}|
< i\ssp\yplus\sp\ydot\,^{\mminus 1}$ holds for all 
$i\in\bbNo$ and $\eta\in \Omega\,$.

If $\sigma\ar 1\in\rng\bosy s\ar 1$ is such that $
\sigma\ar 1\sn\inve\ssp\image\snn\{\ssp s\ssp\}
\in\mu\invss44\image\snn\{\ssp 0\ssp\}$ holds for 
some $s\in\rng\sigma\ar 1\,$, on a set of measure zero we can modify 
$\sigma\ar 1$ to get another simple function $\sigma$ 
such that for every 
$s\in\rng\sigma$ we have $\sigma\invss44\image\snn\{\ssp s\ssp\}
\in\mu\invss44\image\spp\rbb R^+\,$. 
Using this observation in conjunction with 
{\sl countable choice\sp} we obtain another sequence $\bosy s$ of 
simple functions and some $N\in\mu\invss44\image\snn\{\ssp 0\ssp\}
$ such that for all $\eta\in \Omega\spp\setminus N$ and $i\in\bbNo$ we have 
$\bosy s\fvalss01 i\fvalss10\eta=
 \bosy s\ar 1\sn\fvalue\sp i\fvalss10\eta\,$.

Now arbitrarily given \math{\eta\in \Omega\spp\setminus N} and \math{\eps\in
 \rbb R^+} we pick some $\sigma\in\rng\bosy s$ such that for all $\eta\ar 1\in 
 \Omega\spp\setminus N$ we have 
$|\KP1\sigma\fvalue\eta\ar 1\snn - \varphi\fvalue\eta\ar 1\,| < \eps \,$. Then 
with $A=\sigma\invss44\image\snn\{\,\sigma\fvalue\eta\,\}$ we take either 
$\scrmt A=\{\,A\,,\spp \Omega\spp\setminus A\KPt9\}$ 
or $\scrmt A=\{\ssp A\,\}$ 
according to whether 
$A\not=\Omega$ or $A=\Omega$ holds, getting then 
$\scrmt A\in\dom\varDelta$ by construction. 
If now $\eta\in B\in\scrmt B\in\varDelta\spp\image\snn\{\,\scrmt A\,\}$ 
holds, we have $B\inc A$ and hence \vskip.5mm\centerline{$
\roman{ev}\sp\sbi{\emath x}\snn\circ\sp 
\roman{ev}\sbi\eta\snn\circ\spp\bosy c\fvalss02\scrmt B
=\bosy c\fvalss02\scrmt B\fvalue\eta\fvalss02\xi=
(\ssp\mu\fvalue B\ssp)^{\,\mminus 1}\,
(\ssp m\fvalue B\fvalss11\xi\ssp)=
(\ssp\mu\fvalue B\ssp)^{\,\mminus 1}\int_{\KP1 B}\ssp\varphi\rmdss11\mu $} \inskipline{.5}0

further giving 
$|\KP{1.2}
\roman{ev}\sp\sbi{\emath x}\snn\circ\sp 
\roman{ev}\sbi\eta\snn\circ\spp\bosy c\fvalss02\scrmt B
 - \varphi\fvalue\eta\KP{1.2}| < 2\KP1\eps \,$. 
Since $c\fvalue\eta\fvalue x$ is a $\taurd\bosy K\,$--\,limit point of 
the net 
$(\sp\varDelta\,,\spp
\roman{ev}\sp\sbi{\emath x}\snn\circ\sp
\roman{ev}\sbi\eta\snn\circ\spp\bosy c\ssp) \KPt8 $, this gives 
$c\fvalue\eta\fvalue x=\varphi\fvalue\eta\,$, and having 
here $\eta\in \Omega\spp\setminus N$ arbitrarily fixed, we see that 
$\roman{ev}\sp\sbi{\emath x}\snn\circ\sp c\fvalue\eta=
\varphi\fvalue\eta$ holds for all 
$\eta\in \Omega\spp\setminus N\ssp$.
  \end{proof}

\begin{corollary}\label{Coro q-var}

Let \œ$\,1\le q<\plusinfty$ and let $\,\mu$ be a positive measure on $\,\Omega
${\,\rm, }and with \œ$\,\bosy K\in\setRC$ let \œ$\,\vPi\in\BaSps0(K)$ be such 
that either \œ$\,\vPi$ is reflexive or \œ$\,\taurd\vPi$ is a separable 
topology. Also let $\,m$ be a $\,\vPi\dlbetss01\,$--\,measure with \œ$\,\dom m
 = \mu\invss44\image\spp\lbb R_+$ and such that $\,m$ is absolutely $\,\mu\,
$--\,continuous in $\,\vPi\dlbetss01$ with $\,m$ having bounded $\,\mu\,$--$
 \RHB{.3}{\KP{1.5} ^q \ssp}$variation in $\,\vPi\dlbetss01\,$. Then there are 
some countable disjoint $\,\scrmt A$ and $\,y$ with \œ$\, \scrmt A \inc 
 \mu\invss44\image\spp\rbb R^+$ and \œ$
(\KPt5 y\,;\spp\mu\,,\spp\vPi\dlsigss00\spp)$ simply measurable and such that 
{\,\rm(1)} and {\,\rm(2)} and {\,\rm(3)} and {\,\rm(4)} below hold for all $\,
A\in\dom m$ and $\,A\spp\ar 1\in\scrmt A$ and $\,\xi\in\vecs\vPi$ and $\,\eta
 \in\Omega\,$. {\rm\inskipline14

(1)} \ $\eta\not\in\bigcup\,\scrmt A\impss33 
     y\fvalue\eta=\vecs\vPi\times\snn\{\ssp 0\ssp\} \KP1 ${\rm, \inskipline{.5}4

(2)} \ $\bigcup\,\scrmt A\capss31 A=\emptyset\impss33 m\fvalue\ssn A = 
     \vecs\vPi\times\snn\{\ssp 0\ssp\} \KP1 ${\rm, \inskipline{.5}4

(3)} \ $A\inc A\spp\ar 1\impss33 m\fvalue\ssn A\fvalss12\xi = 
     \int_{\,A}\ssp\roman{ev}\KPt2\sbi\xi\snn\circ\sp y\rmdss11\mu \KPt6 ${\rm, \inskipline{.5}4

(4)} \ $\vPi$ is reflexive $\impss33
     (\KPt5 y\,;\spp\mu\,,\spp\vPi\dlbetss01\sp)$ is simply measurable. \vskip1mm
  \end{corollary}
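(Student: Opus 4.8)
The plan is to extract the countable family $\scrmt A$ first, using \emph{only} the bounded variation hypothesis, and then to produce $y$ on its union by localizing the two representation results already proved.

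\emph{Extraction of $\scrmt A$.} By {\sl Zorn's lemma} I would choose $\scrmt A$ maximal among the disjoint subfamilies of $\{\,A\in\mu\invss44\image\spp\rbb R^+:m\fvalue\ssn A\not=\vecs\vPi\times\snn\{\ssp 0\ssp\}\,\}$. Fixing a compatible norm $\Nu$ for $\vPi$ and letting $\Nu\aR 1$ be the dual norm, $\Nu\aR 1$ is a compatible, hence bounded, norm for $\vPi\dlbetss01$, so $\Nu\aR 1\in\Bqnorm(\vPi\dlbetss01)$, and the bounded $\mu\,$--$^q$variation of $m$ gives some $\smb M$ with $\sum_{A\ssp\in\ssp\scrm7 B}(\mu\fvalue\ssn A)^{1-q}\,(\ssp\Nu\aR 1\snn\circ\sp m\fvalue\ssn A\ssp)^q\le\smb M$ for every finite disjoint $\scrmt B\inc\mu\invss44\image\spp\rbb R^+$. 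Hence for each $n\in\rbb Z^+$ the subfamily $\{\,A\in\scrmt A:(\mu\fvalue\ssn A)^{1-q}\,(\ssp\Nu\aR 1\snn\circ\sp m\fvalue\ssn A\ssp)^q\ge\frac1n\,\}$ has at most $n\,\smb M$ members, and since $\Nu\aR 1$ is a genuine norm every $A\in\scrmt A$ satisfies this bound for some $n$; thus $\scrmt A$ is countable. Setting $y=\vecs\vPi\times\snn\{\ssp 0\ssp\}$ off $\bigcup\,\scrmt A$ gives (1). For (2), if $A\in\dom m$ is disjoint from $\bigcup\,\scrmt A$ with $m\fvalue\ssn A\not=\vecs\vPi\times\snn\{\ssp 0\ssp\}$, then by absolute continuity $\mu\fvalue\ssn A>0$, so $\scrmt A\cupss31\{\,A\,\}$ would strictly enlarge $\scrmt A$, a contradiction; hence $m\fvalue\ssn A=\vecs\vPi\times\snn\{\ssp 0\ssp\}$.

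\emph{Local representatives.} As $\bigcup\,\scrmt A$ is $\sigma$-finite, it suffices to represent $m$ on each finite-measure member $A\spp\ar 1\in\scrmt A$ and patch. Fixing such $A\spp\ar 1$, I would apply Proposition \ref{Pro Phi5.4} to $\mu\KP1|\KP1\Pows A\spp\ar 1$ and $m\KP1|\KP1(\dom\mu\capss22\Pows A\spp\ar 1)$ to get a decreasing $\bmii8 A\in\sp^\sbbNo\,\dom\mu$ with $\lim\,(\ssp\mu\spp\circ\bmii8 A\ssp)=0$ and $\Nu\aR 1\snn\circ\sp m\fvalue\ssn A\le i\ssp\yplus\,(\ssp\mu\fvalue\ssn A\ssp)$ whenever $A\capss22(\ssp\bmii8 A\fvalss01 i\ssp)=\emptyset$. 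On $C\,i=A\spp\ar 1\setminus\bmii8 A\fvalss01 i$ the measure $m$ is then norm-dominated by $i\ssp\yplus\,\mu$, so after rescaling by the factor $i\ssp\yplus$ Proposition \ref{Pro mA=int ev_x c mu} applies and yields $y\,i$ with $(\,y\,i\,;\mu\KP1|\KP1\Pows C\,i\,,\spp\vPi\dlsigss00\spp)$ simply measurable, $m\fvalue\ssn A\fvalss12\xi=\int_{\,A}\roman{ev}\KPt2\sbi\xi\snn\circ\sp y\,i\rmdss11\mu$ for $A\in\dom\mu\capss22\Pows C\,i$ and $\xi\in\vecs\vPi$, and with $(\,y\,i\,;\mu\KP1|\KP1\Pows C\,i\,,\spp\vPi\dlbetss01\sp)$ simply measurable when $\vPi$ is reflexive.

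\emph{Gluing.} Since $\bmii8 A$ decreases, the $C\,i$ increase to $A\spp\ar 1\setminus N$ with $N=\bigcap_i\bmii8 A\fvalss01 i$ being $\mu\,$--\,null; disjointifying this increasing sequence and defining $y$ on the $i$-th difference to agree with $y\,i$, then running over all members of $\scrmt A$ and keeping $y=\vecs\vPi\times\snn\{\ssp 0\ssp\}$ on the residual null and off-support part, I obtain a countable \emph{disjoint} family of finite-measure pieces on each of which $y$ is simply measurable. A single global simple sequence is then the function equal, on the $m$-th piece for $m\le n$, to the $n$-th term of that piece's approximating simple sequence and zero elsewhere: each such function is simple, and because every point lies in exactly one piece it eventually uses one fixed sequence, so pointwise convergence holds on all of $\Omega$ even though $\taurd(\vPi\dlsigss00\spp)$ is not metrizable; the same construction with the $\vPi\dlbetss01\,$--\,approximants gives (4). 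Finally (3), for $A\inc A\spp\ar 1$, follows by splitting $A$ into its disjoint intersections with the pieces and summing the per-piece representations, using that $A\mapsto m\fvalue\ssn A\fvalss12\xi$ is countably additive (a consequence of absolute continuity) and that the integral is countably additive over disjoint unions.

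\emph{Main obstacle.} The two delicate points both live in the gluing. First, the non-metrizability of $\taurd(\vPi\dlsigss00\spp)$ forbids a naive diagonalization over the increasing exhaustion, which is why I pass to \emph{disjoint} pieces so that each point sees a single approximating sequence. Second, the representing integral in (3) must be shown to be a genuine finite integral rather than the undefined value $\Univ$; this is where the bounded $q\,$--\,variation is invoked a second time, since by Jensen's inequality the variation sums dominate $\int_{\,A\spp\ar 1}(\ssp\Nu\aR 1\snn\circ\sp y\ssp)^q\rmdss11\mu$, so $\Nu\aR 1\snn\circ\sp y$ is $q\,$--\,integrable over the finite-measure set $A\spp\ar 1$, whence $\roman{ev}\KPt2\sbi\xi\snn\circ\sp y$ is $\mu\,$--\,integrable there and the limit of the truncated representations is the asserted integral. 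I expect this integrability bookkeeping, together with arranging the pieces to be simultaneously disjoint and exhausting, to be the hardest part; the extraction of $\scrmt A$ and the verification of (1), (2), (4) should be comparatively routine.
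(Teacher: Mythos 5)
You assemble the proof from exactly the same ingredients as the paper: a Zorn\ssp-\ssp maximal disjoint family made countable by the bounded $q\,$--\,variation, Proposition \ref{Pro Phi5.4} to localize the domination of $m$ by a multiple of $\mu$, a rescaled application of Proposition \ref{Pro mA=int ev_x c mu}, countable choice, and a disjoint\ssp-\ssp piece diagonal gluing of simple sequences. The one structural difference is which family you hand over as the $\scrmt A$ of the statement. The paper applies Proposition \ref{Pro Phi5.4} separately on each member of the Zorn family $\scrmt C$ and thereby replaces $\scrmt C$ by a \emph{finer} countable disjoint family with the same union, on each member $A\spp\ar 1$ of which $\sup\KPt8\{\KPt8(\ssp\mu\fvalue\ssn A\ssp)^{\,\mminus 1}\,(\ssp\Nu\aR 1\sn\circ\sp m\spp\fvalue\sn A\ssp) : A\in\mu\invss44\image\spp\rbb R^+\sn\cap\ssp\Pows A\spp\ar 1\ssp\} < \plusinfty$ holds, and it declares \emph{that refined family} to be $\scrmt A$; Proposition \ref{Pro mA=int ev_x c mu} then yields (3) on every member outright, with $y$ essentially bounded on each member. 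You instead keep the coarse Zorn family as $\scrmt A$, so on a member $A\spp\ar 1$ your $y$ is only bounded piece by piece, and (3) over an arbitrary $A\inc A\spp\ar 1$ becomes a genuine integrability question --- work the paper's organization never has to do.

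That is exactly where your argument has a gap. You justify the integrability by claiming that ``by Jensen's inequality the variation sums dominate'' the integral of $(\ssp\Nu\aR 1\sn\circ\sp y\ssp)\RHB{.25}{\KPt8^q}$ over $A\spp\ar 1$; the inequality goes the other way. Jensen, together with $\Nu\aR 1\sn\fvalue(\ssp m\fvalue\ssn A\ssp) \le \int_{\,A}\ssp\Nu\aR 1\sn\circ\sp y\rmdss11\mu$, gives $(\ssp\mu\fvalue\ssn A\ssp)\KP1^{1\ssp-\,q}\,(\ssp\Nu\aR 1\sn\circ\sp m\spp\fvalue\sn A\ssp)\RHB{.25}{\KPt8^q} \le \int_{\,A}(\ssp\Nu\aR 1\sn\circ\sp y\fvalue\eta\ssp)\RHB{.25}{\KPt8^q}\rmdss11\mu\,(\sp\eta\sp)$, i.e.\ each variation sum is dominated \emph{by} the integral, which says nothing about finiteness of the integral. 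The bound you actually need is true, but proving it requires a refining\ssp-\ssp partition and simple\ssp-\ssp approximation argument of the kind carried out in Step 3 of the proof of Lemma \nfss A\,\ref{final lemma}, not a one\ssp-\ssp line appeal to Jensen. Two repairs are available: (i) output the refined family as $\scrmt A$, as the paper does, and the issue evaporates; or (ii) keep your coarse family and obtain integrability from scalar measure theory: for fixed $\xi$ the set function $\roman{ev}\KPt2\sbi\xi\snn\circ\sp m$ on $\dom\mu\capss22\Pows A\spp\ar 1$ (a genuine \rsigma3algebra, since $\mu\fvalue\ssn A\spp\ar 1 < \plusinfty$) is finitely additive, finite valued and absolutely $\mu\,$--\,continuous, hence a countably additive scalar measure of finite total variation, and the per\ssp-\ssp piece representations show that summing $\int_{\,D}\,|\KP1\roman{ev}\KPt2\sbi\xi\snn\circ\sp y\KP1|\rmdss11\mu$ over the pieces $D$ is bounded by that variation; then your countable\ssp-\ssp additivity splitting of (3) goes through.
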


\begin{proof} Let \math{\Nu\aR 1=\seqss44{
\sup\KPt8(\ssp\Abrs00^1\circ\sp u\circss01\Nu\invss44\image\ssbb15 I)
:u\in\Cal L\,(\sp\vPi\sp,\spp\bosy K\ssp)} } where 
\math{\Nu} is some fix- ed compatible norm 
for \mathss31{\vPi}. We first show that there is a countable disjoint 
\math{\scrmt C\inc\mu\invss44\image\spp\rbb R^+ } such that 
\math{m\fvalue\ssn A=\vPi\times\snn\{\ssp 0\ssp\} } for all 
\math{A\in\dom m} with 
\mathss36{\bigcup\,\scrmt C\capss31 A=\emptyset}. To see 
this, with \math{
\scrmt A\ar 1=
\dom m\capss21\{\,A:\Nu\aR 1\sn\circ\sp
  m\spp\fvalue\sn A\not=0\KPt9\} } 
\newline 
we let \mathss38{\Cal P=\{\,(\sp\scrmt A\,,\spp\scrmt B\ssp)
:\scrmt A\,,\sp\scrmt B\ssp\text{ are disjoint and }\ssp
\scrmt A\inc\scrmt B\inc\scrmt A\ar 1\,\} }. 
Then $\Cal P$ is a nonempty partial order, and if 
$\Cal C$ is a $\Cal P\,$--\,chain, then 
$\bigcup\KP1\Cal C$ is an upper $\Cal P\,$--\,bound. Hence by 
{\sl Zorn's lemma\sp} there exists some $\Cal P\,$--\,maximal $\scrmt C\ssp$. 
Clearly $\scrmt C$ is as required {\sl if it is countable\sp}. To verify this, 
we note that $\scrmt C
=\{\KP1\roman C\,n:n\in\rbb Z^+\sp\big\}$ when 
$\roman C\,n$ is the set of all $A\in\scrmt C$ 
with \mathss31{n^{\,\mminus 1} < 
(\ssp\mu\fvalue\ssn A\ssp)\KP1^{1\ssp-\,q}\,(\ssp\Nu\aR 1\sn\circ\sp
  m\spp\fvalue\sn A\ssp)\RHB{.3}{\KPt8^q} }. If $\roman C\,n$ 
is finite for every $n\in\rbb Z^+$, then $\scrmt C$ is countable. 
If $\roman C\,n$ is 
infinite for some $n\in\rbb Z^+$, we 
get a contradiction with the assumption that 
\math{m} has bounded \mathss37{\mu}--$
 \RHB{.3}{\KP{1.5} ^q \ssp}$variation in \mathss34{\vPi\dlbetss01}.

Next, using Proposition \ref{Pro Phi5.4} on page \pageref{Pro Phi5.4} 
above we find a countable disjoint 

$\scrmt A\inc\mu\invss44\image\spp\rbb R^+$ with 
$\bigcup\,\scrmt A=\bigcup\,\scrmt C$ and such that 

$\sup\KPt8\{\KPt8(\ssp\mu\fvalue\ssn A\ssp)^{\,\mminus 1}\,
(\ssp\Nu\aR 1\sn\circ\sp m\spp\fvalue\sn A\ssp) :
 A\in\mu\invss44\image\spp\rbb R^+\sn\cap\ssp\Pows A\ar 1\ssp\} < \plusinfty$

holds for every fixed \mathss36{A\ar 1\in\scrmt A}. 
Indeed, we just apply Proposition \ref{Pro Phi5.4} separately to 
\math{\mu\KP1|\KP1\Pows C}
for every fixed \math{C\in\scrmt C} and then take 
the union of the thus obtained partitions. 

Finally we let \math{\scrmt Y} be the set of all 
pairs \math{
(\sp A\ar 1\sp,\spp y\ar 1) } with \mathss03{ A\ar 1 \in 
 \scrmt A} and such that for \mathss03{
\mu\ar 1=\mu\KP1|\KP1\Pows A\ar 1} and 
\math{m\ar 1 = m\KP1|\KP1\Pows A\ar 1} we have \math{
(\ssp y\ar 1\KPt2;\spp\mu\ar 1\sp,\spp\vPi\dlsigss00\spp) } 
simply measurable and 
Pettis with \math{m\ar 1\ssn\fvalue\ssn A\fvalss12\xi = 
 \int_{\,A}\ssp\roman{ev}\KPt2\sbi\xi\snn\circ\sp y\ar 1\rmdss01\mu } 
for all \math{
A\in\dom m\ar 1} and \mathss31{\xi\in\vecs\vPi }, \,and such that also 
\math{(\ssp y\ar 1\KPt2;\spp\mu\ar 1\sp,\spp\vPi\dlbetss01\sp) } is simply 
measurable if \math{\vPi} is reflexive. Then 
considering arbitrarily fixed \math{
A\ar 1\in\scrmt A} and with \vskip.3mm\centerline{$
 \smb M=\sup\KPt8\{\KPt8(\ssp\mu\fvalue\ssn A\ssp)^{\,\mminus 1}\,
(\ssp\Nu\aR 1\sn\circ\sp m\spp\fvalue\sn A\ssp) :
 A\in\mu\invss44\image\spp\rbb R^+\sn\cap\ssp\Pows A\ar 1\ssp\} $} \inskipline{.3}0

taking \mathss03{ \Nu\sprim2 = 
 \{\,(\ssp\xi\,,\spp\smb M\KPt8 t\ssp) : 
 (\ssp\xi\,,\spp t\ssp)\in\Nu\KPt8\} } in place of 
\math{\Nu} in 
Proposition \ref{Pro mA=int ev_x c mu} 
on page \pageref{Pro mA=int ev_x c mu} above, 
we see that \mathss03{ \scrmt A 
 \inc \dom\scrmt Y } holds, and hence by {\sl countable choice\sp} there 
is 
a function \mathss30{\scrmt Y\ar 1\inc\scrmt Y} with \mathss34{ \scrmt A \inc 
 \dom\scrmt Y\ar 1}. Now taking \vskip.3mm\centerline{$
 y = (\ssp\Omega\setminus\bigcup\,\scrmt A\,)
 \times\snn\{\KPt8\vecs\vPi\times\snn\{\ssp 0\ssp\}\sp\}\cupss24
   \bigcup\ssp\rng\scrmt Y\ar 1 \ssp $,} \inskipline{.3}0

it is clear that all the asserted properties hold.
  \end{proof}

Although we shall not need below the result, as an application of the 
Dunford\,--\,Pettis property of \math{\mLrs42^1(\ssp\mu\sp) } we reformulate 
the a bit mysterious looking assertion \vskip.5mm\centerline{%
\q{if $|\tau_0|\tau<\infty$, then $[x(\tau)|\tau\subset\tau_0]$ is compact 
 valued}} \inskipline{.5}0

from \cite[p.\ 131]{Phil} in the following

\begin{proposition}

Let $\,\mu$ be a positive measure with \œ$\,\sup\rng\mu < \plusinfty${\,\rm, }%
and with \œ$\,\bosy K\in{\ssn}$ $\setRC$ let \œ$\,F\in\BaSps0(K)$ be reflexive 
with $\,\Nu$ a compatible norm. Also let \œ$\,m \in{\ssn}$ \œ$ 
 ^{\roman{dom\,}\mu}\,\vecs E$ be such that \œ$\,m\fvalue(\sp A\cupss31 B\ssp)
 = (\ssp m\fvalue\ssn A + m\fvalue B\ssp)\svs E$ and \œ$\,
\Nu\circss01 m\fvalue\ssn A \le \mu\fvalue\ssn A$ hold for all \œ$\,A\,,\sp B
 \in\dom\mu$ with \œ$\,A\capss31 B=\emptyset\,$. Then $\,\rng m$ is relatively $\,
  \taurd F\,$--\,compact.
  \end{proposition}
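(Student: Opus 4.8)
The plan is to exhibit $\,\rng m$ as the image of a relatively weakly compact set under a bounded operator out of $\,\LL^1$, and then to exploit the Dunford\,--\,Pettis property of that $\,\LL^1\,$. Writing $\,\Omega=\bigcup\,\dom\mu$ and understanding that $\,m\in\sp^{\roman{dom\,}\mu}\,\vecs F$, I would put $\,E=\mvLrs42^1(\ssp\mu\,,\spp\bosy K\ssp)$ and recall from Corollary \ref{Cor L^1 is D-S} on page \pageref{Cor L^1 is D-S} above that $\,E$ is a \erm{DP\,}--\,space. With $\,\roman x\,A=(\ssp\Omega\spp\setminus A\ssp)\times\snn\{\ssp 0\ssp\}\cupss22(\sp A\times\snn\{\ssp 1\ssp\}\sp\sbig)0$ and $\,\eightroman X\,A=\uniqset\smb\Phii:\roman x\,A\in\smb\Phii\in\vecs E$, the first step is to construct some $\,\smb U\in\Cal L\,(\sp E\ssp,\spp F\ssp)$ with $\,\smb U\sp\fvalue\snn\eightroman X\,A=m\fvalue\ssn A$ for all $\,A\in\dom\mu\,$. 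Since $\,\Nu\circss01 m\fvalue\ssn N\le\mu\fvalue\ssn N=0$ forces $\,m\fvalue\ssn N=\Bnull_F$ for $\,N\in\mu\invss44\image\snn\{\ssp 0\ssp\}$, finite additivity makes the assignment well\ssp-\sp defined on the $\,\sigrd E$--\,span of $\,\{\KPt8\eightroman X\,A:A\in\dom\mu\KPt8\}$, sending the class of a simple function with disjoint representative $\,\bosy s\inc\dom\mu\times\vecs\bosy K$ to $\,\sum_{\,A\ssp\in\ssp\dom\sn\bmii8 s\,}(\ssp\bosy s\fvalue\ssn A\ssp)\KP1(\ssp m\fvalue\ssn A\ssp)$, and the bound $\,\Nu\circss01 m\fvalue\ssn A\le\mu\fvalue\ssn A$ then gives $\,\Nu\fvalue(\ssp\smb U\sp\fvalue\snn\smb\Phii\ssp)\le\|\KP1\varphi\KP1\|\Lnorss33^1_\mu$ for $\,\varphi\in\smb\Phii\,$. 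By density of the simple classes, established in Proposition \ref{Pro simp Lp dense} on page \pageref{Pro simp Lp dense} above, this map extends uniquely to the desired $\,\smb U\,$.

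Next I would establish the two compactness facts that feed the Dunford\,--\,Pettis property. On one hand, reflexivity of $\,F$ means that its bounded sets are relatively $\,\taurd(\sp F\subsigrs00)$--\,compact, so that $\,\smb U\ssp\image\bouSet E\inc\{\,B:B\ssp\text{ is relatively }\ssp\taurd(\sp F\subsigrs00)\text{--\,compact}\ssp\}$ holds. On the other hand, I would apply Lemma \ref{Le L^1_si-compa} on page \pageref{Le L^1_si-compa} above to the set $\,K=\{\KPt8\eightroman X\,A:A\in\dom\mu\KPt8\}\,$: by $\,\sup\rng\mu<\plusinfty$ this $\,K$ is bounded in $\,E$, and for given $\,\eps\in\rbb R^+$ the choice $\,\delta=\eps$ suffices, since $\,\|\KP1\roman x\,A\KP1|\KP1 B\KP1\|\Lnorss33^1_\mu=\mu\fvalue(\sp A\capss31 B\ssp)\le\mu\fvalue\ssn B<\eps$ whenever $\,\mu\fvalue\ssn B<\delta\,$; hence $\,K$ is relatively $\,\taurd(\sp E\subsigrs04)$--\,compact.

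Finally, since $\,E$ is \erm Banachable and hence complete, the remark following Definitions \ref{df D-P} on page \pageref{df D-P} above lets me take the family $\,\scrmt K$ occurring there to consist of the relatively $\,\taurd(\sp E\subsigrs04)$--\,compact sets. Combining this with the weak compactness of $\,\smb U$ just recorded, the \erm{DP\,}--\,property of $\,E$ yields that $\,\smb U$ maps the relatively $\,\taurd(\sp E\subsigrs04)$--\,compact set $\,K$ to a relatively $\,\taurd F$--\,compact set; as $\,\rng m=\smb U\ssp\image\sn K$, this is precisely the assertion. I expect the main obstacle to be the clean invocation of the \erm{DP\,}--\,property in this completed form, namely checking via the cited remark that the hypothesis $\,\smb U\ssp\image\bouSet E\inc\{\,B:B\ssp\text{ relatively }\ssp\taurd(\sp F\subsigrs00)\text{--\,compact}\ssp\}$ together with the relative weak compactness of $\,K$ is exactly what is needed, rather than the essentially routine construction and estimation of $\,\smb U\,$.
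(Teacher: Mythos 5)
Your proposal is correct and follows essentially the same route as the paper's own proof: extend $A\mapsto m\fvalue\ssn A$ to a continuous linear map $\mvLrs42^1(\ssp\mu\,,\spp\bosy K\ssp)\to F$ via the dense span of the indicator classes, verify relative weak compactness of $\{\KPt8\eightroman X\,A:A\in\dom\mu\KPt8\}$ by Lemma \ref{Le L^1_si-compa}, and conclude with the Dunford\,--\,Pettis property of $\,\mvLrs42^1(\ssp\mu\,,\spp\bosy K\ssp)$ together with reflexivity of $F$. Your explicit checks (well\ssp-\sp definedness on null sets, $\delta=\eps$ in the uniform integrability condition, and the remark permitting relatively weakly compact $\scrmt K$ for complete $E$) are exactly the points the paper leaves implicit.
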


\begin{proof} Let \math{\Omega=\sp\bigcup\ssp\dom\mu } and \mathss38{ E = 
 \mvLrs42^1(\ssp\mu\,,\spp\bosy K\ssp) }. Also 

putting \math{\roman x\,A=
(\ssp\Omega\spp\setminus A\ssp)\times\snn\{\ssp 0\ssp\}\cupss22
(\sp A\times\snn\{\ssp 1\ssp\}\sp\sbig)0} 

and \math{\eightroman X\,A=\uniqset\smb\Phii:
\roman x\,A\in\smb\Phii\in\vecs E} let \math{S} 
be the linear \mathss37{\sigrd E}--\,span 

of 
\mathss38{\{\KPt8\eightroman X\,A:
A\in\mu\invss44\image\spp\rbb R^+\sp\big\} }. Thus 
\math{S} is the set of all \math{\smb\Phii\in\vecs E} such that 
there is \math{\varphi\in\smb\Phii} with 
\math{\rng\varphi} finite. We know that 
\math{S} is \mathss37{\taurd E}--\,dense. Then 
we let \math{\smb V\aR 0} be the unique linear extension 
\math{\sigrd E_{\KPt8|\,S}\to\sigrd F } of 
\mathss38{\{\,(\ssp\eightroman X\,A
\,,\spp m\fvalue\ssn A\ssp):
A\in\mu\invss44\image\spp\rbb R^+\sp\big\} }, \,noting that by the assumptions 
on \math{m} we indeed get a linear map.

For finite functions \math{\bosy s\inc
\dom\mu\ar 1\sn\times\vecs\bosy K} with \math{\dom\bosy s} disjoint 
and 
\newline
\math{\varphi=
\sigrd\bosy K\expnota^\sp\aars A_1]_{vs}\text{\KPt8-}
\sum_{\,A\ssp\in\ssp\dom\sn\bmii8 s\,}
(\ssp\bosy s\fvalue\ssn A\ssp)\KP1\roman x\,A
\in\smb\Phii\in\vecs E } 

and for \math{u\in\Cal L\,(\sp F\spp,\spp\bosy K\ssp) } with \math{
\sup\KPt8(\ssp\Abrs00^1\circ\sp u\circss01\Nu\invss44\image\ssbb15 I)
\le 1 } we obtain \vskip.5mm

$|\KP1 u\circss00 \smb V\aR 0\sn\fvalue\smb\Phii\KP1|=
\big|\,
\sum_{\,A\ssp\in\ssp\dom\sn\bmii8 s\,}
(\ssp\bosy s\fvalue\ssn A\ssp)\KP1
(\ssp u\circss00 m\fvalue\ssn A\ssp)\KP1|\le
\sum_{\,A\ssp\in\ssp\dom\sn\bmii8 s\,}|\KP1
\bosy s\fvalue\ssn A\KP1|\KP1|\KP1
u\circss00 m\fvalue\ssn A\KP1| $ \inskipline1{11}

${}\le
\sum_{\,A\ssp\in\ssp\dom\sn\bmii8 s\,}|\KP1
\bosy s\fvalue\ssn A\KP1|\KP1
(\ssp\Nu\circss01 m\fvalue\ssn A\ssp)\le
\sum_{\,A\ssp\in\ssp\dom\sn\bmii8 s\,}|\KP1
\bosy s\fvalue\ssn A\KP1|\KP1
(\ssp\mu\fvalue\ssn A\ssp)=
\|\,\varphi\,\|\Lnorss33^1_{\aars\mu_1} \, $. \inskipline{.7}0

Consequently \math{\smb V\aR 0} has a unique continuous extension \mathss38{
\smb V\in\Cal L\,(\sp E\ssp,\spp F\ssp) }.

Taking \mathss38{K=\{\KPt8\eightroman X\,A:A\in\dom\mu\KPt8\} }, \,from Lemma \ref{Le L^1_si-compa} 
on page \pageref{Le L^1_si-compa} above we see that \math{K} is relatively \mathss37{
\taurd(\sp E\subsigrs04)}--\,compact. Since by 
Corollary \ref{Cor L^1 is D-S} on page \pageref{Cor L^1 is D-S} above 
\math{E} is a \erm{DP\,}--\,space, noting that by reflexivity of 
\math{F} all bounded sets in \math{F} are relatively 
\mathss37{\taurd(F\!\RHB{.25}{\subsigma})}--\,compact, we 
see that \math{\smb V\KPt8\image\sn K} is 
relatively \mathss37{\taurd F}--\,compact. Noting that also 
\math{\rng m\inc\smb V\KPt8\image\sn K} holds, 
we are done.
  \end{proof}


\Ssubhead D               Duality of Bochner spaces                       \label{Sec D}

Proceeding by a sequence of lemmas, we here give the proof of 
Theorem \nfss A\,\ref{main Th} on page \pageref{main Th} above. From now on 
untill the end of the proof of Lemma \nfss A\,\ref{final lemma} on 
page \pageref{endmpf} below, without further mention we let \math{p\,,\sp
\bosy K\ssp,\sp\vPi\sp,\sp\mu\,,\sp\Omega
\,,\sp F\sp,\sp F\aar 1\ssp,\sp\Iota } be as in 
Theorem \nfss A\,\ref{main Th}\sp. For short, we call this assumption 
together with the temporary shorthands below 
{\it Assumptions 
                 \hbox{\font\Å=cmssi9\ÅA}\sp}. From Corollary \ref{Coro Io inj etc} 
on page \pageref{Coro Io inj etc} above, we see that \math{\Iota} is an 
injective continuous linear map \mathss34{F\aar 1\to F\dlbetss10 }. Since by 
Theorem \ref{Th L_s^p Ba} and Corollary \ref{Cor L^p Ban} the spaces \math{
F\aar 1} and \math{F\dlbetss10} are \erm Banachable, by the open mapping 
theorem we only need to verify the surjectivity \mathss34{
\Cal L\,(\sp F\spp,\spp\bosy K\ssp)\inc\rng\Iota                          \label{page surj}
}. This we shall do separately for \mathss30{p=1} under (1) or (2) or (3) or 
(4) and for \math{1 < p < \plusinfty} under (5) or (6)\,.

Fixing a compatible norm \math{\Nu} 
for \math{\vPi} and letting \math{\Nu\aR 1} be the dual norm, we introduce the 
following shorthands \vskip.6mm

$\|\,\smb X\ssp\|\sNorF = \inf\sp\big\{\KPt8\|\KP1\Nu\circss01 x\KP1
 \|\Lnorss33^p_\mu\snn:x\in\smb X\,\} \hfill $ and \KP7 \vskip.4mm

$\|\,\smb Y\KPt8\|\sNorFp = \inf\sp\big\{\KPt8\|\KP1\Nu\ar 1\snn\circ\sp y\KP1
 \|\Lnorss50^{p^*}_\mu\snn:y\in\smb Y\KPp1.1\} \hfill $ and \KP7 \vskip.4mm

$\|\KPt8\smb U\,\| = \sup\ssp\big\{\KP1|\KP{1.1}\smb U\sp\fvalue\smb X\KPt9| : 
 \smb X\in\vecs F\ssp\text{ and }\ssp \|\,\smb X\ssp\|\sNorF\le 1\KPt9\} \hfill $ 
  and \KP7 \vskip.4mm

$\roman f\,u\,\xi=\uniqset\smb X:{}$ \inskipline0{18.5}

$(\ssp\Omega\setminus\snn\dom u\ssp)\times\snn\{\,\Bnull_\vPi\}\cupss22
\seqss33{((\ssp u\fvalue\eta\ssp)\KP1\xi\ssp)\svs\vPi\sn:\eta\in\dom u}
\in\smb X\in\vecs F \sp $. \inskipline{.6}0

Note that by the discussion after the proof of Lemma \ref{Le 0_{L^p}} on page \pageref{discus inf N = N} 
above we in fact have \math{\|\,\smb X\ssp\|\sNorF = 
 \|\KP1\Nu\circss01 x\KP1 \|\Lnorss33^p_\mu } for \mathss30{x\in\smb X\in
  \vecs F}.

\begin{Alemma}\label{LeA(1)}

If under {\,\rm Assumptions \nfss A} also {\,\rm(1)} holds{\ssp\rm, }then $\,
\Cal L\,(\sp F\spp,\spp\bosy K\ssp)\inc\rng\Iota\ssp$.
  \end{Alemma}

\begin{proof} Arbitrarily fix \math{ \smb U \in 
 \Cal L\,(\sp F\spp,\spp\bosy K\ssp) } and let \math{\scrmt A} and \math{
N\sprim1} be as in Definitions \ref{df decomp}\,(2) on page \pageref{decos A} 
above. Then let \math{\scrmt Y} be the set of all pairs \math{
(\sp A\ar 1\KPt2;\sp y\ar 1\sp,\spp S\ar 1) } with \mathss03{ A\ar 1 \in 
 \scrmt A} and \math{\rng y\ar 1\inc S\ar 1} and \math{S\ar 1} a separable 
closed linear subspace in \math{\vPi\dlbetss01} and such that for \math{
\mu\ar 1=\mu\KP1|\KP1\Pows A\ar 1} and \mathss30{m = \langle\ssp\seqss33{ 
 \smb U\fvalss11\lfloor\,^{1\sp,\ssp\aars\mu_1\spp,\ssp\vPi}\ssp\xi\sp\sbi A \snn 
 : \xi\in\vecs\vPi} : A \in \mu\ar 1\sn\inve\sp\image\spp\lbb R_+\ssp
 \big\rangle } we have \math{
(\ssp y\ar 1\KPt2;\spp\mu\ar 1\sp,\spp\vPi\dlbetss01\sp) } measurable and 
Pettis with \math{m\fvalue\ssn A\fvalss12\xi = 
 \int_{\,A}\ssp\roman{ev}\KPt2\sbi\xi\snn\circ\sp y\ar 1\rmdss01\mu } for all \linebreak \mathss03{
A\in\dom m} and \mathss31{\xi\in\vecs\vPi }. Now considering arbitrarily fixed \math{
A\ar 1\in\scrmt A} and choosing \mathss03{ \Nu\sprim2 = 
 \{\,(\ssp\xi\,,\sp t\KPt8\|\KPt8\smb U\,\|\sp\sbig)0 : 
 (\ssp\xi\,,\spp t\ssp)\in\Nu\KPt8\} } by Proposition \ref{Pro mA=int ev_x c mu} 
on page \pageref{Pro mA=int ev_x c mu} above we see that \mathss03{ \scrmt A 
 \inc \dom\scrmt Y } holds, and hence by the {\sl axiom of choice\sp} there is 
a function \mathss30{\scrmt Y\ar 1\inc\scrmt Y} with \mathss34{ \scrmt A \inc 
 \dom\scrmt Y\ar 1}. Let \mathss34{ y = (\ssp\Omega\setminus N\sprim1\sp)
 \times\snn\{\KPt8\vecs\vPi\times\snn\{\ssp 0\ssp\}\sp\}\cupss24
   \bigcup\ssp\dom\sn\rng\scrmt Y\ar 1}.

To verify that \math{y\in\bigcup\ssp\vecs F\aar 1} holds, it suffices to get \mathss38{
\|\KP1\Nu\aR 1\snn\circ\sp y\KP1\|\Lnorss33^\plusinftyy_\mu\le
 \|\KPt8\smb U\,\| }. This in turn follows if for every fixed \math{A\ar 1\in
 \scrmt A} we show existence of some \math{ N \in 
 \mu\invss44\image\snn\{\ssp 0\ssp\} } such that \math{
\Nu\aR 1\snn\circ\sp y\fvalue\eta\le\|\KPt8\smb U\,\| } holds for \mathss30{
\eta\in A\ar 1\ssn\setminus N}. Now for \math{A\in\dom\mu\capss22\Pows A\ar 1 } 
and \math{\xi\in\vecs\vPi} we have \math{
  \smb U\fvalss11\lfloor\,^{1\sp,\ssp\mu\sp,\ssp\vPi}\ssp\xi\sp\sbi A
= m\fvalue\ssn A\fvalss01\xi
= \int_{\,A}\ssp\roman{ev}\KPt2\sbi\xi\snn\circ\sp y\rmdss11\mu } and hence \vskip.2mm\centerline{$
   \big|\sp\int_{\,A}\ssp\roman{ev}\KPt2\sbi\xi\snn\circ\sp y\rmdss11\mu\KP1|
\le \|\KPt8\smb U\,\|\KP1(\ssp\Nu\fvalss02\xi\ssp)\KP1
    (\ssp\mu\fvalue\ssn A\ssp) \KP1 $.} \inskipline{.6}0

Then by Corollary \ref{Coro |f|<M} on page \pageref{Coro |f|<M} 
above for every \math{\xi\in\vecs\vPi} there is \math{ N\aar 1 \in 
 \mu\invss44\image\snn\{\ssp 0\ssp\} } such that \math{
|\KPp1.1 y\fvalue\eta\fvalss01\xi\KP1| \le
 \|\KPt8\smb U\,\|\KP1(\ssp\Nu\fvalss02\xi\ssp) } holds for \mathss34{ \eta 
 \in A\ar 1\ssn\setminus N\aar 1 }. 

Now taking \math{S\ar 1=
\roman{pr}\ar 2\circ\scrmt Y\ar 1\ssn\fvalue\ssn A\spp\ar 1 } in place of \math{
S} in Lemma \ref{Le Nu_1 = sup ...} on page \pageref{Le Nu_1 = sup ...} above, 
let \math{D} be as given there. Then considering fixed \math{\xi\in D} we find \math{
N\aar 1\in\mu\invss44\image\snn\{\ssp 0\ssp\} } with \math{
|\KPp1.1 y\fvalue\eta\fvalss01\xi\KP1| \le
 \|\KPt8\smb U\,\|\KP1(\ssp\Nu\fvalss02\xi\ssp) } for all \mathss34{ \eta \in 
 A\ar 1\ssn\setminus N\aar 1}. By {\sl countable choice\sp} taking as \math{N} 
the union of these \math{N\aar 1} we get \math{
|\KPp1.1 y\fvalue\eta\fvalss01\xi\KP1| \le
 \|\KPt8\smb U\,\|\KP1(\ssp\Nu\fvalss02\xi\ssp) } for all \math{ \eta \in 
 A\ar 1\ssn\setminus N } and \mathss34{\xi\in D}. Now having \mathss38{
\Nu\aR 1\sn\circ\sp y\fvalue\eta = 
 \sup \KPt8(\ssp\Abrs00^1\circ\sp(\ssp y\fvalue\eta\ssp)\spp\image\sn D\ssp)
 \le\|\KPt8\smb U\,\|} for all \mathss30{\eta\in A\ar 1\ssn\setminus N}, \,the 
assertion follows.

Thus having \math{y\in\bigcup\ssp\vecs F\aar 1} there is \math{\smb Y} with \mathss34{
y\in\smb Y\in\vecs F\aar 1}. To proceed, we first note that we now have \math{
m\fvalue\ssn A\fvalss12\xi =                                              \label{Le A2 final ded}
 \int_{\,A}\ssp\roman{ev}\KPt2\sbi\xi\snn\circ\sp y\rmdss11\mu } for all \math{A\in
 \mu\invss44\image\spp\rbb R^+} and \mathss31{\xi\in\vecs\vPi}. To see this, 
let \math{\scrmt C=\scrmt A\capss31\{\,A\spp\ar 1\ssn:A\spp\ar 1\snn\cap\sp A
 \in\mu\invss44\image\spp\rbb R^+\sp\big\} } and \mathss36{N = N\sprim1\cup\ssp
 \bigcup\KP1(\sp\scrmt A\sp\setminus\scrmt C\ssp)\capss21 A }. Then \math{
\scrmt C} is countable since otherwise \math{A\in\mu\invss44\image\spp\rbb R^+} 
would be contradicted. In addition \math{ N \in \bigcup\KPt8\{\KPt8
 \Pows N\aar 1\sn:N\aar 1\in\mu\invss44\image\snn\{\ssp 0\ssp\} \sp\} } holds 
with \mathss30{ A = \bigcup\KP1(\sp\scrmt C\leiss31 A\ssp)\cupss21 N}. \hfill 
Now by do- \linebreak minated convergence we obtain \inskipline{.6}{20.2}

$ m\fvalue\ssn A\fvalss12\xi
= \smb U\fvalss11\lfloor\,^{1\sp,\ssp\mu\sp,\ssp\vPi}\ssp\xi\sp\sbi A
= \sum_{\,\aars A_1\sp\in\ssp\scrm7 A\,}(\ssp
   \smb U\fvalss11\lfloor\,^{1\sp,\ssp\mu\sp,\ssp\vPi}\ssp\xi\sp\sbi
        {\aars A_1\capss25 A}\sp\sbig)0 $ \inskipline{.6}{31}

${}
= \sum_{\,\aars A_1\sp\in\ssp\scrm7 A\,}
   m\fvalue(\sp A\spp\ar 1\sn\cap\sp A\ssp)\fvalss01\xi $ \inskipline{.4}{31}

${}
= \sum_{\,\aars A_1\sp\in\ssp\scrm7 A\ssp}\int_{\,\aars A_1\capss25 A}\ssp
        \roman{ev}\KPt2\sbi\xi\snn\circ\sp y\rmdss11\mu
= \int_{\,A}\ssp\roman{ev}\KPt2\sbi\xi\snn\circ\sp y\rmdss11\mu \,$. \inskipline{.8}0

Then by Lemma \ref{Le-first} on page \pageref{Le-first} above we have \vskip.3mm\centerline{$
U=
\vecs F\snn\times\mathbb C\capss31\{\,(\ssp\smb X\sp,\spp t\ssp) : 
 \aall{x\in\smb X}\,
     t = \int_{\KPp1.1\Omega\,}y\,.\KPt8 x\rmdss11\mu\KPt9\} $} \inskipline{.5}0

and hence \math{ \smb U = \Iota\snn\fvalue\sp\smb Y} holds and so \math{
\smb U\in\rng\Iota} is established.
  \end{proof}

\begin{Alemma}\label{LeA(2)}

If under {\,\rm Assumptions \nfss A} also {\,\rm(2)} holds{\ssp\rm, }then $\,
\Cal L\,(\sp F\spp,\spp\bosy K\ssp)\inc\rng\Iota\ssp$.
  \end{Alemma}

\begin{proof}  Given \mathss38{\smb U
\in\Cal L\,(\sp F\spp,\spp\bosy K\ssp) }, \,letting \vskip.5mm\centerline{$
\smb V=
\ssp\big\langle\KPt8
\{\,(\ssp\xi\,,\spp\smb U\fvalss22\roman f\KPt8\varphi\KP1\xi
\ssp):\vPi:\xi\in\vecs\vPi\ssp\text{ and }\ssp\varphi\in\smb\Phii\,\}
:
\smb\Phii \in 
 \vecs\sn\mvLrs42^1(\ssp\mu\,,\spp\bosy K\ssp) 
\KP1\big\rangle \KP1 $,} \inskipline{.5}0

we easily see \math{\smb V \in 
 \Cal L\,(\ssp\mvLrs42^1(\ssp\mu\,,\spp\bosy K\ssp)\,,\spp\vPi\dlbetss01\sp) } 
to hold. Hence by 
Proposition \ref{Pro Edw 8.17.6} on page \pageref{Pro Edw 8.17.6} above there 
exists some \math{y \in \bigcup\ssp\vecs\sn
 \mvsLrs23^\plusinftyy(\ssp\mu\,,\spp\vPi\dlsigss00\spp) } such that \vskip.5mm\centerline{$
\smb U\fvalss22\roman f\KPt8\varphi\KP1\xi
=\smb V\fvalss60\smb\Phii\fvalss00\xi=
\int_{\KP{1.1}\Omega\,} 
 \roman{ev}\KPt2\sbi\xi\snn\circ\sp y\cdot\varphi\rmdss21\mu $} \inskipline{.5}0

holds for \math{\varphi\in\smb\Phii\in\dom\smb V} and \mathss31{ \xi \in 
 \vecs\vPi }. Noting that from \mathss30{ y \in \bigcup\ssp\vecs\sn
 \mvsLrs23^\plusinftyy(\ssp\mu\,,\spp\vPi\dlsigss00\spp) } we directly get \math{
\|\KP1\Nu\aR 1\snn\circ\sp y\KP1\|\Lnorss33^\plusinftyy_\mu < \plusinfty } now 
Lemma \ref{Le-first} gives the conclusion similarly as in the proof of Lemma \nfss A\,\ref{LeA(1)} 
  above.
  \end{proof}

\begin{Alemma}\label{LeA(3)}

If under {\,\rm Assumptions \nfss A} also {\,\rm(3)} holds{\ssp\rm, }then $\,
\Cal L\,(\sp F\spp,\spp\bosy K\ssp)\inc\rng\Iota\ssp$.
  \end{Alemma}

\begin{proof} Given \mathss38{\smb U
\in\Cal L\,(\sp F\spp,\spp\bosy K\ssp) }, \,define $\smb V:
\mvLrs42^1(\ssp\mu\,,\spp\bosy K\ssp)\to\vPi\dlbetss01$ by 
$\smb V\fvalss60\smb\Phii\fvalss00\xi=
\smb U\fvalss22\roman f\KPt8\varphi\KP1\xi$ for 
$\varphi \in \smb\Phii \in 
 \vecs\sn\mvLrs42^1(\ssp\mu\,,\spp\bosy K\ssp)$ and $\,\xi\in\vecs\vPi\sp$. 
Then by 
Proposition \ref{Pro Edw 8.17.8} on page \pageref{Pro Edw 8.17.8} above there 
is $y \in \bigcup\ssp\vecs\sn
 \mvsLrs23^\plusinftyy(\ssp\mu\,,\spp\vPi\dlsigss00\spp)$ with 
$\smb U\fvalss22\roman f\KPt8\varphi\KP1\xi
=\smb V\fvalss60\smb\Phii\fvalss00\xi=
\int_{\KP{1.1}\Omega\,} 
 \roman{ev}\KPt2\sbi\xi\snn\circ\sp y\cdot\varphi\rmdss21\mu$ . 
The rest proceeds as in the proof of Lemma \nfss A\,\ref{LeA(1)} above.
  \end{proof}

\begin{Alemma}\label{LeA(4)}

If under {\,\rm Assumptions \nfss A} also {\,\rm(4)} holds{\ssp\rm, }then $\,
\Cal L\,(\sp F\spp,\spp\bosy K\ssp)\inc\rng\Iota\ssp$.
  \end{Alemma}

\begin{proof}                                                          \newcommand\erU{\hbox{\font\Å=cmr8\ÅU}\kern.7mm}%
              Arbitrarily fix \mathss38{\smb U \in 
\Cal L\,(\sp F\spp,\spp\bosy K\ssp) }. Putting \math{ G = 
 \mvLrs23^\plusinftyy(\ssp\mu\,,\spp\bosy K\ssp) } and letting \math{c} be as 
given by (4) in Theorem \nfss A\,\ref{main Th} let \math{c\ar 1 = c } if \math{
\bosy K=\tfbbR} holds and in the complex case let \math{ c\ar 1 = 
 \Iota\ar 3\circ\sp(\ssp c\ftimes c\ssp)\circ\sp\Iota\ar 2} where with \math{
S = \vecs\mLrs23^\plusinftyy(\ssp\mu\sp) \times
    \vecs\mLrs23^\plusinftyy(\ssp\mu\sp) } we have \inskipline{.5}{10}

$\Iota\ar 3 = \{\,(\ssp x\ssp,\spp y\ssp,\spp x + \imag\KPt8 y\ssp) : 
  x\ssp,\sp y\in\vecs\lll^\plusinftyy\sp(\sp\Omega\sp)\KPt8\} \KP{33} $ and \inskipline{.2}{10}

$\Iota\ar 2 = \vecs G\times S\capss31\{\,(\ssp\smb\Psii\,;\sp
  \smb\Psii\ar 1\sp,\spp\smb\Psii\aR 2\spp) : \aall{\psi\ar 1\in\smb\Psii\ar 1\KPt2
 ,\sp\psi\ar 2\in\smb\Psii\ar 2}\,
     \psi\ar 1\snn + \imag\KPt8\psi\ar 2\in\smb\Psii\,\} \KP1 $. \inskipline{.5}0

Then \math{c\ar 1} is a continuous linear choice function \math{G\to
\lll^\plusinftyy\sp(\ssp\Omega\,,\spp\bosy K\ssp) } and hence there is some \math{
\smb A\in\lbb R_+} with the property that \math{
    \|\KP1 c\fvalss01\smb\Psii\KP1\|\lllnor_\plusinftyy 
\le \smb A\KP1\|\,\psi\,\|\Lnorss33^\plusinftyy_\mu } holds for \mathss30{
\psi\in\ssn} \mathss02{\smb\Psii\in\vecs G}. Further let \math{\Iota\ar 1} be 
as in Proposition \ref{Pro L^1'=L^i} on page \pageref{Pro L^1'=L^i} above. 
Then for \mathss30{ E = \ssn } \mathss03{\mvLrs42^1(\ssp\mu\,,\spp\bosy K\ssp) } 
with \math{\erU\xi = \vecs E\times\ssbb00 C\capss41\{\,
 (\ssp\smb\Phii,\spp t\ssp) : \aall{\varphi\in\smb\Phii}\, t = 
 \smb U\fvalss11\roman f\,\varphi\KP1\xi\KPt9\} } we obtain \math{ \smb V \in \ssn } \mathss03{
 \Cal L\,(\sp\vPi\sp,\sp\lll^\plusinftyy\sp(\ssp\Omega\,,\spp\bosy K\ssp)) } 
by taking \mathss38{\smb V = c\ar 1\sn\circ\sp\Iota\ar 1\sn\inve\circ\sp
 \seqss33{\erU\xi:\xi\in\vecs\vPi} }.

Now for \math{\xi\in\vecs\vPi} and \math{\varphi\in\smb\Phii\in\vecs E} we 
have \math{\smb V\fvalss50\xi\in\Iota\ar 1\sn\inve\sp\fvalue\sp\erU\xi } and 
hence \mathss30{\smb U\fvalss11\roman f\,\varphi\KP1\xi } \mathss03{ = 
 \erU\xi\sp\fvalss10\smb\Phii = 
 \int_{\KP{1.1}\Omega}\sp\smb V\fvalss50\xi\cdot\varphi\rmdss11\mu }. Taking \inskipline{.5}{12}

$\smb B = \inf\,\{\KPt8\sup\ssp\big\{\,\big|\sp\int_{\KPp1.1\Omega\,}
          \psi\cdot\varphi\rmdss21\mu\KP1| : \varphi\in\bigcup\ssp\vecs E\ssp\text{ 
 and }\ssp \|\,\varphi\,\|\Lnorss33^1_\mu\le 1\KPt8\}$ \inskipline{.5}{58.6}

${} : \psi\in\bigcup\ssp\vecs G\ssp\text{ and }\ssp
  \|\,\psi\,\|\Lnorss33^\plusinftyy_\mu = 1\KPt8\} \KP1 $, \inskipline{.5}0

we have \math{\smb B\in\rbb R^+} unless \math{\Omega} is \mathss37{\mu
}--\,negligible in which case the assertion of the lemma to be proved 
trivially holds. Then for \math{\xi\in\vecs\vPi} we get \inskipline{.4}4

(\sp$*$\sp) \ $\|\KP1\smb V\fvalss50\xi\KP1\|\lllnor_\plusinftyy \le
            \smb A\KP1\|\KP1\Iota\ar 1\sn\inve\sp\fvalue\sp\erU\xi
 \KP1\|\Lnorss33^\plusinftyy_\mu \le \smb A\,\smb B^{\ssp\mminus 1}\,
 \|\KPt8\smb U\,\|\KP1(\ssp\Nu\fvalss12\xi\ssp) \KP1 $. \vskip.5mm

Now taking \mathss38{ y = 
\seqss33{\roman{ev}\sp\sbi{\eta}\snn\circ
\smb V
:\eta\in\Omega} }, \,trivially \math{
(\KPt5 y\,;\spp\mu\,,\spp\vPi\dlsigss00\spp) } is finitely almost sca- larly 
measurable, and by (\sp$*$\sp) above having \math{
\|\KP1\Nu\aR 1\snn\circ\sp y\KP1\|\Lnorss33^\plusinftyy_\mu \le
 \smb A\,\smb B^{\ssp\mminus 1}\,\|\KPt8\smb U\,\| } we get \mathss02{ y \in 
 \bigcup\ssp\vecs F\aar 1 }. Then the conclusion follows from Lemma \ref{Le-first} 
  similarly as above.
  \end{proof}

As opposed to the case (1) in Lemma \nfss A\,\ref{LeA(1)} above, note that in 
the cases (2) and (3) and (4) in Lemmas \nfss A\,\ref{LeA(2)} and 
\nfss A\,\ref{LeA(3)} and \nfss A\,\ref{LeA(4)} we only got \math{
\|\,\smb Y\KPt8\|\sNorFp\le\smb A\KP1\|\KP1\Iota\snn\fvalue\sp\smb Y\KPp1.2\| } 
for all \mathss03{\smb Y\in\vecs F\aar 1} for some \math{\smb A} with \math{
1\le\smb A<\plusinfty} and possibly \math{1<\smb A}.

\begin{Alemma}\label{final lemma}

If under {\,\rm Assumptions \nfss A} also 
{\,\rm(5)} or {\,\rm(6)} holds{\ssp\rm, } \inskipline{.2}{54.3}

then $\,\Cal L\,(\sp F\spp,\spp\bosy K\ssp)\inc\rng\Iota\ssp$.
  \end{Alemma}

\begin{proof} Since the verification is quite long ending on page \pageref{endmpf} 
below, we devide it into Steps 1\ssp$,\ldots\,$4\ssp. Now, arbitrarily fixing \math{
\smb U \in \Cal L\,(\sp F\spp,\spp\bosy K\ssp) } let \vskip.5mm\centerline{$
m = \langle\ssp\seqss33{ \smb U\fvalss11
     \lfloor\,^{p\sp,\ssp\mu\sp,\ssp\vPi}\ssp\xi\sp\sbi A\snn : \xi \in 
   \vecs\vPi} : A\in\mu\invss44\image\spp\lbb R_+\ssp\big\rangle \KP1 $.}

\Step 1.0 We first show that \math{m} has bounded \mathss37{
          \mu}--$\KP{1.5} ^{p\sast}\ssp$variation in \mathss34{\vPi\dlbetss01
}. Indeed, we show that \math{
\sum_{\,A\ssp\in\ssp\scrm7 A\,}( (\ssp\mu\fvalue\ssn A\ssp)
 \KP1^{1\ssp-\,p\sast}\ssp(\ssp\Nu\aR 1\sn\circ\sp m\spp\fvalue\sn A\ssp)
  \KP1^{p\sast}\sp\sbig)0
\le\|\KPt8\smb U\,\|\KP1^{p\sast} } holds for arbitrarily given finite 
disjoint \mathss30{\scrmt A\inc\mu\invss44\image\spp\rbb R^+}. In order to get 
this, we first note that for arbitrarily given \math{
\bosy\xi\in\sp^{\scrm7 A}\,(\ssp\Nu\invss44\image\ssbb15 I) } it suffices to 
show that \vskip.0mm\centerline{$
\big(\sp
\sum_{\,A\ssp\in\ssp\scrm7 A\,}( (\ssp\mu\fvalue\ssn A\ssp)
 \KP1^{1\ssp-\,p\sast}\ssp
|\KPp1.1 m\,.\KPt9\bosy\xi\KPt4\fvalue\KN{.4}A
\KP1|\KP1^{p\sast}\ssp\sbig)0\sbig)0\KP1^{p\sast\sp^{-1}}\sn\le\sp
\|\KPt8\smb U\,\| $} \inskipline{.5}0

holds since otherwise we could easily get a contradiction. 

$\null\hfill$
In order to get this, taking 
\math{s=p^{\,*\sn}-1} and with the short- 
  \linebreak
hand \math{
\roman v\ssp A=(\ssp\mu\fvalue\ssn A\ssp)
 \KP1^{p\sast\sp^{-1}\spp -\ssp 1}\,(\ssp
 m\,.\KPt9\bosy\xi\KPt4\fvalue\KN{.4}A\ssp) } putting \math{v=
\seqss33{\roman v\ssp A:A\in\scrmt A} } we have 
\math{v\in\sp^{\scrm7 A}\,\ssbb10 C } and we need to show that \math{
\|\,v\,\|\lllnor_{p\sast}\le\|\KPt8\smb U\,\| } holds. We may assume that 
\math{\|\,v\,\|\lllnor_{p\sast}\not=0 } holds, and then taking \math{u=
\seqss33{\roman u\,A:A\in\scrmt A} } where 
\math{\roman u\,A=
\|\,v\,\|\lllnor_{p\sast}\snn^{\mminus\emath s}\,
|\KP1\roman v\ssp A\KP1|\KPt8^{\emath s\ssp - \ssp 1}\,
\overline{\roman v\ssp A\RHB{.0}{\KN{.99}\phantom{'}}}
} if \math{\roman v\ssp A\not=0} holds, otherwise having 
\mathss36{\roman u\,A=0}, \,we now have \math{
\|\,u\,\|\lllnor_p=1 } and \vskip.5mm\centerline{$
\big|\sp\sum\KP1(\ssp u\cdot v\ssp)\KP1|=
\sum\KP1(\ssp u\cdot v\ssp)=
\sum_{\,A\ssp\in\ssp\scrm7 A\,}(\ssp\roman u\,A\KP1\roman v\ssp A\ssp)=
\|\,v\,\|\lllnor_{p\sast} \KP1 $.} \inskipline{.3}0

Furthermore, with the shorthand \math{ \roman t\,A = \roman u\,A\KP1(\ssp
 \mu\fvalue\ssn A\ssp)\,^{\mminus p}\LHB{.2}{^{^{-1}}} } we have \inskipline1{13}

$ \big|\sp\sum\KP1(\ssp u\cdot v\ssp)\KP1|
= \big|\sp\sum_{\,A\ssp\in\ssp\scrm7 A\,}(\ssp
   \roman u\,A\KP1\roman v\ssp A\ssp) \KP1 | $ \inskipline{.6}{31.5}

${} = \big|\sp\sum_{\,A\ssp\in\ssp\scrm7 A\,}(\,\roman t\,A\KP1(\ssp
       \mu\fvalue\ssn A\ssp)\KP1^p\LHB{.2}{^{^{-1}}} (\ssp
       \mu\fvalue\ssn A\ssp)\KP1^{p\sast\sp^{-1}\spp -\ssp 1}\,(\ssp
        m\,.\KPt9\bosy\xi\KPt4\fvalue\KN{.4}A\ssp )\ssp )\KP1 | $ \inskipline{.6}{31.5}

${} = \big|\sp\sum_{\,A\ssp\in\ssp\scrm7 A\,}(\,\roman t\,A\KP1(\ssp
      m\,.\KPt9\bosy\xi\KPt4\fvalue\KN{.4}A\ssp )\ssp )\KP1 |$ \inskipline{.6}{31.5}

${} = \big|\sp\sum_{\,A\ssp\in\ssp\scrm7 A\,}(\,\roman t\,A\KP1(\ssp
       \smb U\fvalss20
        \lfloor\,^{p\sp,\ssp\mu\sp,\ssp\vPi}\ssp(\ssp
         \bosy\xi\KPt4\fvalue\KN{.4}A\ssp)\sp\sbi A\ssp))\KP1|$ \inskipline{.6}{31.5}

${} = |\KP1\smb U\fvalue(\ssp\sigrd F\text{\KPt8-}
       \sum_{\,A\ssp\in\ssp\scrm7 A\,}\roman t\,A\KP1
        \lfloor\,^{p\sp,\ssp\mu\sp,\ssp\vPi}\ssp(\ssp
         \bosy\xi\KPt4\fvalue\KN{.4}A\ssp)\sp\sbi A\,|$ \inskipline{.6}{31.5}

${}\le\|\KPt8\smb U\,\|\KP1\|\KP1\sigrd F\text{\KPt8-}
       \sum_{\,A\ssp\in\ssp\scrm7 A\,}\roman t\,A\KP1
        \lfloor\,^{p\sp,\ssp\mu\sp,\ssp\vPi}\ssp(\ssp
         \bosy\xi\KPt4\fvalue\KN{.4}A\ssp)\sp\sbi A\,\|\sNorF$ \inskipline{.6}{31.5}

${}\le\|\KPt8\smb U\,\|\KP1\big(\sp\sum_{\,A\ssp\in\ssp\scrm7 A}\sp\sbig(3
       |\KP1\roman t\,A\KP1|\RHB{.3}{\KP1^p}\,(\ssp
         \mu\fvalue\ssn A\ssp)))\KP1^p\LHB{.2}{^{^{-1}}} $ \inskipline{.6}{31.5}

${} = \|\KPt8\smb U\,\|\KP1\|\,u\,\|\lllnor_p=\|\KPt8\smb U\,\| \KP1 
      $, \,giving the assertion.

\Step 2.0 Noting that the requirement of absolute continuity holds since we 
trivially have \math{\Nu\aR 1\sn\circ\sp m\spp\fvalue\sn A \le 
 \|\KPt8\smb U\,\|\KP1(\ssp\mu\fvalue\ssn A\ssp)\KPt8\RHB{.2}{^p}{^{^{\sp-1}}} } 
for any \math{A\in\mu\invss44\image\spp\lbb R_+}, \,now let \math{\scrmt A} 
and \math{y} be as given by Corollary \ref{Coro q-var} on page \pageref{Coro q-var} 
above. Then we have \math{(\KPt5 y\,;\spp\mu\,,\spp\vPi\dlsigss00\spp) } simply 
measurable and such that \math{y\fvalue\eta=\vPi\times\snn\{\ssp 0\ssp\} } 
holds for \math{\eta\in\Omega\sp\setminus\bigcup\,\scrmt A } and such that we 
also have \math{m\fvalue\ssn A\fvalss12\xi = 
 \int_{\,A}\ssp\roman{ev}\KPt2\sbi\xi\snn\circ\sp y\rmdss11\mu } for \math{
A\spp\ar 1 \in\scrmt A} and \math{A\in\dom\mu\capss33\Pows A\ar 1} and \mathss31{
\xi\in\vecs\vPi}. In addition \math{
(\KPt5 y\,;\spp\mu\,,\spp\vPi\dlbetss01\sp) } is simply measurable if \math{
\vPi} is reflexive.

\Step 3.0 Under (5) or (6) to prove that \math{
\|\KP1\Nu\aR 1\sn\circ\sp y\KP1\|\Lnorss40^{p\sast}_\mu\le\|\KPt8\smb U\,\| } 
holds, noting that in the reflexive case now \math{
(\ssp\Abrs33^{p\sast}\KN1\circ\KPt2\Nu\aR 1\sn\circ\sp y\,;\spp
 \mu\,,\sn\tfbbR\ssp) } is trivially measurable, and that by Lemma \ref{Le Nu_1 ci y meas} 
on page \pageref{Le Nu_1 ci y meas} above the same holds also in the separable 
case, it suffices to show that \math{\int_{\KPp1.1\Omega\,}\Abrs33^{p\sast}\KN1
 \circ\KPt2\Nu\aR 1\sn\circ\sp y\rmdss11\mu \le 
 \|\KPt8\smb U\,\|\RHB{.2}{\KP1^{p\sast}} } holds. For this in turn for every 
fixed $A\sp\ar 0\in\mu\invss44\image\spp\rbb R^+\sp$ it suffices to show that \math{
\int_{\,\aars A_0\sp}(\ssp\Nu\aR 1\sn\circ\sp y\fvalue\eta\ssp)
  \RHB{.2}{\KP1^{p\sast}}\ssn\rmdss01\mu\,(\sp\eta\sp)
 \le\|\KPt8\smb U\,\|\RHB{.2}{\KP1^{p\sast}} } holds. 

Now we can express \math{A\sp\ar 0} as the union of an increasing sequence of 

$A\in\mu\invss44\image\spp\rbb R^+$ such that 
$\Nu\aR 1\sn\circ\sp y$ is bounded on every $A\,$, 

say \math{\Nu\aR 1\sn\circ\sp y\sp\image\ssn A\inc
 [\KP{1.1} 0\,,\spp\smb M\KPt9] } with \mathss30{\smb M\in\rbb R^+}, \,and 

it further suffices to show that for every such $A$ with 
$0 < \mu\fvalue\ssn A$ we have

$
\int_{\,A\KPt8}(\ssp\Nu\aR 1\sn\circ\sp y\fvalue\eta\ssp)
  \RHB{.2}{\KP1^{p\sast}}\ssn\rmdss01\mu\,(\sp\eta\sp)
\le\|\KPt8\smb U\,\|\RHB{.2}{\KP1^{p\sast}} \sp$.

To proceed indirectly, supposing that 

$\|\KPt8\smb U\,\|\RHB{.2}{\KP1^{p\sast}}
<\int_{\,A\KPt8}(\ssp\Nu\aR 1\sn\circ\sp y\fvalue\eta\ssp)
  \RHB{.2}{\KP1^{p\sast}}\ssn\rmdss01\mu\,(\sp\eta\sp)$ holds, we let 

$\eps=
 \frac 14\KP1
(\ssp\mu\fvalue\ssn A\ssp)
\RHB{.2}{\,^{\mminus p^{\sp-1}}}
\big(\sp
\int_{\,A\KPt8}(\ssp\Nu\aR 1\sn\circ\sp y\fvalue\eta\ssp)
  \RHB{.2}{\KP1^{p\sast}}\ssn\rmdss01\mu\,(\sp\eta\sp)
)
  \RHB{.2}{\KP1^{p\sast\ssp^{-1}}}
\snn - \|\KPt8\smb U\,\|\ssp\sbig)0 \KP1 $. \vskip1mm

Since \math{\Nu\aR 1\sn\circ\sp y\KP1|\KPt9 A } is positive \mathss37{\mu
}--\,measurable with \mathss35{
\sup\KPt8(\ssp\Nu\aR 1\sn\circ\sp y\ssp\image\ssn A\ssp) < \plusinfty}, \,we 
can find a finite partion \math{\scrmt A\sp\ar 0\inc\dom\mu} of \math{A} such 
that \math{ |\KP{1.2}\Nu\aR 1\sn\circ\sp y\fvalue\eta - 
                     \Nu\aR 1\sn\circ\sp y\fvalue\eta\ar 1\,| < \eps } holds 
for all \mathss36{\eta\ssp,\sp\eta\ar 1\in A\ar 1\in\scrmt A\sp\ar 0 }. Taking \math{
S=\Nu\invss44\image\snn\{\ssp 1\ssp\} } and \vskip.4mm\centerline{$
 P = 
A\times S\capss31\{\,(\ssp\eta\ssp,\spp\xi\ssp):
0 \le y\fvalue\eta\fvalss01\xi \ssp \text{ and }\ssp
\Nu\aR 1\sn\circ\sp y\fvalue\eta 
< y\fvalue\eta\fvalss01\xi + \eps \KPt9\} \KP1 $,} \inskipline{.4}0

we first see that \math{A\inc\dom P} holds. In the reflexive case letting \math{
S\ar 0} be the closed linear span in \math{\vPi\dlbetss01} of \math{\rng y} we 
take \math{\scrmt T\aR 1=\taurd(\sp\vPi\dlbetss01\sp)\leiss42(\ssp
 \Nu\aR 1\sn\inve\ssp\image\sp[\KPp1.1 0\,,\spp\smb M\KPt9]\capss42 
S\ar 0\spp) } whereas in the separable case we put \mathss38{\scrmt T\aR 1 = 
\taurd(\sp\vPi\dlsigss00\spp)
\leiss42(\ssp\Nu\aR 1\sn\inve\ssp\image\sp[
\KP{1.1} 0\,,\spp\smb M\KPt9]\sp
\sbig)0 }. Noting that in both cases now 
\math{\scrmt T\aR 1} is a separable and metrizable and hence 
second countable topology, we find some 
$\bmii8 U\in\sp^{\sbbNo}\,\scrmt T\aR 1$ with 
$y\sp\image\ssn A\inc\bigcup\,\rng\ssp\bmii8 U$ and 
such that for every 
$U\in\rng\bmii8 U$ there are $\xi\,,\sp\eta$ with 
$(\ssp\eta\ssp,\spp\xi\ssp)\in P$ and 
\mathss38{U\inc\Cal L\,(\sp\vPi\sp,\spp\bosy K\ssp)
\capss31\{\,\zeta:
|\KP{1.1}(\ssp\zeta - y\fvalue\eta\ssp)\fvalss01\xi\KP{1.1}| 
< \eps\KPt8\} }.

We next fix some bijection $
\bmii8 A\sp\ar 0:k\to\scrmt A\sp\ar 0$ with $k\in\bbN\,$ and construct the 
countable finite or infinite sequence $\bmii8A$ as follows. Indeed, we first
 let $\bmii8 A\ar 1$ be the infinite 
sequence of possibly empty finite sequences obtained as follows. For 
every fixed $i\in\bbNo$ with 
$B=y\invss46[\KP{1.2}
\bmii8 U\fvalss51 i\sp\setminus\bigcup\KP1(\ssp\bmii8 U\KPt8\image\spp i\ssp)
\KP{1.1}]$ let 
$\bmii8 A\ar 1\sn\fvalue\sp i$ with 
$l\in\bbNo$ be the unique bijection 
$l\to\scrmt A\sp\ar 0\leiss02 B\sp\setminus 1\sp\adot$ ordered by 
$\bmii8 A\ar 0 \,$. Then let 
$\bmii8 A$ be the infinite concatenation of 
$\bmii8 A\ar 1 \ssp $. Now 
$\bmii8 A$ is injective with $\rng\bmii8 A\inc
\dom\mu\setminus 1\sp\adot$ and such that 
$\rng\bmii8 A$ is a partition of $A$ refining $\scrmt A\sp\ar 0\,$, i.e.\ for 
every $i\in\dom\bmii8 A$ there is $A\ar 1\in\scrmt A\sp\ar 0$ with 
$\bmii8 A\fvalss51 i\inc A\ar 1 \ssp$. Possibly by 
{\sl countable choice\sp} we 
take any 
$\bosy\eta\in\prodc\bmii8 A$ and any $\bosy\xi\in\sp^{\dom\sn\bmii6 A}\,S$ 
such that 
$(\ssp\bosy\eta\fvalss01 i\ssp,\spp\bosy\xi\fvalss01 i\ssp)
\in P$ holds for all $i\in\dom\bmii8 A \,$. Now by construction 

$|\KP{1.1}(\ssp y\fvalue\eta - y\fvalue\eta\ar 1)\fvalss01\xi\KP{1.1}| 
< \eps \,$ and $\,
\Nu\aR 1\sn\circ\sp y\fvalue\eta\ar 1 
< y\fvalue\eta\ar 1\sn\fvalue\sp\xi + \eps 
\,$ 
and $0\le y\fvalue\eta\ar 1\sn\fvalue\sp\xi\,$ hold 

whenever we have
$(\ssp i\ssp,\spp A\ar 1)\in\bmii8 A$ and $\eta\in A\ar 1$ and 
$(\ssp i\ssp,\spp\eta\ar 1)\in\bosy\eta$ and 
$(\ssp i\ssp,\spp\xi\ssp)\in\bosy\xi \,$.

With \math{N\aar 0=\dom\bmii8 A} we next compute \vskip1mm

$
\int_{\,A\KPt8}(\ssp\Nu\aR 1\sn\circ\sp y\fvalue\eta\ssp)
  \RHB{.2}{\KP1^{p\sast}}\ssn\rmdss01\mu\,(\sp\eta\sp)
=\sum_{\KPt8 i\ssp\in\ssp\aars N_0}\spp
\int_{\,\bmii6 A\snn\ffvalue i\KPt8}(\ssp\Nu\aR 1\sn\circ\sp y\fvalue\eta\ssp)
  \RHB{.2}{\KP1^{p\sast}}\ssn\rmdss01\mu\,(\sp\eta\sp) $ \inskipline{.7}{27}

${}\le
\sum_{\KPt8 i\ssp\in\ssp\aars N_0}\ssp
(\ssp\Nu\aR 1\sn\circ\sp y\circss00\bosy\eta\fvalss01 i + \eps\ssp)
  \RHB{.2}{\KP1^{p\sast}}\ssp
(\ssp\mu\circss10\bmii8 A\fvalss01 i\ssp)$ \inskipline{.7}{20}

${}\le
\sum_{\KPt8 i\ssp\in\ssp\aars N_0}\ssp
(\ssp y\circss00\bosy\eta\fvalss01 i
\fvalss20(\ssp\bosy\xi\fvalss11 i\ssp) + 2\KPt8\eps\ssp)
  \RHB{.2}{\KP1^{p\sast}}\ssp
(\ssp\mu\circss10\bmii8 A\fvalss01 i\ssp)$ \vskip.7mm\centerline{$
{}=\lim\sbi{\sp\ssmb N\ssp\to\ssp\infty\ssp}
\sum_{\KPt8 i\ssp\in\ssp\ssmb N}\,
(\ssp y\circss00\bosy\eta\fvalss01 i
\fvalss20(\ssp\bosy\xi\fvalss11 i\ssp) + 2\KPt8\eps\ssp)
  \RHB{.2}{\KP1^{p\sast}}\ssp
(\ssp\mu\circss10\bmii8 A\fvalss01 i\ssp) \KP1 $,} \inskipline10

where the last limit expression is valid and needed only in the case where 
\math{\bmii8 A} is infinite. According to whether 
\math{\bmii8 A} is finite or infinite, with 
\math{\smb N=N\aar 0} or 
for arbitrarily fixed \math{\smb N\in\bbN} considering 
\math{u\in\sp^{\ssmb N}\KPt8\lbb R_+} given by \vskip.5mm\centerline{$
u=\seqss33{
(\ssp y\circss00\bosy\eta\fvalss01 i
\fvalss20(\ssp\bosy\xi\fvalss11 i\ssp) + 2\KPt8\eps\ssp)\KP1
(\ssp\mu\circss10\bmii8 A\fvalss01 i\ssp)
  \RHB{.2}{\KP1^{p\sast\ssp^{-1}}}\ssn:i\in\smb N} \KP1 $,} \inskipline{.5}0

we know that for some \math{v\in\sp^{\ssmb N}\KPt8\lbb R_+} with \math{ 
\|\,v\,\|\lllnor_p=1} we have \mathss30{ \|\,u\,\|\lllnor_{p\sast} =      \label{express p-norm}
 \sum\KP1(\ssp u\cdot v\ssp) } where \math{u\cdot v} is the pointwise product \mathss39{
\smb N\owns i\mapsto u\fvalss01 i\cdot(\ssp v\fvalss01 i\ssp) }. Using this, 
we get \vskip1mm

$
\sum_{\KPt8 i\ssp\in\ssp\ssmb N}\,
(\ssp y\circss00\bosy\eta\fvalss01 i
\fvalss20(\ssp\bosy\xi\fvalss11 i\ssp) + 2\KPt8\eps\ssp)
  \RHB{.2}{\KP1^{p\sast}}\ssp
(\ssp\mu\circss10\bmii8 A\fvalss01 i\ssp) 
= \big(\sp\sum\KP1
(\ssp u\cdot v\ssp))\,^{\,p\sast}$ \vskip.7mm

${}=\big(\sp\sum_{\KPt8 i\ssp\in\ssp\ssmb N}\,(
(\ssp y\circss00\bosy\eta\fvalss01 i
\fvalss20(\ssp\bosy\xi\fvalss11 i\ssp) + 2\KPt8\eps\ssp)\KP1
(\ssp\mu\circss10\bmii8 A\fvalss01 i\ssp)
  \RHB{.2}{\KP1^{p\sast\ssp^{-1}}}
(\ssp v\fvalss01 i\ssp)))\RHB{.2}{\KP1^{p\sast}}$ \vskip.7mm

${}=\big(\sp\sum_{\KPt8 i\ssp\in\ssp\ssmb N}\spp
\int_{\,\bmii6 A\snn\ffvalue i\KPt8}(
(\ssp y\circss00\bosy\eta\fvalss01 i
\fvalss20(\ssp\bosy\xi\fvalss11 i\ssp) + 2\KPt8\eps\ssp)\KP1
(\ssp\mu\circss10\bmii8 A\fvalss01 i\ssp)
  \RHB{.2}{\KP1^{p\sast\ssp^{-1} - \ssp 1\,}}
(\ssp v\fvalss01 i\ssp))
\rmdss11\mu\,(\sp\eta\sp))\RHB{.2}{\KP1^{p\sast}}$ \vskip.7mm

${}=(\ssp\smb I\aR 1\snn + 2\KP1\eps\KP1\smb I\aR 2\spp
)\RHB{.2}{\KP1^{p\sast}}$ where we have \vskip.7mm

$\smb I\aR 1 = \sum_{\KPt8 i\ssp\in\ssp\ssmb N}\spp
\int_{\,\bmii6 A\snn\ffvalue i\KPt8}y\circss00\bosy\eta\fvalss01 i
\fvalss20(\ssp\bosy\xi\fvalss11 i\ssp)\KP1
(\ssp\mu\circss10\bmii8 A\fvalss01 i\ssp)
  \RHB{.2}{\KP1^{p\sast\ssp^{-1} - \ssp 1\,}}(\ssp v\fvalss01 i\ssp)
\rmdss11\mu\,(\sp\eta\sp) \,$  and

$\smb I\aR 2 = \sum_{\KPt8 i\ssp\in\ssp\ssmb N}\spp
\int_{\,\bmii6 A\snn\ffvalue i\KPt8}(
\ssp\mu\circss10\bmii8 A\fvalss01 i\ssp)
  \RHB{.2}{\KP1^{p\sast\ssp^{-1} - \ssp 1\,}}
(\ssp v\fvalss01 i\ssp)\rmdss11\mu\,(\sp\eta\sp) \KP1 $. \inskipline10

Now with \math{A\sp\ar 1=\bigcup\KP1(\sp\bmii8 A\spp\image\snn\smb N\ssp) } 
a direct computation using H\"older's inequality 

gives 
\math{\smb I\aR 2\le
(\ssp\mu\fvalue\ssn A\sp\ar 1)
\RHB{.2}{\KP1^{p^{\sp-1}}}\le
(\ssp\mu\fvalue\ssn A\ssp)
\RHB{.2}{\KP1^{p^{\sp-1}}} }, \,and to estimate \math{\smb I\aR 1}, \,taking 

$\bosy\xi\ar 1=
\seqss33{((\ssp\mu\circss10\bmii8 A\fvalss01 i\ssp)
  \RHB{.2}{\KP1^{p\sast\ssp^{-1} - \ssp 1\,}}(\ssp v\fvalss01 i\ssp)\KP1
(\ssp\bosy\xi\fvalss11 i\ssp))\svs\vPi\sn
:i\in\smb N} \,$ and 

$\smb X
=\sigrd F\text{\KPt8-}\sum_{\KPt8 i\ssp\in\ssp\ssmb N}\,
\lfloor\,^{p\sp,\ssp\mu\sp,\ssp\vPi}\ssp
(\ssp\bosy\xi\fvalss11 i\ssp)\sbi{\,\bmii6 A\snn\ffvalue i} \KP1 $, we get \vskip1mm

$\smb I\aR 1 = \sum_{\KPt8 i\ssp\in\ssp\ssmb N}\spp
\int_{\,\bmii6 A\snn\ffvalue i\KPt8}y\circss00\bosy\eta\fvalss01 i
\fvalss20(\ssp\bosy\xi\ar 1\sn\fvalue\sp i\ssp)
\rmdss11\mu\,(\sp\eta\sp) 
=\smb U\sp\fvalue\snn\smb X + \smb I\aR 3 $ where \vskip.7mm

$\smb I\aR 3 = 
\sum_{\KPt8 i\ssp\in\ssp\ssmb N}\spp
\int_{\,\bmii6 A\snn\ffvalue i\KPt8}(\ssp y\circss00\bosy\eta\fvalss01 i 
- y\fvalue\eta\ssp)\fvalue(\ssp\bosy\xi\ar 1\sn\fvalue\sp i\ssp)
\rmdss11\mu\,(\sp\eta\sp) \KP1 $. \inskipline10

A direct computation gives \math{\|\,\smb X\ssp\|\sNorF \le 1} whence we get \mathss38{
|\KP1\smb U\sp\fvalue\snn\smb X\,|\le\|\KPt8\smb U\,\|}, \,and further \vskip1mm

$|\KP1\smb I\aR 3\,| \le \eps\,
\sum_{\KPt8 i\ssp\in\ssp\ssmb N}\spp
\int_{\,\bmii6 A\snn\ffvalue i\KPt8}
(\ssp\mu\circss10\bmii8 A\fvalss01 i\ssp)
  \RHB{.2}{\KP1^{p\sast\ssp^{-1} - \ssp 1\,}}(\ssp v\fvalss01 i\ssp)
\rmdss11\mu\,(\sp\eta\sp)=
\eps\KP1\smb I\aR 2
\le\eps\KP1(\ssp\mu\fvalue\ssn A\ssp)
\RHB{.2}{\KP1^{p^{\sp-1}}} $. \inskipline10

Putting these results together, and letting 
\math{\smb N\to\infty} or taking \math{\smb N=\dom\bmii8 A} if 
\math{\bmii8 A} is finite, \,we finally obtain \inskipline1{11}

$
\int_{\,A\KPt8}(\ssp\Nu\aR 1\sn\circ\sp y\fvalue\eta\ssp)
  \RHB{.2}{\KP1^{p\sast}}\ssn\rmdss01\mu\,(\sp\eta\sp)
\le
\big(\ssp \|\KPt8\smb U\,\| + 3\KP1
\eps\KP1(\ssp\mu\fvalue\ssn A\ssp)
\RHB{.2}{\KP1^{p^{\sp-1}}}
\sp\big)\RHB{.2}{\KP1^{p\sast}} $ \inskipline{.7}{47}

${}<
\big(\ssp \|\KPt8\smb U\,\| + 4\KP1
\eps\KP1(\ssp\mu\fvalue\ssn A\ssp)
\RHB{.2}{\KP1^{p^{\sp-1}}}
\sp\big)\RHB{.2}{\KP1^{p\sast}} $ \inskipline{.7}{47}

${}=
\int_{\,A\KPt8}(\ssp\Nu\aR 1\sn\circ\sp y\fvalue\eta\ssp)
  \RHB{.2}{\KP1^{p\sast}}\ssn\rmdss01\mu\,(\sp\eta\sp)
\KP1 $, \,a {\sl contradiction\sp}. \vskip1mm

\Step 4.0 Now having obtained \math{
\|\KP1\Nu\aR 1\sn\circ\sp y\KP1\|\Lnorss40^{p\sast}_\mu\le\|\KPt8\smb U\,\| } 
we know that \math{y\in\bigcup\ssp\vecs F\aar 1} holds, and hence there is 
some \math{\smb Y} with \math{y\in\smb Y\in\vecs F\aar 1}. Then we get \math{ 
\smb U \in \rng\Iota} from Lemma \ref{Le-first} similarly as in the proof of 
Lemma \nfss A\,\ref{LeA(1)} on page \pageref{Le A2 final ded} above.      \label{endmpf}
  \end{proof}

We have now established Theorem \nfss A\,\ref{main Th} since as noted at the 
beginning of this section on page \pageref{page surj} above, only the 
surjectivity \mathss34{\Cal L\,(\sp F\spp,\spp\bosy K\ssp)\inc\rng\Iota } 
remained to be verified, and this is done in the various cases in 
Lemmas \nfss A\,\ref{LeA(1)}$\,,\ldots\KPt8$\nfss A\,\ref{final lemma} above. 
Note also that as opposed to the treatments in \cite{Phil} and \cite{Edw}\,, 
we succeeded to handle the cases (5) and (6) simultaneously. In \cite{Phil} 
only the case (5) is considered, and the text also contains some quite obscure 
passages. In \cite{Edw} the case (6) is treated under the additional 
assumption that \math{\mu} be at least positive \erm Radonian.


\Ssubhead E               Examples and open problems                      \label{Sec E}

Below, we have collected some examples in order to make more concrete some 
points of the abstract theory given above. We also point out some related open 
problems. In the first example we demonstrate that in Theorem \nfss A\,\ref{main Th} 
the case (3) {\sl does not\sp} cover (1) and (2) even when \math{\mu} is a 
  probability measure.

\begin{example}\label{Exa big compact}                               \renewcommand\sNorF{\sNor{\fivemath F}}\renewcommand\erm[1]{\hbox{\font\Å=cmr8\Å#1}}

For \mathss37{\Omega=\sp^\bbI\ssbb70 I}, \,we construct a probability measure \math{
\mu} on \math{\Omega} such that for the space \math{ F = 
 \mLrs42^1(\ssp\mu\ssp) } the topology \math{\taurd F} is not separable. 
Indeed, for details referring to \cite[199\,--\,203]{Du} let \math{ \mu = 
 \otimes_{\fiveroman{mea}\,}(\ssbb60 I\times\snn\{\,\LeBmef^{}\,|\KP1
 \Pows\bbI\KP1\}\sp\sbig)0 } be the uncountable product measure of the 
Borel\,--\,Lebesgue measure on the closed unit interval.

Now with 
$\roman A\,s=
\Omega\capss41\{\,\eta:\frac 12 \le \eta\fvalue s \le 1\KPt9\} \, $ let

$\roman x\,s=
(\ssp\Omega\sp\setminus\roman A\,s\ssp)\times\snn\{\ssp 0\ssp\}
\cupss22(\sp\roman A\,s\times\snn\{\ssp 2\ssp\}\sp\sbig)0 \, $ and 
$\, \erm X\,s=\uniqset\smb X:\roman x\,s\in\smb X\in\vecs F \sp $. 

For $\smb X\in\vecs F$ letting 
$\|\,\smb X\ssp\|\sNorF = \uniqset s:\aall{x\in\smb X}\, s = 
 \int_{\KP{1.1}\Omega\,}|\KP1 x\fvalue\eta\KP1|\suba
  \rmdss01\mu\,(\spp\eta\spp)$ 

and 
$\|\KPt9\smb X - \smb Y\KP{1.1}\|\sNorF=
\|\KP1(\sp\smb X - \smb Y\,)\svs F\,\|\sNorF$ , 

\noin
then $\{\KPt8\erm X\,s:s\in\bbI\KP1\}$ is uncountable, and 
for $s\ssp,\sp t\in\bbI$ with $s\not=t$ by a simple computation we get 
$\|\KP1\erm X\,s - \erm X\,t\KP1\|\sNorF=1 \,$, \,giving the 
assertion on nonseparability.
  \end{example}

Equally well in Example \ref{Exa big compact} above we could have taken the 
uncountable \q{coin tossing} measure \math{ \mu = 
 \otimea3(\ssp I\times\snn\{\ssp\pi\ssp\}\sp\sbig)0 } for any uncountable set \math{
I} when \vskip.3mm\centerline{$
\pi = \{\,1\spp\adot\ssp,\sp\{\,1\spp\adot\spp\}\sp\}\timesn\big\{\ssp
 \frac 12\ssp\big\}\cupss22\{\,(\ssp\emptyset\,,\spp 0\ssp)\,,\sp
 (\ssp 2\sp\adot\ssp,\spp 1\ssp)\,\} \KP1 $.}

\begin{problem}

{\it Does {\,\rm(4)} hold\,} in Theorem \nfss A\,\ref{main Th} when \math{\mu} 
is the probability measure constructed in Example \ref{Exa big compact} above? 
Observe that \cite[Lemma 8.17.1\,(\sp b\spp)\,, p.\ 580]{Edw} would give a 
positive answer only if \math{
(\ssp\nsTbb_R\leiss02\ssbb05 I)\expnota^\ssbb44 I]_{ti} } were a metrizable 
topology, thus requiring the set \math{\mathbb I=[\KPp1.1 0\,,\spp 1\KPt9] } 
  to be countable.
  \end{problem}

\begin{example}\label{Exa not trul deco}

For \math{\Omega=\bbR\times\bbR} we construct a {\sl decomposable\sp} positive 
measure 
\math{\mu} on \math{\Omega} that is {\sl not truly decomposable\sp}. We also 
get a 
function \math{u:\Omega\to\{\KPt8 0\,,\spp 1\KPt5\} } with \math{
\upint u\rmdss11\mu=
\plusinfty} 
but \math{
\int_{\,A}\ssp u\rmdss11\mu=
0} for all \mathss34{A\in\mu\invss44\image\spp\lbb R_+}. Indeed, 

let $\mu$ be the set of all pairs 
$(\sp A\,,\spp s\ssp)$ 
with $A\inc\Omega$ and such that there are 
$B\in\{\,A\,,\spp\Omega\sp\setminus A\KPt8\}$ and a 
countable $C\inc\bbR$ such that 
$B\,\image\snn\{\ssp t\ssp\}\in\dom\Lebmef^{}$ holds 
for all $t\in C$, and that 
$B\,\image\snn\{\ssp t\ssp\}=\emptyset$ for 
$t\in\ssbb02 R\setminus C$, and that $s=\sum\KP1
\seqss33{\Lebmef^{}\ssn\fvalue(\sp 
A\ssp\image\snn\{\ssp t\ssp\}\ssp\big):t\in\bbR} \KP1 $. For $N=
\bbR\times\snn\{\ssp 0\ssp\}$ then $N\not\in\dom\mu$ but 
$A \capss31 N \in  \mu\invss44\image\snn\{\ssp 0\ssp\} $ 
for all $A\in\mu\invss44\image\sp\lbb R_+$. 
It follows that $\mu$ cannot be truly decomposable. To see that $\mu$ is 
decomposable, just take \mathss39{\scrmt A=
\big\{\ssp\{\ssp t\ssp\}\snn\times
{]}\KP{1.2} n\ssp,\spp n + 1\KP1]:
t\in\bbR\ssp$ and $\ssp n\in\mathbb Z\KP1 \} }. One also 
observes that for \math{u\ar 0=N\timesn\{\ssp 1\ssp\} } and \math{u
=(\ssp\Omega\sp\setminus N\ssp)\times\snn\{\ssp 0\ssp\}\cupss22 u\ar 0 } 
we have $\upint u\rmdss11\mu=\upint u\ar 0\rmdss01\mu=
\plusinfty$ 
but $\int_{\,A}\ssp u\rmdss11\mu=\int_{\,A}\ssp u\ar 0\rmdss01\mu=
0$ for all $A\in\mu\invss44\image\sp\lbb R_+$.
  \end{example}

Decomposable but not \rsigma6finite positive measures are given in the next

\begin{example}\label{Exa Haar}

Let \math{g\in\sp^{S\ssp\times\ssp S}\,S } be a group operation with \math{S} 
uncountable. Then with \mathss03{\Omega=S\snn\times\bbR } and \math{ \scrmt T 
 = \Pows S\sp\ttimes\nsTbb_R } and \inskipline{.2}{4.4}

$a=\{\,(\ssp s\ar 1\sp,\spp t\ar 1\KPt2;\spp s\ar 2\ssp,\spp t\ar 2\,;\spp 
             s\ar 3\ssp,\spp t\ar 3\spp) : 
 (\ssp s\ar 1\sp,\spp s\ar 2\ssp,\spp s\ar 3\spp) \in g\ssp\text{ and }\ssp 
 (\ssp t\ar 1\sp,\spp t\ar 2\ssp,\spp t\ar 3\spp) \in\sigrd\snn\fbbR\KP1\} \hfill $ 
putting \inskipline{.5}{4}

$\mu \sp = \ssp \big\langle\ssp\sum_{\KPt8\emath s\ssp\in\ssp S\KPt8}
 (\ssp\Lebmef^{}\ssn\fvalue(\sp A\sp\image\snn\{\ssp s\ssp\}\sp\sbig)0\big) : 
 A\inc\Omega\ssp\text{ and }\ssp\aall{s\in S}\,A\sp\image\snn\{\ssp s\ssp\} 
 \in \dom\Lebmef^{} \KPt8 \rangle $ \inskipline{.7}0

and \mathss38{\mu\ar 1=\mu\KP1|\KP1\{\,A:\dom A\ssp\text{ is countable or 
 \math{\dom(\ssp\Omega\spp\setminus A\ssp) } is countable }\} }, \hfill we 
have \linebreak \mathss03{
     (\ssp a\,,\spp\scrmt T\,) } a locally compact Hausdorff topological group 
with \math{\mu} a modified {\sl Haar me- asure\sp} for it and \mathss38{
\mu\ar 1 = \mu\KP1|\KP1\sigmAlg3\{\,K:K\ssp\text{ is \mathss37{\scrmt T
 }--\,compact }\} }. With \vskip.2mm\centerline{$
\scrmt A = \{\sp\{\ssp s\ssp\}\snn\times[\KP1 n\ssp,\spp n + 1 \KP1 {[\sp} : 
           s\in S\ssp\text{ and }\ssp n\in\ssbb04 Z\,\} $} \inskipline{.2}0

one checks that \math{\mu} is {\sl truly decomposable\sp} and that \math{
\mu\ar 1} is {\sl decomposable\sp}. Note that for \math{ g = \sigrd\snn\fbbR = 
 \ssigrd\tfbbR } we have \math{\mu\ar 1} precisely the \math{\mu} given in 
Example \ref{Exa not trul deco} above.
  \end{example}

\begin{problem}\label{Prblm z-z mea}

{\it Is $\,\mu$ almost decomposable\ssp} in the following situation\sp? Let \math{
\Omega=\sp^{2.}\ssbb60 R } and with \math{\roman S\,a\,b=\{\,a + t\KP1(\ssp 
 b - a\ssp):0\le t\le 1\KPt8\} } let \mathss30{\mu\ar 0=\{\,(\KPt5
 \roman S\,a\,b\,,\spp\|\KPt8 a - b\KP1\|\lllnor_2\sp):a\ssp,\sp b\in\Omega
 \KP1\} } and \ $\mu\sp = \sp \upCth\ssp \seqss43{ \inf \sp \big \{\ssp \sum\KP1
 (\ssp\mu\ar 0\circ\bmii8 A\ssp) : \mu\ar 0\snn : \ebit A \in \sp 
 ^\sbbNo\,\dom\mu\ar 0\ssp\text{ and }$ \inskipline0{39}

 $ A\inc\bigcup\ssp\rng\ebit A\,\} :
   A\inc\Omega } \KP1|\KP1\sigmAlg3\dom\mu\ar 0 \KPt8 $. \inskipline{0.2}0

Since from \cite[Proposition 3.2.4\ssp, p.\ 72]{Du} we know that \math{\mu} is 
a positive measure, the problem is whether there exist \math{\scrmt A\,,\sp 
 N\sprim1 } as required in Definitions \ref{df decomp}\,(2) on page \pageref{decos A} 
above. An appeal to intuition suggests that \math{\mu} is {\sl not\sp} almost 
decomposable, but a possible proof does not seem to be simple.

Note that if we above instead had written \vskip.25mm\centerline{$
\mu=\uniqset m:m\ssp$ is a positive measure and 
\math{\dom m=\sigmAlg3\dom\mu\ar 0 } and \mathss36{\mu\ar 0\inc m },} \inskipline{.25}0

then it might have happened that \math{\mu=\Univ} holds, and hence the answer 
to the above question would trivially have been \q{no}, noting that by the 
lacking \rsigma5finiteness the uniqueness in \cite[Theorem 3.1.10\ssp, p.\ 68]{Du} 
is not applicable in this situation.
  \end{problem}

{                                                                      \newcommand\elvrB{\lower.2mm\hbox{\font\Å=cmr11\ÅB\kern.7mm}} %
Similarly as in Problem \ref{Prblm z-z mea} above we might ask whether with \math{
k\ssp,\smb N\in\bbN } and \mathss30{ \Omega = \sn} \mathss30{\sp
 ^{k\ssp+\KPt4\ssmb N}\ssbb80 R } and suitably fixed \math{\lambda\in\rbb R^+ } 
and \vskip.25mm\centerline{$
\elvrB r=\{\KPt8\Omega\capss31\{\,\eta:\|\KP1\eta - \eta\ar 0\,\|\lllnor_2 < 
 \smb R\,\}:\eta\ar 0\in\Omega\ssp\text{ and }\ssp 
 0 < \smb R \le r \, \} $} \inskipline{.25}0

and \math{ \alpha = \seqss30{ t\KPt8^{\ssmb N\,(\sp k \ssp + \ssp\ssmb N\ssp)
 \yydot\sp^{-1}}\KN{.8}:t\in\lbb R_ +\ssn} } the \mathss36{\smb N
}--\,dimensional {\sl Hausdorff measure\sp} \inskipline{.3}{24}

$\upCth\ssp\seqss43{\lim_{\KP1\emath r\ssp\to\,0^+}\sp\inf\sp\big\{\,\lambda\sp
 \sum\,(\ssp\alpha\circss00\Lebmef^{\ssp k\ssp+\ssp\ssmb N}\sn\circ\ebit B\ssp
 ) : {} $ \inskipline0{38.3}

$ \ebit B\in\sp^\sbbNo\,\elvrB r\ssp\text{ and }\ssp A \inc \bigcup \sp \rng 
 \ebit B\,\} : A\inc\Omega}$ \inskipline{.3}0

is almost decomposable. } 

In search for an example of a positive measure that would not be almost 
decomposable we noticed the positive measure \math{\mu} in the following

\begin{example}\label{Exa non-Rad?}

Let \math{\mu\ar 1=\scrmt N\times\snn\{\ssp 0\ssp\}\cupss22\{\,(\ssp
 \varOmega\aar 1\ssn\setminus N\sp,\spp 1\ssp):N\in\scrmt N\KP1\} } where \math{
\varOmega\aar 1} is an uncountable set and \math{\scrmt N} is a \rsigma7ideal 
in \math{\varOmega\aar 1} with \math{\varOmega\aar 1\not\in\scrmt N } and \mathss34{
\{\sp\{\ssp\eta\ssp\}:\eta\in\varOmega\aar 1\ssp \}\inc\scrmt N }. For 
example, we might have \math{\varOmega\aar 1=\bbI } and \mathss36{ \scrmt N = 
 \Lebmef^{}\KN1\inve\ssp\image\snn\{\ssp 0\ssp\}\leiss22\bbI }, \,or \vskip.2mm\centerline{$
\scrmt N=\sp\big\{\ssp\bigcup\,\scrmt A:\scrmt A\ssp\text{ is countable and }\ssp
 \scrmt A\inc\Pows\varOmega\aar 1\cap\sp\{\,A:\roman{Int\,}\sbi{\scrm7 T\KP1}
 \roman{Cl\,}\sbi{\scrm7 T\KPt8}A = \emptyset \KPt9 \} \sp \} $} \inskipline{.2}0

where \math{\scrmt T} is a regular locally compact or completely metrizable 
topology for \mathss34{\varOmega\aar 1}. 

With a fixed \math{s\ar 0\in\varOmega\aar 1 } for \math{ \eta\ar 0 = 
 (\ssp s\ar 0\KPt5,\spp s\ar 0) } and for \math{ \Omega = 
 \varOmega\aar 1\sn\times\varOmega\aar 1 } we then construct a positive 
measure \math{\mu} on \math{\Omega} such that \math{\mu} is {\sl decomposable\sp} 
but not \rsigma5finite and such that \math{ \bigcap\,\scrmt A\sp\ar 0 = 
 \{\,\eta\ar 0\spp\}\not\in\dom\mu } and \math{ \bigcap\,\scrmt A \in
 \mu\invss44\image\snn\{\ssp 1\ssp\} } hold for \vskip.2mm\centerline{$
\scrmt A\sp\ar 0 = \mu\invss44\image\snn\{\ssp 1\ssp\}\capss22\{\, A : 
                   \eta\ar 0 \in A \KP1 \} $} \inskipline{.2}0

and for all nonempty countable \mathss36{\scrmt A\inc\scrmt A\sp\ar 0 }.

Indeed, with \math{ \roman P\ssp\ebit A = \bigcup\KPt8\{\sp\{\ssp s\ssp\}
 \timesn A\spp\ar 1\ssn:(\ssp s\ssp,\spp A\spp\ar 1)\in\ebit A\,\} } and \math{
\scrmt P} the set of all countable functions \math{ \ebit A \inc 
 \varOmega\aar 1\sn\times\dom\mu\ar 1 } with \math{[\KPp1.4 s\ar 0 \in 
 \dom\ebit A\ssp\text{ and }\ssp\ebit A\fvalue s\ar 0\not\in\scrmt N\impss33 
 s\ar 0\in\ebit A\fvalue s\ar 0 \KP1 ] } we let \math{ \mu = 
 \{\KPt8(\sp A\,,\spp t\ssp):\eexi{\ebit A\in\scrmt P}\,[\KPp1.4 A = 
 \roman P\ssp\ebit A\ssp\text{ and }\ssp t = 
 \sum\KP1(((\ssp\mu\ar 1\sn\circ\spp\ebit A\ssp
 )\invss24\image\snn\{\ssp 1\ssp\}\sp\sbig)0\times\snn
                    \{\ssp 1\ssp\}\sp\sbig)0\KPp1.4\big] } \inskipline0{71.4}

 or $\ssp[\KPp1.4 A = \Omega\spp\setminus\roman P\ssp\ebit A\ssp\text{ and }\ssp 
 t = \plusinfty\KPp1.4]\KP1 
                       \lower.1mm\hbox{\font\Å=cmsy11\Å\char'147}
                                        \KP1 $. \inskipline{.25}0

Note that \math{\{\KPt9\roman P\ssp\ebit A:\ebit A\in\scrmt P\KP1\} } is a 
\rsigma3ring and hence that \math{\dom\mu} is a \rsigma3algebra.

One sees that for \math{0 < p\le\plusinfty } and for \math{\vPi\in\LCSps0(K) } 
with \math{\bosy K\in\setRC } the spaces \math{
\mvLrs03^p(\ssp\mu\,,\spp\vPi\ssp) } and \math{
\lll^p\spp(\sp\varOmega\aar 1\sp,\spp\vPi\ssp) } are linearly homeomorphic 
under \math{\smb X\mapsto y } when \mathss03{ y \in \sp
 ^{\aars\varOmega_1}\ssp\vecs\vPi } is such that for \math{x\in\smb X} and \math{
u\in\Cal L\,(\sp\vPi\sp,\spp\bosy K\ssp) } and for finite \math{ \ebit A \in 
 \scrmt P } with \mathss03{ \rng\ebit A \inc 
 \mu\ar 1\ssn\inve\ssp\image\snn\{\ssp 1\ssp\} } we have \mathss38{
  \int_{\KPp1.1\roman P\KPt2\bmii6 A}\sp u\circss00 x\rmdss11\mu 
= \sum\KP1(\ssp u\circss00 y\KP1|\KP1\dom\ebit A\ssp) }.
  \end{example}

From Example \ref{Exa non-Rad?} above we arrive at the following

\begin{problem}

{\it Is $\,\mu$ positive \eit Radonian\ssp} when \math{\mu = 
 \Lebmef^{}\,|\KP1(\ssp\Pows\bbI\capss42(\ssp
 \Lebmef^{}\KN1\inve\ssp\image\snn\{\KPt8 0\,,\spp 1\,\}\sp\sbig)0\sbig)0 } 
holds\ssp? Note that if there is \math{\scrmt T} that 
positively \erm Radonizes \math{\mu} above, then necessarily \mathss30{
\mu\fvalue\snn K\in\sn} \mathss03{
                   \{\KPt8 0\,,\spp 1\,\} } holds when \math{K} is \mathss37{
\scrmt T}--\,compact. Furthermore, there is some \mathss30{ N \in 
 \mu\invss44\image\snn\{\ssp 0\ssp\} } such that \math{
\scrmt T\leiss32(\ssbb62 I\setminus N\ssp) } is a compact topology. Then we 
get \math{\mu\sp\image(\ssp\scrmt T\leiss32(\ssbb62 I\setminus N\ssp)) \inc \sn} \mathss03{
 \{\KPt8 0\,,\spp 1\,\} } and hence \mathss38{ \mu\sp\image\ssp\scrmt T \inc 
 \{\KPt8 0\,,\spp 1\,\} }.
  \end{problem}

Observe that if we take the trivially positive \erm Radonian \mathss38{
\mu\ar 0 = \{\KPt8(\ssp\emptyset\,,\spp 0\ssp)\,,\sp(\ssp 1\spp\adot\ssp,\spp 
 1\ssp)\KPt8 \} }, then for \math{ q = \bbI\times 1\spp\adot } and \math{ 
\mu\ar 2 = \{\,(\,q\invss44\image\ssn A\,,\spp t\ssp):(\sp A\,,\spp t\ssp)\in
 \mu\ar 0\,\} } and for \math{0 \le p \le \plusinfty } and \mathss03{ E = 
 \mLrs03^p(\ssp\mu\sp) } and \math{F = \mLrs03^p(\ssp\mu\ar 2\spp) } and \math{
\Iota = \vecs E\times\vecs F\capss21\{\,(\ssp\smb\Phii\spp,\spp\smb\Psii\sp) : 
 \smb\Phii\capss12\smb\Psii\not=\emptyset \KPt9 \} } we \linebreak
                                                        have \math{
\Iota:E\to F} a linear homeomorphism. This leads us to the following

\begin{definitions}\label{Df Leb equ}

(1) \ $q\meastss33\mu = \{\KPt7(\,q\invss44\image\ssn A\,,\spp t\ssp) : 
    (\sp A\,,\spp t\ssp)\in\mu\KPt8\} \KP1 $, \inskipline{.5}2

(2) \ Say that \math{N\sprim1} is {\it finitely \mathss37{\mu}--\,negligible\ssp} 
    if{}f \math{N\sprim1} is \mathss37{\mu}--\,negligible \inskipline0{37.5}

 and \math{\mu\invss44\image\snn\{\sp\plusinfty\,\}\capss13\Pows N\sprim1 = 
  \emptyset} holds, \inskipline{.5}2

(3) \ Say that \math{\mu\ar 1\sp,\sp\mu\ar 2} are {\it Lebesgue equal\,} if{}f \math{
    \mu\sbi{\sixmath\nuu} } for \math{\sbi{ \sixmath\nuu \sixroman{\ssp =\ssp
 1\spp,\ssp 2}}} is a positive measure and there are \math{N\aar 1\sp,\sp 
 N\aar 2\ssp,\sp q\ar 1\sp,\sp q\ar 2\ssp,\sp Q } with  \math{
N\sn\sbi{\sixmath\nuu} } finitely \mathss37{\mu\sbi{\sixmath\nuu} 
}--\,negligible and \math{q\sbi{\sixmath\nuu} } is a surjection \mathss03{Q\to 
 \bigcup\ssp\dom\mu\sbi{\sixmath\nuu}\sn\setminus N\sn\sbi{\sixmath\nuu} } for \math{\sbi{
\sixmath\nuu \sixroman{\ssp =\ssp 1\spp,\ssp 2}} } and \math{ \Iota \in 
 \Lis(\sp E\ar 1\sp,\spp E\ar 2\spp) } holds for \mathss30{E\sbi{\sixmath\nuu}
 = \mLrs42^1(\,q\sbi{\sixmath\nuu}\ssn\meastss03\mu\sbi{\sixmath\nuu}) } and \mathss38{
\Iota = \vecs E\ar 1\sn\times\vecs E\ar 2\capss01\{\,(\ssp\smb\Phii\spp , \spp
 \smb\Psii\sp) : \smb\Phii\capss12\smb\Psii\not=\emptyset \KPt9 \} }, \inskipline{.5}2

(4) \ Say that \math{\mu} is {\it essentially positive \eit Radonian\ssp} 
    if{}f \inskipline0{9}

 there is a positive \erm Radonian \math{\mu\ar 0} such that \math{
 \mu\,,\sp\mu\ar 0} are Lebesgue equal.
  \end{definitions}

Note that if \math{\mu} is a positive measure and \math{q} is a small function 
such that for the set \math{N\sprim1=\bigcup\ssp\dom\mu\setminus\rng q } it 
holds that \math{N\sprim1} is \mathss37{\mu}--\,negligible, then \math{
q\meastss33\mu } {\sl need not\sp} be a positive measure since it may even 
fail to be a function. For example, taking \mathss03{q=\emptyset} and \math{
\mu = \{\KPt8(\ssp\emptyset\,,\spp 0\ssp)\,,\sp(\ssp 1\spp\adot\ssp ,
 \plusinfty\ssp)\KPt8\} } we get \mathss38{ q\meastss33\mu = 
 1\spp\adot\timesn\{\KPt7 0\,,\plusinfty\,\} }. However, if we know that \math{
q\meastss33\mu } is a function, then it is also a positive measure as one 
quickly verifies. A sufficient condition to guarantee that \math{
q\meastss33\mu } be a function is that \mathss30{N\sprim1} be finitely \mathss37{
\mu}--\,negligible as we have required in Definitions \ref{Df Leb equ}\,(3) 
  above.

Note also that our Definition \ref{Df Leb equ}\,(3) is not entirely 
satisfactory since for example taking \math{ \mu\ar 0 = 
 \{\,(\ssp\emptyset\,,\spp 0\ssp)\,\} } we have both \math{
\mLrs42^1(\ssp\mu\sp) } and \math{\mLrs42^1(\ssp\mu\ar 0\spp) } linearly 
homeomorphic to the trivial space \math{
(\ssp 1\spp\adot\snn\times 1\spp\adot\snn\times 1\spp\adot\ssp , \spp
 \bbR\times 1\spp\adot\snn\times 1\spp\adot\ssp , \spp\Pows 1\spp\adot\spp) } 
but \math{\mu\,,\sp\mu\ar 0} are not Lebesgue equal by the above. We further 
remark that the relation of being Lebesgue equal is not an equivalence since 
for example taking the positive \erm Radonian \math{ \mu\ar 1 = 
 2\sp\adot\timesn\{\ssp 0\ssp\} } that is positively \erm Radonized by \math{
\Pows 1\spp\adot} we have both \math{\mu\,,\sp\mu\ar 1} Lebesgue equal and \mathss30{
\mu\ar 0\,,\sp\mu\ar 1} Lebesgue equal. Now we can pose the following

\begin{problem}\label{Prblm all Rado?}

{\sl Is every positive measure essentially positive \eit Radonian\ssp}?
  \end{problem}

If the answer to the question in Problem \ref{Prblm all Rado?} is positive, 
then one might be able to remove from Theorem \nfss A\,\ref{main Th} the 
assumption on \math{\mu} being almost decomposable in the case where \math{p=1} 
holds. It seems that possibly by using {\sl Kakutani's theorem\sp} 
\cite[4.23.2\ssp, p.\ 287]{Edw} one might be able to prove that this indeed is 
the case. However, we leave these matters open here.

For example the {\sl Wiener\sp} probability {\sl measure\sp} in \cite{Pi-Po} 
on a non\ssp-\ssp locally compact separably metrizable topological space 
{\sl is essentially positive \esl Radonian\sp} directly by its construction 
since it is obtained by restricting a \erm Radonian probability measure on a 
compact topological space to a subset of measure unity. More specifically, one 
first constructs a probability measure \math{\pi} that is positively 
\erm Radonized by a compact topology \mathss30{\scrmt T}. Then for a certain 
separably metrizable topological space \math{(\ssp\Omega\,,\spp\scrmt U\ssp) } 
one shows that \math{\sigmAlg3\scrmt U=\dom\pi\leiss23\Omega } and \math{
\Omega\in\pi\invss44\image\snn\{\ssp 1\ssp\} } hold, and finally one defines \linebreak
$\id\sp\Omega\meastss33\pi\ssp$ to be the Wiener measure.

We remark that there is some confusion in \cite[pp.\ 12\,--\,25]{Pi-Po} and 
that the above is not a review but rather an interpretation of how it could 
  have been done.

\begin{example}\label{Exa pos Rad C(I)}

It holds that \math{\Pows\Omega} {\sl positively almost \esl Radonizes\sp} \math{
\pi} in the following situation. Let \math{(\ssp\Omega\ssp,\spp\scrmt T\,) } 
be a separably metrizable and not locally compact topological space with \math{
\Omega} uncountable, and let \math{D} countable and \mathss37{\scrmt T
}--\,dense with \math{\bosy a\in\sp^D\KPt8\rbb R^+ } and \mathss04{
\sum\,\bosy a=1}. Then let \mathss38{\pi = \sp \big\langle\ssp\sum\KP1(\ssp
 \bosy a\KPt9|\KPt8 A\ssp) : A \in \sigmAlg3\scrmt T\KPp1.2\rangle }. For 
example, we might have \mathss03{\scrmt T=\taurd C\,(\ssbb55 I) } and \math{D} 
the set of all polynomial functions with rational coefficients, or the set of 
all piecewise affine functions with rational \q{break} points.
  \end{example}

\begin{example}\label{Exa Sum Lebm^N}

For \math{\Omega = \sp^\sbbNo\,\bbI } we obtain a \hfill {\sl decomposable\sp} 
non\ssp-\sp\rsigma5finite positive mea- \linebreak
                                        sure \math{\mu} on \math{\Omega} by 
taking \math{\mu=\sum\KP1\seqss30{\roman m\KPt8\alpha:\alpha\inc\bbNo} } where 
with \math{m\ar 1 = \Lebmef^{}\,|\KP1\Pows\bbI } and \mathss03{\delta\ar 0 = 
 \dom m\ar 1\snn\times\{\KPt7 0\,,\spp 1\KPt6\}\capss22\{\,
 (\sp A\,,\spp t\ssp) : t = 1 \equivss33 0 \in A \KP1 \} } we have \vskip.3mm\centerline{$
\roman m\KPt8\alpha=
\otimea0((\ssp\bbNo\sn\setminus\alpha\ssp)\times\snn\{\KPt8 m\ar 1\}\cupss22
(\ssp\alpha\times\snn\{\,\delta\ar 0\spp\}\sp\sbig)0\sbig)0 \KP1 $.} \inskipline{.3}0

Indeed, with \math{\roman A\KPt8\alpha = \Omega\capss41\{\,\eta : 
 \eta\invss44\image\snn\{\ssp 0\ssp\}=\alpha\KPt8\} } taking \math{ \scrmt A = 
 \{\,\roman A\KPt8\alpha:\alpha\inc\bbNo\,\} } we have \mathss03{\scrmt A} 
uncountable and disjoint with \math{\Omega=\bigcup\,\scrmt A} and \mathss38{
\scrmt A\inc\mu\invss44\image\snn\{\ssp 1\ssp\} }. Furthermore, for \mathss03{
\alpha\KPt5,\sp\kappa\in\Pows\bbNo} we have \math{
\roman m\KPt8\alpha\fvalue\sn\roman A\KPt8\alpha = 1 } and \mathss36{
\alpha\not=\kappa\impss33\roman m\KPt8\kappa\fvalue\sn\roman A\KPt8\alpha=0}. 
To see these, by straightforward inspection one first verifies the last 
assertion which then directly implies that \math{ \scrmt A \inc 
 \mu\invss44\image\snn\{\ssp 1\ssp\} } holds. Consequently \math{\mu} is not 
  \rsigma5finite.

To show that \math{\mu} is decomposable, let \math{A\in
 \mu\invss44\image\sp\rbb R^+} and \math{N\sprim1\inc\Omega} with \mathss30{
\scrmt A\leiss42 N\sprim1 \inc \sn} \mathss08{ 
 \bigcup\KPt8\{\KPt8\Pows N \sn : N\in\mu\invss44\image\snn\{\ssp 0\ssp\}\sp\} 
}. Then taking \mathss30{ \varLambda\sp\ar 0 = \Pows\bbNo\capss01\{\,\alpha : 
 A\capss32\roman A\KPt8\alpha\in\mu\invss44\image\sp\rbb R^+\sp\big\} } we 
have \math{\varLambda\sp\ar 0} countable, and also putting \math{ N\aar 1 = 
 A\spp\setminus\bigcup\KPt8\{\,\roman A\KPt8\alpha:\alpha\in\varLambda\sp\ar 0
 \,\} } we get \mathss30{N\aar 1 \in \sn } \mathss08{
 \mu\invss44\image\snn\{\ssp 0\ssp\} }. Indeed, trivially \math{ N\aar 1 \in
 \dom\mu } holds, and if we have \mathss38{ N\aar 1 \not \in 
 \mu\invss44\image\snn\{\ssp 0\ssp\} }, \,then \math{0 < 
 \mu\fvalue\snn N\aar 1 = \sum\KP1\seqss30{
 \roman m\KPt8\alpha\fvalue\snn N\aar 1\sn : \alpha\in\Pows\bbNo} = 
 \sum\KP1\seqss30{\roman m\KPt8\alpha\fvalue\snn N\aar 1\sn : \alpha \in 
 \Pows\bbNo\sn\setminus\varLambda\sp\ar 0} } when- ce there is some \math{
\alpha\in\bbNo\sn\setminus\varLambda\sp\ar 0 } with \math{ 0 < 
 \roman m\KPt8\alpha\fvalue\snn N\aar 1 \le \roman m\KPt8\alpha\fvalue\ssn A } 
and hence \mathss36{\alpha\in\varLambda\sp\ar 0}, \,a {\sl contradiction\sp}. 
Now by {\sl countable choice\sp} there is some countable \mathss30{ \scrmt N
 \inc\mu\invss44\image\snn\{\ssp 0\ssp\} } with \mathss34{
\bigcup\KPt8\{\,N\sprim1\cap\sp\roman A\KPt8\alpha : \alpha \in 
 \varLambda\sp\ar 0\,\}\inc\bigcup\,\scrmt N }. Then taking \math{ N = 
 \bigcup\,\scrmt N\cupss42 N\aar 1} we finally get \mathss08{
A\capss31 N\sprim1\inc N\in\mu\invss44\image\snn\{\ssp 0\ssp\} }.
  \end{example}

Observe that if in Example \ref{Exa Sum Lebm^N} in place of \math{\bbNo} we 
take any uncountable set, then we obtain a trivial measure \math{\mu} in the 
sense that \math{\rng\mu=\{\KPt7 0\,,\plusinfty\,\} } holds.

\begin{problem}\label{Prblm Sum Lebm^N}

{\it Is $\,\mu$ positive \eit Radonian\ssp} in Example \ref{Exa Sum Lebm^N} 
above\sp? Note that at least we cannot take \math{ \scrmt T = 
 (\ssp\nsTbb_R\lei\ssbb25 I)\expnota^\sp\sbbNo\sp]_{ti} } in order to 
positively \erm Radonize \math{\mu} since \math{\Omega} is \mathss37{\scrmt T
}--\,compact with \mathss35{\mu\fvalss01\Omega=\plusinfty}.
  \end{problem}

\begin{example}\label{Exa sign mea}

We say that \math{\mu} is a {\it signed measure\ssp} if{}f \math{ \mu \in \sp
 ^{\dom\sn\mu}\,\ovbbR} with \math{\dom\mu} being a \rsigma3algebra and \math{
\mu\fvalue\sn\bigcup\,\scrmt A = \sum\KP1(\ssp\mu\KP1|\KP1\scrmt A\ssp) } for 
all countable disjoint \mathss35{\scrmt A\inc\dom\mu}. Then \linebreak 
                                                            we cannot have \math{
\{\sp\minusinfty\,,\plusinfty\KPt8\}\inc\rng\mu } since otherwise we could 
find \math{A\,,\sp B} with \mathss03{
\{\KPt8(\sp A\,,\minusinfty\ssp)\,,\sp(\sp B\ssp,\plusinfty\ssp)\KPt8\} \inc 
 \mu} and \math{A\capss32 B=\emptyset} whence we would get \vskip.3mm\centerline{$
\Univ = \sum\KP1(\,\mu\KP1|\KPt8\{\,A\,,\sp B\KPt8\}\sp\sbig)0
      = \mu\fvalue(\sp A\cupss32 B\ssp) \in \ovbbR \KP1 $,} \inskipline{.3}0

a contradiction following from our sum conventions in \cite{Hif}\,. Now the 
positive measures are precisely the signed measures \math{\mu} with \mathss38{
\rng\mu\inc[\KPp1.1 0\,,\plusinfty\KPt9] }, \,and a signed measure \math{\mu} 
we say to be {\it positively signed\,} if{}f \math{\minusinfty\not\in\rng\mu} 
holds. Similarly the con- dition \math{\plusinfty\not\in\rng\mu} defines being 
{\it negatively signed\,}. Real measures are now those that are both 
positively and negatively signed\sp.

We next construct the topologized conoid \math{ \tcbbR_+ = 
 (\ssp a\ssp,\sp c\,,\spp\scrmt T\,) } so that \math{m} is positively signed 
if{}f \math{m} is countably \mathss37{\tcbbR_+}--\,additive and such that \math{
\dom m} is a \rsigma3algebra. Indeed, taking \math{R=\lbb R_+} and \math{ S = 
 {\ssp]}\,\minusinfty\ssp,\plusinfty\KP1] } let \math{ \scrmt T = 
 \barscTbb_R\leiss02 S} and \inskipline{.3}{10.3}

$a = \ssigrd\tfbbR\cupss31\{\,(\ssp s\ssp,\spp t\ssp,\plusinfty\ssp) : 
     \plusinfty\in\{\,s\ssp,\sp t\,\}\inc S\KP1\} \KP{26.5} $ and \inskipline{.3}{10.7}

$c = \tsigrd\tfbbR\KP1|\KP1(\sp R\times\Univ\ssp)\cupss21\{\,
     (\ssp 0\,,\plusinfty\ssp,\spp 0\ssp)\,\}\cupss22\{\,
     (\ssp s\ssp,\plusinfty\ssp,\plusinfty\ssp):s\in\rbb R^+\sp\big\} \KP1 $. \inskipline{.3}0

Making the obvious modifications we similarly get the topologized conoid \math{
\tcbbR_-} so that \math{m} is negatively signed if{}f \math{m} is countably \mathss37{
\tcbbR_-}--\,additive and such that \mathss30{\dom m} is a \rsigma3algebra. 
Likewise, we can construct the topologized conoid \math{\tcovbbRplus} 
characterizing the positive measures.
  \end{example}

\begin{problem}\label{Prblm Io sur}

In Theorem \nfss A\,\ref{main Th} taking for example \math{ \mu = 
 \LeBmef^{}\,|\KP1\Pows\mathbb I } and \vskip.5mm\centerline{$
\vPi \in \{\KP1\co(\ssbb44 I) \, , \sp \ell\KPt8^1\sp(\ssbb44 I) \, , \sp
          \LLrs23^\plusinftyy(\ssbb44 I) \KPt9 \} \KP1 $,} \inskipline{.3}0

hence \math{\vPi} being nonreflexive with \math{\taurd\vPi} a nonseparable 
topology, for \math{p=1} we see \linebreak
                                that (3) holds, and then with \math{ F\aar 1 =
 \mvsLrs23^{p\sast}\ssn(\ssp\mu\,,\spp\vPi\dlsigss00\spp) } we obtain that \math{
\Iota} is a linear ho- \linebreak
                       meomorphism \mathss30{F\aar 1\to F\dlbetss10}, \,and 
hence in particular that \math{\Cal L\,(\sp F\spp,\sn\tfbbR\ssp)\inc\rng\Iota} 
holds. However, if we instead take \math{1 < p < \plusinfty} for example with \mathss35{
p=2}, \,then we {\sl do not\sp} know whether \math{
\Cal L\,(\sp F\spp,\sn\tfbbR\ssp)\inc\rng\Iota } holds. So under these 
circumstances we may ask\sp: {\it Is there\ssp} \math{\smb U} such that \math{
\smb U\in\Cal L\,(\sp F\spp,\sn\tfbbR\ssp)\setminus\rng\Iota} holds\,{\bf?}

We remark that by suitably adapting the proof of Corollary \ref{Coro q-var} 
above it seems \linebreak
               to be possible to deduce existence of some \math{y} and a 
countable disjoint \mathss30{\scrmt A\inc\mu\invss44\image\spp\rbb R^+ \ssn} 
with \mathss30{\bigcup\,\scrmt A = \mathbb I } and such that \math{
(\KPt5 y\,;\spp\mu\,,\spp\vPi\dlsigss00\spp) } is scalarly measurable and such 
that we have \math{\smb U\sp\fvalue\snn\smb X=
 \int_{\,A}\ssp y\,.\KPt8 x\rmdss11\mu } for all \math{x\ssp,\sp\smb X} with \math{
x\in\smb X\in\vecs F} and \math{\rng x} finite and such that for some \math{
A\in\scrmt A} we have \mathss36{
x\invss46[\KPp1.1\Univ\sp\setminus\{\,\Bnull_\vPi\}\KP1]\inc A }. However, we 
do not know whether \math{
\|\KP1\Nu\aR 1\sn\circ\sp y\KP1\|\Lnorss40^{p\sast}_\mu < \plusinfty } holds. 
If we could get \math{y} with these properties together with \mathss36{
\|\KP1\Nu\aR 1\sn\circ\sp y\KP1\|\Lnorss40^{p\sast}_\mu < \plusinfty }, \,then 
we would also get \mathss34{\smb U\in\rng\Iota}.
  \end{problem}


\end{document}